\theoremstyle{plain}
\newtheorem{thm}{Theorem}[section]
\newtheorem{lem}[thm]{Lemma}
\newtheorem{prop}[thm]{Proposition}
\newtheorem{cor}[thm]{Corollary}
\newtheorem*{thm*}{Theorem}
\newtheorem*{rem*}{A posteriori remark}
\theoremstyle{definition}
\newtheorem{defn}[thm]{Definition}
\newtheorem*{NB}{Nota Bene}
\newtheorem{rem}[thm]{Remark}
\renewcommand{\t}{\mathfrak{t}}
\newcommand{\p}{\mathfrak{p}}
\newcommand{\s}{\mathfrak{s}}
\renewcommand{\ss}{\mathbf{s}}
\newcommand{\nn}{\mathfrak{n}}
\newcommand{\rr}{\mathfrak{r}}
\newcommand{\cA}{\mathcal{A}}
\newcommand{\cL}{\mathcal{L}}
\newcommand{\bS}{\mathbf{S}}
\newcommand{\cc}{\mathfrak{c}}
\DeclareMathOperator{\nf}{nf}
\DeclareMathOperator{\e}{exp}
\DeclareMathOperator{\dec}{dec}
\DeclareMathOperator{\nfd}{CD}
\DeclareMathOperator{\ext}{ext}
\DeclareMathOperator{\sol}{sol}
\DeclareMathOperator{\Aut}{Aut}
\DeclareMathOperator{\lef}{left}
\DeclareMathOperator{\rig}{right}
\DeclareMathOperator{\sign}{sign}
\DeclareMathOperator{\Hom}{Hom}
\DeclareMathOperator{\id}{id}
\DeclareMathOperator{\Ker}{Ker}
\DeclareMathOperator{\ncl}{ncl}
\DeclareMathOperator{\rel}{rel}
\DeclareMathOperator{\tp}{tp}
\DeclareMathOperator{\ET}{ET}
\DeclareMathOperator{\D}{D}
\DeclareMathOperator{\Sh}{Sh}
\DeclareMathOperator{\const}{const}
\DeclareMathOperator{\az}{alph}
\DeclareMathOperator{\edge}{\mathbf{e}}
\DeclareMathOperator{\comp}{comp}
\newcommand{\VV}{\mathfrak{V}}
\renewcommand{\AA}{\mathfrak{A}}
\newcommand{\BB}{\mathfrak{C}}
\newcommand{\PT}{\mathcal{PT}}
\newcommand{\TB}{\mathcal{T}}
\renewcommand{\P}{\mathcal{P}}
\newcommand{\kk}{\mathfrak{k}}
\newcommand{\m}{\mathfrak{m}}
\newcommand{\BC}{\mathcal{BC}}
\newcommand{\BS}{\mathcal{BS}}
\newcommand{\BD}{\mathcal{BD}}
\newcommand{\GE}{\mathcal{GE}}
\newcommand{\M}{\mathcal{M}}
\newcommand{\B}{\mathcal{B}}
\newcommand{\N}{\mathbb{N}}
\newcommand{\BR}{\mathbb{R}}
\newcommand{\Z}{\mathbb{Z}}
\newcommand{\factor}[2]{{\raise0.7ex\hbox{$#1$} \!\mathord{\left/ {\vphantom {#1 {#2}}}\right.\kern-\nulldelimiterspace}\!\lower0.7ex\hbox{${#2}$}}}
\newcommand{\GG}{\ensuremath{\mathbb{G}}}
\newcommand{\HH}{\ensuremath{\mathbb{H}}}
\newcommand{\FF}{\ensuremath{\mathbb{F}}}
\newcommand{\Tr}{\ensuremath{\mathbb{T}}}
\newcommand{\MM}{\ensuremath{\mathbb{M}}}
\newcommand{\ov}{\overline}
\newcommand{\BA}{\ensuremath{\mathbb{A}}}
\newcommand{\gpo}{\langle\Upsilon,\Re_\Upsilon\rangle}
\newcommand{\gpof}[1]{\langle\Upsilon_{#1},\Re_{\Upsilon_{#1}}\rangle}
\newcommand{\lra}{\leftrightarrows}
\title[Equations over Partially Commutative Groups]{On Systems of Equations over Free Partially Commutative Groups}
\author[M. Casals-Ruiz]{Montserrat Casals-Ruiz}\thanks{The first author is supported by Programa de Formaci\'{o}n
de Investigadores del Departamento de Educaci\'{o}n, Universidades e Investigaci\'{o}n del Gobierno Vasco}
\address{Montserrat Casals-Ruiz,
Department of Mathematics and Statistics, McGill University,
805 Sherbrooke St. West, Montreal,
Quebec H3A 2K6, Canada}
\email{casalsruiz@math.mcgill.ca}
\author[I. Kazachkov]{Ilya V. Kazachkov}\thanks{The second author is supported by Richard.~H.~Tomlinson Fellowship}
\address{Ilya V. Kazachkov,
Department of Mathematics and Statistics, McGill University,
805 Sherbrooke St. West, Montreal,
Quebec H3A 2K6, Canada}
\email{kazachkov@math.mcgill.ca}
\subjclass{Primary 20F10;\\  Secondary 20F36}
\keywords{Equations in groups, partially commutative group, right-angled Artin group, Makanin-Razborov diagrams}
\begin{document}
\begin{abstract}
Using an analogue of Makanin-Razborov diagrams, we give an effective description of the solution set of systems of equations over a partially commutative group (right-angled Artin group) $\GG$. Equivalently, we give a parametrisation of $\Hom(G, \GG)$, where $G$ is a finitely generated group.
\end{abstract}
\maketitle
\begin{flushright}
\parbox{1.5in}{\textit{``Divide et impera''}\\ Principle of government of Roman Senate}
\end{flushright}
\setcounter{tocdepth}{2}
\tableofcontents

\section{Introduction}

Equations are one of the most natural algebraic objects; they have been studied at all times and in all branches of mathematics and lie at its core.  Though the first equations to be considered were equations over integers, now equations are considered over a multitude of other algebraic structures: rational numbers, complex numbers, fields, rings, etc. Since equations have always been an extremely important tool for a majority of branches of mathematics, their study developed into a separate, now classical, domain - algebraic geometry.

The algebraic structures we work with in this paper are groups. We would like to mention some important results in the area that established the necessary basics and techniques and that motivate the questions addressed in this paper. Though this historical introduction is several pages long, it is by no means complete and does not give a detailed account of the subject. We refer the reader to \cite{LS} and to \cite{LynEqns} and references there for an extensive survey of the early history of equations in groups.

\bigskip

It is hard to date the beginning of the theory of equations over groups, since even the classical word and conjugacy problems formulated by Max Dehn in 1911 can be interpreted as the compatibility problem for very simple equations.
Although the study of equations over groups now goes back almost a century, the foundations of a general theory of algebraic geometry over groups, analogous to the classical theory over commutative rings, were only set down relatively recently by G.~Baumslag, A.~Miasnikov and V.~Remeslennikov, \cite{BMR1}.

Given a system of equations, the two main questions one can ask are whether the system is compatible and whether one can describe its set of solutions. It is these two questions that we now address.

\subsubsection*{\textbf{Nilpotent and solvable groups}}
The solution of these problems is well-known for finitely generated abelian groups. V.~Roman'kov showed that the compatibility problem is undecidable for nilpotent groups, see \cite{romanN}. Furthermore, in \cite{repin1} N.~Repin refined the result of Roman'kov and showed that there exists a finitely presented nilpotent group for which the problem of recognising whether equations in one variable have solutions is undecidable. Later, in \cite{repin2}, the author showed that for every non-abelian free nilpotent group of sufficiently large nilpotency class the compatibility problem for one-variable equations in undecidable. The compatibility problem is also undecidable for free metabelian groups, see \cite{roman}.

Equations from the viewpoint of first-order logic are simply atomic formulas. Therefore, recognising if a system of equations and inequations over a group has a solution is equivalent to the decidability of the universal theory (or, which is equivalent, the existential theory) of this group. This is one of the reasons why often these two problems are intimately related. In general, both the compatibility problem and the problem of decidability of the universal theory are very deep. For instance, in \cite{romanNilp} Roman'kov showed that the decidability of the universal theory of a free nilpotent group of class 2 is equivalent to the decidability of the universal theory of the field of rational numbers - a long-standing problem which, in turn, is equivalent to the Diophantine problem over rational numbers. For solvable groups, O.~Chapuis in \cite{Chapuis} shows that if the universal theory of a free solvable group of class greater than or equal to 3 is decidable then so is the Diophantine problem over rational numbers.

\subsubsection*{\textbf{Free groups and generalisations}}
In the case of free groups, both the compatibility problem and the problem of describing the solution set were long-standing problems. In this direction, works of R.~Lyndon and A.~Malcev, which are precursors to the solution of these problems, are of special relevance.

\subsubsection*{One-variable equations}
One of the first types of equations to be considered was one-variable equations. In \cite{Lyndon1} R.~Lyndon solved the compatibility problem and gave a description of the set of all solutions of a single equation in one variable (over a free group). Lyndon proved that the set of solutions of a single equation can be defined by a finite system of ``parametric words''. These parametric words were complicated and the number of parameters on which they depended was restricted only by the type of each equation considered. Further results were obtained by K.~Appel \cite{Appel68} and A.Lorents \cite{Lorents63}, \cite{Lorents68}, who gave the exact form of the parametric words, and Lorents extended the results to systems of equations with one variable. Unfortunately Appel's published proof has a gap, see p.87 of \cite{BaumReviews}, and Lorents announced his results without proof. In the year 2000, I.~Chiswell and V.~Remeslennikov gave a complete argument, see \cite{ChRem}. Instead of giving a description in terms of parametric words, they described the structure of coordinate groups of irreducible algebraic sets (in terms of their JSJ-decompositions) and, thereby, using the basic theory of algebraic geometry over free groups, they obtained a description of the solution set (viewed as the set of homomorphisms from the coordinate group to a free group). Recently, D.~Bormotov, R.~Gilman and A.~Miasnikov in their paper \cite{BGM}, gave a polynomial time algorithm that produces a description of the solution set of one-variable equations.

The parametric description of solutions of one-variable equations gave rise to a conjecture that the solution set of any system of equations could be described by a finite system of parametric words. In 1968, \cite{Appel68}, Appel showed that there are equations in three and more variables that can not be defined by a finite set of parametric words. Therefore, the method of describing the solution set in terms of parametric words was shown to be limited.

\subsubsection*{Two-variable equations}
In an attempt to generalise the results obtained for one-variable equations, a more general approach, involving parametric functions, was suggested by Yu.~Hmelevski\u{i}. In his papers \cite{Hm1}, \cite{Hm2}, he gave a description of the solution set and proved the decidability of the compatibility problem of some systems of equations in two variables over a free group. This approach was later developed by Yu.~Ozhigov in \cite{Ozhig}, who gave a description of the solution set and proved the decidability of the compatibility problem for arbitrary equations in two variables.

However, it turned out that this method was not general either. In \cite{Razborov84}, A.~Razborov showed that there are equations whose set of solutions cannot be represented by a superposition of a finite number of parametric functions.

Recently, in \cite{Tou}, N.~Touikan, using the approach developed by Chiswell and Remeslennikov for one-variable equations, gave a description (in terms of the JSJ-decomposition) of coordinate groups of irreducible algebraic sets defined by a system of equations in two variables.

\subsubsection*{Quadratic equations}
Because of their connections to surfaces and automorphisms thereof, quadratic equations have been studied since the beginning of the theory of equations over groups.

The first quadratic equation to be studied was the commutator equation $[x,y]=[a,b]$, see \cite{Niel}. A description of the solution set of this equation was given in  \cite{Mal62} by A.~Malcev in terms of parametric words in automorphisms and minimal solutions (with respect to these automorphisms).

Malcev's powerful idea was later developed by L.~Commerford  and C.~Edmunds, see \cite{ComEd}, and by R.~Grigorchuk and P.~Kurchanov, see \cite{GrKur89}, into a general method of describing the set of solutions of all quadratic equations over a free group. A more geometric approach to this problem was given by M.~Culler, see \cite{Cul81} and A.~Olshanskii \cite{OlshSMJ}.

Quadratic equations and Malcev's idea to use automorphisms and minimal solutions play a key role in the modern approach to describing the solution set of arbitrary systems of equations over free groups.

Because of their importance, quadratic equations were considered over other groups. The decidability of compatibility problem for quadratic equations over one relator free product of groups was proved by A.~Duncan in \cite{Dunc} and over certain small cancellation groups by Commerford in \cite{Comerf}. In \cite{Lys}, I.~Lys\"{e}nok reduces the description of solutions of quadratic equations over certain small cancellation groups to the description of solutions of quadratic equations in free groups. Later, Grigorchuk and Lys\"{e}nok gave a description of the solution set of quadratic equations over hyperbolic groups, see \cite{GrLys}.

\subsubsection*{Arbitrary systems of equations}
A major breakthrough in the area was made by G.~Makanin in his papers \cite{Makanin} and \cite{Mak82}. In his work, Makanin devised a process for deciding whether or not an arbitrary system of equations $S$ over a free monoid (over a free group) is compatible. Later, using similar techniques, Makanin proved that the universal theory of a free group is decidable, see \cite{Mak84}. Makanin's result on decidability of the universal theory of a free group together with an important result of Yu.~Merzlyakov \cite{Merz} on quantifier elimination for positive formulae over free groups proves that the positive theory of a free group is decidable.

Makanin's ideas were developed in many directions. Remarkable progress was made by A.~Razborov. In his work \cite{Razborov1}, \cite{Razborov3}, Razborov refined Makanin's process and used automorphisms and minimal solutions to give a complete description of the set of solutions of an arbitrary system of equations over a free group in terms of, what is now called, Makanin-Razborov diagrams (or $\Hom$-diagrams). In their work \cite{KMIrc}, O.~Kharlampovich and A.~Miasnikov gave an important insight into Razborov's process and provided algebraic semantics for it. Using the process and having understanding of radicals of quadratic equations, the authors showed that the solution set of a system of equations can be canonically represented by the union of solution sets of a finite family of NTQ systems and gave an effective embedding of finitely generated fully residually free groups into coordinate groups of NTQ systems ($\omega$-residually free towers), thereby giving a characterisation of such groups. Then, using Bass-Serre theory, they proved that finitely generated fully residually free groups are finitely presented and that one can effectively find a cyclic splitting of such groups. Analogous results were proved by Z.~Sela using geometric techniques, see \cite{Sela1}. Later, Kharlampovich and Myasnikov in \cite{IFT} and Sela in \cite{Sela1}, developed Makanin-Razborov diagrams for systems of equations with parameters over a free group. These Makanin-Razborov diagrams encode the Makanin-Razborov diagrams of the systems of equations associated with each specialisation of the parameters. This construction plays a key role in a generalisation of Merzlyakov's theorem, in other words, in the proof of existence of Skolem functions for certain types of formulae (for NTQ systems of equations), see \cite{IFT}, \cite{Sela2}.

These results are an important piece of the solution of two well-known conjectures formulated by A.~Tarski around 1945: the first of them states that the elementary theories of non-abelian free groups of different ranks coincide; and the second one states that the elementary theory of a free group is decidable. These problems were recently solved by O.~Kharlampovich and A.~Miasnikov in \cite{KhMTar} and the first one was independently solved by Z.~Sela in \cite{SelaTar}.

In another direction, Makanin's result (on decidability of equations over a free monoid) was developed by K.Schulz, see \cite{Schulz}, who proved that the compatibility problem of equations with regular constraints over a free monoid is decidable. Later V.~Diekert, C.~Guti\'{e}rrez and C.~Hagenah, see \cite{DGH}, reduced the compatibility problem of systems of equations over a free group with rational constraints to compatibility problem of equations with regular constraints over a free monoid.

Since then, one of the most successful methods of proving the decidability of the compatibility problem for groups (monoids) has been to reduce it to the compatibility problem over a free group (monoid) with constraints. The reduction of compatibility problem for torsion-free hyperbolic groups to free groups was made by E.~Rips and Z.~Sela in \cite{RipsSela}; for relatively hyperbolic groups with virtually abelian parabolic subgroups by F.~Dahmani in \cite{Dahmani}; for partially commutative monoids by Yu.~Matiasevich in \cite{Mat} (see also \cite{DMM}); for partially commutative groups by V.~Diekert and A.~Muscholl in \cite{DM}; for graph products of groups by V.~Diekert and M.~Lohrey in \cite{DL}; and for HNN-extensions with finite associated subgroups and for amalgamated products with finite amalgamated subgroups by M.~Lohrey and G.~S\'{e}nizergues in \cite{LohrSeni}.

The complexity of Makanin's algorithm has received a lot of attention. The best result about arbitrary systems of equations over monoids is due to W.~Plandowski. In a series of two papers \cite{Pl1, Pl2} he gave a new approach to the compatibility problem of systems of equations over a free monoid and showed that this problem is in PSPACE. This approach was further extended by Diekert, Guti\'{e}rrez and Hagenah, see \cite{DGH} to systems of equations over free groups. Recently, O.~Kharlampovich, I.~Lys\"{e}nok, A.~Myasnikov and N.~Touikan have shown that solving quadratic equations over free groups is NP-complete, \cite{KhLMT}.

Another important development of the ideas of Makanin is due to E.~Rips and is now known as the Rips' machine. In his work, Rips interprets Makanin's algorithm in terms of partial isometries of real intervals, which leads him to a classification theorem of finitely generated groups that act freely on $\BR$-trees. A complete proof of Rips' theorem was given by D.~Gaboriau, G.~Levitt, and F.~Paulin, see \cite{GLP}, and, independently, by M.~Bestvina and M.~Feighn, see \cite{BF}, who also generalised Rips' result to give a classification theorem of groups that have a stable action on $\BR$-trees. Recently, F.~Dahmani and V.~Guirardel announced the decidability of the compatibility problem for virtually free groups generalising Rips' machine.

The existence of analogues of Makanin-Razborov diagrams has been proven for different groups. In \cite{SelaHyp}, for torsion-free hyperbolic groups by  Z.~Sela; in \cite{groves}, for torsion-free relatively hyperbolic groups relative to free abelian subgroups by D.~Groves; for fully residually free (or limit) groups in \cite{EJSJ}, by O.~Kharlampovich and A.~Miasnikov and in \cite{Alibeg}, by E.~Alibegovi\'{c}.

\subsubsection*{\textbf{Other results}}
Other well-known problems and results were studied in relation to equations in groups. In analogy to Galois theory, the problem of adjoining roots of equations was considered in the first half of the twentieth century leading to the classical construction of an HNN-extension of a group $G$  introduced by B.~Higman, B.~Neumann and H.~Neumann in \cite{HNN49} as a construction of an extension of a group $G$ in which the ``conjugacy'' equation $x^{-1}gx=g'$ is compatible. Another example is the characterisation of the structure of elements $g$ for which an equation of the form $w=g$ is compatible, also known as the endomorphism problem. A well known result in this direction is the solution of the endomorphism problem for the commutator equation $[x,y]=g$ in a free group obtained in \cite{Wicks} by M.~Wicks in terms of, what now are called, Wicks forms. Wicks forms were generalised to free products of groups and to higher genus by A.~Vdovina in \cite{Vdov}, to hyperbolic groups by S.~Fulthorp in \cite{Fulth} and to partially commutative groups by S.~Shestakov, see \cite{Shest}. The endomorphism problem for many other types of equations over free groups was studied by P.~Schupp, see \cite{Schupp}, C.~Edmunds, see \cite{Edm1}, \cite{Edm2} and was generalised to free products of groups by L.~Comerford and C.~Edmunds, see \cite{ComEdm}.

\bigskip

We now focus on equations over partially commutative groups. Recall that a partially commutative group $\GG$ is a group given by a presentation of the form $\langle a_1, \dots, a_\rr \mid  R \rangle$, where $R$ is a subset of the set $\{ [a_i, a_j] \mid i,j=1, \dots, \rr, i \neq j\}$. This means that the only relations imposed on the generators are commutation of some of the generators. In particular, both free abelian groups and free groups are partially commutative groups.

As we have mentioned above, given a system of equations, the two main questions one can ask are whether the system is compatible and whether one can describe its set of solutions. It is known that the compatibility problem for systems of equations over partially commutative groups is decidable, see \cite{DM}. Moreover, the universal  (existential) and positive theories of partially commutative groups are also decidable, see \cite{DL} and \cite{CK1}. But, on the other hand, until now there were not even any partial results known about the description of the solution sets of systems of equations over partially commutative groups.

Nevertheless, in the case of  partially commutative groups, other problems, involving particular equations, have been studied. We would like to mention two papers by S.~Shestakov, see \cite{Shest} and \cite{Shest2}, where the author solves the endomorphism problem for the equations $[x,y]=g$ and $x^2y^2=g$ correspondingly, and a result of J.~Crisp and B.~Wiest from \cite{crw} stating that partially commutative groups satisfy  Vaught's conjecture, i.e. that if a tuple $(X,Y,Z)$ of elements from $\GG$ is a solution of the equation $x^2 y^2 z^2=1$, then $X$, $Y$ and $Z$ pairwise commute.

In this paper, we effectively construct an analogue of Makanin-Razborov diagrams and use it to give a description of the solution set of systems of equations over partially commutative groups. It seems to the authors that the importance of the work presented in this paper lies not only in
the construction itself but in the fact that it enables consideration of analogues of the (numerous) consequences of the classical Makanin-Razborov process in more general circumstances.

The classes of groups for which Makanin-Razborov diagrams have been constructed so far are generalisations of free groups with a common feature: all of them are CSA-groups (see Lemma 2.5 in \cite{groves1}). Recall that a group is called CSA if every maximal abelian subgroup is malnormal, see \cite{MyasExpo2}, or, equivalently, a group is CSA if every non-trivial abelian subgroup is contained in a unique maximal abelian subgroup. In Proposition 9 in \cite{MyasExpo2}, it is proved that if a group is CSA, then it is commutative transitive (the commutativity relation is transitive) and thus the centralisers of its (non-trivial) elements are abelian, it is directly indecomposable, has no non-abelian normal subgroups, has trivial centre, has no non-abelian solvable subgroups and has no non-abelian Baumslag-Solitar subgroups. This shows that the CSA property imposes strong restrictions on the structure of the group and, especially, on the structure of the centralisers of its elements. Therefore, numerous classes of groups, even geometric, are not CSA.

The CSA property is important in the constructions of analogues of Makanin-Razborov diagrams constructed before. The fact that partially commutative groups are \emph{not} CSA, conceptually, shows that this property is not essential for constructing Makanin-Razborov diagrams and that the approach developed in this paper opens the possibility for other groups to be taken in consideration: graph products of groups, particular types of HNN-extensions (amalgamated products), partially commutative-by-finite, fully residually partially commutative groups,  particular types of small cancellation groups, torsion-free relatively hyperbolic groups, some torsion-free CAT(0) groups and more.

On the other hand, as mentioned above, Schulz proved that Makanin's process to decide the compatibility problem of equations carries over to systems of equations over a free monoid with regular constraints. In our construction, we show that Razborov's results can be generalised to systems of equations (over a free monoid) with constraints characterised by certain axioms. Therefore, two natural problems arise: to understand for which classes of groups the description of solutions of systems of equations reduces to this setting, and to understand which other constraints can be considered in the construction of Makanin-Razborov diagrams.

In another direction, the Makanin-Razborov process is one of the main ingredients in the effective construction of the JSJ-decomposition for fully residually free groups and in the characterisation of finitely generated fully residually free groups as given by Kharlampovich and Myasnikov in \cite{EJSJ} and \cite{KMIrc}, correspondingly. Therefore, the process we construct in this paper may be one of the techniques one can use to understand the structure of finitely generated fully residually partially commutative groups, or, which in this case is equivalent, of finitely generated residually partially commutative groups, and thus, in particular, the structure of all finitely generated residually free groups.

In the case of free groups, Rips' machine leads to a classification theorem of finitely generated groups that have a stable action on $\BR$-trees. Therefore, our process may be useful for understanding finitely generated groups that act in a certain way (that can be axiomatised) on an $\BR$-tree. A more distant hope is that the process may lead to a classification of finitely generated groups that act freely on a (nice) CAT(0) space.

The structure of subgroups of partially commutative groups is very complex, see \cite{Wise1}, and some of the subgroups exhibit odd finiteness properties, see \cite{BB}. Nevertheless, recent results on partially commutative groups suggest that the attempts to generalise some of the well-known results for free groups have been, at least to some extent, successful. We would like to mention here the recent progress of R.~Charney and K.~Vogtmann on  the outer space of partially commutative groups, of M.~Day on the generalisation of peak-reduction algorithm (Whitehead's theorem) for partially commutative groups, and of a number of authors on the structure of automorphism groups of partially commutative groups, see \cite{Day1}, \cite{Day2}, \cite{CCV}, \cite{GPR}, \cite{DKR}. This and the current paper makes us optimistic about the future of this emerging area.

\medskip

To get a global vision of the long process we describe in this paper, we would like to begin by a brief comparison of the method of solving equations over a free monoid (as the reader will see the setting we reduce the study of systems of equations over partially commutative groups is rather similar to it) to Gaussian elimination in linear algebra. Though, technically very different, we want to point out that the general guidelines of both methods have a number of common features. Given a system of $\m$ linear equations in $k$ variables over a field $K$, the algorithm in linear algebra firstly encodes this system as an $\m \times (k+1)$ matrix with entries in $K$. Then it uses elementary Gauss transformations of matrices (row permutation, multiplication of a row by a scalar, etc) in a particular order to take the matrix to a row-echelon form, and then it produces a description of the solution set of the system of linear equations with the associated matrix in row-echelon form. Furthermore, the algorithm has the following property: every solution of the (system of equations corresponding to the) matrix in row-echelon form gives rise to a solution of the original system and, conversely, every solution of the original system is induced by a solution of the (system of equations corresponding to the) matrix in row-echelon form. Let us compare this algorithm with its group-theoretic counterpart, a variation of which we present in this paper.

Given a system of equations over a free monoid, we introduce a combinatorial object - a generalised equation, and establish a correspondence between systems of equations over a free monoid and generalised equations (for the purposes of this introduction, the reader may think of generalised equations as just systems of equations over a monoid). Graphic representations of generalised equations will play the role of matrices in linear algebra.

We then define certain transformations of generalised equations, see Sections \ref{se:5.1} and \ref{se:5.2half}. One of the differences is that in the case of systems of linear equations, applying an elementary Gauss transformation to a matrix one obtains a \emph{single} matrix. In our case, applying some of the (elementary and derived) transformations to a generalised equation one gets a finite family of generalised equations. Therefore, the method we describe here, results in an oriented rooted tree of generalised equations instead of a sequence of matrices.

A case-based algorithm, described in Section \ref{se:5.2}, applies transformations in a specific order to generalised equations. The branch of the algorithm terminates, when the generalised equation obtained is ``simple'' (there is a formal definition of being ``simple''). The algorithm then produces a description of the set of solutions of ``simple'' generalised equations.

One of the main differences is that a branch of the procedure described may be \emph{infinite}. Using particular automorphisms of coordinate groups of generalised equations as parameters, one can prove that a finite tree is sufficient to give a description of the solution set. Thus, the parametrisation of the solutions set will be given using a finite tree, recursive groups of automorphisms of finitely many coordinate groups of generalised equations and solutions of ``simple'' generalised equations.

We now briefly describe the content of each of the sections of the paper.

In Section \ref{sec:prelim} we establish the basics we will need throughout the paper.

The goal of Section \ref{sec:red} is to present the set of solutions of a system of equations over a partially commutative group as a union of sets of solutions of finitely many constrained generalised equations over a free monoid. The family of solutions of the collection constructed of  generalised equations describes all solutions of the initial system over a partially commutative group. The term ``constrained generalised equation over a free monoid (partially commutative monoid)'' can be misleading, since their solutions are not tuples of elements from a free monoid (partially commutative monoid), but tuples of reduced elements of the partially commutative group, that satisfy the equalities in a free monoid (partially commutative monoid).

This reduction is performed in two steps. Firstly, we use an analogue of the notion of a cancellation tree for free groups  to reduce the study of systems of equations over a partially commutative group to the study of constrained generalised equations over a partially commutative monoid. We show that van Kampen diagrams over partially commutative groups can be taken to a ``standard form'' and therefore the set of solutions of a given system of equations over a partially commutative group defines only finitely many types of van Kampen diagram in standard form, i.e. finitely many different cancellation schemes. For each of these cancellation schemes, we then construct a constrained generalised equation over a partially commutative monoid.

We further show that to a given generalised equation over a partially commutative monoid one can associate a finite collection of (constrained) generalised equations over a free monoid. The family of solutions of the generalised equations from this collection describes all solutions of the initial generalised equation over a partially commutative monoid. This reduction relies on the ideas of Yu.~Matiyasevich, see \cite{Mat}, V.~Diekert and A.~Muscholl, see \cite{DM}, that state that there are only finitely many ways to take a product of words in the trace monoid (written in normal form) into normal form. We apply these results to reduce the study of solutions of generalised equations over a partially commutative monoid to the study of solutions of constrained generalised equations over a free monoid.

In Section \ref{se:5} in order to describe the solution set of a constrained generalised equation over a free monoid we describe a branching rewriting process for constrained generalised equations. For a given generalised equation, this branching process results in a locally finite, possibly infinite, oriented rooted tree. The vertices of this tree are labelled by (constrained) generalised equations over a free monoid. Its edges are labelled by epimorphisms of the corresponding coordinate groups. Moreover, for every solution $H$ of the original generalised equation, there exists a path in the tree from the root vertex to another vertex $v$ and a solution $H^{(v)}$ of the generalised equation corresponding to $v$ such that the solution $H$ is a composition of the epimorphisms corresponding to the edges in the tree and the solution $H^{(v)}$.  Conversely, to any path from the root to a vertex $v$ in the tree, and any solution $H^{(v)}$ of the generalised equation labelling $v$, there corresponds a solution of the original generalised equation.

The tree described is, in general, infinite. In Lemma \ref{3.2} we give a characterisation of the three types of infinite branches that it may contain: namely linear, quadratic and general. The aim of the remainder of the paper is, basically, to define the automorphism groups of coordinate groups that are used in the parametrisation of the solution sets and to prove that, using these automorphisms all solutions can be described by a finite tree.

Sections \ref{sec:minsol} and \ref{sec:periodstr} contain technical results used in Section \ref{se:5.3}.

In Section \ref{sec:minsol} we use automorphisms to introduce a reflexive, transitive relation on the set of solutions of a generalised equation. We use this relation to introduce the notion of a minimal solution and describe the behaviour of minimal solutions with respect to transformations of generalised equations.

In Section \ref{sec:periodstr} we introduce the notion of periodic structure. Informally, a periodic structure is an object one associates to a generalised equation that has a solution that contains subwords of the form $P^k$ for arbitrary large $k$. The aim of this section is two-fold. On one hand, to understand the structure of the coordinate group of such a generalised equation and to define a certain finitely generated subgroup of its automorphism group. On the other hand, to prove that, using automorphisms from the automorphism group described, either one can bound the exponent of periodicity $k$ or one can obtain a solution of a proper generalised equation.

Section \ref{se:5.3} contains the core of the proof of the main results. In this section we deal with the three types of infinite branches and construct a finite oriented rooted subtree $T_0$ of the infinite tree described above. This tree has the property that for every leaf either one can trivially describe the solution set of the generalised equation assigned to it, or from every solution of the generalised equation associated to it, one gets a solution of a proper generalised equation using automorphisms defined in Section \ref{sec:periodstr}.

The strategy for the linear and quadratic branches is common: firstly, using a combinatorial argument, we prove that in such infinite branches a generalised equation repeats thereby giving rise to an automorphism of the coordinate group. Then, we show that such automorphisms are contained in a recursive group of automorphisms. Finally, we prove that minimal solutions with respect to this recursive group of automorphisms factor through sub-branches of bounded height.

The treatment for the general branch is more complex. We show that using automorphisms defined for quadratic branches and automorphisms defined in Section \ref{sec:periodstr}, one can take any solution to a solution of a proper generalised equation or to a solution of bounded length.

In Section \ref{5.5.5} we prove that the number of proper generalised equations through which the solutions of the leaves of the finite tree $T_0$ factor is finite and this allows us to extend $T_0$ and obtain a tree $T_{\dec}$ with the property that for every leaf either one can trivially describe the solution set of the generalised equation assigned to it, or the edge with an end in the leaf is labelled by a proper epimorphism. Since partially commutative groups are equationally Noetherian and thus any sequence of proper epimorphisms of coordinate groups is finite, an inductive argument, given in Section \ref{se:5.5'}, shows that we can construct a tree $T_{\ext}$ with the property that for every leaf one can trivially describe the solution set of the generalised equation assigned to it.

In the last section we construct a tree $T_{\sol}$ as an extension of the tree $T_{\ext}$ with the property that for every leaf the coordinate group of the generalised equation associated to it is a fully residually $\GG$ partially commutative group and one can trivially describe its solution set.

The following theorem summarises one of the main results of the paper.

\begin{thm*}
Let $\GG$ be the free partially commutative group with the underlying commutation graph $\mathcal{G}$ and let ${G}$ be a finitely generated {\rm(}$\GG$-{\rm)}group. Then the set of all {\rm(}$\GG$-{\rm)}homomorphisms  $\Hom({G},\GG)$ {\rm(}$\Hom_\GG({G},\GG)$, correspondingly{\rm)} from ${G}$ to $\GG$ can be effectively described by a finite rooted tree. This tree is oriented from the root, all its vertices except for the root and the leaves are labelled by coordinate groups of generalised equations. The leaves of the tree are labelled by $\GG$-fully residually $\GG$ partially commutative groups $\GG_{w_i}$ {\rm(}described in Section {\rm\ref{se:5.5}}{\rm)}.

Edges from the root vertex correspond to a finite number of {\rm(}$\GG$-{\rm)}homomorphisms from ${G}$ into coordinate groups of generalised equations.  To each vertex group we assign a group of automorphisms. Each edge {\rm(}except for the edges from the root and the edges to the final leaves{\rm)} in this tree is labelled by an epimorphism, and all the epimorphisms are proper. Every {\rm(}$\GG$-{\rm)}homomorphism from ${G}$ to $\GG$ can be written as a composition of the {\rm(}$\GG$-{\rm)}homomorphisms corresponding to the edges, automorphisms of the groups assigned to the vertices, and a {\rm(}$\GG$-{\rm)}homomorphism $\psi=(\psi_j)_{j\in J}$, $|J|\le 2^\rr$  into $\GG$, where $\psi_j:\HH_j[Y]\to \HH_j$ and $\HH_j$ is the free partially commutative subgroup of $\GG$ defined by some full subgraph of $\mathcal{G}$.
\end{thm*}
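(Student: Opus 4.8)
The plan is to follow the overall architecture of the classical Makanin--Razborov construction, adapted to the partially commutative setting, in four stages. \emph{Stage 1: reduction to generalised equations.} Since $\GG$ is equationally Noetherian, describing $\Hom(G,\GG)$ amounts to describing the solution set of a single finite system of equations over $\GG$. Following Section~\ref{sec:red}, I would reduce this, in two steps, to a problem about constrained generalised equations over a \emph{free} monoid: first, bringing van Kampen diagrams over $\GG$ to a standard form shows that a given system admits only finitely many cancellation schemes, and to each scheme one attaches a constrained generalised equation over a partially commutative monoid; second, using the Matiyasevich--Diekert--Muscholl fact that normal forms in a trace monoid multiply in only finitely many ways, each of these is replaced by finitely many constrained generalised equations over a free monoid. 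The solution sets of this finite collection jointly parametrise $\Hom(G,\GG)$ via explicit homomorphisms, which become the edges issuing from the root.

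\emph{Stage 2: the branching process and pruning to a finite tree $T_0$.} For each constrained generalised equation over a free monoid I would run the branching rewriting process of Section~\ref{se:5}: applying the elementary and derived transformations in the prescribed case-based order yields a locally finite oriented rooted tree whose vertices carry generalised equations, whose edges carry epimorphisms of coordinate groups, and through which every solution factors; branches ending at ``simple'' equations need no further treatment. The genuine difficulty is that branches may be infinite, and here I would invoke the trichotomy of Lemma~\ref{3.2} (linear, quadratic, general branches). For linear and quadratic branches a pigeonhole argument forces a generalised equation to recur along the branch, producing an automorphism of the associated coordinate group; one then shows these automorphisms lie in a recursively presented subgroup of the automorphism group, and that any solution minimal with respect to this subgroup (Section~\ref{sec:minsol}) already factors through a sub-branch of bounded height, so the branch may be cut. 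For general branches one additionally needs the periodic-structure machinery of Section~\ref{sec:periodstr}: combining the automorphisms arising from periodic structures with the quadratic ones, every solution can be pushed either to a solution of bounded length or to a solution of a \emph{proper} generalised equation. This lets one extract a finite subtree $T_0$ at whose leaves one either describes the solution set trivially or obtains a solution of a proper generalised equation.

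\emph{Stage 3: closing the induction.} One shows (Section~\ref{5.5.5}) that only finitely many proper generalised equations can arise at the leaves of $T_0$, which permits extending $T_0$ to a tree $T_{\dec}$ whose nontrivial leaf-edges are labelled by \emph{proper} epimorphisms of coordinate groups. Because $\GG$ is equationally Noetherian, there is no infinite strictly descending chain of such epimorphisms, so iterating the entire construction terminates after finitely many steps, giving a finite tree $T_{\ext}$ all of whose leaves have trivially describable solution sets. A final refinement replaces each leaf by a $\GG$-fully residually $\GG$ partially commutative group $\GG_{w_i}$ as in Section~\ref{se:5.5}, producing $T_{\sol}$; tracking where the variables ultimately land on these leaves, and using that their solution sets are described by homomorphisms into parabolic subgroups, yields the terminal homomorphism in the stated form $\psi=(\psi_j)_{j\in J}$ with $\psi_j\colon\HH_j[Y]\to\HH_j$, $\HH_j$ the free partially commutative subgroup of $\GG$ on a full subgraph of $\mathcal G$, and $|J|\le 2^\rr$ (the bound reflecting the finitely many parabolic ``regular-constraint'' patterns compatible with a branch). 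Effectivity propagates through every stage, since each transformation, each branching, and each of the height and cardinality bounds is algorithmic.

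I expect the main obstacle to be the treatment of the \emph{general} infinite branch: controlling solutions containing arbitrarily long periodic subwords through the notion of periodic structure, isolating the correct finitely generated subgroup of automorphisms of the coordinate group, and proving the dichotomy ``bounded length or proper equation''. This is exactly the point where the partially commutative geometry --- non-CSA, with large and reducible centralisers of elements --- departs most sharply from the free-group situation, so the free-group arguments for this case will need to be rebuilt rather than merely transcribed.
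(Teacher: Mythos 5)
Your proposal follows essentially the same route as the paper: reduction via cancellation schemes and the Diekert--Muscholl normal-form results to constrained generalised equations over a free monoid, the case-based branching process pruned to a finite tree using minimal solutions and periodic structures, the passage to proper quotients and the equationally Noetherian termination argument, and finally the solution-tree leaves $\GG_{w_i}$ with homomorphisms into canonical parabolic subgroups (whence the bound $2^\rr$, the number of full subgraphs of $\mathcal{G}$). The only step you do not make explicit is the paper's opening move for an abstract finitely generated ($\GG$-)group: one treats its defining relations as a (possibly infinite, possibly coefficient-free) system and uses equational Noetherianness to replace it by a finite subsystem with the same radical before running the process, but this is implicit in your Stage~1 and does not affect the argument.
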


\begin{rem*}
In his work on systems of equations over a free group \cite{Razborov1}, \cite{Razborov3}, A.~Razborov uses a result of J.~McCool on automorphism group of a free group {\rm(}that the stabiliser of a finite set of words is finitely presented{\rm)}, see {\rm \cite{McCool}}, to prove that the automorphism groups of the coordinate groups associated to the vertices of the tree $T_{\sol}$ are finitely generated. When this paper was already written M.~Day published two preprints, {\rm\cite{Day1}} and {\rm\cite{Day2}} on automorphism groups of partially commutative groups. The authors believe that using the results of this paper and techniques developed by M.~Day, one can prove that the automorphism groups used in this paper are also finitely generated.
\end{rem*}

\section{Preliminaries}\label{sec:prelim}

\subsection{Graphs and relations}

In this section we introduce notation for graphs that we use in this paper. Let $\Gamma=(V(\Gamma),E(\Gamma))$ be an oriented graph, where $V(\Gamma)$ is the set of vertices of $\Gamma$ and $E(\Gamma)$ is the set of edges of $\Gamma$. If an edge $e:v\to v'$ has \emph{origin} $v$ and \emph{terminus} $v'$, we sometimes write $e=v\to v'$. We always denote the paths in a graph by letters $\p$ and $\s$, and cycles by the letter $\cc$. To indicate that a path $\p$ begins at a vertex $v$ and ends at a vertex $v'$ we write $\p(v,v')$. If not stated otherwise, we assume that all paths we consider are simple. For a path $\p(v,v')=e_1\dots e_l$ by ${(\p(v,v'))}^{-1}$ we denote the reverse (if it exists) of the path $\p(v,v')$, that is a path $\p'$ from $v'$ to $v$, $\p'= e_l^{-1}\dots e_1^{-1}$, where $e_i^{-1}$ is the inverse of the edge $e_i$, $i=1,\dots l$.

Usually, the edges of the graph are labelled by certain words or letters. The \emph{label} of a path $\p=e_1\dots e_l$ is the concatenation of labels of the edges $e_1\dots e_l$.

Let $\Gamma$ be an oriented rooted tree, with the root at a vertex $v_0$ and such that for any vertex $v\in V(\Gamma)$ there exists a unique path $\p(v_0,v)$ from $v_0$ to $v$. The length of this path from $v_0$ to $v$ is called the \index{height!of a vertex}\emph{height of the vertex $v$}. The number $\max\limits_{v\in V(\Gamma)} \{\hbox{height of } v\}$ is called the \index{height!of a tree}\emph{height of the tree $\Gamma$}. We say that a vertex $v$ of height $l$ is \emph{above} a vertex $v'$ of height $l'$ if and only if $l>l'$ and there exists a path of length $l-l'$ from $v'$ to $v$.

\bigskip

Let $S$ be an arbitrary finite set. Let $\Re$ be a symmetric binary relation on $S$, i.e. $\Re\subseteq S\times S$ and if $(s_1,s_2)\in \Re$ then $(s_2,s_1)\in \Re$. Let $s\in S$, then by $\Re(s)$ we denote the following set: \glossary{name={$\Re(x)$},description={set of elements related with $x$},sort=R}
$$
\Re(s)=\{s_1\in S\mid \Re(s,s_1)\}.
$$

\subsection{Elements of algebraic geometry over groups}
\label{se:2-4}

In \cite{BMR1} G.~Baumslag, A.~Miasnikov and V.~Remeslennikov lay down the foundations of algebraic geometry over groups and introduce group-theoretic counterparts of basic notions from algebraic geometry over fields. We now recall some of the basics of algebraic geometry over groups. We refer to \cite{BMR1} for details.

Algebraic geometry over groups centers around the notion of a \index{group@$G$-group}\emph{$G$-group}, where $G$ is a fixed group generated by a set $A$. These $G$-groups can be likened to algebras over a unitary commutative ring, more specially a field, with $G$ playing the role of the coefficient ring. We therefore, shall consider the category of $G$-groups, i.e. groups which contain a designated subgroup isomorphic to the group $G$. If $H$ and $K$ are $G$-groups then a homomorphism $\varphi: H \rightarrow K$ is a \index{homomorphism@$G$-homomorphism}\emph{$G$-homomorphism} if $\varphi(g)= g$ for every $g \in G$. In the category of $G$-groups morphisms are $G$-homomorphisms; subgroups are \index{subgroup@$G$-subgroup}$G$-subgroups, etc. By \glossary{name={$\Hom_G(H,K)$}, description={set of $G$-homomorphisms from $H$ to $K$}, sort=H} $\Hom_G(H,K)$ we denote the set of all $G$-homomorphisms from $H$ into $K$. A $G$-group $H$ is termed \index{group@$G$-group!finitely generated}{\em finitely generated $G$-group} if there exists a finite subset $C \subset H$ such that the set $G \cup C$ generates $H$. It is not hard to see that the free product $G \ast F(X)$ is a free object in the category of $G$-groups, where $F(X)$ is the free group with basis $X = \{x_1, x_2, \ldots,  x_n\}$. This group is called the free $G$-group with basis $X$, and we denote it  by $G[X]$.

For any element $s\in G[X]$ the formal equality $s=1$ can be treated, in an obvious way, as an \index{equation over a group}\emph{equation} over $G$. In general, for a subset  $S \subset G[X]$ the formal equality $S=1$ can be treated as \index{system of equations over a group}{\em a system of equations} over $G$ with coefficients in $A$. In other words, every equation is a word in $(X \cup A)^{\pm 1}$. Elements from $X$ are called {\em variables}, and elements from $A^{\pm 1}$ are called {\em coefficients} or {\em constants}. To emphasize this we sometimes write $S(X,A) = 1$.

A \index{solution!of a system of equations} {\em solution} $U$ of the system $S(X) = 1$ over a group $G$ is a tuple of elements $g_1, \ldots, g_n \in G$ such that every equation from $S$ vanishes at $(g_1, \ldots, g_n)$, i.e. $S_i(g_1, \ldots, g_n)=1$ in $G$, for all $S_i\in S$. Equivalently, a solution $U$ of the system $S = 1$ over $G$ is a $G$-homomorphism \glossary{name={$\pi_U$}, description={homomorphism defined by the solution $U$}, sort=P}$\pi_U: G[X] \to G$ induced by the map $\pi_U: x_i\mapsto g_i$ such that $S\subseteq \ker(\pi_U)$. When no confusion arises, we abuse the notation and write $U(w)$, where $w\in G[X]$, instead of $\pi_U(w)$.

Denote by \glossary{name=$\ncl\langle S\rangle$, description={normal closure of $S$}, sort=N}$\ncl\langle S\rangle$ the normal closure of $S$ in $G[X]$. Then every solution of $S(X) = 1$ in $G$ gives rise to a $G$-homomorphism $\factor{G[X]}{\ncl\langle S\rangle )} \to G$, and vice-versa. The set of all solutions over $G$ of the system $S=1$ is denoted by \glossary{name={$V_G(S)$},description={algebraic set defined by $S$ over $G$}, sort=V}$V_G(S)$ and is called the \index{algebraic set}{\em algebraic set defined by} $S$.

For every system of equations $S$ we set the \index{radical!of a system}{\em radical of the system $S$}  to be the following subgroup of  $G[X]$:\glossary{name=$R(S)$, description={radical of the system $S$}, sort=R}
$$
R(S) = \left\{ T(X) \in G[X] \ \mid \ \forall g_1,\dots,\forall g_n  \left( S(g_1,\dots,g_n) = 1 \rightarrow T(g_1,\dots, g_n) = 1\right) \right\}.
$$
It is easy to see that  $R(S)$ is a normal subgroup of $G[X]$ that contains $S$. There is a one-to-one correspondence between algebraic sets $V_G(S)$ and radical subgroups $R(S)$ of $G[X]$. This correspondence is described in Lemma \ref{lem:rad} below. Notice that if $V_G(S) = \emptyset$, then $R(S) = G[X]$.

It follows from the definition that
$$
R(S)=\bigcap\limits_{U\in V_G(S)}\ker(\pi_U).
$$
In the lemma below we summarise the relations between radicals, systems of equations and algebraic sets. Note the similarity of these relations with the ones in algebraic geometry over fields, see \cite{Eisenbud}.

For a subset $Y\subseteq G^n$ define the \index{radical!of a set}\emph{radical of $Y$} to be \glossary{name={$R(Y)$}, description={radical of a set $Y$}, sort=R}
$$
R(Y) = \left\{ T(X) \in G[X] \ \mid \ T(g_1,\dots,g_n)=1 \hbox{ for all } (g_1,\dots,g_n)\in Y \right\}.
$$

\begin{lem} \label{lem:rad}\
\begin{enumerate}
    \item The radical of a system $S \subseteq G[X]$ contains the normal closure $\ncl \langle S \rangle$ of $S$.
    \item Let\ $Y_1$ and $Y_2$  be  subsets of $G^n$ and $S_1$, $S_2$   subsets of $G[X]$.
If $Y_1  \subseteq Y_2$  then $R(Y_1) \supseteq R(Y_2)$. If $S_1  \subseteq S_2$ then $R(S_1) \subseteq R(S_2)$.
    \item For any family of sets $\left\{Y_i \mid i \in I \right\}$, $Y_i \subseteq G^n $, we have
$$
R\left(\bigcup\limits_{i \in I} {Y_i }\right) = \bigcap\limits_{i \in I} R(Y_i ).
$$
    \item A normal subgroup $H$ of the group $G[X]$ is the radical of an algebraic set over $G$ if and only if $ R(V_G(H)) = H$.
    \item A set $Y \subseteq G^n$ is algebraic over $G$ if and only if $ V_G (R(Y)) = Y$.
    \item Let $Y_1 ,Y_2  \subseteq G^n $ be two algebraic sets, then
$$
Y_1  = Y_2 \hbox{ \textit{if and only if} }  R(Y_1 ) = R(Y_2 ).
$$
Therefore the radical of an algebraic set describes it uniquely.
\end{enumerate}
\end{lem}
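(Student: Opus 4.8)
The plan is to observe that the maps $R$ and $V_G$ form an (order-reversing) Galois connection between the subsets of $G^n$ and the subsets of $G[X]$, and then read off all six assertions from the standard formalism.

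First I would record the defining adjunction: for any $Y\subseteq G^n$ and any $S\subseteq G[X]$,
$$
Y\subseteq V_G(S)\quad\Longleftrightarrow\quad S\subseteq R(Y),
$$
since both sides express that every element of $S$ vanishes at every point of $Y$. Assertion (1) is then immediate from the remark preceding the lemma that $R(S)$ is a normal subgroup of $G[X]$ containing $S$: being normal and containing $S$, it must contain the normal closure $\ncl\langle S\rangle$. Assertion (3) is a one-line unwinding of the definition of the radical of a set, since a word vanishes on $\bigcup_{i\in I}Y_i$ precisely when it vanishes on each $Y_i$. For assertion (2), the implication $Y_1\subseteq Y_2\Rightarrow R(Y_1)\supseteq R(Y_2)$ is direct from the definition of $R(Y)$; for the implication $S_1\subseteq S_2\Rightarrow R(S_1)\subseteq R(S_2)$ I would use the identity $R(S)=\bigcap_{U\in V_G(S)}\ker(\pi_U)$ recorded above together with the inclusion $V_G(S_2)\subseteq V_G(S_1)$ (more equations are satisfied by fewer tuples), so that the intersection defining $R(S_1)$ runs over the larger index set.

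Next I would extract from the adjunction the two closure relations $Y\subseteq V_G(R(Y))$ and $S\subseteq R(V_G(S))$, and hence the standard identities $V_G\circ R\circ V_G=V_G$ and $R\circ V_G\circ R=R$. With these in hand, assertion (5) is immediate: if $V_G(R(Y))=Y$ then $Y$ is algebraic by definition, and conversely if $Y=V_G(S)$ then $V_G(R(Y))=V_G(R(V_G(S)))=V_G(S)=Y$. Assertion (4) is the mirror image: if $R(V_G(H))=H$ then $H$ is the radical of the algebraic set $V_G(H)$, while if $H=R(Y)$ for an algebraic set $Y$ then $R(V_G(H))=R(V_G(R(Y)))=R(Y)=H$. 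Finally, assertion (6) follows from (5): applying $V_G$ to $R(Y_1)=R(Y_2)$ and using $V_G(R(Y_i))=Y_i$ for algebraic $Y_i$ gives $Y_1=Y_2$, while the converse is trivial.

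There is no genuine obstacle here: the lemma is the group-theoretic transcription of the classical correspondence between algebraic sets and their vanishing ideals, and every step is a short formal manipulation. The only point deserving some care is the direction of the inclusions in assertion (2) --- enlarging a system of equations enlarges its radical but shrinks its solution set --- so that for the comparison of $R(S_1)$ and $R(S_2)$ one must route through $V_G$ rather than comparing the two radicals directly.
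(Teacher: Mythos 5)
Your proof is correct: the Galois-connection formalism between subsets of $G^n$ and subsets of $G[X]$, together with the observation that the paper's $R(S)$ for a system coincides with the set-radical of $V_G(S)$, yields all six assertions exactly as you describe, and your care with the direction of the inclusions in (2) is the right (and only) delicate point. The paper itself states this lemma without proof, quoting it as standard from algebraic geometry over groups (cf.\ \cite{BMR1}), so your argument is precisely the expected one.
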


The quotient group \glossary{name=$G_{R(S)}$, description={coordinate group of the system $S$ over $G$}, sort=G}
$$
G_{R(S)}=\factor{G[X]}{R(S)}
$$
is called the \index{coordinate group!of an algebraic set}\index{coordinate group!of a system of equations}{\em coordinate group} of the algebraic set  $V_G(S)$ (or of the system $S$). There exists a one-to-one correspondence between the algebraic sets and  coordinate groups $G_{R(S)}$. More formally, the categories of algebraic sets and coordinate groups are equivalent, see Theorem 4, \cite{BMR1}.

A $G$-group $H$ is called \index{equationally Noetherian@$G$-equationally Noetherian group}{\em $G$-equationally Noetherian} if every system $S(X) = 1$ with coefficients from $G$ is equivalent over $G$ to a finite subsystem $S_0 = 1$, where $S_0 \subset S$, i.e. the system $S$ and its subsystem $S_0$ define the same algebraic set. If $G$ is $G$-equationally Noetherian, then we say that $G$ is equationally Noetherian. If $G$ is equationally Noetherian then the Zariski topology over $G^n$ is {\em Noetherian} for every $n$, i.e., every proper descending chain of closed sets in $G^n$ is finite. This implies that every algebraic set $V$ in $G^n$ is a finite union of irreducible subsets, called \index{irreducible!component}{\em irreducible components} of $V$, and such a decomposition of $V$ is unique. Recall that a closed subset $V$ is \index{irreducible!algebraic set}{\em irreducible} if it is not a union of two proper closed (in the induced topology) subsets.

\medskip

If $V_G(S) \subseteq G^n$ and $V_G(S') \subseteq G^m$ are algebraic sets, then a map $\phi: V_G(S) \to V_G(S')$ is a
\index{morphism of algebraic sets}\emph{morphism} of algebraic sets if there exist $f_1,\dots,f_m \in G[x_1,\ldots ,x_n]$ such that, for any $(g_1,\ldots,g_n) \in V_G(S)$,
$$
\phi(g_1,\dots,g_n)=(f_1(g_1,\dots,g_n),\ldots,f_m(g_1,\dots,g_n)) \in V_G(S').
$$
Occasionally we refer to morphisms of algebraic sets as \emph{word mappings}.

Algebraic sets $V_G(S)$ and $V_G(S')$ are called \index{isomorphism of algebraic sets}\emph{isomorphic} if there exist morphisms $\psi: V_G(S) \rightarrow V_G(S')$  and $\phi: V_G(S') \rightarrow V_G(S)$ such that $\phi \psi  = \id_{V_G(S)}$ and $\psi \phi = \id_{V_G(S')}$.

Two systems of equations $S$ and $S'$ over $G$ are called \index{equivalence!of systems of equations}\emph{equivalent} if the algebraic sets $V_G(S)$ and  $V_G(S')$ are isomorphic.

\begin{prop}
Let $G$ be a group and let $V_G(S)$ and $V_G(S')$ be two algebraic sets over $G$. Then the algebraic sets $V_G(S)$ and $V_G(S')$ are isomorphic if and only if the coordinate groups $G_R(S)$ and $G_{R(S')}$ are $G$-isomorphic.
\end{prop}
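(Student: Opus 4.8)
The plan is to establish, as in \cite{BMR1}, the standard contravariant dictionary between morphisms of algebraic sets over $G$ and $G$-homomorphisms of their coordinate groups, and then read off the statement at the level of isomorphisms. Write $X=\{x_1,\dots,x_n\}$ and $Y=\{y_1,\dots,y_m\}$, so that $V_G(S)\subseteq G^n$ has coordinate group $G_{R(S)}=G[X]/R(S)$ and $V_G(S')\subseteq G^m$ has coordinate group $G_{R(S')}=G[Y]/R(S')$. Throughout I would use two elementary facts, both immediate from the identity $R(S)=\bigcap_{U\in V_G(S)}\ker(\pi_U)$ recorded above: for $\bar g\in V_G(S)$ the evaluation homomorphism $\pi_{\bar g}\colon G[X]\to G$ kills $R(S)$, hence factors through a $G$-homomorphism $\overline{\pi}_{\bar g}\colon G_{R(S)}\to G$; and conversely, any $T\in G[X]$ vanishing at every point of $V_G(S)$ lies in $R(S)$.

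First I would pass from morphisms to $G$-homomorphisms. A morphism $\phi\colon V_G(S)\to V_G(S')$ is given by word mappings $f_1,\dots,f_m\in G[X]$ with $\phi(\bar g)=(f_1(\bar g),\dots,f_m(\bar g))$; let $\widetilde{\phi}\colon G[Y]\to G[X]$ be the $G$-homomorphism with $y_j\mapsto f_j$ and $g\mapsto g$ for $g\in G$. The key point is that $\widetilde{\phi}(R(S'))\subseteq R(S)$: if $T(Y)\in R(S')$ then for each $\bar g\in V_G(S)$ we have $\phi(\bar g)\in V_G(S')$, so $\widetilde{\phi}(T)(\bar g)=T(f_1(\bar g),\dots,f_m(\bar g))=1$; as $\bar g$ ranges over $V_G(S)$ this forces $\widetilde{\phi}(T)\in R(S)$. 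Thus $\widetilde{\phi}$ descends to a $G$-homomorphism $\phi^{*}\colon G_{R(S')}\to G_{R(S)}$. Reading off the substitution formulas one checks $(\id_{V_G(S)})^{*}=\id_{G_{R(S)}}$ and $(\phi\circ\psi)^{*}=\psi^{*}\circ\phi^{*}$, so $\phi\mapsto\phi^{*}$ is functorial.

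Conversely, given a $G$-homomorphism $\alpha\colon G_{R(S')}\to G_{R(S)}$, lift along $G[X]\to G_{R(S)}$ to choose $f_1,\dots,f_m\in G[X]$ whose cosets satisfy $f_j R(S)=\alpha(y_j R(S'))$. For $\bar g\in V_G(S)$, the composite $\overline{\pi}_{\bar g}\circ\alpha$, precomposed with the projection $G[Y]\to G_{R(S')}$, is a $G$-homomorphism $G[Y]\to G$ that kills $R(S')\supseteq S'$ and sends $y_j$ to $f_j(\bar g)$; hence $(f_1(\bar g),\dots,f_m(\bar g))\in V_G(S')$, and this defines a morphism $\alpha_{*}\colon V_G(S)\to V_G(S')$. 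The two constructions are mutually inverse: $(\alpha_{*})^{*}=\alpha$, since both are $G$-homomorphisms sending $y_j R(S')$ to $f_j R(S)$; and $(\phi^{*})_{*}=\phi$, since it is realized by word mappings $f_j'$ with $f_j' R(S)=f_j R(S)$, so $f_j'(\bar g)=f_j(\bar g)$ for all $\bar g\in V_G(S)$ by the defining property of $R(S)$. In particular $\alpha_{*}$ is independent of the chosen lift, and $\phi\mapsto\phi^{*}$ is a bijection on Hom-sets.

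Now the Proposition follows. If $\psi\colon V_G(S)\to V_G(S')$ and $\phi\colon V_G(S')\to V_G(S)$ realize an isomorphism of algebraic sets, then by functoriality $\psi^{*}\phi^{*}=(\phi\psi)^{*}=\id_{G_{R(S)}}$ and $\phi^{*}\psi^{*}=(\psi\phi)^{*}=\id_{G_{R(S')}}$, so $\psi^{*}$ is a $G$-isomorphism $G_{R(S')}\to G_{R(S)}$. Conversely, from a $G$-isomorphism $\alpha$ with inverse $\beta$ we obtain morphisms $\alpha_{*}$, $\beta_{*}$ with $(\beta_{*}\alpha_{*})^{*}=(\alpha_{*})^{*}(\beta_{*})^{*}=\alpha\beta=\id_{G_{R(S)}}$; since $(-)^{*}$ is injective on Hom-sets and sends $\id_{V_G(S)}$ to $\id_{G_{R(S)}}$, this gives $\beta_{*}\alpha_{*}=\id_{V_G(S)}$, and symmetrically $\alpha_{*}\beta_{*}=\id_{V_G(S')}$, so $V_G(S)$ and $V_G(S')$ are isomorphic. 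The only steps needing real attention are the two appeals to $R(S)=\bigcap_{U\in V_G(S)}\ker(\pi_U)$ and the equality $(\phi^{*})_{*}=\phi$ — that is, that distinct lifts of a coordinate-group homomorphism induce the same map of algebraic sets; I would regard this last point as the one genuine (though minor) obstacle, with everything else being routine bookkeeping.
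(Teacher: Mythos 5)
Your proof is correct. The paper does not prove this proposition at all — it is imported from Baumslag--Miasnikov--Remeslennikov \cite{BMR1} (the equivalence of the categories of algebraic sets and coordinate groups, Theorem 4 there) — and your argument is exactly the standard contravariant duality underlying that citation: word mappings induce $G$-homomorphisms of coordinate groups via $\widetilde{\phi}(R(S'))\subseteq R(S)$, lifts of $G$-homomorphisms induce morphisms of algebraic sets, and the identity $R(S)=\bigcap_{U\in V_G(S)}\ker(\pi_U)$ makes the two constructions mutually inverse, so the isomorphism statement follows on both sides.
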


The notions of an equation, system of equation, solution of an equation, algebraic set defined by a system of equations, morphism between algebraic sets and equivalent systems of equations are categorical in nature. These notion carry over, in an obvious way, from the case of groups to the case of monoids.

\subsection{Formulas in the languages $\cL_{A}$ and $\cL_{G}$}
\label{se:2-3}

In this section we recall some basic notions of first-order logic and model theory. We refer the reader to \cite{ChKe} for details.

Let $G$ be a group generated by the set $A$. The standard first-order language of group theory, which we denote by \glossary{name=$\cL$, description={first-order language of groups}, sort=L}$\cL$, consists of a symbol for multiplication $\cdot$, a symbol for inversion $^{-1}$, and a symbol for the identity $1$.  To deal with $G$-groups, we have to enrich the language $\cL$ by all the elements from $G$ as constants. In fact, as $G$ is generated by $A$, it suffices to enrich the language $\cL$ by the constants that correspond to elements of $A$, i.e. for every element of $a\in A$ we introduce a new constant $c_a$. We denote language $\cL$ enriched by constants from $A$ by \glossary{name={$\cL_A$}, description={first-order language of groups enriched by constants from $A$}, sort=L}$\cL_{A}$, and by constants from $G$ by \glossary{name={$\cL_G$}, description={first-order language of groups enriched by constants from $G$}, sort=L}$\cL_G$. In this section we further consider only the language $\cL_A$, but everything stated below carries over to the case of the language $\cL_G$.

A group word in variables $X$ and constants $A$ is a word $W(X,A)$ in the alphabet $(X\cup A)^{\pm 1}$. One  may consider the word $W(X,A)$ as a term in the language $\cL_A$. Observe that every term in the language $\cL_A$ is  equivalent modulo the axioms of group theory to a group word in variables $X$ and constants $A$. An \index{formula!atomic}{\em atomic formula}  in the language $\cL_A$ is a formula of the type $W(X,A) = 1$, where $W(X,A)$ is a group word in $X$ and $A$.  We interpret atomic formulas in $\cL_A$ as equations over $G$, and vice versa.  A {\em Boolean combination} of atomic formulas in the language $\cL_A$ is a disjunction of conjunctions of atomic formulas and their negations. Thus every Boolean combination $\Phi$  of atomic  formulas in $\cL_A$ can be written in the form $\Phi =  \bigvee\limits_{i=1}^m\Psi_i$, where each $\Psi_i$ has one of following form:
$$
\bigwedge\limits_{j = 1}^n(S_j(X,A) = 1),  \hbox{ or } \bigwedge\limits_{j =1}^n(T_j(X,A) \neq 1),  \hbox{ or }
\bigwedge\limits_{j = 1}^n (S_j(X,A) = 1) \ \wedge \  \bigwedge_{k = 1}^m (T_k (X,A) \neq 1).
$$

It follows from general results on disjunctive normal forms in propositional logic that every quantifier-free formula in the language $\cL_\cA$ is logically equivalent (modulo the axioms of group theory) to a Boolean combination of  atomic ones. Moreover, every formula $\Phi$ in $\cL_A$ with \emph{free variables} $Z=\{z_1,\ldots ,z_k\}$  is logically equivalent to a formula of the type
$$
Q_1x_1 Q_2 x_2 \ldots Q_l x_l \Psi(X,Z,A),
$$
where  $Q_i \in \{\forall, \exists \}$, and  $\Psi(X,Z,A)$ is a Boolean combination of atomic formulas in variables from $X \cup Z$.
Introducing fictitious quantifiers, if necessary, one can always rewrite the formula $\Phi$ in the form
$$
\Phi(Z)  = \forall x_1 \exists y_1 \ldots \forall x_k \exists y_k \Psi(x_1, y_1, \ldots, x_k, y_k, Z).
$$

A first-order formula $\Phi$ is called a \index{sentence}\emph{sentence}, if $\Phi$ does not contain free variables.

A sentence $\Phi$ is called \index{formula!universal}\emph{universal} if and only if $\Phi$ is equivalent to a formula of the type:
$$
\forall x_1 \forall x_2 \ldots \forall x_l \Psi(X,A),
$$
where $\Psi(X,A)$ is a Boolean combination of atomic formulas in variables from $X$. We sometimes refer to universal sentences as to universal formulas.

A \index{quasi identity}\emph{quasi identity} in the language $\cL_A$  is a universal formula of the form
\[
\forall x_1 \cdots\forall x_l \left(\mathop  \bigwedge \limits_{i = 1}^m \;r_i (X,A) = 1 \rightarrow s(X, A) = 1\right),
\]
where $r_i (X,A)$ and $S(X,A)$ are terms.

\subsection{First order logic and algebraic geometry}

The connection between algebraic geometry over groups and logic has been shown to be very deep and fruitful and, in particular, led to a solution of the Tarski's problems on the elementary theory of free group, see \cite{KhMTar}, \cite{SelaTar}.

In \cite{Rem} and \cite{AG2} A.~Myasnikov and V.~Remeslennikov established relations between universal classes of groups, algebraic geometry and residual properties of groups, see Theorems \ref{thm:coordgr} and \ref{thm:ircoordgr} below. We refer the reader to \cite{AG2} and \cite{K} for proofs.

In order to state these theorems, we shall make us of the following notions. Let $H$ and $K$ be $G$-groups.  We say that a family of $G$-homomorphisms ${\mathcal F} \subset \Hom_G(H,K)$ {\em $G$-separates} ({\em $G$-discriminates}) $H$ into $K$ if for every non-trivial element $h \in H$ (every finite set of non-trivial elements $H_0 \subset H$) there exists $\phi \in {\mathcal F}$ such that $h^\phi \ne 1$ ($h^\phi \neq 1$ for every $h \in H_0$). In this case we say that $H$ is \index{separated@$G$-separated group}{\em $G$-separated} by $K$ or that $H$ is \index{residually@$G$-residually $G$ group}\emph{$G$-residually $K$} (\index{discriminated@$G$-discriminated group}{\em $G$-discriminated} by $K$ or that $H$ is \index{fully residually@$G$-fully residually $G$ group}\emph{$G$-fully residually $K$}). In the case that $G=1$, we simply say that $H$ is separated (discriminated) by $K$.

\begin{thm} \label{thm:coordgr}
Let $G$ be an equationally Noetherian {\rm(}$G$-{\rm)}group. Then the following classes coincide:
\begin{itemize}
    \item the class of all coordinate groups of algebraic sets over $G$ {\rm(}defined by systems of equations with coefficients in $G${\rm)};
    \item the class of all finitely generated {\rm(}$G$-{\rm)}groups that are {\rm(}$G$-{\rm)}separated by $G$;
    \item the class of all finitely generated {\rm(}$G$-{\rm)}groups that satisfy all the quasi-identities {\rm(}in the language $\cL_G$ {\rm(}or $\cL_A${\rm))} that are satisfied by $G$;
    \item the class of all finitely generated {\rm(}$G$-{\rm)}groups from the {\rm(}$G$-{\rm)}prevariety generated by $G$.
\end{itemize}
Furthermore, a coordinate group of an algebraic set $V_G(S)$ is {\rm($G$-)}separated by $G$ by homomorphisms $\pi_U$, $U\in V_G(S)$, corresponding to solutions.
\end{thm}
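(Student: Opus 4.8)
The plan is to establish the coincidence of the four classes by running through a cycle of inclusions. Denote by $\mathcal{K}_1,\mathcal{K}_2,\mathcal{K}_3,\mathcal{K}_4$ the four classes in the order listed; I will prove $\mathcal{K}_1 \subseteq \mathcal{K}_2 \subseteq \mathcal{K}_4 \subseteq \mathcal{K}_3 \subseteq \mathcal{K}_1$. Everything is phrased for $G$-groups and the language $\cL_G$ (equivalently $\cL_A$), the case of plain groups and $\cL$ being verbatim the same. First, $\mathcal{K}_1 \subseteq \mathcal{K}_2$, which also yields the last assertion of the theorem: let $H = G_{R(S)} = G[X]/R(S)$ with $X$ finite, so $H$ is a finitely generated $G$-group (we may assume $V_G(S) \neq \emptyset$, the empty case giving only the trivial group). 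By the identity $R(S) = \bigcap_{U \in V_G(S)} \ker(\pi_U)$ recorded before Lemma \ref{lem:rad}, each non-trivial element of $H$ lies outside $\ker(\pi_U)$ for some solution $U$; since $R(S) \subseteq \ker(\pi_U)$, the map $\pi_U$ descends to a $G$-homomorphism $\bar\pi_U : H \to G$, and the family $\{\bar\pi_U \mid U \in V_G(S)\}$ $G$-separates $H$. Thus $H \in \mathcal{K}_2$, and in fact $H$ is $G$-separated by $G$ via the homomorphisms $\pi_U$ attached to solutions.

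Next, $\mathcal{K}_2 \subseteq \mathcal{K}_4$: given $H \in \mathcal{K}_2$ with a $G$-separating family $\mathcal{F} \subseteq \Hom_G(H,G)$, the diagonal map $H \to \prod_{\phi \in \mathcal{F}} G$, $h \mapsto (h^\phi)_{\phi \in \mathcal{F}}$, is a $G$-homomorphism into a direct power of $G$ (endowed with the diagonal copy of $G$) and is injective precisely because $\mathcal{F}$ separates $H$; hence $H$ embeds as a $G$-subgroup of a direct power of $G$, so it belongs to the ($G$-)prevariety generated by $G$ and, being finitely generated, to $\mathcal{K}_4$. Then $\mathcal{K}_4 \subseteq \mathcal{K}_3$ is pure model theory: quasi-identities are universal Horn sentences, hence inherited by subgroups and preserved under arbitrary direct products, so every member of the prevariety generated by $G$ satisfies every quasi-identity of $\cL_G$ true in $G$; restricting to finitely generated members gives the inclusion. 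Neither of these two steps uses the Noetherian hypothesis.

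The remaining inclusion $\mathcal{K}_3 \subseteq \mathcal{K}_1$ is the heart of the matter and the only place equational Noetherianity enters. Let $H \in \mathcal{K}_3$ and write $H = G[X]/N$ with $X$ finite and $N$ a normal subgroup of $G[X]$. It suffices to show $R(V_G(N)) = N$, for then by Lemma \ref{lem:rad}(4) the subgroup $N$ is the radical of an algebraic set and $H = G[X]/N$ is a coordinate group, i.e. $H \in \mathcal{K}_1$. The inclusion $N \subseteq R(V_G(N))$ is immediate, so fix $w \in G[X] \setminus N$ and seek a solution $U \in V_G(N)$ with $\pi_U(w) \neq 1$. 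Since $G$ is $G$-equationally Noetherian, the system $N = 1$ over $G$ is equivalent to a finite subsystem $S_0 \subseteq N$, so that $V_G(S_0) = V_G(N)$. Consider the $\cL_G$-sentence
$$
\Phi:\qquad \forall X \left( \bigwedge_{s \in S_0} s(X) = 1 \ \longrightarrow\ w(X) = 1 \right),
$$
which is a bona fide quasi-identity precisely because $S_0$ is finite. Were $G$ to satisfy $\Phi$, then so would $H \in \mathcal{K}_3$; evaluating $\Phi$ at the images of $X$ in $H$ and using $S_0 \subseteq N$ (so every hypothesis $s(X) = 1$ holds in $H$) would force $w \in N$, a contradiction. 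Hence $G \not\models \Phi$: there is a tuple $\bar g$ over $G$ with $s(\bar g) = 1$ for all $s \in S_0$ and $w(\bar g) \neq 1$. Then $\bar g \in V_G(S_0) = V_G(N)$, the induced homomorphism $\pi_{\bar g} : G[X] \to G$ kills $N$ and hence factors through $H$, and $\pi_{\bar g}(w) \neq 1$; so $w \notin R(V_G(N))$, completing the proof that $R(V_G(N)) = N$ and closing the cycle.

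The main obstacle is exactly this last step: without the equationally Noetherian hypothesis one cannot compress the (possibly infinitely many) defining relations of $H$ into a single first-order quasi-identity $\Phi$, and the inclusion $\mathcal{K}_3 \subseteq \mathcal{K}_1$ genuinely fails in general. The other three inclusions are soft, using only the definitions recalled in Section \ref{se:2-4} together with elementary preservation properties of universal Horn sentences.
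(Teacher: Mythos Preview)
Your proof is correct and follows the standard cycle of inclusions. Note, however, that the paper does not actually prove this theorem: it is stated as background in Section~\ref{se:2-4} with the remark ``We refer the reader to \cite{AG2} and \cite{K} for proofs.'' Your argument is essentially the one found in those references, so there is nothing to compare against within the paper itself.
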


\begin{thm} \label{thm:ircoordgr}
Let $G$ be an equationally Noetherian {\rm(}$G$-{\rm)}group. Then the following classes coincide:
\begin{itemize}
    \item the class of all coordinate group of irreducible algebraic sets over $G$ {\rm(}defined by systems of equations with coefficients in $G${\rm)};
    \item the class of all finitely generated {\rm(}$G$-{\rm)}groups that are $G$-discriminated by $G$;
    \item the class of all finitely generated {\rm(}$G$-{\rm)}groups that satisfy all universal sentences {\rm(}in the language $\cL_G$ {\rm(}or $\cL_A${\rm))} that are satisfied by $G$.
\end{itemize}
Furthermore, a coordinate group of an irreducible algebraic set $V_G(S)$ is {\rm($G$-)}discriminated by $G$ by homomorphisms $\pi_U$, $U\in V_G(S)$, corresponding to solutions.
\end{thm}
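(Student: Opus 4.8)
The plan is to show that the three classes coincide by proving the four implications (I)$\Rightarrow$(II), (II)$\Rightarrow$(I), (II)$\Rightarrow$(III) and (III)$\Rightarrow$(II), where (I) reads ``$H$ is $G_{R(S)}$ for some irreducible algebraic set $V_G(S)$'', (II) reads ``$H$ is a finitely generated $G$-group that is $G$-discriminated by $G$'', and (III) reads ``$H$ is a finitely generated $G$-group satisfying every universal sentence of $\cL_G$ that is true in $G$''; this parallels the proof of Theorem~\ref{thm:coordgr}, with discrimination and universal sentences playing the roles of separation and quasi-identities. Throughout I write a finitely generated $G$-group as $H = G[X]/N$ with $X = \{x_1,\dots,x_n\}$ and $N = \ker(G[X]\to H)$, and let $\bar t = (t_1,\dots,t_n)$ be the images of $X$ in $H$; then an element of $H$ is nontrivial exactly when it is $w(\bar t)$ for some $w \in G[X]\setminus N$, and every tuple $\bar g \in V_G(N) \subseteq G^n$ induces a $G$-homomorphism $\pi_{\bar g}\colon H \to G$ with $\pi_{\bar g}(w(\bar t)) = w(\bar g)$, because $r(\bar g) = 1$ for all $r \in N$ means $N \subseteq \ker\pi_{\bar g}$. (The $\cL_A$ version of (III) is handled identically, since every element of $G$ equals a word in $A$.)

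For (I)$\Rightarrow$(II) --- which simultaneously proves the ``Furthermore'' claim --- suppose $H = G_{R(S)}$ with $V := V_G(S)$ irreducible, so $N = R(S)$. Given nontrivial $w_1(\bar t),\dots,w_k(\bar t)$ in $H$, each $C_i := V \cap V_G(w_i)$ is closed in $V$, and it is a proper subset of $V$ because $R(S) = \bigcap_{U\in V}\ker\pi_U$ and $w_i \notin R(S)$ force some $U \in V$ with $\pi_U(w_i) \neq 1$. Since an irreducible set is not a union of finitely many proper closed subsets, there is $U \in V$ outside $C_1 \cup \dots \cup C_k$, and then $\pi_U(w_i(\bar t)) = w_i(U) \neq 1$ for all $i$. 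For (II)$\Rightarrow$(I), let $H$ be finitely generated and $G$-discriminated by $G$. First $R(N) = N$: if $w(\bar t)\neq 1$ pick $\phi \in \Hom_G(H,G)$ with $\phi(w(\bar t))\neq 1$; then $\bar g := (\phi(t_1),\dots,\phi(t_n))$ lies in $V_G(N)$ since $r(\bar g) = \phi(r(\bar t)) = 1$ for $r \in N$, while $w(\bar g) = \phi(w(\bar t))\neq 1$, so $w \notin R(N)$; with Lemma~\ref{lem:rad}(1) this gives $R(N) = N$, hence $N$ is a radical subgroup and $H = G_{R(N)}$ is a coordinate group. Second, $V_G(N)$ is irreducible: if $V_G(N) = Y_1 \cup Y_2$ with $Y_1,Y_2$ proper closed, then by Lemma~\ref{lem:rad} (using that the $Y_i$ are algebraic) $R(Y_i)\supsetneq N$, so choose $u \in R(Y_1)\setminus N$ and $v \in R(Y_2)\setminus N$; then $u(\bar t), v(\bar t)$ are nontrivial in $H$, some $\phi \in \Hom_G(H,G)$ satisfies $\phi(u(\bar t))\neq 1 \neq \phi(v(\bar t))$, and yet $\bar g := (\phi(t_1),\dots,\phi(t_n)) \in V_G(N) = Y_1 \cup Y_2$ lies in some $Y_i$ and therefore satisfies $u(\bar g) = 1$ or $v(\bar g) = 1$, contradicting $u(\bar g) = \phi(u(\bar t)) \neq 1 \neq \phi(v(\bar t)) = v(\bar g)$.

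For (II)$\Rightarrow$(III), let $H$ be finitely generated and $G$-discriminated by $G$, and let $\sigma$ be a universal $\cL_G$-sentence true in $G$. By the conjunctive normal form discussed in Section~\ref{se:2-3}, it suffices to treat $\sigma$ of the shape $\forall \bar x\, \bigl( \bigwedge_{j=1}^{m} v_j(\bar x, A) = 1 \to \bigvee_{i=1}^{n} u_i(\bar x, A) = 1 \bigr)$ (with the usual conventions when $m = 0$ or $n = 0$). Take $\bar h \in H$ with $v_j(\bar h) = 1$ for all $j$ and assume for contradiction that $u_i(\bar h)\neq 1$ for all $i$; by discrimination there is $\phi \in \Hom_G(H,G)$ with $\phi(u_i(\bar h)) \neq 1$ for every $i$ (any $G$-homomorphism $H\to G$ if $n=0$), and then $\phi(\bar h) \in G^n$ satisfies all $v_j(\phi(\bar h)) = \phi(1) = 1$ and all $u_i(\phi(\bar h)) \neq 1$, contradicting $G \models \sigma$. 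Hence $H \models \sigma$.

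For (III)$\Rightarrow$(II) --- where equational Noetherianity of $G$ enters essentially --- let $H$ satisfy every universal sentence true in $G$ and let $h_1,\dots,h_k \in H$ be nontrivial, say $h_i = w_i(\bar t)$ with $w_i \in G[X]\setminus N$. By equational Noetherianity the system $N$ is equivalent over $G$ to a finite $N_0 \subseteq N$, so $V_G(N_0) = V_G(N)$. Consider the universal $\cL_G$-sentence $\tau\colon \forall \bar x\, \bigl( \bigwedge_{r \in N_0} r(\bar x, A) = 1 \to \bigvee_{i=1}^{k} w_i(\bar x, A) = 1 \bigr)$. The tuple $\bar t$ witnesses $H \not\models \tau$ (it kills $N_0 \subseteq N$ but $w_i(\bar t) = h_i \neq 1$), so $\tau$ is not true in $G$ either; hence there is $\bar g \in G^n$ with $r(\bar g) = 1$ for all $r \in N_0$ and $w_i(\bar g) \neq 1$ for all $i$. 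Then $\bar g \in V_G(N_0) = V_G(N)$, so $\pi_{\bar g}\colon H \to G$ is a well-defined $G$-homomorphism with $\pi_{\bar g}(h_i) = w_i(\bar g) \neq 1$ for every $i$, and a single homomorphism $G$-discriminates $\{h_1,\dots,h_k\}$. I expect this last step to be the main obstacle: if $G$ is not equationally Noetherian the defining set $N$ may be genuinely infinite, and only a compactness argument producing a $G$-homomorphism into an \emph{ultrapower} of $G$ is available; replacing $N$ by the finite subsystem $N_0$ is precisely what allows one to land inside $G$ itself.
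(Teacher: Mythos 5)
The paper itself contains no proof of Theorem \ref{thm:ircoordgr}: it is quoted from Myasnikov--Remeslennikov, with proofs deferred to \cite{AG2} and \cite{K}. Your four implications are the standard argument from those sources. The directions (I)$\Rightarrow$(II) (which indeed yields the ``furthermore'' clause), (II)$\Rightarrow$(III) and (III)$\Rightarrow$(II) are correct as written, and you locate precisely the one place where equational Noetherianity is indispensable: replacing the possibly infinite defining system $N$ by a finite subsystem $N_0$ with $V_G(N_0)=V_G(N)$, so that the failure of the universal sentence $\tau$ produces a witnessing tuple in $G$ itself rather than merely in an ultrapower.

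The one step that does not follow as written is in (II)$\Rightarrow$(I), where you treat the proper closed subsets $Y_1,Y_2$ of $V_G(N)$ as if they were algebraic. In the Zariski topology of \cite{BMR1} the algebraic sets are only a sub-basis for the closed sets: a closed set is an a priori arbitrary intersection of finite unions of algebraic sets, and for such a $Y_i$ the inference $Y_i\subsetneq V_G(N)\Rightarrow R(Y_i)\supsetneq N$ is not licensed by Lemma \ref{lem:rad}, whose item (6) compares algebraic sets only; a priori a proper closed $Y_i$ could have $R(Y_i)=N$. The repair is routine and uses only what you already have. Either invoke equational Noetherianity: the descending chain condition reduces arbitrary intersections to finite ones, so every closed set is a finite union of algebraic sets. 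Or, without it, pick $p_i\in V_G(N)\setminus Y_i$ and a finite union $W_i$ of algebraic sets with $Y_i\subseteq W_i$ and $p_i\notin W_i$. Either way $V_G(N)$ becomes a finite union of \emph{proper algebraic} subsets $V_1,\dots,V_m$ of $V_G(N)$; choose $u_j\in R(V_j)\setminus N$ for each $j$, and a single $G$-homomorphism discriminating the finitely many nontrivial elements $u_1(\bar t),\dots,u_m(\bar t)$ gives the same contradiction as in your two-element version. With this adjustment the proposal is a complete and correct proof along the standard lines.
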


\subsection{Partially commutative groups} \label{sec:pcgr}

Partially commutative groups are widely studied in different branches of mathematics and computer science, which explains the variety of names they were given: \index{graph group}\emph{graph groups}, \index{right-angled Artin group}\emph{right-angled Artin groups}, \emph{semifree groups}, etc. Without trying to give an account of the literature and results in the field we refer the reader to a recent survey \cite{charney} and to the introduction and references in \cite{DKRpar}.

Recall that a (free) \index{partially commutative group} \emph{partially commutative} group is defined as follows. Let \glossary{name={$\mathcal{G}$}, description={commutation graph of a partially commutative group}, sort=G}$\mathcal{G}$ be a finite, undirected, simplicial graph. Let $\cA = V(\mathcal{G}) = \{a_1, \dots , a_\rr\}$ be the set of vertices of $\mathcal{G}$ and let $F(\cA)$ be the free group on $\cA$. Let
$$
R = \{[a_i, a_j] \in F(\cA) \mid a_i, a_j \in \cA\hbox{ and there is an edge of $\mathcal{G}$ joining $a_i$ to $a_j$}\}.
$$
The partially commutative group corresponding to the \index{graph!commutation, of a partially commutative group} (commutation) graph $\mathcal{G}$ is the  group \glossary{name={$\GG(\mathcal{G})$}, description={partially commutative group with underlying commutation graph $\mathcal{G}$}, sort=G}$\GG(\mathcal{G})$ with presentation $\langle \cA \mid R\rangle$.  This means that the only relations imposed on the generators are commutation of some of the generators. When the underlying graph is clear from the context we write simply \glossary{name={$\GG$}, description={partially commutative group}, sort=G}$\GG$.

From now on \glossary{name={$\cA$}, description={a finite alphabet, the generating set of $\GG$}, sort=A}\glossary{name={$\rr$}, description={the cardinality of $\cA$}, sort=R}$\cA=  \left\{ a_1, \ldots , a_\rr \right\}$ always stands for a finite alphabet and its elements are called \index{letter}\emph{letters}. We reserve the term \index{occurrence}\emph{occurrence} to mean an occurrence of a letter or of its formal inverse in a word. In a more formal way, an occurrence is a pair (letter (or its inverse), its placeholder in the word).

Let $\mathcal{G}_1$ be a full subgraph of $\mathcal{G}$ and let $\mathcal{A}_1$ be its set of vertices. In \cite{EKR} it is shown that $\GG(\mathcal{G}_1)$ is the subgroup  of the group $\GG$ generated by $\mathcal{A}_1$, $\GG(\mathcal{G}_1)=\langle \mathcal{A}_1\rangle$. Following \cite{DKRpar}, we call $\GG(\mathcal{G}_1)= \GG(\mathcal{A}_1)$ a \index{canonical parabolic subgroup}\emph{canonical parabolic subgroup} of $\GG$.

We denote the \index{length}length of a word $w$ by \glossary{name={$|w|$}, description={length of the word $w\in \GG$}, sort=W}$|w|$. For a word $w \in \GG$, we denote by \glossary{name={$\ov{w}$}, description={a geodesic of a word $w$, $w\in \GG$}, sort=W} $\ov{w}$ a \index{geodesic}\emph{geodesic} of $w$. Naturally, $|\ov{w}|$ is called the length of an element $w\in \GG$. An element $w\in \GG$ is called \index{cyclically reduced element}\emph{cyclically reduced} if the length of $\ov{w^2}$ is twice the length of $\ov{w}$ or, equivalently, the length of $w$ is minimal in the conjugacy class of $w$.

Let $u$ be a geodesic word in $\GG$. We say that $u$ is a left\/ {\rm (}right{\rm )} \index{divisor of an element}\emph{divisor} of an element $w$, $w\in \GG$ if there exists a geodesic word $v\in \GG$ and a geodesic word $\ov w$, $\ov w=w$, such that $\ov{w}= u v$ {\rm (}$\ov w=  v u$, respectively{\rm )} and $|\ov{w}|=|u|+|v|$. In this case, we also say that $u$ \index{left-divide}\emph{left-divides} $w$ (\index{right-divide}\emph{right-divides} $w$, respectively). Let $u,v$ be geodesic words in $\GG$. If $|\ov{uv}|=|u|+|v|$ we sometimes write  \glossary{name={`$u\circ v$'}, description={no cancellation in the product of $u$ and $v$}, sort=Z}$u\circ v$ to stress that there is no cancellation between $u$ and $v$.

For a given word $w$ denote by \glossary{name={$\az(w)$}, description={the set of letters that occur in $w$}, sort=A} $\az(w)$ the set of letters occurring in $w$. For a  word $w\in \GG$ define \glossary{name={$\BA(w)$}, description={the subgroup of $\GG$ generated by all letters that do not occur in $\ov w$ and commute with $w$}, sort=A}$\BA(w)$ to be the subgroup of $\GG$ generated by all letters that do not occur in $\ov w$ and commute with $w$.  The subgroup $\BA(w)$ is well-defined (independent of the choice of a geodesic $\ov w$), see \cite{EKR}. Let $v,w\in \GG$ be so that $[v,w]=1$ and $\az(v)\cap \az(w)=1$, or, which is equivalent, $v\in \BA(w)$ and $w\in \BA(v)$. In this case we write \glossary{name={`$\lra$'}, description={disjoint commutation of elements or sets}, sort=Z}$v\lra w$.

Let $A, B\subseteq \GG$ be arbitrary subsets of $\GG$. We denote by $[A,B]$ the set \glossary{name={$[A,B]$}, description={the set of commutators of elements of sets $A$ and $B$}, sort=C} $[A,B]=\left\{[a,b]\mid a\in A, b\in B\right\}$ (not to confuse with the more usual notation $[A,B]$ for the subgroup generated by the set $\left\{[a,b]\mid a\in A, b\in B\right\}$). Naturally, the notation \glossary{name={$[A,B]=1$}, description={elements of sets $A$ and $B$ commute pairwise}, sort=C}$[A,B]=1$  means that $[a,b]=1$ for all $[a,b]\in [A,B]$. Analogously, given a set of words $W\subseteq \GG$ we denote by \glossary{name={$\az(W)$}, description={the set of letters that occur in a word $w\in W$}, sort=A}$\az(W)$ the set of letters that occur in a word $w\in W$, i.e. $\az(W)=\bigcup\limits_{w\in W} \az(w)$, and by \glossary{name={$\BA(W)$}, description={the subgroup of $\GG$ generated by all letters that do not occur in $\ov w$ and commute with $w$ for every $w\in W$}, sort=A}$\BA(W)$ the intersection of the subgroups $\BA(w)$ for all $w\in W$, i.e. $\BA(W)=\bigcup\limits_{w\in W} \BA(w)$. Similarly, we write $A\lra B$ whenever $\az(A)\cap\az(B)=1$ and $[A,B]=1$.

We say that a word $w\in \GG$ is written in the \index{normal form!lexicographical}\emph{lexicographical normal form} if the word $w$ is geodesic and minimal with respect to the lexicographical ordering induced by the ordering
$$
a_1<a_2\dots<a_\rr<a_\rr^{-1}<\dots<a_1^{-1}
$$
of the set $\cA\cup \cA^{-1}$. Note that if $w$ is written in the lexicographical normal form, the word $w^{-1}$ is not necessarily written in the lexicographical normal form, see Remark 32, \cite{DM}.

Henceforth, by the symbol \glossary{name={`$\doteq$'}, description={graphical equality of words}, sort=Z}`$\doteq$' we denote graphical equality of words, i.e. equality in the free monoid.

For a partially commutative  group $\GG$ consider its \index{graph!non-commutation, of a partially commutative group}non-commutation graph $\Delta$. The vertex set $V$ of $\Delta$ is the set of generators $\cA$ of $\GG$. There is an edge connecting $a_i$ and $a_j$ if and only if $\left[a_i, a_j \right] \ne 1$. Note that the graph $\Delta$ is the complement graph the graph $\Gamma$. The graph $\Delta$ is a union of its connected components $I_1, \ldots , I_k$. In the above notation
\begin{equation} \label{eq:decomp}
\GG= \GG(I_1) \times \cdots \times \GG(I_k).
\end{equation}

Consider $w \in \GG$ and the set $\az(w)$. For this set, just as above, consider the graph $\Delta (\az(w))$ (it is a full subgraph of $\Delta$). This graph can be either connected or not. If it is connected we will call $w$ a \index{block}\emph{block}. If $\Delta(\az(w))$ is not connected, then we can split $w$ into the product of commuting words
\begin{equation} \label{eq:bl}
w= w_{j_1} \cdot w_{j_2} \cdots w_{j_t};\ j_1, \dots, j_t \in J,
\end{equation}
where $|J|$ is the number of connected components of $\Delta(\az(w))$ and the word $w_{j_i}$ is a word in the letters from the $j_i$-th connected component. Clearly, the words $\{w_{j_1}, \dots, w_{j_t}\}$ pairwise commute. Each word $w_{j_i}$, $i \in {1, \dots,t}$ is a block and so we refer to presentation (\ref{eq:bl}) as the block decomposition of $w$.

An element $w\in \GG$ is called a least root (or simply, root) of $v\in \GG$ if there exists an integer $0\ne m\in \Z$ such that $v=w^m$ and there does not exists $w'\in \GG$ and $0\ne m'\in \Z$ such that $w={w'}^{m'}$. In this case we write $w=\sqrt{v}$. By a result from \cite{DK}, partially commutative groups have least roots, that is the root element of $v$ is defined uniquely.

The next result describes centralisers of elements in partially commutative groups.

\begin{thm}[Centraliser Theorem, Theorem 3.10, \cite{DK}, see also \cite{Servatius}] \label{thm:centr} \
Let $w\in \GG$ be a cyclically reduced word and $w=v_1\dots v_k$ be its block decomposition. Then, the centraliser of $w$ is the following subgroup of $\GG$:
\begin{equation} \notag
C(w)=\langle \sqrt{v_1}\rangle \times \cdots \times \langle \sqrt{v_k} \rangle\times \BA(w).
\end{equation}
\end{thm}

\begin{cor} \label{cor:centr}
  For any $w\in \GG$ the centraliser $C(w)$ of $w$ is an isolated subgroup of $\GG$, i.e. $C(w)=C(\sqrt{w})$.
\end{cor}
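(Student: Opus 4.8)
The plan is to derive the corollary from the Centraliser Theorem, the key point being that centralisers are invariant under passing to powers. Precisely, I would first reduce to the following claim: for every $x\in\GG$ and every $n\in\Z\setminus\{0\}$ one has $C(x)=C(x^n)$. Granting this, since by the definition of the least root $w=(\sqrt w)^m$ for some $m\neq 0$, applying the claim with $x=\sqrt w$ and $n=m$ gives $C(\sqrt w)=C\big((\sqrt w)^m\big)=C(w)$, which is the assertion (one may assume $w\neq 1$, the only case in which $\sqrt w$ is defined).

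To prove the claim, I would first pass to cyclically reduced elements. Replacing $n$ by $-n$ if necessary we may take $n\ge 1$. Write $x=h^{-1}uh$ with $u$ cyclically reduced; then $x^n=h^{-1}u^nh$, and the word $u^n$ is geodesic and $u^n$ is cyclically reduced, because $|u^k|=k|u|$ for every $k\ge 1$ when $u$ is cyclically reduced. As $C(x)=h^{-1}C(u)h$ and $C(x^n)=h^{-1}C(u^n)h$, it suffices to prove $C(u)=C(u^n)$ for cyclically reduced $u\neq 1$. Let $u=v_1\cdots v_k$ be the block decomposition: the $v_i$ are blocks, one for each connected component of $\Delta(\az(u))$, and they pairwise commute. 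Since $u^n$ is geodesic we have $\az(\overline{u^n})=\az(u)$, so $\Delta(\az(u^n))$ has the same connected components; moreover each $v_i^n$ is a word in the letters of the $i$-th component with $\az(v_i^n)=\az(v_i)$, hence a block. By uniqueness of the block decomposition, $u^n=v_1^n\cdots v_k^n$ is the block decomposition of $u^n$.

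Now compare the two expressions given by Theorem \ref{thm:centr}:
$$
C(u)=\langle\sqrt{v_1}\rangle\times\cdots\times\langle\sqrt{v_k}\rangle\times\BA(u),\qquad
C(u^n)=\langle\sqrt{v_1^n}\rangle\times\cdots\times\langle\sqrt{v_k^n}\rangle\times\BA(u^n).
$$
For the cyclic factors, $\sqrt{v_i^n}=\sqrt{v_i}$: if $v_i=(\sqrt{v_i})^{d_i}$ then $v_i^n=(\sqrt{v_i})^{nd_i}$, so $\sqrt{v_i}$ is a root of $v_i^n$ admitting no proper root, and least roots in $\GG$ are unique by \cite{DK}. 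For the remaining factor, $\BA(u^n)=\BA(u)$: by definition each is generated by the letters outside $\az(\overline{u^n})=\az(\bar u)$ that commute with $u^n$, respectively with $u$; and a letter $a$ with $a\notin\az(u)=\az(u^n)$ commutes with $u$ if and only if it commutes with $u^n$, since $C(a)=\langle a\rangle\times\BA(a)$ and, $a$ not occurring in $u$, the $\langle a\rangle$-component of $u$ is trivial, so the condition $u\in C(a)$ is equivalent to $u\in\BA(a)$, i.e. to ``every letter of $u$ commutes with $a$'' --- a condition unchanged on replacing $u$ by $u^n$. Feeding these equalities back into the two displayed decompositions yields $C(u)=C(u^n)$, establishing the claim.

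The step I expect to be most delicate is the last one --- verifying $\BA(u^n)=\BA(u)$, and in particular that a letter not occurring in $u$ commutes with $u$ exactly when it commutes with $u^n$; the rest is a fairly mechanical application of the Centraliser Theorem, the uniqueness of roots, and the invariance of $\az$ under passage to geodesic representatives.
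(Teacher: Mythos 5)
Your proof is correct and takes the route the paper intends: the corollary is stated without proof as an immediate consequence of Theorem \ref{thm:centr}, and your argument --- reducing to a cyclically reduced conjugate, matching the block decompositions and least roots of $u$ and $u^n$, and verifying $\BA(u)=\BA(u^n)$ --- is exactly the verification left to the reader. The only ingredients you use beyond the statement of the Centraliser Theorem (that $|u^k|=k|u|$ for cyclically reduced $u$, uniqueness of least roots, and detection of membership in a canonical parabolic subgroup such as $\BA(a)$ by the alphabet of a geodesic) are standard facts from \cite{EKR} and \cite{DK}, so nothing essential is missing.
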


In Section \ref{se:4-1} we shall need an analogue of the definition of a cancellation scheme for a free group. We gave a description of cancellation schemes for partially commutative groups in \cite{CK1}. We shall use the following result.
\begin{prop}[Lemma 3.2, \cite{CK1}] \label{lem:prod}
Let $\GG$ be a partially commutative group and let $w_1,\dots w_k$ be geodesic words  in $\GG$ such that  $w_1\cdots w_k=1$. Then, there exist geodesic words $w_i^j$, $1\le i,j\le k$ such that for any $1\le l\le k$ there exists the following geodesic presentation for $w_l$:
$$
w_l=w_l^{l-1} \cdots w_l^1 w_l^k\cdots w_l^{l+1},
$$
where $w_l^i={w_i^l}^{-1}$.
\end{prop}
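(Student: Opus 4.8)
\textbf{Proof plan for Proposition~\ref{lem:prod}.}

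The plan is to prove the statement by induction on $k$, running over geodesic words $w_1, \dots, w_k$ whose product is trivial. The base case $k = 2$ is immediate: from $w_1 w_2 = 1$ we get $w_2 = w_1^{-1}$, so setting $w_1^2 := w_1$ and $w_2^1 := w_1^{-1} = w_2$ (and $w_i^i$ empty, say) the required presentations $w_1 = w_1^2$ with $w_1^2 = {w_2^1}^{-1}$ hold trivially. (One might prefer to start the induction at $k=1$ with all pieces empty.) The combinatorial content of the proposition is a bookkeeping statement: the cancellation pattern realising $w_1 \cdots w_k = 1$ must be a ``chord diagram'' in which each $w_l$ splits into consecutive geodesic blocks $w_l^{l-1}, \dots, w_l^1, w_l^k, \dots, w_l^{l+1}$, the block $w_l^i$ being cancelled against the block $w_i^l$ of $w_i$, so that $w_l^i = {w_i^l}^{-1}$ graphically (up to replacing a geodesic by an equal geodesic). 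The natural tool is the cancellation scheme / van Kampen-diagram machinery for partially commutative groups developed in \cite{CK1}, of which this proposition is stated to be Lemma 3.2; for a self-contained inductive argument, however, one proceeds as follows.

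First I would reduce two consecutive factors. Consider the product $w_{k-1} w_k$ and write it in geodesic form. In a partially commutative group, when multiplying two geodesics there is a well-defined ``cancelled part'': there exist geodesic words $u, v, t$ with $w_{k-1} = u \circ t$, $w_k = t^{-1} \circ v$ (using the no-cancellation notation $\circ$), and $w_{k-1}w_k = u \circ v$ is geodesic — here $t^{-1}$ is the maximal right-divisor of $w_{k-1}$ that cancels into $w_k$, and the fact that such a canonical decomposition exists (and that what remains, $u \circ v$, is genuinely geodesic) is a standard property of trace monoids / partially commutative groups; it can be extracted from the normal form theory, e.g. the results cited around \cite{DM} and \cite{EKR}. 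Replacing the pair $w_{k-1}, w_k$ by the single geodesic word $w' := w_{k-1} w_k = u \circ v$, we have $w_1 \cdots w_{k-2} w' = 1$, a product of $k - 1$ geodesics, to which the induction hypothesis applies, yielding geodesic blocks $w_l^j$ for $1 \le l, j \le k-1$ (indices modulo $k-1$) with $w_l = w_l^{l-1} \cdots w_l^1 w_l^{k-1} \cdots w_l^{l+1}$ for $l \le k-2$, and $w' = {w'}^{k-2} \cdots {w'}^1$ correspondingly.

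The main work — and the step I expect to be the genuine obstacle — is to ``re-expand'' $w'$ back into $w_{k-1}$ and $w_k$ and to check that the resulting blocks fit the cyclic pattern claimed for $k$ factors. Concretely, $w' = u \circ v$ with $u$ coming from $w_{k-1}$ and $v$ from $w_k$; the induction gives a decomposition of $w'$ into blocks ${w'}^i$ ($1 \le i \le k-2$) read along $w'$, and one must argue that the cut between $u$ and $v$ can be taken to respect (refine) this block decomposition — i.e. each block ${w'}^i$ lies entirely in the $u$-part or entirely in the $v$-part, or is split cleanly at the $u/v$ boundary. This is where the partially commutative structure forces care: commuting letters can be shuffled across block boundaries, so ``lies entirely in'' must be interpreted up to re-choosing equal geodesics, exactly the flexibility the statement allows via ``there exists the following geodesic presentation''. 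Granting this, the blocks of $w'$ contained in $u$ become blocks $w_{k-1}^i$, those in $v$ become blocks $w_k^i$, the portion $t$ (resp. $t^{-1}$) that cancelled between $w_{k-1}$ and $w_k$ becomes the mutually inverse pair $w_{k-1}^k$ and $w_k^{k-1}$, and reindexing (the cyclic shift replacing the single index $k-1$ by the two indices $k-1, k$) gives precisely the presentations $w_l = w_l^{l-1} \cdots w_l^1 w_l^k \cdots w_l^{l+1}$ with $w_l^i = {w_i^l}^{-1}$ for all $l$. I would handle the shuffling obstruction by working with the non-commutation graph $\Delta$ and the block decomposition \eqref{eq:bl}: two letters on opposite sides of a would-be cut that do not commute cannot both survive (one of them must be cancelled), which is what pins the boundary down up to commutation; this is the same principle underlying the cancellation-scheme description in \cite{CK1}, so in the final write-up one may simply invoke that description rather than re-deriving it.
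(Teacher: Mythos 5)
The paper itself contains no proof of this statement: it is imported verbatim from \cite{CK1} (Lemma 3.2), where the argument is diagrammatic — one takes a van Kampen diagram over the commutation-square presentation of $\GG$ for $w_1\cdots w_k=1$, organises it into bands and brings it to a standard form, the pieces $w_i^j$ being the two ends of each band. Your route is genuinely different and essentially algebraic: induct on $k$, merge the last two factors via the cancellation-divisor decomposition $w_{k-1}=u\circ t$, $w_k=t^{-1}\circ v$, $\ov{w_{k-1}w_k}=u\circ v$ (this two-word lemma is indeed available, cf.\ the discussion of $\nfd$ in Section 6 and \cite{EKR}), apply the hypothesis to $w_1,\dots,w_{k-2},w'$, and re-expand. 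The bookkeeping you only gesture at does come out right: writing the block of $w'$ paired with $w_i$ as $p_iq_i$, with the $p_i$ assembling to $u$ and the $q_i$ to $v$, one sets $w_{k-1}^i=p_i$, $w_k^i=q_i$, $w_{k-1}^k=t$, $w_k^{k-1}=t^{-1}$, and the old piece of $w_i$ becomes $q_i^{-1}p_i^{-1}=w_i^k w_i^{k-1}$, which is exactly the order the statement prescribes. The step you flag as the genuine obstacle is precisely Levi's lemma for trace monoids: two geodesic words representing the same element of $\GG$ coincide in $\Tr$, and an equality $B_{k-2}\cdots B_1=uv$ of traces forces factorisations $B_i=p_iq_i$ with $u=p_{k-2}\cdots p_1$, $v=q_{k-2}\cdots q_1$ and the pieces that cross each other commuting letterwise. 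Stating and using that lemma (it is standard in trace theory, and elementary to prove) is what turns your plan into a complete, self-contained argument; simply ``invoking the cancellation-scheme description of \cite{CK1}'' instead would not do, since that description is the very statement being proved. What the diagrammatic proof of \cite{CK1} buys in exchange is the explicit crossing/commutation data among the pieces $w_i^j$, which this paper needs later when constructing $\GG$-partition tables; your induction produces that data too (through the Levi commutations), but you would have to record it explicitly if it is to be used downstream.
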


This result is illustrated in Figure \ref{pic:8}. In fact, Proposition \ref{lem:prod}  states that there exists a normal form for van Kampen diagrams over partially commutative groups and that, structurally, there are only finitely many possible normal forms of van Kampen diagrams for the product $w_1\cdots w_k=1$ corresponding to the different decompositions of the word $w_i$ as a product of the \emph{non-trivial} words $w_i^j$. We usually consider van Kampen diagrams in  normal forms and think of them in terms of their structure. It is, therefore, natural to refer to van Kampen diagrams in  normal forms as to \emph{cancellation schemes}. We refer the reader to \cite{CK1} for more details.

\begin{figure}[!h]
  \centering
   \includegraphics[keepaspectratio,width=6in]{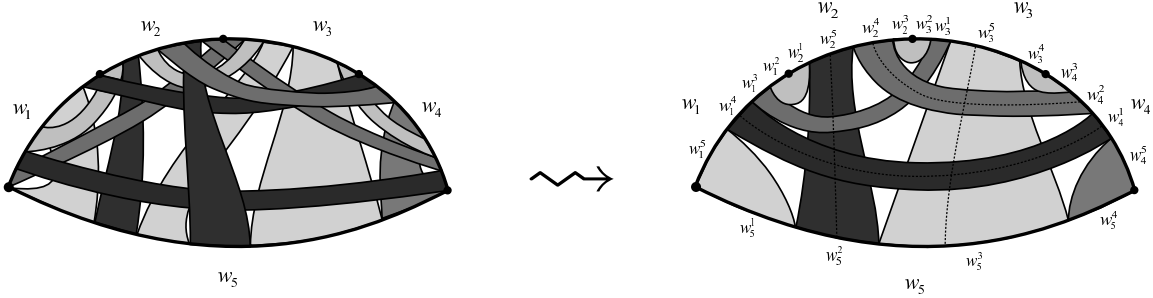}
\caption{Cancellation in a product of geodesic words $w_1w_2w_3w_4w_5=1$.} \label{pic:8}
\end{figure}

Figure \ref{pic:8} (on the right), shows a cancellation scheme for the product of words $w_1w_2w_3w_4w_5=1$. All cancellation schemes for $w_1w_2w_3w_4w_5=1$ can be constructed from the one shown on Figure \ref{pic:8} (on the right) by making some of the bands trivial.

In the last section of this paper we use Proposition \ref{prop:soltree} to prove that certain partially commutative $\GG$-groups are $\GG$-fully residually $\GG$. Proposition \ref{prop:soltree} is a generalisation of a result from \cite{CK1} that states that if $\GG$ is a non-abelian, directly indecomposable partially commutative group, then $\GG[X]$ is $\GG$-fully residually $\GG$. The next two lemmas were used in the proof of this result in \cite{CK1} and are necessary for our proof of Proposition \ref{prop:soltree}.

\begin{lem}[Lemma 4.11, \cite{CK1}] \label{lem:discr} \
There exists an integer $N=N(\GG)$ such that the following statements hold.
\begin{enumerate}
\item \label{it:lemdis1} Let $b\in \GG$ be a cyclically reduced block and let $z\in \GG$ be so that $b^{-1}$ does not left-divide and right-divide $z$. Then one has $b^{N+1}zb^{N+1}=b\circ b^{N} z b^{N} \circ b$.

\item \label{it:lemdis2}Let $b\in \GG$ be a cyclically reduced block and let $z=z_1^{-1}z_2z_1$, where $z_2$ is cyclically reduced. Suppose that $b$ does not left-divide $z$, $z^{-1}$, $z_2$ and $z_2^{-1}$, and $[b,z]\ne 1$. Then one has $z^{b^{N+1}}= b \circ z^{b^{N}} \circ b^{-1}$.
\end{enumerate}
\end{lem}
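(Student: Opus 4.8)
The plan is to reduce both statements to a single combinatorial fact about powers of a cyclically reduced block and then derive (1) and (2) by bookkeeping. Both assertions say that once enough copies of $b$ are concatenated on the outside, the reduction of the resulting product can no longer ``reach'' the outermost copies of $b^{\pm1}$; so the real work is to bound, uniformly in $\GG$, how deep the cancellation between a power $b^m$ and an arbitrary geodesic word can penetrate.

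\emph{Step 1: the key fact.} I would first establish that if $b\in\GG$ is a cyclically reduced block and $m\ge1$, then every geodesic word that has, as a left divisor, a suffix of $b^m$ of length $\ge|b|$ must have $b^{-1}$ as a left divisor; equivalently, every ``suffix'' of $b^m$ of length $\ge|b|$ is right-divided (with no cancellation) by $b$. In a free monoid this is immediate, since the length-$(q|b|+r)$ suffix of $b^m$ is literally $(\text{length-}r\text{ suffix of }b)\circ b^{q}$. In the partially commutative case the analogous statement holds \emph{precisely because $b$ is a block}: connectedness of the non-commutation graph $\Delta(\az(b))$ prevents $b^m$ from splitting across its support into independent commuting factors, so a long ``window'' inside $b^m$ cannot avoid containing a fully displaced copy of $b$. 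Here I would invoke the uniqueness of roots in $\GG$ (\cite{DK}), Corollary \ref{cor:centr} and Theorem \ref{thm:centr}, and a Fine--Wilf-type periodicity statement in trace monoids (in the spirit of \cite{DM}). The consequence is a uniform bound: if $b^{-1}$ is not a left divisor of the geodesic word $z$, then the cancellation length in the product $b^m\cdot z$ is strictly less than $|b|$ (and symmetrically on the right). Since this bound is $<|b|$ and $b^{N}=b\circ\cdots\circ b$ for every $N$, it suffices to take $N=N(\GG)$ to be a small explicit constant compatible with the bookkeeping below.

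\emph{Step 2: deducing (1) and (2).} For (1), the hypotheses that $b^{-1}$ neither left- nor right-divides $z$ are exactly what is needed to apply Step 1 at the interfaces of $b^{N+1}\,z\,b^{N+1}=b\circ b^{N}zb^{N}\circ b$: the cancellation of $z$ against the left copy $b^{N+1}$ has depth $<|b|$, so it does not consume the leftmost $b$, and symmetrically on the right; the internal reduction of $b^{N}zb^{N}$ (which may be substantial when $z$ is short, but cannot turn $z$ into a negative power of $b$, again by hypothesis) is kept from propagating outward by appealing to the normal form for van Kampen diagrams of Proposition \ref{lem:prod} applied to $b^{N+1}zb^{N+1}\cdot(\ov{b^{N+1}zb^{N+1}})^{-1}=1$, whose ``bands'' emanating from the outermost $b$'s are forced to be trivial once $N$ exceeds the bound of Step 1. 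For (2), writing $x^{g}=gxg^{-1}$, the claim is $b^{N+1}zb^{-N-1}=b\circ b^{N}zb^{-N}\circ b^{-1}$. I would first adjust $z_1,z_2$ within their commutation classes so that $z=z_1^{-1}\circ z_2\circ z_1$ has no cancellation at the two junctions, using that $z_2$ is cyclically reduced. The hypotheses that $b$ left-divides none of $z,z^{-1},z_2,z_2^{-1}$, together with $[b,z]\ne1$ (which, via Theorem \ref{thm:centr}, forbids $z$, $z_1$, $z_2$ from being powers of $b$ or of $\sqrt b$), are then massaged into the statement that $b$ does not left-divide $z_1$ (equivalently $b^{-1}$ does not right-divide $z_1^{-1}$), which is exactly what Step 1 needs to control the cancellation of $b^{N+1}$ against $z_1^{-1}$ and of $z_1$ against $b^{-N-1}$; the cyclic reducedness of $z_2$ keeps the conjugated middle block in place, and one concludes as for (1).

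\emph{Main obstacle.} The delicate point is Step 1 in the genuinely partially commutative regime: unlike in a free monoid, a ``subword'' of $b^m$ is defined only up to shuffling commuting letters, so the clean identity ``long suffix of $b^m$ equals (suffix of $b$)$\circ b^{q}$'' must be replaced by a periodicity argument that crucially exploits the block hypothesis and the uniqueness of roots. A secondary difficulty is the bookkeeping in part (2): translating the four given non-divisibility hypotheses on $z$ and $z_2$ into the single non-divisibility hypothesis on $z_1$ that Step 1 actually consumes, while correctly ruling out the degenerate configurations excluded by $[b,z]\ne1$.
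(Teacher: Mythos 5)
There is a genuine gap, and it sits exactly where you placed the whole weight of the argument. The paper does not prove this lemma at all — it imports it verbatim from \cite{CK1} (Lemma 4.11 there) — but the remark immediately following it records that $N$ is a \emph{linear function of the centraliser dimension} of $\GG$. That already tells you the mechanism in \cite{CK1} is a chain argument (strictly growing centraliser-type subgroups, bounded in length by the centraliser dimension), not a fixed cancellation-depth bound, so your suggestion that ``a small explicit constant'' $N$ suffices via a depth-$<|b|$ estimate is not consistent with what the lemma actually requires. More importantly, the quantitative claim in your Step 1 is false. Take $\GG=\langle x,y,t \mid [x,y]=1\rangle$, $b=xty$ (a cyclically reduced block, since the non-commutation graph on $\{x,t,y\}$ is the path $x$--$t$--$y$), and $z=y^{-1}t^{-1}y^{-1}$. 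Then $b^{-1}=y^{-1}t^{-1}x^{-1}$ neither left- nor right-divides $z$ (no $x^{-1}$ occurs in $z$), yet $z$ itself is a left divisor of $b^{-2}$ (the letters $y_1^{-1},t_1^{-1},y_2^{-1}$ form an order ideal of the dependence graph, because $y_2^{-1}$ commutes with $x_1^{-1}$), so the cancellation divisor in $b^{m}z$ has length exactly $|b|$ for every $m\ge 2$. Thus ``cancellation length strictly less than $|b|$'' fails, and the weaker statement that the cancellation is bounded by some $N(\GG)|b|$ independent of $m$ and $z$ is precisely the hard content of the lemma — which your proposal defers to an unproved ``Fine--Wilf-type'' periodicity claim. (Incidentally, the lemma survives in this example only because the cancellation, though of length $|b|$, does not propagate to the outermost copy of $b$ once $N\ge 1$; controlling that propagation uniformly is again the real work.)

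The deduction in Step 2 also leans on moves that are not justified. In part (1), ``bands emanating from the outermost $b$'s are forced to be trivial'' by Proposition \ref{lem:prod} is exactly the quantitative control Step 1 was supposed to supply, so nothing is gained by invoking the cancellation-scheme normal form. In part (2), the hypotheses ($b$ does not left-divide $z$, $z^{-1}$, $z_2$, $z_2^{-1}$, and $[b,z]\ne 1$) do \emph{not} imply that $b$ fails to left-divide $z_1$: one can have $z_1=b\circ z_1'$ while all four non-divisibility conditions on $z$ and $z_2$ still hold, so the ``massaging'' you describe is not available, and it is not clear that non-divisibility of $z_1$ is even the right intermediate statement — the danger in the conjugation case is absorption of the outer $b^{\pm 1}$ through commutation with pieces of $z_1$ and $z_2$, which is again a statement about long common divisors of powers of $b$, i.e.\ the same unproved core.
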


\begin{rem}
In \cite{CK1} we prove that $N$ is a linear function of the centraliser dimension of the group $\GG$, see \cite{DKR1} for definition.
\end{rem}

\begin{lem}[Lemma 4.17, \cite{CK1}] \label{lem:4.17}
Let $b\in \GG$ be a cyclically reduced block element and let $w_1,w_2\in \GG$ be geodesic words of the form
$$
w_1=b^{\delta_ 1} \circ g_1 \circ b^{\epsilon}, \quad w_2=b^{\epsilon} \circ g_2 \circ b^{\delta_2}, \hbox{ where } \epsilon, \delta_1, \delta_2= \pm 1.
$$
Then the geodesic word $\ov{w_1w_2}$  has the form $\ov{w_1w_2}=b^{\delta_1} \circ w_3\circ b^{\delta_2}$.
\end{lem}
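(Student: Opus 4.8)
The plan is to induct on $|g_1|+|g_2|$, working with the single (in general non-reduced) word
$$
W\ \doteq\ b^{\delta_1}\, g_1\, b^{2\epsilon}\, g_2\, b^{\delta_2}
$$
representing $w_1w_2$ (here $b^{\epsilon}b^{\epsilon}\doteq b^{2\epsilon}$ since $b$ is cyclically reduced). Recall that a word over $\GG$ fails to be geodesic exactly when it contains a pair of mutually inverse occurrences with every letter strictly between them commuting with them, so the whole argument reduces to locating all such cancelling pairs in $W$.

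First I would note that the hypotheses propagate to the sub-blocks of $W$. Since $\az(b^{2\epsilon})=\az(b^{\epsilon})$, the first occurrence of any $x^{-1}$ inside $b^{2\epsilon}$ already lies in its first period $b^{\epsilon}$; hence if a letter of $g_1$ cancelled a letter of $b^{2\epsilon}$ it would already cancel one inside $b^{\epsilon}$, contradicting $g_1\circ b^{\epsilon}$ geodesic. Thus $g_1\circ b^{2\epsilon}$, and symmetrically $b^{2\epsilon}\circ g_2$, are geodesic, and $b^{2\epsilon}$ is geodesic. Now run through the possible positions of a cancelling pair $(x,x^{-1})$ in $W$: pairs contained in one syllable $b^{\delta_1},b^{2\epsilon},b^{\delta_2}$ are excluded since $b$ is cyclically reduced; pairs contained in $g_1$ or $g_2$ since these are geodesic; pairs across the junctions $b^{\delta_1}/g_1$, $g_1/b^{2\epsilon}$, $b^{2\epsilon}/g_2$, $g_2/b^{\delta_2}$ since $w_1$, $g_1\circ b^{2\epsilon}$, $b^{2\epsilon}\circ g_2$, $w_2$ are geodesic. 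What remains are the pairs that \emph{span} one or more of the $b$-syllables, and here the hypothesis that $b$ is a \emph{block} is used: if a cancelling pair spans all of $b^{2\epsilon}$ then $x$ commutes with every letter of $\az(b)$, and since the non-commutation graph $\Delta(\az(b))$ is connected (and $C(b)=\langle\sqrt b\rangle\times\BA(b)$ by the Centraliser Theorem together with Corollary \ref{cor:centr}) this forces $x\notin\az(b)$, i.e.\ $x\in\BA(b)$; as $x^{-1}$ then cannot lie in a $b$-syllable, the only surviving possibility is that $x$ is the last letter of a geodesic of $g_1$ and $x^{-1}$ the first letter of a geodesic of $g_2$, with $x\in\BA(b)$. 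The last spanning case — one endpoint in $b^{\delta_1}$, the other in $b^{2\epsilon}$, across $g_1$ — is excluded by hand: it would force every letter between the last $x$ of $b^{\delta_1}$ and the first $x^{-1}$ of $b^{\epsilon}$ to commute with $x$, contradicting $w_1\doteq b^{\delta_1}\circ g_1\circ b^{\epsilon}$ geodesic. (If $g_1=1$ or $g_2=1$ the hypotheses force a sign coincidence, e.g.\ $g_1=1\Rightarrow\delta_1=\epsilon$, which removes even this last type.)

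Given the case analysis, the induction closes. If $W$ has no cancelling pair then $\ov{w_1w_2}=W=b^{\delta_1}\circ(g_1\circ b^{2\epsilon}\circ g_2)\circ b^{\delta_2}$ and we take $w_3=g_1\circ b^{2\epsilon}\circ g_2$. Otherwise $g_1\doteq g_1'\circ x$ and $g_2\doteq x^{-1}\circ g_2'$ with $x\in\BA(b)$, so (as $x\lra b$) $w_1w_2=b^{\delta_1}g_1'\,b^{2\epsilon}\,g_2'b^{\delta_2}$; the words $w_1':=b^{\delta_1}\circ g_1'\circ b^{\epsilon}$ and $w_2':=b^{\epsilon}\circ g_2'\circ b^{\delta_2}$ inherit the hypotheses (their $\circ$-decompositions survive deletion of $x$ precisely because $x\in\BA(b)$ slides past $b$ without cancellation), and $|g_1'|+|g_2'|<|g_1|+|g_2|$; the inductive hypothesis then gives $\ov{w_1'w_2'}=b^{\delta_1}\circ w_3\circ b^{\delta_2}$, and $w_1'w_2'=w_1w_2$.

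The step I expect to require the most care is the exhaustiveness of the case analysis in the middle paragraph — in particular making precise the statement that a cancellation cannot pierce a $b^{\pm1}$-syllable of a block partway through (this genuinely uses connectedness of $\Delta(\az(b))$, not merely cyclic reducedness of $b$), and disposing of the exceptional case where $b$ is a proper power of a single generator, in which "$x$ commutes with $b$" no longer implies $x\notin\az(b)$ and one argues separately inside the cyclic subgroup involved. A secondary technical point is the verification that the $\circ$-decompositions of $w_1,w_2$ descend to $w_1',w_2'$; this is exactly where one needs $\BA(b)\lra b$.
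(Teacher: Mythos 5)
You should first be aware that the paper itself contains no proof of this statement: Lemma \ref{lem:4.17} is imported verbatim from \cite{CK1} (Lemma 4.17 there), so there is no in-paper argument to compare yours against, and your proposal has to be judged on its own. On that basis it is essentially correct. The reduction of geodesicity to the absence of cancelling pairs is the right tool, the preliminary step that $g_1\circ b^{2\epsilon}$ and $b^{2\epsilon}\circ g_2$ are geodesic (via ``the first occurrence of a letter of $b^{2\epsilon}$ already lies in the first period'') is valid, the exhaustion of the possible positions of a cancelling pair in $W\doteq b^{\delta_1}g_1b^{2\epsilon}g_2b^{\delta_2}$ is the right dichotomy, and the point you flag about the $\circ$-decompositions descending to $w_1'=b^{\delta_1}g_1'b^{\epsilon}$, $w_2'=b^{\epsilon}g_2'b^{\delta_2}$ is settled cleanly by the length count: $w_1=w_1'x$ with $x\lra b$, so $|w_1'|\ge |w_1|-1=2|b|+|g_1'|$, forcing $w_1'$ to be geodesic as written. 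The induction then closes as you describe.

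Two loose ends should be written out. First, the exceptional case $\az(b)=\{x\}$ that you defer is genuinely needed (connectedness of $\Delta(\az(b))$ gives nothing there), but it does not require a separate argument ``inside the cyclic subgroup'': if the letter of a pair spanning $b^{2\epsilon}$ lies in $\az(b)$, then every letter of $b^{2\epsilon}$ is $x^{\pm 1}$, and pairing the offending occurrence with the first (respectively last) letter of $b^{2\epsilon}$ produces a cancelling pair inside $g_1\circ b^{2\epsilon}$, $w_1$, $b^{2\epsilon}\circ g_2$ or $w_2$, all of which you have already excluded; the same collapsing trick also disposes of the cases where an endpoint of the spanning pair sits in $b^{\delta_1}$ or $b^{\delta_2}$. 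Second, you only treat explicitly the spanning pair with endpoints in $b^{\delta_1}$ and $b^{2\epsilon}$ across $g_1$; the mirror case with endpoints in $b^{2\epsilon}$ and $b^{\delta_2}$ across $g_2$ (take the last occurrence of the letter, which lies in the second period, and contradict geodesicity of $w_2$) should at least be stated, though it is strictly symmetric. With these details filled in, your argument is a complete and self-contained proof of the lemma.
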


\begin{cor} \label{cor:thm417}
Let $\GG$ be a partially commutative group and let $1\ne w\in \GG[x]$, $w=x^{k_1}g_1\cdots g_{l-1} x^{k_l}g_{l}$, where $g_i, \in \GG$, $g_1, \dots,g_l\ne 1$. Suppose that there exists a cyclically reduced block element $b\in \GG$ such that $[b,g_i]\ne 1$, $i=1,\dots, l-1$. Then there exists a positive integer $N$ such that for all $n>N$ the homomorphism $\varphi_{b,n}$ induced by the map $x\mapsto b^n$ maps the word $w$ to a non-trivial element of $\GG$.
\end{cor}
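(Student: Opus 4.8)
The plan is to reduce the statement to an iterated application of Lemma \ref{lem:4.17}. Write $w = x^{k_1} g_1 x^{k_2} g_2 \cdots x^{k_l} g_l$ with $g_1,\dots,g_l\neq 1$ and each $g_i$ (for $i=1,\dots,l-1$) non-commuting with the block $b$; note $k_1$ may be zero but if so the first syllable is just $g_1$, and we may harmlessly assume $k_1,\dots,k_l$ are all nonzero after regrouping (syllables with zero exponent merge with adjacent group elements, and if all exponents vanish the word is a nontrivial constant and there is nothing to prove). First I would fix $n$ larger than some threshold $N$ and study $\varphi_{b,n}(w) = b^{nk_1} g_1 b^{nk_2} g_2 \cdots b^{nk_l} g_l$. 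The idea is to show that for $n$ large this product, after reduction, still has length growing in $n$, hence in particular is nontrivial.

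The key step is an induction on $l$ using Lemma \ref{lem:4.17}. For the base of the argument, observe that since $[b,g_i]\neq 1$, the geodesic word $g_i$ neither left-divides nor is left-divided by large powers of $b$ in the relevant sense; more precisely, I would first invoke part (\ref{it:lemdis1}) of Lemma \ref{lem:discr} (applied to $z$ a suitable conjugate/core of $g_i$, or directly) to guarantee that for $n > N_0$ each factor $b^{nk_i} g_i b^{nk_{i+1}}$ can be written in the form $b^{\pm 1} \circ (\text{something}) \circ b^{\pm 1}$, i.e.\ has the shape required to feed into Lemma \ref{lem:4.17}. Then Lemma \ref{lem:4.17}, applied successively to the pairs $\bigl(b^{nk_1}g_1 b^{\pm1}, b^{\pm1}g_2 b^{nk_3}\bigr)$ and so on, shows that the whole product $\varphi_{b,n}(w)$ reduces to a geodesic word of the form $b^{\delta_1} \circ w' \circ b^{\delta_l}$ with $\delta_1,\delta_l=\pm1$, and moreover — tracking the bookkeeping — that its length is at least $(\sum_i |k_i|)n - C$ for a constant $C$ depending only on the lengths of the $g_i$. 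Taking $N$ large enough that this lower bound is positive gives $\varphi_{b,n}(w)\neq 1$ in $\GG$ for all $n > N$.

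The main obstacle I anticipate is the bookkeeping in the inductive step: Lemma \ref{lem:4.17} is stated for exactly two factors each of a very specific normal form $b^{\delta}\circ g\circ b^{\epsilon}$ with exponents $\pm1$, whereas here the powers $b^{nk_i}$ are large and the signs of the $k_i$ vary. One must carefully peel off single copies of $b$ from each end of each $b^{nk_i}$ block, keep the ``core'' $b^{n|k_i|-2}$ (or $b^{nk_i}$ when a block sits at the boundary and loses only one copy) intact inside the $\circ$-decomposition, and check that no catastrophic cancellation occurs between $g_i$ and these cores — which is exactly where the hypothesis $[b,g_i]\neq 1$ is used, via Lemma \ref{lem:discr}. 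A secondary subtlety is the case $k_1=0$ or $k_l=0$ (the word begins or ends with a group element rather than a power of $x$): there the leftmost/rightmost factor needs no splitting and one simply starts the induction one step in. Once these normal-form preliminaries are in place, the length estimate is routine and the conclusion is immediate.
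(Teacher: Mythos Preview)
Your approach is essentially the same as the paper's: use Lemma~\ref{lem:discr} to put each block $b^{\,\cdot\,}g_ib^{\,\cdot\,}$ into the form $b^{\pm1}\circ(\cdot)\circ b^{\pm1}$, then iterate Lemma~\ref{lem:4.17}. The paper handles the bookkeeping you flag as the ``main obstacle'' by a clean doubling trick: it passes to $\varphi_{b,2n}$ and factors
\[
\varphi_{b,2n}(w)=b^{k_1n}\bigl(b^{k_1n}g_1b^{k_2n}\bigr)\bigl(b^{k_2n}g_2b^{k_3n}\bigr)\cdots\bigl(b^{k_{l-1}n}g_{l-1}b^{k_ln}\bigr)b^{k_ln}g_l,
\]
so that each bracketed factor becomes $b^{\sign(k_i)}\circ\tilde g_i\circ b^{\sign(k_{i+1})}$ via Lemma~\ref{lem:discr}, and consecutive factors automatically share the same boundary exponent $\epsilon=\sign(k_{i+1})$ --- exactly the matching hypothesis of Lemma~\ref{lem:4.17}. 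This avoids the vaguer ``peel off single copies'' step (your displayed pair $(b^{nk_1}g_1b^{\pm1},\,b^{\pm1}g_2b^{nk_3})$ does not actually account for all of $b^{nk_2}$, so that particular factorisation would need repair). Also, your final length estimate is unnecessary: once the iterated product has the form $b^{\delta_1}\circ w'\circ b^{\delta_l}$ it is visibly nontrivial.
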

\begin{proof}
Let $g_i=g_{i,1}^{-1}g_{i,2}g_{i,1}$ be the cyclic decomposition of $g_i$. Taking $b'$ to be a large enough power of the block element $b$, we may assume that $b'$ does not left-divide the elements $g_{i,1}^{\pm 1}$, $g_{i,2}^{\pm 1}$, $i=1,\dots, l$.

Note that $b'$ satisfies the assumptions of Lemma \ref{lem:discr}.

Consider the homomorphism $\varphi_{b,2n}:\GG[x]\to \GG$, defined by $x\mapsto b^{2n}$.  Then
$$
\varphi_{b,2n}(w)=b^{k_1n}\left(b^{k_1n}g_1b^{k_2n}\right) \cdot \left(b^{k_2n}g_2b^{k_2n}\right) \cdots \left(b^{k_{l-1}n}g_{l-1}b^{k_ln}\right)\cdot b^{k_ln} g_l
$$
By Lemma \ref{lem:discr}, there exists $N$ such that if $n\ge N$, then every factor of $\varphi_{b,2n}(w)$ of the form $\left(b^{k_in}g_ib^{k_{i+1}n}\right)$ has the form $b^{\sign(k_i)}\circ \tilde{g_i}\circ b^{\sign(k_{i+1})}$. The statement now follows from Lemma \ref{lem:4.17}.
\end{proof}

\begin{prop} \label{prop:soltree}
Let $\GG$ be a partially commutative group and let $\HH$ be a directly indecomposable canonical parabolic subgroup of $\GG$. Then the group
$$
\GG'=\langle \GG, t\mid \rel(\GG), [t,C_\GG(\HH)]=1\rangle,
$$
where $C_\GG(\HH)$ is the centraliser of $\HH$ in $\GG$, is a $\GG$-discriminated by $\GG$ partially commutative group.
\end{prop}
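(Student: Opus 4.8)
The plan is to realise $\GG'$ explicitly as a partially commutative group obtained from $\GG$ by adding a single vertex to the commutation graph, and then to $\GG$-discriminate $\GG'$ into $\GG$ by the homomorphisms that fix $\GG$ and send the new generator $t$ to $b^{n}$, where $b$ is a carefully chosen cyclically reduced block element and $n$ is large.

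First I would describe $C_\GG(\HH)$ as a canonical parabolic subgroup. Writing $\HH=\GG(\mathcal Y)$, the Centraliser Theorem \ref{thm:centr} gives $C_\GG(y)=\langle y\rangle\times\BA(y)=\GG(\mathcal Z_y)$ for every vertex $y\in\mathcal Y$, where $\mathcal Z_y$ is the set of vertices equal to or commuting with $y$; intersecting over $y\in\mathcal Y$ and using that intersections of canonical parabolic subgroups are again canonical parabolic (they are generated by vertices and geodesics in them are geodesics of $\GG$, cf. \cite{EKR}), one gets $C_\GG(\HH)=\GG(\mathcal Z)$ with $\mathcal Z=\bigcap_{y\in\mathcal Y}\mathcal Z_y$. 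Let $\mathcal G'$ be $\mathcal G$ together with one new vertex $t$ joined by an edge to every vertex of $\mathcal Z$. Since $C_\GG(\HH)=\langle\mathcal Z\rangle$, the relation $[t,C_\GG(\HH)]=1$ is equivalent to the relations $[t,z]=1$, $z\in\mathcal Z$, so $\GG'=\GG(\mathcal G')$ is a partially commutative group; as $\mathcal G$ is a full subgraph of $\mathcal G'$ we get $\GG\le\GG'$, so $\GG'$ is a $\GG$-group. Finally, $\mathcal G'$ has no edges between $\cA\setminus\mathcal Z$ and $t$, which yields the amalgamated decomposition $\GG'=\GG\ast_{C}(C\times\langle t\rangle)$, where $C:=C_\GG(\HH)=\GG(\mathcal Z)$.

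Next, given nontrivial $w_1,\dots,w_m\in\GG'$, I would replace each $w_j$ by a conjugate $v_j$ which is cyclically reduced in this amalgam (this does not affect whether $\phi(w_j)=1$): each such $v_j$ is either an element of $\GG$, or of the form $ct^{k}$ with $c\in C$, $k\neq 0$, or of the form $a_1t^{m_1}a_2t^{m_2}\cdots a_st^{m_s}$ with $s\ge 1$, $m_i\neq 0$ and $a_i\in\GG\setminus C$. Let $a_1,\dots,a_r$ be all the syllables of the third kind occurring among the $v_j$. The crux is to choose \emph{one} cyclically reduced block element $b$ with $\az(\ov b)=\mathcal Y$ such that $[b,a_i]\neq 1$ for all $i$ and $b$ commutes with $C$ (so that $\phi_n\colon\GG'\to\GG$, $\phi_n|_\GG=\id$, $\phi_n(t)=b^{n}$, is a well-defined $\GG$-homomorphism). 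Commutation with $C$ holds for \emph{every} element supported on $\mathcal Y$, since $\mathcal Y\subseteq\mathcal Z_z$ for each $z\in\mathcal Z$. Being a block is automatic because $\HH$ directly indecomposable means $\Delta(\mathcal Y)$ is connected. If $|\mathcal Y|=1$, say $\mathcal Y=\{y\}$, then $C=C_\GG(y)$ and $b=y$ works, since no element of $\GG\setminus C$ commutes with $y$. If $|\mathcal Y|\ge 2$ there are infinitely many pairwise distinct roots supported on $\mathcal Y$ (for instance $y_1^{p}y_2\cdots y_k$, $p\ge 1$, for an enumeration $\mathcal Y=\{y_1,\dots,y_k\}$), and by the Centraliser Theorem each $a_i$ commutes with at most one of them: if $[b,a_i]=1$ with $b$ a root supported on $\mathcal Y$, then $a_i=b^{s}u$ with $u\in\BA(b)$, and $s\neq 0$ because $a_i\notin C$, whence $b$ is forced to be the root of the (unique) $\mathcal Y$-block of $a_i$. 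Hence all but finitely many of the candidate roots are good, and we fix one as $b$.

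With such a $b$ in hand the proof finishes by Corollary \ref{cor:thm417}. If $v_j\in\GG$ then $\phi_n(v_j)=v_j\neq 1$; if $v_j=ct^{k}$ then $\phi_n(v_j)=cb^{nk}\neq 1$ for all large $n$; and if $v_j=a_1t^{m_1}\cdots a_st^{m_s}$ then $a_1^{-1}\phi_n(v_j)a_1=b^{nm_1}a_2b^{nm_2}\cdots b^{nm_s}a_1$ is the image under $x\mapsto b^{n}$ of the element $W(x)=x^{m_1}a_2x^{m_2}\cdots x^{m_s}a_1\in\GG[x]$, which is nontrivial, has all constants nontrivial, and satisfies $[b,a_i]\neq 1$ by the choice of $b$; so Corollary \ref{cor:thm417} gives $N_j$ with $\phi_n(v_j)\neq 1$ for $n>N_j$. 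Taking $n$ above all the $N_j$ discriminates $w_1,\dots,w_m$, so $\GG'$ is $\GG$-discriminated by $\GG$. The only genuinely delicate step is the uniform choice of $b$ in the previous paragraph: one must exclude the possibility that some syllable of some $w_j$ lies in the centraliser of $b$, and this is exactly what the ``at most one bad root per syllable'' count, combined with direct indecomposability supplying infinitely many candidate roots, takes care of; the description of $C_\GG(\HH)$ and the manipulation of amalgam normal forms are routine.
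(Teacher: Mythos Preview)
Your proposal is correct and follows essentially the same strategy as the paper: define $\GG$-homomorphisms $\phi_n\colon t\mapsto b^n$ for a cyclically reduced block element $b$ supported on $\mathcal Y$, then invoke Corollary~\ref{cor:thm417}. The only differences are cosmetic: you organise the argument via the amalgam $\GG\ast_C(C\times\langle t\rangle)$ and cyclic reduction, whereas the paper writes a general element of $\GG'$ in the normal form $g_1t^{k_1}\cdots g_lt^{k_l}g_{l+1}$ coming from an ordering on $\cA^{\pm1}\cup\{t^{\pm1}\}$; and you give an explicit counting argument (at most one bad root per syllable) for the existence of $b$, where the paper simply asserts ``Clearly, such $b$ exists, since $\HH$ is directly indecomposable.''
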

\begin{proof}
Firstly, note that by Theorem \ref{thm:centr}, the subgroup  $C_\GG(\HH)$ is a canonical parabolic subgroup of the group $\GG$. It follows that the group $\GG'$ is a partially commutative group.

Consider an element $g\in \GG'$ written in the normal form induced by an ordering on $\cA^{\pm 1} \cup \{ t \}^{\pm 1}$, so that $t^{\pm 1}$ precedes any element $a \in \cA^{\pm 1}$,
$$
g=g_1t^{k_1}\cdots g_lt^{k_l} g_{l+1},
$$
where $[g_j,t]\ne 1$, $j=1,\dots, l$, $g_j\in\GG$. Since $[g_j,t]\ne 1$, we get that $g_i \notin C_{\GG}(\HH)$. Fix a block element $b$ from $\HH$, such that $[b,g_j]\ne 1$ for all $j=1,\dots,l$. Clearly, such $b$ exists, since $\HH$ is directly indecomposable. If we treat the word $g$ as an element of $\GG[t]$, then  by Corollary \ref{cor:thm417}, there exists a positive integer $M$ such that for all $n\ge M$, the family of homomorphisms $\varphi_{b,n}$ induced by the map $t\mapsto b^{n}$, maps $g$ to a non-trivial element of $\GG$.

Since, by the choice of $b$, the element $\varphi_{b,n}(t)$ belongs to $\HH$, it follows that the relations $[C_\GG(\HH),\varphi_{b,n}(t)]=1$ are satisfied. Therefore, the family of homomorphisms $\{\varphi_{b,n}\}$ induces a $\GG$-approximating family $\{\tilde{\varphi}_{b,n}\}$ of homomorphisms from the group $\GG'$ to $\GG$, see the diagram below.
$$
\xymatrix{
 \GG[t]  \ar[rd] \ar[dd]_{\varphi_{b,n}}   & \\
                                                     & \GG'\ar[ld]^{\tilde{\varphi}_{b,n}}\\
\GG &
}
$$
Considering, instead of one element $g$, a finite family of elements from $\GG'$ and choosing the block element $b$ in an analogous way, we get that the group $\GG'$ is $\GG$-discriminated by $\GG$.
\end{proof}

The property of a group to be equationally Noetherian plays an important role in algebraic geometry over groups, see Theorems \ref{thm:coordgr} and \ref{thm:ircoordgr}. It is known that every linear group (over a commutative, Noetherian, unitary ring) is equationally Noetherian (see \cite{Gub}, \cite{Br}, \cite{BMR1}). Partially commutative groups are linear, see \cite{Heqn}, hence equationally Noetherian.

\bigskip

In Section \ref{se:5.5} we shall use the notion of a  \index{graph product of groups}\emph{graph product of groups}. The idea of a graph product, introduced in \cite{graphpr}, is a generalisation of the concept of a partially commutative group. Let $G_1,\dots, G_k$, $G_i=\langle X_i\mid R_i\rangle$, $i=1,\dots, k$ be groups.  Let $\mathcal{G}'=(V(\mathcal{G}'), E(\mathcal{G}'))$ be a finite, undirected, simplicial graph, $V(\mathcal{G}')=\{v_1,\dots,v_k\}$.

A graph product $G=G_{\mathcal{G}'}=G_\Gamma(G_1,\dots,G_k)$ of the groups $G_1,\dots, G_k$ with respect to the graph $\mathcal{G}'$, is a group with a presentation of the form
$$
\langle X_1,\dots, X_k\mid R_1,\dots, R_k, \mathcal{R}\rangle,
$$
where $\mathcal{R}=\{[X_i,X_j]\mid \hbox{$v_i$ and $v_j$ are adjacent in $\mathcal{G}'$}\}$.

\subsection{Partially commutative monoids and DM-normal forms}

The aim of this section is to describe a normal form for elements of a partially commutative monoid. For our purposes, it is essential that the normal form be invariant with respect to inversion. As mentioned above, natural normal forms, such as the lexicographical normal form or the normal form arising from the bi-automatic structure on $\GG$, see \cite{VanWyk}, do not have this property and hence can not be used. In \cite{DM} V.~Diekert and A.~Muscholl specially designed a normal form that has this property. We now define this normal form and refer the reader to \cite{DM} for details.

Let $\GG$ be a partially commutative group given by the presentation $\langle \cA\mid R\rangle$. Let \glossary{name={$\FF$, $\FF(\cA^{\pm 1})$}, description={free monoid on the alphabet $\cA\cup \cA^{-1}$}, sort=F}$\FF=\FF(\cA^{\pm 1})$ be the free monoid on the alphabet $\cA\cup \cA^{-1}$ and let \glossary{name={$\Tr$, $\Tr(\cA^{\pm 1})$}, description={partially commutative monoid monoid on the alphabet $\cA\cup \cA^{-1}$ corresponding to the group $\GG$}, sort=T}$\Tr=\Tr(\cA^{\pm 1})$ be the partially commutative monoid with involution given by the presentation:
$$
\Tr(\cA^{\pm 1})=\langle \cA\cup \cA^{-1}\mid R_\Tr \rangle, \hbox{ where} \left[ a_i^{\epsilon}, a_j^\delta \right]\in R_\Tr \hbox{ if and only if } [a_i,a_j]\in R, \ \epsilon,\delta\in \{-1,1\}.
$$
The involution on $\Tr$ is induced by the operation of inversion in $\GG$ and does not have fixed points. We refer to it as to the \emph{inversion} in $\Tr$ and denote it by $^{-1}$.

Following \cite{DM}, we call the maximal subset $C=\mathcal{C}\cup\mathcal{C}^{-1}$ of $\cA \cup \cA^{-1}$ such that $[a,c]\notin R_\Tr$ if and only if $[b,c]\notin R_\Tr$ for all $a,b\in  C$ and $c\in \cA^{\pm 1}$, a \index{clan}\emph{clan}. A clan $C$ is called \index{clan!thin} \emph{thin} if there exist $a\in C$ and $b\in \cA^{\pm 1}\setminus C$ such that $[a,b]\in R_\Tr$ and is called \index{clan!thick} \emph{thick} otherwise. It follows that there is at most one thick clan and that the number of thin clans never equals 1.

It is convenient to encode an element of the partially commutative monoid as a finite labelled acyclic oriented graph $[V,E,\lambda]$, where $V$ is the set of vertices, $E$ is the set of edges and $\lambda:V\to \cA^{\pm 1}$ is the labelling. Such a graph induces a labelled partial order $[V, E^*,\lambda]$. For an element $w\in \Tr$, $w=b_{1}\cdots b_{n}$, $b_{i}\in \cA^{\pm 1}$, we introduce the graph $[V,E, \lambda]$ as follows. The set of vertices of $[V,E, \lambda]$ is in one-to-one correspondence with the letters of $w$, $V=\{1,\dots, n\}$. For the vertex $j$ we set $\lambda(j)=b_{j}$. We define an edge from $b_{i}$ to $b_{j}$ if and only if both $i<j$ and $[b_{i},b_{j}]\notin R_\Tr$. The graph $[V,E, \lambda]$ thereby obtained is called the \index{dependence graph}\emph{dependence graph} of $w$. Up to isomorphism, the dependence graph of $w$ is unique, and so is its induced labelled partial order, which we further denote by $[V, \le, \lambda]$.

Let $c_{1}<\dots<c_{q}$ be the linearly ordered subset of $[V,\le,\lambda]$ containing all vertices with label in the clan $C$. For the vertex $v\in V$, we define the \index{source point} \emph{source point} $s(v)$ and  and the \index{target point}\emph{target point} $t(v)$ as follows:
$$
s(v)=\sup\{i\mid c_i\le v\}, \quad t(v)=\inf \{i \mid v \le c_i\}.
$$
By convention $\sup\emptyset=0$ and $\inf\emptyset=q+1$. Thus, $0\le s(v)\le q$, $1\le t(v)\le q+1$ and $s(v)\le t(v)$ for all $v\in V$. Note that we have $s(v)=t(v)$ if and only if the label of $v$ belongs to $C$.

For $0\le s\le t\le q+1$, we define the \index{median position}\emph{median position} $m(s,t)$. For $s=t$ we let $m(s,t)=s$. For $s<t$, by Lemma 1 in \cite{DM}, there exist unique $l$  and $k$ such that $s\le l< t$, $k\ge 0$ and
$$
c_{s+1}\dots c_l\in \FF(\mathcal{C})(\mathcal{C}^{-1}\FF(\mathcal{C}))^k, \quad c_{l+1}\dots c_{t-1}\in (\FF(\mathcal{C}^{-1})\mathcal{C})^k \FF({\mathcal{C}^{-1}}).
$$
Then we define $m(s,t)=l+\frac{1}{2}$ and we call $m(s,t)$ the median position. Define the \index{global position} \emph{global position} of $v\in V$ to be $g(v)=m(s(v), t(v))$.

We define the normal form $\nf(w)$ of an element $w\in \Tr$ by introducing new edges into the dependence graph $[V, E, \lambda]$ of $w$. Let $u,v\in V$ be such that $\lambda(v)\in C$ and $[\lambda(u),\lambda(v)]\in R_\Tr$. We define a new edge from $u$ to $v$ if $g(u)<g(v)$, otherwise we define a new edge from $v$ to $u$. The new dependence graph $[V, \hat{E}, \lambda]$ defines a unique element of the trace monoid $\hat{\Tr}$, where $\hat{\Tr}$ is obtained from $\Tr$ by omitting the commutativity relations of the form $[c,a]$ for any $c\in C$ and any $a\in \cA^{\pm 1}$. Note that the number of thin clans of $\hat{\Tr}$ is strictly less than the number of thin clans of $\Tr$. We proceed by designating a thin clan in $\hat{\Tr}$ and introducing new edges in the dependence graph $[V, \hat{E}, \lambda]$.

It is proved in Lemma 4, \cite{DM}, that the normal form $\nf$ is a map from the trace monoid $\Tr$ to the free monoid $\FF(\cA\cup \cA^{-1})$, which is compatible with inversion, i.e.  it satisfies that $\pi(\nf(w))=w$ and $\nf(w^{-1})=\nf(w)^{-1}$, where $w\in \Tr$ and $\pi$ is the canonical epimorphism from $\FF(\cA\cup \cA^{-1})$ to $\Tr$.

We refer to this normal form as to the \index{normal form!DM-}\emph{DM-normal form} or simply as to the \index{normal form}\emph{normal form} of an element $w\in \Tr$.

\section{Reducing systems of equations over $\GG$ to constrained generalised equations over $\FF$} \label{sec:red}

The notion of a generalised equation was introduced by Makanin in \cite{Makanin}. A generalised equation is a combinatorial object which encodes a system of equations over a free monoid. In other words, given a system of equations $S$ over a free monoid, one can construct a generalised equation $\Upsilon(S)$. Conversely, to a generalised equation $\Upsilon$, one can canonically associate a system of equations $S_\Upsilon$ over the free monoid $\FF$. The correspondence described has the following property.  Given a system $S$ the system $S_{\Upsilon(S)}$ is equivalent to $S$, i.e. the set of solutions defined by $S$ and by $S_{\Upsilon(S)}$ are isomorphic; and vice-versa, given a generalised equation $\Upsilon$, one has that the generalised equations $\Upsilon(S_\Upsilon)$ and  $\Upsilon$ are equivalent,  see Definition \ref{defn:geequiv} and Lemma \ref{lem:cgege}.

The motivation for defining a generalised equation is two-fold. One the one hand, it gives an efficient way of encoding all the information about a system of equations and, on the other hand, elementary transformations, that are essential for Makanin's algorithm, see Section \ref{se:5.1}, have a cumbersome description in terms of systems of equations, but admit an intuitive one in terms of graphic representations of combinatorial generalised equations. In this sense graphic representations of generalised equations can be likened to matrices. In linear algebra there is a correspondence between systems of equations over a field $k$ and matrices with elements from $k$. To describe the set of solutions of a system of equations, one uses Gauss elimination which is usually applied to matrices, rather than systems of equations.

In \cite{Mak82}, Makanin reduced the decidability of equations over a free group to the decidability of finitely many systems of equations over a free monoid, in other words, he reduced the compatibility problem for a free group to the compatibility problem for generalised equations. In fact, Makanin essentially proved that the study of solutions of systems of equations over free groups reduces to the study of solutions of generalised equations in the following sense: every solution of the system of equations $S$ factors trough one of the solutions of one of the generalised equations and, conversely, every solution of the generalised equation extends to a solution of $S$.

A crucial fact for this reduction is that the set of solutions of a given system of equations $S$ over a free group, defines only finitely many   different cancellation schemes (cancellation trees).  By each of these cancellation trees, one can construct a generalised equation.

The goal of this section is to generalise this approach to systems of equations over a partially commutative group $\GG$.

In Section \ref{se:4-1}, we give the definition of a generalised equation over a monoid $\MM$. Then, along these lines, we define the constrained generalised equation over a monoid $\MM$. Informally, a constrained generalised equation is simply a system of equations over a monoid with some constrains imposed onto its variables. In our case, the monoid we work with is either a trace monoid (alias for a partially commutative monoid) or a free monoid and the constrains that we impose on the variables are $\lra$-commutation, see Section \ref{sec:pcgr}.

Our aim is to reduce the study of solutions of systems of equations over partially commutative groups to the study of solutions of constrained generalised equations over a free  monoid. We do this reduction in two steps.

In Section \ref{sec:redgrmon}, we show that to a system of equations over a partially commutative group one can associate a finite collection of (constrained) generalised equations over a partially commutative monoid. The family of solutions of the collection  of  generalised equations constructed describes all solutions of the initial system over a partially commutative group, see Lemma \ref{le:14}.

This reduction is performed using an analogue of the notion of a cancellation tree for free groups. Let $S(X)=1$ be an equation over $\GG$. Then for any solution $(g_1,\dots, g_n)\in \GG^n$ of $S$, the word $S(g_1,\dots, g_n)$ represents a trivial element in $\GG$. Thus, by van Kampen's lemma there exists a van Kampen diagram for this word. In the case of partially commutative groups van Kampen diagrams have a structure of a band complex, see \cite{CK1}. We show in Proposition \ref{lem:prod} that van Kampen diagrams over a partially commutative group can be taken to a ``standard form''. This standard form of van Kampen diagrams can be viewed as an analogue of the notion of a cancellation scheme. By Proposition \ref{lem:prod}, it follows that the set of solutions of a given system of equations $S$ over a partially commutative group defines only finitely many van Kampen diagrams in standard form, i.e. finitely many different cancellation schemes. For each of these cancellation schemes one can construct a constrained generalised equation over the partially commutative monoid $\Tr$.

In Section \ref{sec:pcmontofrmon} we show that for a given generalised equation over $\Tr$ one can associate a finite collection of (constrained) generalised equations over the free monoid $\FF$. The family of solutions of the generalised equations from this collection describes all solutions of the initial generalised equation over $\Tr$, see Lemma \ref{lem:R1}.

This reduction relies on the ideas of Yu.~Matiyasevich, see \cite{Mat} (see also \cite{DMM}) and V.~Diekert and A.~Muscholl, see \cite{DM}. Essentially, it states that there are finitely many ways to take the product of words (written in DM-normal form) in $\Tr$ to DM-normal form, see Proposition \ref{prop:DMM} and Corollary \ref{cor:DMM}. We apply these results to reduce the study of the solutions of  generalised equations over the trace monoid to the study of solutions of constrained generalised equations over a free monoid.

Finally, in Section \ref{sec:expl1} we give an example that follows the exposition of Section \ref{sec:red}. We advise the reader unfamiliar with the terminology, to read the example of Section \ref{sec:expl1} simultaneously with the rest of Section \ref{sec:red}.

We would like to mention that in \cite{DM} V.~Diekert and A.~Muscholl give a reduction of the compatibility problem of equations over a partially commutative group $\GG$ to the decidability of equations over a free monoid with constraints. The reduction given in \cite{DM} provides a solution only to the compatibility problem of systems of equations over $\GG$ and uses the theory of formal languages. In this section we employ the machinery of generalised equations in order to reduce the description of the set of solutions of a system over $\GG$ to the same problem for constrained generalised equations over a free monoid and obtain a convenient setting for a further development of the process.

\subsection{Definition of (constrained) generalised equations} \label{se:4-1}

Let $X = \{x_1, \ldots, x_n\}$ be a set of variables and let $\GG = \GG(\cA)$  be the partially commutative group generated by $\cA$ and  $\GG[X] = \GG \ast F(X)$.

Further by \glossary{name={$\MM$}, description={free or partially commutative monoid, $\MM=\FF(\cA^{\pm 1})$ or $\MM=\Tr(\cA^{\pm 1})$}, sort=M}$\MM$ we always mean either $\FF(\cA^{\pm 1})$ or $\Tr(\cA^{\pm 1})$.

\begin{defn}
A \index{generalised equation!combinatorial}\emph{combinatorial generalised equation} $\Upsilon$ over $\MM$ (with coefficients from $\cA^{\pm 1}$)  consists of the following objects:
\begin{enumerate}
    \item A finite set of \index{base!of a generalised equation}{\em bases} \glossary{name={$\BS$}, description={the set of bases of a generalised equation}, sort=B}$\BS = \BS(\Upsilon)$.  Every base is either a \index{base!constant}constant base or a \index{base!variable}variable base. Each constant base is associated with exactly one letter from $\cA^{\pm 1}$. The set of variable bases ${\mathcal M}$ consists of $2n$ elements ${\mathcal M} = \{\mu_1, \ldots, \mu_{2n}\}$. The set ${\mathcal M}$ comes equipped with two functions: a function \glossary{name={$\varepsilon$}, description={function from the set of variable bases of a generalised equation to $\{-1,1\}$ that determines orientation of the base}, sort=E}$\varepsilon: {\mathcal M} \rightarrow \{-1,1\}$ and an involution \glossary{name={$\Delta$}, description={involution on the set of variable bases of a generalised equation that determines the dual of a base}, sort=D}$\Delta: {\mathcal M} \rightarrow {\mathcal M}$ (i.e. $\Delta$ is a bijection such that $\Delta^2$ is the identity on  ${\mathcal M}$). Bases $\mu$ and $\Delta(\mu)$ are called \index{base!dual}{\em dual bases}.
    \item A set of \index{boundary!of a generalised equation}{\em boundaries} \glossary{name={$\BD$}, description={the set of boundaries of a generalised equation}, sort=B}$\BD = \BD(\Upsilon)$. The set $\BD$ is a finite initial segment of the set of positive integers  $\BD = \{1, 2, \ldots, \rho_\Upsilon+1\}$.
    \item Two functions \glossary{name={$\alpha$}, description={function from the set $\BS$ of bases to the set $\BD$ of boundaries of a generalised equation that determines the left-most boundary of a base}, sort=A}$\alpha : \BS \rightarrow \BD$ and \glossary{name={$\beta$}, description={function from the set $\BS$ of bases to the set $\BD$ of boundaries of a generalised equation that determines the right-most boundary of a base}, sort=B}$\beta : \BS \rightarrow \BD$.   These functions satisfy the following conditions: $\alpha(b) <  \beta(b)$  for every base $b \in \BS$; if $b$ is a constant base then $\beta(b) = \alpha(b) + 1$.
    \item A finite set of \emph{boundary connections} \glossary{name={$\BC$}, description={the set of boundary connections of a generalised equation}, sort=B} $\BC = \BC(\Upsilon)$. A \index{boundary connection}($\mu$-)boundary connection is a triple $(i,\mu,j)$ where $i, j \in \BD$, $\mu \in {\M}$ such that $\alpha(\mu) <  i < \beta(\mu)$, $\alpha(\Delta(\mu)) <  j < \beta(\Delta(\mu))$
        We assume that if $(i,\mu,j) \in \BC$ then $(j,\Delta(\mu),i) \in \BC$. This allows one to identify the boundary
        connections $(i,\mu,j)$ and $(j,\Delta(\mu),i)$.
\end{enumerate}
\end{defn}
Though, by the definition, a combinatorial generalised equation is a combinatorial object, it is not practical to work with combinatorial generalised equations describing its sets and functions. It is more convenient to encode all this information in its graphic representation. We refer the reader to Section \ref{sec:expl1} for the construction of a graphic representation of a generalised equation. All examples given in this paper use the graphic representation of generalised equations.

To a combinatorial generalised equation $\Upsilon$ over a monoid $\MM$, one can associate a system of equations \glossary{name={$S_\Upsilon$}, description={system of equations associated to the generealised equation}, sort=S}$S_\Upsilon$ in \index{variable of a generalised equation}{\em variables} $h_1, \ldots, h_\rho$, \glossary{name={$\rho$, $\rho_\Upsilon$, $\rho_\Omega$}, description={the number of items of a generalised equation}, sort=R}$\rho=\rho_\Upsilon$ and coefficients from $\cA^{\pm 1}$ (variables $h_i$ are sometimes  called \index{item!of a generalised equation}{\em items}). The system of equations  $S_\Upsilon$  consists of the following three types of equations.

\begin{enumerate}
    \item Each pair of dual variable bases $(\lambda, \Delta(\lambda))$ provides an equation over the monoid $\MM$:
$$
[h_{\alpha (\lambda )}h_{\alpha (\lambda )+1}\cdots h_{\beta (\lambda )-1}]^ {\varepsilon (\lambda)}= [h_{\alpha (\Delta (\lambda ))}h_{\alpha (\Delta (\lambda ))+1} \cdots h_{\beta (\Delta (\lambda ))-1}]^ {\varepsilon (\Delta (\lambda))}.
$$
These equations are called \index{equation of a generalised equation!basic}{\em basic equations}. In the case when $\beta(\lambda)=\alpha(\lambda)+1$ and $\beta(\Delta(\lambda))=\alpha(\Delta(\lambda))+1$, i.e. the corresponding basic equation takes the form:
$$
[h_{\alpha (\lambda )}]^ {\varepsilon (\lambda)}= [h_{\alpha (\Delta (\lambda ))}]^{\varepsilon (\Delta (\lambda))}.
$$
    \item For each constant base $b$ we write down a \index{equation of a generalised equation!coefficient}{\em coefficient equation} over $\MM$:
$$
h_{\alpha(b)} = a,
$$
where $a \in \cA^{\pm 1}$ is the constant associated to $b$.
\item Every boundary connection $(p,\lambda,q)$ gives rise to a \index{equation of a generalised equation!boundary}\emph{boundary equation} over $\MM$, either
            $$
            [h_{\alpha (\lambda )}h_{\alpha (\lambda)+1}\cdots h_{p-1}]= [h_{\alpha (\Delta (\lambda ))}h_{\alpha (\Delta (\lambda ))+1} \cdots h_{q-1}],
            $$
            if $\varepsilon (\lambda)= \varepsilon (\Delta(\lambda))$, or
            $$
            [h_{\alpha(\lambda )}h_{\alpha (\lambda )+1}\cdots h_{p-1}]= [h_{q}h_{q+1}\cdots h_{\beta (\Delta (\lambda))-1}]^{-1},
            $$
            if $\varepsilon(\lambda)= -\varepsilon (\Delta(\lambda))$.
\end{enumerate}

Conversely, given a system of equations $S(X,\cA)=S$ over a monoid $\MM$, one can construct a combinatorial generalised equation \glossary{name={$\Upsilon(S)$}, description={combinatorial generalised equation associated to a system of equations over a monoid}, sort=U}$\Upsilon(S)$ over $\MM$.

Let $S=\{L_1=R_1,\dots, L_\m=R_\m\}$ be a system of equations over a monoid $\MM$. Write $S$ as follows:
$$
\begin{array}{lll}
l_{11}\cdots l_{1i_1}&=&r_{11}\cdots r_{1j_1}\\
 &\cdots&\\
l_{\m 1}\cdots l_{\m i_\m}&=&r_{\m 1}\cdots r_{\m j_\m}
\end{array}
$$
where $l_{ij}, r_{ij}\in X^{\pm 1}\cup \cA^{\pm 1}$. The set of boundaries $\BD(\Upsilon(S))$ of the generalised equation $\Upsilon(S)$ is
$$
\BD(\Upsilon(S))=\left\{1,2,\dots, \sum\limits_{k=1}^\m (i_k+j_k) +1\right\}.
$$
For all $k=1,\dots, \m$, we introduce a pair of dual variable bases $\mu_k, \Delta(\mu_k)$, so that
$$
\begin{array}{lll}
\alpha(\mu_k)=\sum\limits_{n=1}^{k-1} (i_n+j_n) +1,& \beta(\mu_k)=\alpha(\mu_k)+i_k,& \varepsilon(\mu_k)=1;\\
\alpha(\Delta(\mu_k))=\alpha(\mu_k)+i_k,& \beta(\Delta(\mu_k))=\alpha(\Delta(\mu_k))+j_k, & \varepsilon(\Delta(\mu_k))=1.
\end{array}
$$
For any pair of distinct occurrences of a variable $x\in X$ as $l_{ij}=x^{\epsilon_{ij}}$, $l_{rs}=x^{\epsilon_{st}}$, $\epsilon_{ij},\epsilon_{st}\in \{\pm 1\}$, where $(i,j)$ precedes $(s,t)$ in left-lexicographical order, we introduce a pair of dual bases $\mu_{x,q}$, $\Delta(\mu_{x,q})$, where $q=(i,j,s,t)$ so that
$$
\begin{array}{lll}
\alpha(\mu_{x,q})=\sum\limits_{n=1}^{i-1} (i_n+j_n) +j,& \beta(\mu_{x,q})=\alpha(\mu_{x,q})+1, & \varepsilon(\mu_{x,q})=\epsilon_{ij};\\
\alpha(\Delta(\mu_{x,q}))=\sum\limits_{n=1}^{s-1} (i_n+j_n) +t,& \beta(\Delta(\mu_{x,q}))=\alpha(\mu_{x,q})+1, & \varepsilon(\Delta(\mu_{x,q}))=\epsilon_{st}.
\end{array}
$$
Analogously, for any two occurrences of a variable $x\in X$ in $S$ as $r_{ij}=x^{\epsilon_{ij}}$, $r_{st}=x^{\epsilon_{st}}$ or as $r_{ij}=x^{\epsilon_{ij}}$, $l_{st}=x^{\epsilon_{st}}$, we introduce the corresponding pair of dual bases.

For any occurrence of a constant $a\in \cA$ in $S$ as $l_{ij}=a^{\epsilon_{ij}}$ we introduce a constant base $\nu_a$ so that
$$
\alpha(\nu_a)=\sum\limits_{n=1}^{i-1} (i_n+j_n) +j,\quad \beta(\nu_a)=\alpha(\nu_a)+1.
$$
Similarly, for any occurrence of a constant $a\in \cA$ as $r_{st}=a^{\epsilon_{st}}$, we introduce a constant base $\nu_a$ so that
$$
\alpha(\nu_a)=\sum\limits_{n=1}^{s-1} (i_n+j_n) +i_s+t,\quad \beta(\nu_a)=\alpha(\nu_a)+1.
$$

The set of boundary connections $\BC$ is empty.

\begin{defn} \label{defn:geequiv}
Introduce an equivalence relation on the set of all combinatorial generalised equations over $\MM$ as follows. Two generalised equations $\Upsilon$ and $\Upsilon'$ are \index{equivalence!of generalised equations}\emph{equivalent}, in which case we write $\Upsilon\approx\Upsilon'$, if and only if the corresponding systems of equations $S_\Upsilon$  and $S_{\Upsilon'}$ are equivalent {\rm(}recall that two systems are called equivalent if their sets of solutions are isomorphic{\rm)}.
\end{defn}

\begin{lem} \label{lem:cgege}
There is a one-to-one correspondence between the set of $\approx$-equivalence classes of combinatorial generalised equations over $\MM$ and the set of equivalence classes of systems of equations over $\MM$. Furthermore, this correspondence is given effectively.
\end{lem}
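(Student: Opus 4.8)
The plan is to exhibit the correspondence in both directions and check that the two passages are mutually inverse on equivalence classes. In one direction, the excerpt already provides the construction $\Upsilon \mapsto S_\Upsilon$: given a combinatorial generalised equation $\Upsilon$ over $\MM$, one writes down the basic equations (one per pair of dual variable bases), the coefficient equations (one per constant base), and the boundary equations (one per boundary connection), all in the items $h_1,\dots,h_{\rho_\Upsilon}$. In the other direction, the excerpt provides $S \mapsto \Upsilon(S)$: given $S = \{L_1 = R_1,\dots,L_\m = R_\m\}$ written letter-by-letter as above, one lays the left- and right-hand sides of all the equations consecutively along an interval of boundaries $\{1,\dots,\sum_k(i_k+j_k)+1\}$, places a pair of dual variable bases $\mu_k,\Delta(\mu_k)$ over the $L_k$-segment and the $R_k$-segment for each equation, a pair of dual (length-one) bases over each pair of occurrences of a common variable $x\in X$, and a constant base over each occurrence of a letter $a\in\cA$; the set of boundary connections is empty. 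Thus both maps are already defined; what remains is to verify the compatibility asserted in the statement.

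The key steps, in order, are as follows. First I would check that $S_{\Upsilon(S)}$ is equivalent to $S$. Reading off the basic equations of $\Upsilon(S)$: the pair $\mu_k,\Delta(\mu_k)$ gives exactly the equation $[h_{\alpha(\mu_k)}\cdots h_{\beta(\mu_k)-1}] = [h_{\alpha(\Delta(\mu_k))}\cdots h_{\beta(\Delta(\mu_k))-1}]$, which, under the identification of the items covering the $L_k$-segment with the letters of $L_k$ and the items covering the $R_k$-segment with the letters of $R_k$, is the equation $L_k = R_k$; the variable-occurrence bases force all items corresponding to occurrences of the same $x\in X$ to be equal (hence define a common value), and the constant bases force the items over occurrences of $a\in\cA$ to equal $a$. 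So the map $(g_1,\dots,g_n)\mapsto (\text{induced values of the }h_i)$ is a bijection between $V_\MM(S)$ and $V_\MM(S_{\Upsilon(S)})$; one checks it is a morphism of algebraic sets in both directions (the $h_i$ are words in the $x_j$ and $\cA$, and conversely each $x_j$ is one of the $h_i$), giving an isomorphism, i.e. $S_{\Upsilon(S)}\approx S$ in the sense of Definition~\ref{defn:geequiv} extended to systems. Second, I would check that $\Upsilon(S_\Upsilon)\approx\Upsilon$: by definition of $\approx$ this amounts to $S_{\Upsilon(S_\Upsilon)}$ being equivalent to $S_\Upsilon$, which follows by applying the first step with $S = S_\Upsilon$. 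Third, I would combine these: the two assignments descend to well-defined maps between $\approx$-classes of combinatorial generalised equations and equivalence classes of systems (well-definedness because $\Upsilon\approx\Upsilon'$ means $S_\Upsilon$, $S_{\Upsilon'}$ equivalent, and because equivalent systems yield equivalent associated generalised equations — again by the first step together with transitivity of "equivalent"), and steps one and two say they are mutually inverse. Finally, effectiveness is immediate: both constructions are given by explicit finite recipes (counting occurrences, assigning boundaries by the displayed summation formulas), so each can be carried out by an algorithm.

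The main obstacle, such as it is, is bookkeeping rather than conceptual: one must set up carefully the identification between the items $h_i$ of $\Upsilon(S)$ and the letter-positions of the $L_k$ and $R_k$, track the orientation functions $\varepsilon$ (all equal to $1$ in $\Upsilon(S)$, so that no inversions intervene in the basic equations) and the signs $\epsilon_{ij}$ carried by the variable-occurrence bases, and confirm that the resulting system $S_{\Upsilon(S)}$ really is a system over $\MM$ of the expected form. One should also be slightly careful that "equivalent systems" for monoids is the categorical notion inherited from the group case (isomorphic algebraic sets), as noted at the end of Section~\ref{se:2-4}, so that Definition~\ref{defn:geequiv} and the statement of the lemma refer to the same relation. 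With these identifications pinned down the verification is routine, and the effectiveness claim needs no extra argument beyond observing that every quantity used — the number of equations, the lengths $i_k,j_k$, the positions of occurrences — is computed directly from the data of $S$ (respectively of $\Upsilon$).
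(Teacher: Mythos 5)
Your proposal is correct and follows essentially the same route as the paper: both directions use the already-defined assignments $\Upsilon\mapsto S_\Upsilon$ and $S\mapsto\Upsilon(S)$, reduce everything (including $\Upsilon(S_\Upsilon)\approx\Upsilon$ and well-definedness on classes) to the single fact that $S_{\Upsilon(S)}$ is equivalent to $S$, and note effectiveness from the explicitness of the constructions. The only difference is one of detail: the paper dismisses that key fact as "easy to check by construction," whereas you spell out the solution-set bijection and its word-mapping inverses, which is a harmless elaboration.
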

\begin{proof}
Define the correspondence between the set of $\approx$-equivalence classes of combinatorial generalised equations over $\MM$ and the set of equivalence classes of systems of equations over $\MM$ as follows. To a combinatorial generalised equation $\Upsilon$ over $\MM$ we assign the system of equations $S_{\Upsilon}$ associated to $\Upsilon$ and to a system of equations $S$ over $\MM$ we assign the combinatorial generalised equation $\Upsilon(S)$ associated to it. We now prove that this correspondence is well-defined.

Let $\Upsilon$ and $\Upsilon'$ be two $\approx$-equivalent generalised equations and let $S_{\Upsilon}$ and $S_{\Upsilon'}$ be the corresponding associated systems of equations over $\MM$. Then, by definition of the equivalence relation `$\approx$', one has that $S_{\Upsilon}$ and $S_{\Upsilon'}$ are equivalent.

Conversely,  let $S$ and $S'$ be two equivalent systems of equations over $\MM$ and let $\Upsilon(S)$ and $\Upsilon(S')$ be the corresponding associated combinatorial generalised equations over $\MM$. By definition, $\Upsilon(S)$ and $\Upsilon(S')$ are equivalent if and only if the systems $S_{\Upsilon(S)}$ and $S_{\Upsilon(S')}$ are. By construction, it is easy to check, that  the  system of equations $S_{\Upsilon(S)}$ associated to $\Upsilon(S)$ is equivalent to $S$ and the system $S_{\Upsilon(S')}$ is equivalent to $S'$, thus, by transitivity, $S_{\Upsilon(S)}$ is equivalent to $S_{\Upsilon(S')}$.
\end{proof}

Abusing the language, we call the system $S_\Upsilon$ associated to a generalised equation $\Upsilon$ \index{generalised equation}{\em generalised equation} over $\MM$, and, abusing the notation, we further denote it by the same symbol $\Upsilon$.

\begin{defn}\label{defn:Re}
A \index{generalised equation!constrained}\emph{constrained generalised equation} $\Omega$ over $\MM$ is a pair \glossary{name={$\gpo$},description={constrained generalised equation}, sort=O}$\gpo$, where $\Upsilon$ is a generalised equation and $\Re_\Upsilon$ is a symmetric binary relation on the set of variables $h_1, \ldots, h_\rho$ of the generalised equation $\Upsilon$ that satisfies the following condition.
\begin{enumerate}
\item[($\star$)]
Let $\Re_\Upsilon(h_i)=\left\{ h_j\mid \Re_\Upsilon(h_i,h_j)\right\}$. If in $\Upsilon$ there is an equation of the form
$$
h_{i_1}^{\epsilon_{i_1}}\dots h_{i_k}^{\epsilon_{i_k}} =h_{j_1}^{\epsilon_{j_1}}\dots h_{j_l}^{\epsilon_{j_l}}, \ \epsilon_{i_n},\epsilon_{j_l},\in \{1,-1\}, \ n=1,\dots, k,\, t=1,\dots, l
$$
and there exists $h_m$ such that $\Re_{\Upsilon}(h_{i_n},h_m)$ for all $n=1,\dots, k$, then $\Re_{\Upsilon}(h_{j_t},h_m)$ for all $t=1,\dots, l$.
\end{enumerate}
\end{defn}

\begin{defn}
Let $\Upsilon(h) = \{L_1(h)=R_1(h), \ldots, L_s(h) = R_s(h)\}$ be a generalised equation over $\Tr$ in variables $h = (h_1, \ldots,h_{\rho})$ with coefficients from $\GG$. A tuple $H = (H_1, \ldots, H_{\rho})$ of non-empty words from $\GG$ in the normal form (see Section \ref{sec:pcgr}) is a \index{solution!of a generalised equation!over $\Tr$}{\em solution} of $\Upsilon$ if:
\begin{enumerate}
\item all words $L_i(H), R_i(H)$ are geodesic (treated as elements of $\GG$);
\item $L_i(H) =  R_i(H)$ in the monoid $\Tr$ for all $i = 1, \dots, s$.
\end{enumerate}
\end{defn}

\begin{defn}
Let $\Upsilon(h) = \{L_1(h)=R_1(h), \ldots, L_s(h) = R_s(h)\}$ be a generalised equation over $\FF$ in variables $h = (h_1, \ldots,h_{\rho})$ with coefficients from $\GG$. A tuple $H = (H_1, \ldots, H_{\rho})$ of non-empty geodesic words from $\GG$ is a \index{solution!of a generalised equation!over $\FF$}{\em solution} of $\Upsilon$ if:
\begin{enumerate}
\item all words $L_i(H), R_i(H)$ are geodesic (treated as elements of $\GG$);
\item $L_i(H) =  R_i(H)$ in $\FF$ for all $i = 1, \dots, s$.
\end{enumerate}
\end{defn}
The notation \glossary{name={$(\Upsilon, H)$}, description={$H$ is a solution of the generalised equation $\Upsilon$}, sort=U}$(\Upsilon, H)$ means that $H$ is a solution of the generalised equation $\Upsilon$.

\begin{defn}
Let $\Omega=\gpo$ be a constrained generalised equation over $\MM$ in variables $h = (h_1, \ldots,h_{\rho})$ with coefficients from $\GG$. A tuple $H = (H_1, \ldots, H_{\rho})$ of non-empty geodesic words in $\GG$ is a \index{solution!of a constrained generalised equation}{\em solution} of $\Omega$ if $H$ is a solution of the generalised equation $\Upsilon$ and $H_i\lra H_j$ if $\Re_\Upsilon(h_i,h_j)$.

The \index{length!of a solution of the generalised equation}\emph{length} of a solution $H$ is defined to \glossary{name={$|H|$}, description={length of a solution of a (constrained) generalised equation, $|H|=\sum\limits_{i=1}^\rho |H_i|$}, sort=H} be
$$
|H|=\sum\limits_{i=1}^\rho |H_i|.
$$
\end{defn}
The notation \glossary{name={$(\Omega, H)$}, description={$H$ is a solution of the constrained generalised equation $\Omega$}, sort=O}$(\Omega, H)$ means that $H$ is a solution of the constrained generalised equation $\Omega$.

The term ``constrained generalised equation over $\MM$'' can be misleading. We would like to stress that solutions of a constrained generalised equations are \emph{not} solutions of the system of equations (associated to the generalised equation) over $\MM$. They are tuples of non-empty geodesic words from $\GG$ such that substituting these words in to the equations of a generalised equation, one gets equalities in $\MM$.

\begin{NB}
Further we abuse the terminology and call $\Omega$ simply ``generalised equation''. However, we always use the symbol \glossary{name={$\Omega$}, description={constrained generalised equation}, sort=O}$\Omega$ for constrained generalised equations and $\Upsilon$\glossary{name={$\Upsilon$}, description={generalised equation}, sort=U} for generalised equations.
\end{NB}

We now introduce a number of notions that we use throughout the text. Let $\Omega$ be a generalised equation.

\begin{defn}[Glossary of terms]
A boundary $i$ \index{boundary!intersects a base}\emph{intersects} the base $\mu$ if $\alpha (\mu)<i<\beta (\mu)$. A boundary $i$ \index{boundary!touches a base}\emph{touches} the base $\mu$ if $i=\alpha (\mu)$ or $i=\beta (\mu)$. A boundary is said to be \index{boundary!open}\emph{open} if it intersects at least one base, otherwise it is called \index{boundary!closed}\emph{closed}. We say that a boundary $i$ is \index{boundary!tied}\emph{tied} in a base $\mu$ (or is \index{boundary!$\mu$-tied}\emph{$\mu$-tied}) if there exists a boundary connection $(p,\mu,q)$ such that $i = p$ or $i = q$. A boundary is \index{boundary!free}\emph{free} if it does not touch any base and it is not tied by a boundary connection.

An item $h_i$ \index{item!belongs to a base}\emph{belongs} to a base $\mu$ or, equivalently, $\mu$ \index{base!contains an item}\emph{contains} $h_i$, if $\alpha (\mu)\leq i\leq \beta (\mu)-1$ (in this case we sometimes write $h_i\in \mu$). An item $h_i$ is called a \index{item!constant}\emph{constant} item if it belongs to a constant base and $h_i$ is called a \index{item!free}\emph{free} item if it does not belong to any base. By \glossary{name={$\gamma(h_i)$, $\gamma_i$},description={the number of bases which contain $h_i$},sort=G}$\gamma(h_i) =\gamma_i$ we denote the number of bases which contain $h_i$, in this case we also say that $h_i$ is \index{item!covered $\gamma_i$ times}\emph{covered} $\gamma_i$ times. An item $h_i$ is called \index{item!linear}\emph{linear} if $\gamma_i=1$ and is called \index{item!quadratic}\emph{quadratic} if $\gamma_i=2$.

Let $\mu, \Delta(\mu)$ be a pair of dual bases such that $\alpha (\mu)=\alpha (\Delta(\mu))$ and  $\beta (\mu)=\beta (\Delta(\mu))$ in this case we say that bases $\mu$ and $\Delta(\mu)$ form \index{pair of matched bases}\emph{a pair of matched bases}. A base $\lambda$ is \index{base!contained in another base}\emph{contained} in a base $\mu$ if $\alpha(\mu) \leq \alpha(\lambda) < \beta(\lambda) \leq \beta(\mu)$. We say that two bases $\mu$ and $\nu$ \index{base!intersects another base}\emph{intersect} or \index{base!overlaps with another base}\emph{overlap}, if $[\alpha(\mu),\beta(\mu)]\cap [\alpha(\nu),\beta(\nu)]\ne \emptyset$. A base $\mu$ is called \index{base!linear}\emph{linear} if there exists an item $h_i\in\mu$ so that $h_i$ is linear.

A set of consecutive items \glossary{name={$[i,j]$}, description={section $\{h_i,\ldots, h_{j-1}\}$ of a generalised equation}, sort=S}$[i,j] = \{h_i,\ldots, h_{j-1}\}$ is called a \index{section}\emph{section}. A section is said to be \index{section!closed}\emph{closed} if the boundaries $i$ and $j$ are closed and all the boundaries between them are open. If $\mu$ is a base then by \glossary{name={$\sigma(\mu)$}, description={the section $[\alpha(\mu),\beta(\mu)]$}, sort=S}$\sigma(\mu)$ we denote the section $[\alpha(\mu),\beta(\mu)]$ and by \glossary{name={$h(\mu)$}, description={the product of items $h_{\alpha(\mu)}\ldots h_{\beta(\mu)-1}$}, sort=H}$h(\mu)$ we denote the product of items $h_{\alpha(\mu)}\ldots h_{\beta(\mu)-1}$. In general for a section $[i,j]$ by \glossary{name={$h[i,j]$}, description={the product of items $h_i \ldots h_{j-1}$}, sort=H}$h[i,j]$ we denote the product  $h_i \ldots h_{j-1}$. A base $\mu$ \index{base!belongs to a section}\emph{belongs} to a section $[i,j]$ if $i\le\alpha(\mu)<\beta(\mu)\le j$. Similarly an item $h_k$ \index{item!belongs to a section}\emph{belongs} to a section $[i,j]$ if $i\le k<j$. In these cases we write $\mu\in [i,j]$ or $h_k\in [i,j]$.

Let $H=(H_1,\dots,H_{\rho})$ be a solution of a generalised equation $\Omega$ in variables $h=\{h_1,\dots, h_\rho\}$. We use the following notation. For any word $W(h)$ in $\GG[h]$ set \glossary{name={$W(H)$}, description={for any word $W(h)$ in $\GG[h]$ and any solution $H$ of a generalised equation, $W(H)=H(W(h))$}, sort=W}$W(H)=H(W(h))$. In particular, for any base $\mu$ (section $\sigma=[i,j]$) of $\Omega$,  we have \glossary{name={$H(\mu)$}, description={if $H$ is a solution  of a generalised equation, $H(h(\mu))=H_{\alpha(\mu)} \cdots H_{\beta(\mu)-1}$}, sort=H} $H(\mu)=H(h(\mu))=H_{\alpha(\mu)} \cdots H_{\beta(\mu)-1}$ (\glossary{name={$H[i,j]$, $H(\sigma)$},description={if $H$ is a solution  of a generalised equation, $H(\sigma)=H_i\cdots H_{j-1}$}, sort=H}$H[i,j]=H(\sigma)=H(h(\sigma))=H_i\cdots H_{j-1}$, respectively).
\end{defn}

We now formulate some necessary conditions for a generalised equation to have a solution.

\begin{defn} \label{def:forcon}
A generalised equation $\Omega=\gpo$ is called \index{generalised equation!formally consistent}{\em formally consistent} if it satisfies the following conditions.
\begin{enumerate}
    \item \label{it:forcon1} If $\varepsilon (\mu)=-\varepsilon (\Delta (\mu))$, then the bases $\mu$ and $\Delta (\mu )$ do not intersect, i.e. none of the items $h_{\alpha(\mu)}, h_{\beta(\mu)-1}$ is contained in $\Delta (\mu )$.
    \item \label{it:forcon2}  Given two boundary connections $(p,\lambda ,q)$ and $(p_1,\lambda ,q_1)$, if $p\le p_1$, then $q\le q_1$ in the case when $\varepsilon (\lambda)\varepsilon (\Delta (\lambda))=1$, and $q\ge q_1$ in the case when $\varepsilon (\lambda)\varepsilon (\Delta (\lambda))=-1$. In particular, if $p=p_1$ then $q = q_1$.
    \item  Let $\mu$ be a base such that  $\alpha (\mu)=\alpha (\Delta (\mu))$, in other words, let $\mu$ and $\Delta(\mu)$ be a pair of matched bases. If $(p,\mu ,q)$ is a $\mu$-boundary connection  then  $p=q$.
    \item A variable cannot occur in two distinct coefficient equations, i.e., any two constant bases with the same left end-point are labelled by the same letter from $\cA^{\pm 1}$.
    \item If $h_i$ is a variable from some coefficient equation and $(i,\mu,q_1), (i+1,\mu,q_2)$ are boundary connections, then $|q_1- q_2|=1$.
    \item If $\Re_\Upsilon(h_i,h_j)$ then $i\ne j$.
\end{enumerate}
\end{defn}

\begin{lem}\label{le:4.1} \
\begin{enumerate}
    \item If a generalised equation $\Omega$ over a monoid $\MM$ has a solution, then $\Omega$ is formally consistent;
    \item There is an algorithm to check whether or not a given generalised equation is formally consistent.
\end{enumerate}
\end{lem}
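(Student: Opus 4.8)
The plan is to handle the two parts separately: first verify the necessity of each of the six conditions of Definition~\ref{def:forcon}, and then observe that formal consistency is a finite, mechanically checkable property.

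For part~(1), fix a solution $H=(H_1,\dots,H_\rho)$ of $\Omega$. The inputs I will use are: every $H_i$ is a \emph{non-empty} geodesic word in $\GG$; every factor of a geodesic word is geodesic; and substituting $H$ turns each basic, coefficient and boundary equation of $\Omega$ into a valid identity in $\MM$, hence --- via the canonical monoid epimorphism $\MM\to\GG$ --- into a valid identity in $\GG$. Conditions (4) and (5) are read off coefficient equations directly: a constant item $h_i$ satisfies $|H_i|=1$, so two constant bases with the same left end-point must carry the same letter, and between two consecutive $\mu$-boundary connections at such an $h_i$ there is room for exactly one item, giving $|q_1-q_2|=1$. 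Condition (6) is immediate from the definition of $\lra$: $\Re_\Upsilon(h_i,h_i)$ would force $\az(H_i)=\emptyset$. Conditions (2) and (3) follow by comparing the lengths of the two sides of the relevant boundary/basic equations and using that the partial sums $\sum_{k\le j}|H_k|$ strictly increase in $j$, so that equal lengths force equal indices and the orientation of $\lambda$ dictates whether boundary connections run monotonically or anti-monotonically.

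The one condition that needs a genuine argument is (1). Suppose $\varepsilon(\mu)=-\varepsilon(\Delta(\mu))$ but $h_{\alpha(\mu)}\in\Delta(\mu)$ or $h_{\beta(\mu)-1}\in\Delta(\mu)$; then the basic equation gives $H(\mu)=H(\Delta(\mu))^{-1}$ in $\GG$, both sides geodesic, and the sections $\sigma(\mu)$, $\sigma(\Delta(\mu))$ overlap in at least one item. Using the symmetries $\mu\leftrightarrow\Delta(\mu)$ and left--right reflection one reduces to two cases. If one section is contained in the other, then, since every $H_i$ is non-empty, the balance $|H(\mu)|=|H(\Delta(\mu))|$ forces the two sections to coincide, so $\mu,\Delta(\mu)$ are matched and $H(\mu)=H(\mu)^{-1}$, i.e. $H(\mu)^2=1$. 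Otherwise the sections overlap properly, and writing $H(\mu)=u\circ w$, $H(\Delta(\mu))=w\circ z$ with $u,w,z$ non-empty geodesic words and $|u|=|z|$, the equation becomes $uw=z^{-1}w^{-1}$, whence $uw^2=z^{-1}$ in $\GG$; since there is no cancellation between $u$ and $w$ there is none between $u$ and $w^2$, so $|u|+|w^2|=|uw^2|=|z|=|u|$ and $w^2=1$. In either case $\GG$ would contain a non-trivial element of order two, contradicting that $\GG$, like every partially commutative group, is torsion-free. (For $\MM=\FF$ one could instead argue more directly that a non-empty geodesic word cannot coincide with its own formal inverse, by comparing first and last letters.) I expect this case to be the main technical point --- in particular, keeping track of which $H_i$ are genuinely non-empty and justifying the ``no cancellation between $u$ and $w^2$'' step.

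For part~(2), note that a combinatorial (constrained) generalised equation is a finite object: the sets $\BS$, $\BD$, $\BC$, the items $h_1,\dots,h_\rho$, the maps $\alpha,\beta,\varepsilon,\Delta$, the constant-base labels, and the relation $\Re_\Upsilon$ are all finite and explicitly presented. Condition $(\star)$ of Definition~\ref{defn:Re} and each of conditions (1)--(6) of Definition~\ref{def:forcon} is a bounded quantification over these finite sets followed by a decidable arithmetic test on boundary indices and labels (e.g. ``for every dual pair $\mu$ with $\varepsilon(\mu)=-\varepsilon(\Delta(\mu))$, neither $\alpha(\mu)$ nor $\beta(\mu)-1$ lies in $[\alpha(\Delta(\mu)),\beta(\Delta(\mu))-1]$''; ``for every two $\lambda$-boundary connections the monotonicity holds''; and so on). Running these checks one after another is the desired algorithm.
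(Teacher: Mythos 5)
Your proposal is correct and follows essentially the same route as the paper: the only substantive point is condition (1) of Definition \ref{def:forcon}, which both you and the paper settle by observing that an overlapping, oppositely oriented dual pair forces the geodesic word $H(\mu)$ into an impossible configuration (the paper phrases the contradiction as the overlap word and its inverse both right-dividing the geodesic $H(\mu)$, citing \cite{EKR}, while you close it via $w^2=1$ and torsion-freeness of $\GG$ --- the same fact in slightly different clothing). The remaining conditions and the algorithmic part are dismissed in the paper as straightforward verification, which is exactly what your length-monotonicity and finiteness checks spell out.
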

\begin{proof}
We show that condition (\ref{it:forcon1}) of Definition \ref{def:forcon} holds for the generalised equation $\Omega$ in the case $\MM=\Tr$. Assume the contrary, i.e. $\varepsilon (\mu)=-\varepsilon (\Delta (\mu))$ and the bases $\mu$ and $\Delta (\mu )$ intersect. Let $H$ be a solution of $\Omega$. Without loss of generality we may assume that $\varepsilon (\mu)=1$ and $\alpha(\mu)\le\alpha(\Delta(\mu))$. Then $\Omega$ has the following basic equation:
$$
h_{\alpha(\mu)}\cdots h_{\alpha(\Delta(\mu))}=\left( h_{\beta(\mu)}\cdots h_{\beta(\Delta(\mu))}\right)^{-1}.
$$
Since the bases $\mu$ and $\Delta (\mu )$ intersect, the words $H[\alpha(\Delta(\mu)),\beta(\mu)]$ and $H[\alpha(\Delta(\mu)),\beta(\mu)]^{-1}$ right-divide the word $H(\mu)$.  This derives a contradiction with the fact that $H(\mu)$ is a geodesic word in $\GG$, see \cite{EKR}.

Proof follows by straightforward verification of the conditions in Definition \ref{def:forcon}.
\end{proof}

\begin{rem}
We further consider only formally consistent generalised equations.
\end{rem}

\subsection{Reduction to generalised equations: from partially commutative groups to monoids} \label{sec:redgrmon}

In this section we show that to a given finite system of equations $S(X,\cA) = 1$ over a partially commutative group $\GG$ one can associate a finite collection of (constrained) generalised equations $\GE'(S)$ over $\Tr$ with coefficients from $\cA^{\pm 1}$. The family of solutions of the generalised equations from $\GE'(S)$ describes all solutions of the system $S(X,\cA) = 1$, see Lemma \ref{le:14}.

\subsubsection{$\GG$-partition tables} \label{sec:Gparttab}
Write the system $\{S(X,\cA)=1\}= \{S_ 1 = 1, \ldots, S_\m = 1\}$ in the form
\begin{equation}\label{*}
\begin{array}{c}
 r_{11}r_{12}\ldots r_{1l_1}=1,\\
 r_{21}r_{22}\ldots r_{2l_2}=1,\\
 \ldots \\
 r_{\m 1}r_{\m 2}\ldots r_{\m l_\m}=1,\\
\end{array}
\end{equation}
where $r_{ij}$ are letters of the alphabet $ X^{\pm 1}\cup \cA^{\pm 1}$.

We aim to define a combinatorial object called a $\GG$-partition table, that encodes a particular type of cancellation that happens when one substitutes a solution $W(\cA) \in \GG^n$ into $S(X,\cA) = 1$ and then reduces the words in $S(W(\cA),\cA)$ to the empty word.

Informally, Proposition \ref{lem:prod} describes all possible cancellation schemes for the set of all solutions of the system $S(X,\cA)$ in the following way: the cancellation scheme corresponding to a particular solution, can be obtained from the one described in Proposition \ref{lem:prod} by setting some of the words $w_i^j$'s (and the corresponding bands) to be trivial. Therefore, every $\GG$-partition table (to be defined below) corresponds to one of the cancellation schemes obtained from the general one by setting some of the words $w_i^j$'s to be trivial. Every non-trivial word $w_i^j$ corresponds to a variable $z_k$ and the word $w_j^i$ to the variable $z_k^{-1}$.  If a variable $x$ that occurs in the system $S(X,A) = 1$ is subdivided into a product of some words $w_i^j$'s, i.e. the variable $x$ is a word in the $w_i^j$'s, then the word $V_{ij}$ from the definition of a partition table is this word in the corresponding $z_k$'s. If the bands corresponding to the words $w_i^j$ and $w_k^l$ cross, then the corresponding variables $z_r$ and $z_s$ commute in the group $\HH$.

The definition of a $\GG$-partition table is rather technical, we refer the reader to Section \ref{sec:expl1} for an example.

A pair $\TB$ (a finite set of geodesic words from $\GG*F(Z)$, a $\GG$-partially commutative group), $\TB=(V,\HH)$ of the form:
$$
V = \{V_{ij}(z_1, \ldots ,z_p)\} \subset \GG[Z]=\GG*F(Z) \ \ (1\leq i\leq m, 1\leq j\leq l_i), \quad \HH=\GG(\cA\cup Z),
$$
is called a  \index{partition@$\GG$-partition table}$\GG$-{\em partition table} of the system $S(X,\cA)$ if the following conditions are satisfied:
\begin{enumerate}
    \item\label{it:pt0} Every element $z\in Z\cup Z^{-1}$ occurs in the words $V_{ij}$ only once;
    \item\label{it:pt1} The equality $V_{i1}V_{i2} \cdots V_{il_i}=1, 1\leq i\leq \m,$ holds in $\HH$;
    \item\label{it:pt2} $|V_{ij}|\leq l_i-1$;
    \item\label{it:pt3} if $r_{ij} = a \in \cA^{\pm 1}$, then $|V_{ij}|=1$.
\end{enumerate}
Here the designated copy of $\GG$ in $\HH$ is the natural one and is generated by $\cA$.

Since $|V_{ij}|\leq l_i - 1$ then at most $\sum\limits_{i = 1}^\m (l_i - 1)l_i$ different letters $z_i$ can occur in a partition table of $S(X,\cA) = 1$. Therefore we always assume that $p \leq \sum\limits_{i = 1}^\m (l_i - 1)l_i$.

\begin{rem} \label{rem:frgr1}
In the case that $\GG=F(\cA)$ and $\HH=F(Z)$ are free groups, the notion of a $\GG$-partition table coincides with the notion of a partition table in the sense of Makanin, see \cite{Mak82}.
\end{rem}

\begin{lem}\label{le:4.2}
Let $S(X,\cA) = 1$ be a finite system of equations over $\GG$. Then
\begin{enumerate}
    \item the set $\PT(S)$ of all $\GG$-partition tables  of $S(X,\cA) = 1$ is finite, and its cardinality is bounded by a number which depends only on the system $S(X,\cA)$;
    \item one can effectively enumerate the set $\PT(S)$.
\end{enumerate}
\end{lem}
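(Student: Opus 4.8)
The plan is to show that a $\GG$-partition table of $S(X,\cA)=1$ is a finite combinatorial object whose size is bounded in terms of the syntactic data of $S$, and that the finitely many constraints defining it can be checked algorithmically. First I would recall the bookkeeping: after writing $S$ in the form (\ref{*}), each equation has length $l_i$, and a partition table consists of words $V_{ij}(z_1,\dots,z_p)\in\GG[Z]$ with $1\le i\le\m$, $1\le j\le l_i$. Condition (\ref{it:pt2}) bounds $|V_{ij}|\le l_i-1$, and condition (\ref{it:pt0}) says each $z\in Z^{\pm1}$ occurs at most once among all the $V_{ij}$; hence the total number of occurrences of $Z$-letters is at most $\sum_{i=1}^{\m}(l_i-1)l_i$, so $p\le\sum_{i=1}^{\m}(l_i-1)l_i=:P$. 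Thus each $V_{ij}$ is a geodesic word of length $\le l_i-1$ in the finite alphabet $\cA^{\pm1}\cup\{z_1,\dots,z_P\}^{\pm1}$ (and by (\ref{it:pt3}), $|V_{ij}|=1$ whenever $r_{ij}$ is a constant).

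The cardinality bound in part (1) then follows by a crude counting argument: the number of words of length $\le l_i-1$ over an alphabet of size $2(\rr+P)$ is finite, and a partition table is just a choice of one such word for each of the $\sum_i l_i$ positions, so $|\PT(S)|$ is bounded by a number depending only on $\rr$ and the integers $l_i$ — equivalently, only on $S(X,\cA)$. For part (2), one enumerates all $\sum_i l_i$-tuples of geodesic words subject to the length bounds; this is a finite, effectively listable set of candidates. For each candidate one must verify that it genuinely is a partition table, i.e.\ check conditions (\ref{it:pt0})--(\ref{it:pt3}). Conditions (\ref{it:pt0}), (\ref{it:pt2}), (\ref{it:pt3}) are syntactic and trivially decidable. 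Condition (\ref{it:pt1}) — that $V_{i1}V_{i2}\cdots V_{il_i}=1$ holds in $\HH=\GG(\cA\cup Z)$ for each $i$ — is an instance of the word problem in a partially commutative group, which is decidable (indeed, one reduces to comparing DM-normal forms, or simply uses that $\GG(\cA\cup Z)$ is a finitely presented linear group). Discarding the candidates that fail any condition leaves exactly $\PT(S)$, proving effective enumerability.

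The main obstacle, such as it is, is not logical difficulty but making the size bound explicit and clean: one wants to be careful that conditions (\ref{it:pt0}) (each $z$ used at most once) together with (\ref{it:pt2}) really do force $p\le P$, so that only finitely many letters $z_i$ can appear and the alphabet is genuinely finite — this is already observed in the paragraph preceding the lemma, so it can simply be invoked. A secondary point is that one should fix the commutation graph on $\cA\cup Z$ as part of the data of $\HH$; since $\HH$ is required to be the $\GG$-partially commutative group $\GG(\cA\cup Z)$ with its prescribed graph, there are also only finitely many graphs on $\cA\cup Z$ extending $\mathcal{G}$, so enumerating the graphs as well keeps the whole set finite and effectively listable. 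Once these points are in place the proof is essentially a finiteness-plus-decidable-word-problem argument and nothing deeper is needed.
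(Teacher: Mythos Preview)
Your proposal is correct and follows essentially the same approach as the paper's proof: bound the alphabet and word lengths via conditions (\ref{it:pt0}) and (\ref{it:pt2}), enumerate the finitely many candidate tuples $\{V_{ij}\}$, and for each candidate check the remaining conditions, with (\ref{it:pt1}) reducing to the (decidable) word problem in one of the finitely many partially commutative groups $\HH$ on $\cA\cup Z$. Your write-up is in fact more explicit than the paper's (you spell out the counting bound and the enumeration of commutation graphs for $\HH$), but the argument is the same.
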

\begin{proof}   Since the words $V_{ij}$ have  bounded length, one can effectively enumerate the finite set  of all collections of words $\{V_{ij}\}$ in $F(Z)$ which satisfy the conditions (\ref{it:pt0}), (\ref{it:pt2}), (\ref{it:pt3}) above.  Now for each such collection $\{V_{ij}\}$,  one can
effectively check  whether the  equalities $V_{i1}V_{i2} \cdots V_{il_i}=1, 1\leq i\leq \m$ hold in one of the finitely many (since $|Z|<\infty$) partially commutative groups $\HH$ or not. This allows one to list effectively all partition tables for $S(X,\cA) = 1$.
\end{proof}

\subsubsection{Associating a generalised equation over $\Tr$ to a $\GG$-partition table} \label{sec:geneqT}
By a partition table $\TB=(\{V_{ij}\},\HH)$ we construct a generalised equation $\Omega _\TB'=\langle\Upsilon_\TB',\Re_{\Upsilon_\TB'}\rangle$ in the following way. We refer the reader to Section \ref{sec:expl1} for an example.

Consider the following word $\mathcal{V}$ in $\FF(Z^{\pm 1})$:
$$
\mathcal{V}\doteq V_{11}V_{12}\cdots V_{1l_1}\cdots V_{\m 1}V_{\m 2} \cdots V_{\m l_\m} = y_1 \cdots y_{\rho'},
$$
where $\FF(Z^{\pm 1})$ is the free monoid on the alphabet $Z^{\pm 1}$; $y_i \in Z^{\pm 1}$ and $\rho' =|\mathcal{V}|$ is the length of $\mathcal{V}$. Then the generalised equation $\Omega_\TB' = \Omega_\TB'(h')$ has $\rho' + 1$ boundaries and $\rho'$ variables $h'_1,\ldots ,h'_{\rho'}$ which are denoted by $h' = (h'_1,\dots, h'_{\rho'})$.

Now we define the bases of $\Omega_\TB'$ and the functions $\alpha, \beta, \varepsilon$.

Let $z \in Z$. For the (unique) pair of distinct occurrences of $z$ in $\mathcal{V}$:
$$
y_i = z^{\epsilon _i}, \quad y_j = z^{\epsilon _j} \quad \epsilon _i, \epsilon _j \in \{1,-1\}, \quad i<j,
$$
we introduce a pair of dual variable bases $\mu_{z,i}, \mu_{z,j}$ such that $\Delta(\mu_{z,i}) = \mu_{z,j}$. Put
\begin{gather}\notag
\begin{split}
\alpha(\mu_{z,i}) = i, \quad &\beta(\mu_{z,i}) = i+1, \quad \varepsilon(\mu_{z,i}) = \epsilon _i,\\
&\alpha(\mu_{z,j}) = j, \quad \beta(\mu_{z,j}) = j+1, \quad \varepsilon(\mu_{z,j}) = \epsilon _j.
\end{split}
\end{gather}
The basic equation that corresponds to this pair of dual bases is ${h'_{i}}^{\epsilon_i}\doteq {h'_{j}}^{\epsilon _j}$.

Let $x \in X$. For any two distinct occurrences of $x$ in $S(X,\cA) = 1$:
$$
r_{i,j} = x^{\epsilon_{ij}}, \quad r_{s,t} = x^{\epsilon_{st}} \quad (\epsilon _{ij}, \epsilon _{st} \in \{1,-1\})
$$
so that $(i,j)$ precedes $(s,t)$ in left-lexicographical order, we introduce a pair of dual bases $\mu_{x,q}$ and $\Delta(\mu_{x,q})$, $q=(i,j,s,t)$.  Now suppose that $V_{ij}$ and $V_{st}$ occur in the word $\mathcal{V}$ as subwords
$$
V_{ij} = y_{c_1} \ldots y_{d_1}, \quad V_{st} = y_{c_2} \ldots y_{d_2} \hbox{ correspondingly}.
$$
Then we put
\begin{gather}\notag
\begin{split}
\alpha(\mu_{x,q}) = {c_1}, \quad &\beta(\mu_{x,q}) = d_1+1, \quad \varepsilon(\mu_{x,q}) = \epsilon_{ij},\\
&\alpha(\Delta(\mu_{x,q})) = {c_2}, \quad \beta(\Delta(\mu_{x,q})) = d_2+1, \quad \varepsilon(\Delta(\mu_{x,q})) = \epsilon_{st}.
\end{split}
\end{gather}
The basic equation over $\Tr$ which corresponds to this pair of dual bases can be written in the form
$$
\left(h'_{\alpha(\mu_{x,q})} \cdots h'_{\beta(\mu_{x,q})-1}\right)^{\epsilon _{ij}}=\left(h'_{\alpha(\Delta(\mu_{x,q}))}\ldots h'_{\beta(\Delta(\mu_{x,q}))-1}\right)^{\epsilon_{st}}.
$$

Let $r_{ij} = a \in \cA^{\pm 1}$. In this case we introduce a constant base $\mu_{ij}$ with the label $a$. If $V_{ij}$ occurs in $\mathcal{V}$ as $V_{ij} = y_c$, then we put
$$
\alpha(\mu_{ij}) = c,\quad  \beta(\mu_{ij}) = c+1.
$$
The corresponding coefficient equation is  $h_{c}'=a$. The set of boundary connections of the generalised equation $\Upsilon_\TB'$ is empty. This defines the generalised equation $\Upsilon_\TB'$.

We define the binary relation $\Re_{\Upsilon_\TB'}\subseteq h'\times h'$ to be the minimal subset of $h'\times h'$ which contains the pairs $(h'_i,h'_j)$ such that $[y_i, y_j]=1$\ in $\HH$ and $y_i\ne y_j$, is symmetric and satisfies condition ($\star$) of Definition \ref{defn:Re}. This defines the constrained generalised equation $\Omega'_\TB=\langle\Upsilon_\TB', \Re_{\Upsilon_\TB'}\rangle$.

Put
\glossary{name={$\GE'(S)$}, description={the set of all constrained generalised equations $\Omega'_\TB$ over $\Tr$ constructed by $\GG$-partition tables $\TB$ for the system $S(X,\cA)$}, sort=G}
$$
\GE'(S) = \{\Omega'_\TB \mid \TB \hbox{ is a $\GG$-partition table for } S(X,\cA)= 1 \}.
$$
Then $\GE'(S)$ is a finite collection of generalised equations over $\Tr$ which can be effectively constructed for a given system of equations $S(X,\cA) = 1$ over $\GG$.

\begin{rem} \label{rem:frgr2}
As in Remark \ref{rem:frgr1}, if $\GG=F(\cA)$ and $\HH=F(Z)$ are free groups and the set $\Re_{\Upsilon_\TB'}$ is empty, then the basic equations are considered over a free monoid and the collection of generalised equations $\GE'$ coincides with the one used by Makanin, see \cite{Mak82}.
\end{rem}

\subsubsection{Coordinate groups of systems of equations and coordinate groups of generalised equations} \label{sec:relcoordgrgrtr}

\begin{defn}
For a generalised equation $\Upsilon$ in variables $h$ over a monoid $\MM$ we can consider the same system of equations over the partially commutative group (not in the monoid). We denote this system by \glossary{name={${\Upsilon}^*$}, description={generalised equation $\Upsilon$ treated as a system of equations over the partially commutative group $\GG$}, sort=U}
${\Upsilon}^*$. In other words, if
$$
\Upsilon = \{L_1(h)=R_1(h), \ldots, L_s(h) = R_s(h)\}
$$
is an arbitrary system of equations over $\MM$ with coefficients from $\cA^{\pm 1}$, then by $\Upsilon^*$ we denote the system of equations
$$
\Upsilon^*= \{L_1(h)R_1(h)^{-1} = 1, \ldots, L_s(h)R_s(h)^{-1} = 1\}
$$
over the group $\GG$.

Similarly, for a given constrained generalised equation $\Omega=\gpo$ in variables $h$ over a monoid $\MM$ we can consider the system of equations \glossary{name={${\Omega}^*$}, description={the generalised equation $\Omega$ treated as a system of equations over the partially commutative group $\GG$}, sort=O}${\Omega}^*={\Upsilon}^*\cup \{[h_i,h_j]\mid \Re_\Upsilon(h_i,h_j)\}$ over the partially commutative group (not in the monoid). \glossary{name={$\GG_{R(\Omega^\ast)}$}, description={coordinate group of the generalised equation $\Omega$}, sort=G} Let
$$
\GG_{R(\Omega^\ast)}= \factor{\GG\ast F(h)}{R({\Upsilon}^*\cup \{[h_i,h_j]\mid \Re_\Upsilon(h_i,h_j)\})}\, ,
$$
where $F(h)$ is the free group with basis $h$. We call $\GG_{R(\Omega^\ast)}$ the \index{coordinate group!of the (constrained) generalised equation}\emph{coordinate group} of the constrained generalised equation $\Omega$.
\end{defn}
Note that the definition of the coordinate group of a constrained generalised equation over $\MM$ is independent of the monoid $\MM$.

Obviously, each solution $H$ of $\Omega$ over $\MM$ gives rise to a solution of $\Omega^*$ in the partially commutative group $\GG$. The converse does not hold in general even in the case when $\MM=\Tr$: it may happen that $H$ is a solution of $\Omega^*$ in $\GG$ but not in $\Tr$, i.e. some equalities $L_i(H) = R_i(H)$ hold only after a reduction in $\GG$ (and, certainly, the equation $[h_i,h_j]$ does not imply the $\lra$-commutation for the solution).

Now we explain the relation between the coordinate groups $\GG_{R({\Omega_\TB'}^\ast)}$  and $\GG_{R(S(X,\cA))}$.

For a letter $x$ in $X$ we choose an arbitrary occurrence of $x$ in $S(X,\cA) = 1$ as
$$
r_{ij} = x^{\epsilon_{ij}}.
$$
Let $\mu = \mu_{x,q}$, $q=(i,j,s,t)$ be a base that corresponds to this occurrence of $x$. Then $V_{ij}$ occurs in $\mathcal{V}$ as the subword
$$
V_{ij} = y_{\alpha(\mu)} \ldots y_{\beta(\mu) -1}.
$$
Notice that the word $V_{ij}$ does not depend on the choice of the base $\mu_{x,q}$ corresponding to the occurrence $r_{ij}$.
Define a word $P_x(h') \in \GG[h']$ (where $h' = \{h'_1, \ldots,h'_{\rho'}\}$) as follows
$$
P_x(h',\cA) = {\left(h'_{\alpha(\mu)} \cdots h'_{\beta(\mu)-1}\right)}^{\epsilon_{ij}},
$$
and put
$$
P(h') = (P_{x_1}, \ldots, P_{x_n}).
$$
The word $P_x(h')$ depends on the choice of occurrence $r_{ij} = x^{\epsilon_{ij}}$ in $\mathcal{V}$.

It follows from the construction above that the map $X \rightarrow \GG[h']$ defined by $x \mapsto  P_x(h',\cA)$ gives rise to a $\GG$-homomorphism
\begin{equation}\label{eq:hompt}
\pi : \GG_{R(S)}\rightarrow \GG_{R({\Omega_\TB'}^\ast)}.
\end{equation}
Indeed,  if $f(X) \in R(S)$ then $\pi(f(X))= f(P(h))$. It follows from condition (\ref{it:pt1}) of the definition of partition table that $f(P(h)) = 1$ in $\GG_{R({\Omega_\TB'}^\ast)}$, thus $f(P(h))\in R({\Omega_\TB'}^*)$. Therefore $R(f(S))\subseteq R({\Omega_\TB'}^*)$ and $\pi$ is a homomorphism.

Observe that the image  $\pi (x)$ in $\GG_{R({\Omega_\TB'}^\ast)}$ does not depend on a particular choice of the occurrence of $x$ in $S(X,\cA)$ (the basic equations of $\Omega_\TB'$ make these images equal). Hence $\pi$ depends only on $\Omega_\TB'$. Thus, every solution $H$ of a generalised equation gives rise to a solution $U$ of $S$ so that $\pi_U=\pi\pi_H$.

\subsubsection{Solutions of systems of equations over $\GG$ and solutions of generalised equations over $\Tr$}
Our goal in this section is to prove that every solution of the system of equations $S(X,\cA)$ over $\GG$ factors through one of the solutions of one of the finitely many generalised equations from $\GE'(S)$, i.e. for a solution $W$ of $S$ there exists a generalised equation $\Omega'_\TB\in \GE'(S)$ and a solution $H$ of $\Omega'_\TB$ so that $\pi_W=\pi\pi_H$, where $\pi$ is defined in (\ref{eq:hompt}). In order to do so, we need an analogue of the notion of a cancellation scheme for free groups. The results of this section rely on the techniques developed in Section 3 of \cite{CK1}.

Let $W(\cA)$ be a solution of $S(X,\cA) = 1$ in $\GG$. If in the system (\ref{*}) we make the substitution  $\sigma : X \rightarrow W(\cA)$, then
$$
(r_{i1}r_{i2}\ldots r_{il_i})^{\sigma} = r_{i1}^{\sigma}r_{i2}^{\sigma}\ldots r_{il_i}^{\sigma} = 1
$$
in $\GG$ for every $i = 1, \ldots, \m$.

Since every product $R_i^\sigma = r_{i1}^{\sigma}r_{i2}^{\sigma}\ldots r_{il_i}^{\sigma}$ is trivial, we can construct a van Kampen diagram ${\mathcal D}_{R_i^\sigma}$ for $R_i^\sigma$. Denote by ${\tilde z}_{i,1}, \ldots, {\tilde z}_{i,p_i}$ the subwords $w_j^k$, $1\le j<k\le l_i$ of $r_{ij}^{\sigma}$, where $w_j^k$ are defined as in Proposition \ref{lem:prod}. As, by Proposition \ref{lem:prod}, $w_j^k= {w_k^j}^{-1}$, so every $r_{ij}$ can be written as a freely reduced word $V_{ij}(Z_i)$ in variables $Z_i = \{z_{i,1}, \ldots, z_{i,p_i}\}$ so that if we set ${z}_{i,j}^\sigma={\tilde z}_{i,j}$, then we have that the following equality holds in $\Tr$:
$$
r_{ij}^\sigma = V_{ij}({\tilde z}_{i,1}, \ldots, {\tilde z}_{i,p_i}).
$$
Observe that if $r_{ij} = a \in \cA^{\pm 1}$ then $r_{ij}^{\sigma} = a$ and we have $|V_{ij}| = 1$. By Proposition \ref{lem:prod}, $r_{ij}^\sigma$ is a product of at most $l_i-1$ words $w_j^k$, thus we have that $|V_{ij}| \leq l_i - 1$. Denote by $Z=\bigcup\limits_{i=1}^{\m} Z_i= \{z_1, \ldots, z_p\}$. Take the partially commutative group $\HH=\GG(\cA\cup Z)$ whose underlying commutation graph is defined as follows:
\begin{itemize}
    \item two elements $a_i,a_j$ in $\cA$ commute whenever they commute in $\GG$;
    \item an element $a\in \cA$ commutes with $z_i$ whenever $\tilde z_i\lra a$;
    \item two elements $z_i, z_j\in Z$ commute whenever $\tilde z_i\lra \tilde z_j$.
\end{itemize}

By construction, the set $\{V_{ij}\}$ along with the group $\HH$ is a partition table $\TB_{W(\cA)}$ for the system $S(X,\cA) = 1$ and the solution ${W(\cA)}$. Obviously,
$$
H' = ({\tilde z}_1, \ldots, {\tilde z}_p)
$$
is the solution of the generalised equation $\Omega_{\TB_{W(\cA)}}$ induced by $W(\cA)$. From the construction of the map $P(h')$ we
deduce that $W(\cA) = P(H')$.

The following lemma shows that to describe the set of solutions of a system of equations over $\GG$ is equivalent to describe the set of solutions of constrained generalised equations over the trace monoid $\Tr$. This lemma can be viewed as the first ``divide'' step of the process.

\begin{lem}\label{le:14}
For a given system of equations $S(X,\cA)=1$ over $\GG$, one can effectively construct a finite set
$$
\GE'(S) = \{\Omega_\TB' \mid \TB \hbox{ is a $\GG$-partition table for } S(X,\cA)= 1 \}
$$
of generalised equations over $\Tr$ such that
\begin{enumerate}
    \item if the set $\GE'(S)$ is empty, then $S(X,\cA)= 1$ has no solutions in $\GG$;
    \item for each $\Omega' (h') \in\GE'(S)$ and for each $x \in X$ one can effectively find a word $P_x(h',\cA) \in \GG[h']$ of length at most $|h'|$ such that the map $x \mapsto P_x(h',\cA)$ gives rise to a $\GG$-homomorphism $\pi_{\Omega'}: \GG_{R(S)}\rightarrow \GG_{R({\Omega'}^\ast)}$ (in particular, for every solution $H'$ of the generalised equation $\Omega'$ one has that $P(H')$ is a solution of the system $S(X, \cA)$, where $P(h') = (P_{x_1}, \ldots, P_{x_n})$);
    \item for any solution $W(\cA) \in \GG^n$ of the system $S(X,\cA)=1$ there exists $\Omega' (h') \in \GE'(S)$ and a solution $H'$ of $\Omega'(h')$ such that $W(\cA) = P(H')$, where $P(h') = (P_{x_1}, \ldots, P_{x_n})$, and this equality holds in the partially commutative monoid $\Tr(\cA^{\pm 1})$;
\end{enumerate}
\end{lem}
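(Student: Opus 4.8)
The proof assembles the constructions of Sections~\ref{sec:Gparttab}--\ref{sec:relcoordgrgrtr} together with the discussion immediately preceding the statement. The set $\GE'(S)$ is the one defined in Section~\ref{sec:geneqT}; by Lemma~\ref{le:4.2} the set $\PT(S)$ of $\GG$-partition tables of $S(X,\cA)=1$ is finite and effectively enumerable, and $\TB\mapsto\Omega'_\TB$ is a total effective map, so $\GE'(S)$ is finite and can be effectively constructed. Part~(1) then follows by contraposition from part~(3): if $S(X,\cA)=1$ has a solution, that solution produces a $\GG$-partition table, so $\PT(S)\ne\emptyset$ and hence $\GE'(S)\ne\emptyset$.

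For part~(2), fix $\Omega'=\Omega'_\TB\in\GE'(S)$ and $x\in X$, choose an occurrence $r_{ij}=x^{\epsilon_{ij}}$ in $S(X,\cA)=1$ and let $\mu=\mu_{x,q}$ be the base attached to it, so that $V_{ij}=y_{\alpha(\mu)}\cdots y_{\beta(\mu)-1}$ in $\mathcal V$; set $P_x(h',\cA)=(h'_{\alpha(\mu)}\cdots h'_{\beta(\mu)-1})^{\epsilon_{ij}}$ as in Section~\ref{sec:relcoordgrgrtr}. Since $\beta(\mu)-\alpha(\mu)=|V_{ij}|\le|\mathcal V|=\rho'=|h'|$, the length bound holds, and the definition is independent of the chosen occurrence since the basic equations of $\Omega'$ identify the corresponding values. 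That $x\mapsto P_x$ induces a $\GG$-homomorphism $\pi_{\Omega'}\colon\GG_{R(S)}\to\GG_{R({\Omega'}^\ast)}$ is the computation around~(\ref{eq:hompt}): condition~(\ref{it:pt1}) of the definition of a $\GG$-partition table, together with the commutation relations built into $\Re_{\Upsilon'_\TB}$, shows that every solution of ${\Omega'}^\ast$ in $\GG$ evaluates $P$ to a solution of $S$, whence $f(P(h'))\in R({\Omega'}^\ast)$ for each $f\in R(S)$; composing $\pi_{\Omega'}$ with the homomorphism $\pi_{H'}$ attached to a solution $H'$ of $\Omega'$ gives $\pi_{P(H')}$, so $P(H')$ solves $S(X,\cA)=1$.

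For part~(3), start from an arbitrary solution $W(\cA)\in\GG^n$. Substituting $\sigma\colon x\mapsto W_x$ makes each $R_i^\sigma=r_{i1}^\sigma\cdots r_{il_i}^\sigma$ trivial in $\GG$, so it admits a van Kampen diagram, which by Proposition~\ref{lem:prod} may be taken in standard band form. Reading off the non-trivial pieces $\tilde z_{i,1},\dots,\tilde z_{i,p_i}$ --- the words $w_j^k$ of Proposition~\ref{lem:prod}, with $w_j^k=(w_k^j)^{-1}$ --- each $r_{ij}^\sigma$ becomes a freely reduced word $V_{ij}$ in the $z$'s with $|V_{ij}|\le l_i-1$, and with $|V_{ij}|=1$ when $r_{ij}\in\cA^{\pm1}$; taking $\HH=\GG(\cA\cup Z)$ with commutation graph dictated by the disjoint commutation ($\lra$) of the pieces, the pair $(\{V_{ij}\},\HH)$ satisfies conditions~(\ref{it:pt0})--(\ref{it:pt3}), i.e.\ it is a $\GG$-partition table $\TB_{W(\cA)}$, so $\Omega'_{\TB_{W(\cA)}}\in\GE'(S)$. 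The tuple $H'$ assigning to $h'_i$ the value $\tilde z_k^{\epsilon_i}$, where $y_i=z_k^{\epsilon_i}$ in $\mathcal V$, is the required solution, and unwinding the definitions of $V_{ij}$ and $P_x$ yields $P_x(H')=W_x$ in $\Tr(\cA^{\pm1})$ for every $x\in X$.

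The step requiring care is checking that $H'$ is a solution of the \emph{constrained} generalised equation $\Omega'_{\TB_{W(\cA)}}$, not merely of the system ${\Omega'}^\ast$ over $\GG$. The words $H'_i$ are non-empty and geodesic because they are pieces of the geodesic words $r_{ij}^\sigma$; the products $L_i(H'),R_i(H')$ are geodesic --- so the basic equations hold as equalities in $\Tr$, not only after reduction in $\GG$ --- by the band structure of the standard-form diagram, in which the pieces are laid out with no further cancellation. The genuinely delicate point is that $H'_i\lra H'_j$ for \emph{every} pair $(h'_i,h'_j)\in\Re_{\Upsilon'_\TB}$, including the pairs forced only by the closure condition~$(\star)$ of Definition~\ref{defn:Re}: for the visible pairs this is how the graph of $\HH$ was defined, while for a pair produced by~$(\star)$ from an equation $h_{i_1}^{\epsilon}\cdots h_{i_k}^{\epsilon}=h_{j_1}^{\epsilon}\cdots h_{j_l}^{\epsilon}$ one argues that, since $L_i(H')$ and $R_i(H')$ are geodesic, their alphabets are the unions of the alphabets of the constituent pieces, so that if each $H'_{i_n}\lra H'_m$ then $\az(L_i(H'))=\az(R_i(H'))$ is disjoint from $\az(H'_m)$, while $R_i(H')$ commutes with $H'_m$; an element commuting with $H'_m$ and having alphabet disjoint from $\az(H'_m)$ commutes with it letterwise (a consequence of the Centraliser Theorem~\ref{thm:centr}), whence $H'_{j_t}\lra H'_m$ for all $t$. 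This is the disjoint-commutation bookkeeping of Section~3 of~\cite{CK1}, to which I refer for the remaining routine checks.
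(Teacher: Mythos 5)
Your proposal is correct and takes essentially the same route as the paper, which establishes Lemma \ref{le:14} simply by the constructions preceding it (Sections \ref{sec:Gparttab}--\ref{sec:relcoordgrgrtr}): finiteness and effectiveness via Lemma \ref{le:4.2}, part (2) via the map $x\mapsto P_x(h',\cA)$ and the argument around (\ref{eq:hompt}), and part (3) via the cancellation scheme of Proposition \ref{lem:prod} and the induced partition table $\TB_{W(\cA)}$, with (1) immediate by contraposition. The only difference is that you spell out the check that $H'$ is a solution of the \emph{constrained} generalised equation, including the $\lra$-commutation for pairs forced by condition $(\star)$ of Definition \ref{defn:Re}, a point the paper dismisses as obvious; your argument for it (equal geodesic sides have equal alphabets, and commuting elements with disjoint alphabets commute letterwise) is sound.
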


\begin{cor} \label{co:14}
In the notation of {\rm Lemma \ref{le:14}}  for any solution $W(\cA)  \in \GG^n=\GG(\cA)^n$ of the system $S(X,\cA)=1$ there exist a generalised equation $\Omega' (h') \in \GE'(S)$  and a solution $H'$ of $\Omega'(h')$ such that the following diagram commutes
$$
\xymatrix@C3em{
 \GG_{R(S)}  \ar[rd]_{\pi_W} \ar[rr]^{\pi_{\Omega'}}  &  &\GG_{R({\Omega'}^\ast)} \ar[ld]^{\pi_{H'}}
                                                                             \\
                               &  \GG &
}
$$
Conversely, for every generalised equation $\Omega' (h') \in \GE'(S)$  and a solution $H'$ of $\Omega'(h')$ there exists a solution $W(\cA)  \in \GG^n=\GG(\cA)^n$ of the system $S(X,\cA)=1$ such that the above diagram commutes.
\end{cor}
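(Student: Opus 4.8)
The plan is to read the statement off from Lemma \ref{le:14}; the only thing to add is the translation of the pointwise identity $W(\cA)=P(H')$ into commutativity of the triangle of coordinate groups. The underlying remark is that $\GG_{R(S)}$ is generated, as a $\GG$-group, by the images of the variables $x_1,\dots,x_n$, so two $\GG$-homomorphisms out of $\GG_{R(S)}$ coincide as soon as they agree on $x_1,\dots,x_n$.

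For the first assertion I would start from a solution $W(\cA)\in\GG^n$ of $S(X,\cA)=1$. Part (3) of Lemma \ref{le:14} then produces $\Omega'(h')\in\GE'(S)$ and a solution $H'$ of $\Omega'$ with $W(\cA)=P(H')$, the equality holding in $\Tr(\cA^{\pm 1})$ and hence, via the canonical epimorphism $\Tr(\cA^{\pm 1})\to\GG$, in $\GG$. The map $\pi_{\Omega'}\colon\GG_{R(S)}\to\GG_{R({\Omega'}^\ast)}$ is the $\GG$-homomorphism furnished by part (2) of the same lemma, given on generators by $x_i\mapsto P_{x_i}(h')$, while $\pi_{H'}\colon\GG_{R({\Omega'}^\ast)}\to\GG$ is the $\GG$-homomorphism induced by $H'$; the latter descends to the coordinate group because $H'$, being a solution of $\Omega'$, is in particular a solution of ${\Omega'}^\ast$ over $\GG$ (cf.\ Section \ref{sec:relcoordgrgrtr}). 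Now $\pi_{H'}\circ\pi_{\Omega'}$ is a $\GG$-homomorphism $\GG_{R(S)}\to\GG$ sending $x_i$ to $\pi_{H'}(P_{x_i}(h'))=P_{x_i}(H')$, which is exactly the $i$-th coordinate of $P(H')=W(\cA)$, i.e.\ $\pi_W(x_i)$. Since $\pi_{H'}\circ\pi_{\Omega'}$ and $\pi_W$ both fix $\GG$ pointwise and agree on $x_1,\dots,x_n$, they are equal, which is the required commutativity $\pi_W=\pi_{H'}\circ\pi_{\Omega'}$.

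For the converse I would take $\Omega'(h')\in\GE'(S)$ and a solution $H'$ of $\Omega'$ and set $W(\cA):=P(H')$; by part (2) of Lemma \ref{le:14} this is a solution of $S(X,\cA)=1$ in $\GG$, so $\pi_W\colon\GG_{R(S)}\to\GG$ is defined and carries $x_i$ to $W_i(\cA)=P_{x_i}(H')$. The computation of the previous paragraph then yields $\pi_W=\pi_{H'}\circ\pi_{\Omega'}$, so the diagram commutes.

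I do not anticipate a genuine obstacle: the real work sits in Lemma \ref{le:14} and in the construction of Section \ref{sec:relcoordgrgrtr}. The only points needing a little care are bookkeeping ones --- that $\pi_{H'}$ really factors through the coordinate group rather than being merely a homomorphism out of $\GG\ast F(h')$, and that the equality $W(\cA)=P(H')$, delivered by Lemma \ref{le:14} as an identity in the trace monoid, may be transported to $\GG$; both are immediate.
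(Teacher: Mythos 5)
Your proof is correct and follows exactly the route the paper intends: the corollary is stated as an immediate consequence of Lemma \ref{le:14} together with the construction of $\pi_{\Omega'}$ in Section \ref{sec:relcoordgrgrtr}, and your verification (both compositions are $\GG$-homomorphisms agreeing on the generators $x_1,\dots,x_n$, with $\pi_{H'}$ factoring through the coordinate group because $H'$ solves ${\Omega'}^\ast$) is precisely that deduction spelled out.
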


\subsection{Reduction to generalised equations: from partially commutative monoids to free monoids}\label{sec:pcmontofrmon}

In this section we show that to a given generalised equation $\Omega'$ over $\Tr$ one can associate a finite collection of (constrained) generalised equations $\GE(\Omega')$ over $\FF$ with coefficients from $\cA^{\pm 1}$. The family of solutions of the generalised equations from $\GE(\Omega')$ describes all solutions of the generalised equation $\Omega'$, see Lemma \ref{lem:R1}.

We shall make use of the proposition below, which is due to V.~Diekert and A.~Muscholl. Essentially, it states that for a given partially commutative monoid $\Tr$, there are finitely many ways to take the product of two words (written in DM-normal form) in $\Tr$ to DM-normal form. More precisely, given a trace monoid $\Tr$ there exists a global bound $k$ such that the words $u$ and $v$ can be written as a product of $k$ subwords in such a way that the product $w=uv$ written in the normal form is obtained by concatenating these subwords in some order.

\begin{prop}[Theorem 8, \cite{DM}] \label{prop:DMM}
Let $u,v,w$ be words in DM-normal form in the trace monoid $\Tr$, such that $uv=w$ in $\Tr$. Then there exists a positive integer $k$ bounded above by a computable function of $\rr$ {\rm(}recall that $\rr=|\cA|${\rm)} such that
$$
\begin{array}{lll}
  u & \doteq & u_1\cdots  u_k \\
  v & \doteq & v_{j_1}\cdots v_{j_k} \\
  w & \doteq & u_1v_1\cdots u_kv_{k}
\end{array}
$$
for some permutation $(j_1,\dots,j_k)$ of $\{1,\dots, k\}$ and words $u_i,v_j$ written in DM-normal form satisfying
$$
 u_i\lra v_j  \hbox{ for all } i>j \hbox{ and } v_{j_p}\lra v_{j_q} \hbox{ for all $p, q$ such that } (j_p-j_q)(p-q)<0
$$
\end{prop}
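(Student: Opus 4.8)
The plan is to prove the statement by induction on the number of thin clans of $\Tr$, mirroring the recursive definition of the DM-normal form given in Section~\ref{sec:pcgr}. The base case is the free monoid $\FF(\cA^{\pm 1})$ (no thin clans, a single thick clan, so the dependence graph is totally ordered): here the normal form is just concatenation, $w\doteq uv$, and one takes $k=1$, $u_1\doteq u$, $v_1\doteq v$, with the $\lra$-conditions vacuous. For the inductive step, recall that $\nf$ is computed by fixing a clan $C$, inserting the ``$C$-edges'' into the dependence graph according to the global positions $g(v)$, passing to the monoid $\hat\Tr$ which has strictly fewer thin clans, and iterating. So I would first analyse how a single round of this procedure — relative to one designated clan $C$ — splits $u$ and $v$, and then invoke the inductive hypothesis on $\hat\Tr$.

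First I would examine the dependence graph $[V,\le,\lambda]$ of $w$ and the sub-orders corresponding to $u$ and to $v$ (which, since $uv=w$ in $\Tr$, embed as a partition of $V$ into a prefix-like and suffix-like set, though not necessarily a contiguous one). The key observation is that the relevant clan $C$ is, by the decomposition~\eqref{eq:decomp}, a direct factor in the sense that its letters only fail to commute with letters outside $C$ in a uniform way; consequently the linearly ordered chain $c_1<\dots<c_q$ of $C$-labelled vertices, restricted to $u$ and to $v$, determines the ``global positions'' and hence where each non-$C$ letter is shuffled. Counting the number of maximal runs of $u$-vertices and $v$-vertices that get interleaved when the $C$-chain of $w$ is formed from the $C$-chains of $u$ and $v$ gives a bound on the number of pieces; this bound is controlled by $|C|\le 2\rr$ and, more precisely, by the structure of the median-position function $m(s,t)$ (Lemma~1 of \cite{DM}). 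This is where the alternating-word shape $\FF(\mathcal{C})(\mathcal{C}^{-1}\FF(\mathcal{C}))^k$ in the statement enters: a piece of $u$ lands to the left of a piece of $v$ exactly when the corresponding global positions compare that way, which forces $u_i\lra v_j$ for $i>j$, and the reshuffling among the $v$-pieces forces $v_{j_p}\lra v_{j_q}$ whenever $(j_p-j_q)(p-q)<0$. After this one round I would have $u\doteq u_1'\cdots u_m'$ and $v\doteq v_1'\cdots v_m'$ with the product taken in a definite interleaving order, all pieces in $\hat\Tr$-normal form, and a bound $m\le f_1(\rr)$.

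Then I would apply the inductive hypothesis to each matched pair of factors, or rather to the product $u'v'$ regrouped in $\hat\Tr$, obtaining a further refinement into at most $f_2(\rr)$ pieces, and compose the two subdivisions; the bound $k\le f_1(\rr)\cdot f_2(\rr)$ is again a computable function of $\rr$ since the recursion has depth at most the number of clans, which is at most $\rr$. I would then verify that the $\lra$-conditions are preserved under composition of subdivisions: $u_i\lra v_j$ for $i>j$ is stable because refining a piece only restricts its alphabet, and the permutation condition on the $v$-pieces composes correctly because a refinement of a block that is moved as a unit inherits the same relative displacement. Finally, pad with trivial pieces if necessary to make the two index sets literally $\{1,\dots,k\}$ and the interleaving literally $u_1v_1\cdots u_kv_k$ with $(j_1,\dots,j_k)$ a genuine permutation.

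The main obstacle I expect is the bookkeeping in the inductive step: controlling precisely \emph{which} pieces of $u$ move past which pieces of $v$ when the $C$-chain is merged, and checking that this shuffling is governed solely by the global-position comparison (so that the clean $\lra$ and permutation conditions come out), rather than by some finer interaction. Making the induction go through cleanly may require strengthening the statement slightly — e.g. tracking not just the factorisations of $u$, $v$, $w$ but the explicit order-isomorphism between the dependence graph of $w$ and the shuffle of those of $u$ and $v$ — so that the hypothesis is strong enough to feed back into the next round. Everything else (finiteness of the number of clans, the existence of the bound as a computable function of $\rr$, and the base case) is routine given Lemma~1 and Lemma~4 of \cite{DM}.
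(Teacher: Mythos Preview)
The paper does not prove this proposition: it is quoted verbatim as Theorem~8 of \cite{DM} and used as a black box, so there is no ``paper's own proof'' to compare against. Your outline --- induction on the number of thin clans, following the recursive definition of $\nf$ via one clan at a time and passing to $\hat\Tr$ --- is the natural strategy and is essentially how the result is established in \cite{DM}; the bookkeeping concern you flag (tracking the shuffle of pieces via global positions and checking that the $\lra$-conditions survive composition of subdivisions) is exactly the substance of the argument there.
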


\begin{cor} \label{cor:DMM}
Let $w,w_1\dots w_l$ be words in DM-normal form in the trace monoid $\Tr$, such that $w_1\dots w_l=w$ in $\Tr$. Then there exists a positive integer $\kk=\kk(l,\rr)$ bounded above by a computable function of $l$ and $\rr$ such that
$$
\begin{array}{lll}
  w_i & \doteq & w_{i,\varsigma_i(1)}\cdots  w_{i,\varsigma_i(\kk)},\  i=1,\dots, l; \\
  w & \doteq & w_{1,1}w_{2,1}\dots w_{l,1}\dots w_{1,\kk}\dots w_{l,\kk}
\end{array}
$$
for some permutations $\varsigma_i$, $i=1,\dots, l$ of $\{1,\dots, \kk\}$ and some words $w_{i,j}$ written in DM-normal form satisfying
\begin{gather}\notag
\begin{split}
& w_{i_1,j_1}\lra w_{i_2,j_2}  \hbox{ for all  $i_1>i_2$ and $j_1<j_2$, and } \\
           & w_{i,\varsigma_i(j_1)}\lra w_{i,\varsigma_i(j_2)} \hbox{ for all $j_1, j_2$ such that } (j_1-j_2)(\varsigma_i(j_1)-\varsigma_i(j_2))<0.
\end{split}
\end{gather}
\end{cor}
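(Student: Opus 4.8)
The plan is to argue by induction on $l$, with Proposition \ref{prop:DMM} serving both as the base case $l=2$ (the case $l=1$ being trivial, with $\kk=1$) and as the main tool in the inductive step.

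For the inductive step, assume the statement for $l-1$ and let $w_1\cdots w_l=w$ with all the $w_i$ and $w$ in DM-normal form. I would first peel off $w_1$: put $v:=\nf(w_2\cdots w_l)$, so that $w_2\cdots w_l=v$ and $w_1v=w$ in $\Tr$. Applying the induction hypothesis to $w_2\cdots w_l=v$ yields $\kk':=\kk(l-1,\rr)$, permutations $\varsigma_i$ of $\{1,\dots,\kk'\}$ for $2\le i\le l$, and DM-normal words $\widehat w_{i,m}$ with $w_i\doteq\widehat w_{i,\varsigma_i(1)}\cdots\widehat w_{i,\varsigma_i(\kk')}$, with $v\doteq\widehat w_{2,1}\cdots\widehat w_{l,1}\cdots\widehat w_{2,\kk'}\cdots\widehat w_{l,\kk'}$, and with the two families of $\lra$-relations asserted in the statement. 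Applying Proposition \ref{prop:DMM} to the product $w_1\cdot v=w$ yields a number $k$, bounded by the computable function $K(\rr)$ of that proposition, DM-normal words $u_1,\dots,u_k$ and $v_1,\dots,v_k$, and a permutation $(j_1,\dots,j_k)$ with $w_1\doteq u_1\cdots u_k$, $v\doteq v_{j_1}\cdots v_{j_k}$, $w\doteq u_1v_1u_2v_2\cdots u_kv_k$, $u_a\lra v_b$ for $a>b$, and $v_{j_p}\lra v_{j_q}$ whenever $(j_p-j_q)(p-q)<0$.

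The heart of the step is to reconcile the two descriptions of $v$ now in hand: the ``column word'' $\widehat w_{2,1}\cdots\widehat w_{l,\kk'}$ coming from the induction hypothesis, and the factorization into the pieces $v_b$ that occur inside $w$. Since $\Tr$ is a trace monoid it satisfies Levi's lemma: any two factorizations of a trace admit a common refinement (this is the monoid analogue of Proposition \ref{lem:prod}). Applying this to the two factorizations of $v$ refines each $\widehat w_{i,m}$ along the cut points determined by the $v_b$'s --- equivalently, refines each $v_b$ along the column boundaries --- producing a grid of pieces, each of which I would then replace by its DM-normal form; this alters no trace, hence no $\lra$-relation, and by Lemma~4 of \cite{DM} stays compatible with the inversion. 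Together with the factorization $w_1\doteq u_1\cdots u_k$, and padding unused slots with the empty trace, one reads off from $w\doteq u_1v_1\cdots u_kv_k$ a factorization of $w$ into $\kk:=K(\rr)^{\,l-1}$ columns of the shape $w_{1,m}w_{2,m}\cdots w_{l,m}$, a simultaneous factorization of each $w_i$ into $\kk$ pieces, and the permutations recording the order in which those pieces occur in $w_i$. As $K(\rr)$ is computable in $\rr$, the quantity $\kk$ is a computable function of $l$ and $\rr$.

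Finally one verifies the two families of $\lra$-relations for this $\kk$-column decomposition. Every required relation $w_{i_1,j_1}\lra w_{i_2,j_2}$ (with $i_1>i_2$, $j_1<j_2$) or $w_{i,\varsigma_i(j_1)}\lra w_{i,\varsigma_i(j_2)}$ descends from one of three sources: the $\lra$-relations among the $u$'s and $v$'s supplied by Proposition \ref{prop:DMM}; those among the $\widehat w_{i,m}$ ($i\ge2$) supplied by the induction hypothesis; or the off-anti-diagonal independences produced by Levi's lemma in the common refinement --- throughout invoking that $\lra$ is a property of traces, inherited by sub-traces (alphabets only shrink under refinement) and preserved by $\nf$. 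I expect the main obstacle to be exactly this last stretch of bookkeeping, together with the delicacy hidden in the common-refinement step: one must choose the refinement so that the resulting pieces are themselves in DM-normal form --- the same stability of DM-normal forms under passing to suitable factors that already underlies Proposition \ref{prop:DMM} --- and one must track, for each piece, which original $w_i$ and which original column it comes from, in order to certify that each commutation actually needed is the strong, alphabet-disjoint $\lra$ rather than mere commutation in $\Tr$, and that the permutations $\varsigma_i$ and the column ordering match up coherently. None of this is conceptually deep, but it is unavoidably intricate.
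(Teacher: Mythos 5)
Your overall strategy is the one the paper intends: its own proof is just ``induction on $l$, left to the reader,'' and your peel-off-one-factor induction, with Proposition \ref{prop:DMM} supplying both the base case and the splitting in the inductive step, is exactly that. Moreover, the commutation bookkeeping you sketch does close up: every required $\lra$-relation among the refined pieces follows either from the relations $u_a\lra v_b$ (for $a>b$) and $v_{j_p}\lra v_{j_q}$ of Proposition \ref{prop:DMM} (namely when the two pieces exchange their relative order between $v$ and $w$, or when one of them is a piece of $w_1$), or from the induction hypothesis (when they keep their order inside $v$ but not inside $w_i$), using only that $\lra$ is inherited by factors; no third source is needed.

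Two points in your middle step need correction, however. First, Levi's lemma for traces is a red herring: both factorizations of $v$ that you must reconcile --- $v\doteq v_{j_1}\cdots v_{j_k}$ from Proposition \ref{prop:DMM} and the column word from the induction hypothesis --- are \emph{graphical} equalities in the free monoid, so the ``common refinement'' is nothing more than cutting the single word $v$ at the union of the two sets of cut points; invoking Levi, and listing ``independences produced by Levi's lemma'' among the sources of the needed commutations, suggests you are refining trace factorizations, which is neither necessary nor compatible with the $\doteq$'s in the statement. Second, and more seriously, you may not ``replace each piece by its DM-normal form'': the corollary asserts graphical equalities $w\doteq w_{1,1}w_{2,1}\cdots$ and $w_i\doteq w_{i,\varsigma_i(1)}\cdots$, and substituting $\nf$ of a piece for the piece itself preserves the trace but in general destroys these letter-by-letter equalities, so this step would fail as written. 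What the induction actually requires is precisely the point you only flag at the end: that the refined pieces --- factors of the normal-form words $u_a$, $v_b$ and of the pieces delivered by the induction hypothesis --- are \emph{already} in DM-normal form (equivalently, that the cuts can be chosen so that they are). Since this is also the only point the paper's ``left to the reader'' glosses over, you should either prove that the language of DM-normal forms is closed under taking factors or otherwise justify normality of the cut pieces; with that supplied, your argument is a correct rendering of the intended proof, and the exact value $K(\rr)^{l-1}$ you assign to $\kk$ is immaterial, since any computable bound padded with empty pieces suffices.
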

\begin{proof}
The proof is by induction on $l$ and is left to the reader.
\end{proof}

\subsubsection{Generalised equations over $\FF$ and generalised equations over $\Tr$}\label{sec:fromTtoF}
Let $\Omega'=\langle\Upsilon',\Re_{\Upsilon'}\rangle$ be a generalised equation over $\Tr$ and let $\Upsilon'=\{L_i=R_i\mid i=1,\dots,m\}$. To every word $L_i$ (correspondingly, $R_i$) (in the alphabet $h'$) we associate a set $C(L_i)$ (correspondingly, $C(R_i)$) constructed below.

We explain the idea behind this construction. By Corollary \ref{cor:DMM}, for every solution $H$ the word $L_i(H)$  (correspondingly, $R_i(H)$) can be taken to the DM-normal form by using appropriate permutations $\varsigma_i$ and setting some of the words $w_{i,\varsigma_i(j)}$ to be trivial. The system of equations in $c$, $c\in C(L_i)$ (correspondingly, $d\in C(R_i)$), see below for definition, is obtained from the equations
$$
w_i \doteq  w_{i,\varsigma_i(1)}\cdots  w_{i,\varsigma_i(k)},
$$
see  Corollary \ref{cor:DMM}, setting some of the words $w_{i,\varsigma_i(j)}$ to be trivial and applying the appropriate permutations $\varsigma_i$.

The word $NF_c(L_i) \in c$, $c\in C(L_i)$ (correspondingly, $NF_d(R_i)$, $d\in C(R_i)$) is the DM-normal form of the word $L_i$ (correspondingly, $R_i$) obtained from the equation
$$
w  \doteq  w_{1,1}w_{2,1}\dots w_{l,1}\dots w_{1,\kk}\dots w_{l,\kk},
$$
see  Corollary \ref{cor:DMM}, setting some of the words $w_{i,\varsigma_i(j)}$ to be trivial and applying the appropriate permutations $\varsigma_i$.

The relations $\mathcal{R}_c(L_i)$ (correspondingly, $\mathcal{R}_d(R_i)$) are the ones induced by Corollary \ref{cor:DMM}.

We treat the constant items $h_{\const}'$ (see below) separately, since, for every solution every item in $h_{\const}'$ corresponds to an element from $\cA^{\pm 1}$ and, therefore does not need to be subdivided to be taken to the normal form.

The formalisation of the above ideas is rather involved and technical. We refer the reader to Section \ref{sec:expl1} for an example.

\bigskip

Let
$$
h_{\const}'=\{h_i'\in h'\mid \hbox{ there is a coefficient equation of the form } h_i'=a \hbox{ in $\Upsilon'$ for some $a\in \cA$}\}.
$$
Given $L_i$, an element $c$ of $C(L_i)$ is a triple:
\begin{gather}\notag
\begin{split}
\hbox{(system of equations in $h^{(L_i)}\cup h'$;}&\hbox{ a word $NF_c(L_i)$ in $h^{(L_i)}$;} \\
&\hbox{a symmetric subset $\mathcal{R}_c(L_i)$ of $h^{(L_i)}\times h^{(L_i)}$).}
\end{split}
\end{gather}
Suppose first that the length of $L_i$ (treated as a word in $h'$) is greater than $1$. Then the system of equations is defined as follows:
$$
\left\{
\begin{array}{ll}
{h_j'}^\epsilon=h^{(L_i)}_{j,\varsigma_{c,j}(i_{c,j,1})}\cdots h^{(L_i)}_{j,\varsigma_{c,j}(i_{c,j,k_{c,j}})},& \hbox{ for every occurrence ${h_j'}^\epsilon$ in $L_i$, $h_j' \notin h_{\const}'$, $\epsilon=\pm 1$,}\\
{h_j'}^\epsilon=h^{(L_i)}_{j,1}, &\hbox{ for every occurrence ${h_j'}^\epsilon$ in $L_i$, $h_j'\in h_{\const}'$, $\epsilon=\pm 1$}
\end{array}
\right\},
$$
where $1\le i_{c,j,1}<i_{c,j,2}<\dots<i_{c,j,k_{c,j}}\le \kk$, $\kk=\kk(|L_i|, \rr)$, $\varsigma_{c,j}$ is a permutation on $k_{c,j}$ symbols, $h^{(L_i)}_{j,\varsigma_{c,j}(i_{c,j,1})},\ldots,h^{(L_i)}_{j,\varsigma_{c,j}(i_{c,j,k_{c,j}})}\in h^{(L_i)}$, see Corollary \ref{cor:DMM}.

The word $NF_c(L_i)$ is a product of all the variables $h^{(L_i)}$. The variable $h^{(L_i)}_{j_1,k_1}$ is to the left of $h^{(L_i)}_{j_2,k_2}$ in $NF_c(L_i)$ if and only if $(j_1,k_1)$ precedes $(j_2,k_2)$ in the right lexicographical order.

The symmetric subset $\mathcal{R}_c(L_i)$ of $h^{(L_i)}\times h^{(L_i)}$ is defined as follows (cf. Corollary \ref{cor:DMM}):
$$
\begin{array}{ll}
\left(h^{(L_i)}_{j_1,k_1},h^{(L_i)}_{j_2,k_2}\right)\in \mathcal{R}_c(L_i) & \hbox{ for all $j_1>j_2$ and $k_1<k_2$, and} \\
&\\
\left(h^{(L_i)}_{j,\varsigma_{c,j}(k_1)},h^{(L_i)}_{j,\varsigma_{c,j}(k_2)}\right)\in \mathcal{R}_c(L_i)& \hbox{ for all $k_1, k_2$ such that } (k_1-k_2)(\varsigma_{c,j}(k_1)-\varsigma_{c,j}(k_2))<0.
\end{array}
$$

Suppose now that $L_i=h_j'$, then the system of equations is: $\{h_j'=h^{(L_i)}_{j,1}\}$, the word $NF_c(L_i)=h^{(L_i)}_{j,1}$ and $\mathcal{R}_c(L_i)=\emptyset$.

If $L_i$ is just a constant ${a_{i_1}}^{\pm 1}$, we define $C(L_i)=\{c_i\}$, $c_i=(\emptyset,a_{i_1}^{\pm 1},\emptyset)$.

The construction of the set $C(R_i)$ is analogous.

\begin{rem}
Notice that the sets $C(L_i)$ and of $C(R_i)$ can be effectively constructed and that their cardinality is bounded above by a computable function of $|L_i|$, $|R_i|$ and $\rr$.
\end{rem}

Given a generalised equation $\Omega'=\langle \Upsilon', \Re_{\Upsilon'}\rangle$ over $\Tr$, where $\Upsilon'=\{L_1=R_1,\dots, L_m=R_m\}$  we construct a finite set of generalised equations $\GE(\Omega')$ over $\FF$
\glossary{name={$\GE(\Omega')$}, description={the set of all constrained generalised equations $\Omega_T$ over $\FF$ constructed for the constrained generalised equation $\Omega'$} over $\Tr$, sort=G}
$$
\GE (\Omega')=\left\{ \Omega_T\mid T=(c_1,\dots,c_{m},d_1,\dots,d_{m}), \hbox{ where } c_i\in C(L_i), d_i\in C(R_i)\right\},
$$
and $\Omega_T=\gpof{T}$ is constructed as follows. The generalised equation $\Upsilon_T$ in variables $h=\bigcup\limits_{i=1}^m\left(h^{(L_i)}\cup h^{(R_i)}\right)$ consists of the following equations:
\begin{enumerate}
    \item[(a)] Equating the normal forms of $L_i$ and $R_i$:
    $$
    NF_{c_i}(L_i)=NF_{d_i}(R_i), \hbox{ for }i=1,\dots, m.
    $$
    \item[(b)]  Equating different decompositions (as words in the $h$'s) of the same variable $h_j'$ of $\Upsilon'$:
        \begin{itemize}
    \item   $\left(h^{(L_{i_1})}_{j,\varsigma_{c_{i_1},j}(i_{c_{i_1},j,1})}\cdots h^{(L_{i_1})}_{j,\varsigma_{c_{i_1},j} (i_{ c_{i_1},j, k_{c_{i_1},j}})}\right)^\epsilon= \left(h^{(L_{i_2})}_{j,\varsigma_{c_{i_2},j}(i_{c_{i_2},j,1})}\cdots h^{(L_{i_2})}_{j,\varsigma_{c_{i_2},j} (i_{ c_{i_2},j, k_{c_{i_2},j}})}\right)^\delta$, for any two occurrences ${h_j'}^\epsilon$ and ${h_j'}^\delta$ of $h_j'$ in equations of $c_{i_1}$ and $c_{i_2}$, respectively, $\epsilon,\delta\in\{-1,1\}$;
    \item  $\left(h^{(L_{i_1})}_{j,\varsigma_{c_{i_1},j}(i_{c_{i_1},j,1})}\cdots h^{(L_{i_1})}_{j,\varsigma_{c_{i_1},j} (i_{ c_{i_1},j, k_{c_{i_1},j}})}\right)^\epsilon= \left(h^{(R_{i_2})}_{j,\varsigma_{d_{i_2},j}(i_{d_{i_2},j,1})}\cdots h^{(R_{i_2})}_{j,\varsigma_{d_{i_2},j} (i_{ d_{i_2},j, k_{d_{i_2},j}})}\right)^\delta$,
            for any two occurrences ${h_j'}^\epsilon$ and ${h_j'}^\delta$ of $h_j'$ in equations of $c_{i_1}$ and $d_{i_2}$ respectively, $\epsilon,\delta\in\{-1,1\}$;
    \item $\left(h^{(R_{i_1})}_{j,\varsigma_{d_{i_1},j}(i_{d_{i_1},j,1})}\cdots h^{(R_{i_1})}_{j,\varsigma_{d_{i_1},j} (i_{ d_{i_1},j, k_{d_{i_1},j}})}\right)^\epsilon=
            \left(h^{(R_{i_2})}_{j,\varsigma_{d_{i_2},j}(i_{d_{i_2},j,1})}\cdots h^{(R_{i_2})}_{j,\varsigma_{d_{i_2},j} (i_{ d_{i_2},j, k_{d_{i_2},j}})}\right)^\delta$,
            for any two occurrences ${h_j'}^\epsilon$ and ${h_j'}^\delta$ of $h_j'$ in equations of $d_{i_1}$ and $d_{i_2}$ respectively, $\epsilon,\delta\in\{-1,1\}$;
        \end{itemize}
\end{enumerate}
The set of boundary connections of $\Upsilon_T$ is empty.

We now define $\Re_{\Upsilon_T}$. Set
\begin{itemize}
\item $\Re_{\Upsilon_T}\left(h^{(L_{i_1})}_{j_1,k_1},h^{(L_{i_2})}_{j_2,k_2}\right)$, $\Re_{\Upsilon_T}\left(h^{(R_{i_1})}_{j_1,k_1},h^{(L_{i_2})}_{j_2,k_2}\right)$, $\Re_{\Upsilon_T}\left(h^{(R_{i_1})}_{j_1,k_1},h^{(R_{i_2})}_{j_2,k_2}\right)$ if $\Re_{\Upsilon'}(h_{j_1}',h_{j_2}')$;
\item $\Re_{\Upsilon_T}\left(h^{(L_{i_1})}_{j_1,k_1},h^{(L_{i_2})}_{j_2,k_2}\right)$ if $\left(h^{(L_{i_1})}_{j_1,k_1},h^{(L_{i_2})}_{j_2,k_2}\right)\in \mathcal{R}_{c_i}(L_i)$ for some $i$;
\item $\Re_{\Upsilon_T}\left(h^{(R_{i_1})}_{j_1,k_1},h^{(R_{i_2})}_{j_2,k_2}\right)$ if $\left(h^{(R_{i_1})}_{j_1,k_1},h^{(R_{i_2})}_{j_2,k_2}\right)\in \mathcal{R}_{d_i}(R_i)$ for some $i$;
\end{itemize}
We define $\Re_{\Upsilon_T}$ to be the minimal subset of $h\times h$ that is symmetric and satisfies condition ($\star$) of Definition \ref{defn:Re}.
This defines a combinatorial generalised equation, see Lemma \ref{lem:cgege}.

\subsubsection{Coordinate groups of generalised equations over $\Tr$ and over $\FF$}

We now explain the relation between the coordinate group of the generalised equation $\Omega'$ over $\Tr$ and the coordinate group of the generalised equation $\Omega_T$ over $\FF$. For any variable $h_j'\in h'$ we choose an arbitrary equation in $c_i$, $c_i\in T$ (or $d_i$, $d_i\in T$) of the form
${h_j'}^\epsilon=h^{(L_i)}_{j,\varsigma_{c_i,j}(i_{c,j,1})}\cdots h^{(L_i)}_{j,\varsigma_{c_i,j}(i_{c_i,j,k_{c_i,j}})}$ and define a word $P_{h_j'}(h,\cA)\in \GG[h]$ as follows:
$$
P_{h_j'}(h,\cA)=\left(h^{(L_i)}_{j,\varsigma_{c_i,j}(i_{c_i,j,1})}\cdots h^{(L_i)}_{j,\varsigma_{c_i,j}(i_{c_i,j,k_{c_i,j}})}\right)^\epsilon.
$$
The word $P_{h_j'}(h,\cA)$ depends on the choice of the equation in $c_i$ and on $i$. It follows from the construction above, that the map
$$
h'\to \GG[h] \hbox{ defined by } h_j'\mapsto P_{h_j'}(h,\cA)
$$
gives rise to a $\GG$-homomorphism $\pi_{\Omega_T}:\GG_{R({\Omega'}^\ast)}\to \GG_{R({\Omega_T}^\ast)}$. Observe that the image $\pi_{\Omega_T}(h_j')\in \GG_{R({\Omega_T}^\ast)}$ does not depend on a particular choice of $i$ and of the equation in $c_i$ (or in $d_i$), since equations in $\Omega$ make all of them identical. Hence, $\pi_{\Omega_T}$ depends only on $\Omega_T$.

If $H$ is an arbitrary solution of the generalised equation $\Omega_T$, then $H'=(P_{h_1'}(H,\cA),\dots,P_{h_{\rho'}'}(H,\cA))$ is a solution of $\Omega'$.

The converse also holds. From Corollary \ref{cor:DMM}, the fact that DM-normal form is invariant with respect to inversion and definition of solution of a generalised equation, it follows that any solution $H'$ of $\Omega'$ induces a solution $H$ of $\Omega_T$. Furthermore, if $H'=(H_1',\dots,H_{\rho'}')$ then $H_j'=P_{h_j'}(H,\cA)$.

The following lemma shows that to describe the set of solutions of a constrained generalised equation over $\Tr$ is equivalent to describe the set of solutions of constrained generalised equations over $\FF$. This lemma can be viewed as the second ``divide'' step of the process.

\begin{lem} \label{lem:R1}
Let $\Omega'$ be a generalised equation in variables $h'=\{h_1',\dots, h_{\rho'}'\}$ over $\Tr$, one can effectively construct a finite set $\GE(\Omega')$ of constrained generalised equations over $\FF$ such that
\begin{enumerate}
    \item if the set $\GE(\Omega')$ is empty, then $\Omega'$ has no solutions;
    \item for each $\Omega (h) \in\GE(\Omega')$ and for each $h_i' \in h'$ one can effectively find a word $P_{h_i'}(h,\cA) \in \GG[h]$ of length at most $|h|$ such that the map $h_i' \mapsto P_{h_i'}(h,\cA)$ {\rm(}$h_i'\in h'${\rm)} gives rise to a $\GG$-homomorphism $\pi_{\Omega}: \GG_{R({\Omega'}^\ast)}\to\GG_{R(\Omega^\ast)}$ {\rm(}in particular, for every solution $H$ of the generalised equation $\Omega$ one has that $P(H)$ is a solution of the generalised equation $\Omega'$, where $P(h) = (P_{h_1'}, \ldots, P_{h_{\rho'}'})${\rm)};
    \item for any solution $H' \in \GG^n$ of $\Omega'$ there exists $\Omega (h) \in \GE(\Omega')$ and a solution $H$ of $\Omega(h)$ such that $H' = P(H)$, where $P(h) = (P_{h_1'}, \ldots, P_{h_{\rho'}'})$, and this equality holds in the free monoid $\FF$.
\end{enumerate}
\end{lem}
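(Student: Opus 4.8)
The set $\GE(\Omega')$, the words $P_{h_i'}(h,\cA)$ and the homomorphisms $\pi_\Omega$ have already been constructed in Section~\ref{sec:pcmontofrmon}; the proof amounts to verifying the three stated properties, and the only non-formal ingredients are Corollary~\ref{cor:DMM} together with the fact (Lemma~4 of~\cite{DM}) that $\nf\colon\Tr\to\FF$ is a single-valued map compatible with inversion. For item~(1) I would note that, by the Remark in Section~\ref{sec:pcmontofrmon}, each $C(L_i)$, $C(R_i)$ is finite and effectively computable, so that $\GE(\Omega')$ is obtained by running over the finitely many tuples $T=(c_1,\dots,c_m,d_1,\dots,d_m)$, building $\Omega_T=\gpof{T}$ algorithmically and discarding those that are not formally consistent (which have no solution by Lemma~\ref{le:4.1}); and $P_{h_j'}(h,\cA)$ is a product of pairwise distinct items of $\Omega_T$, so $|P_{h_j'}(h,\cA)|\le|h|$. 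The assertion that an empty $\GE(\Omega')$ forces $\Omega'$ to be unsolvable is then the contrapositive of item~(3).

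For item~(2) I would argue as in the proof that \eqref{eq:hompt} is a homomorphism, working through solutions rather than through defining relations. Let $H$ be an arbitrary solution of $\Omega^\ast$ in $\GG$; then $H$ satisfies the commutators coming from $\Re_{\Upsilon_T}$, so in $\GG$ one may reorder the concatenation of the blocks $P_{h_j'}(H)$ corresponding to the occurrences in $L_i$ into $NF_{c_i}(L_i)(H)$ using only transpositions of $\mathcal R_{c_i}(L_i)$-related items (by the way $NF_{c_i}(L_i)$ was defined), whence $L_i(P(H))=NF_{c_i}(L_i)(H)$, and likewise $R_i(P(H))=NF_{d_i}(R_i)(H)$; the relation~(a) of $\Upsilon_T$ now gives $L_i(P(H))=R_i(P(H))$ in $\GG$. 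Similarly the first clause in the definition of $\Re_{\Upsilon_T}$ forces $[P_{h_i'}(H),P_{h_j'}(H)]=1$ whenever $\Re_{\Upsilon'}(h_i',h_j')$. Hence $P(H)$ is a solution of ${\Omega'}^\ast$ in $\GG$; since the equations~(b) make $\pi_\Omega(h_j')$ independent of the auxiliary choices, it follows that $f\in R({\Omega'}^\ast)$ implies $f(P(h))\in R(\Omega^\ast)$, so $h_j'\mapsto P_{h_j'}(h,\cA)$ induces a $\GG$-homomorphism $\pi_\Omega\colon\GG_{R({\Omega'}^\ast)}\to\GG_{R(\Omega^\ast)}$. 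The parenthetical claim — that a solution $H$ of $\Omega$ over $\FF$ yields a solution $P(H)$ of $\Omega'$ over $\Tr$ — is obtained by the same computation carried out inside $\Tr$, using that $C(L_i)$, $C(R_i)$ and $\Re_{\Upsilon_T}$ mirror the statement of Corollary~\ref{cor:DMM} so that $L_i(P(H))=NF_{c_i}(L_i)(H)$ is in DM-normal form and equals $NF_{d_i}(R_i)(H)=R_i(P(H))$ in $\Tr$, and that $\Re_{\Upsilon'}(h_i',h_j')$ together with geodesicity gives $P_{h_i'}(H)\lra P_{h_j'}(H)$.

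Item~(3) is the heart of the matter. Given a solution $H'=(H_1',\dots,H_{\rho'}')$ of $\Omega'$ over $\Tr$ and an equation $L_i=R_i$ of $\Upsilon'$, the words $L_i(H')$ and $R_i(H')$ are equal in $\Tr$ and geodesic as elements of $\GG$; applying Corollary~\ref{cor:DMM} to the two factorisations of this element as products of the normal-form words $H_j'$ (leaving the constant items as single letters) produces, for each occurrence of $H_j'$, a decomposition into at most $\kk$ subwords and a reordering that realises $\nf(L_i(H'))$, respectively $\nf(R_i(H'))$. Keeping only the non-empty subwords singles out $c_i\in C(L_i)$ and $d_i\in C(R_i)$, and assigning to the items $h^{(L_i)}_{j,k}$, $h^{(R_i)}_{j,k}$ the corresponding subwords defines a tuple $H$ of non-empty geodesic words; by construction the equations of $c_i$, $d_i$ hold in $\FF$, the equation~(a) holds because $\nf$ is single-valued and $\nf(L_i(H'))=\nf(R_i(H'))$, and the equations~(b) hold because the different decompositions of a given occurrence of $H_j'$ all recombine to $H_j'$ (or, through $\nf(w^{-1})=\nf(w)^{-1}$, to its inverse). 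The $\lra$-constraints of $\Omega_T$ are met: those of $\mathcal R_{c_i}(L_i)$- and $\mathcal R_{d_i}(R_i)$-type are exactly the $\lra$-relations asserted by Corollary~\ref{cor:DMM}, those inherited from $\Re_{\Upsilon'}$ hold because $H_{j_1}'\lra H_{j_2}'$ forces every subword of $H_{j_1}'$ to be $\lra$ every subword of $H_{j_2}'$, and those produced by closing under condition~$(\star)$ follow from these once one uses that $\lra$-commutation can be checked letterwise and that two geodesic factorisations of a single element of $\GG$ have the same alphabet. Thus $H$ is a solution of $\Omega_T\in\GE(\Omega')$ and $H'=P(H)$ in $\FF$ by the definition of $P$. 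The step I expect to demand the most care is precisely this verification: checking that the decompositions read off from the normal-form computations of the several equations of $\Upsilon'$ are mutually compatible and produce a genuine solution of a single $\Omega_T$, and in particular that all the $\lra$-constraints — including those forced by~$(\star)$ — are satisfied.
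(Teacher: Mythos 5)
Your proposal is correct and follows essentially the same route as the paper, which proves Lemma \ref{lem:R1} only implicitly through the construction of $\GE(\Omega')$ in Section \ref{sec:fromTtoF} and the short discussion of the homomorphisms $\pi_{\Omega_T}$, with the converse direction resting, exactly as you argue, on Corollary \ref{cor:DMM} and the fact that the DM-normal form is single-valued and compatible with inversion. Your verification of the type (b) equations, of the $\lra$-constraints inherited from $\Re_{\Upsilon'}$ and of the $(\star)$-closure simply fills in details the paper leaves to the reader, and it does so correctly.
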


Combining Lemma \ref{le:14} and Lemma \ref{lem:R1} we get the following

\begin{cor} \label{co:R1}
In the notation of {\rm Lemmas \ref{le:14}} and {\rm \ref{lem:R1}}  for any solution $W(\cA)  \in \GG^n=\GG(\cA)^n$ of the system $S(X,\cA)=1$ there exist a generalised equation $\Omega' (h') \in \GE'(S)$, a solution $H'(\cA)$ of $\Omega'(h')$, a generalised equation $\Omega (h) \in \GE(S)$, and a solution $H(\cA)$ of $\Omega(h)$ such that the following diagram commutes
$$
\xymatrix@C3em{
 \GG_{R(S)}  \ar[rd]_{\pi_W} \ar[r]^{\pi_{\Omega'}} &  \GG_{R({\Omega'}^\ast)}\ar[d]^{\pi_{H'}} \ar[r]^{\pi_{\Omega}} & \GG_{R(\Omega^\ast)} \ar[ld]^{\pi_H}\\
                                             &     \GG                                           &
}
$$
\end{cor}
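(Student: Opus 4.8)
The plan is to obtain the statement by pasting together the two diagrams furnished by Corollary \ref{co:14} and the (stated but not-yet-proved) content of Lemma \ref{lem:R1}; essentially all the work has already been done, and what remains is a bookkeeping argument about composition of $\GG$-homomorphisms. First I would fix a solution $W(\cA)\in\GG^n$ of $S(X,\cA)=1$. By Corollary \ref{co:14} there is a generalised equation $\Omega'(h')\in\GE'(S)$ over $\Tr$ and a solution $H'$ of $\Omega'(h')$ with $\pi_W=\pi_{H'}\pi_{\Omega'}$, i.e. the left-hand triangle of the desired diagram commutes. Now apply Lemma \ref{lem:R1} to this particular $\Omega'$: one effectively constructs $\GE(\Omega')\subseteq\GE(S)$, and since $\Omega'$ has the solution $H'$, part (1) of Lemma \ref{lem:R1} tells us $\GE(\Omega')$ is non-empty and part (3) produces a generalised equation $\Omega(h)\in\GE(\Omega')$ and a solution $H$ of $\Omega(h)$ with $H'=P(H)$, the equality holding in $\FF$. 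Part (2) of Lemma \ref{lem:R1} gives the $\GG$-homomorphism $\pi_\Omega:\GG_{R({\Omega'}^\ast)}\to\GG_{R(\Omega^\ast)}$ induced by $h_j'\mapsto P_{h_j'}(h,\cA)$, so the right-hand edge of the diagram is in place.

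The one point that needs genuine (if brief) verification is that the right-hand triangle $\pi_{H'}=\pi_{H}\pi_{\Omega}$ commutes, i.e. that $H$ really induces $H'$ through $\pi_\Omega$. This is exactly the assertion, recorded in the paragraph preceding Lemma \ref{lem:R1}, that if $H$ is a solution of $\Omega_T$ then $H'=(P_{h_1'}(H,\cA),\dots,P_{h_{\rho'}'}(H,\cA))$ is a solution of $\Omega'$, together with the converse that any solution $H'$ of $\Omega'$ arises in this way with $H_j'=P_{h_j'}(H,\cA)$. Concretely: for each variable $h_j'$ of $\Omega'$ we have $\pi_{\Omega}(h_j')=P_{h_j'}(h,\cA)$ in $\GG_{R(\Omega^\ast)}$, hence $\pi_H(\pi_{\Omega}(h_j'))=\pi_H(P_{h_j'}(h,\cA))=P_{h_j'}(H,\cA)=H_j'=\pi_{H'}(h_j')$; since the $h_j'$ together with $\cA$ generate $\GG_{R({\Omega'}^\ast)}$ and all three maps are $\GG$-homomorphisms, $\pi_{H'}=\pi_H\pi_\Omega$ on the nose. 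Combining the two triangles, $\pi_W=\pi_{H'}\pi_{\Omega'}=\pi_H\pi_\Omega\pi_{\Omega'}$, so the full diagram of the corollary commutes.

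For the converse direction, I would run the same two ingredients backwards. Start from any $\Omega'(h')\in\GE'(S)$ and any solution $H'$ of $\Omega'$; the second half of Corollary \ref{co:14} yields a solution $W(\cA)\in\GG^n$ of $S(X,\cA)=1$ making the left triangle commute. (Alternatively one may start from $\Omega(h)\in\GE(S)$ and a solution $H$ of $\Omega$, use the converse half of Lemma \ref{lem:R1} to descend to a solution $H'=P(H)$ of the associated $\Omega'$, then descend again via Corollary \ref{co:14} to a solution $W(\cA)$ of $S$; both routes give a commuting diagram.) In either case the commutativity of the right triangle is again the identity $\pi_{H'}=\pi_H\pi_\Omega$ verified on generators as above. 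This completes the construction of the commuting square in both directions.

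The statement is effective throughout: $\GE'(S)$ is effectively constructible by Lemma \ref{le:14}, and for the chosen $\Omega'$ the set $\GE(\Omega')$ is effectively constructible by Lemma \ref{lem:R1}, so $\GE(S)=\bigcup_{\Omega'\in\GE'(S)}\GE(\Omega')$ is effectively constructible as well. I do not anticipate a real obstacle here: the corollary is a formal consequence of the two lemmas, and the only thing to be careful about is that the homomorphisms $\pi_{\Omega'}$, $\pi_\Omega$ depend only on the respective generalised equations (not on auxiliary choices of occurrences), which is precisely what was argued in Sections \ref{sec:relcoordgrgrtr} and in the discussion preceding Lemma \ref{lem:R1}, so that the composite $\pi_\Omega\pi_{\Omega'}$ is well defined independently of the solution $W$ one started with.
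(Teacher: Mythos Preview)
Your proposal is correct and follows exactly the approach the paper intends: the corollary is stated immediately after the phrase ``Combining Lemma \ref{le:14} and Lemma \ref{lem:R1} we get the following'' with no further proof, so the pasting of the two triangles (via Corollary \ref{co:14} and Lemma \ref{lem:R1}) that you spell out is precisely what is meant. Your verification of $\pi_{H'}=\pi_H\pi_\Omega$ on generators and the remarks on effectiveness and the converse direction are more detail than the paper supplies, but entirely in line with its argument.
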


\subsection{Example} \label{sec:expl1}
The aim of Section \ref{sec:red} is to show that to a given finite system of equations $S=S(X,\cA) = 1$ over a partially commutative group $\GG$ one can associate a finite collection of (constrained) generalised equations $\GE(S)$ over $\FF$ with coefficients from $\cA^{\pm 1}$ such that
for any solution $W(\cA) \in \GG^n$ of $S$ there exists a generalised equation $\Omega (h)=\Omega$ over $\FF$ and a solution $H(\cA)$ of $\Omega$ such that $$
\xymatrix@C3em{
 \GG_{R(S)}  \ar[rd]_{\pi_W}  \ar[rr]^{\pi_{\Omega}} & & \GG_{R(\Omega^\ast)} \ar[ld]^{\pi_H}\\
     &\GG                                           &
}
$$

The family of solutions of the generalised equations from $\GE'(S)$ describes all solutions of the system $S(X,\cA) = 1$, see Lemma \ref{le:14}.

In this section for a particular equation over a given partially commutative group $\GG$ and one of its solutions, following the exposition of Section \ref{sec:red}, we first construct the constrained generalised equation over $\Tr$ and then the generalised equation over $\FF$ such that the above diagram commutes.

\begin{figure}[!h]
  \centering
   \includegraphics[keepaspectratio,height=1.5in]{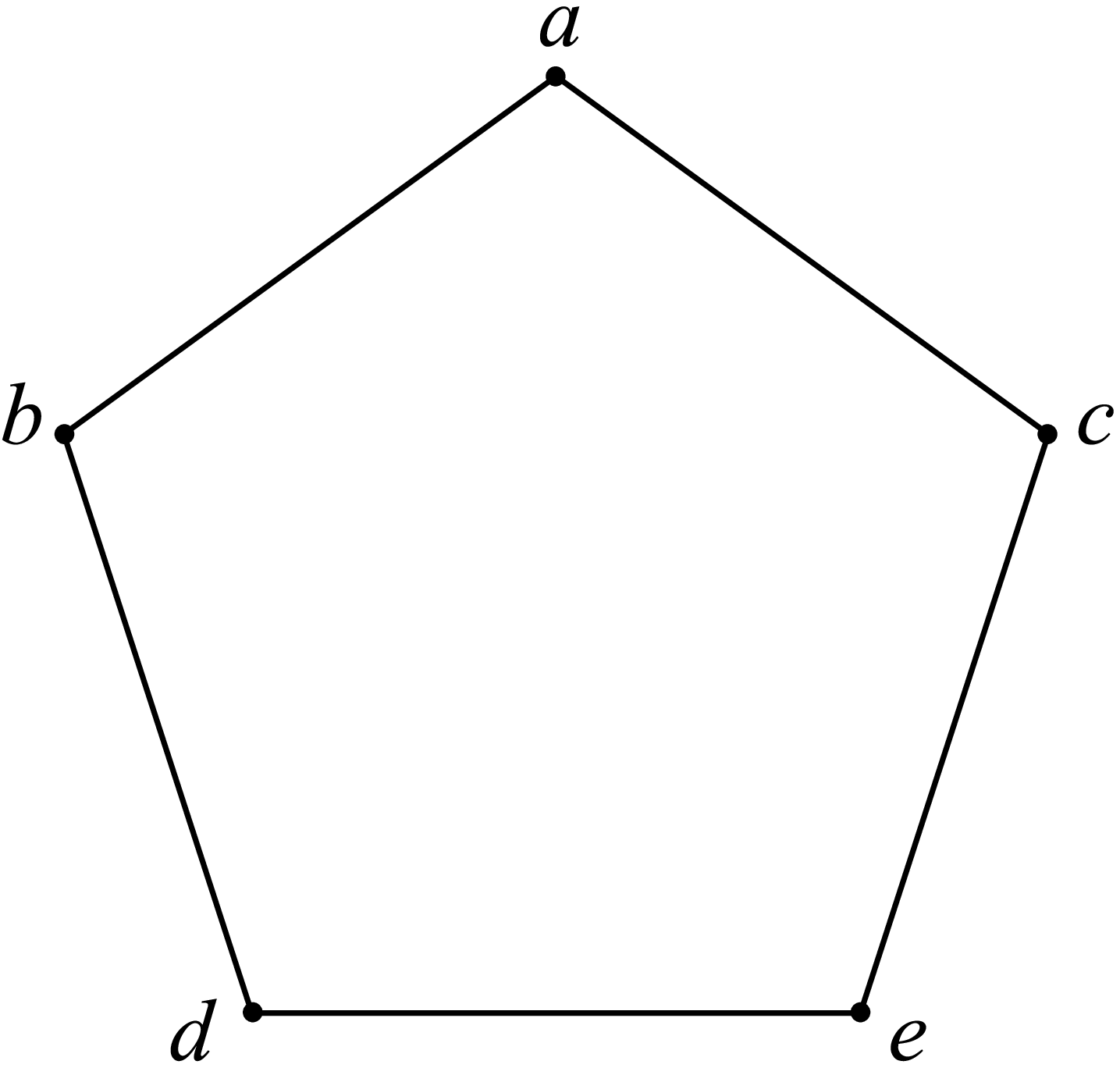}
\caption{The commutation graph of $\GG$.} \label{fig:pentagon}
\end{figure}

Let $\GG$ be the free partially commutative group whose underlying commutation graph is a pentagon, see Figure \ref{fig:pentagon}. Let $S(x,y,z,\cA)=\{xyzy^{-1}x^{-1}z^{-1}eb e^{-1}b^{-1}=1\}$ be a system consisting of a single equation over $\GG$ in variables $x$, $y$ and $z$. Since the word
$$
S(bac,\, c^{-1}a^{-1}d,\,e)=bac \, c^{-1}a^{-1}d \, e \, d^{-1}ac \, c^{-1}a^{-1}b^{-1} \, e^{-1}\, e b e^{-1}b^{-1}
$$
is trivial in $\GG$, the tuple $x=bac$, $y=c^{-1}a^{-1}d$, $z=e$ is a solution of $S$. Construct a van Kampen diagram $\mathcal{D}$ for the word $S(bac,\, c^{-1}a^{-1}d,\,e)=1$ and consider the underlying cancellation scheme as in Proposition \ref{lem:prod}, see Figure \ref{fig:example}.

\begin{figure}[!h]
  \centering
   \includegraphics[keepaspectratio,width=5in]{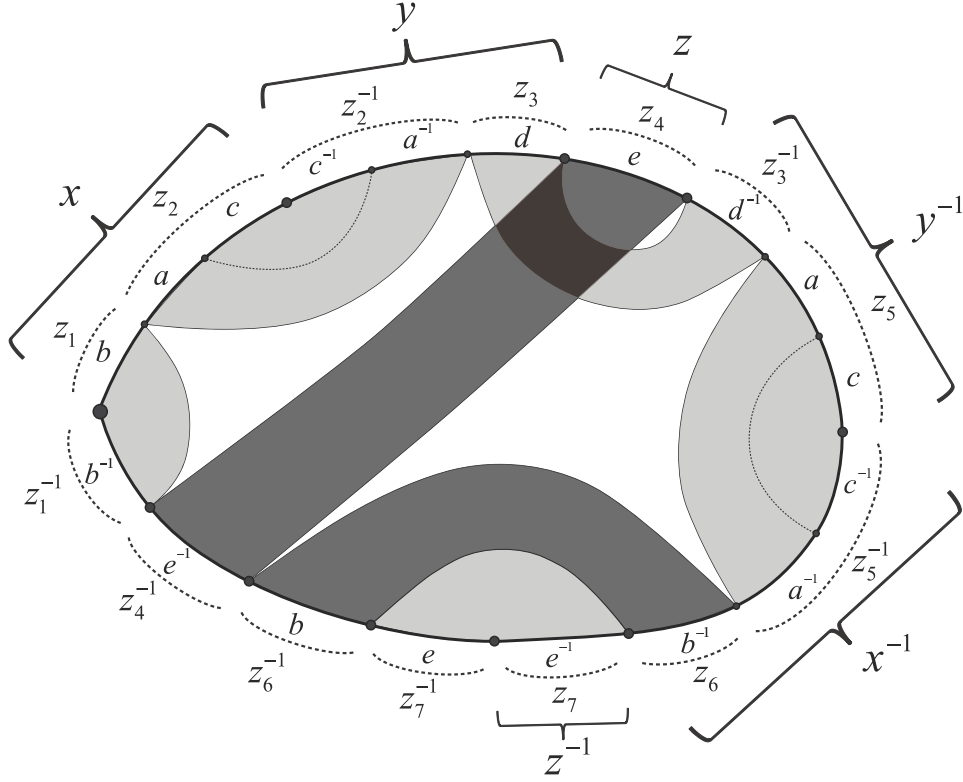}
\caption{Cancellation scheme for the solution $x=bac$, $y=c^{-1}a^{-1}d$, $z=e$ of $S$.} \label{fig:example}
\end{figure}

\subsubsection*{Content of Section \ref{sec:Gparttab}}

Write  the equation $S(x,y,z,\cA)$ in the form $xyzy^{-1}x^{-1}z^{-1}eb e^{-1}b^{-1}=r_{1,1}\dots r_{1,10}=1$, see Equation (\ref{*}).

By the cancellation scheme shown on Figure \ref{fig:example}, we construct a partition table $\TB$. To every end of a  band on Figure \ref{fig:example}, we associate a variable $z_i$ or its inverse $z_i^{-1}$. We can write the variables $x,y,z$ as words in the variables $z_i$'s. Two variables $z_i$ and $z_j$ commute if and only if the corresponding bands cross. Thus, the partition table associated to the equation $S(x,y,z,\cA)$ and its solution $x=bac$, $y=c^{-1}a^{-1}d$, $z=e$ is given below
\begin{gather}\notag
\begin{split}
&\TB=(\{V_{1i}(z_1,\dots,z_7)\},\quad \HH=\langle \GG, z_1,\dots,z_7\mid [z_3,z_4]=1\rangle, \quad i=1,\dots,10\\
&
\begin{array}{llll}
V_{1,1}=z_1 z_2;         &    V_{1,4}=z_3^{-1}z_5;     &    V_{1,7}=z_7^{-1}&V_{1,9}=z_4^{-1};\\
V_{1,2}=z_2^{-1} z_3;    &    V_{1,5}=z_5^{-1}z_6 ;    &    V_{1,8}=z_6^{-1}&V_{1,10}=z_1^{-1}.\\
V_{1,3}=z_4'             &    V_{1,6}=z_7;             &    &
\end{array}
\end{split}
\end{gather}
In the above notation, it is easy to see that every variable $z_i^{\pm 1}$, $i=1,\dots, 7$ occurs in the words $V_{1,j}$ precisely once, and that $V_1=V_{1,1}\cdots V_{1,10}=_\HH 1$.

\subsubsection*{Content of Section \ref{sec:geneqT}}
We now construct the generalised equation $\Omega'_\TB$ associated to the partition table $\TB$. Consider the word $\mathcal{V}$ in the free monoid $\FF(z_1,\dots,z_7, z_1^{-1},\dots,z_7^{-1})$:
$$
\mathcal{V}=V_1=z_1 z_2 z_2^{-1} z_3 z_4 z_3^{-1} z_5 z_5^{-1} z_6 z_7 z_7^{-1} z_6^{-1} z_4^{-1} z_1^{-1}=y_1\cdots y_{14}.
$$
The variables of the generalised equation $\Omega'_\TB$ are $h_1',\dots, h_{14}'$ (equivalently, the set of boundaries of the combinatorial generalised equation is $\{1,\dots, 15\}$).

For every pair of distinct occurrences of $z_i$ in $\mathcal{V}$ we construct a pair of dual variable bases and the corresponding basic equation. For example, $z_1=y_1$ and $z_1^{-1}=y_{14}$.  We introduce a pair of dual bases $\mu_{z_1,1}$, $\mu_{z_1,14}$ and $\Delta(\mu_{z_1,1})=\mu_{z_1,14}$ so that
$$
\alpha(\mu_{z_1,1})=1,\ \beta(\mu_{z_1,1})=2,\quad \alpha(\Delta(\mu_{z_1,1}))=14,\  \beta(\Delta(\mu_{z_1,1}))=15, \quad \varepsilon(\mu_{z_1,1})=1, \ \varepsilon(\mu_{z_1,14})=-1
$$
The corresponding basic equation is $h_1'={h_{14}'}^{-1}$, see Figure \ref{fig:geneq}. The following table describes all basic equations thus constructed.
\begin{center}
\begin{tabular}{c|c|c}
pair of occurrences&  pair of dual bases & corresponding basic equation\ \\
    \hline
$y_1=z_1, y_{14}=z_1^{-1}$ & $\Delta(\mu_{z_1,1})=\mu_{z_1,14}$, $\varepsilon(\mu_{z_1,1})=-\varepsilon(\mu_{z_1,14})=1$ & $\{h_1'={h_{14}'}^{-1}\}=\{L_1=R_1\}$   \\
$y_2=z_2, y_{3}=z_2^{-1} $     &   $  \Delta(\mu_{z_2,2})=\mu_{z_2,3}   $, $\varepsilon(\mu_{z_2,2})=-\varepsilon(\mu_{z_2,3})=1$  & $\{h_2'={h_{3}'}^{-1}\}=\{L_2=R_2\}$\\
$y_4=z_3, y_{6}=z_3^{-1} $     &   $  \Delta(\mu_{z_3,4})=\mu_{z_3,6}   $, $\varepsilon(\mu_{z_3,4})=-\varepsilon(\mu_{z_3,6})=1$    & $\{h_{4}'={h_{6}'}^{-1}\}=\{L_3=R_3\}$\\
$y_5=z_4, y_{13}=z_4^{-1}$     &   $   \Delta(\mu_{z_4,5})=\mu_{z_4,13} $, $\varepsilon(\mu_{z_4,5})=-\varepsilon(\mu_{z_4,13})=1$    & $\{h_{5}'={h_{13}'}^{-1}\}=\{L_4=R_4\}$\\
$y_7=z_5, y_{8}=z_5^{-1} $     &   $  \Delta(\mu_{z_5,7})=\mu_{z_5,8}   $, $\varepsilon(\mu_{z_5,7})=-\varepsilon(\mu_{z_5,8})=1$    & $\{h_{7}'={h_{8}'}^{-1}\}=\{L_5=R_5\}$\\
$y_9=z_6, y_{12}=z_6^{-1}$     &   $  \Delta(\mu_{z_6,9})=\mu_{z_6,12}  $, $\varepsilon(\mu_{z_6,9})=-\varepsilon(\mu_{z_6,12})=1$ & $\{h_{9}'={h_{12}'}^{-1}\}=\{L_6=R_6\}$\\
$y_{10}=z_7, y_{11}=z_7^{-1}$  &   $  \Delta(\mu_{z_7,10})=\mu_{z_7,11} $, $\varepsilon(\mu_{z_7,10})=-\varepsilon(\mu_{z_7,11})=1$    & $\{h_{10}'={h_{11}'}^{-1}\}=\{L_7=R_7\}$
\end{tabular}
\end{center}

For every pair of distinct occurrences of $x$ (correspondingly, of $y$, or of $z$) in $r_{1,1}\cdots r_{1,10}=1$, we construct a pair of dual variable bases and the corresponding basic equation. For example, $r_{1,1}=x$, $r_{1,5}=x^{-1}$. We introduce a pair of dual bases $\mu_{x,q_1}$, $\Delta(\mu_{x,q_1})$, where $q_1=(1,1,1,5)$ so that
$$
\alpha(\mu_{x,q_1})=1,\  \beta(\mu_{x,q_1})=3,\quad  \alpha(\Delta(\mu_{x,q_1}))=8, \ \beta(\Delta(\mu_{x,q_1}))=10,\quad  \varepsilon(\mu_{x,q_1})=1,\  \varepsilon(\Delta(\mu_{x,q_1}))=-1.
$$
The corresponding basic equation is $h_1'h_2'={\left(h_{8}'h_{9}'\right)}^{-1}$, see Figure \ref{fig:geneq}. The following table describes all basic equations thus constructed.
\begin{center}
\begin{tabular}{c|c|c}
 pair of occurrences  & pair of dual bases & corresponding basic equation\\
    \hline
$r_{1,1}=x, r_{1,5}=x^{-1}$ &   $\Delta(\mu_{x,q_1}),\mu_{x,q_1}$, $\varepsilon(\mu_{x,q_1})=-\varepsilon(\mu_{x,1,5})=1$&   $\{h_1'h_2'={\left(h_{8}'h_{9}'\right)}^{-1}\}=\{L_8=R_8\}$\\
$r_{1,2}=y, r_{1,4}=y^{-1}$ &   $\Delta(\mu_{y,q_2}),\mu_{y,q_2}$, $\varepsilon(\mu_{y,q_2})=-\varepsilon(\mu_{y,q_2})=1$&    $\{h_3'h_4'={\left(h_{6}'h_{7}'\right)}^{-1}\}=\{L_9=R_9\}$\\
$r_{1,3}=z, r_{1,6}=z^{-1}$ &   $\Delta(\mu_{z,q_3}),\mu_{z,q_3}$, $\varepsilon(\mu_{z,q_3})=-\varepsilon(\mu_{z,q_3})=1$ &   $\{h_5'={h_{10}'}^{-1}\}=\{L_{10}=R_{10}\}$
\end{tabular}
\end{center}
Here $q_1=(1,1,1,5)$, $q_2=(1,2,1,4)$, $q_3=(1,3,1,6)$.

For $r_{1j}=a\in \cA^{\pm 1}$, we construct a constant base and a coefficient equation. For example, $r_{1,7}=e$. We introduce a constant base $\mu_{1,11}$ so that
$$
\alpha(\mu_{1,11})=11, \ \beta(\mu_{1,11})=12.
$$
The corresponding coefficient equation is ${h_{11}'}=e$, see Figure \ref{fig:geneq}. The following table describes all coefficient equations.
\begin{center}
\begin{tabular}{c|c|c}
\phantom{aa} an occurrence \phantom{aa} & constant base  &\phantom{aa} corresponding coefficient equation \phantom{aa}\\
    \hline
$r_{1,11}=e$        & $\mu_{1,11}=e$  &   $\{{h_{11}'}=e\}=\{L_{11}=R_{11}\}$\\
$r_{1,12}=b$        & $\mu_{1,12}=b$ &   $\{{h_{12}'}=b\}=\{L_{12}=R_{12}\}$\\
$r_{1,13}=e^{-1}$   & $\mu_{1,13}={e}^{-1}$&   $\{{h_{13}'}^{-1}=e\}=\{L_{13}=R_{13}\}$\\
$r_{1,14}=b^{-1}$   & $\mu_{1,14}=b^{-1}$ &   $\{{h_{14}'}^{-1}=b\}=\{L_{14}=R_{14}\}$
\end{tabular}
\end{center}
The set of boundary connections of $\Upsilon_\TB'$ is empty. This defines the generalised equation $\Upsilon_\TB'$ shown on Figure \ref{fig:geneq}.

The set of pairs $(h_i',h_j')$ such that $[y_i, y_j]=1$\ in $\HH$ and $y_i\ne y_j$ consists of:
$$
\{(h'_4,h'_5), (h'_5,h'_4), (h'_5,h'_6), (h'_6,h'_5), (h'_4,h'_{13}), (h'_{13},h'_4), (h'_6,h'_{13}), (h'_{13},h'_6) \}.
$$

To define  the relation $\Re_{\Upsilon_{\TB}'}$ we have to make sure that the set $\Re_{\Upsilon_{\TB}'}\subseteq h'\times h'$ is symmetric and satisfies condition ($\star$) of Definition \ref{defn:Re}. Since we have the equations $h_5'={h_{10}'}^{-1}$, $h_{10}'={h_{11}'}^{-1}$, we get that
$$
\Re_{\Upsilon_{\TB}'}=\{(h'_4,h'_5), (h'_5,h'_6), (h'_4,h'_{10}), (h'_6,h'_{10}), (h'_4,h'_{11}), (h'_6,h'_{11}), (h'_4,h'_{13}), (h'_6,h'_{13})\}.
$$
Clearly, the relation $\Re_{\Upsilon_{\TB}'}$ has to be symmetric, but, in this section, we further write only one of the two symmetric pairs. This defines the constrained generalised equation $\Omega_\TB'=\langle \Upsilon_\TB', \Re_{\Upsilon_{\TB}'}\rangle$.

We  represent generalised equations graphically in the way shown on Figure \ref{fig:geneq}.
\begin{figure}[!h]
  \centering
   \includegraphics[keepaspectratio,width=6in]{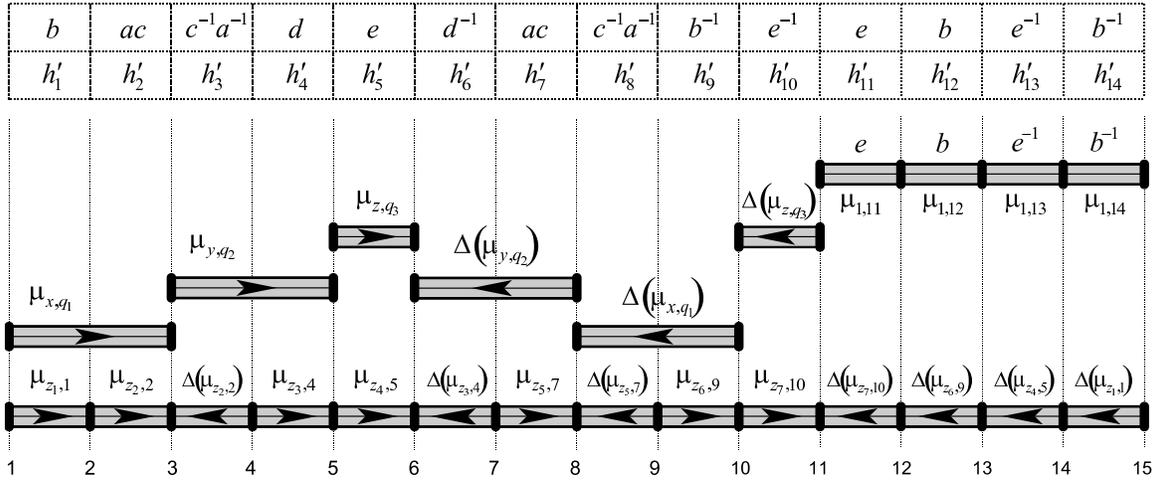}
\caption{The generalised equation $\Omega_\TB'$.} \label{fig:geneq}
\end{figure}

\subsubsection*{Content of Section \ref{sec:fromTtoF}}

By Corollary \ref{co:R1} for every pair $(\Omega'_\TB, H')$ there exists a pair $(\Omega_T,H)$ such that the diagram given in Corollary \ref{co:R1} is commutative.  We now construct the set $T=(c_1,\dots,c_{14}, d_1,\dots, d_{10})$ that defines a generalised equation $\Omega_T$ over $\FF$. We construct the elements $c_i$ of the sets $C(L_i)$ that correspond to the solution $x=bac$, $y=c^{-1}a^{-1}d$, $z=e$.

Recall that the partially commutative group $\GG$ is defined by the commutation graph shown on Figure \ref{fig:pentagon}. The definition of DM-normal form in the partially commutative monoid $\Tr$ is given by induction on the number of thin clans. In our example thin clans of $\Tr$ are $\{a,a^{-1}\}$, $\{b,b^{-1}\}$, $\{c,c^{-1}\}$, $\{d,d^{-1}\}$ and $\{e,e^{-1}\}$.

Consider the DM-normal form in $\Tr$ with respect to the thin clan $\{a,a^{-1}\}$. Note that for $L_8=h_1'h_2'$, we have that $L_8(bac,c^{-1}a^{-1}d,e)=bac$ is not written in the DM-normal form.  We introduce a new variable and write $h_1'=h^{(L_8)}_{1,2}$, $h_2'=h^{(L_8)}_{2,1}h^{(L_8)}_{2,2}$  (here the corresponding permutation $\varsigma_{c,2}$ on $\{1,2\}$ is trivial) and $NF_{c_8}(L_8)=h^{(L_8)}_{2,1}h^{(L_8)}_{1,2}h^{(L_8)}_{2,2}$. Notice that
$NF_{c_8}(L_8)(bac,c^{-1}a^{-1}d,e)=abc$ is in the DM-normal form.

Below we describe all the elements $c_i$ for the solution $x=bac$, $y=c^{-1}a^{-1}d$, $z=e$.
\begin{gather}\notag
\begin{split}
&
\begin{array}{ll}
C(L_1)\ni c_1=\left(\left\{h_1'=h^{(L_1)}_{1,1}\right\};\ h^{(L_1)}_{1,1};\ \emptyset\right); &    C(L_7)\ni c_7=\left(\left\{h_{10}'=h^{(L_7)}_{10,1}\right\};\ h^{(L_7)}_{10,1};\ \emptyset\right);\\
C(L_2)\ni c_2=\left(\left\{h_2'=h^{(L_2)}_{2,1}\right\};\ h^{(L_2)}_{2,1};\ \emptyset\right);&    C(L_{10})\ni c_{10}=\left(\left\{h_5'=h^{(L_{10})}_{5,1}\right\};\ h^{(L_{10})}_{5,1};\ \emptyset\right);\\
C(L_3)\ni c_3=\left(\left\{h_4'=h^{(L_3)}_{4,1}\right\};\ h^{(L_3)}_{4,1};\ \emptyset\right);&    C(L_{11})\ni c_{11}=\left(\left\{h_{11}'=h^{(L_{11})}_{11,1}\right\};\ h^{(L_{11})}_{11,1};\ \emptyset\right);\\
C(L_4)\ni c_4=\left(\left\{h_5'=h^{(L_4)}_{5,1}\right\};\ h^{(L_4)}_{5,1};\ \emptyset\right);&    C(L_{12})\ni c_{12}=\left(\left\{h_{12}'=h^{(L_{12})}_{12,1}\right\};\ h^{(L_{12})}_{12,1};\ \emptyset\right);\\
C(L_5)\ni c_5=\left(\left\{h_7'=h^{(L_5)}_{7,1}\right\};\ h^{(L_5)}_{7,1};\ \emptyset\right);&    C(L_{13})\ni c_{13}=\left(\left\{{h_{13}'}^{-1}=h^{(L_{13})}_{13,1}\right\};\ h^{(L_{13})}_{13,1};\ \emptyset\right)\\
C(L_6)\ni c_6 =\left(\left\{h_9'=h^{(L_6)}_{9,1}\right\};\ h^{(L_6)}_{9,1};\ \emptyset\right);&    C(L_{14})\ni c_{14}=\left(\left\{{h_{14}'}^{-1}=h^{(L_1)}_{14,1}\right\};\ h^{(L_{14})}_{14,1};\ \emptyset\right);
\end{array}
\\
&\ C(L_8)\ni c_8=\left(\left\{h_1'=h^{(L_8)}_{1,2},h_2'=h^{(L_8)}_{2,1}h^{(L_8)}_{2,2}\right\};\  h^{(L_8)}_{2,1}h^{(L_8)}_{1,2}h^{(L_8)}_{2,2};\
\mathcal{R}_c(L_8)=\left\{\left(h^{(L_8)}_{1,2},h^{(L_8)}_{2,1}\right)\right\}\right); \\
&\ C(L_9)\ni c_9=\left(\left\{h_3'=h^{(L_9)}_{3,1}, h_4'=h^{(L_9)}_{4,1}\right\};\ h^{(L_9)}_{3,1}h^{(L_9)}_{4,1};\ \emptyset \right).
\end{split}
\end{gather}

Analogously, we describe all the elements $d_i$ for the solution $x=bac$, $y=c^{-1}a^{-1}d$, $z=e$.
\begin{gather}\notag
\begin{split}
&
\begin{array}{ll}
C(R_1)\ni d_1=\left(\left\{h_{14}'=h^{(R_{1})}_{14,1}\right\};\   h^{(R_{1})}_{14,1};\ \emptyset\right);& C(R_7)\ni d_7=\left(\left\{h_{11}'=h^{(R_{7})}_{11,1}\right\};\   h^{(R_{7})}_{11,1};\ \emptyset\right);\\
C(R_2)\ni d_2=\left(\left\{h_{3}'=h^{(R_{2})}_{3,1}\right\};\     h^{(R_{2})}_{3,1};\ \emptyset\right);&  C(R_{10})\ni d_{10}=\left(\left\{{h_{10}'}^{-1}=h^{(R_{10})}_{10,1}\right\};\ h^{(R_{10})}_{10,1};\ \emptyset\right);  \\
C(R_3)\ni d_3=\left(\left\{h_6'=h^{(R_{3})}_{6,1}\right\};\       h^{(R_{3})}_{6,1};\ \emptyset\right);&  C(R_{11})\ni d_{11}=\left(\emptyset;\ e;\ \emptyset\right);   \\
C(R_4)\ni d_4=\left(\left\{h_{13}'=h^{(R_{4})}_{13,1}\right\};\   h^{(R_{4})}_{13,1};\ \emptyset\right);&  C(R_{12})\ni d_{12}=\left(\emptyset;\ b;\ \emptyset\right);\\
C(R_5)\ni d_5=\left(\left\{h_8'=h^{(R_{5})}_{8,1}\right\};\       h^{(R_{5})}_{8,1};\ \emptyset\right);&  C(R_{13})\ni d_{13}=\left(\emptyset;\ e^{-1};\ \emptyset\right);\\
C(R_6)\ni d_6=\left(\left\{h_{12}'=h^{(R_{6})}_{12,1}\right\};\   h^{(R_{6})}_{12,1};\ \emptyset\right);&C(R_{14})\ni d_{14}=\left(\emptyset;\ b^{-1};\ \emptyset\right);\\
\end{array}
\\
&\ C(R_8)\ni d_8=\left(\left\{{h_9'}^{-1}=h^{(R_{8})}_{9,1},{h_8'}^{-1}=h^{(R_{8})}_{8,1}h^{(R_{8})}_{8,2}\right\};\ h^{(R_{8})}_{8,1}h^{(R_{8})}_{9,1}h^{(R_{8})}_{8,2};\ \mathcal{R}_c(R_8)=\left\{\left(h^{(R_{8})}_{8,1},h^{(R_8)}_{9,1}\right)\right\}\right);\\
&\ C(R_9)\ni d_9=\left(\left\{{h_7'}^{-1}=h^{(R_{9})}_{7,1}, {h_6'}^{-1}=h^{(R_{9})}_{6,1}\right\};\ h^{(R_{9})}_{6,1}h^{(R_{9})}_{7,1};\ \emptyset \right).
\end{split}
\end{gather}

For the tuple $T=(c_1,\dots, c_{14}, d_1,\dots, d_{14})$ we construct the generalised equation $\Omega_T$.
Equations corresponding to $NF_{c_i}(L_i)=NF_{d_i}(R_i)$, $i=1,\dots, 14$ are:
$$
\begin{array}{lll}
 h^{(L_1)}_{1,1} = h^{(R_{1})}_{14,1};   &\quad h^{(L_6)}_{9,1} = h^{(R_{6})}_{12,1};&      h^{(L_{11})}_{11,1}  =e;\\
 h^{(L_2)}_{2,1} = h^{(R_{2})}_{3,1};  & \quad  h^{(L_7)}_{10,1}  =  h^{(R_{7})}_{11,1};  &h^{(L_{12})}_{12,1}  =b;\\
 h^{(L_3)}_{4,1}=h^{(R_{3})}_{6,1};& \quad h^{(L_8)}_{2,1}h^{(L_8)}_{1,2}h^{(L_8)}_{2,2} = h^{(R_{8})}_{8,1}h^{(R_{8})}_{9,1}h^{(R_{8})}_{8,2}; \ &
 h^{(L_{13})}_{13,1}  =e^{-1};\\
 h^{(L_4)}_{5,1} = h^{(R_{4})}_{13,1};& \quad h^{(L_9)}_{3,1}h^{(L_9)}_{4,1}  = h^{(R_{9})}_{6,1}h^{(R_{9})}_{7,1};& h^{(L_{14})}_{14,1}  =b^{-1}.\\
 h^{(L_5)}_{7,1} = h^{(R_{5})}_{8,1};  &\quad h^{(L_{10})}_{5,1}=h^{(R_{10})}_{10,10,1};&
 \end{array}
$$

Equations that equate different decompositions of the same variable $h_j'$ are obtained as follows:
$$
\begin{array}{ll}
h_1': \hbox{ from $L_1,L_8$, we get } h^{(L_1)}_{1,1}= h^{(L_8)}_{1,2};&                       h_8': \hbox{ from $R_5,R_8$, we get } h^{(R_{5})}_{8,1}={h^{(R_{8})}_{8,2}}^{-1}{h^{(R_{8})}_{8,1}}^{-1};\\
h_2': \hbox{ from $L_2, L_8$, we get }h^{(L_2)}_{2,1}=h^{(L_8)}_{2,1}h^{(L_8)}_{2,2};&                        h_9':    \hbox{ from $L_6, R_8$, we get } h^{(L_6)}_{9,1}={h^{(R_{8})}_{9,1}}^{-1};\\
h_3': \hbox{ from $L_9, R_2$, we get } h^{(L_9)}_{3,1} ={h^{(R_{2})}_{3,1}}^{-1};&     h_{10}': \hbox{ from $L_7, R_{10}$, we get } h^{(L_7)}_{10,1}={h^{(R_{10})}_{10,1}}^{-1};\\
h_4': \hbox{ from $L_3,L_9$, we get  }h^{(L_3)}_{4,1}=h^{(L_9)}_{4,1};&                        h_{11}': \hbox{ from $L_{11}, R_7$, we get } h^{(L_{11})}_{11,1}=h^{(R_{7})}_{11,1};\\
h_5': \hbox{ from $L_4,L_{10}$, we get } h^{(L_4)}_{5,1}=h^{(L_{10})}_{5,1}; &                  h_{12}': \hbox{ from $L_{12}, R_6$, we get } h^{(L_{12})}_{12,1}=h^{(R_{6})}_{12,1};\\
h_6': \hbox{ from $R_3,R_9$, we get } h^{(R_{3})}_{6,1}={h^{(R_{9})}_{6,1}}^{-1};&                h_{13}': \hbox{ from $L_{13}, R_4$, we get } {h^{(L_{13})}_{13,1}}^{-1}=h^{(R_{4})}_{13,1};\\
h_7': \hbox{ from $L_5, R_9$, we get } h^{(L_5)}_{7,1}={h^{(R_{9})}_{7,1}}^{-1};&                  h_{14}': \hbox{ from $L_{14}, R_1$, we get } {h^{(L_{14})}_{14,1}}{-1}=h^{(R_{1})}_{14,1}.
\end{array}
$$
This defines the generalised equation $\Upsilon_T$ (by the definition, the set of boundary connections of $\Upsilon_T$ is empty).

Set
\begin{itemize}
\item $\Re_{\Upsilon_T}\left(h^{(L_{i_1})}_{j_1,k_1},h^{(L_{i_2})}_{j_2,k_2}\right)$, $\Re_{\Upsilon_T}\left(h^{(R_{i_1})}_{j_1,k_1},h^{(L_{i_2})}_{j_2,k_2}\right)$, $\Re_{\Upsilon_T}\left(h^{(R_{i_1})}_{j_1,k_1},h^{(R_{i_2})}_{j_2,k_2}\right)$ if $\Re_{\Upsilon'}(h_{j_1}',h_{j_2}')$;
\item $\Re_{\Upsilon_T}\left(h^{(L_{i_1})}_{j_1,k_1},h^{(L_{i_2})}_{j_2,k_2}\right)$ if $\left(h^{(L_{i_1})}_{j_1,k_1},h^{(L_{i_1})}_{j_2,k_2}\right)\in \mathcal{R}_{c_i}(L_i)$ for some $i$;
\item $\Re_{\Upsilon_T}\left(h^{(R_{i_1})}_{j_1,k_1},h^{(L_{i_2})}_{j_2,k_2}\right)$ if $\left(h^{(R_{i_1})}_{j_1,k_1},h^{(L_{i_1})}_{j_2,k_2}\right)\in \mathcal{R}_{d_i}(R_i)$ for some $i$;
\end{itemize}
We define $\Re_{\Upsilon_T}$ to be the minimal subset of $h\times h$ that is symmetric and satisfies condition ($\star$) of Definition \ref{defn:Re}.

\begin{rem}
Observe, that the initial system of equations $S(x,y,z,\cA)$ contains a single equation in three variables. The generalised equation $\Omega'_\TB$ is a system of equations in $14$ variables. The generalised equation $\Omega_T$ is a system of equations in at least $30$ variables. (Note that for $\Omega_T$, we constructed the system of equations over the monoid $\FF$. We still should have constructed a combinatorial generalised equation associated to this system, see Lemma \ref{lem:cgege}.)

Notice that, the above considered example describes only one generalised equation (which was traced by a particular solution of $S(x,y,z,\cA)$). Consideration of all the \emph{finite} collection of the generalised equations corresponding to the ``simple'' system of equations $S(x,y,z,\cA)$, can only be done on a computer.

This picture is, in fact, general. The size of the systems grows dramatically and makes difficult to work with examples.
\end{rem}

\section{The process: construction of the tree $T$} \label{se:5}
\begin{flushright}
\parbox{2in}{\textit{``Es werden aber bei uns in der Regel keine aussichtslosen Prozesse gef\"{u}hrt.''}\\ Franz Kafka, ``Der Prozess''}
\end{flushright}

In the previous section we reduced the study of the set of solutions of a system of equations over a partially commutative group to the study of solutions of constrained generalised equations over a free monoid. In order to describe the solutions of constrained generalised equations over a free monoid, in this section we describe a branching rewriting process for constrained generalised equations.

In his important papers \cite{Makanin} and \cite{Mak82}, G.~Makanin devised a process for proving that the compatibility problem of systems of equations over a free monoid (over a free group) is decidable. This process was later developed by A.~Razborov. In his work \cite{Razborov1}, \cite{Razborov3}, gave a complete description of all solutions of a system of equations. A further step was made by O.~Kharlampovich and A.~Miasnikov. In \cite{IFT}, in particular, the authors extend Razborov's result to systems of equations with parameters. In \cite{EJSJ} the authors establish a correspondence between the process and the JSJ decomposition of fully residually free groups.

In another direction, Makanin's result (on decidability of equations over a free monoid) was developed by K.Schulz, see \cite{Schulz}, who proved that the compatibility problem of equations with regular constraints over a free monoid is decidable.

It turns out to be that the compatibility problem of equations over numerous groups and monoids can be reduced to the compatibility problem of equations over a free monoid with constraints. This technique turned out to be rather fruitful. G.~Makanin was the first to do such a reduction. In \cite{Mak82}, in particular, he reduced the compatibility problem of equations over a free group to the decidability of the compatibility problem for free monoids. Later V.~Diekert, C.~Guti\'{e}rrez and C.~Hagenah, see \cite{DGH}, reduced the compatibility problem of systems of equations over a free group with rational constraints to compatibility problem of equations with regular constraints over a free monoid.

The reduction of compatibility problem for hyperbolic groups to free group was made  in \cite{RipsSela} by E.~Rips and Z.~Sela; for relatively hyperbolic groups with virtually abelian parabolic subgroups in \cite{Dahmani} by F.~Dahmani, for HNN-extensions with finite associated subgroups and for amalgamated products with finite amalgamated subgroups in \cite{LohrSeni} by M.~Lohrey and G.~S\'{e}nizergues, for partially commutative monoids in \cite{Mat} by Yu.~Matiasevich, for partially commutative groups in \cite{DM} by V.~Diekert and A.~Muscholl, for graph product of groups in \cite{DL} by V.~Diekert and M.~Lohrey.

The complexity of Makanin's algorithm has received a great deal of attention. The best result about arbitrary systems of equations over monoids is due to W.~Plandowski. In a series of two papers \cite{Pl1, Pl2} he gave a new approach to the compatibility problem of systems of equations over a free monoid and showed that this problem is in PSPACE. An important ingredient of Plandowski's method is data compression in terms of exponential expressions. This approach was further extended by Diekert, Guti\'{e}rrez and Hagenah, see \cite{DGH} to systems of equations over free groups. Recently, O.~Kharlampovich, I.~Lys\"{e}nok, A.~Myasnikov and N.~Touikan have shown that solving quadratic equations over free groups is NP-complete, \cite{KhLMT}.

Another important development of the ideas of Makanin is due to E.~Rips and is now known as the Rips' machine. In his work Rips interprets Makanin's algorithm in terms of partial isometries of real intervals, which leads him to a classification theorem of finitely generated groups that act freely on $\BR$-trees. A complete proof of Rips' theorem was given by D.~Gaboriau, G.~Levitt, and F.~Paulin, see \cite{GLP}, and, independently, by M.~Bestvina and M.~Feighn, see \cite{BF}, who also generalised Rips' result to give a classification theorem of groups that have a stable action on $\BR$-trees.

\bigskip

The process we describe is a rewriting system based on the ``divide and conquer'' algorithm design paradigm (more precisely, ``divide and marriage before conquest'' technique, \cite{Bl}).

For a given generalised equation $\Omega_{v_0}$, this branching process results in a locally finite and possibly infinite oriented rooted at $v_0$ tree $T$, $T=T(\Omega_{v_0})$. The vertices of the tree $T$ are labelled by (constrained) generalised equations $\Omega_{v_i}$ over $\FF$. The edges of the tree $T$ are labelled by epimorphisms of the corresponding coordinate groups. Moreover, for every solution $H$ of $\Omega_{v_0}$, there exists a path in the tree $T$ from the root vertex to a vertex $v_l$ and a solution $H^{(l)}$ of $\Omega_{v_l}$ such that the solution $H$ is a composition of the epimorphisms corresponding to the edges in the tree and the solution  $H^{(l)}$.  Conversely, every path from the root to a vertex $v_l$ in $T$ and any solution $H^{(l)}$ of $\Omega_{v_l}$ give rise to a solution of $\Omega_{v_0}$, see Proposition \ref{prop:TO}.

The tree is constructed by induction on the height. Let $v$ be a vertex of height $n$. One can check under the assumptions of which of the 15 cases described in Section \ref{se:5.2} the generalised equation $\Omega_v$ falls. If $\Omega_v$ falls under the assumptions of Case 1 or Case 2, then $v$ is a leaf of the tree $T$. Otherwise, using the combination of elementary and derived transformations (defined in Sections \ref{se:5.1} and \ref{se:5.2half}) given in the description of the corresponding (to $v$) case, one constructs finitely many generalised equations and epimorphisms from the coordinate group of $\Omega_v$ to the coordinate groups of the generalised equations constructed.

We finish this section (see Lemma \ref{3.2}) by proving that infinite branches of the tree $T$, as in the case of free groups, correspond to one of the following three cases:
\begin{enumerate}
\item[(A)] Case 7-10: Linear case (Levitt type, thin type);
\item[(B)] Case 12: Quadratic case (surface type, interval exchange type);
\item[(C)] Case 15: General case (toral type, axial type).
\end{enumerate}

\subsection{Preliminary definitions}
In this section we give some definitions that we use throughout the text.
\begin{defn} \label{de:gepar}
Let $\Omega$ be a generalised equation. We partition the set \glossary{name={$\Sigma$, $\Sigma(\Omega)$}, description={the set of all closed sections of a generalised equation $\Omega$}, sort=S}$\Sigma = \Sigma(\Omega)$ of all closed sections of $\Omega$ into a disjoint union of two subsets
\glossary{name={$V\Sigma$}, description={the set of all variable sections of a generalised equation}, sort=V}
\glossary{name={$C\Sigma$}, description={the set of all constant sections of a generalised equation}, sort=C}
$$
\Sigma(\Omega) = V\Sigma\cup  C\Sigma.
$$
The sections from $V\Sigma$ and $C\Sigma$, are called correspondingly, \index{section!variable}\emph{variable}, and \index{section!constant}\emph{constant} sections.

To organise the process properly, we partition the closed sections of $\Omega$ in another way into a disjoint union of two sets, which we refer to as \index{part of a generalised equation!non-active}\index{part of a generalised equation!active}\emph{parts}:
\glossary{name={$A\Sigma$}, description={the active part of a generalised equation}, sort=A}
\glossary{name={$NA\Sigma$}, description={the non-active part of a generalised equation}, sort=N}
$$
\Sigma(\Omega) = A\Sigma\cup NA\Sigma
$$
Sections from $A\Sigma$ are called \index{section!active}{\em active}, and sections from $NA\Sigma$ are called \index{section!non-active}{\em non-active}. We set
$$
C\Sigma \subseteq NA\Sigma.
$$
If not stated otherwise, we assume that all sections from  $V\Sigma$ belong to the active part $A\Sigma$.

If $\sigma \in \Sigma$, then every item (or base) from $\sigma$ is called \index{item!active}\index{base!active}active or \index{item!non-active}\index{base!non-active}non-active, depending on the type of $\sigma$.
\end{defn}

\begin{defn}
We say that a generalised equation $\Omega$ is in the \index{standard form of a generalised equation}{\em standard  form} if the following conditions hold.
\begin{enumerate}
    \item All non-active sections are located to the right of all active sections. Formally, there are numbers \glossary{name={$\rho_A$}, description={the boundary between active and non-active parts of a generalised equation}, sort=R}$1 \leq \rho_A  \leq  \rho_\Omega+1$ such that $[1,\rho_A]$ and $[\rho_{A}, \rho_\Omega+1]$ are, correspondingly, unions of all active and all non-active sections.
    \item All constant bases belong to $C\Sigma$, and for every letter $a \in \cA^{\pm 1}$ there is at most one constant base in $\Omega$ labelled by $a$.
    \item Every free variable $h_i$ of $\Omega$ belongs to a section from $C\Sigma$.
 \end{enumerate}
 \end{defn}

We will show in Lemma \ref{lem:stform} that every generalised equation can be taken to the standard form.

\subsection{Elementary transformations} \label{se:5.1}

In this section we describe \index{transformation of a generalised equation!elementary}{\em elementary transformations} of generalised equations. Recall that we consider only formally consistent generalised equations. In general, an elementary transformation $\ET$ associates  to a generalised equation $\Omega=\gpo$ a finite set of generalised equations $\ET(\Omega) = \left\{\Omega_1, \dots, \Omega_r\right\}$, $\Omega_i=\gpof{i}$  and a collection of surjective homomorphisms $\theta_i: \GG_{R(\Omega^\ast)}\rightarrow \GG_{R({\Omega _i}^\ast)}$ such that for every pair $(\Omega,H)$ there exists a unique pair $(\Omega_i, H^{(i)})$ such that the following diagram commutes.
$$
\xymatrix@C3em{
 \GG_{R(\Omega^\ast)}  \ar[rd]_{\pi_H} \ar[rr]^{\theta_i}  &  &\GG_{R(\Omega_i^\ast )} \ar[ld]^{\pi_{H^{(i)}}}
                                                                             \\
                               &  \GG &
}
$$
Since the pair $(\Omega_i,H^{(i)})$ is defined uniquely, we have a well-defined map $\ET:(\Omega,H) \to (\Omega_i,H^{(i)})$.

Every elementary transformation is first described formally and then we give an example using graphic representations of generalised equations, the latter being much more intuitive. It is a good exercise to understand what how the elementary transformations change the system of equations corresponding to the generalised equation.

\subsubsection*{\glossary{name={$\ET 1$}, description={elementary transformation $\ET 1$}, sort=E}$\ET 1$: Cutting a base}
Suppose that $\Omega$ contains a boundary connection $(p,\lambda ,q )$.

The transformation $\ET 1$ carries $\Omega$ into a single generalised equation $\Omega_1=\gpof{1}$ which is obtained from $\Omega$ as follows. To obtain $\Upsilon_1$ from $\Upsilon$ we
\begin{itemize}
\item replace (cut in $p$) the base $\lambda$ by two new bases  $\lambda _1$ and $\lambda _2$, and
\item replace (cut in $q$) $\Delta(\lambda)$ by two new bases $\Delta (\lambda _1)$ and $\Delta (\lambda _2)$,
\end{itemize}
so that the following conditions hold.

If $\varepsilon(\lambda) =  \varepsilon(\Delta(\lambda))$, then
$$
\alpha(\lambda_1) = \alpha(\lambda), \ \beta(\lambda_1) = p, \quad \alpha(\lambda_2) = p, \  \beta(\lambda_2) = \beta(\lambda); $$
$$
\alpha(\Delta(\lambda_1))= \alpha(\Delta(\lambda)),\ \beta(\Delta(\lambda_1))= q, \quad \alpha(\Delta(\lambda_2)) = q, \ \beta(\Delta(\lambda_2))= \beta(\Delta(\lambda));
$$

If  $\varepsilon(\lambda) = - \varepsilon(\Delta(\lambda))$, then
$$
\alpha(\lambda_1) = \alpha(\lambda), \ \beta(\lambda_1) = p, \quad \alpha(\lambda_2) = p, \  \beta(\lambda_2) = \beta(\lambda); $$
$$
\alpha(\Delta(\lambda_1)) = q,  \ \beta(\Delta(\lambda_1)) =\beta(\Delta(\lambda)), \quad \alpha(\Delta(\lambda_2)) =\alpha(\Delta(\lambda)),  \  \beta(\Delta(\lambda_2)) = q;
$$

Put
$$
\varepsilon(\lambda_i) = \varepsilon(\lambda), \ \varepsilon(\Delta(\lambda_i)) = \varepsilon(\Delta(\lambda)), i= 1,2.
$$

Let $(p', \lambda, q')$ be a boundary connection in $\Omega$. If $p' < p$,  then we replace $(p', \lambda, q')$ by  $(p',\lambda_1, q')$. If $p' > p$, then we replace   $(p', \lambda, q')$ by  $(p',\lambda_2, q')$. Notice that from property (\ref{it:forcon2}) of Definition \ref{def:forcon}, it follows that $(p',\lambda_1, q')$ (or  $(p',\lambda_2, q')$) is a boundary connection in the new generalised equation.

We define the new generalised equation $\Omega_1=\gpof{1}$, by setting $\Re_{\Upsilon_1}=\Re_{\Upsilon}$.  The resulting generalised equation $\Omega_1$ is also formally consistent. Put $\ET 1(\Omega) = \{\Omega_1\}$, see   Figure \ref{fig:ET1}.

\begin{figure}[!h]
  \centering
   \includegraphics[keepaspectratio,width=6in]{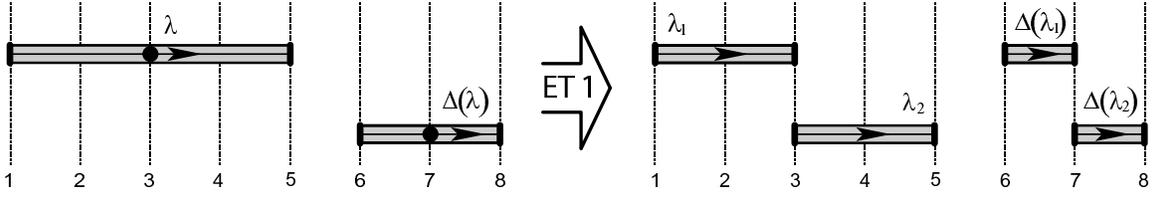}
\caption{Elementary transformation $\ET 1$: Cutting a base.} \label{fig:ET1}
\end{figure}

\subsubsection*{\glossary{name={$\ET 2$}, description={elementary transformation $\ET 2$}, sort=E}$\ET 2$: Transferring a base}
Let a base $\lambda$ of a generalised equation $\Omega$  be contained in the base $\mu$, i.e., $\alpha (\mu)\leq \alpha (\lambda)<\beta (\lambda)\leq\beta(\mu)$. Suppose that the boundaries $\alpha(\lambda)$ and $\beta(\lambda)$ are $\mu$-tied, i.e.  there are boundary connections of the form $(\alpha (\lambda),\mu, q_1)$ and $(\beta (\lambda),\mu,q_2)$.  Suppose also that every $\lambda$-tied boundary is $\mu$-tied.

The transformation $\ET 2$ carries $\Omega$ into a single generalised equation $\Omega_1=\gpof{1}$ which is obtained from $\Omega$ as follows. To obtain $\Upsilon_1$ from $\Upsilon$ we transfer $\lambda$ from the base $\mu$ to the base $\Delta (\mu)$ and adjust all the basic and boundary equations (see Figure \ref{fig:ET2}). Formally, we replace $\lambda$ by a new base $\lambda^\prime$ such that $\alpha(\lambda^\prime) = q_1, \beta(\lambda^\prime) = q_2$ and replace each $\lambda$-boundary connection $(p,\lambda,q)$ with a new one $(p^\prime,\lambda^\prime,q)$ where $p$ and $p^\prime$ are related by a $\mu$-boundary connection $(p,\mu, p^\prime)$.

By definition, set $\Re_{\Upsilon_1}=\Re_{\Upsilon}$. We therefore defined a generalised equation $\Omega=\gpof{1}$, set $\ET 2(\Omega)=\{\Omega_1\}$.

\begin{figure}[!h]
  \centering
   \includegraphics[keepaspectratio,width=6in]{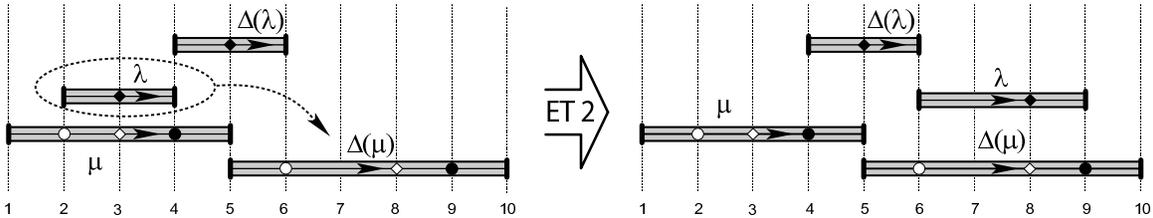}
\caption{Elementary transformation $\ET 2$: Transferring a base.} \label{fig:ET2}
\end{figure}

\subsubsection*{\glossary{name={$\ET 3$}, description={elementary transformation $\ET 3$}, sort=E}$\ET 3$: Removing a pair of matched bases}

Let $\mu$ and $\Delta(\mu)$ be a pair of matched bases in $\Omega$. Since $\Omega$ is formally consistent, one has $\varepsilon(\mu) = \varepsilon(\Delta(\mu))$, $\beta(\mu) = \beta(\Delta(\mu))$ and every $\mu$-boundary connection is of the form $(p,\mu,p)$.

The transformation $\ET 3$ applied to $\Omega$ results in a single generalised equation $\Omega_1=\gpof{1}$ which is obtained from $\Omega$ by
removing the pair of bases  $\mu, \Delta(\mu)$ with all the $\mu$-boundary connections and setting $\Re_{\Upsilon_1}=\Re_{\Upsilon}$, see Figure \ref{fig:ET3}.

\begin{figure}[!h]
  \centering
   \includegraphics[keepaspectratio,width=6in]{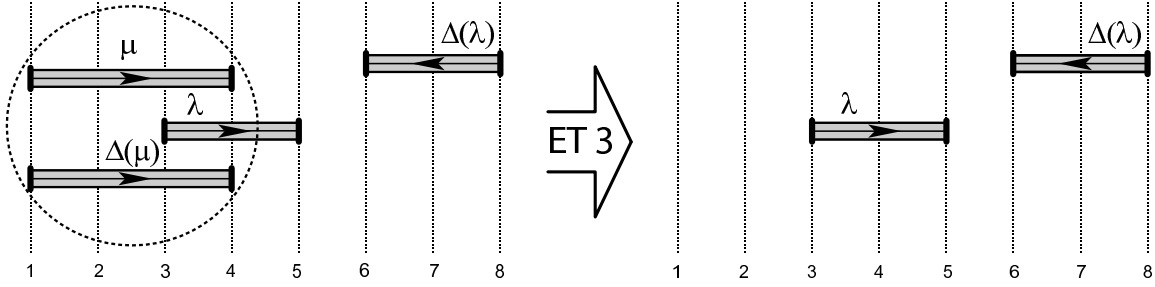}
\caption{Elementary transformation $\ET 3$: Removing a pair of matched bases.} \label{fig:ET3}
\end{figure}

\begin{rem} \label{rem:et123}
Observe that for $i = 1,2,3$, the set $\ET i(\Omega)$ consists of a single generalised equation $\Omega_1$,  such that  $\Omega$ and $\Omega_1$ have the same set of variables $h$ and $\Re_\Upsilon=\Re_{\Upsilon_1}$. The identity isomorphism $\tilde\theta_1:F[h]\to F[h]$, where $F$ is the free group on $\cA$, trivially induces a $\GG$-isomorphism $\theta_1$ from $\GG_{R(\Omega^\ast)}$ to $\GG_{R({\Omega_1}^\ast)}$ and an $F$-isomorphism $\theta_1'$ from $F_{R(\Upsilon^\ast)}$ to $F_{R({\Upsilon_1}^\ast)}$.

Moreover, if $H$ is a solution of $\Omega$, then the tuple $H$ is a solution of $\Omega_1$, since the substitution of $H$ into the equations of $\Omega_1$ result in graphical equalities.
\end{rem}

\subsubsection*{\glossary{name={$\ET 4$}, description={elementary transformation $\ET 4$}, sort=E}$\ET 4$: Removing a linear base}

Suppose that in $\Omega$ a variable base $\mu$ does not intersect any other variable base, i.e. the items $h_{\alpha(\mu)}, \ldots, h_{\beta(\mu)-1}$ are contained only in one variable base $\mu$. Moreover, suppose that all boundaries that intersect $\mu$ are $\mu$-tied, i.e. for every $i$, $\alpha(\mu)< i< \beta(\mu)$ there exists a boundary $\t(i)$ such that $(i,\mu ,\t(i))$ is a boundary connection in $\Omega$. Since $\Omega$ is formally consistent, we have $\t(\alpha(\mu)) = \alpha(\Delta(\mu))$ and  $\t(\beta(\mu)) = \beta(\Delta(\mu))$ if $\varepsilon(\mu)\varepsilon(\Delta(\mu)) = 1$, and $\t(\alpha(\mu)) = \beta(\Delta(\mu))$ and $\t(\beta(\mu)) = \alpha(\Delta(\mu))$ if $\varepsilon(\mu)\varepsilon(\Delta(\mu)) = -1$.

The transformation $\ET 4$ carries $\Omega$ into a single generalised equation $\Omega_1=\gpof{1}$ which is obtained from $\Omega$ by deleting the pair of bases $\mu$ and $\Delta(\mu)$; deleting all the boundaries $\alpha(\mu)+1, \ldots, \beta(\mu)-1$, deleting all $\mu$-boundary connections, re-enumerating the remaining boundaries and setting $\Re_{\Upsilon_1}=\Re_{\Upsilon}$.

We define the epimorphism $\tilde\theta_1: F[h] \to F[h^{(1)}]$, where $F$ is the free group on $\cA$ and $h^{(1)}$ is the set of variables of $\Omega_1$, as follows:
$$
\tilde\theta_1(h_j)=
\left\{
  \begin{array}{ll}
h_j^{(1)},                            & \hbox{if $j<\alpha (\mu)$ or $j\ge \beta (\mu)$;} \\
h^{(1)}_{\t(j)} \dots h^{(1)}_{\t(j+1)-1},   & \hbox{if $\alpha +1 \leq j\leq\beta (\mu)-1$ and  $\varepsilon (\mu)=\varepsilon (\Delta(\mu))$;} \\
h^{(1)}_{\t(j-1)}\dots h^{(1)}_{\t(j+1)},  & \hbox{if $\alpha +1 \leq j\leq\beta (\mu)-1$ and $\varepsilon (\mu)=-\varepsilon (\Delta(\mu))$.}
\end{array}
\right.
$$
It is not hard to see that $\tilde\theta_1$ induces a $\GG$-isomorphism $\theta_1: \GG_{R(\Omega^\ast )} \to \GG_{R({\Omega_1}^\ast)}$ and an $F$-isomorphism $\theta_1'$ from $F_{R(\Upsilon^\ast)}$ to $F_{R({\Upsilon_1}^\ast)}$. Furthermore, if $H$ is a solution of $\Omega$ the tuple $H^{(1)}$ (obtained from $H$ in the same way as $h^{(1)}$ is obtained from $h$) is a solution of $\Omega_1$.

\begin{figure}[!h]
  \centering
   \includegraphics[keepaspectratio,width=6in]{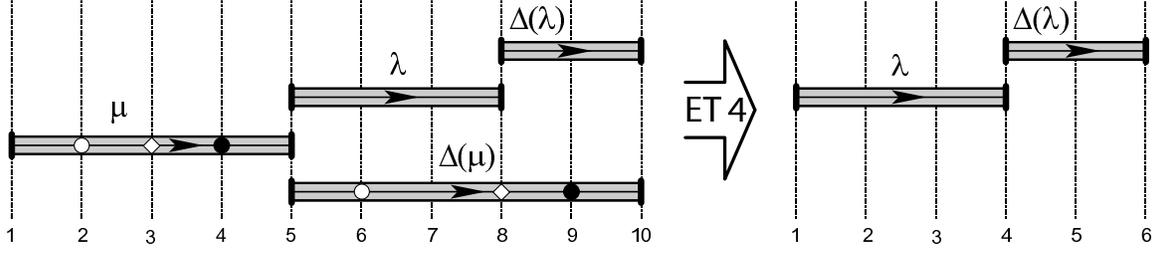}
\caption{Elementary transformation $\ET 4$: Removing a linear base.} \label{fig:ET4}
\end{figure}

\begin{rem} \label{rem:boundterm}
Every time when we transport or delete a closed section $[i,j]$ (see $\ET 4$, $\D 2$, $\D 5$, $\D 6$), we re-enumerate the boundaries as follows.

The re-enumeration of the boundaries is defined by the correspondence $\mathcal{C}:\BD(\Omega)\to \BD(\Omega_1)$, $\Omega_1\in\ET 4(\Omega)$:
$$
\mathcal{C}:
k\mapsto \left\{
           \begin{array}{ll}
             k, & \hbox{if $k\le i$;} \\
             k-(j-i), & \hbox{if $k\ge j$.}
           \end{array}
         \right.
$$
Naturally, in this case we write $\Re_{\Upsilon_1}=\Re_{\Upsilon}$ meaning that
$$
\Re_{\Omega_1}(h_k)=
\left\{
           \begin{array}{ll}
            \Re_\Upsilon(h_k), & \hbox{if $k\le i$;} \\
             \Re_\Upsilon(h_{k-(j-i)}), & \hbox{if $k\ge j$.}
           \end{array}
         \right.
$$
\end{rem}

\subsubsection*{\glossary{name={$\ET 5$}, description={elementary transformation $\ET 5$}, sort=E}$\ET 5$: Introducing a new boundary}

Suppose that a boundary $p$ intersects a base $\mu$ and $p$ is not $\mu$-tied.

The transformation $\ET 5$ $\mu$-ties the boundary $p$ in all possible ways, producing  finitely many different generalised equations. To this end, let $q$ be a boundary intersecting $\Delta (\mu)$. Then we perform one of the following two transformations (see Figure \ref{fig:ET5}):

\begin{figure}[!h]
  \centering
   \includegraphics[keepaspectratio,width=6in]{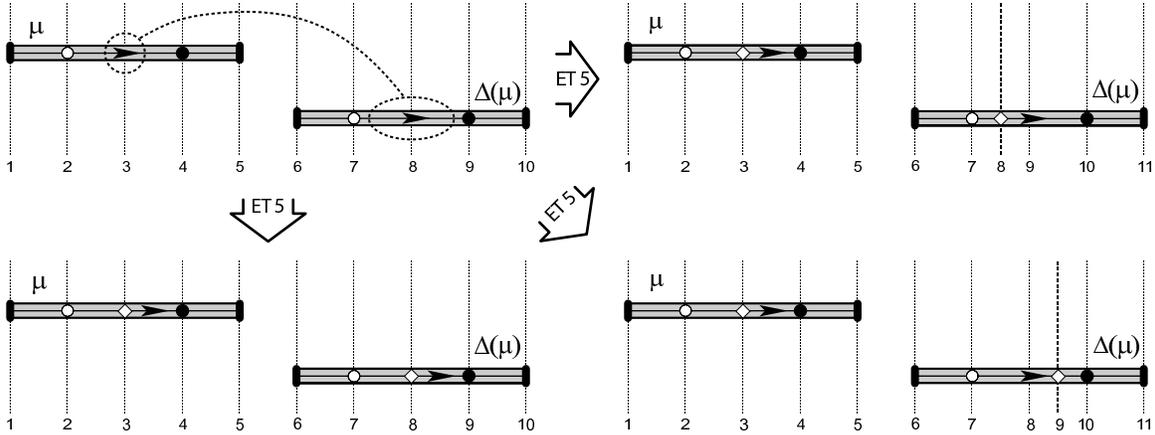}
\caption{Elementary transformation $\ET 5$: Introducing a new boundary.} \label{fig:ET5}
\end{figure}

\begin{enumerate}
\item Introduce the boundary connection $(p,\mu ,q)$ in $\Upsilon$, provided that the resulting generalised equation $\Upsilon_q$ is formally consistent. Define the set $\Re_{\Upsilon_q}\subseteq h\times h$ to be the minimal subset that contains $\Re_\Upsilon$, is symmetric and satisfies condition ($\star$), see Definition \ref{defn:Re}.    This defines the generalised equation $\Omega_{q}=\gpof{q}$.

The identity isomorphism $\tilde\theta_q:F[h]\to F[h]$, where $F$ is the free group on $\cA$, trivially induces a $\GG$-epimorphism $\theta_q$ from $\GG_{R(\Omega^\ast)}$ to $\GG_{R({\Omega_q}^\ast)}$ and an $F$-epimorphism $\theta_q'$ from $F_{R(\Upsilon^\ast)}$ to $F_{R({\Upsilon_q}^\ast)}$.

Observe that $\theta_q$ is not necessarily an isomorphism. More precisely, $\theta_q$ is not an isomorphism whenever the boundary equation corresponding to the boundary connection $(p,\mu ,q)$ does not belong to $R(\Omega^\ast)$. In this case $\theta_q$ is a proper epimorphism. Moreover, if $H$ is a solution of $\Omega$, then the tuple $H$ is a solution of $\Omega_q$.

\item Introduce a new boundary $q^\prime$ between $q$ and $q+1$; introduce a new boundary connection $(p,\mu,q^\prime)$ in $\Upsilon$. Denote the resulting generalised equation by $\Upsilon_{q'}$. Set (below we assume that the boundaries of $\Upsilon_{q'}$ are labelled $1,2,\dots,q,q',q+1,\dots, \rho_{\Upsilon_{q^\prime}}+1$):
    $$
    \Re_{\Upsilon_{q'}}(h_i)=\left\{
    \begin{array}{ll}
    \Re_{\Upsilon}(h_q), &\hbox{ if } i=q,q';\\
    \Re_{\Upsilon}(h_i), &\hbox{ otherwise.}
    \end{array}
    \right.
    $$
    We then make the set $\Re_{\Upsilon_{q'}}$ symmetric and complete it in such a way that it satisfies condition ($\star$), see Definition \ref{defn:Re}.  This defines the generalised equation $\Omega_{q^\prime}=\gpof{q'}$.

We define the $F$-monomorphism  $\tilde\theta_{q^\prime}: F[h]\hookrightarrow F[h^{(q^{\prime})}]$, where $F$ is the free group on $\cA$ and $h^{(q^{\prime})}$ is the set of variables of $\Omega_{q^\prime}$, as follows:
$$
\tilde\theta (h_i)=
\left\{
  \begin{array}{ll}
    h_i, & \hbox{ if $i\neq q$;} \\
    h_{q'-1}h_{q'}, & \hbox{ if $i=q$.}
  \end{array}
\right.
$$
Observe that the $F$-monomorphism  $\tilde\theta_{q^\prime}$  induces a $\GG$-isomorphism $\theta_{q^\prime}: \GG_{R(\Omega^\ast)}\to \GG_{R({\Omega_{q^{\prime}}}^\ast)}$ and an $F$-isomorphism $\theta_{q'}'$ from $F_{R(\Upsilon^\ast)}$ to $F_{R({\Upsilon_{q'}}^\ast)}$. Moreover, if $H$ is a solution of $\Omega$, then the tuple $H^{(q')}$ (obtained from $H$ in the same way as $h^{h^{(q^{\prime})}}$ is obtained from $h$) is a solution of $\Omega_{q'}$.
\end{enumerate}

\bigskip

\begin{lem}\label{le:hom-check}
Let $\Omega_1\in \{\Omega_i\}=\ET(\Omega)$ be a generalised equation obtained from $\Omega$ by an elementary transformation $\ET$ and let $\theta_1: \GG_{R(\Omega^*)}\to \GG_{R(\Omega_1^*)}$ be the corresponding epimorphism. There exists an algorithm which determines whether or not the epimorphism $\theta_1$ is a proper epimorphism.
\end{lem}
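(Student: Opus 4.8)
The plan is to proceed by a case analysis on which of the five elementary transformations of Section~\ref{se:5.1} produced $\Omega_1$ from $\Omega$, so the first step is simply to read off the type of $\ET$ from the combinatorial data.

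In the cases $\ET 1$, $\ET 2$, $\ET 3$ the set of variables is unchanged and, by Remark~\ref{rem:et123}, the map $\theta_1$ is a $\GG$-isomorphism; the same is true for $\ET 4$ and for the second variant of $\ET 5$, where $\theta_1$ is explicitly exhibited as a $\GG$-isomorphism. Thus in each of these cases $\theta_1$ is never a proper epimorphism and the algorithm answers ``no'' at once. The only case with any content is the first variant of $\ET 5$, in which a boundary connection $(p,\mu,q)$ is adjoined, $\Omega_1=\Omega_q$ has the same variable set $h$, and $\theta_1=\theta_q$ is the canonical epimorphism $\GG_{R(\Omega^{\ast})}\twoheadrightarrow\GG_{R(\Omega_q^{\ast})}$.

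In this remaining case I would observe that $\Omega_q^{\ast}$ is obtained from $\Omega^{\ast}$ by adjoining a finite, explicitly computable set $E_1=1,\dots,E_t=1$ of new equations over $\GG$: the boundary equation determined by $(p,\mu,q)$, together with the finitely many commutation equations $[h_i,h_j]=1$ that are forced because $\Re_{\Upsilon_q}$ is the completion of $\Re_{\Upsilon}$ under condition~$(\star)$. Since $\Omega^{\ast}\subseteq\Omega_q^{\ast}$ we always have $R(\Omega^{\ast})\subseteq R(\Omega_q^{\ast})$, so $\theta_q$ is an isomorphism if and only if this inclusion is an equality, i.e. (by Lemma~\ref{lem:rad}(6)) if and only if $V_\GG(\Omega_q^{\ast})=V_\GG(\Omega^{\ast})$; and this fails precisely when some solution of $\Omega^{\ast}$ in $\GG$ violates at least one of $E_1,\dots,E_t$. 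Hence $\theta_q$ is a proper epimorphism if and only if, for some $j\in\{1,\dots,t\}$, the finite system consisting of the equations of $\Omega^{\ast}$ together with the single inequation $E_j\neq 1$ is compatible over $\GG$.

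The proof is then finished by invoking the decidability of the existential theory of $\GG$ (\cite{DM}, \cite{DL}, \cite{CK1}): for each $j$ one decides whether $\Omega^{\ast}=1\wedge E_j\neq 1$ has a solution in $\GG$, and $\theta_1$ is proper exactly when at least one of these $t$ decision procedures returns ``yes''. I expect the only genuine subtlety to be the bookkeeping in the $\ET 5$ case --- keeping track of exactly which new equations enter $\Omega_q^{\ast}$ (in particular the extra commutators coming from $(\star)$) --- everything else being a routine translation of the definitions and a single appeal to the decidability of equations-with-inequations over $\GG$.
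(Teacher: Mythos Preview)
Your proposal is correct and follows essentially the same route as the paper: reduce to the first variant of $\ET 5$, observe that $\Omega_1^\ast$ differs from $\Omega^\ast$ by finitely many explicitly computable new relations, and decide whether these already lie in $R(\Omega^\ast)$ via the decidable first-order theory of $\GG$. The only cosmetic differences are that the paper phrases the test as a quasi-identity (universal theory) rather than as compatibility of equations-with-an-inequation (existential theory), and that you are more explicit than the paper in tracking the extra commutators forced by condition~$(\star)$.
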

\begin{proof}
The only non-trivial case is when $\ET = \ET 5$ and no new boundaries were introduced. In this case $\Omega_1$ is obtained from $\Omega$ by adding a new boundary equation $s = 1$, which is effectively determined by $\Omega$ and $\Omega_1$.
In this event, the coordinate group
$$
\GG_{R({\Omega_1}^\ast)} = \GG_{R({\{\Omega \cup \{s\}\}}^\ast)}
$$
is a quotient of the group $\GG_{R(\Omega^\ast)}$. The homomorphism $\theta_1$ is an isomorphism if and only if $R(\Omega^\ast) = R({\{\Omega \cup \{s\}\}}^\ast)$, or, equivalently, $s \in R(\Omega^\ast)$. The latter condition holds if and only if $s$ vanishes on all solutions of the system of equations $\Omega^\ast = 1$ in $\GG$, i.e. if the following universal formula (quasi identity) holds in $\GG$:
$$
\forall x_1 \ldots \forall x_\rho (\Omega^\ast (x_1, \ldots, x_\rho) = 1 \rightarrow s(x_1, \ldots, x_\rho) = 1).
$$
This can be verified effectively, since the universal theory of $\GG$  is decidable, see \cite{DL}.
\end{proof}

\begin{lem}\label{lem:indETepi}
Let $\Omega_1\in \{\Omega_i\}=\ET(\Omega)$ be a generalised equation obtained from $\Omega$ by an elementary transformation $\ET$ and let $\theta_1: \GG_{R(\Omega^*)}\to \GG_{R(\Omega_1^*)}$ be the corresponding epimorphism. Then there exists a homomorphism
$$
\tilde\theta_1: {F[h_1,\dots, h_{\rho_\Omega}]}\to {F[h_1,\dots, h_{\rho_{\Omega_1}}]}
$$
such that $\tilde\theta_1$ induces an epimorphism
$$
\theta_1':F_{R(\Upsilon^*)}\to F_{R(\Upsilon_1^*)}
$$
and the epimorphism $\theta_1$. In other words, the following diagram commutes:
$$
\CD
 \GG_{R(\Omega^*)} @<<<  F[h_1,\dots, h_{\rho_\Omega}]           @>>> F_{R(\Upsilon^*)} \\
 @V\theta_1 VV       @V \tilde\theta VV   @VV \theta' V  \\
 \GG_{R(\Omega_1^*)} @<<<   F[h_1,\dots, h_{\rho_{\Omega_1}}]           @>>> F_{R(\Upsilon_1^*)}
\endCD
$$
\end{lem}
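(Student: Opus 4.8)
The plan is to prove the lemma by inspecting the five elementary transformations $\ET1,\dots,\ET5$ one at a time; in every case the homomorphism $\tilde\theta_1$ of free groups has already been written down in Section~\ref{se:5.1}, so the only real task is to assemble the pieces into the stated diagram. Recall that for $\ET1$, $\ET2$, $\ET3$ (Remark~\ref{rem:et123}) and for the first case of $\ET5$ the map $\tilde\theta_1$ is the identity of $F[h]$ (with $F=F(\cA)$), while for $\ET4$ and for the second case of $\ET5$ it is the explicit substitution displayed in the description of the transformation, from $F[h_1,\dots,h_{\rho_\Omega}]$ to $F[h_1,\dots,h_{\rho_{\Omega_1}}]$. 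In each case $\tilde\theta_1$ is $\cA$-identical, i.e.\ it fixes every constant $a\in\cA^{\pm1}$; and in each case it was already recorded that $\tilde\theta_1$ maps $R(\Upsilon^\ast)$ into $R(\Upsilon_1^\ast)$, hence induces the $F$-epimorphism $\theta_1'\colon F_{R(\Upsilon^\ast)}\to F_{R(\Upsilon_1^\ast)}$ (an isomorphism except, possibly, for the first case of $\ET5$), and that, being $\cA$-identical, it induces the $\GG$-epimorphism $\theta_1\colon\GG_{R(\Omega^\ast)}\to\GG_{R(\Omega_1^\ast)}$. Thus the lemma is essentially a repackaging of facts from Section~\ref{se:5.1} together with a diagram chase.

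Concretely, I would first spell out the two horizontal maps of the diagram. The map $F[h]\to F_{R(\Upsilon^\ast)}$ is the canonical projection. The map $F[h]\to\GG_{R(\Omega^\ast)}$ is the composite of the epimorphism $F[h]=F(\cA)\ast F(h)\twoheadrightarrow\GG\ast F(h)=\GG[h]$ induced by $F(\cA)\twoheadrightarrow\GG(\cA)=\GG$ with the canonical projection $\GG[h]\to\GG_{R(\Omega^\ast)}$. Commutativity of the right-hand square is then immediate, since $\theta_1'$ is by definition the map obtained from $\tilde\theta_1$ by passing to the quotients $F_{R(\Upsilon^\ast)}$ and $F_{R(\Upsilon_1^\ast)}$. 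For the left-hand square: because $\tilde\theta_1$ fixes $\cA$, it descends to a homomorphism $\hat\theta_1\colon\GG[h]\to\GG[h^{(1)}]$ (reduce coefficients modulo the defining relations of $\GG$) that is compatible with the coefficient-reduction maps $F[h]\to\GG[h]$ and $F[h^{(1)}]\to\GG[h^{(1)}]$; and $\theta_1$ is, by its construction in Section~\ref{se:5.1}, precisely the map that $\hat\theta_1$ induces on $\GG_{R(\Omega^\ast)}$. Chaining these identifications gives commutativity of the left square, and $\theta_1'$ then fits into the diagram as required.

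The only point that calls for genuine checking is that, for $\ET4$ and for the second case of $\ET5$, the explicit substitution $\tilde\theta_1$ is compatible with the constrained relation $\Re$: one must verify that every item $h^{(1)}_k$ occurring in the products $\tilde\theta_1(h_j)$ (the two displayed formulas in the description of $\ET4$, and $\tilde\theta_{q'}(h_q)=h_{q'-1}h_{q'}$ in $\ET5$) carries exactly the same $\Re$-neighbours as $h_j$, so that $\hat\theta_1$ indeed takes the commutation relations of $\Omega^\ast$ into consequences of those of $\Omega_1^\ast$. This is forced by condition~$(\star)$ of Definition~\ref{defn:Re} applied to the basic and boundary equations that produce the boundary connections $(i,\mu,\t(i))$, together with the way $\Re_{\Upsilon_1}$ is defined (see Remark~\ref{rem:boundterm} for $\ET4$ and the explicit formula for $\Re_{\Upsilon_{q'}}$ in $\ET5$). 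I expect this verification — rather than the essentially formal diagram chase — to be the only mildly technical step, and it is in any event implicit in the already-recorded fact that $\tilde\theta_1$ induces $\theta_1$ on the coordinate groups over $\GG$.
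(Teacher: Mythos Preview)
Your proposal is correct and takes essentially the same approach as the paper, whose proof is the single line ``Follows by examining the definition of $\theta_1$ for every elementary transformation $\ET 1$--$\ET 5$.'' You have simply unpacked this examination in detail: the maps $\tilde\theta_1$ are already explicitly given in Section~\ref{se:5.1} (identity for $\ET1$--$\ET3$ and the first case of $\ET5$, the displayed substitutions for $\ET4$ and the second case of $\ET5$), and it is recorded there that each induces both $\theta_1$ and $\theta_1'$, so the diagram commutes by construction.
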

\begin{proof}
Follows by examining the definition of $\theta_1$ for every elementary transformation $\ET 1-\ET 5$.
\end{proof}

\subsection{Derived transformations} \label{se:5.2half}

In this section we describe several useful transformations of generalised equations. Some of them are finite sequences of elementary transformations, others result in equivalent generalised equations but cannot be presented as a composition of finitely many elementary transformations.

In general, a \index{transformation of a generalised equation!derived}\emph{derived transformation} $\D$ associates  to a generalised equation $\Omega=\gpo$ a finite set of generalised equations $\D(\Omega) = \left\{\Omega_{1}, \dots, \Omega_{r}\right\}$, $\Omega_i=\gpof{i}$  and a collection of surjective homomorphisms $\theta_i: \GG_{R(\Omega^\ast)}\to \GG_{R({\Omega _i}^\ast)}$ such that for every pair $(\Omega, H)$ there exists a unique pair $(\Omega_i, H^{(i)})$ such that the following diagram commutes.
$$
\xymatrix@C3em{
 \GG_{R(\Omega^*)}  \ar[rd]_{\pi_H} \ar[rr]^{\theta_i}  &  &\GG_{R(\Omega_i^*)} \ar[ld]^{\pi_{H^{(i)}}}
                                                                             \\
                               &  \GG &
}
$$
Since the pair $(\Omega_i,H^{(i)})$ is defined uniquely, we have a well-defined map $\D:(\Omega,H) \to (\Omega_i,H^{(i)})$.

\subsubsection*{\glossary{name={$\D 1$}, description={derived transformation $\D 1$}, sort=D}$\D 1:$ Closing a section}

Let $\sigma=[i,j]$ be a section of $\Omega $. The transformation $\D 1$ makes the section $\sigma$ closed. To perform $\D 1$, using  $\ET 5$, we $\mu$-tie the boundary $i$ (the boundary $j$) in every base $\mu$ containing $i$ ($j$, respectively). Using $\ET 1$, we cut all the bases containing $i$ (or $j$) in the boundary $i$ (or in $j$), see Figure \ref{fig:D1}.

\begin{figure}[!h]
  \centering
   \includegraphics[keepaspectratio,width=6in]{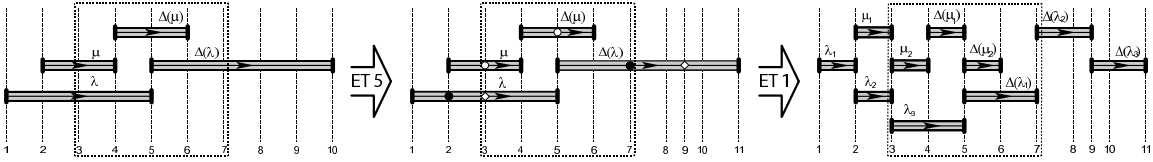}
\caption{Derived transformation $\D 1$: Closing a section.} \label{fig:D1}
\end{figure}

\subsubsection*{\glossary{name={$\D 2$}, description={derived transformation $\D 2$}, sort=D}$\D 2:$ Transporting a closed section}

Let $\sigma$ be a closed section of a generalised equation $\Omega$. The derived transformation $\D 2$ takes $\Omega$ to a single generalised equation $\Omega_1$ obtained from $\Omega$ by cutting $\sigma$ out from the interval $[1,\rho_\Omega+1]$ together with all the bases, and boundary connections on $\sigma$ and moving $\sigma$ to the end of the interval or between two consecutive closed sections of $\Omega$, see Figure \ref{fig:D2}. Then we re-enumerate all the items and boundaries of the generalised equation obtained as appropriate, see Remark \ref{rem:boundterm}. Clearly, the original equation $\Omega$ and the new one $\Omega_1$ have the same solution sets and their coordinate groups are isomorphic (the isomorphism is induced by a permutation of the variables $h$).

\begin{figure}[!h]
  \centering
   \includegraphics[keepaspectratio,width=6in]{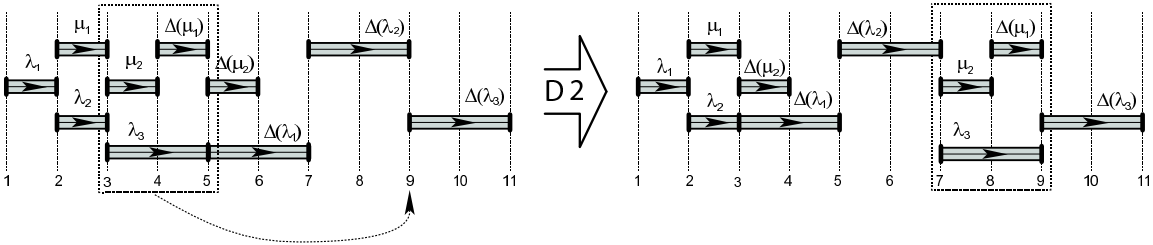}
 \caption{Derived transformation $\D 2$: Transporting a closed section.} \label{fig:D2}
\end{figure}

\subsubsection*{\glossary{name={$\D 3$}, description={derived transformation $\D 3$}, sort=D}$\D 3:$ Completing a cut}

Let $\Omega $ be constrained a generalised equation. The derived transformation $\D 3$ carries $\Omega$ into a single generalised equation \glossary{name={$\widetilde{\Omega}$, $\widetilde{\Upsilon}$}, description={the generalised equation obtained from $\Omega$ (or $\Upsilon$) by $\D 3$}, sort=O}$\widetilde{\Omega}=\Omega_1$ by applying $\ET 1$ to all boundary connections in $\Omega$. Formally, for every boundary connection $(p,\mu,q)$ in $\Omega$ we cut the base $\mu$ in $p$  applying $\ET 1$. Clearly, the generalised equation $\widetilde\Omega$ does not depend on the choice of a sequence of transformations $\ET 1$. Since, by Remark \ref{rem:et123}, $\ET 1$ induces the identity isomorphism between the coordinate groups, equations $\Omega$ and $\widetilde\Omega$ have isomorphic coordinate groups and the isomorphism is induced by the identity map $\GG[h] \to \GG[h]$.

\subsubsection*{\glossary{name={$\D 4$}, description={derived transformation $\D 4$}, sort=D}$\D 4:$ \index{kernel of a generalised equation}Kernel of a generalised equation}
The derived transformation $\D 4$ applied to a generalised equation $\Omega$ results in a single generalised equation \glossary{name={$\Ker(\Omega)$, $\Ker(\Upsilon)$}, description={kernel of a (constrained) generalised equation}, sort=K}$\Ker(\Omega)=\D 4(\Omega)$ constructed below.

Applying $\D 3$, if necessary, one can assume that a generalised equation $\Omega=\gpo$ does not contain boundary connections. An active base $\mu  \in A\Sigma(\Upsilon)$ is called \index{base!eliminable}\emph{eliminable} if at least one of the following holds:
\begin{enumerate}
    \item [a)] $\mu$ contains an  item $h_i$ with $\gamma(h_i)=1$;
    \item [b)] at least one of the boundaries $\alpha (\mu),\beta (\mu)$ is different from $1,\rho +1 $ and it does not touch any other base (except  $\mu$).
\end{enumerate}

An \index{elimination process for a generalised equation}\emph{elimination  process} for $\Upsilon$ consists of consecutive removals (\index{elimination of a base}\emph{eliminations}) of  eliminable bases until there are no eliminable bases left in the generalised equation. The resulting generalised equation is called the \emph{kernel} of $\Upsilon$ and we denote it by $\Ker(\Upsilon)$. It is easy to see that $\Ker(\Upsilon)$ does not depend on the choice of the elimination process. Indeed, if $\Upsilon$ has two different eliminable bases $\mu_1$, $\mu_2$, and elimination of  $\mu_i$ results in a generalised equation $\Upsilon_i$ then  by induction on the number of eliminations $\Ker(\Upsilon_i)$ is defined uniquely for $i = 1,2$. Obviously, $\mu_1$ is still eliminable in $\Upsilon_2$, as well as $\mu_2$ is eliminable in $\Upsilon_1$. Now eliminating $\mu_1$ and $\mu_2$ from $\Upsilon_2$ and $\Upsilon_1$ respectively, we get the same generalised equation $\Upsilon_0$. By induction $\Ker(\Upsilon_1) = \Ker(\Upsilon_0) = \Ker(\Upsilon_2)$ hence the result.

We say that a variable $h_i$ \index{item!belongs to the kernel}{\em belongs to the kernel}, if $h_i$ either belongs to at least one base in the kernel, or is constant.

For a generalised equation $\Omega$ we denote by $\overline{\Omega}=\langle \overline{\Upsilon}, \Re_{\overline{\Upsilon}}\rangle$  the generalised equation which is obtained from $\Omega$ by deleting all free variables in $\Upsilon$ and setting $\Re_{\overline{\Upsilon}}$ to be the restriction of $\Re_\Upsilon$ to the set of variables in $\overline{\Upsilon}$. Obviously,
$$
\GG_{R(\Omega^\ast)} =\factor{\GG[h_1,\dots, h_{\rho_{\ov{\Omega}}}, y_1,\dots,y_k]}{R({{\overline {\Omega}}}^\ast\cup\{[y_i,h_j]\mid \Re_{\Upsilon}(y_i,h_j)\})},
$$
where $\{y_1,\dots y_{k}\}$ is the set of free variables in $\Upsilon$ and $h_j$ is a variable of $\Omega$ (in particular, $h_j$ may be one of the free variables $y_1,\dots, y_k$).

Let us consider what happens in the elimination process on the level of coordinate groups.

We start with the case  when just one base is eliminated. Let $\mu$ be an eliminable base in $\Upsilon = \Upsilon(h_1, \ldots, h_\rho)$. Denote by $\Upsilon_1$  the generalised equation obtained from $\Upsilon$ by eliminating $\mu$ and let $\Omega_1=\gpof{1}$, where $\Re_{\Upsilon_1}=\Re_{\Upsilon}$.

\begin{enumerate}
\item \label{item:7-10c1} Suppose  $h_i \in \mu$ and $\gamma(h_i) = 1$, i.e. $\mu$ falls under the assumption a) of the definition of an eliminable base. Then the variable $h_i$ occurs only once in $\Upsilon$ (in the equation $h(\mu)^{\varepsilon(\mu)}=h(\Delta(\mu))^{\varepsilon(\Delta(\mu))}$ corresponding to the base $\mu$, denote this equation by $s_\mu$).  Therefore, in the coordinate group $\GG_{R(\Omega^\ast)}$ the relation $s_\mu$ can be written as $h_i = w_\mu$, where $w_\mu$ does not contain $h_i$. Consider the generalised equation $\Upsilon'$ obtained from $\Upsilon$ by deleting the equation $s_\mu$ and the item $h_i$. The presentation of the coordinate group of $\Upsilon'$ is obtained from the presentation of the coordinate group of $\Upsilon$ using a sequence of Tietze transformations, thus these groups are isomorphic. We define the relation $\Re_{\Upsilon'}$ as the restriction of $\Re_\Upsilon$ to the set $h\setminus\{h_i\}$, and $\Omega'=\langle\Upsilon',\Re_{\Upsilon'}\rangle$.

It follows that
$$
\GG_{R(\Omega^\ast)}\simeq \factor{\GG[h_1,\dots,h_{i-1},h_{i+1},\dots,h_\rho]}{R({\Omega'}^*\cup\{[h_j,w_\mu]\mid \Re_\Upsilon(h_j,h_i)\})}.
$$
We therefore get that
$$
   \GG_{R({{\Omega_1}^\ast})} \simeq  \factor{\GG[h_1,\dots,h_{\rho}]}{R({\Omega'}^*\cup\{[h_i,h_j]\mid\Re_\Upsilon(h_i,h_j)\})},
$$
and
\begin{gather} \label{eq:ker1}
\begin{split}
\GG_{R(\Omega^\ast)}& \simeq  \factor{\GG[h_1,\dots,h_{i-1},h_{i+1},\dots,h_{\rho}]}{R({\Omega^\prime}^\ast\cup \{[h_j,w_\mu]\mid \Re_\Upsilon(h_j,h_i)\})}\\
&\simeq  \factor{\GG[h_1,\dots, h_{\rho_{\ov{\Omega}_1}}, z_1,\dots,z_l]}{R\left(\begin{array}{l}{\ov{\Omega}_1}^* \cup\{[h_j,w_\mu]\mid \Re_\Upsilon(h_j,h_i)\}\\ \cup \{[z_k,h_j]\mid\Re_\Upsilon(z_k,h_j)\}\cup \{[z_k,w_\mu]\mid\Re_\Upsilon(z_k,h_i)\}
\end{array}\right)},
\end{split}
\end{gather}
where $\{z_1,\dots,z_l\}$ are the free variables of $\Omega_1$ distinct from $h_i$. Note that all the groups and equations which occur above can be found effectively.

\item \label{item:7-10c2} Suppose now that $\mu$ falls under the assumptions of case b) of the definition of an eliminable base, with respect to a boundary $i$. Then in all the equations of the generalised equation $\Omega$ but $s_\mu$, the variable $h_{i-1}$ occurs as a part of the subword $(h_{i-1}h_i)^{\pm 1}$. In the equation $s_\mu$ the variable $h_{i-1}$ either occurs once or it occurs precisely twice and in this event the second occurrence of $h_{i-1}$ (in $\Delta(\mu)$) is a part of the subword $(h_{i-1}h_i)^{\pm 1}$.
It is obvious that the tuple
    $$
    (h_1, \ldots, h_{i-2},h_{i-1}, h_{i-1}h_i, h_{i+1}, \ldots,h_\rho)=(h_1, \ldots, h_{i-2},h_{i-1}, h_i', h_{i+1}, \ldots,h_\rho)
    $$
forms a basis of the ambient free group generated by $(h_1,\ldots, h_\rho)$. We further assume that $\Omega$ is a system of equations in these variables. Notice, that in the new basis, the variable $h_{i-1}$ occurs only once in the generalised equation $\Omega$, in the equation $s_\mu$. Hence, as in case (\ref{item:7-10c1}), the relation $s_\mu$ can be written as $h_{i-1} = w_\mu$, where $w_\mu$ does not contain $h_{i-1}$.

Therefore, if we eliminate the relation $s_\mu$, then the set $Y = (h_1, \ldots, h_{i-2}, h_i', h_{i+1}, \ldots,h_{\rho})$ is a generating set of $\GG_{R(\Omega^\ast)}$. In the coordinate group $\GG_{R(\Omega^\ast)}$ the commutativity relations $\{[h_j,h_k]\mid \Re_\Upsilon(h_j,h_k)\}$ rewrite as follows:
$$
\left\{
\begin{array}{ll}
  \mathstrut [h_j,h_k], & \hbox{ if } \Re_\Upsilon (h_j,h_k) \hbox{ and } j,k \notin \{i,i-1\}; \\
   \mathstrut [w_{\mu} , h_j ], & \hbox{ if } \Re_\Upsilon (h_{i-1},h_j);\\
  \mathstrut [{w_\mu}^{-1}h_i',h_j], & \hbox{ if }\Re_\Upsilon(h_i,h_j)
\end{array}
\right\}.
$$
This shows that
    $$
    \GG_{R(\Omega^\ast)} \simeq \factor{\GG[h_1, \dots, h_{i-2}, h_i', h_{i+1},\dots,h_{\rho}]}{R\left( \begin{array}{l}{\Omega^\prime}^\ast\cup \{[w_\mu,h_j]\mid \Re_\Upsilon(h_{i-1},h_j)\}\\ \cup \{[{w_\mu}^{-1}h_i',h_j] \mid \Re_\Upsilon(h_i,h_j)\}\end{array}\right)},
    $$
where $\Omega^\prime=\langle \Upsilon^\prime,\Re_{\Upsilon^\prime}\rangle $ is the generalised equation obtained from $\Omega_1$ by deleting the boundary $i$ and $\Re_{\Upsilon^\prime}$ is the restriction of $\Re_{\Upsilon_1}$ to the set $h\setminus \{h_{i-1},h_i\}$. Denote by $\Omega^{\prime \prime}$ the generalised equation obtained from $\Omega^\prime$ by adding a free variable $z$. It now follows that
\begin{gather}\notag
\begin{split}
\GG_{R({\Omega_1}^\ast)} &\simeq  \factor{\GG[h_1, \dots, h_{i-2}, h_{i+1},\dots,h_{\rho},z]}{R
\left(
\begin{array}{l}
{\Omega^{\prime \prime}}^\ast \cup \{[z,h_j]\mid \Re_{\Upsilon_1}(h_{i-1},h_j)\}\\
\cup\{[z^{-1}h_i',h_j]\mid \Re_{\Upsilon_1}(h_{i},h_j)\}
\end{array}
\right)}
\\
&\simeq\factor{\GG[h_1, \dots, h_{i-2}, h_{i+1},\dots,h_{\rho},z]}{R
\left(
\begin{array}{l}
{\Omega^{\prime}}^\ast \cup \{[z,h_j]\mid \Re_{\Upsilon_1}(h_{i-1},h_j)\}\\
\cup\{[z^{-1}h_i',h_j]\mid \Re_{\Upsilon_1}(h_{i},h_j)\}
\end{array}
\right)} \ \hbox{ and }
\end{split}
\end{gather}
\begin{gather}\label{eq:ker2}
\begin{split}
& \GG_{R(\Omega^\ast)} \simeq \factor{\GG[h_1, \dots, h_{i-2}, h_i', h_{i+1},\dots,h_{\rho}]}{R
 \left(
 \begin{array}{l}
 {\Omega^\prime}^\ast\cup \{[w_\mu,h_j]\mid \Re_\Upsilon(h_{i-1},h_j)\}\\
 \cup \{[{w_\mu}^{-1}h_i',h_j] \mid \Re_\Upsilon(h_i,h_j)\}
 \end{array}
 \right)} \\
& \simeq \factor{\GG[h_1,\dots, h_{\rho_{\ov{\Omega^{\prime\prime}}}}, z_1,\dots, z_l]}
{R\left(
\begin{array}{l}
\overline{\Omega^{\prime\prime}}^\ast \cup \{[{w_{\mu}}^{-1}h_i',h_j]\mid \Re_{\Upsilon_1}(h_{i},h_j)\}\\
\cup \{[{w_{\mu}}^{-1}h_i',z_j]\mid \Re_{\Upsilon_1}(h_{i},z_j)\} \cup \{[w_\mu,h_j]\mid \Re_{\Upsilon_1}(h_{i-1},h_j)\}\\
\cup \{[w_\mu,z_j]\mid \Re_{\Upsilon_1}(h_{i-1},z_j)\}\cup \{[z_j,h_k]\mid\Re_{\Upsilon'}(z_j,h_k)\}
\end{array}
\right)}
\end{split}
\end{gather}
where $\{z_1,\dots, z_l\}$ are the free variables of $\Omega^{\prime\prime}$ distinct from $z$. Notice that all the groups and equations which occur above can be found effectively.
\end{enumerate}

By induction on the number of steps in the elimination process we obtain the following lemma.
\begin{lem}\label{7-10}
In the above notation,
$$
\GG_{R(\Omega^*)}\simeq  \factor{\GG[h_1,\dots,h_{\rho_{\ov{\Ker(\Omega)}}},z_1,\dots, z_l]}{R({\overline{\Ker(\Omega)}}^\ast \cup \mathcal{K})},
$$
where $\{z_1,\dots, z_l\}$ is a set of free variables of ${\Ker(\Omega)}$, and $\mathcal{K}$ is a certain computable set of commutators of words in $h\cup Z$.
\end{lem}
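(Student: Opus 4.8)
The plan is to prove Lemma \ref{7-10} by induction on the number $N$ of base eliminations performed in the elimination process that produces $\Ker(\Omega)$ from $\Omega$. Recall that by the observation immediately before the lemma, $\Ker(\Omega)$ and hence $\overline{\Ker(\Omega)}$ do not depend on the order in which eliminable bases are removed, so we may fix any one elimination sequence $\Omega = \Omega^{(0)} \to \Omega^{(1)} \to \cdots \to \Omega^{(N)}$, where $\Ker(\Omega) = \Omega^{(N)}$ and each arrow eliminates a single eliminable base. Before starting, we may apply $\D 3$ to assume $\Omega$ has no boundary connections, which costs nothing since $\D 3$ induces the identity isomorphism of coordinate groups; this is the setting in which ``eliminable base'' was defined.

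The base case $N = 0$ is immediate: then $\Omega$ itself has no eliminable bases, so $\Ker(\Omega) = \Omega$, and the displayed isomorphism reduces to the identity $\GG_{R(\Omega^*)} \simeq \GG_{R(\Omega^*)}$ once one recalls the description of $\GG_{R(\Omega^*)}$ in terms of $\overline{\Omega}$ and its free variables given just after the definition of $\overline{\Omega}$; here $\mathcal{K}$ is the set of commutators $\{[z_k,h_j]\mid \Re_\Upsilon(z_k,h_j)\}$ recorded there. For the inductive step, let $\mu$ be the first base eliminated, so $\Omega^{(1)}$ is obtained from $\Omega$ by eliminating $\mu$. There are two cases according to which clause of the definition of eliminable base applies to $\mu$, and these are exactly cases (\ref{item:7-10c1}) and (\ref{item:7-10c2}) worked out in detail immediately above the lemma: in each case one has explicit isomorphisms relating $\GG_{R(\Omega^*)}$, $\GG_{R({\Omega_1}^*)} = \GG_{R({\Omega^{(1)}}^*)}$, and the groups $\overline{\Omega^{(1)}}$, $\overline{\Omega'}$, $\overline{\Omega''}$ together with explicitly computable sets of commutators (formulas \eqref{eq:ker1} and \eqref{eq:ker2}). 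By the induction hypothesis applied to $\Omega^{(1)}$ (which reaches its kernel in $N-1$ eliminations and has the same kernel as $\Omega$, since $\Ker(\Omega^{(1)}) = \Ker(\Omega)$), we get
$$
\GG_{R({\Omega^{(1)}}^*)} \simeq \factor{\GG[h_1,\dots,h_{\rho_{\ov{\Ker(\Omega)}}},z_1,\dots,z_{l'}]}{R({\overline{\Ker(\Omega)}}^* \cup \mathcal{K}')}
$$
for a computable commutator set $\mathcal{K}'$ and a set of free variables $\{z_1,\dots,z_{l'}\}$ of $\Ker(\Omega)$. One then composes this isomorphism with the isomorphism of \eqref{eq:ker1} (resp.\ \eqref{eq:ker2}) expressing $\GG_{R(\Omega^*)}$ in terms of the same data plus finitely many extra free variables and finitely many extra commutators $[w_\mu,h_j]$, $[{w_\mu}^{-1}h_i',h_j]$ and so on; substituting the word $w_\mu$ (which is computable from $s_\mu$) into those commutators yields an explicitly computable set $\mathcal{K}$ of commutators of words in $h \cup Z$, and the free variables of $\Ker(\Omega)$ are unchanged by the elimination of $\mu$ from the kernel's point of view. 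This gives the claimed presentation for $\GG_{R(\Omega^*)}$.

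The only genuine point requiring care — and the step I expect to be the main obstacle — is bookkeeping the free variables and the commutator sets consistently through the composition. Each elimination of type (\ref{item:7-10c2}) performs a change of basis $h_i \mapsto h_i' = h_{i-1}h_i$ before deleting a boundary, and it converts a commutator $[h_i,h_j]$ into $[{w_\mu}^{-1}h_i',h_j]$ and $[h_{i-1},h_j]$ into $[w_\mu,h_j]$; one must check that after iterating this the resulting relators are still commutators of \emph{words} in the variable set $h \cup Z$ of the kernel (not relators of some more general form), which is exactly what the shape of \eqref{eq:ker2} guarantees at each single step, and that effectivity is preserved — both $w_\mu$ and the re-indexing map of Remark \ref{rem:boundterm} are computable, so the composite substitution is computable. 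One also checks that a variable is a free variable of $\Ker(\Omega)$ if and only if it survives as a free variable through the whole process (a base elimination never creates or destroys membership in the kernel for variables not touched by $\mu$, and the variables $h_i$, $h_{i-1}$ that are deleted or merged are precisely the ones certifying $\mu$ eliminable), so the set $\{z_1,\dots,z_l\}$ in the statement is well defined. Modulo this routine but slightly tedious tracking, the lemma follows directly from the two cases already established before its statement.
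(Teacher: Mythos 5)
Your overall strategy coincides with the paper's: fix an elimination sequence $\Omega=\Omega_0\to\Omega_1\to\dots\to\Omega_k=\Ker(\Omega)$ and induct on its length, letting the one-step computations of cases (\ref{item:7-10c1}) and (\ref{item:7-10c2}) (Equations \eqref{eq:ker1}, \eqref{eq:ker2}) do the work. However, your inductive step has a genuine gap. You apply the induction hypothesis to $\Omega^{(1)}=\Omega_1$ and then claim to ``compose'' the resulting isomorphism with \eqref{eq:ker1} (resp.\ \eqref{eq:ker2}), which you describe as expressing $\GG_{R(\Omega^*)}$ ``in terms of the same data plus finitely many extra free variables and finitely many extra commutators''. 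No such expression exists: $\Omega^*$ is $\Omega_1^*$ together with the \emph{extra} relation $s_\mu$, so the natural map $\GG_{R(\Omega_1^*)}\to\GG_{R(\Omega^*)}$ is in general a proper epimorphism (already for a single pair of dual bases $h_1=h_2$ one has $\GG_{R(\Omega_1^*)}\simeq\GG[h_1,h_2]$ while $\GG_{R(\Omega^*)}\simeq\GG[h_1]$), and there is no isomorphism ``relating $\GG_{R(\Omega^*)}$ and $\GG_{R(\Omega_1^*)}$'' in \eqref{eq:ker1}, \eqref{eq:ker2}: those formulas present $\GG_{R(\Omega^*)}$ in terms of the \emph{reduced} equations $\ov{\Omega}_1$, resp.\ $\ov{\Omega^{\prime\prime}}$, with the generator $h_i$ (resp.\ $h_{i-1}$) deleted and the commutation constraints rewritten by substituting $w_\mu$ — that is, by removing a generator, not by adding free variables and commutators. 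Since a presentation of the source of a proper epimorphism does not yield a presentation of its target, the composition you invoke is undefined, and the missing content is not re-indexing bookkeeping.

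To close the induction one must either recurse on the reduced equations that actually appear on the right-hand sides of \eqref{eq:ker1} and \eqref{eq:ker2} (namely $\ov{\Omega}_1$, resp.\ $\Omega^{\prime\prime}$), which requires knowing that their kernels have the same ``bar'' as $\Ker(\Omega)$ — this is the role of the identities $\overline{\Ker(\Omega_j)}=\overline{\Ker(\overline{\Omega_j})}$ and $\overline{(\Ker(\Omega_j))_1}=\overline{\Ker(\Omega_j^{\prime\prime})}$ used in the paper's proof — and then transport the extra commutators $[h_j,w_\mu]$, $[w_\mu^{-1}h_i',h_j]$ through the induction-hypothesis isomorphism via the explicit word maps inducing it (equivalently: run the eliminations forward on the presentation of $\GG_{R(\Omega^*)}$ itself, redoing the Tietze arguments of cases (\ref{item:7-10c1}), (\ref{item:7-10c2}) in the presence of the commutators accumulated so far, which substitution keeps as computable commutators of words). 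Alternatively, you may keep your backward recursion on $\Omega_1$, but then the bare statement of the lemma is too weak as an inductive hypothesis: you would need in addition that the isomorphism is induced by computable word maps and is the identity on the free variable $h_i$ created at the first step, so that the relation $s_\mu$ can afterwards be imposed and Tietze-eliminated; the lemma as stated gives no control over which free variables of $\Ker(\Omega)$ survive or how the isomorphism acts on them. Your observation that $\Ker(\Omega^{(1)})=\Ker(\Omega)$ is correct but does not supply either of these ingredients.
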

\begin{proof} Let
$$
\Omega = \Omega_0 \rightarrow \Omega_1 \rightarrow \ldots \rightarrow \Omega_k = \Ker(\Omega),
$$
where $\Omega_{j+1}$ is obtained from $\Omega_j$ by eliminating an eliminable base, $j = 0, \ldots,k-1$. It is easy to see (by induction on $k$)  that for every $j = 0, \ldots,k-1$
$$
\overline{\Ker(\Omega_j)} = \overline{\Ker (\overline{\Omega_j})}.
$$
Moreover, if $\Omega_{j+1}$ is obtained from $\Omega_j$ as in case (\ref{item:7-10c2}) above, then (in the above notation)
$$
\overline{\left(\Ker (\Omega_j)\right)_1} = \overline{\Ker(\Omega_j^{\prime\prime})}.
$$
The statement of the lemma now follows from the remarks above and Equations (\ref{eq:ker1}) and (\ref{eq:ker2}). Note that the cardinality $l$ of the set $\{z_1,\dots, z_l\}$ is the number of free variables of ${\Ker(\Omega)}$ minus $k$.
\end{proof}

\subsubsection*{\glossary{name={$\D 5$}, description={derived transformation $\D 5$}, sort=D}$\D 5:$ \index{transformation of a generalised equation!entire}Entire transformation}

In order to define the derived transformation $\D 5$ we need to introduce several notions.  A base $\mu$ of the generalised equation $\Omega$ is called a \index{base!leading}{\em leading} base if $\alpha(\mu)=1$. A leading base is called a \index{base!carrier}\index{carrier}{\em carrier} if for any other leading base $\lambda$ we have $\beta (\lambda)\leq \beta (\mu)$. Let $\mu $ be a carrier base of $\Omega$. Any active base $\lambda \neq \mu$ with $\beta(\lambda )\leq \beta (\mu )$ is called a \index{base!transfer}{\em transfer} base (with respect to $\mu$).

Suppose now that $\Omega$ is a generalised equation with $\gamma(h_i)\geq 2$ for each $h_i$ in the active part of $\Omega$. The \emph{entire transformation} is the following sequence of elementary transformations.

We fix a carrier base $\mu$ of $\Omega$. For any transfer base $\lambda$ we $\mu$-tie (applying $\ET 5$) all boundaries that intersect $\lambda$. Using $\ET 2$ we transfer all transfer bases from $\mu$ onto $\Delta (\mu)$. Now, there exists some $k<\beta (\mu)$ such that $h_1,\ldots ,h_k$ belong to only one base $\mu,$ while  $h_{k+1}$ belongs to at least two bases. Applying $\ET 1$ we cut $\mu$ in the boundary $k+1$. Finally, applying $\ET 4$, we delete the section $[1,k+1]$, see Figure \ref{quadratic}.

Let $\rho_A$ be the boundary between the active and non-active parts of $\Omega$, i.e. $[1,\rho_A]$ is the active part $A\Sigma$ of $\Omega$ and $[\rho_A,\rho+1]$ is the non-active  part $NA\Sigma$ of $\Omega$.

\begin{defn}\label{defn:excess}
For a pair $(\Omega,H)$, we introduce the following \glossary{name={$d_{A\Sigma}(H)$}, description={length of the active part of the solution, $d_{A\Sigma}(H)=\sum \limits_{i=1}^{\rho_A-1}|H_i|$}, sort=D} \glossary{name={$\psi_{A\Sigma}(H)$}, description={excess of the solution, $\psi_{A\Sigma}(H)=\sum \limits_{\mu\in \omega_1}|H({\mu})|-2d_{A\Sigma}(H)$}, sort=P}
notation
\begin{equation}\label{3.12}
d_{A\Sigma}(H)=\sum \limits_{i=1}^{\rho_A-1}|H_i|,
\end{equation}
\begin{equation}\label{3.13}
\psi_{A\Sigma}(H)=\sum \limits_{\mu\in \omega_1}|H({\mu})|-2d_{A\Sigma}(H),
\end{equation}
where \glossary{name={$\omega_1$}, description={the set of all variable bases $\nu $ for which either $\nu$ or $\Delta (\nu)$ belongs to the active part of a generalised equation}, sort=O}$\omega _1$ is the set of all variable bases $\nu$ for which either $\nu$ or $\Delta (\nu)$ belongs to the active part $[1,\rho_A]$ of $\Omega$.

We call the number $\psi_{A\Sigma}(H)$ the \index{excess of the solution}\emph{excess} of the solution $H$ of the generalised equation $\Omega$.
\end{defn}

Every item $h_i$ of the section $[1,\rho_A]$ belongs to at least two bases, each of these bases belongs to $A\Sigma$, hence $\psi_{A\Sigma}(H)\ge 0$.

Notice, that if for every item $h_i$ of the section $[1,\rho_A]$ one has $\gamma(h_i)=2$, for every solution $H$ of $\Omega$ the excess $\psi_{A\Sigma}(H)=0$.  Informally, in some sense, the excess of $H$ measures how far the generalised equation $\Omega$ is from being \index{generalised equation!quadratic}quadratic (every item in the active part is covered twice).

\begin{lem}\label{lem:excess}
Let $\Omega$ be a generalised equation such that  every item $h_i$ from the active part $A\Sigma=[1,\rho_A]$ of $\Omega$ is covered at least twice. Suppose that $\D 5(\Omega,H)=(\Omega_i,H^{(i)})$ and the carrier base $\mu$ of $\Omega$ and its dual $\Delta(\mu)$ belong to the active part of $\Omega$.
Then the excess of the solution $H$ equals the excess of the solution $H^{(i)}$, i.e. $\psi_{A\Sigma}(H)=\psi_{A\Sigma}(H^{(i)})$.
\end{lem}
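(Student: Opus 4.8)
The plan is to track the effect of the entire transformation $\D 5$ on the two quantities $d_{A\Sigma}$ and $\sum_{\nu\in\omega_1}|H(\nu)|$ separately, since the excess is the combination $\psi_{A\Sigma}(H)=\sum_{\nu\in\omega_1}|H(\nu)|-2d_{A\Sigma}(H)$. Recall the description of $\D 5$: we fix the carrier base $\mu$ (with $\alpha(\mu)=1$), $\mu$-tie all boundaries intersecting the transfer bases $\lambda$ via $\ET 5$, transfer all such $\lambda$ from $\mu$ onto $\Delta(\mu)$ via $\ET 2$, cut $\mu$ at the first boundary $k+1$ whose item $h_{k+1}$ is covered at least twice via $\ET 1$, and finally delete the section $[1,k+1]$ via $\ET 4$. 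The key observation is that $\ET 1$, $\ET 2$ and the $\ET 5$-steps that only introduce boundary connections (without new boundaries) do not change the underlying solution tuple in a way that affects lengths of the words $H(\nu)$ for bases in the active part, nor $d_{A\Sigma}(H)$; and where $\ET 5$ introduces a new boundary, the word $H(\sigma)$ of any section is unchanged (the item $h_q$ is merely split into $h_{q'-1}h_{q'}$ with $H_{q'-1}H_{q'}=H_q$). So the only genuinely length-changing step is the deletion of the section $[1,k+1]$ in $\ET 4$.

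The core computation is then: let $\ell = |H_1\cdots H_k| = |H(\mu')|$ where $\mu'$ is the part of $\mu$ on $[1,k+1]$ after the cut, equivalently $\ell = \sum_{i=1}^{k}|H_i|$. After $\D 5$, the active part of $\Omega_i$ is $[1,\rho_A - k]$ (re-enumerated), so $d_{A\Sigma}(H^{(i)}) = d_{A\Sigma}(H) - \ell$. Now I must account for the change in $\sum_{\nu\in\omega_1}|H(\nu)|$. The section $[1,k+1]$ that is deleted is covered by exactly two bases: the carrier portion $\mu'$ of $\mu$, and $\Delta(\mu)$ restricted to the image — indeed, by the moment of the $\ET 4$-step all transfer bases have been moved off of $[1,k+1]$ onto $\Delta(\mu)$, so $h_1,\dots,h_k$ each lie in precisely the two bases $\mu$ (now its left part) and $\Delta(\mu')$. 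Hence deleting $[1,k+1]$ removes exactly $2\ell$ from $\sum_{\nu\in\omega_1}|H(\nu)|$: a contribution $\ell$ from $\mu$'s left part and a contribution $\ell$ from $\Delta(\mu)$'s restriction. All other bases in $\omega_1$ have their $H(\nu)$ unchanged, because transferring a base $\lambda$ from $\mu$ to $\Delta(\mu)$ in $\ET 2$ replaces $\lambda$ by $\lambda'$ with $h(\lambda')$ having the same value under $H$ (that is the content of the $\mu$-tying and the basic equation for $\mu$: $H[\alpha(\lambda),\beta(\lambda)] = H[q_1,q_2]$). Therefore
\[
\sum_{\nu\in\omega_1}|H^{(i)}(\nu)| = \sum_{\nu\in\omega_1}|H(\nu)| - 2\ell .
\]

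Putting these together,
\[
\psi_{A\Sigma}(H^{(i)}) = \sum_{\nu\in\omega_1}|H^{(i)}(\nu)| - 2 d_{A\Sigma}(H^{(i)}) = \Bigl(\sum_{\nu\in\omega_1}|H(\nu)| - 2\ell\Bigr) - 2\bigl(d_{A\Sigma}(H) - \ell\bigr) = \psi_{A\Sigma}(H),
\]
which is the claim. To make this rigorous I would first state a lemma (or cite the remarks accompanying $\ET 1$--$\ET 5$) recording that each of $\ET 1, \ET 2, \ET 3$, and the boundary-introducing $\ET 5$, preserves the value $H(\sigma)$ of every section and of every base (under the correspondence of solutions given in the definitions), and that they preserve which bases lie in the active part; then I would verify that the hypothesis ``$\mu$ and $\Delta(\mu)$ belong to the active part'' guarantees that the two bases covering the deleted section $[1,k+1]$ are precisely (the left parts of) $\mu$ and $\Delta(\mu)$ and both lie in $\omega_1$ — this is where that hypothesis is used, and it is what makes the bookkeeping of the $-2\ell$ term correct.

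The main obstacle is the careful bookkeeping in the $\ET 4$-step: one must be certain that after all $\ET 5$ and $\ET 2$ operations, \emph{every} item $h_1,\dots,h_k$ of the section being deleted is covered exactly twice, once by $\mu$ and once by $\Delta(\mu)$ (not by any leftover transfer base), so that the deletion subtracts exactly $2\ell$ and not more or less; this requires invoking that $\mu$ was chosen as the carrier (so all leading bases $\lambda$ have $\beta(\lambda)\le\beta(\mu)$ and become transfer bases and get moved) together with the choice of $k$ as the largest index with $h_1,\dots,h_k$ in a single base. A secondary subtlety is that $\ET 5$ may introduce new boundaries and hence new items; one must confirm these splittings contribute $0$ to the change in $d_{A\Sigma}$ and in $\sum|H(\nu)|$, which is immediate since $H_{q'-1}H_{q'}=H_q$ and the new boundary lies strictly inside whatever section it was placed in.
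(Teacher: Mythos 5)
Your overall ledger is the same as the paper's: under $\D 5$ the active word loses exactly the prefix $H[1,k+1]$, so $d_{A\Sigma}$ drops by $\ell=|H(\mu)|-|H^{(i)}(\mu)|$, while in $\sum_{\nu\in\omega_1}|H(\nu)|$ only the carrier $\mu$ and its dual $\Delta(\mu)$ change, each losing $\ell$ (all transferred bases keep their $H$-values), whence $\psi_{A\Sigma}$ is unchanged; this is exactly how the paper argues via the graphical decompositions $H[1,\rho+1]\doteq U_iH^{(i)}[1,\rho_{\Omega_i}+1]$ and $H^{(i)}[1,\rho_{\Omega_i}+1]\doteq V_iH[\rho_A,\rho+1]$, using $\mu,\Delta(\mu)\in A\Sigma$ to see that the non-active part is untouched.

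However, your justification of the $-2\ell$ term contains a false structural claim: you assert that the deleted section $[1,k+1]$ is covered by exactly two bases, the left part of $\mu$ and ``$\Delta(\mu)$ restricted to the image'', so that $h_1,\dots,h_k$ lie in both. That is not what happens, and it contradicts your own closing remark that $k$ is the largest index with $h_1,\dots,h_k$ lying in a \emph{single} base: after all transfer bases have been moved onto $\Delta(\mu)$, the items $h_1,\dots,h_k$ are covered only by the left part $\mu_1$ of the carrier --- this is precisely the applicability condition of $\ET 4$. The second $\ell$ does not come from a second base covering $[1,k+1]$; it comes from the fact that $\ET 1$ cuts $\Delta(\mu)$ at the boundary $q$ tied to $k+1$, and $\ET 4$ removes the dual pair $\mu_1,\Delta(\mu_1)$ together. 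The base $\Delta(\mu_1)$ sits over an initial segment of $\sigma(\Delta(\mu))$, elsewhere on the interval; its items survive in $\Omega_i$ but lose that layer of coverage, and since $|H(\Delta(\mu_1))|=|H(\mu_1)|=\ell$ (dual bases have equal $H$-length), the sum over $\omega_1$ still drops by exactly $2\ell$. So your final computation is correct, but the sentence locating the hypothesis ``$\mu,\Delta(\mu)\in A\Sigma$'' in a two-base coverage of $[1,k+1]$ should be replaced: that hypothesis is needed to guarantee the non-active part (and hence $d_{A\Sigma}$'s decrement being exactly $\ell$, and the correspondence of $\omega_1$ for $\Omega$ and $\Omega_i$) is unaffected, not to produce the coverage you describe.
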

\begin{proof}
By the definition of the entire transformation $\D 5$, it follows that the word $H^{(i)}[1,\rho_{\Omega_i}+1]$ is a terminal subword of the word $H[1,\rho+1]$, i.e.
$$
H[1,\rho+1]\doteq U_iH^{(i)}[1,\rho_{\Omega_i}+1], \hbox{ where } \rho=\rho_\Omega.
$$
On the other hand, since $\mu, \Delta(\mu)\in A\Sigma$, the non-active parts of $\Omega$ and $\Omega_i$ coincide. Therefore, $H[\rho_A,\rho+1]$ is the terminal subword of the word $H^{(i)}[ 1,\rho _{\Omega_i}+1]$, i.e. the following graphical equality holds:
$$
H^{(i)}[1,\rho_{\Omega_i}+1]\doteq V_i H[\rho_A,\rho +1].
$$
So we have
\begin{equation}\label{3.22}
d_{A\Sigma}(H)-d_{A\Sigma}(H ^{(i)})= |H[1,\rho_A]|-|V_{i}|=|U_{i}|= |H({\mu})|-|H^{(i)}({\mu})|,
\end{equation}
where, in the above notation, if $k=\beta(\mu)-1$ (and thus $\mu$ has been completely eliminated in $\Omega^{(i)}$), then  $H^{(i)}({\mu})=1$; otherwise, $H^{(i)}({\mu})=H[k+1,\beta(\mu)]$.

From (\ref{3.13}) and (\ref{3.22}) it follows that $\psi _{A\Sigma}(H)=\psi _{A\Sigma}(H^{(i)})$.
\end{proof}

\subsubsection*{\glossary{name={$\D 6$}, description={derived transformation $\D 6$}, sort=D}$\D 6:$ Identifying closed constant sections}

Let $\lambda$ and $\mu$ be two constant bases in $\Omega$ with labels $a^{\epsilon_\lambda}$ and $a^{\epsilon_\mu}$, where $a \in \cA^{\pm 1}$ and $\epsilon_\lambda=\epsilon_\mu$, $\epsilon_\mu \in \{1,-1\}$. Suppose that the sections $\sigma(\lambda) = [i,i+1]$ and $\sigma(\mu) = [j,j+1]$ are closed.

The transformation $\D 6$ applied to $\Omega$ results in a single generalised equation $\Omega_1$ which is obtained from $\Omega$ in the following way, see Figure \ref{constants}. Introduce a new variable base $\eta$ with its dual $\Delta(\eta)$ such that
$$
\sigma(\eta) = [i,i+1], \ \sigma(\Delta(\eta)) = [j,j+1],\  \varepsilon(\eta) = \epsilon_\lambda, \ \varepsilon(\Delta(\eta)) = \epsilon_\mu.
$$
Then we transfer all bases from $\eta$ onto $\Delta(\eta)$ using $\ET 2$, remove the bases $\eta$ and $\Delta(\eta)$, remove the item $h_i$, re-enumerate the remaining items  and adjust the relation $\Re_\Upsilon$ as appropriate, see Remark \ref{rem:boundterm}.

The corresponding homomorphism $\theta_1:\GG_{R(\Omega^*)}\to \GG_{R(\Omega_1^*)}$ is induced by the composition of the homomorphisms defined by the respective elementary transformations. Obviously, $\theta_1$ is an isomorphism.

\bigskip

\begin{lem} \label{lem:solleng}
Let $\Omega_i$ be a generalised equation and $H^{(i)}$ be a solution of $\Omega_i$ so that
$$
\ET:(\Omega,H)\to (\Omega_i,H^{(i)}) \hbox{ or } \D:(\Omega,H)\to (\Omega_i,H^{(i)}).
$$
Then one has $|H^{(i)}|\le |H|$. Furthermore, in the case that $\ET=\ET 4$ or $\D=\D 5$  this inequality is strict $|H^{(i)}|<|H|$.
\end{lem}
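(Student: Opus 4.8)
The plan is to verify the inequality $|H^{(i)}| \le |H|$ by inspecting each elementary transformation $\ET 1$–$\ET 5$ and each derived transformation $\D 1$–$\D 6$ separately, using the explicit descriptions of how the solution $H^{(i)}$ is obtained from $H$ in each case. Recall that $|H| = \sum_i |H_i|$, where the lengths $|H_i|$ are lengths of geodesic words in $\GG$.

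First I would dispose of the transformations that do not change the solution at all, merely its indexing or presentation. For $\ET 1$ (cutting a base), $\ET 2$ (transferring a base), $\ET 3$ (removing a pair of matched bases), and $\D 3$ (completing a cut), the description in Section \ref{se:5.1} and Remark \ref{rem:et123} shows that $\Omega$ and $\Omega_i$ have the same set of variables $h$ and $H^{(i)} = H$, so $|H^{(i)}| = |H|$. For $\ET 5$ case (1) (introducing a boundary connection without a new boundary) we again have $H^{(i)} = H$, so $|H^{(i)}| = |H|$. For $\D 1$, $\D 2$, $\D 6$, and $\ET 5$ case (2), the new solution is obtained from $H$ either by a permutation of the coordinates or by splitting one coordinate $H_i$ into two adjacent pieces $H_{q'-1}H_{q'}$ whose concatenation equals $H_i$; since the split is a geodesic decomposition (an instance of $H_i = u \circ v$), we have $|H_{q'-1}| + |H_{q'}| = |H_i|$, so the total length is unchanged, $|H^{(i)}| = |H|$. (Transformation $\D 4$, the kernel, is handled at the level of coordinate groups, not as a map $(\Omega,H)\to(\Omega_i,H^{(i)})$, so it need not be treated here; but if required, its component eliminations each only delete variables or re-express them, and deleting a variable can only decrease the total length.)

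Next I would treat the two transformations where the inequality is strict, namely $\ET 4$ (removing a linear base) and $\D 5$ (entire transformation). For $\ET 4$: the map $\tilde\theta_1$ expresses each deleted item $h_j$ (for $\alpha(\mu)+1 \le j \le \beta(\mu)-1$) as a product of items $h^{(1)}_{\t(j)} \cdots h^{(1)}_{\t(j+1)-1}$ (or a slight variant) of $\Omega_1$; tracing this through a solution, each $H^{(1)}_k$ with $k$ outside $[\alpha(\mu)+1, \beta(\mu)-1]$ equals the corresponding $H_j$, while the items $H_j$ with $j$ inside that range decompose as geodesic products of items of $\Omega_1$ which are counted once on the $\Delta(\mu)$-side. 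The key point is that the bases $\mu$ and $\Delta(\mu)$, together with their internal boundaries, are deleted: the section $[\alpha(\mu), \beta(\mu)]$ of $\Omega$ carried a non-trivial subword $H(\mu)$ of positive length $|H(\mu)| = |H(\Delta(\mu))|$ which, after the transformation, is accounted for exactly once (on the $\Delta(\mu)$ side) rather than twice. Hence $|H^{(1)}| = |H| - |H(\mu)| < |H|$, the strict inequality holding because a variable base $\mu$ contains at least one item, so $|H(\mu)| \ge 1$. For $\D 5$: by the computation in the proof of Lemma \ref{lem:excess}, one has the graphical equality $H[1,\rho+1] \doteq U_i H^{(i)}[1, \rho_{\Omega_i}+1]$ with $|U_i| = |H(\mu)| - |H^{(i)}(\mu)|$, and the non-active part is preserved, so $|H^{(i)}| = |H| - |U_i|$; since the carrier base $\mu$ is a variable base and the section $[1, k+1]$ (with $k \ge 1$) is deleted by the final $\ET 4$ step, $|U_i| \ge 1$ and the inequality $|H^{(i)}| < |H|$ is strict.

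The only real subtlety — and the step I expect to require the most care — is $\ET 4$ in the orientation-reversing case $\varepsilon(\mu) = -\varepsilon(\Delta(\mu))$, where the rewriting rule $\tilde\theta_1(h_j) = h^{(1)}_{\t(j-1)} \cdots h^{(1)}_{\t(j+1)}$ has shifted indices, so one must check carefully that the telescoping of the $\t$-values still partitions the items of the $\Delta(\mu)$-section without overlap or omission, and that the resulting words $H^{(1)}_k$ are genuinely geodesic (which follows since they are subwords of the geodesic word $H(\Delta(\mu))$). Once the bookkeeping of boundaries via the function $\t$ and the re-enumeration of Remark \ref{rem:boundterm} is set up correctly, the length identity $|H^{(1)}| = |H| - |H(\mu)|$ is immediate, and with it the whole lemma. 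I would therefore organise the write-up as: (i) the no-change cases collected together; (ii) the splitting/permutation cases collected together; (iii) a careful treatment of $\ET 4$; (iv) a short appeal to the proof of Lemma \ref{lem:excess} for $\D 5$.
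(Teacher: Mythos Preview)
Your proposal is correct and is precisely the approach the paper takes: the paper's entire proof reads ``Proof is by straightforward examination of descriptions of elementary and derived transformations,'' and you have carried out that examination in detail. Your case analysis, including the identification of the length-preserving transformations and the strict decrease $|H^{(1)}| = |H| - |H(\mu)|$ for $\ET 4$ (and similarly for $\D 5$ via the deleted initial segment), is exactly what is intended.
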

\begin{proof}
Proof is by straightforward examination of descriptions of elementary and derived transformations.
\end{proof}

\begin{lem}\label{lem:indDepi}
Let $\Omega_1\in \{\Omega_i\}=\ET(\Omega)$ be a generalised equation obtained from $\Omega$ by a derived transformation $\D$ and let $\theta_1: \GG_{R(\Omega^*)}\to \GG_{R(\Omega_1^*)}$ be the corresponding epimorphism. Then there exists a homomorphism
$$
\tilde\theta_1: {F[h_1,\dots, h_{\rho_\Omega}]}\to {F[h_1,\dots, h_{\rho_{\Omega_1}}]}
$$
such that $\tilde\theta_1$ induces an epimorphism
$$
\theta_1':F_{R(\Upsilon^*)}\to F_{R(\Upsilon_1^*)}
$$
and the epimorphism $\theta_1:\GG_{R(\Omega^*)}\to \GG_{R(\Omega_1^*)}$. In other words, the following diagram commutes:
$$
\CD
 \GG_{R(\Omega^*)} @<<<  F[h_1,\dots, h_{\rho_\Omega}]           @>>> F_{R(\Upsilon^*)} \\
 @V\theta_1 VV       @V \tilde\theta VV   @VV \theta' V  \\
 \GG_{R(\Omega_1^*)} @<<<   F[h_1,\dots, h_{\rho_{\Omega_1}}]           @>>> F_{R(\Upsilon_1^*)}
\endCD
$$
\end{lem}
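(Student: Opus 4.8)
The statement to prove is Lemma \ref{lem:indDepi}: for a derived transformation $\D$ taking $\Omega$ to $\Omega_1 \in \D(\Omega)$ with corresponding epimorphism $\theta_1 \colon \GG_{R(\Omega^*)} \to \GG_{R(\Omega_1^*)}$, there is a lift $\tilde\theta_1$ on the free groups $F[h]$ inducing $\theta_1$ and an epimorphism $\theta_1'$ on the $F$-coordinate groups $F_{R(\Upsilon^*)} \to F_{R(\Upsilon_1^*)}$.

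The plan is to proceed exactly as in the proof of Lemma \ref{lem:indETepi} for elementary transformations — namely, by examining each derived transformation $\D 1$–$\D 6$ in turn — using the fact that every derived transformation is built from the elementary transformations $\ET 1$–$\ET 5$, for which the desired lift already exists by Lemma \ref{lem:indETepi}, together with closure of this property under composition. So the first step is to record the composition principle: if $\Omega \to \Omega'$ and $\Omega' \to \Omega''$ each admit compatible lifts $\tilde\theta, \theta', \theta$ making the relevant squares commute (in the sense that $F[h] \to F[h'] \to F[h'']$ descends to $F_{R(\Upsilon^*)} \to F_{R(\Upsilon'^*)} \to F_{R(\Upsilon''^*)}$ and to $\GG_{R(\Omega^*)} \to \GG_{R(\Omega'^*)} \to \GG_{R(\Omega''^*)}$), then the composite $\tilde\theta'' \circ \tilde\theta$ is a lift for $\Omega \to \Omega''$; this is immediate since the two commuting squares paste along their common middle column. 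Thus for $\D 1$ (closing a section, a finite sequence of $\ET 5$'s and $\ET 1$'s) and $\D 3$ (completing a cut, a finite sequence of $\ET 1$'s) the lemma follows directly from Lemma \ref{lem:indETepi} and the composition principle; similarly $\D 5$ (the entire transformation, a composition of $\ET 5$, $\ET 2$, $\ET 1$, $\ET 4$) and the relevant parts of $\D 6$ (a composition of $\ET 2$'s plus a bookkeeping deletion) reduce to the same.

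The genuinely new cases are $\D 2$ (transporting a closed section), $\D 4$ (the kernel), and the deletion/re-enumeration bookkeeping common to $\D 4$, $\D 5$, $\D 6$ — the transformations which, as the text itself notes, are not literally compositions of elementary transformations. For $\D 2$, the generalised equation $\Omega_1$ differs from $\Omega$ only by a permutation $\pi$ of the variables $h$ (induced by moving a closed section), so I take $\tilde\theta_1 \colon F[h_1,\dots,h_{\rho_\Omega}] \to F[h_1,\dots,h_{\rho_{\Omega_1}}]$ to be the corresponding permutation of the free generators; since the defining relations of $\Upsilon_1^*$ (and of $\Omega_1^*$, including the commutativity relations coming from $\Re_{\Upsilon_1} = \Re_\Upsilon$ under the same relabelling, per Remark \ref{rem:boundterm}) are exactly the images under $\pi$ of those of $\Upsilon^*$ (resp.\ $\Omega^*$), $\tilde\theta_1$ descends to isomorphisms $\theta_1' \colon F_{R(\Upsilon^*)} \to F_{R(\Upsilon_1^*)}$ and $\theta_1 \colon \GG_{R(\Omega^*)} \to \GG_{R(\Omega_1^*)}$, and the square commutes on generators hence everywhere. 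The deletion of a closed section / deletion of free variables, together with re-enumeration, is handled the same way: $\tilde\theta_1$ sends each deleted free generator to $1$ (or to the appropriate word, in the $\D 4$ cases where an eliminable base yields a relation $h_i = w_\mu$, exactly as in the construction in cases (\ref{item:7-10c1}) and (\ref{item:7-10c2}) of $\D 4$ above) and each surviving generator to its re-enumerated copy, and one checks the relators map to relators using the explicit presentations of the coordinate groups computed in the discussion preceding and including Lemma \ref{7-10}.

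For $\D 4$ itself I would argue by induction on the number of base eliminations in the elimination process, the single-elimination step being precisely the content of the displayed computations (\ref{eq:ker1}) and (\ref{eq:ker2}): in each of the two cases of an eliminable base one reads off a homomorphism $\tilde\theta_1$ on the free groups (substituting $h_i \mapsto w_\mu$, respectively passing to the basis $(h_1,\dots,h_{i-2},h_i',h_{i+1},\dots,h_\rho)$ and substituting $h_{i-1}\mapsto w_\mu$) and verifies, via the Tietze transformations already used to derive (\ref{eq:ker1}) and (\ref{eq:ker2}), that it induces the stated epimorphisms on $F_{R(\Upsilon^*)}$ and $\GG_{R(\Omega^*)}$ and that the diagram commutes; then one composes over the steps of the elimination process using the composition principle, invoking Lemma \ref{7-10} to see that the endpoint is $\Ker(\Omega)$. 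The main obstacle, and the place where care is needed, is the bookkeeping in $\D 4$: one must track how the relation set $\Re_\Upsilon$ (and the auxiliary commutators $[w_\mu, h_j]$, $[w_\mu^{-1}h_i', h_j]$, etc.) transforms, and confirm that the $F$-level homomorphism $\tilde\theta_1$ is genuinely compatible with the quotient maps $F[h] \to F_{R(\Upsilon^*)}$ and $F[h] \to \GG_{R(\Omega^*)}$ on \emph{all} relators, not merely the basic equations — but this is exactly the verification already carried out in the passage establishing (\ref{eq:ker1}) and (\ref{eq:ker2}) and Lemma \ref{7-10}, so the proof amounts to assembling those pieces. Hence the proof reads: \emph{the claim for $\ET 1$–$\ET 5$ is Lemma \ref{lem:indETepi}; it is closed under composition; each of $\D 1, \D 3, \D 5$ and the transfer part of $\D 6$ is such a composition; and $\D 2$, $\D 4$, and the deletion/re-enumeration steps are treated directly as above. $\square$}
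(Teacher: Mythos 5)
Your proposal is correct and follows essentially the same route as the paper, whose proof is simply the remark that the claim follows by examining the definition of $\theta_1$ for each derived transformation $\D 1$--$\D 6$ together with Lemma \ref{lem:indETepi}; you have merely spelled out the case-by-case verification (composition for $\D 1$, $\D 3$, $\D 5$, $\D 6$, the permutation/re-enumeration bookkeeping for $\D 2$, and the elimination computations behind $\D 4$) that the paper leaves implicit.
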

\begin{proof}
Follows by examining the definition of $\theta_1$ for every derived transformation $\D 1-\D 6$ and Lemma \ref{lem:indETepi}.
\end{proof}

\begin{lem} \label{lem:stform}
Using derived transformations, every generalised equation can be taken to the standard form.

Let $\Omega_1$ be obtained from $\Omega$ by an elementary or a derived transformation and let $\Omega$ be in the standard form. Then $\Omega_1$ is in the standard form.
\end{lem}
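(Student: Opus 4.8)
The plan is to prove the two assertions of Lemma~\ref{lem:stform} separately, the first by exhibiting an explicit sequence of derived transformations that enforces conditions (1)--(3) of the definition of standard form one at a time, and the second by checking that each of the elementary and derived transformations preserves those three conditions.

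For the first assertion I would proceed in three stages, corresponding to the three defining conditions, and crucially in \emph{this} order, since the later stages must not destroy what earlier stages achieve. First, ensure condition (2): for every letter $a\in\cA^{\pm1}$, collect all constant bases labelled by $a$ with the same orientation; using $\D 1$ close each of their sections, then use $\D 6$ (identifying closed constant sections) repeatedly to merge them into a single constant base with label $a$; for the pair of labels $a$ and $a^{-1}$ one keeps just one of the two orientations by the usual $\ET 3$-style cancellation after $\D 6$. Second, ensure condition (3): after Stage~1 the constant bases live on closed sections; using $\D 1$ close the one-item section containing any free variable $h_i$, thereby turning it into a closed constant-free section, and then move it (via $\D 2$) into $C\Sigma$; since a free variable belongs to no base, this is harmless. (Alternatively, one notes a free variable can simply be adjoined to an already-existing constant section $C\Sigma$ by $\D 2$.) Third, ensure condition (1): having designated $C\Sigma\subseteq NA\Sigma$ and all of $V\Sigma$ as active, use $\D 2$ (transporting a closed section) to move every non-active closed section to the right of every active one; $\D 2$ does not change the partition of sections into active/non-active, so after finitely many applications there is a boundary $\rho_A$ with $[1,\rho_A]$ the active part and $[\rho_A,\rho_\Omega+1]$ the non-active part. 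At each stage one invokes $\D 1$, $\D 2$, $\D 6$ (and the bookkeeping of Remark~\ref{rem:boundterm} for re-enumeration), all of which are derived transformations, so the composite is a derived transformation and the conclusion follows.

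For the second assertion I would argue by inspection, transformation by transformation. Condition (2) — uniqueness of the constant base for each letter — is immediate for $\ET 1$--$\ET 5$ since none of them creates a new constant base or changes a label, and likewise for $\D 1$--$\D 5$; the only transformation that touches constant bases is $\D 6$, which \emph{reduces} the number of constant bases and is only applied to already-standard-form inputs, so it preserves (2). Condition (3) — every free variable lies in $C\Sigma$ — is preserved because the transformations either leave free variables untouched, or, in the case of $\ET 4$ and the $\D 4$/$\D 5$ eliminations, can only \emph{create} free variables inside sections that are (or are made, via the built-in re-enumeration) part of $C\Sigma$; one must check that $\ET 2$, $\ET 5$, $\D 1$, $\D 3$ do not move a free variable out of $C\Sigma$, which follows since these operate within active sections or introduce boundaries/connections without changing section status. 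Condition (1) — non-active sections to the right of active ones — is preserved because no elementary transformation reorders sections, and among the derived ones only $\D 2$ moves a section, and it moves it to a position consistent with (1) by construction (``to the end of the interval or between two consecutive closed sections''), while $\ET 4$, $\D 5$, $\D 6$ delete sections, which cannot violate (1).

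The main obstacle I anticipate is the bookkeeping in the second assertion for the transformations that \emph{create} free variables, namely $\ET 4$ and especially the eliminations inside $\D 4$ and $\D 5$: one has to make sure that when a base is deleted and previously-covered items become free, the re-enumeration convention of Remark~\ref{rem:boundterm} indeed deposits those items into a section of $C\Sigma$ rather than leaving a free variable stranded in an active section — this is the place where the precise definitions of ``active part'' and of $\D 5$ (which deletes the section $[1,k+1]$, the carrier's exclusive part) have to be reconciled with the requirement that all free variables sit in $C\Sigma$. If it turned out that an intermediate equation momentarily has a free variable outside $C\Sigma$, one would patch this by following the offending transformation with a single $\D 2$ that transports the newly freed item into $C\Sigma$; since $\D 2$ is a derived transformation this does not weaken the statement. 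The rest is routine verification against the explicit descriptions in Sections~\ref{se:5.1} and \ref{se:5.2half}.
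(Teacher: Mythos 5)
Your plan for the first assertion is sound and is essentially what the paper's process does implicitly in its preprocessing and Cases 3--6: close constant sections with $\D 1$, merge them with $\D 6$, move freed variables and constant sections into $C\Sigma$ and reorder with $\D 2$. (The paper itself offers no details, declaring the proof straightforward.) One small slip: you propose to merge a base labelled $a$ with one labelled $a^{-1}$ by an ``$\ET 3$-style cancellation after $\D 6$''; $\D 6$ requires equal exponents and $\ET 3$ applies only to a matched pair of variable bases, so this step is not available --- but it is also not needed, since condition (2) of the standard form treats $a$ and $a^{-1}$ as distinct letters of $\cA^{\pm 1}$.

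The genuine gap is in your verification of the second assertion, at exactly the point you flag. The claim that $\ET 4$ and the eliminations inside $\D 4$, $\D 5$ ``can only create free variables inside sections that are (or are made, via the built-in re-enumeration) part of $C\Sigma$'' is unsupported and false as stated: the re-enumeration of Remark \ref{rem:boundterm} only renumbers boundaries, it never re-designates a section as constant or non-active. Since $\ET 4$ deletes \emph{both} $\mu$ and $\Delta(\mu)$, every item covered only by $\Delta(\mu)$ becomes free in place, i.e.\ inside the (typically active, variable) section occupied by $\Delta(\mu)$; already the two-item equation $h_1=h_2$, with $\mu$ on $h_1$, $\Delta(\mu)$ on $h_2$ and both sections active, is in standard form but leaves a free variable in an active section after $\ET 4$. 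The same phenomenon occurs for $\ET 3$, which you list as harmless: removing a matched pair frees all items covered only by that pair, and this is precisely the transition from a type-6 to a type-5 vertex acknowledged in the proof of Lemma \ref{3.2}; a type-5 equation has a free variable in an active section and so violates condition (3). In the paper's process the standard form is restored not by the transformation itself but by the subsequent Cases 3--6, whose auxiliary edges explicitly re-designate the relevant sections as constant (cf.\ Remark \ref{rem:stform}); your fallback patch of appending a corrective $\D 2$ amounts to the same thing, but it then proves only that a transformation \emph{followed by} such a $\D 2$ preserves the standard form, which is weaker than the second sentence of Lemma \ref{lem:stform} as stated, where $\Omega_1$ is the output of a single elementary or derived transformation. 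So as written, your verification of condition (3) does not go through: you must either justify that newly freed items always lie in $C\Sigma$ (which the above examples show you cannot), or explicitly weaken or reinterpret the assertion being proved.
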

\begin{proof}
Proof is straightforward.
\end{proof}

\subsection{Construction of the tree  $T(\Omega)$} \label{se:5.2}

In this section we describe a branching process for rewriting a generalised equation $\Omega$. This process results in a locally finite and possibly infinite tree \glossary{name={$T(\Omega)$}, description={the infinite, locally finite tree of the process}, sort=T}
$T(\Omega)$. In the end of the section we describe infinite paths in $T(\Omega)$. We summarise the results of this section in the proposition below.

\begin{prop} \label{prop:TO}
For a {\rm(}constrained{\rm)} generalised equation $\Omega=\Omega_{v_0}$ over $\FF$, one can effectively construct a locally finite, possibly infinite, oriented rooted at $v_0$ tree $T$, $T=T(\Omega_{v_0})$, such that:
\begin{enumerate}
\item The vertices $v_i$ of $T$ are labelled by generalised equations $\Omega_{v_i}$ over $\FF$.
\item The edges $v_i\to v_{i+1}$ of $T$ are labelled by epimorphisms
$$
\pi(v_i,v_{i+1}):\GG_{R(\Omega_{v_i}^\ast)}\to \GG_{R(\Omega_{v_{i+1}}^\ast)}.
$$
The edges $v_k\to v_{k+1}$, where $v_{k+1}$ is a leaf of $T$ and $\tp(v_{k+1})=1$, are labelled by proper epimorphisms. All the other epimorphisms $\pi(v_i,v_{i+1})$ are isomorphisms, in particular, edges that belong to infinite branches of the tree $T$ are labelled by isomorphisms.
\item Given a solution $H$ of $\Omega_{v_0}$, there exists a path $v_0\to v_1\to \dots \to v_l$ and a solution $H^{(l)}$ of $\Omega_{v_l}$ such that $$
\pi_H=\pi(v_0,v_1)\cdots \pi(v_{l-1},v_l)\pi_{H^{(l)}}.
    $$
    Conversely, for every path $v_0\to v_1\to \dots \to v_l$ in $T$ and every solution $H^{(l)}$ of $\Omega_{v_l}$ the homomorphism
    $$
    \pi(v_0,v_1)\cdots \pi(v_{l-1},v_l)\pi_{H^{(l)}}
    $$
    gives rise to a solution of $\Omega_{v_0}$.
\end{enumerate}
\end{prop}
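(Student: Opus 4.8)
The plan is to build the tree $T(\Omega_{v_0})$ by induction on height, so that all the structural and epimorphism-tracking properties are built in at each step, and then to extract the infinite-branch description afterwards. The key point that makes the induction work is that every move in the process is realised by a \emph{finite} composition of the elementary transformations $\ET 1$--$\ET 5$ and derived transformations $\D 1$--$\D 6$, each of which, by construction (see the discussion following each transformation and Lemmas \ref{le:hom-check}, \ref{lem:indETepi}, \ref{lem:indDepi}), associates to a pair $(\Omega,H)$ a finite family $\{(\Omega_i,H^{(i)})\}$ together with surjections $\theta_i\colon\GG_{R(\Omega^\ast)}\to\GG_{R(\Omega_i^\ast)}$ and a \emph{unique} index $i$ for which the triangle through $\pi_H$ and $\pi_{H^{(i)}}$ commutes. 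Composing such triangles along a path composes the epimorphisms, so property (3) of Proposition \ref{prop:TO} will follow formally once the tree is defined; the only genuine content is (i) local finiteness, (ii) that the labelling epimorphisms are isomorphisms except on the final edges of type-$1$ leaves, and (iii) the trichotomy of infinite branches.

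First I would set up the recursion. Put $\Omega_{v_0}$ (in standard form, using Lemma \ref{lem:stform}) at the root. Given a vertex $v$ of height $n$ already labelled by a formally consistent generalised equation $\Omega_v$ in standard form, one checks (using Lemma \ref{le:4.1}, and the decidability of the universal theory of $\GG$ via Lemma \ref{le:hom-check}) which of the fifteen mutually exhaustive Cases 1--15 of Section \ref{se:5.2} applies. In Cases 1 and 2 the vertex $v$ is declared a leaf, with $\tp(v)$ recorded; in the remaining cases one applies the prescribed finite sequence of elementary/derived transformations, obtaining finitely many children $v'$ with $\Omega_{v'}=\ET(\cdots(\Omega_v))$ and composite epimorphisms $\pi(v,v')\colon\GG_{R(\Omega_v^\ast)}\to\GG_{R(\Omega_{v'}^\ast)}$; by Lemma \ref{lem:stform} each $\Omega_{v'}$ is again in standard form, and by Lemma \ref{le:4.1} one may prune the (at most finitely many) formally inconsistent children, which have no solutions. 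Local finiteness is then immediate because each transformation has finite fan-out. That the composite $\pi(v,v')$ is surjective follows since each constituent $\theta_i$ is; that it is an isomorphism \emph{unless} $v'$ is a type-$1$ leaf reached from $v$ via Case $k\to k+1$ follows from Lemmas \ref{lem:indETepi} and \ref{lem:indDepi} together with the observation that the only transformation capable of producing a proper epimorphism is $\ET 5$ with no new boundary introduced (Lemma \ref{le:hom-check}), and the bookkeeping of the cases arranges that such a strictly-quotient step occurs precisely at the final edge into a type-$1$ leaf. The lift-and-descend property (3) is proved by induction on the path length: the base case is trivial, and the inductive step is exactly the uniqueness-of-commuting-triangle property of a single transformation applied at the last edge, followed by $\pi_H=\pi(v_0,v_1)\cdots\pi(v_{l-1},v_l)\pi_{H^{(l)}}$; the converse is the analogous forward statement for each transformation, which is recorded in the definition of $\ET$ and $\D$ (e.g.\ Remarks \ref{rem:et123}, \ref{rem:boundterm} and the explicit $\tilde\theta_1$'s) and composes trivially.

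Finally I would address the claim in item (2) that edges on infinite branches are labelled by isomorphisms, and the trichotomy. For the first: a proper epimorphism occurs only on an edge into a type-$1$ leaf, so any edge lying on an infinite branch carries an isomorphism. For the trichotomy one argues that an infinite branch can only be produced by the cases that do not strictly decrease the relevant complexity measure. Using Lemma \ref{lem:solleng} (solution length is non-increasing, and strictly decreasing for $\ET 4$ and $\D 5$) together with the combinatorial parameters of $\Omega$ (number of bases, number of boundaries, the excess $\psi_{A\Sigma}$ which is preserved under $\D 5$ by Lemma \ref{lem:excess}), one shows that every case except the entire-transformation cases in the linear regime (Cases 7--10), the quadratic regime (Case 12), and the general regime (Case 15) leads to a strict drop of a well-ordered tuple of such parameters, hence cannot recur infinitely often along a branch; so an infinite branch must eventually stabilise in one of (A), (B), (C). I expect the main obstacle to be precisely this last step: verifying, case by case through the fifteen cases of Section \ref{se:5.2}, which complexity parameter strictly decreases and that the only non-decreasing cases are the three asserted ones --- this is where the real combinatorial work sits, and it is the content needed for the later Lemma \ref{3.2}. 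Everything else (local finiteness, surjectivity, the commuting triangles) is a formal consequence of the per-transformation statements already established.
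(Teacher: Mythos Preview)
Your overall plan matches the paper's: the tree is built by induction on height, with each vertex falling under exactly one of the fifteen cases of Section~\ref{se:5.2}, and edges labelled by the composite epimorphisms coming from the prescribed elementary/derived transformations. You are also right that property~(2) is essentially definitional rather than a consequence of analysing $\ET 5$: a vertex $v$ is declared to have $\tp(v)=1$ precisely when the incoming edge $\pi(v_1,v)$ is not an isomorphism, so all other edges carry isomorphisms by construction.

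The real gap is in your sketch of the trichotomy (the content of Lemma~\ref{3.2}). You invoke Lemma~\ref{lem:solleng} on solution length and the excess $\psi_{A\Sigma}$ from Lemma~\ref{lem:excess}, but both are \emph{solution-dependent}: $|H|$ and $\psi_{A\Sigma}(H)$ attach to a particular solution $H$, whereas the tree $T(\Omega)$ and its infinite branches exist independently of any solution, and an infinite branch need not be traced by any single solution. So a decreasing-$|H|$ argument does not bound the branch. The paper's argument uses only combinatorial parameters of the generalised equations themselves, tracked in Lemma~\ref{3.1}: the complexity $\comp(\Omega_v)$, the number of active bases $n_A(\Omega_v)$, and the number of open active boundaries $\xi(\Omega_v)$. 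One shows that $\comp$ eventually stabilises (which rules out further auxiliary edges and forces Case~6 to disappear), that Case~5 occurs boundedly often by a rank argument on $\GG_{R(\Upsilon^\ast)}$, that once in Case~12 one stays there, and that Cases~13 and~14 each strictly drop $n_A$ or can occur only boundedly often between occurrences of Case~15. (A small terminological point: Cases~7--10 are not entire-transformation cases---those are 12, 13, 15---but the linear-base removal cases; the reason they can persist indefinitely is different.) Replace the appeals to $|H|$ and $\psi_{A\Sigma}$ with the parameter inequalities of Lemma~\ref{3.1} and the argument goes through.
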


\begin{defn}
Denote by \glossary{name={$n_A$, $n_A(\Omega)$}, description={the number of bases in the active sections of a generalised equation}, sort=N}$n_A=n_A(\Omega)$ the number of bases in the active sections of $\Omega$ and by \glossary{name={$\xi$}, description={the number of open boundaries in the active sections of a generalised equation}, sort=X}$\xi$ the number of open boundaries in the active sections.

For a closed section $\sigma \in \Sigma(\Omega)$ denote by $n(\sigma)$\glossary{name={$n(\sigma)$}, description={the number of bases in the section $\sigma$}, sort=N} the number of bases in $\sigma$. The \index{complexity of the generalised equation}{\em complexity} of a generalised equation $\Omega$ is defined as follows
\glossary{name={$\comp$, $\comp (\Omega)$}, description={complexity of a generalised equation}, sort=C}
$$
\comp = \comp (\Omega) = \sum\limits_{\sigma \in A\Sigma(\Omega)} \max\{0, n(\sigma)-2\}.
$$
\end{defn}

\begin{rem}
We use the following convention. Let $\Omega$ be a generalised equation. By a function of a generalised equation $f(\Omega)$ we mean a function of the parameters $n_A(\Omega)$, $\xi(\Omega)$, $\rho_\Omega$ and $\comp(\Omega)$.
\end{rem}

We begin with a general description of the tree $T(\Omega)$ and then construct it using induction on its height.  For each vertex $v$ in $T(\Omega)$ there exists a generalised equation $\Omega_v$ associated to $v$. Recall, that we consider only formally consistent generalised equations. The initial generalised equation $\Omega$ is associated to the root $v_0$, $\Omega_{v_0} = \Omega$. For each edge $v\to v'$ there exists a unique surjective homomorphism $\pi(v,v'):\GG_{R(\Omega _v^* )}\rightarrow \GG_{R(\Omega _{v'}^* )}$ associated to $v\rightarrow v'$.

If
$$
v\rightarrow v_1\rightarrow\ldots\rightarrow v_s\rightarrow u
$$
is a path in $T(\Omega )$, then by $\pi (v,u)$ we denote the composition of corresponding homomorphisms
$$
\pi (v,u) = \pi (v,v_1)   \cdots \pi (v_s,u).
$$
We call this epimorphism the \index{canonical homomorphism of coordinate groups of generalised equations}\emph{canonical homomorphism from $\GG_{R(\Omega _v^*)}$ to $\GG_{R(\Omega _{u}^*)}$}

There are two kinds of edges in $T(\Omega)$: \emph{principal} and \emph{auxiliary}. Every edge constructed is principal, if not stated otherwise.
Let $v \to v^\prime$ be an edge of $T(\Omega)$, we assume that active (non-active) sections in $\Omega_{v^\prime}$ are naturally inherited from $\Omega_v$. If $v \to v^\prime$ is a principal edge, then there exists a finite sequence of elementary  and derived transformations from $\Omega_v$ to $\Omega_{v^\prime}$ and the homomorphism $\pi(v,v')$ is a composition of the homomorphisms corresponding to these transformations.

Since both elementary and derived transformations uniquely associate a pair $(\Omega_i,H^{(i)})$ to $(\Omega,H)$:
$$
\ET:(\Omega,H)\to (\Omega_i,H^{(i)}) \hbox{ and } \D:(\Omega,H)\to (\Omega_i,H^{(i)}),
$$
a solution $H$ of $\Omega$ defines a path in the tree $T(\Omega)$. We call such a path \index{path!defined by a solution in the tree $T(\Omega)$}\emph{the path defined by a solution $H$ in $T$}.

Let $\Omega $ be a generalised equation. We construct an oriented rooted tree $T(\Omega)$. We start from the root $v_0$ and proceed by induction on the height of the tree.

Suppose, by induction, that the tree $T(\Omega)$ is constructed  up to  height $n$, and let $v$ be a vertex of height $n$. We now describe how to extend the tree from $v$.  The construction of the  outgoing edges from $v$ depends on which of the case described below takes place at the vertex $v$. We always assume that:
\begin{center}
\emph{If the generalised equation $\Omega_v$ satisfies the assumptions of Case $i$, then $\Omega_v$ does not satisfy the assumptions of all the Cases $j$, with $j < i$.}
\end{center}

The general guideline of the process is to use the entire transformation to transfer bases to the right of the interval. Before applying the entire transformation one has to make sure that the generalised equation is ``clean''. The stratification of the process into 15 cases, though seemingly unnecessary, is convenient in the proofs since each of the cases has a different behaviour with respect to ``complexity'' of the generalised equation, see Lemma \ref{3.1}.

\subsubsection*{Preprocessing}

In $\Omega_{v_0}$ we transport closed sections using $\D 2$ in such a way that all active sections are at the left end of the interval (the active part of the generalised equation), then come all non-active sections.

\subsubsection*{Termination conditions: Cases 1 and 2}

\subsubsection*{Case 1: The homomorphism $\pi (v_0,v)$ is not an isomorphism, or, equivalently, the canonical homomorphism $\pi(v_1,v)$, where $v_1\to v$ is an edge of $T(\Omega)$, is not an isomorphism.}

The vertex $v$, in this case, is a leaf of the tree $T(\Omega)$. There are no outgoing edges from $v$.

\subsubsection*{Case 2: The generalised equation $\Omega _v$ does not contain active sections.}

Then the vertex $v$ is a leaf of the tree $T(\Omega)$. There are no outgoing edges from $v$.

\subsubsection*{Moving constant bases to the non-active part: Cases 3 and 4}

\subsubsection*{Case 3: The generalised equation $\Omega _v$ contains a constant base $\lambda$ in an active section such that the section $\sigma(\lambda)$ is not closed.}

In this case, we make the section $\sigma(\lambda)$ closed using the derived transformation $\D 1$.

\subsubsection*{Case 4: The generalised equation $\Omega _v$ contains a constant base $\lambda$ such that the section $\sigma(\lambda)$ is closed.}

In this case, we transport the section $\sigma(\lambda)$ to $C\Sigma$ using the derived transformation $\D 2$. Suppose that the base $\lambda$ is labelled by a letter $a \in \cA^{\pm 1}$. Then we identify all closed sections of the type $[i,i+1]$, which contain a constant base with the label $a^{\pm 1}$, with the transported section $\sigma(\lambda)$, using the derived transformation $\D 6$. In the resulting generalised equation $\Omega_{v^\prime}$ the section $\sigma(\lambda)$ becomes a constant section, and the corresponding edge $(v,v^\prime)$ is auxiliary, see Figure \ref{constants} (note that $\D 1$ produces a finite set of generalised equations, Figure \ref{constants} shows only one of them).

\begin{figure}[!h]
  \centering
   \includegraphics[keepaspectratio,width=6in]{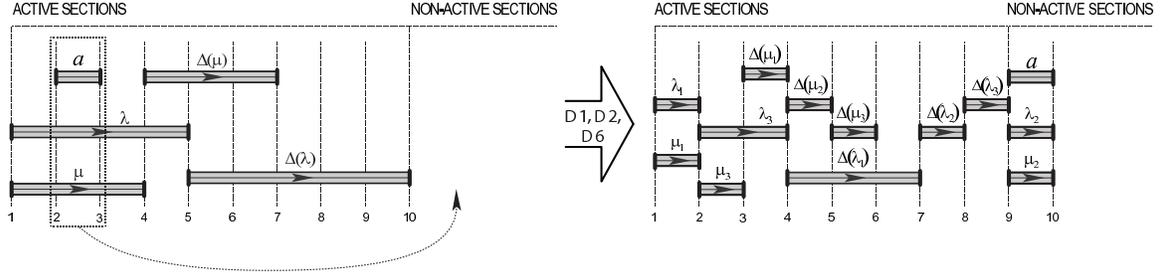}
\caption{Cases 3-4: Moving constant bases.} \label{constants}
\end{figure}

\subsubsection*{Moving free variables to the non-active part: Cases 5 and 6}

\subsubsection*{Case 5: The generalised equation $\Omega _v$ contains a free variable $h_q$ in an  active section.}

Using $\D 2$, we transport the section $[q,q+1]$ to the very end of the interval behind all the items of $\Omega_v$. In the resulting generalised equation $\Omega_{v^\prime}$ the transported section becomes a constant section, and the corresponding edge $(v,v^\prime)$ is auxiliary.

\subsubsection*{Case 6: The generalised equation $\Omega _v$ contains a pair of matched bases in an active section.}

In this case, we perform $\ET 3$ and delete it, see Figure \ref{useless}.

\begin{figure}[!h]
  \centering
   \includegraphics[keepaspectratio,width=6in]{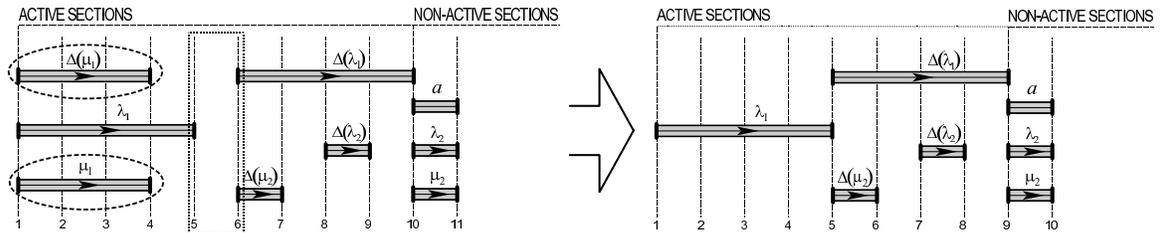}
 \caption{Cases 5-6: Removing a pair of matched bases and free variables.}
 \label{useless}
 \end{figure}

\begin{rem} \label{rem:stform}
If $\Omega_v$ does not satisfy the conditions of any of the Cases 1-6, then the generalised equation $\Omega_v$ is in the standard form.
\end{rem}

\subsubsection*{Eliminating linear variables: Cases 7-10}

\subsubsection*{Case 7: There exists an item $h_i$ in an active section of $\Omega_v$ such that $\gamma _i=1$ and  such that both boundaries $i$ and $i+1$ are closed.}

Then, we remove the closed section $[i,i+1]$ together with the linear base using $\ET 4$.

\subsubsection*{Case 8: There exists an item $h_i$ in an active section of $\Omega_v$ such that $\gamma _i=1$ and such that  one of the boundaries $i$, $i+1$ is open, say $i+1$, and the other is closed.}

In this case, we first perform $\ET 5$ and $\mu$-tie $i+1$ by the only base $\mu$ it intersects; then using $\ET 1$ we cut $\mu$ in $i+1$; and then we delete the closed section $[i,i+1]$ using $\ET 4$,  see Figure \ref{linear'}.

\begin{figure}[!h]
  \centering
   \includegraphics[keepaspectratio,width=6in]{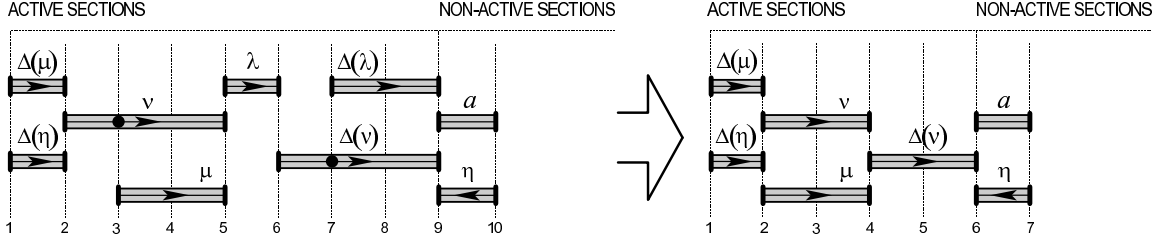}
 \caption{Cases 7-8: Linear variables.} \label{linear'}
\end{figure}

\subsubsection*{Case 9: There exists an item $h_i$ in an active section of $\Omega_v$ such that $\gamma _i=1$ and  such that both boundaries $i$ and $i+1$ are open. In addition, there is a closed section $\sigma$ such that $\sigma$ contains exactly two bases $\mu _1$ and $\mu _2$, $\sigma = \sigma(\mu_1) = \sigma(\mu_2)$ and $\mu_1, \mu_2$ is not a pair of matched bases, i.e. $\mu_1\ne \Delta(\mu_2)$; moreover, in the generalised equation $\widetilde {\Omega}_v= \D 3(\Omega)$ all the bases obtained from $\mu _1,\mu _2$  by $\ET 1$ when constructing $\widetilde {\Omega}_v$ from $\Omega_v$, do not belong to the kernel of $\widetilde{\Omega}_v$.}

In this case, using $\ET 5$ we  $\mu _1$-tie  all the boundaries that intersect $\mu_1$; using $\ET 2$, we transfer $\mu _2$ onto $\Delta (\mu_1)$; and  remove $\mu _1$ together with the closed section $\sigma$  using  $\ET 4$,  see Figure \ref{linear''}.

\subsubsection*{Case 10: There exists an item $h_i$ in an active section of $\Omega_v$ such that $\gamma _i=1$ and  such that both boundaries $i$ and $i+1$ are open.}

In this event we close the section $[i,i+1]$ using $\D 1$ and remove it using $\ET 4$, see Figure \ref{linear''}.

\begin{figure}[!h]
  \centering
   \includegraphics[keepaspectratio,width=6in]{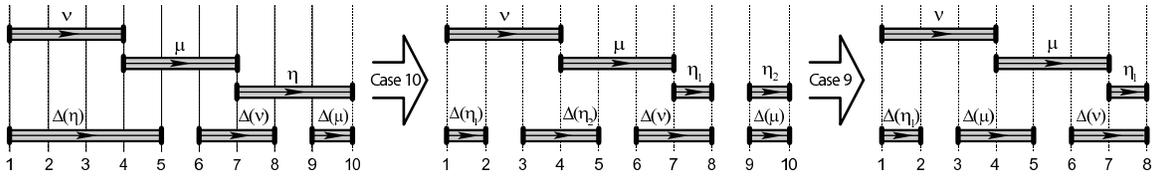}
  \caption{Cases 9-10: Linear case.} \label{linear''}
\end{figure}

\subsubsection*{Tying a free boundary: Case 11}

\subsubsection*{Case 11: Some boundary $i$ in the active part of $\Omega_v$ is free.}

Since the assumptions of Case 5 are not satisfied, the boundary $i$ intersects at least one base, say, $\mu$.

In this case, we $\mu$-tie $i$  using $\ET 5$.

\subsubsection*{Quadratic case: Case 12}

\subsubsection*{Case 12: For every item $h_i$ in the active part of $\Omega_v$ we have $\gamma _i = 2$.}

We apply the entire transformation $\D 5$, see Figure \ref{quadratic}.

\begin{figure}[!h]
  \centering
   \includegraphics[keepaspectratio,width=6in]{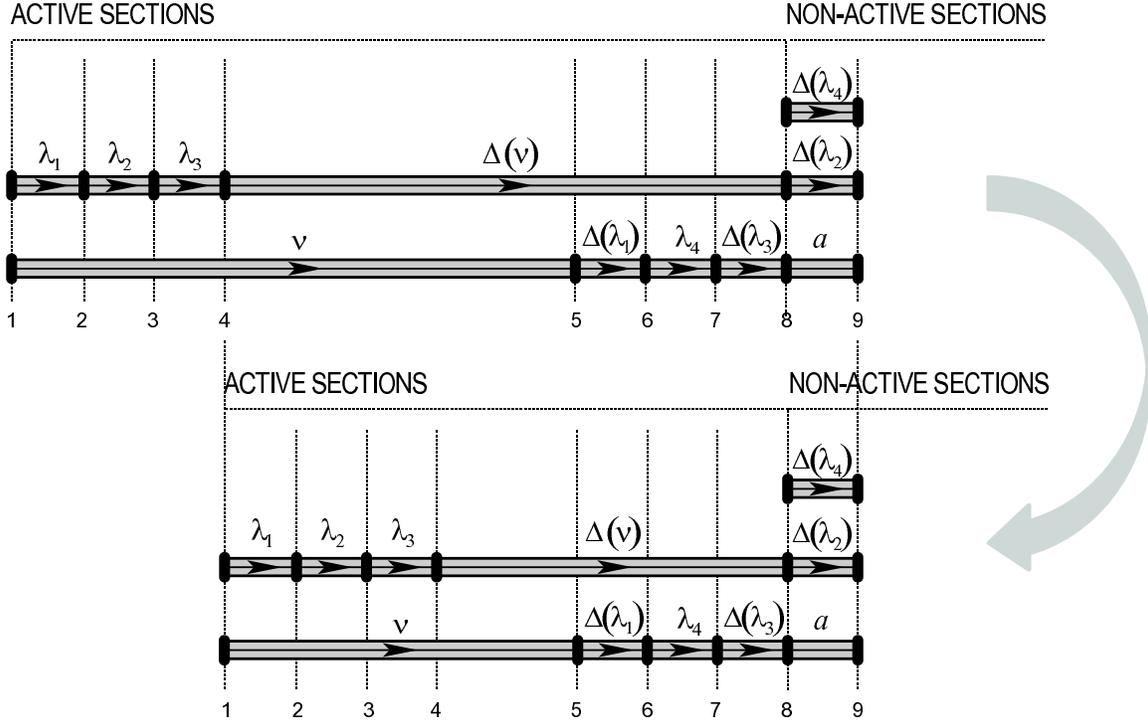}
  \caption{Case 12: Quadratic case, entire transformation.} \label{quadratic}
\end{figure}

\subsubsection*{Removing a closed section: Case 13}

\subsubsection*{Case 13: For every item $h_i$ in the active part of $\Omega_v$ we have $\gamma _i\geq 2$, and $\gamma_{i'}>2$ for at least one item $h_{i'}$ in the active part. Moreover, for some active base $\mu$ the section $\sigma(\mu)$ is closed.}

In this case, using $\D 2$, we transport the section $\sigma(\mu)$ to the beginning of the interval (this makes $\mu$ a carrier base). We then apply the entire transformation $\D 5$ (note that in this case the whole section $\sigma(\mu)$ is removed).

\subsubsection*{Tying a boundary: Case 14}

\subsubsection*{Case 14:  For every item $h_i$ in the active part of $\Omega_v$ we have $\gamma _i\geq 2$, and $\gamma _{i'}>   2$ for at least one item $h_{i'}$ in the active part. Moreover,  some boundary $j$ in the active part touches some base $\lambda$, intersects some base $\mu$,  and $j$ is not $\mu$-tied.}

In this case, using $\ET 5$, we $\mu$-tie $j$.

\subsubsection*{General case: Case 15}

\subsubsection*{Case 15: For every item $h_i$ in the active part of $\Omega_v$ we have $\gamma _i\geq 2$, and $\gamma _{i'}>   2$ for at least one item $h_{i'}$ in the active part. Moreover, every active section $\sigma(\mu)$ is not closed and every boundary $j$ is $\mu$-tied in every base it intersects.}

We first apply the entire transformation $\D 5$. Then, using $\ET 5$, we $\mu$-tie every boundary $j$ in the active part that intersects a base $\mu$ and touches at least one base. This results in finitely many new vertices connected to $v$ by principal edges.

If, in addition,  $\Omega_v$ satisfies the assumptions of Case 15.1 below, then besides the principal edges already constructed, we construct a few more auxiliary edges outgoing from the vertex $v$.

\subsubsection*{Case 15.1: The carrier base $\mu$ of the generalised equation $\Omega _v$ intersects with its dual $\Delta (\mu)$.}

We first construct an auxiliary generalised equation ${\widehat{\Omega}_{ v}}$ (which does \emph{not} appear in the tree $T(\Omega)$) as follows. Firstly, we add a new constant section $[\rho _{\Omega_v}+1,\rho_{\Omega_v}+2]$ to the right of all the sections in $\Omega_v$ (in particular, $h_{\rho _{\Omega_v}+1}$ is a new free variable). Secondly, we introduce a new pair of dual variable bases $\lambda ,\Delta (\lambda)$ so that
$$
\alpha(\lambda)=1, \ \beta(\lambda)=\beta(\Delta (\mu)),\  \alpha(\Delta(\lambda))=\rho_{\Omega_v}+1,\  \beta(\Delta(\lambda))=\rho_{\Omega_v}+2.
$$
Notice that $\Omega _v$ can be obtained from ${\widehat{\Omega}_{v}}$ if we apply $\ET 4$ to ${\widehat{\Omega}_{v}}$ and delete the base $\Delta(\lambda)$ together with the closed section $[\rho _v+1,\rho _v+2]$. Let
$$
{\hat \pi}_{v}: \GG_{R(\Omega _v^\ast)}\rightarrow \GG_{R({{{\widehat{\Omega}}_v}}^{\ast})}
$$
be the isomorphism induced by $\ET 4$. The assumptions of Case 15 still hold for $\widehat {\Omega} _{ v}$. Note that the carrier base of  $\widehat {\Omega} _{ v}$ is the base $\lambda$. Applying the transformations described in Case 15 to ${\widehat \Omega _{v}}$ (first $\D 5$ and then $\ET 5$), we obtain a set of new generalised equations $\{\Omega_{v^\prime_i}\mid i=1,\dots, \nn\}$ and the set of corresponding epimorphisms of the form:
$$
{\phi}_{v'_i}: \GG_{R({\widehat{\Omega}_{ v}}^\ast)} \rightarrow \GG_{R(\Omega_{v_i'}^\ast)}.
$$

Now for each generalised equation $\Omega_{v'_i}$ we add a vertex $v'_i$ and an auxiliary edge $v\to v'_i$ in the tree $T(\Omega)$. The edge $v\to v'_i$ is labelled by the homomorphism $\pi(v,v'_i)$ which is the composition of homomorphisms $\phi_{v'_i}$ and ${\hat \pi}_{v}$, $\pi(v,v'_i) = {\hat \pi}_{v}\phi_{v'_i}$. We associate the generalised equation $\Omega_{v'_i}$ to the vertex $v'_i$.

\bigskip

The tree $T(\Omega)$ is therefore constructed. Observe that, in general, $T(\Omega)$ is an infinite locally finite tree.

If the generalised equation $\Omega_v$ satisfies the assumptions of Case $i\ (1\leq i\leq 15)$, then we say that the vertex $v$ has \index{type!of a vertex}\index{type!of a generalised equation}\emph{type} $i$ and write \glossary{name={$\tp(v)$}, description={type of the vertex $v$ of the tree $T$, $T_0$, $T_{\dec}$, $T_{\ext}$ or $T_{\sol}$}, sort=T}$\tp(v)=i$.

\begin{lem}
For any vertex $v$ of the tree $T(\Omega)$ there exists an algorithm to decide whether or not $v$ is of type $k$, $k=1,\dots, 15$.
\end{lem}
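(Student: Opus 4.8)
The plan is to use the convention governing the cases: by construction $\tp(v)=k$ holds precisely when $\Omega_v$ satisfies the assumptions of Case $k$ while failing the assumptions of every Case $j$ with $j<k$. Consequently it is enough to produce, for each $i\in\{1,\dots,15\}$, an algorithm deciding whether $\Omega_v$ (together with the incoming edge $v_1\to v$, which matters only for $i=1$) satisfies the assumptions of Case $i$; running these fifteen algorithms in order of increasing $i$ then determines $\tp(v)$.

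For $i\in\{2,\dots,15\}$ the assumptions of Case $i$ are finite combinatorial conditions on the data of $\Omega_v$. They quantify over the finite sets $\BS(\Omega_v)$, $\BD(\Omega_v)$, $\BC(\Omega_v)$ and refer only to $\alpha,\beta,\varepsilon,\Delta$, the coverage numbers $\gamma_i$, the partition $\Sigma(\Omega_v)=A\Sigma\cup NA\Sigma$, the constraint relation of $\Omega_v$, and such notions as open/closed/free boundary, free/linear/constant item and closed section, all of which are computable from $\Omega_v$ by inspection; hence each such condition is decidable. The only point requiring comment is Case $9$, whose assumption involves the auxiliary equation $\widetilde\Omega_v=\D 3(\Omega_v)$ and its kernel: $\D 3$ is a finite sequence of applications of $\ET 1$, and the elimination process defining $\D 4$ strictly decreases the number of bases and therefore terminates, so $\widetilde\Omega_v$ and $\Ker(\widetilde\Omega_v)$ are effectively computable and one can check whether the bases obtained from $\mu_1,\mu_2$ belong to the kernel. (That $\Omega_v$ is formally consistent, which is implicit in $v$ being a vertex, is itself decidable by Lemma \ref{le:4.1}.)

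The only genuinely non-combinatorial case is Case $1$, which asks whether the homomorphism $\pi(v_1,v)$ labelling the incoming edge is an isomorphism. By the construction of the tree $\pi(v_1,v)$ is a composition of finitely many homomorphisms, each associated to one of the elementary transformations $\ET 1,\dots,\ET 5$ or to the variable‑permutation isomorphism of $\D 2$ (every derived transformation used along an edge being composed out of these); by Remark \ref{rem:et123} and the explicit formulas for these transformations, all of these factors are isomorphisms except possibly those coming from the first alternative of $\ET 5$, in which a boundary connection is introduced without adding a new boundary. Since a composition of group epimorphisms is an isomorphism precisely when each factor is, it suffices to decide, for each such factor $\theta$, whether it is proper — and this is exactly Lemma \ref{le:hom-check}, whose proof reduces the question to the validity of a quasi‑identity in $\GG$ and hence to the decidability of the universal theory of $\GG$ established in \cite{DL}. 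The hard part is therefore not the argument itself, which is essentially bookkeeping, but keeping careful track of all the elementary transformations occurring along the single edge $v_1\to v$, so that Lemma \ref{le:hom-check} may be applied factor by factor; with this done the proof is complete.
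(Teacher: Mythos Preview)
Your proof is correct and follows essentially the same approach as the paper: the paper declares the combinatorial cases ``obvious'' and reduces Case~1 to Lemma~\ref{le:hom-check}, which is exactly your plan carried out in more detail (you make explicit the factorisation of the edge homomorphism into elementary steps and the observation that a composition of surjections is an isomorphism iff each factor is, points the paper leaves implicit).
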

\begin{proof}
The statement of the lemma is obvious for all cases, except for Case 1.

To decide whether or not the vertex $v$ is of type 1 it suffices to show that there exists an algorithm to check whether or not the canonical epimorphism $\pi(v,u)$ associated to an edge $v \rightarrow u$ in $T(\Omega)$ is a proper epimorphism. This can be done effectively by Lemma \ref{le:hom-check}.
\end{proof}

\begin{lem}[cf. Lemma 3.1, \cite{Razborov3}] \label{3.1}
Let $u\rightarrow v$ be  a principal edge of the tree $T(\Omega)$. Then the following statements hold.
\begin{enumerate}
    \item If $\tp(u)\ne 3,10$, then $n_{A}(\Omega_v) \leq n_{A}(\Omega_u)$, moreover if $\tp(u)=6,7,9,13$, then this inequality is strict;
    \item If $\tp(u)=10$, then $n_{A}(\Omega_v) \leq n_{A}(\Omega_u) + 2$;
    \item If $\tp(u)\leq 13$ and $\tp (u)\ne 3,11$, then $\xi(\Omega_v) \leq \xi(\Omega_u)$;
    \item If $\tp (u) \ne 3$, then $\comp(\Omega_v)  \leq \comp(\Omega_u)$.
\end{enumerate}
\end{lem}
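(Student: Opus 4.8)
The plan is to verify the four inequalities by a case-by-case inspection of the transformations applied at a principal edge, using only the explicit descriptions of the elementary transformations $\ET 1$--$\ET 5$ and the derived transformations $\D 1$--$\D 6$ given in Sections \ref{se:5.1} and \ref{se:5.2half}, together with the description of Cases $1$--$15$ in Section \ref{se:5.2}. The key observation is that each of the four quantities $n_A$, $\xi$, $\rho$, $\comp$ is affected in a controlled way by a \emph{single} elementary or derived transformation, and a principal edge is a finite composition of such transformations; so one tracks how each transformation changes each quantity and then reads off the net effect for every type $\tp(u)\le 15$. Throughout, recall that the types appearing in a principal edge are $u$ with $\tp(u)\in\{3,\dots,15\}$, since vertices of type $1$ and $2$ are leaves.

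For statement (4), the cleanest argument is uniform: none of $\ET 1$, $\ET 2$, $\ET 3$, $\ET 4$, $\ET 5$, nor $\D 1,\D 2,\D 5,\D 6$ \emph{increases} $\comp=\sum_{\sigma\in A\Sigma}\max\{0,n(\sigma)-2\}$, \emph{except} that closing or creating a closed section inside an active section (as in $\D 1$, used in Case $3$) can split the count among more sections in a way that in principle needs care. This is exactly why Case $3$ is excluded in (4): closing a section via $\D 1$ may move bases out of a section and transport a constant section, and the bound fails. For the remaining types one checks: $\ET 1$ only splits a base and may split a section, never raising any $\max\{0,n(\sigma)-2\}$ summand; $\ET 2$ transfers a base within a section (or between dual sections) without changing the total base count in active sections; $\ET 3$, $\ET 4$ remove bases; $\D 5$ (the entire transformation) deletes a section $[1,k+1]$ containing at least the carrier, which can only decrease $\comp$; transporting closed constant sections to $NA\Sigma$ (Cases $4$, $5$, $13$) removes them from $A\Sigma$. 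So $\comp(\Omega_v)\le\comp(\Omega_u)$ whenever $\tp(u)\ne 3$.

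For (1)--(3) one examines the types individually. For $\tp(u)\in\{4,5\}$ the transformation is a transport $\D 2$ (plus possibly $\D 6$, $\ET 2$), which does not change $n_A$ or $\xi$ beyond moving a section out of the active part, so all inequalities are non-strict. For $\tp(u)=6$ the transformation $\ET 3$ removes a pair of matched bases, so $n_A$ drops by $2$ (strict), and $\xi$ cannot increase since no boundaries are created. For $\tp(u)=7$, $\ET 4$ removes a linear base (and its dual) together with a closed section, strictly decreasing $n_A$ and not increasing $\xi$. For $\tp(u)=8$, one does $\ET 5$ then $\ET 1$ then $\ET 4$: the net effect on $n_A$ is a decrease by $2$ (the removed linear base and its dual) minus... here one must be careful, because $\ET 1$ \emph{adds} a base when cutting; but the base being cut, $\mu$, gets one half absorbed by closing $[i,i+1]$ and then deleted by $\ET 4$, so the count works out to $n_A(\Omega_v)\le n_A(\Omega_u)$, and the boundary $i+1$ is $\mu$-tied and then the section removed, so $\xi$ does not grow. (The edge here is non-strict in the statement, so it suffices that there is \emph{no} increase.) For $\tp(u)=9$ the sequence $\ET 5,\ET 2,\ET 4$ removes $\mu_1$ and the closed section $\sigma$ while transferring $\mu_2$, giving a strict decrease in $n_A$; closing boundaries via $\ET 5$ only ties boundaries, not creating new ones, so $\xi$ does not increase. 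For $\tp(u)=10$ one uses $\D 1$ to close $[i,i+1]$ — which involves $\ET 5$ (ties, harmless for $n_A$) and $\ET 1$ (cuts bases containing $i$ or $i+1$, each cut adding one base) — and then $\ET 4$ removes the linear base and the now-closed section; since $h_i$ with $\gamma_i=1$ has both boundaries open, at most two bases get cut, giving $n_A(\Omega_v)\le n_A(\Omega_u)+2$, which is exactly (2); $\xi$ may genuinely increase here, which is why $\tp(u)=10$ is excluded from (3). For $\tp(u)=11$, $\ET 5$ ties a free boundary; this either introduces a boundary connection (no effect on $n_A$ or $\rho$, but $\xi$ unchanged since $i$ was already counted) or a new boundary $q'$ — hence $\xi$ can rise, explaining the exclusion of type $11$ from (3). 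For $\tp(u)\in\{12,13,14,15\}$ one tracks the entire transformation $\D 5$ (removes the section $[1,k+1]$, so $n_A$ does not rise, and in type $13$ a whole section is killed giving a strict drop) and subsequent $\ET 5$-ties (harmless to $n_A$, and since they only tie boundaries that \emph{touch} a base, $\xi$ does not increase); in type $15$ the auxiliary edges of Case $15.1$ are auxiliary, not principal, so they are irrelevant to the lemma.

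The main obstacle is the bookkeeping in Cases $8$, $10$, and $15$, where the transformation is a genuine composition in which $\ET 1$ \emph{creates} bases and boundaries while $\ET 4$ destroys them; one must be precise about exactly how many bases are cut (bounded by the number of bases through the relevant boundary, which the case hypotheses constrain) and confirm that the created pieces are subsequently deleted. Everything else is a routine but lengthy verification of the transformation descriptions. The proof will conclude by noting that each step is effective, matching the way the lemma is used downstream.
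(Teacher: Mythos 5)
Your overall strategy (a case-by-case audit of how each elementary and derived transformation changes $n_A$, $\xi$ and $\comp$) is exactly what the paper intends — its proof is literally the words ``straightforward verification''. However, your execution of statements (3) and especially (4) rests on per-transformation monotonicity claims that are false, and this is a genuine gap rather than bookkeeping. When $\ET 1$ cuts a base $\lambda$ it also cuts the dual $\Delta(\lambda)$, so the closed section containing $\Delta(\lambda)$ gains one base and in general does \emph{not} split (the new boundary on the dual side typically still intersects other bases); hence a single $\ET 1$ can raise $\comp$ by $1$. Likewise $\ET 2$ moves a base from $\mu$'s section into $\Delta(\mu)$'s section and can raise $\comp$, and $\ET 5$ with a newly introduced boundary creates a new \emph{open} boundary inside $\Delta(\mu)$, so a single $\ET 5$ can raise $\xi$. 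So your uniform argument ``none of the transformations increases $\comp$, except $\D 1$ in Case 3'' does not work; Case 3 is not the only place where a local increase occurs, it is only the place where no compensation is available.

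What the verification actually requires is the case-level compensation that your sketch omits. For instance, in Case 10 the dual side gains one base without splitting, and the offset comes from the carrier's side: the section containing $\mu$ splits at the closed boundaries $i$, $i+1$ into two closed sections, and each of them contains at least two bases — this uses the structural fact that in a type-$10$ equation every item with $\gamma_i=1$ has both boundaries open, which forces the first and last items of $\mu$ to be covered at least twice — so that summand drops by one and $\comp$ does not increase. Similarly, in Cases 12, 13, 15 the transfer bases moved into $\Delta(\mu)$'s section raise that section's count, and this is offset by the deletion of $[1,k+1]$ from the carrier's section (for Case 15 one also needs that some base crosses $\beta(\mu)$, i.e.\ that Case 13 does not apply, to guarantee the carrier's section still has enough bases to absorb the loss). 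For (3), the correct reason in Cases 8--13 is not that ties are ``harmless'', but that each new open boundary created on the dual side is matched by a tied open boundary that is afterwards deleted together with the removed section (or becomes closed), as in $\D 5$ where every tied boundary lies in $[1,k+1]$. A final minor point: edges out of vertices of types 4 and 5 are auxiliary, not principal, so they need not be considered at all.
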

\begin{proof}
Straightforward verification.
\end{proof}

The following lemma gives a description of the infinite branches in the tree $T(\Omega)$.

\begin{lem}[cf. Lemma 3.2, \cite{Razborov3}] \label{3.2} Let
\begin{equation}\label{eq:path}
v_0\rightarrow v_1\rightarrow\ldots \rightarrow v_r \rightarrow \ldots
\end{equation}
be an infinite path in the tree $T(\Omega )$. Then there exists a natural number $N$ such that all the edges
$v_n \rightarrow v_{n+1}$ of this path with $n \geq N$ are principal edges, and one of the following conditions holds:
$$
\begin{array}{crl}
  (A) & \hbox{linear case: } & 7\leq \tp(v_n)\leq 10 \hbox{ for all } n\ge N;\\
  (B) & \hbox{quadratic case: } & \tp(v_n)=12 \hbox{ for all } n\ge N; \\
  (C) & \hbox{general case: } & \tp(v_n)=15 \hbox{ for all } n\ge N.
\end{array}
$$
\end{lem}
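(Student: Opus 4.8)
The plan is to analyze an infinite path (\ref{eq:path}) by tracking how the four complexity parameters $n_A$, $\xi$, $\rho$ and $\comp$ evolve along it, exploiting the monotonicity properties established in Lemma \ref{3.1}. First I would dispose of the cases of small type. An infinite path cannot pass through a vertex of type $1$ or $2$ at all (such vertices are leaves), so every $\tp(v_n)\le 15$ and $\ge 3$. Auxiliary edges only appear in Cases $4$, $5$ and $15.1$ and are finite in number at each vertex and do not lie on infinite branches in the way principal edges do; more precisely, after performing the transformations of Cases $3$--$6$ the generalised equation is put in standard form (Remark \ref{rem:stform}) and Cases $3,4,5,6$ each strictly reduce some nonnegative integer parameter (the number of constant bases in the active part for $3,4$; the number of free variables in the active part for $5$; the number of bases $n_A$ for $6$, by Lemma \ref{3.1}(1)). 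Hence types $3,4,5,6$ can occur only finitely often along the path, so after discarding an initial segment we may assume $\tp(v_n)\ge 7$ for all $n\ge N_1$.

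Next I would show types $8$ and $11$ and $14$ also occur only finitely often, and likewise type $13$, so that eventually the path stabilises into one of the three regimes. Type $13$ strictly decreases $n_A$ by Lemma \ref{3.1}(1), and types $8,9,10,7$ involve $\ET 4$ which strictly decreases $|H|$, but since we are arguing at the level of generalised equations (not solutions) I would instead use the following: by Lemma \ref{3.1}, along principal edges with $\tp(u)\ne 3$ the complexity $\comp(\Omega_v)\le\comp(\Omega_u)$ is non-increasing, and $n_A$ is non-increasing except when $\tp(u)=10$ (where it may grow by $2$) or $\tp(u)=3$ (excluded). The key combinatorial point, following Razborov's argument (cf.\ Lemma 3.2 of \cite{Razborov3}), is that Case $10$ cannot occur infinitely often without being compensated: each application of Case $7$ or $9$ strictly decreases $n_A$, each Case $10$ increases it by at most $2$, and linear items produced can be absorbed; a careful bookkeeping shows that in an infinite path the number of type-$10$ vertices is finite unless the path is eventually entirely of types $7$--$10$. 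Similarly, type $11$ strictly decreases $\xi$ (tying a free boundary), so it occurs finitely often; type $14$ strictly decreases $\xi$ as well (tying a boundary in a base it intersects), so it occurs finitely often; type $8$ involves cutting and then $\ET 4$, reducing $n_A$, so it occurs finitely often.

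After these reductions there is $N\ge N_1$ beyond which only types from $\{7,9,10\}$, or only type $12$, or only type $15$ appear, and all edges are principal. It then remains to argue that these three families cannot be \emph{mixed} infinitely often. Here I would use the complexity $\comp$: transitions between the linear regime (types $7$--$10$, where roughly $\comp$ does not increase and $n_A$ eventually stabilises) and the quadratic/general regimes require passing through a vertex where the covering numbers $\gamma_i$ change, and such a transition must go through one of the finitely-occurring types ($11$--$14$ or a decrease of $\comp$). Since $\comp$ is a non-negative integer that is non-increasing along principal edges with $\tp\ne 3$, it eventually stabilises; once $\comp$ is constant and types $3,\ldots,6,8,11,13,14$ are exhausted, the generalised equation's ``shape'' (whether every active item is covered exactly twice, or some are covered more) is locked, which forces exactly one of (A), (B), (C). I would conclude by noting that case (A) is $7\le\tp(v_n)\le 10$, case (B) is $\tp(v_n)=12$ (every active item covered exactly twice, no closed active section triggering $13$, no free boundary triggering $14$), and case (C) is $\tp(v_n)=15$.

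The main obstacle, I expect, is the careful amortised counting in the linear regime showing that type $10$ (the one case where $n_A$ may increase) cannot sabotage termination of the non-linear types: one needs a potential function, essentially a weighted combination of $n_A$, the number of linear items, and $\comp$, that strictly decreases at every vertex of type $3$--$14$ except within a run of types $7$--$10$, and is bounded below. Setting up this potential correctly, and verifying the inequality at each of the fifteen cases by inspecting the definitions in Section \ref{se:5.2}, is the technical heart of the proof; the rest is the routine observation that a non-increasing $\N$-valued sequence stabilises and that the stabilised type is then constrained to the three listed possibilities. This is precisely the content of Lemma 3.2 of \cite{Razborov3} adapted to the present setting, where the only genuinely new ingredient is checking that the relation $\Re_\Upsilon$ and the constrained nature of the equations do not affect any of these parameter estimates — which is immediate since none of the transformations $\ET 1$--$\ET 5$, $\D 1$--$\D 6$ changes the relevant counts of bases, boundaries or complexity beyond what is recorded in Lemma \ref{3.1}.
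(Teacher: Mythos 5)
Your overall strategy (use the monotonicity of Lemma \ref{3.1} to kill the small types, then argue the surviving regimes are the three listed ones) is the right flavour, but several of the specific parameter estimates you rely on are false or absent, and they are exactly the points where the paper has to work. Lemma \ref{3.1}(3) \emph{excludes} type 11 and only covers $\tp(u)\le 13$, and indeed $\ET 5$ may introduce a new boundary, so at a vertex of type 11 or 14 the number $\xi$ of open boundaries can \emph{increase}; your claim that these types strictly decrease $\xi$ is wrong. The strict decrease of $n_A$ holds for types $6,7,9,13$ but not for type 8 (in Case 8 a base is first cut, adding a pair of bases, and then a pair is deleted, so $n_A$ is typically unchanged); type 8 recurs freely in the linear case, so your plan to exclude it is unfounded (harmless for the statement, but symptomatic). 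For type 5 your decreasing quantity (free variables in the active part) is not monotone, because $\ET 3$ (Case 6) creates free items and free boundaries; the paper instead bounds the number of type-5 vertices algebraically: each one adds a free variable in a constant section, so the coordinate group acquires a free factor whose rank is bounded by the number of generators of $\GG_{R(\Upsilon^*)}$, i.e.\ by $\rho+1+|\cA|$. Likewise the strict drop of $n_A$ at type 6 does not by itself bound the type-6 vertices (type 10 can raise $n_A$); the paper excludes type 6 only after $\comp$ has stabilised, since a type-6 vertex whose closed section contains another base would strictly drop $\comp$, and otherwise the next vertex has type 5, already excluded. Type 11 is then finite because, with type 6 gone, no new free boundaries are ever created; and runs of type 14 are bounded by $8n_A^2$, after which a type-15 vertex occurs and, by the description of Case 15, is never followed by type 14.

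Two essential parts of the conclusion are missing altogether. First, the lemma asserts that beyond $N$ all edges are \emph{principal}, yet an infinite path could a priori use infinitely many auxiliary edges coming from Case 15.1; saying such edges ``do not lie on infinite branches'' is just restating the claim. The paper rules them out by letting $\comp$ stabilise and then computing that an auxiliary step gives $\comp(\Omega_{v_{i+1}})\le \comp(\widehat{\Omega}_{v_i})-2=\comp(\Omega_{v_i})-1$, a contradiction. Second, the trichotomy rests on absorption properties that you replace by the unproved assertion that the ``shape is locked'': one must check from the case descriptions that $\tp(v_i)=12$ forces $\tp(v_{i+1})\in\{6,12\}$ and that $13\le\tp(v_i)\le 15$ forces $\tp(v_{i+1})\in\{6,13,14,15\}$, so once type 6 is excluded the quadratic and the $13$--$15$ regimes are absorbing, and what remains is either staying in $\{7,\ldots,10\}$ forever or case (B)/(C). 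Finally, the ``potential function'' you defer to as the technical heart is not needed for this lemma at all: the growth of $n_A$ at type-10 vertices is harmless here, since case (A) is simply that the type stays in $\{7,\ldots,10\}$; a bound on $n_A$ in the linear regime is only required later, in Lemma \ref{3.3}. As written, the proposal does not go through.
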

\begin{proof}
Firstly, note that Cases 1 and 2 can only occur once in the path (\ref{eq:path}).

Secondly, note Cases 3 and 4 can occur only finitely many times in the path (\ref{eq:path}), namely, at most $2t$ times where $t$ is the number of constant bases in the original generalised equation $\Omega$. Therefore, there exists a natural number $N_1$ such that $\tp(v_i)\geq 5$ for all $i \geq N_1$.

Now we show that the number of vertices $v_i$ ($i \geq N$) for which $\tp (v_i)=5$ is bounded above by the minimal number of generators of the group $\GG_{R(\Upsilon^\ast)}$, in particular, it cannot be greater than $\rho +1 +|\cA|$, where $\rho =  \rho_\Upsilon$. Indeed, if a path from the root $v_0$ to a vertex $v$ contains $k$ vertices of type 5, then $\Upsilon_v$ has at least $k$ free variables in the non-active part. This implies that the coordinate group $\GG_{R(\Upsilon _v^\ast)}$ has the free group of rank $k$ as a free factor, hence it cannot be generated by less than $k$ elements. Since $\pi'(v_0,v): \GG_{R(\Upsilon^\ast )} \to \GG_{R(\Upsilon _v^\ast)}$ is a surjective homomorphism, the group $\GG_{R(\Upsilon^\ast)}$ cannot be generated by less then $k$ elements. This shows that $k \leq \rho +1 +|\cA|$.  It follows that there exists  a number $N_2 \geq N_1$ such that $\tp(v_i) >5$ for every $i \geq N_2$.

Since the path (\ref{eq:path}) is infinite, by Lemma \ref{3.1}, we may assume that for every $i,j\ge N_3$  one has $\comp(\Omega_{v_i})=\comp(\Omega_{v_j})$, i.e. the complexity stabilises.

If $v_i \rightarrow v_{i+1}$ is  an auxiliary edge, where $i \geq N_3$, then $\tp(v_i) = 15$ and $\Omega_{v_i}$ satisfies the assumption of Case 15.1. In the notation of Case 15.1, we analyse the complexity of the generalised equation $\Omega_{v_{i+1}}$ obtained from ${\widehat{\Omega}}_{v_i}$. As both bases $\mu$ and $\Delta (\mu)$ are transferred from the carrier base $\lambda$ of ${\widehat{\Omega}}_{v_i}$ to the  non-active part, so the complexity decreases by at least two, i.e. $\comp(\Omega_{v_{i+1}}) \leq \comp(\widehat{\Omega}_{v_i}) - 2$. Observe also that $\comp ({\widehat{\Omega}_{v_i}}) = \comp(\Omega_{v_i}) + 1$. Hence $\comp(\Omega_{v_{i+1}}) <  \comp(\Omega_{v_i})$, which derives a contradiction. Hence, if $i \geq N_3$ the edge $v_i\to v_{i+1}$ is principal.

Suppose that $i \ge N_3$. If $\tp(v_i)=6$, then the closed section containing the matched bases $\mu, \Delta (\mu)$, does not contain any other bases (otherwise $\comp(\Omega_{v_{i+1}})<\comp(\Omega_{v_{i}})$). But then $\tp(v_{i+1})=5$ deriving a contradiction. We therefore get that for all $i \ge N_3$ one has $\tp(v_i) >6$.

If $\tp(v_i)=12$, then it is easy to see that $\tp(v_{i+1})=6$ or $\tp(v_{i+1})=12$. Therefore, if $i\ge N_3$ we get that $\tp(v_{i+1})=\tp(v_{i+2})= \dots = \tp(v_{i+j}) = 12$ for every $j>0$ and we have case (B) of the lemma.

Suppose that $\tp(v_{i})\neq 12$ for all $i \geq N_3$.

Notice that the only elementary (derived) transformation that may produce free boundaries is $\ET 3$ and that $\ET 3$ is applied only in Case 6. Since for $i \geq N_3$ we have $\tp(v_i)\ge 7$, we see that there are no new free boundaries in the generalised equations $\Omega_{v_i}$ for $i \geq N_3$. It follows that there exists a number $N_4 \geq N_3$ such that $\tp(v_i)\neq 11$ for every $i \geq N_4$.

Suppose now that for some $i \geq N_4$,  $13\leq \tp(v_i)\leq 15$. It is easy to see from the description of Cases 13, 14 and 15 that $\tp(v_{i+1})\in\{6,13,14,15\}$. Since $\tp(v_{i+1})\neq 6$, this implies that $13\leq \tp(v_j)\leq 15$ for every $j \geq i$. Then, by Lemma \ref{3.1}, we have that $n_A(\Omega_{v_j})\le n_A(\Omega_{v_{N_4}})$, for every $j \geq N_4$. Furthermore, if $\tp (v_j)=13$, then $n_A(\Omega_{v_{j+1}}) <  n_A(\Omega_{v_j})$. Hence there exists a number $N_5 \geq N_4$ such that $\tp (v_j)\ne 13$ for all $j \geq N_5$.

Suppose that $i \geq N_5$. Note that there can be at most $8(n_A(\Omega_{v_i}))^2$ vertices of type 14 in a row starting at the vertex $v_i$, hence there exists $j\geq i$ such that $\tp(v_j)=15$. From the description of Case 15 it follows that $\tp (v_{j+1})\ne 14$, thus $\tp(v_{j+1})=15$, and we have case (C) of the lemma.

Finally, we are left with the case when $7\le \tp (v_i)\leq 10$ for all the vertices of the path. We then have case (A) of the lemma.
\end{proof}

\begin{rem}\label{rem:leng<}
Let
$$
(\Omega_{v_1},H^{(1)})\to (\Omega_{v_2},H^{(2)})\to \dots \to (\Omega_{v_l},H^{(l)})
$$
be the path defined by the solution $H^{(1)}$. If $ \tp(v_i)\in \{7,8,9,10,12,15\}$, then by {\rm Lemma \ref{lem:solleng}},  $|H^{(i+1)}|<|H^{(i)}|$.
\end{rem}

\section{Minimal solutions} \label{sec:minsol}

In this section we introduce a reflexive, transitive relation on the set of solutions of a generalised equation. We use this relation to introduce the notion of a minimal solution with respect to a group of automorphisms of the coordinate group of the generalised equation. In the end of the section we describe the behaviour of minimal solutions with respect to the elementary transformations.

Let $u$ and $v$ be two geodesic words from $\GG$. Consider the product $uv$ of $u$ and $v$. The geodesic $\ov{uv}$ of the element $uv$ can be written as follows $\ov{uv}\doteq u_1 \cdot v_2$, where $u=u_1d^{-1}$, $v= dv_2$ and $d$ is the greatest (left) common divisor of $u^{-1}$ and $v$ (in the sense of \cite{EKR}). Notice that if the word $uv$ is not geodesic, then it follows that $d$ is non-trivial. We call $d$ the \index{cancellation divisor}\emph{cancellation divisor of $u$ and $v$} and we denote it by \glossary{name={$\nfd(u,v)$}, description={the cancellation divisor of $u$ and $v$}, sort=C}$\nfd(u,v)$.

\begin{defn}\label{defn:sol<}
Let $\GG(\cA\cup B)$ be a partially commutative $\GG$-group, let $\Omega$  be a generalised equation with coefficients from $(\cA\cup B)^{\pm 1}$ over the free monoid $\FF(\cA^{\pm 1}\cup B^{\pm 1})$. Let $\BB(\Omega)$ be an arbitrary  group of $\GG$-automorphisms of $\factor{\GG(\cA \cup B)[h]}{R(\Omega^*)}$. For solutions  $H^{(1)}$ and $H^{(2)}$ of the generalised equation $\Omega$ we write \glossary{name={`$<_{\BB(\Omega)}$'}, description={reflexive, transitive relation on the set of solutions of a generalised equation}, sort=Z}$H^{(1)}<_{\BB(\Omega)} H^{(2)}$ if there exists a $\GG$-endomorphism $\pi$ of the group $\GG(\cA \cup B)$ and an automorphism $\sigma\in \BB(\Omega )$  such that the following conditions hold:
\begin{enumerate}
\item \label{it:minsol1} $\pi_{H^{(2)}}=\sigma\pi_{ H^{(1)}}\pi$;
\item \label{it:minsol3} for any $k,j$, $1\le k,j\le \rho$, if the word ${(H_k^{(2)})}^{\epsilon}{(H_j^{(2)})}^{\delta}$, $\epsilon,\delta \in \{1, -1\}$, is geodesic as written, then the word ${(H_k^{(1)})}^{\epsilon}{(H_j^{(1)})}^{\delta}$ is geodesic as written;
\item \label{it:minsol4} for any $k,j$, $1\le k,j\le \rho$ such that the word ${(H_k^{(2)})}^{\epsilon}{(H_j^{(2)})}^{\delta}$, $\epsilon,\delta \in \{1, -1\}$, is not geodesic, if
\begin{equation}\label{eq:minsol1}
    \BA(J)\cap \left\{a\in\cA^{\pm 1}\mid a \hbox{ is a left divisor of } \nfd\left({{(H_k^{(2)})}^{\epsilon}},{(H_j^{(2)})}^{\delta}\right)\right\}=\emptyset,
\end{equation}
for some $J=\{H_{i_1}^{(2)},\dots, H_{i_t}^{(2)}\}$, then either the word ${(H_k^{(1)})}^{\epsilon}{(H_j^{(1)})}^{\delta}$ is geodesic as written or
\begin{equation}\label{eq:minsol2}
    \BA(J')\cap \left\{a\in\cA^{\pm 1}\mid a \hbox{ is a left divisor of } \nfd\left({{(H_k^{(1)})}^{\epsilon}}, {(H_j^{(1)})}^{\delta}\right)\right\}=\emptyset,
\end{equation}
where $J'=\{H_{i_1}^{(1)},\dots, H_{i_t}^{(1)}\}$.
\end{enumerate}
\end{defn}
Obviously, the relation `$<_{\BB(\Omega )}$' is transitive. We would like to draw the reader's attention to the fact that the relation $H<_{\BB(\Omega)}H'$ does not imply relations on the lengths of the solutions $H$ and $H'$.

The motivation for property (\ref{it:minsol4}) in the above definition is the following lemma.

\begin{lem}\label{lem:minsol}
Let $\GG(\cA\cup B)$ be a partially commutative $\GG$-group, let $\Omega$  be a generalised equation with coefficients from $(\cA\cup B)^{\pm 1}$ over the free monoid $\FF(\cA^{\pm 1}\cup B^{\pm 1})$. Let $\BB(\Omega)$ be an arbitrary  group of $\GG$-automorphisms of $\factor{\GG(\cA \cup B)}{R(\Omega^*)}$ and let  $H^{(1)}$ and $H^{(2)}$ be two solutions of the generalised equation $\Omega$ such that $H^{(1)}<_{\BB(\Omega)} H^{(2)}$. Then for any word $W(x_1,\dots,x_\rho)\in F(x_1,\dots,x_\rho)$ such that the $W(H^{(2)}_1,\dots,H^{(2)}_\rho)$ is geodesic as written {\rm(}treated as an element of $\GG(\cA\cup B)${\rm)}, the word $W(H^{(1)}_1,\dots,H^{(1)}_\rho)$ is geodesic as written {\rm(}treated as an element of $\GG(\cA\cup B)${\rm)}.
\end{lem}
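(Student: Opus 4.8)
The statement is essentially a bootstrapping of conditions (\ref{it:minsol3}) and (\ref{it:minsol4}) of Definition \ref{defn:sol<} from single products $x_k^{\epsilon}x_j^{\delta}$ to arbitrary words $W$. The plan is to induct on the length of $W$ (as a word in $F(x_1,\dots,x_\rho)$). First I would normalise: write $W$ as a reduced word $x_{i_1}^{\epsilon_1}\cdots x_{i_m}^{\epsilon_m}$, and write $W^{(2)}=W(H^{(2)})$, $W^{(1)}=W(H^{(1)})$ for brevity. The hypothesis is that $W^{(2)}$ is geodesic as written, meaning $|W^{(2)}|=\sum_t|H^{(2)}_{i_t}|$; we must show the same for $W^{(1)}$. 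The base case $m=1$ is trivial, and the case $m=2$ is exactly condition (\ref{it:minsol3}).

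\textbf{Inductive step.} Suppose $m\ge 3$. Since $W^{(2)}$ is geodesic as written, every subword is geodesic as written, so in particular $U^{(2)}:=H^{(2)}_{i_1}{}^{\epsilon_1}\cdots H^{(2)}_{i_{m-1}}{}^{\epsilon_{m-1}}$ is geodesic as written and $H^{(2)}_{i_{m-1}}{}^{\epsilon_{m-1}}H^{(2)}_{i_m}{}^{\epsilon_m}$ is geodesic as written. By the inductive hypothesis, $U^{(1)}:=H^{(1)}_{i_1}{}^{\epsilon_1}\cdots H^{(1)}_{i_{m-1}}{}^{\epsilon_{m-1}}$ is geodesic as written; and by (\ref{it:minsol3}) the word $H^{(1)}_{i_{m-1}}{}^{\epsilon_{m-1}}H^{(1)}_{i_m}{}^{\epsilon_m}$ is geodesic as written. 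It remains to rule out cancellation between $U^{(1)}$ and $H^{(1)}_{i_m}{}^{\epsilon_m}$, i.e. to show $\nfd(U^{(1)},H^{(1)}_{i_m}{}^{\epsilon_m})=1$. The key point is a structural fact about geodesics in $\GG$: since there is no cancellation between $U^{(2)}$ and $H^{(2)}_{i_m}{}^{\epsilon_m}$, the right divisor of $U^{(2)}$ that "sees" $H^{(2)}_{i_m}{}^{\epsilon_m}$ is already contained in the last letter block $H^{(2)}_{i_{m-1}}{}^{\epsilon_{m-1}}$ — more precisely, the set of letters of $U^{(2)}$ that could cancel into $H^{(2)}_{i_m}{}^{\epsilon_m}$ is governed by $\az$ and $\BA$ of the pieces, and the absence of cancellation between the last two factors together with the geodesy of the whole word forces $\nfd(U^{(2)},H^{(2)}_{i_m}{}^{\epsilon_m})=1$ to be "witnessed locally". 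This is where I expect the main work to lie: I would extract, from the results on geodesics in partially commutative groups cited as \cite{EKR}, the statement that $\nfd(uv,w)=1$ whenever $uv$ and $vw$ are geodesic as written and $uvw$ is geodesic as written — equivalently, that non-cancellation propagates through consecutive pieces. Granting this, apply it with $u=H^{(2)}_{i_1}{}^{\epsilon_1}\cdots H^{(2)}_{i_{m-2}}{}^{\epsilon_{m-2}}$, $v=H^{(2)}_{i_{m-1}}{}^{\epsilon_{m-1}}$, $w=H^{(2)}_{i_m}{}^{\epsilon_m}$ for the level-$2$ solution, and then transfer the conclusion to the level-$1$ solution using (\ref{it:minsol3}) for the pair $(i_{m-1},i_m)$ together with the inductive hypothesis for $U^{(1)}=uv$ (at level $1$) — invoking condition (\ref{it:minsol4}) with $J$ the empty set (for which $\BA(\emptyset)$ meets nothing, so (\ref{eq:minsol1}) holds vacuously for $H^{(2)}$, forcing (\ref{eq:minsol2}) or geodesy for $H^{(1)}$).

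\textbf{Main obstacle.} The delicate part is precisely the "propagation of non-cancellation" lemma in $\GG$: in a free group this is immediate (no cancellation at one interface plus geodesy forces no cancellation further along), but in a partially commutative group letters can commute past each other, so a letter buried deep in $u$ may slide to the right and cancel against $w$ even though $v$ itself does not cancel against $w$. However, if $uvw$ is geodesic \emph{as written}, then in particular $uw$ (after deleting the geodesic $v$ — which one cannot literally do) ... the correct formalisation is: geodesy of $uvw$ as written means $|uvw|=|u|+|v|+|w|$, and then any letter of $u$ cancelling into $w$ would have to commute past all of $v$, contradicting $|uv\,w|$ being additive unless that letter also cancels — a careful length/multiset argument using $\az$ and the commutation graph finishes it. I would isolate this as a preliminary claim, prove it by induction on $|v|$ reducing to the single-letter case handled in \cite{EKR}, and then the inductive step above goes through cleanly. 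Finally, note that conditions (\ref{it:minsol1}) is not needed for this lemma — only (\ref{it:minsol3}) and (\ref{it:minsol4}) — which I would remark on at the end.
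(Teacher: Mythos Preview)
Your inductive setup is right, and you correctly locate the obstacle: the cancellation in $U^{(1)}\cdot H^{(1)}_{i_m}{}^{\epsilon_m}$ need not be between the \emph{adjacent} pieces $H^{(1)}_{i_{m-1}}$ and $H^{(1)}_{i_m}$ --- a letter of $H^{(1)}_{i_1}$ may commute past all of $H^{(1)}_{i_2},\dots,H^{(1)}_{i_{m-1}}$ and cancel into $H^{(1)}_{i_m}$. But your proposed remedy does not close this gap. Your ``propagation of non-cancellation'' lemma lives entirely at the $H^{(2)}$ level, where geodesy is already given by hypothesis; it yields no information at the $H^{(1)}$ level, which is where the work is. And invoking condition~(\ref{it:minsol4}) with $J=\emptyset$ is a mistake: $\BA(\emptyset)$ is the intersection over an empty family, hence all of $\GG$, so (\ref{eq:minsol1}) \emph{fails} (not holds) whenever the cancellation divisor is nontrivial, and the implication in~(\ref{it:minsol4}) is vacuous. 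Finally, the pair $(i_{m-1},i_m)$ is the wrong one to feed into~(\ref{it:minsol3})--(\ref{it:minsol4}): by induction there is no cancellation there at either level.

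The paper's argument works at the $H^{(1)}$ level and uses the \emph{non-adjacent} pair $(i_1,i_m)$ together with $J'=\{H^{(1)}_{i_2},\dots,H^{(1)}_{i_{m-1}}\}$, which is precisely what condition~(\ref{it:minsol4}) was designed for. Assume $W(H^{(1)})$ is not geodesic; by induction both $H^{(1)}_{i_1}{}^{\epsilon_1}\cdots H^{(1)}_{i_{m-1}}{}^{\epsilon_{m-1}}$ and $H^{(1)}_{i_2}{}^{\epsilon_2}\cdots H^{(1)}_{i_m}{}^{\epsilon_m}$ are geodesic. Analyzing the cancellation divisor $D=\nfd(U^{(1)},H^{(1)}_{i_m}{}^{\epsilon_m})$ (using the divisibility theory of \cite{EKR}) shows that $D\lra H^{(1)}_{i_t}$ for all $2\le t\le m-1$ and that $H^{(1)}_{i_1}{}^{\epsilon_1}H^{(1)}_{i_m}{}^{\epsilon_m}$ is not geodesic. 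Thus (\ref{eq:minsol2}) fails for this $J'$. By~(\ref{it:minsol3}), $H^{(2)}_{i_1}{}^{\epsilon_1}H^{(2)}_{i_m}{}^{\epsilon_m}$ is not geodesic, so the contrapositive of~(\ref{it:minsol4}) forces (\ref{eq:minsol1}) to fail for $J=\{H^{(2)}_{i_2},\dots,H^{(2)}_{i_{m-1}}\}$: there is a letter left-dividing $\nfd(H^{(2)}_{i_1}{}^{\epsilon_1},H^{(2)}_{i_m}{}^{\epsilon_m})$ and lying in $\BA(J)$. Such a letter slides past the middle pieces and cancels, contradicting geodesy of $W(H^{(2)})$.
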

\begin{proof}
We use induction on the length of $W$. If $|W|=1$, then the statement follows, since by the definition of a solution of the generalised equation, the words $(H^{(1)}_j)^{\pm 1}$ and $(H^{(2)}_j)^{\pm 1}$ are both geodesic.

Let $W(x_1, \dots, x_\rho)= x_{l_1}^{\epsilon_1}\cdots x_{l_{n+1}}^{\epsilon_{n+1}}$, $\epsilon_i\in \{-1,1\}$, $1\le i\le n+1$, be a word such that $W(H^{(2)}_1,\dots,H^{(2)}_\rho)$ is geodesic and assume that $W(H^{(1)}_1,\dots,H^{(1)}_\rho)$ is not geodesic. By induction hypothesis, the word $W_n(H^{(1)})={H_{l_1}^{(1)}}^{\epsilon_1}\cdots {H_{l_{n}}^{(1)}}^{\epsilon_{n}}$ is geodesic. Consider the cancellation divisor $D=\nfd\left(W_n(H^{(1)}),{H_{l_{n+1}}^{(1)}}^{\epsilon_{n+1}}\right)$. Analogously, by induction hypothesis, the word ${H_{l_2}^{(1)}}^{\epsilon_2}\cdots {H_{l_{n+1}}^{(1)}}^{\epsilon_{n+1}}$ is geodesic. Using the fact that the word $W_n(H^{(1)})$ is geodesic, we get that the cancellation divisor $\nfd\left({H_{l_1}^{(1)}}^{\epsilon_1},{H_{l_2}^{(1)}}^{\epsilon_2}\cdots {H_{l_{n+1}}^{(1)}}^{\epsilon_{n+1}}\right)$ is, in fact, $D$, and, in particular, no left divisor of the word ${H_{l_2}^{(1)}}^{\epsilon_2}\cdots {H_{l_{n}}^{(1)}}^{\epsilon_{n}}$ left-divides $D$. We thereby have that
\begin{enumerate}
    \item[(a)] on one hand, the word ${H_{l_1}^{(1)}}^{\epsilon_1}D$ is not geodesic and,
    \item[(b)] on the other hand, by Proposition 3.18 of \cite{EKR}, that $D \lra {H_{l_{2}}^{(1)}}^{\epsilon_{2}}\cdots{H_{l_{n}}^{(1)}}^{\epsilon_{n}}$.
\end{enumerate}

From (a) we get that the word ${H_{l_1}^{(1)}}^{\epsilon_1}{H_{l_{n+1}}^{(1)}}^{\epsilon_{n+1}}$ is not geodesic, and thus by property (\ref{it:minsol3}) from Definition \ref{def:minsol}, it follows that the word ${H_{l_1}^{(2)}}^{\epsilon_1}{H_{l_{n+1}}^{(2)}}^{\epsilon_{n+1}}$ is not geodesic.

From (b) and the fact that the word ${H_{l_{2}}^{(1)}}^{\epsilon_{2}}\cdots{H_{l_{n}}^{(1)}}^{\epsilon_{n}}$ is geodesic, we have that $D\in \BA({H_{l_i}^{(1)}})$ for every $i=2,\dots, n$. It follows that Equation (\ref{eq:minsol2}) fails for the set $J'=\{H_{l_2}^{(1)},\dots, {H_{l_{n}}^{(1)}}\}$ and thus Equation (\ref{eq:minsol1}) fails for the set $J=\{H_{l_2}^{(2)},\dots, {H_{l_{n}}^{(2)}}\}$. This derives a contradiction with the fact that ${H_{l_1}^{(2)}}^{\epsilon_1}\cdots {H_{l_{n+1}}^{(2)}}^{\epsilon_{n+1}}$ is geodesic, since there exists a letter $a\in \cA^{\pm 1}$ such that $a$ is a divisor of $\nfd\left({H_{l_1}^{(2)}}^{\epsilon_1},{H_{l_{n+1}}^{(2)}}^{\epsilon_{n+1}}\right)$ and $a \in \BA(J)$.
\end{proof}

\begin{lem}\label{lem:liftsol}
Let the generalised equation $\Omega_1$ be obtained from the generalised equation $\Omega$ by one of the elementary transformation $\ET 1-\ET 5$, i.e. $\Omega_1\in \ET i(\Omega)$ for some $i=1,\dots, 5$. Let $H$ be a solution of $\Omega$ and $H^{(1)}$ be a solution of $\Omega_1$ so that the following diagram commutes
$$
\xymatrix@C3em{
 \GG_{R(\Omega^\ast)}  \ar[rd]_{\pi_H} \ar[rr]^{\theta}  &   &\GG_{R(\Omega_1^\ast )} \ar[ld]^{\pi_{H^{(1)}}} \\
                               &  \GG &}
$$
Let ${H^{(1)}}^+$ be another solution of $\Omega_1$ so that ${H^{(1)}}^+<_{\BB(\Omega)}{H}^{(1)}$, where ${\BB(\Omega)}$ is arbitrary. The $\rho$-tuple $H^+$ of geodesic words of $\GG$ defined by the homomorphism $\pi_{H^+}=\theta \pi_{{H^{(1)}}^+}$ is a solution of the generalised equation $\Omega$.
\end{lem}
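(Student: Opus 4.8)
The plan is to reduce everything to Lemma~\ref{lem:minsol} together with a case-by-case inspection of the five elementary transformations. First, note that $\pi_{H^+}=\theta\,\pi_{{H^{(1)}}^+}$ is a composition of $\GG$-homomorphisms, hence a $\GG$-homomorphism $\GG_{R(\Omega^\ast)}\to\GG$; consequently the $\rho_\Omega$-tuple $H^+$ already satisfies all equations of $\Omega^\ast$ in $\GG$, i.e.\ $L_i(H^+)=R_i(H^+)$ in $\GG$ for every $i$ (and, if $\Omega$ carries a relation, $[H^+_k,H^+_j]=1$ in $\GG$ whenever $\Re_\Upsilon(h_k,h_j)$). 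So what remains is to verify that the components $H^+_j$ are non-empty geodesic words, that the words $L_i(H^+),R_i(H^+)$ are geodesic, that the equalities $L_i(H^+)=R_i(H^+)$ actually hold in the free monoid $\FF$, and — in the constrained case — that $H^+_k\lra H^+_j$ whenever $\Re_\Upsilon(h_k,h_j)$.

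For the first two points I would invoke Lemma~\ref{lem:indETepi}, which supplies a homomorphism $\tilde\theta\colon F[h_1,\dots,h_{\rho_\Omega}]\to F[h_1,\dots,h_{\rho_{\Omega_1}}]$ inducing $\theta$. Writing $W(K)$ for the word obtained by substituting a tuple $K$ into a group word $W$ and concatenating in $\FF$, the commutativity of the hypothesis diagram gives $H^+_j=\tilde\theta(h_j)\big({H^{(1)}}^+\big)$ and $H_j=\tilde\theta(h_j)\big(H^{(1)}\big)$, and, setting $\tilde L_i=L_i(\tilde\theta(h_1),\dots,\tilde\theta(h_{\rho_\Omega}))$ and $\tilde R_i$ similarly, also $L_i(H^+)=\tilde L_i\big({H^{(1)}}^+\big)$, $L_i(H)=\tilde L_i\big(H^{(1)}\big)$, and likewise for $R_i$. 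From the explicit form of $\tilde\theta$ for $\ET 1$–$\ET 5$, each $\tilde\theta(h_j)$ is a product of at least one variable, so $H^+_j$ is non-empty. Since $H$ is a solution of $\Omega$, the words $\tilde\theta(h_j)(H^{(1)})=H_j$, $\tilde L_i(H^{(1)})=L_i(H)$ and $\tilde R_i(H^{(1)})=R_i(H)$ are geodesic as written; applying Lemma~\ref{lem:minsol} to the generalised equation $\Omega_1$ and to the relation ${H^{(1)}}^+<_{\BB(\Omega)}H^{(1)}$ — here Lemma~\ref{lem:minsol} is used for an arbitrary group of $\GG$-automorphisms, which is exactly what is available since the relation in fact refers to the automorphism group of $\GG_{R(\Omega_1^\ast)}$ — and allowing the words to contain coefficients (each a geodesic word of length one, which does not affect the trivial base of the induction in Lemma~\ref{lem:minsol}), we conclude that $H^+_j$, $L_i(H^+)$ and $R_i(H^+)$ are all geodesic.

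The crux is the passage from equality in $\GG$ to equality in $\FF$, and this is where I would go through $\ET 1$–$\ET 5$ one at a time. For $\ET 1$, $\ET 2$, $\ET 3$ and the first sub-case of $\ET 5$ the map $\tilde\theta$ is the identity and $\Omega,\Omega_1$ share their variables; one checks that, as a graphical identity over $\FF$, each basic, coefficient or boundary equation of $\Omega$ is a consequence of those of $\Omega_1$ — for $\ET 3$ it collapses to a tautology, for $\ET 1$ it is the $\FF$-concatenation of the equations for the two halves of the cut base (with the appropriate orientations according to $\varepsilon$), for $\ET 2$ it is recovered from the equation for the transferred base and the relevant $\mu$-boundary equations, and for the first sub-case of $\ET 5$ it is literally among the equations of $\Omega_1$. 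For $\ET 4$ and the second sub-case of $\ET 5$ one substitutes $\tilde\theta(h_j)$ into an equation of $\Omega$ and telescopes along the $\mu$-boundary connections (respectively, splits the item $h_q$ at the new boundary); this turns each equation of $\Omega$ into an equation of $\Omega_1$ or into a tautology over $\FF$. Hence $\tilde L_i=\tilde R_i$ holds over $\FF$ for every solution of $\Omega_1$, in particular for ${H^{(1)}}^+$, so $L_i(H^+)=R_i(H^+)$ in $\FF$; the $\lra$-constraints are handled in the same spirit, using that $\Re_{\Upsilon_1}$ contains the pull-back of $\Re_\Upsilon$ and satisfies condition~($\star$) of Definition~\ref{defn:Re}. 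Putting these together, $H^+$ is a solution of $\Omega$. The \textbf{main obstacle} is precisely this last inspection: the conclusion fails for arbitrary epimorphisms of coordinate groups and relies on $\Omega_1$ being obtained from $\Omega$ by a \emph{single} elementary transformation, so that the relevant word identities over $\FF$ match up after the indicated cancellations — this is also why the lemma is stated for elementary, and not derived, transformations.
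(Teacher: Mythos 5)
Your proof is correct and follows essentially the same route as the paper's: geodesicity of the recombined words is deduced from Lemma \ref{lem:minsol} applied to the relation ${H^{(1)}}^+<_{\BB(\Omega)}H^{(1)}$, the graphical equalities over $\FF$ come from the fact that ${H^{(1)}}^+$ is itself a solution of $\Omega_1$, and the remainder is a case-by-case inspection of $\ET 1$--$\ET 5$. The paper writes out only the $\ET 1$ case and leaves the others to the reader, so your uniform bookkeeping via the map $\tilde\theta$ of Lemma \ref{lem:indETepi} is merely a more systematic presentation of the same argument, not a different one.
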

\begin{proof}
Proof is by examination of the definitions of elementary transformations. We further use the notation introduced in the definitions of elementary transformations.

Suppose first that $\Omega_1$ is obtained from $\Omega$ by $\ET 1$.  Since, in this case, $H^+={H^{(1)}}^+$, we have that $H^+$ is a tuple of non-trivial geodesic words in $\GG$. The elementary transformation $\ET 1$ is invariant on all the bases but the pair $\lambda$, $\Delta(\lambda)$. Therefore, we are left to prove that the words $H^+(\lambda)$ and $H^+(\Delta(\lambda))$ are geodesic and that the equality ${H^+(\lambda)}^{\varepsilon(\lambda)}= {H^+(\Delta(\lambda))}^{\varepsilon(\Delta(\lambda))}$ is graphical.

Since $H(\lambda)=H^{(1)}[\alpha(\lambda_1), \beta(\lambda_2)]$ is a geodesic word and ${H^{(1)}}^+<_{\BB(\Omega)}{H}^{(1)}$, by Lemma \ref{lem:minsol} it follows that $H^+(\lambda)={H^{(1)}}^+[\alpha(\lambda_1), \beta(\lambda_2)]$ is geodesic. Similarly, $H^+(\Delta(\lambda))$ is also geodesic. Since
${H^{(1)}}^+$ is a solution of $\Omega_1$, we have that
$$
{{H^{(1)}}^+(\lambda_i)}^{\varepsilon(\lambda_i)}\doteq {{H^{(1)}}^+(\Delta(\lambda_i))}^{\varepsilon(\Delta(\lambda_i))},\ \hbox{  where $i=1,2$.}
$$
From the equalities
$$
H^+(\lambda)={H^{(1)}}^+(\lambda_1){H^{(1)}}^+(\lambda_2) \ \hbox{ and } \ H^+(\Delta(\lambda))={H^{(1)}}^+(\Delta(\lambda_1)){H^{(1)}}^+(\Delta(\lambda_2)),
$$
and the fact that the words $H^+(\lambda)$ and $H^+(\Delta(\lambda))$ are geodesic, we obtain that the equality ${H^+(\lambda)}^{\varepsilon(\lambda)}= {H^+(\Delta(\lambda))}^{\varepsilon(\Delta(\lambda))}$ is graphical.

The other cases are similar and left to the reader.
\end{proof}

\begin{defn}\label{defn:nfmatrix}
Let $\Omega$ be a generalised equation in $\rho$ variables and let $H$ be a solution of $\Omega$.  Consider a $2\rho \times 2\rho$ matrix $(m_{i_1,i_2})$, $1\le i_1,i_2\le 2\rho$ constructed by the solution $H$ in the following way. The elements $m_{i_1,i_2}$ of the matrix are $2^{\rho}+1$-vectors with entries from the set $\{0,1\}$. We enumerate (in an arbitrary way) the set  $\chi(\{H_1, \dots, H_\rho\})$ of all subsets of $\{H_1, \dots, H_\rho\}$. Abusing the notation, in this definition if $i_l>\rho$, then by $H_{i_l}$ we mean ${H_{i_l-\rho}}^{-1}$.

The vector $m_{i_1,i_2}$ has all of its components equal to $0$ if and only if the word $H_{i_1}H_{i_2}$ is geodesic as written. The first component of the vector $m_{i_1,i_2}$ equals $1$ if and only if the word $H_{i_1}H_{i_2}$ is not geodesic. The $l$-th component of $m_{i_1,i_2}$ equals $1$, $l>1$, if for the $l-1$-th set $J$ of $\chi(\{H_1, \dots, H_\rho\})$ Equation (\ref{eq:minsol1}) is satisfied. Otherwise the $l$-th component of $m_{i_1,i_2}$ equals $0$.

We call the matrix $(m_{i_1,i_2})$, $1\le i_1,i_2\le 2\rho$ the \index{cancellation matrix}\emph{cancellation matrix} of the solution $H$.
\end{defn}

\begin{defn}\label{def:minsol}
A solution $H$ of $\Omega$ is called \index{solution!of a generalised equation!minimal with respect to the group of automorphisms}\emph{minimal with respect to the group of automorphisms $\BB(\Omega)$} if there exist no solutions $H'$ of the generalised equation $\Omega$ so that $H' <_{\BB(\Omega)} H$ and $|H_k'|\leq |H_k|$  for all $k$, $k=1,\dots,\rho$ and $|H_j'|< |H_j'|$ for at least one $j$, $1\le j\le \rho$.

Since the length of a solution $H$ is a positive integer, every strictly descending chain of solutions
$$
H>_{\BB(\Omega)} H^{(1)} >_{\BB(\Omega)}\dots >_{\BB(\Omega)} H^{(k)} >_{\BB(\Omega)}\ldots
$$
is finite. It follows that for every solution $H$ of $\Omega$ there exists a minimal solution $H^+$ such that $H^+ < _{\BB(\Omega)} H$.
\end{defn}

\begin{rem} \label{rem:ms}
Note that given a solution $H$, there exists a minimal solution $H'$ (perhaps more than one) so that $H'<_{\BB(\Omega)} H$. Furthermore, there may exist minimal solutions $H'$ and $H^+$ so that $H'\not <_{\BB(\Omega)} H^+$ and $H^+\not <_{\BB(\Omega)} H'$. In this case one has $|H_k'|< |H_k^+|$  for some $1\le k\le\rho$ and $|H_j'|> |H_j^+|$ for some $1\le j\le \rho$.
\end{rem}

\begin{rem} \label{rem:minsol}
Note that every generalised equation $\Omega$ with coefficients from $\GG$ can be considered as a generalised equation $\Omega'$ with coefficients from $\GG(\cA \cup B)$ for \emph{any} partially commutative $\GG$-group $\GG(\cA \cup B)$. Therefore, any solution $H$ of $\Omega$ induces a solution $H'$ of $\Omega'$ such that the following diagram commutes:
$$
\xymatrix@C3em{
 \GG_{R(\Omega^*)}  \, \ar@{^{(}->}[r] \ar[d]_{\pi_H}  & \,\factor{\GG(\cA \cup B)}{R(\Omega^\ast)} \ar[d]^{\pi_{H'}}
                                                                             \\
                   \GG \,   \ar@{^{(}->}[r] & \,\GG(\cA \cup B)
}
$$
A solution $H$ of $\Omega$ is minimal if  so is \emph{any} induced solution $H'$.
\end{rem}
The reason for extending the generating set from $\cA$ to $\cA\cup B$ in the definition above, becomes clear in the proof of Lemma \ref{lem:23-1.5}

\bigskip

\begin{lem} \label{lem:2.1}
Let the generalised equation $\Omega_1$ be obtained from the generalised equation $\Omega$ by one of the elementary transformation $\ET 1-\ET 5$, i.e. $\Omega_1\in \ET i(\Omega)$ for some $i=1,\dots, 5$ and assume that $\GG_{R(\Omega_1^*)}$ is isomorphic to $\GG_{R(\Omega^*)}$. Let $H$ be a solution of $\Omega$ and $H^{(1)}$ be a solution of $\Omega_1$ so that the following diagram commutes
$$
\xymatrix@C3em{
 \GG_{R(\Omega^\ast)}  \ar[rd]_{\pi_H} \ar[rr]^{\theta}  &   &\GG_{R(\Omega_1^\ast )} \ar[ld]^{\pi_{H^{(1)}}} \\
                               &  \GG &}
$$
If $H$ is a minimal solution of $\Omega$ with respect to a group of automorphisms $\BB$ of $\GG_{R(\Omega^*)}$, then $H^{(1)}$ is a minimal solution of $\Omega_1$ with respect to the group of automorphisms $\theta^{-1}\BB\theta$.
\end{lem}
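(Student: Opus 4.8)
\textbf{Proof proposal for Lemma \ref{lem:2.1}.}

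The plan is to transport a hypothetical witness to the non-minimality of $H^{(1)}$ back across the isomorphism $\theta$ to produce a witness to the non-minimality of $H$, thereby contradicting the hypothesis. So suppose $H^{(1)}$ is \emph{not} minimal with respect to $\theta^{-1}\BB\theta$. By Definition \ref{def:minsol} there is a solution ${H^{(1)}}^+$ of $\Omega_1$ with ${H^{(1)}}^+ <_{\theta^{-1}\BB\theta(\Omega_1)} H^{(1)}$, with $|{H^{(1)}_k}^+|\le |H^{(1)}_k|$ for all $k$ and with strict inequality for at least one index. First I would unravel what the relation ${H^{(1)}}^+ <_{\theta^{-1}\BB\theta(\Omega_1)} H^{(1)}$ gives: a $\GG$-endomorphism $\pi$ of $\GG(\cA\cup B)$ and an automorphism $\sigma_1\in\theta^{-1}\BB\theta$ of $\GG_{R(\Omega_1^*)}$ with $\pi_{H^{(1)}}=\sigma_1\pi_{{H^{(1)}}^+}\pi$ together with the cancellation conditions (\ref{it:minsol3}) and (\ref{it:minsol4}) of Definition \ref{defn:sol<}.

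Next I would push ${H^{(1)}}^+$ back through $\theta$. Since $\Omega_1\in\ET i(\Omega)$ and $\GG_{R(\Omega_1^*)}\cong\GG_{R(\Omega^*)}$ via $\theta$, we may invoke Lemma \ref{lem:liftsol}: the $\rho$-tuple $H^+$ of geodesic words defined by $\pi_{H^+}=\theta\,\pi_{{H^{(1)}}^+}$ is a solution of $\Omega$. (Here I should be a little careful about which direction Lemma \ref{lem:liftsol} is stated in; since $\theta$ is an isomorphism and the transformations $\ET 1$–$\ET 5$ are, under the isomorphism hypothesis, exactly the ones whose associated $\tilde\theta$ is invertible, the argument of that lemma applies verbatim with $\theta$ in place of the epimorphism, using that ${H^{(1)}}^+<_{\BB}{H}^{(1)}$ pulls back the relevant geodesy statements.) It then remains to verify the three ingredients that $H^+<_{\BB(\Omega)}H$ requires: (i) writing $\sigma=\theta\sigma_1\theta^{-1}\in\BB(\Omega)$ and composing, $\pi_H=\theta\pi_{H^{(1)}}=\theta\sigma_1\pi_{{H^{(1)}}^+}\pi=\sigma\,\theta\pi_{{H^{(1)}}^+}\pi=\sigma\,\pi_{H^+}\pi$, which is condition (\ref{it:minsol1}); (ii) and (iii), the cancellation conditions (\ref{it:minsol3})–(\ref{it:minsol4}), which I would deduce from the corresponding conditions for ${H^{(1)}}^+<_{\BB}H^{(1)}$ by relating the words $H_k$, $H_k^+$ (resp. $H^{(1)}_k$, ${H^{(1)}_k}^+$) through the explicit formula for $\tilde\theta$ of the particular $\ET i$ applied — here Lemma \ref{lem:minsol}, which says geodesy of a word in the $H^{(1)}$'s is inherited by the corresponding word in the ${H^{(1)}}^+$'s, is the key tool, since each $H_k^+$ is a \emph{fixed} word in the ${H^{(1)}}^+$'s (and symmetrically each $H_k$ is that same word in the $H^{(1)}$'s).

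Finally I would deal with the length comparison. Examining the transformations $\ET 1$–$\ET 5$ that yield an isomorphism (so $\ET 4$ and the branch of $\ET 5$ that introduces a new boundary, plus the identity-on-variables transformations $\ET 1,\ET 2,\ET 3$ and the other branch of $\ET 5$), the passage $h\mapsto h^{(1)}$ is given by a fixed substitution in which each $H_j$ is a product of some of the $H^{(1)}_k$ (or $H_j=H^{(1)}_{\sigma(j)}$). Since $|{H^{(1)}_k}^+|\le|H^{(1)}_k|$ for every $k$ with strictness somewhere, the same substitution yields $|H_j^+|\le|H_j|$ for all $j$; and I would argue strictness is preserved for at least one $j$ — for the identity-type transformations this is immediate, and for $\ET 4$/$\ET 5$ the index at which strictness occurred in $\Omega_1$ lies in a block of variables of $\Omega_1$ occurring in exactly one $H_j$, so the strict drop propagates. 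This contradicts minimality of $H$, completing the proof. The main obstacle I anticipate is the bookkeeping in this last paragraph and in verifying (\ref{it:minsol3})–(\ref{it:minsol4}): one must handle each of $\ET 1$–$\ET 5$ separately and keep straight the correspondence between the variable blocks of $\Omega$ and $\Omega_1$, but no conceptual difficulty arises beyond unwinding the definitions, since the relation $<_{\BB(\Omega)}$ and Lemmas \ref{lem:minsol} and \ref{lem:liftsol} are precisely designed to make this transport work.
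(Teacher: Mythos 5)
Your proposal is correct and follows essentially the same route as the paper's own proof: argue by contradiction, lift the witness ${H^{(1)}}^+$ back through $\theta$ via Lemma \ref{lem:liftsol}, verify conditions (\ref{it:minsol1}), (\ref{it:minsol3}) and (\ref{it:minsol4}) of Definition \ref{defn:sol<} (with Lemma \ref{lem:minsol} giving (\ref{it:minsol3})), and transfer the length inequalities through the graphical decompositions $H_i\doteq w_i(H^{(1)})$. The one step you leave compressed, condition (\ref{it:minsol4}), is handled in the paper not by Lemma \ref{lem:minsol} but by a direct cancellation-divisor argument (locating the dividing letter inside a pair ${{H^{(1)}_{k_m}}^+},{{H^{(1)}_{j_n}}^+}$, enlarging the set $J'$ accordingly, and pushing the resulting letter $b$ back to $H$), though this is still the bookkeeping you describe rather than a different idea.
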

\begin{proof}
Assume the converse, i.e. $H^{(1)}$ is not minimal with respect to $\theta^{-1}\BB\theta$. Then there exists a solution ${H^{(1)}}^+$ of $\Omega_1$ so that ${H^{(1)}}^+<_{\theta^{-1}\BB\theta} H^{(1)}$,  $|{H^{(1)}}^+_k|\le |H^{(1)}_k|$ for all $k$ and $|{H^{(1)}}^+_j|< |H^{(1)}_j|$ for some $j$.

Let $H^+$ be a solution of the system of equations $\Omega^*$ so that $\pi_{H^+}=\theta\pi_{{H^{(1)}}^+}$. By Lemma \ref{lem:liftsol}, $H^+$ is a solution of $\Omega$.

By Lemma \ref{lem:minsol}, condition (\ref{it:minsol3}) from Definition \ref{defn:sol<} holds for the pair $H^+$ and $H$. We now show that $H^+<_{\BB} H$ and derive a contradiction with the minimality of $H$.

We now show that condition (\ref{it:minsol4}) from Definition \ref{defn:sol<} holds. Assume that for the pair $H_k^+$, $H_j^+$ the word ${H_k^+}^\epsilon {H_j^+}^\delta$ is not geodesic and that there exists a set $J'=\{H_{i_1}^{+},\dots, H_{i_t}^{+}\}$ such that Equation (\ref{eq:minsol2}) fails, i.e. there exists a set $J'$ and a letter $a\in \cA^{\pm 1}$ such that $a\in \BA(J')$  and $a$ is a left divisor of $\nfd({H_k^+}^\epsilon, {H_j^+}^\delta)$. Without loss of generality, we further assume that $\epsilon, \delta=1$. Write $H_k^+$ and $H_j^+$ as words in the ${H_{i}^{(1)}}^+$'s:
$$
H_k^+\doteq {{H_{k_1}^{(1)}}^+}^{\epsilon_1}\cdots {{H_{k_s}^{(1)}}^+}^{\epsilon_s} \hbox{ and } H_j^+\doteq {{H_{j_1}^{(1)}}^+}^{\delta_1}\cdots {{H_{j_r}^{(1)}}^+}^{\delta_r}.
$$
It follows that $a$ is a left divisor of $\nfd\left({{H_{k_m}^{(1)}}^+}^{\epsilon_m}, {{H_{j_n}^{(1)}}^+}^{\delta_n}\right)$ for some $m$ and $n$, and that
$$
a\lra {{H_{j_1}^{(1)}}^+}^{\delta_1}\cdots {{H_{j_{n-1}}^{(1)}}^+}^{\delta_{n-1}} \hbox{ and } a\lra {{H_{k_{m+1}}^{(1)}}^+}^{\epsilon_{m+1}}\cdots {{H_{k_s}^{(1)}}^+}^{\epsilon_s}.
$$
Therefore, Equation (\ref{eq:minsol2}) fails for the solution ${H^{(1)}}^+$ and the set ${J^{(1)}}'$, where  the set ${J^{(1)}}'$ is a union of  words ${{H_{i}^{(1)}}^+}$ that appear in the decomposition of a word $H_{i_l}^{+}\in J'$ and  the set $\{{{H_{j_1}^{(1)}}^+},\dots, {{H_{j_{n-1}}^{(1)}}^+},{{H_{k_{m+1}}^{(1)}}^+}, \dots, {{H_{k_s}^{(1)}}^+}\}$. Since ${H^{(1)}}^+<_{\theta^{-1}\BB\theta} H^{(1)}$, by condition (\ref{it:minsol4}) from Definition \ref{defn:sol<}, Equation (\ref{eq:minsol1}) fails for the solution ${H^{(1)}}$ and the corresponding set ${J^{(1)}}$, i.e. there exists a letter $b\in \cA^{\pm 1}$ such that $b\in \BA({J^{(1)}})$ and $b$ is a left divisor of $\nfd\left({{H_{k_m}^{(1)}}}^{\epsilon_m}, {{H_{j_n}^{(1)}}}^{\delta_n}\right)$.

Since $b\lra \{{{H_{j_1}^{(1)}}},\dots, {{H_{j_{n-1}}^{(1)}}},{{H_{k_{m+1}}^{(1)}}}, \dots, {{H_{k_s}^{(1)}}}\}$, we get that $b$ is a left divisor of $\nfd(H_k^\epsilon, H_j^\delta)$. Furthermore, since $b$ $\lra$-commutes with the words ${{H_{i}^{(1)}}}$'s that appear in the decomposition of a word $H_{i_l}\in J$ (where $J$ is the set corresponding to $J'$), we get that $b\in \BA(J)$. It follows that Equation (\ref{eq:minsol1}) fails for the solution $H$ and the set $J$. Hence, condition (\ref{it:minsol4}) from Definition \ref{defn:sol<} holds for the solutions $H$ and $H^+$.

Furthermore, by condition  (\ref{it:minsol1}) from Definition \ref{defn:sol<}, we have $\pi_{H^{(1)}}=\theta^{-1}\psi\theta\pi_{{H^{(1)}}^+}$, where $\psi \in \BB$, hence $\pi_H=\psi\pi_{H^+}$, and thus condition (\ref{it:minsol1}) from Definition \ref{defn:sol<} holds for the pair $H^+$ and $H$. We thereby have proven that $H^+<_{\BB}H$.

Finally, since $H_i\doteq w_i(H^{(1)})$, $H_i^+\doteq w_i({H^{(1)}}^+)$ and $|{H^{(1)}}^+_k|\le |H^{(1)}_k|$ for all $k$ and $|{H^{(1)}}^+_j|< |H^{(1)}_j|$ for some $j$, we get that $|H_k^+|\le |H_k|$ for all $k$ and $|H_l^+|<|H_l|$ for some $l$, contradicting the minimality of $H$.
\end{proof}

\section{Periodic structures}\label{sec:periodstr}

Informally, the aim of this section is to prove the following strong version of the so-called Bulitko's Lemma, \cite{Bul}:
\begin{center}
\parbox{5.5in}{\textit{Applying automorphisms from a finitely generated subgroup $\AA(\Omega)$ of the group of automorphisms of the coordinate group $\GG_{R(\Omega^\ast)}$ to a periodic solution either one can bound the exponent of periodicity of the solution {\rm(}regular case, see {\rm Lemma \ref{lem:23-2})}, or one can get a solution of a proper equation  {\rm(}strongly singular case, see {\rm Lemma \ref{lem:23-1ss}}, and singular case, see {\rm Lemma \ref{lem:23-1})}.}}
\end{center}
Above, by a solution of a proper equation we mean a homomorphism from a proper quotient of the coordinate group of $\Omega$ to $\GG$.

This approach for free groups was introduced by A.~Razborov. In \cite{Razborov1}, he defines a combinatorial object, called a periodic structure on a generalised equation $\Omega$ and constructs a finite set of generators for the group $\AA(\Omega)$.

In Section \ref{sec:explperstr} we give an example that follows the exposition of Section \ref{sec:perstr}. We advise the reader unfamiliar with the definitions, to consult this example while reading Section \ref{sec:perstr}.

\subsection{Periodic  structures}\label{sec:perstr}

We fix till the end of this section a generalised equation $\Omega=\gpo$ in the standard form. Suppose that some boundary $k$ (between $h_{k-1}$ and $h_k$) in the active part of $\Omega$ does not touch bases. Since the generalised equation $\Omega$ is in the standard form, the boundary $k$ intersects at least one base $\mu$. Using $\ET 5$ we $\mu$-tie the boundary $k$. Applying $\D 3$, if necessary, we may assume that the set of boundary connections in $\Omega$ is empty and that each boundary of $\Omega$ touches a base.

A cyclically reduced word $P$ in $\GG$ is called a \index{period}\emph{period} if $P$ is geodesic (in $\GG$) and is not a proper power treated as an element of the ambient free monoid. A word $w$ is called \index{periodic@($P$-)periodic word}$P$-\emph{periodic} if $w$ is geodesic (treated as an element of $\GG$), $|w| \ge |P|$ and, $w$ is a subword of $P^n$ for some $n$. Every $P$-periodic word $w$ can be presented in the form
\begin{equation}\label{2.50}
w \doteq Q^rQ_1
\end{equation}
where $Q$ is a cyclic permutation of $P^{\pm 1}$,  $r \geq 1$, $Q \doteq Q_1 Q_2$ and $Q_2 \neq 1$. The number $r$ is called the \index{exponent!of a word}\emph{exponent} of $w$. A maximal exponent of a $P$-periodic subword in a word $w$ is called the \index{exponent!of ($P$-)periodicity}\emph{exponent of $P$-periodicity of $w$}. We denote it by \glossary{name={$\e(w)$}, description={exponent of periodicity of the word $w$}, sort=E}$\e(w)$.

\begin{defn}\label{11'}
Let $\Omega$ be a generalised equation in the standard form. A solution $H=(H_1,\dots, H_{\rho})$ of $\Omega$ is called \index{solution!of a generalised equation!periodic with respect to a period}\emph{periodic with respect to a period $P$}, if for every closed variable section $\sigma$ of $\Omega$ one of the following conditions holds:
\begin{enumerate}
 \item  \label{it:per1} $H(\sigma)$ is $P$-periodic with exponent $r \ge 2$;
 \item  \label{it:per2} $|H(\sigma)| \le |P|$;
 \item  \label{it:per3} $H(\sigma)$ is $A$-periodic and $|A| \le |P|$;
\end{enumerate}
Moreover, condition (\ref{it:per1}) holds for at least one closed variable section $\sigma$ of $\Omega$.
\end{defn}

Let $H$ be a $P$-periodic solution of $\Omega$. Then a section $\sigma$ satisfying condition (\ref{it:per1}) of the above definition is called \index{section!($P$-)periodic}\emph{$P$-periodic} (with respect to $H$).

The following lemma gives an intuition about the kind of generalised equations that have periodic solutions.

\begin{lem} \label{lem:case2}
Let $\Omega$ be a generalised equation such that every closed section $\sigma_i$ of $\Omega$ is either constant or there exists a pair of dual bases $\mu_i$, $\Delta(\mu_i)$ such that $\mu_i$ and $\Delta(\mu_i)$ intersect but do not form a pair of matched bases, and $\sigma_i=[\alpha(\mu_i),\beta(\Delta(\mu_i))]$. Let $H$ be a solution of $\Omega$. Then there exists a period $P$ such that $H$ is $P$-periodic.
\end{lem}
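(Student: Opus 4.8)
The plan is to exhibit an explicit period $P$ that works for the whole generalised equation, built from the overlap of a single pair of self-intersecting dual bases. First I would fix one non-constant closed section, say $\sigma_i = [\alpha(\mu_i), \beta(\Delta(\mu_i))]$, where $\mu_i$ and $\Delta(\mu_i)$ intersect but are not matched. Without loss of generality take $\varepsilon(\mu_i) = \varepsilon(\Delta(\mu_i))$ and $\alpha(\mu_i) < \alpha(\Delta(\mu_i)) < \beta(\mu_i) < \beta(\Delta(\mu_i))$ (the formal-consistency condition \eqref{it:forcon1} of Definition \ref{def:forcon} rules out the orientation-reversing case, since there the bases cannot intersect, and the non-matched hypothesis excludes equality of the endpoints). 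Write $d = \alpha(\Delta(\mu_i)) - \alpha(\mu_i)$ for the ``shift''. The basic equation for this pair, evaluated at $H$, says $H[\alpha(\mu_i),\beta(\mu_i)] = H[\alpha(\Delta(\mu_i)),\beta(\Delta(\mu_i))]$ in $\FF$, i.e. the word $w_i := H(\sigma_i)$ satisfies a nontrivial overlap equation of the classical Fine--Wilf type: $w_i$ agrees with a proper shift of itself by $|H[\alpha(\mu_i),\alpha(\Delta(\mu_i))]| > 0$ letters. Hence $w_i$ is $P$-periodic for the period $P$ obtained by cyclically reducing the primitive root (in the free monoid $\FF$) of the prefix $H[\alpha(\mu_i),\alpha(\Delta(\mu_i))]$; one checks $|w_i| \geq 2|P|$ so the exponent is $\geq 2$, which is condition~(\ref{it:per1}).

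Next I would verify the three alternatives of Definition \ref{11'} for \emph{every} closed variable section $\sigma$, not just $\sigma_i$. The point is that every such $\sigma$ is, by hypothesis, either constant (hence not a variable section, so there is nothing to check) or of the form $[\alpha(\mu_j),\beta(\Delta(\mu_j))]$ for its own self-intersecting non-matched pair $\mu_j,\Delta(\mu_j)$. Applying the previous paragraph to $\mu_j$ shows $H(\sigma_j)$ is $A_j$-periodic for some period $A_j$. It remains to compare $A_j$ with the global $P$: if $|A_j| \leq |P|$ we land in case~(\ref{it:per3}) (after noting $A$-periodicity with a short period is exactly what is asked), and if $|A_j| > |P|$ I would argue that in fact $A_j$ and $P$ must be conjugate powers — this is where a Fine--Wilf / common-period argument enters, using that the sections $\sigma_i$ and $\sigma_j$ are linked through shared items or through the chain of basic equations of $\Omega$. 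Actually the cleanest route avoids needing the sections to interact: one can simply \emph{choose} $P$ among the finitely many candidate periods $A_j$ to be one of maximal length, and then for each other section either $|H(\sigma)| \leq |P|$ (case~(\ref{it:per2})) or $H(\sigma)$ is $A$-periodic with $|A| \leq |P|$ (case~(\ref{it:per3})), which is precisely what the definition allows. So the definition is lenient enough that a single uniform $P$ is not even required to govern all sections with exponent $\geq 2$.

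The main obstacle I anticipate is the bookkeeping around the orientation signs $\varepsilon(\mu_j),\varepsilon(\Delta(\mu_j))$ and the precise inequality between the four boundaries $\alpha(\mu_j) < \alpha(\Delta(\mu_j)) < \beta(\mu_j) < \beta(\Delta(\mu_j))$: I must confirm that ``intersect but not matched'' together with formal consistency forces exactly the strict overlap configuration that yields a Fine--Wilf periodicity with period length $< |H(\sigma_j)|$, and that degenerate sub-cases (e.g. $\beta(\mu_j) = \beta(\Delta(\mu_j))$ but $\alpha(\mu_j) \neq \alpha(\Delta(\mu_j))$) still give a genuine proper shift. A secondary subtlety is that $H(\sigma_j)$ must be treated as a geodesic word of $\GG$ while the overlap equation holds in $\FF$; since a solution consists of geodesic words and the basic equations hold graphically in $\FF$, the periodicity is a statement about the literal word $H(\sigma_j) \in \FF$, and the notions of period and $P$-periodic word are defined relative to the free monoid anyway, so no reduction issue arises. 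With these checks in place the conclusion follows: there is a period $P$ (of maximal length among the $A_j$'s) such that $H$ is periodic with respect to $P$ in the sense of Definition \ref{11'}.
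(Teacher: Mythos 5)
Your proposal is correct and is essentially the paper's own argument: the paper implements your shift (Fine--Wilf) observation for each non-constant closed section by repeatedly $\mu$-tying the boundary $\alpha(\Delta(\mu))$, which yields $H(\sigma)\doteq A^{l(k+1)}A_1$ with exponent at least $2$, and then, exactly as you do, takes $P$ to be an $A_j$ of maximal length so that the section realising the maximum satisfies the exponent-$\ge 2$ clause of the definition of a periodic solution while the remaining sections fall under the short-section or short-period clauses. The degenerate boundary configurations you flag are excluded just as you suspect (graphical equality of $H(\mu)$ and $H(\Delta(\mu))$ forces equal lengths, and formal consistency excludes the orientation-reversing case), so no further idea is needed.
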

\begin{proof}
Consider a section $\sigma=[\alpha(\mu),\beta(\Delta(\mu))]$. The boundary $i_1=\alpha(\Delta(\mu))$ intersects the base $\mu$, since the bases $\mu$ and $\Delta(\mu)$ overlap. We $\mu$-tie the boundary $i_1$, i.e. we introduce a boundary connection $(i_1,\mu,i_2)$ in such a way that $H$ is a solution of the generalised equation obtained. It follows that $i_1<i_2$.

Repeating this argument, we obtain a finite set of boundaries $i_1<\dots<i_{k+1}$, $k\ge 1$ such that $i_1,\dots,i_{k}$ do and $i_{k+1}$ does not intersect $\mu$, there is a boundary connection $(i_j, \mu, i_{j+1})$ for all $j=1,\dots, k$ and $H$ induces a solution of the generalised equation obtained. This set of boundaries is finite, since the length of the solution $H$ is finite.

Let $H[\alpha(\mu),i_1]=w$, $w=A^l$, where $l\ge 1$ and $A$ is a period. Then the section $\sigma$ is $A$-periodic. Indeed,
$$
\sigma=[\alpha(\mu),i_1]\cup[i_1,i_2]\cup\dots\cup[i_k,i_{k+1}]\cup[i_{k+1},\beta(\Delta(\mu))].
$$
Using the boundary equations, we get that
$$
h[\alpha(\mu),i_1]=h[i_1,i_2]=\dots=h[i_k,i_{k+1}] \hbox{ and } h[i_{k+1},\beta(\Delta(\mu))]=h[i_k,\beta(\mu)],
$$
thus $H(\sigma)\doteq A^{l\cdot(k+1)}\cdot A_1$, where $A\doteq A_1A_2$.

Set $P=A_j$, where $|A_j|=\max\limits_i\{|A_i|\mid\sigma_i \hbox{ is $A_i$-periodic}\}$. By definition, $H$ is $P$-periodic.
\end{proof}

Below we introduce the notion of a periodic structure. The idea of considering periodic structures on $\Omega$ is to subdivide the set of periodic solutions into subsets so that any two solutions from the same subset have the same set of ``long items'', i.e. $P$-periodic solutions that factor through the generalised equation $\Omega$ and a periodic structure $\langle \P, R\rangle$ on $\Omega$ satisfy the following property:
$$
h_i\in \P \hbox{ if and only if } |H_i|\ge 2|P|.
$$
One can regard Lemma \ref{le:PP} below as a motivation for the definition of a periodic structure.

\begin{defn} \label{above}
Let  $\Omega$ be a generalised equation in the standard form without boundary connections. A \index{periodic structure}\emph{periodic structure} on $\Omega$ is a pair \glossary{name={$\langle {\P}, R \rangle$}, description={periodic structure}, sort=P}$\langle {\P}, R \rangle$, where
\begin{enumerate}
\item \label{it:ps1} \glossary{name={$\P$},description={non-empty set of variables, variable bases, and closed sections that belong to the periodic structure $\langle {\P}, R \rangle$}, sort=P}${\P}$ is a non-empty set consisting of some variables $h_i$, some variable bases $\mu$, and some closed sections $\sigma$ from $V\Sigma$ such that the following conditions are satisfied:
\begin{itemize}
    \item[(a)] if $h_i \in {\P}$ and $h_i \in \mu$, then $\mu \in {\P}$;

    \item[(b)] if $\mu \in {\P}$, then $\Delta(\mu) \in {\P}$;

    \item[(c)] if $\mu \in {\P}$ and $\mu \in \sigma$, then $\sigma \in {\P}$;

    \item[(d)] there exists a function ${\mathcal X}$ mapping the set of closed sections from ${\P}$ into $\{-1,+1\}$ such that for every $\mu, \sigma_1, \sigma_2 \in {\P}$, the condition that $\mu \in \sigma_1$ and $\Delta(\mu) \in \sigma_2$ implies $\varepsilon(\mu) \cdot \varepsilon(\Delta(\mu)) = {\mathcal X}(\sigma_1) \cdot {\mathcal X}(\sigma_2)$;
\end{itemize}
\item \label{it:ps2}  $R$ is an equivalence relation on a certain set ${\mathcal B}$ (defined in (e)) such that condition (f) is satisfied.
\begin{itemize}
 \item[(e)] Notice, that for every boundary $l$ belonging to a closed section in $\P$ either there exists a unique closed section $\sigma(l)$ in ${\P}$ containing $l$, or there exist precisely two closed sections $\sigma_{\lef}(l) = [i,l], \sigma_{\rig} =  [l,j]$ in ${\P}$ containing $l$. The set of boundaries of the first type we denote  by ${\B}_1$, and of the second type by  ${\B}_2$. Put
$$
{\B} = {\B}_1  \cup \{l_{\lef}, l_{\rig}  \mid l \in {\B}_2\}
$$
here $l_{\lef}, l_{\rig}$ are two ``formal copies'' of $l$. We  will use the following agreement: for any base $\mu$ if $\alpha(\mu) \in {\B}_2$ then by $\alpha(\mu)$ we mean $\alpha(\mu)_{\rig}$ and, similarly, if $\beta(\mu) \in {\B}_2$ then by $\beta(\mu)$ we mean $\beta(\mu)_{\lef}$.

\item[(f)]  If $\mu \in {\P}$ then
$$
\begin{array}{lll}
\alpha(\mu) \sim_R \alpha(\Delta(\mu)), &  \beta(\mu) \sim_R \beta(\Delta(\mu)), & \hbox{ if } \varepsilon(\mu) =
\varepsilon(\Delta(\mu)); \\
\alpha(\mu) \sim_R \beta(\Delta(\mu)), & \beta(\mu)\sim_R  \alpha(\Delta(\mu)), & \hbox{ if }\varepsilon(\mu) = -
 \varepsilon(\Delta(\mu)).
\end{array}
$$
\end{itemize}
\end{enumerate}
\end{defn}

\begin{rem}
For a given generalised equation $\Omega$, there exists only finitely many periodic structures on $\Omega$, and all of them can be constructed effectively. Indeed, every periodic structure $\langle\P,R\rangle$ is uniquely defined by the subset of items, bases and sections of $\Omega$ that belong to $\P$ and the relation $\sim_R$.
\end{rem}

Now we will show how to a $P$-periodic solution $H$ of  $\Omega$ one can associate a periodic structure \glossary{name={${\P}(H, P)$}, description={periodic structure associated to a $P$-periodic solution $H$}, sort=P}${\P}(H, P) = \langle {\P}, R \rangle$ on $\Omega$. We define ${\P}$ as follows. A closed section $\sigma$ is in ${\P}$ if and only if $\sigma$ is $P$-periodic. A variable $h_i$ is in ${\P}$ if and only if $h_i \in \sigma$ for some $\sigma \in {\P}$ and $|H_i| \geq 2 |P|$. A variable base $\mu$ is in ${\P}$ if and only if either $\mu$ or $\Delta(\mu)$
contains an item $h_i$ from ${\P}$.

Put ${\mathcal X}([i,j]) = \pm 1$ depending on whether in (\ref{2.50}) the word $Q$ is conjugate to $P$ or to $P^{-1}$.

Now let $[i,j]\in {\P}$ and $ i \leq l \leq j$. Then one can write $P \doteq P_1P_2$ in such a way that if ${\mathcal X} ([i,j]) =1$, then the word $H[i,l]$ is the terminal subword of the word $(P^\infty)P_1$, where $P^\infty$ is the infinite word obtained by concatenating the powers of $P$, and $H[l,j]$ is the initial subword of the word $P_2(P^\infty)$; and if ${\mathcal X}([i,j])= -1$, then the word $H[i,l]$ is the terminal subword of the word $(P^{-1})^\infty P_2^{-1}$ and $H[l,j]$ is the initial subword of $P_1^{-1}(P^{-1})^\infty$. By Lemma 1.2.9 \cite{1}, the decomposition $P\doteq P_1P_2$ with these properties is unique;  denote this decomposition by \glossary{name={$\delta(l)$}, description={decomposition of the period defined by the boundary $l$}, sort=D}$\delta(l)$. We define the relation $R$ as follows:
$$
l_1 \sim_R l_2 \hbox{ if and only if } \delta(l_1) =  \delta(l_2).
$$

Every periodic solution $H$ of $\Omega$ induces a periodic structure $\P(H,P)$ on $\Omega$.
\begin{lem}\label{le:PP}
Let $H$ be a periodic solution of $\Omega$. Then ${\P}(H, P)$ is a periodic structure on $\Omega$.
\end{lem}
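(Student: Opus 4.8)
\textbf{Proof plan for Lemma \ref{le:PP}.}

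The plan is to verify directly that the pair $\langle \P, R\rangle$ constructed from a $P$-periodic solution $H$ satisfies all the clauses of Definition \ref{above}, using the explicit description of $\P$ (a closed section lies in $\P$ iff it is $P$-periodic with respect to $H$; a variable $h_i$ lies in $\P$ iff $h_i\in\sigma$ for some $\sigma\in\P$ and $|H_i|\ge 2|P|$; a variable base $\mu$ lies in $\P$ iff $\mu$ or $\Delta(\mu)$ contains an item of $\P$) together with the function $\mathcal X$ and the relation $R$ defined just before the statement. First I would check that $\P$ is non-empty: by the last sentence of Definition \ref{11'}, condition (\ref{it:per1}) holds for at least one closed variable section $\sigma$, so $\sigma\in\P$; moreover by the assumptions at the start of Section \ref{sec:perstr} we may assume $\Omega$ is in standard form without boundary connections and that every boundary touches a base, so the setup of Definition \ref{above} applies.

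Next I would dispatch clauses (a)--(c) of item (\ref{it:ps1}). Clause (a) is immediate from the definition of a variable base in $\P$. For clause (b), if $\mu\in\P$, then $\mu$ or $\Delta(\mu)$ contains an item from $\P$; since this condition is symmetric in $\mu$ and $\Delta(\mu)$, $\Delta(\mu)\in\P$ as well. For clause (c), if $\mu\in\P$ and $\mu\in\sigma$, then $\mu$ contains some item $h_i$, hence $h_i\in\sigma$ (as $\mu\in\sigma$); if $h_i\in\P$ we are done by the definition of sections in $\P$; if instead it is $\Delta(\mu)$ that contains an item $h_j\in\P$, then $h_j$ lies in some $P$-periodic closed section $\sigma'$, and the basic equation attached to $(\mu,\Delta(\mu))$ forces $H(\sigma)$ to contain a $P$-periodic subword of length $\ge|H_j|\ge 2|P|$ (a cyclic permutation of $H(\mu)$ equals up to inversion the corresponding subword of $H(\Delta(\mu))$), so $\sigma$ is itself $P$-periodic and hence $\sigma\in\P$. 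I would then verify clause (d): $\mathcal X(\sigma)=\pm1$ records whether the word $Q$ of (\ref{2.50}) for $H(\sigma)$ is a cyclic permutation of $P$ or of $P^{-1}$; if $\mu\in\sigma_1$, $\Delta(\mu)\in\sigma_2$ with $\mu,\Delta(\mu)\in\P$, then $H(\mu)$ is (a subword of) a $\mathcal X(\sigma_1)$-oriented power of $P$ and $H(\Delta(\mu))$ is (a subword of) a $\mathcal X(\sigma_2)$-oriented power, and the basic equation ${H(\mu)}^{\varepsilon(\mu)}={H(\Delta(\mu))}^{\varepsilon(\Delta(\mu))}$ forces $\varepsilon(\mu)\varepsilon(\Delta(\mu))=\mathcal X(\sigma_1)\mathcal X(\sigma_2)$; here one uses that a long enough $P$-periodic word determines its orientation uniquely, which is where the hypothesis $r\ge 2$ in Definition \ref{11'}(\ref{it:per1}) (equivalently $|H(\mu)|\ge 2|P|$) is used.

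Finally I would handle item (\ref{it:ps2}). For clause (e) there is essentially nothing to prove beyond unwinding the definitions: each boundary $l$ inside a closed section of $\P$ either is interior to a unique section (type $\B_1$) or is the common endpoint of two consecutive sections (type $\B_2$), and the set $\B$ and the left/right conventions are purely combinatorial data attached to $\P$, not to $H$. The substantive point is clause (f), and this is where the definition of $R$ via the decompositions $\delta(l)$ does the work. Using Lemma 1.2.9 of \cite{1}, for each boundary $l$ in a $P$-periodic section the decomposition $P\doteq P_1P_2$ with the stated ``terminal/initial subword'' properties is unique, so $\delta(l)$ is well defined and $l_1\sim_R l_2\iff\delta(l_1)=\delta(l_2)$ is a genuine equivalence relation. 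Now if $\mu\in\P$ with, say, $\varepsilon(\mu)=\varepsilon(\Delta(\mu))$, the basic equation gives $H[\alpha(\mu),\beta(\mu)]\doteq H[\alpha(\Delta(\mu)),\beta(\Delta(\mu))]$ as $P$-periodic words with the same orientation ($\mathcal X(\sigma_1)=\mathcal X(\sigma_2)$ by clause (d)); comparing the terminal subwords of the relevant $P$-powers ending at $\alpha(\mu)$ and at $\alpha(\Delta(\mu))$ shows they determine the same decomposition, i.e. $\delta(\alpha(\mu))=\delta(\alpha(\Delta(\mu)))$, so $\alpha(\mu)\sim_R\alpha(\Delta(\mu))$, and similarly for the right endpoints; the case $\varepsilon(\mu)=-\varepsilon(\Delta(\mu))$ is symmetric and swaps $\alpha$ with $\beta$ and $P_1$ with $P_2$ as already anticipated in the definition of $\delta(l)$. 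The main obstacle, and the only place requiring genuine care, is precisely this compatibility check in clauses (d) and (f): one must be careful that the words involved are long enough (at least $|P|$, and for bases carrying items of $\P$ at least $2|P|$) for the orientation and the decomposition $\delta$ to be unambiguous, which is exactly guaranteed by the defining inequalities of $\P$ and by Definition \ref{11'}; the remaining clauses are routine bookkeeping.
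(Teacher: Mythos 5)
Your overall route is the same as the paper's: a direct verification of the clauses of Definition \ref{above}, with (a), (b), (e) being routine and (d), (f) handled via the basic equations and the uniqueness statements of Lemma 1.2.9 of \cite{1}. Those parts are fine. But there is a genuine gap in your treatment of clause (c), which is in fact where the paper spends almost all of its effort. In the case where $\mu\in\P$ because its \emph{dual} $\Delta(\mu)$ carries an item $h_j\in\P$, you argue that $H(\sigma)$ ``contains a $P$-periodic subword of length $\ge 2|P|$ \dots so $\sigma$ is itself $P$-periodic.'' That inference is not valid on its own: a word can contain a long $P$-periodic factor without being a subword of any power of $P$ (take $H(\sigma)\doteq P^2\cdot w$ with $w$ unrelated to $P$), so ``contains a long $P$-periodic subword'' does not give ``$\sigma$ is $P$-periodic'' in the sense required for membership in $\P$.

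What rescues the statement — and what the paper actually proves — is the trichotomy built into Definition \ref{11'}: since $H$ is a periodic solution, the closed section $\sigma$ containing $\mu$ satisfies one of the three alternatives, and the inequality $|H(\sigma)|\ge |H(\mu)|=|H(\Delta(\mu))|\ge 2|P|$ rules out alternative (\ref{it:per2}). One is then left with the possibility that $\sigma$ is only $A$-periodic for some period $A$ with $|A|\le|P|$; writing $H(\mu)\doteq B^sB_1$ with $B$ a cyclic permutation of $A^{\pm1}$ and comparing with $H(\Delta(\mu))^{\pm1}\doteq Q^{r'}Q_1$ via the graphical basic equation, Lemma 1.2.9 of \cite{1} forces $B$ (hence $A$) to be a cyclic permutation of $P^{\pm1}$, and only then does $\sigma\in\P$ follow. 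This case analysis and periodicity argument is missing from your clause (c); your closing remark that the ``only place requiring genuine care'' is the compatibility check in (d) and (f) misplaces the burden, since (c) is precisely the step that needs it. (A minor point: the basic equation gives the graphical equality $H(\mu)^{\varepsilon(\mu)}\doteq H(\Delta(\mu))^{\varepsilon(\Delta(\mu))}$ directly; no cyclic permutation of $H(\mu)$ is involved.)
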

\begin{proof}
Let ${\P}(H, P) = \langle {\P}, R \rangle$.  Obviously,  ${\P}$ satisfies conditions (a) and (b) from Definition \ref{above}.

We now prove that $\P$ satisfies condition (c) from Definition \ref{above}. Let $\mu \in {\P}$ and $\mu \in [i,j]$. There exists an item $h_k \in {\P}$ such that $h_k \in \mu$ or $h_k \in \Delta({\mu})$. If $h_k \in \mu$, then, by construction, $[i,j] \in {\P}$. If $h_k \in \Delta(\mu)$ and $\Delta(\mu) \in [i',j']$, then $[i', j'] \in {\P}$, and hence, the word $H(\Delta(\mu))$ can be written in the form $Q^{r'} Q_1$, where $Q\doteq Q_1 Q_2$ is a cyclic permutation of the word $P^{\pm 1}$ and $r' \geq 2$. Since $|H[i,j]| \ge |H(\mu)| = |H(\Delta(\mu))|\ge 2 |P|$ and from Definition \ref{11'}, it follows that $[i,j]$ is an $A$-periodic section, where $|A|\le |P|$. Then $H(\mu) \doteq B^s B_1$, where $B$ is a cyclic permutation of the word $A^{\pm 1}$, $|B| \leq |P|$, $B \doteq B_1 B_2$, and $s \ge 0$. From the equality ${H(\mu)}^{\varepsilon(\mu)} \doteq {H(\Delta(\mu))}^{\varepsilon(\Delta(\mu))}$ and Lemma 1.2.9 \cite{1} it follows that $B$ is a cyclic permutation of the word $Q^{\pm 1}$. Consequently, $A$ is a cyclic permutation of the word $P^{\pm 1}$. Therefore, $[i,j]$ is a $P$-periodic section of $\Omega$ with respect to $H$, in other words, the length of $H[i,j]$ is greater than or equal to $2|P|$ and so $[i,j]\in \P$.

If $\mu \in [i_1, j_1]$, $\Delta(\mu) \in [i_2, j_2]$ and $ \mu \in {\P}$, then the equality $\varepsilon(\mu) \cdot \varepsilon(\Delta(\mu))$ = ${\mathcal X}([i_1, j_1]) \cdot {\mathcal X}([i_2, j_2])$ follows from the fact that given $A^r A_1 \doteq B^s B_1$ and $r,s \geq 2$, the word $A$ cannot be a cyclic permutation of the word $B^{-1}$, hence condition (d) of Definition \ref{11'} holds.

Since  ${H(\mu)}^{\varepsilon(\mu)}\doteq {H(\Delta(\mu))}^{\varepsilon(\Delta(\mu))}$, from Lemma 1.2.9 in \cite{1} it follows that condition (f) also holds for the relation $R$.
\end{proof}

\begin{rem} \label{rem:subword}
Now let us fix a non-empty periodic structure $\langle {\P}, R \rangle$ on a generalised equation $\Omega$. Item (d) of Definition \ref{above} allows us to assume (after replacing the variables $h_i, \ldots, h_{j-1}$ by $h_{j-1}^{-1}, \ldots, h_i^{-1}$ on those closed sections $[i,j] \in {\P}$ for which ${\mathcal X}([i,j])=-1$) that $\varepsilon(\mu)=1$ for all $\mu \in {\P}$. Therefore, we may assume that for every item $h_i$, the word $H_i$ is a subword of the word $P^\infty$.
\end{rem}

The rest of this section is devoted to defining the group of automorphisms $\AA(\Omega)$. The idea is as follows. We change the set of generators $h$ of the coordinate group $\GG_{R(\Omega^*)}$ by $\bar x$ and we use the new set of generators to give an explicit description of the generating set of the group of automorphisms $\AA(\Omega)$.

To construct the set of generators $\bar x$ of $\GG_{R(\Omega^*)}$, the following definitions are in order. We refer the reader to Section \ref{sec:explperstr} for an example.

\begin{defn}\label{defn:graphperstr}
We construct the \index{graph!of a periodic structure}\emph{graph \glossary{name=$\Gamma$, description={graph of a periodic structure}, sort=G}$\Gamma$ of a periodic structure $\langle \P,R\rangle$}. The set of vertices $V(\Gamma)$ of the graph $\Gamma$ is the set of $R$-equivalence classes. For a boundary $k$, denote by $(k)$ the equivalence class of the relation $R$ to which it belongs. For each variable $h_k$ that belongs to a certain closed section from ${\P}$, we introduce an oriented edge $e\in E(\Gamma)$ from $(k)$ to $(k+1)$, $e:(k)\to(k+1)$ and an inverse edge $e^{-1}:(k+1)\to (k)$. We label the edge $e$ by $h(e) = h_k$ (correspondingly, $h(e^{-1}) = h_k^{-1}$). For every path $\p=e_1^{\epsilon_1} \ldots e_j^{\epsilon_j}$ in the graph $\Gamma$, we denote its label by $h(\p)$,  $h(\p)=h(e_1^{\epsilon_1}) \ldots h(e_j^{\epsilon_j})$, $\epsilon_1,\dots,\epsilon_j\in \{1,-1\}$.
\end{defn}

The periodic structure $\langle {\P}, R \rangle$ is called \index{periodic structure!connected}\emph{connected}, if its graph $\Gamma$ is connected.

Let $\langle {\P}, R \rangle$ be an arbitrary periodic structure of a generalised equation $\Omega$. Let $\Gamma_1, \ldots, \Gamma_r$ be the connected components of the graph $\Gamma$. The labels of edges of the component $\Gamma_i$ form (in the generalised equation $\Omega$) a union of labels of closed sections from ${\P}$. Moreover, if a base $\mu \in {\P}$ belongs to a section from $\P$, then its dual $\Delta(\mu)$, by condition (f) of Definition \ref{above}, also belongs to a section from $\P$. Define ${\P}_i$ to be the union of the following three sets: the set of labels of edges from $\Gamma_i$ that belong to ${\P}$, closed sections to which these labels belong, and bases $\mu \in {\P}$ that belong to these sections. Define $R_i$ to be the restriction of the relation $R$ to $\P_i$. We thereby obtain a connected periodic structure $\langle {\P}_i, R_i \rangle$ whose graph is  $\Gamma_i$.

We further assume that the periodic structure $\langle {\P}, R \rangle$ is connected.

Let $\Gamma$ be the graph of a periodic structure $\langle {\P}, R \rangle = {\P}(H, P)$ and let $\p=e_1^{\epsilon_1} \ldots e_j^{\epsilon_j}$ be a path in $\Gamma$, $h(\p)$ be its label, $h(\p)=h(e_1^{\epsilon_1}) \ldots h(e_j^{\epsilon_j})$, $\epsilon_1,\dots,\epsilon_j\in \{1,-1\}$. To simplify the notation we write $H(\p)$ instead of $H(h(\p))$.

\begin{lem} \label{2.9}
Let $H$ be a $P$-periodic solution of a generalised equation $\Omega$, let $\langle {\P}, R \rangle = {\P}(H, P)$ be a periodic structure on $\Omega$ and let $\cc$ be a cycle in the graph $\Gamma$ at the vertex $(l)$, $\delta(l)=P_1P_2$. Then there exists $n \in \Z$ such that $H(\cc) = (P_2P_1)^n$.
\end{lem}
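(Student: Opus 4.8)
The plan is to show that the label of any cycle $\cc$ based at $(l)$ is a power of the cyclic permutation $P_2P_1$, by combining the local description of $H$ on each edge (coming from the periodic structure) with Lemma 1.2.9 of \cite{1}. First I would fix the vertex $(l)$ with $\delta(l)=P_1P_2$, and recall from the construction of $\P(H,P)$ (and Remark \ref{rem:subword}, so that $\varepsilon(\mu)=1$ throughout and every $H_i$ is a subword of $P^\infty$) that for each edge $e:(k)\to(k+1)$ labelled $h_k$ one has a decomposition $P\doteq P_1'P_2'$, namely $\delta(k)$ at the tail and $\delta(k+1)$ at the head, such that $H[i,k]$ is a terminal subword of $(P^\infty)P_1'$ and $H_k=H[k,k+1]$ is an initial subword of $P_2'(P^\infty)$, and so on. The key combinatorial point is that the element $Q(k):=P_2(k)P_1(k)$ obtained by reading the decomposition $\delta(k)=P_1(k)P_2(k)$ ``from the other side'' is a cyclic permutation of $P$, and that travelling along an edge $e$ multiplies the relevant $P$-periodic word in a way compatible with these cyclic permutations: concretely, if $\delta(k)=P_1(k)P_2(k)$ and $\delta(k+1)=P_1(k+1)P_2(k+1)$, then $H_k$ is, up to the appropriate cyclic shift, a subword of a power of $Q(k)$, and $Q(k)=Q(k+1)$ because both are conjugates of $P$ realised inside the same bi-infinite word $P^\infty$.

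Next I would formalise the ``transfer along an edge'' statement as a lemma: for an edge $e:(k_1)\to(k_2)$ with label $h_{k}$, writing $\delta(k_1)=P_1P_2$, there is $m\in\Z$ with $H(e)=P_2(P_2P_1)^{m}\cdot(\text{prefix of }P_2P_1)\cdot\ldots$ — more precisely, the word $P_1\cdot H(e)$ is, when read inside $P^\infty$, a block $(P_1P_2)^{t}P_1''$ whose right end is exactly $\delta(k_2)$, so that conjugating by $P_1$ turns ``$H(e)$ shifts $\delta(k_1)$ to $\delta(k_2)$'' into ``$(P_2P_1)^{?}$-multiplication.'' Equivalently, associate to each vertex $(k)$ the group element $g_{(k)}\in\GG$ equal to $H[l,k]$ read against the fixed base point $\delta(l)$; one checks $g_{(k)}$ is well defined modulo $\langle P_2P_1\rangle$ precisely because $R$-equivalent boundaries have equal $\delta$, and that $H(e)=g_{(k_1)}^{-1}g_{(k_2)}$ for an edge $e:(k_1)\to(k_2)$ modulo the ambiguity. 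Summing (multiplying) these along the cycle $\cc$, all the $g$'s cancel and one is left with $H(\cc)\in\langle P_2P_1\rangle$, i.e. $H(\cc)=(P_2P_1)^n$ for some $n\in\Z$. Here Lemma 1.2.9 of \cite{1} is exactly what is needed to guarantee uniqueness of the decompositions $\delta(k)$ and hence that $g_{(k)}$ is well defined modulo $\langle P_2P_1\rangle$, and that the two ways of comparing $H[l,k_1]$ and $H[l,k_2]$ (through $e$ or not) agree.

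I would carry this out by induction on the length of the cycle $\cc=e_1^{\epsilon_1}\cdots e_j^{\epsilon_j}$: the base case $j=0$ is trivial, and for the inductive step I split $\cc$ into an edge $e_1^{\epsilon_1}$ from $(l)$ to an intermediate vertex $(l')$ followed by a cycle $\cc'$; but $\cc'$ is a cycle at $(l')$, not at $(l)$, so I first need the remark that the conclusion for one base point implies it for another (conjugating by $H[l,l']$ sends $\langle P_2'P_1'\rangle$ to $\langle P_2P_1\rangle$ where $\delta(l')=P_1'P_2'$, since $H[l,l']$ read inside $P^\infty$ conjugates one cyclic permutation of $P$ to the other). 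Then $H(\cc)=H(e_1^{\epsilon_1})\,H(\cc')$ and both factors lie in cosets/powers making the product a power of $P_2P_1$. The main obstacle I anticipate is purely bookkeeping: carefully tracking the cyclic permutations and the signs $\epsilon_i$, and making rigorous the phrase ``$H[i,k]$ is a subword of $P^\infty$ ending at $\delta(k)$'' so that the cancellations are literally graphical equalities in the free monoid (this is where Remark \ref{rem:subword} and the reduction $\varepsilon(\mu)=1$ are used), together with invoking Lemma 1.2.9 of \cite{1} at each comparison to rule out the spurious possibility that some $H_i$ is a periodic word with respect to a proper cyclic sub-permutation rather than a full cyclic permutation of $P$.
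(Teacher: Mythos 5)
Your proposal is correct and takes essentially the same route as the paper: the paper's proof simply records that each edge value has the form $H(e)=P_2P^{n_k}P_1'$, where $\delta$ at the tail is $P_1P_2$ and at the head is $P_1'P_2'$, and multiplies these along the cycle, which is exactly the telescoping you carry out. Your vertex potentials modulo $\langle P_2P_1\rangle$, the induction on cycle length, and the base-point change are harmless extra scaffolding, but the underlying computation is the same.
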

\begin{proof}
If $e$ is an edge $v\to v'$ in the graph $\Gamma$, and $P = P_1P_2$,  $P = P_1' P_2'$ are two decompositions corresponding to the boundaries from $(v)$ and $(v')$ respectively. Then, obviously, $H(e) = P_2 P^{n_k}P_1'$, $n_k \in \Z$. The statement follows if we multiply the values $H(e)$ for all the edges $e$ in the cycle $\cc$.
\end{proof}

\begin{defn} \label{2.51}
A generalised equation $\Omega$ is called \index{generalised equation!periodised}\emph{periodised} with respect to a given periodic structure $\langle {\P}, R \rangle$, if for every two cycles $\cc_1$ and $\cc_2$ based at the same vertex in the graph $\Gamma$, there is a relation $[h(\cc_1), h(\cc_2)]=1$ in $\GG_{R(\Upsilon^\ast)}$. Note that, in particular $[h(\cc_1), h(\cc_2)]=1$ in $\GG_{R(\Omega^\ast)}$.
\end{defn}

Let \glossary{name={$\Sh$, $\Sh(\Gamma)$}, description={the set of ``short'' edges of the graph $\Gamma$ of a periodic structure}, sort=S}\glossary{name={$h(\Sh)$}, description={labels of the ``short'' edges of $\Gamma$, $h(\Sh)=\{h(e) \mid e\in \Sh \}$}, sort=H}$\Sh=\Sh(\Gamma)=\{e \in E(\Gamma) \mid h(e)\notin \P \}$ and $h(\Sh)=\{h(e) \mid e\in \Sh \}$.

Let \glossary{name={$\Gamma_0$}, description={the subgraph of $\Gamma$ all of whose edges are ``short'', $\Gamma_0=(V(\Gamma),\Sh(\Gamma))$}, sort=G}$\Gamma_0=(V(\Gamma),\Sh(\Gamma))$ be the subgraph of the graph $\Gamma$ having the same set of vertices as $\Gamma$ and the set of edges  $E(\Gamma_0)=\Sh$. Choose a maximal subforest $T_0$ in the graph $\Gamma_0$ and extend it to a maximal subforest $T$ of the graph $\Gamma$. Since the periodic structure $\langle {\P}, R \rangle$ is connected by assumption, it follows that $T$ is a tree. Fix an arbitrary vertex $v_\Gamma$ of the graph $\Gamma$ and denote by $\p(v_\Gamma, v)$ the (unique) path in $T$ from $v_\Gamma$ to $v$. For every edge $e: v \to v'$ not lying in $T$, we introduce a cycle $\cc_e = \p(v_\Gamma, v) e (\p(v_\Gamma, v'))^{-1}$. Then the fundamental group $\pi_1(\Gamma, v_\Gamma)$ is generated by the cycles $\cc_e$ (see, for example, the proof of Proposition III.2.1, \cite{LS}). This and the decidability of the universal theory of $\GG$ imply that the property of a generalised equation ``to be periodised with respect to a given periodic structure'' is algorithmically decidable. Indeed, it suffices to check if the following universal formula (quasi-identity) holds in $\GG$ (for every pair of cycles $\cc_{e_1}$, $\cc_{e_2}$):
$$
\forall H_1, \dots, H_\rho \left(\left(\bigwedge (\Upsilon^*(H)=1)\right)\to \left([H(\cc_{e_1}),H(\cc_{e_2})]=1\right)\right).
$$

Furthermore, the set of elements
\begin{equation} \label{2.52}
\{h(e) \mid e \in T \} \cup \{h(\cc_e) \mid e \not \in T \}
\end{equation}
forms a basis of the free group generated by
$$
\{h_k \mid h_k\in \sigma, \sigma\in {\P} \}.
$$
If $\mu \in {\P}$, then $(\beta(\mu)) = (\beta(\Delta(\mu)))$, $(\alpha(\mu)) = (\alpha(\Delta(\mu)))$ by property (f) from Definition \ref{above} and, consequently, the word $h(\mu) {h(\Delta(\mu))}^{-1}$ is the label of a cycle $\cc'_\mu$ from $\pi_1 (\Gamma, (\alpha(\mu)))$. Let $\cc_\mu = \p(v_\Gamma, (\alpha(\mu)))\cc'_\mu \p(v_\Gamma, (\alpha(\mu)))^{-1}$. Then
\begin{equation} \label{2.53}
h(\cc_\mu) = uh(\mu) {h(\Delta(\mu))}^{-1} u^{-1},
\end{equation}
where $u$ is the label of the path $\p(v_\Gamma, (\alpha(\mu)))$. Since $\cc_\mu \in \pi_1(\Gamma, v_\Gamma)$, it follows that $\cc_\mu = b_\mu (\{\cc_e \mid e \not \in T \})$, where $b_\mu$ is a certain word in the indicated generators that can be  constructed effectively (see Proposition III.2.1, \cite{LS}).

Let $\tilde{b}_\mu$ denote the image of the word $b_\mu$ in the abelianisation of $\pi_1(\Gamma ,v_\Gamma)$. Denote by $\widetilde{Z}$ the free abelian group consisting of formal linear combinations $\sum\limits_{e \not \in T} n_e \tilde{\cc}_e$, $n_e \in {\Z}$, and by $\widetilde{B}$ its subgroup generated by the elements $\tilde{b}_\mu$, $\mu \in {\P}$ and the elements $\tilde{\cc}_e$, $e \not \in T$, $e \in \Sh$.

By the classification theorem of finitely generated abelian groups, one can effectively construct a basis $\{\widetilde{C}^{(1)}, \widetilde{C}^{(2)}\}$ of $\widetilde{Z}$ such that
\begin{equation} \label{2.54}
\widetilde{Z} = \widetilde{Z}_1 \oplus \widetilde{Z}_2, \ \widetilde{B} \subseteq \widetilde{Z}_1, \ [\widetilde{Z}_1 : \widetilde{B}] < \infty,
\end{equation}
where $\widetilde{C}^{(1)}$ is a basis of $\widetilde{Z}_1$ and $\widetilde{C}^{(2)}$ is a basis of $\widetilde{Z}_2$.

By Proposition I.4.4 in \cite{LS}, one can effectively construct a basis \glossary{name={$C^{(1)}$}, description={``short'' cycles in $\Gamma$}, sort=C}$C^{(1)}$, \glossary{name={$C^{(2)}$}, description={``free'' cycles in $\Gamma$}, sort=C}$C^{(2)}$ of the free (non-abelian) group $\pi_1(\Gamma, v_\Gamma)$ such that $\widetilde{C}^{(1)}$, $\widetilde{C}^{(2)}$ are the natural images of the elements $C^{(1)}$, $C^{(2)}$ in $\widetilde{Z}$.

\begin{rem} \label{rem:hab}
Notice that any equation in $\widetilde{Z}$ of the form
$$
\tilde{ \cc}= \sum\limits_{\tilde \cc_i\in \widetilde{Z}} n_i \tilde {\cc}_i
$$
lifts to an equation in $\pi_1(\Gamma,v_\Gamma)$ of the form
$$
\cc =\prod\limits_{\cc_i\in \pi_1(\Gamma,v_\Gamma)} {\cc}_i^{n_i} V,
$$
where $V$ is an element of the derived subgroup of $\pi_1(\Gamma,v_\Gamma)$. If the generalised equation is periodised, for any two cycles $\cc'_1, \cc'_2 \in \pi_1(\Gamma,v_\Gamma)$, we have that $[h(\cc'_1),h(\cc'_2)]=1$ in $\GG_{R(\Omega^*)}$. Hence, $h(\cc)=\prod\limits_{\cc_i\in \pi_1(\Gamma,v_\Gamma)} {h(\cc_i)}^{n_i}$ in $\GG_{R(\Omega^*)}$.
\end{rem}

\begin{lem} \label{cl:propc1}
Let $\Omega$ be a periodised generalised equation. Then the basis $\widetilde{C}^{(1)}$ can be chosen in such a way that for every $\cc\in C^{(1)}$ either $\cc=\cc_e$, where $e\notin T$, $e\in \Sh$, or for any solution $H$ we have $H(\cc)=1$.
\end{lem}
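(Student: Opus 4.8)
The plan is as follows. Since $\Omega$ is periodised (Definition \ref{2.51}), the assignment $\cc\mapsto h(\cc)$ defines a homomorphism $\pi_1(\Gamma,v_\Gamma)\to\GG_{R(\Omega^\ast)}$ with abelian image, so by Remark \ref{rem:hab} it factors through the abelianisation $\widetilde{Z}$ of $\pi_1(\Gamma,v_\Gamma)$, giving a homomorphism $\bar h\colon\widetilde{Z}\to\GG_{R(\Omega^\ast)}$ with $\bar h(\tilde{\cc}_e)=h(\cc_e)$ and $\bar h(\tilde{\cc})=h(\cc)$ in $\GG_{R(\Omega^\ast)}$ for every cycle $\cc$. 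Put $K=\Ker\bar h$. The first thing I would record is that, since $R(\Omega^\ast)=\bigcap_H\Ker\pi_H$, for a cycle $\cc$ one has $H(\cc)=1$ for every solution $H$ of $\Omega$ if and only if $h(\cc)=1$ in $\GG_{R(\Omega^\ast)}$, i.e. if and only if $\tilde{\cc}\in K$. Hence it suffices to choose $\widetilde{C}^{(1)}$ so that each of its elements is either $\tilde{\cc}_e$ for some $e\notin T$, $e\in\Sh$, or lies in $K$.

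Next I would isolate the structural inputs. Writing $\widetilde{B}_\mu=\langle\tilde b_\mu\mid\mu\in\P\rangle$ and $\widetilde{B}_\Sh=\langle\tilde{\cc}_e\mid e\notin T,\ e\in\Sh\rangle$, so that $\widetilde{B}=\widetilde{B}_\Sh+\widetilde{B}_\mu$, I claim $\widetilde{B}_\mu\subseteq K$: by \eqref{2.53} one has $\bar h(\tilde b_\mu)=h(\cc_\mu)=u\,h(\mu)\,h(\Delta(\mu))^{-1}u^{-1}$, and after the normalisation of Remark \ref{rem:subword} (so that $\varepsilon(\mu)=1$ for $\mu\in\P$) the basic equation attached to the pair $\mu,\Delta(\mu)$ gives $h(\mu)=h(\Delta(\mu))$ in $\GG_{R(\Omega^\ast)}$, whence $\bar h(\tilde b_\mu)=1$. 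Second, $K$ is isolated in $\widetilde{Z}$: a coordinate group is $\GG$-separated by $\GG$ (Theorem \ref{thm:coordgr}) and $\GG$ is torsion-free, so $\GG_{R(\Omega^\ast)}$ is torsion-free, hence $\widetilde{Z}/K\cong\bar h(\widetilde{Z})$ is torsion-free and $K$ is isolated; in particular $\widetilde{Z}_1\cap K$ is a direct summand of $\widetilde{Z}_1$. Finally $\widetilde{B}_\Sh$, being generated by a subset of the free basis $\{\tilde{\cc}_e\mid e\notin T\}$ of $\widetilde{Z}$, is a direct summand of $\widetilde{Z}$, hence of $\widetilde{Z}_1$; fix a decomposition $\widetilde{Z}_1=\widetilde{B}_\Sh\oplus W_0$.

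The heart of the argument is to show that $W_0$ may be chosen inside $K$, equivalently that $\widetilde{Z}_1=\widetilde{B}_\Sh+(\widetilde{Z}_1\cap K)$. One inclusion is clear, and since $\widetilde{B}=\widetilde{B}_\Sh+\widetilde{B}_\mu\subseteq\widetilde{B}_\Sh+(\widetilde{Z}_1\cap K)$ and $[\widetilde{Z}_1:\widetilde{B}]<\infty$, the right-hand side has finite index in $\widetilde{Z}_1$; the point is to upgrade this to an equality. I would do so by establishing $\bar h(\widetilde{Z}_1)=\bar h(\widetilde{B}_\Sh)$: granting this, every $z\in\widetilde{Z}_1$ differs from an element of $\widetilde{B}_\Sh$ by an element of $K$, which gives the decomposition and hence a complement of $\widetilde{B}_\Sh$ lying in $K$. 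To prove $\bar h(\widetilde{Z}_1)=\bar h(\widetilde{B}_\Sh)$ one notes $\bar h(\widetilde{B})=\bar h(\widetilde{B}_\Sh)$ (as $\bar h(\widetilde{B}_\mu)=0$) and that $\bar h(\widetilde{B}_\Sh)\subseteq\bar h(\widetilde{Z}_1)$ has finite index, and then argues that $\bar h(\widetilde{B}_\Sh)$ is isolated in $\bar h(\widetilde{Z})$ — passing through each solution $H$, whose associated map $\pi_H\bar h$ has image a finitely generated free abelian subgroup of the torsion-free group $\GG$, and using $K=\bigcap_H\Ker(\pi_H\bar h)$. I expect this isolatedness statement to be the main obstacle: it is where the torsion-freeness of $\GG$ and the precise way in which the ``short'' non-tree cycles already account, up to finite index, for all of $\bar h(\widetilde{Z}_1)$ have to be combined rather than merely invoked.

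Granting the equality, the proof concludes routinely. Take $\widetilde{C}^{(1)}$ to be the union of the basis $\{\tilde{\cc}_e\mid e\notin T,\ e\in\Sh\}$ of $\widetilde{B}_\Sh$ together with an arbitrary basis of $W_0\subseteq K$, and $\widetilde{C}^{(2)}$ an arbitrary basis of $\widetilde{Z}_2$. Since the abelianisation of a finitely generated free group induces a surjection of its automorphism group onto $GL_n(\Z)$ (the content of Proposition I.4.4 of \cite{LS}), the basis $\widetilde{C}^{(1)}\cup\widetilde{C}^{(2)}$ of $\widetilde{Z}$ lifts to a basis $C^{(1)}\cup C^{(2)}$ of $\pi_1(\Gamma,v_\Gamma)$; moreover, using transvections of the free group, the realising automorphism can be taken to fix the generators $\cc_e$ with $e\notin T$, $e\in\Sh$, so that these occur unchanged in $C^{(1)}$, while each remaining element $\cc\in C^{(1)}$ lifts a basis element of $W_0$ and therefore satisfies $h(\cc)=\bar h(\tilde{\cc})=1$ in $\GG_{R(\Omega^\ast)}$, i.e. $H(\cc)=1$ for every solution $H$. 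This yields the claim.
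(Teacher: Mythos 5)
Your framework is essentially the paper's: you pass to the abelianised cycle space $\widetilde{Z}$, observe that the elements $\tilde b_\mu$, $\mu\in\P$, are annihilated by every solution, note that $\{\tilde{\cc}_e \mid e\notin T,\ e\in\Sh\}$ is part of a basis of $\widetilde{Z}_1$, and aim to complete it by elements killed by all solutions, finishing via Remark \ref{rem:hab} and torsion-freeness. The routine parts are fine (purity of your $K$, $\widetilde{B}_\mu\subseteq K$, splitting off $\widetilde{B}_{\Sh}$, lifting a basis of $\widetilde{Z}$ to a basis of $\pi_1(\Gamma,v_\Gamma)$ keeping the cycles $\cc_e$, $e\notin T$, $e\in\Sh$, untouched), with the small caveat that your asserted equivalence between ``$H(\cc)=1$ for all solutions'' and ``$h(\cc)=1$ in $\GG_{R(\Omega^\ast)}$'' is only valid in the direction you actually use, since $R(\Omega^\ast)$ is determined by all solutions of the system $\Omega^\ast$ in $\GG$, not only by solutions of the generalised equation.

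The genuine gap is the step you yourself flag and never carry out: the equality $\bar h(\widetilde{Z}_1)=\bar h(\widetilde{B}_{\Sh})$, equivalently $\widetilde{Z}_1=\widetilde{B}_{\Sh}+(\widetilde{Z}_1\cap K)$. This is not a formal consequence of what you have established: the sum of a primitive direct summand and an isolated subgroup of a free abelian group can be a proper finite-index subgroup (already in $\Z^2$ the subgroup $\langle(1,0)\rangle+\langle(1,2)\rangle$ has index two), so finite index of $\widetilde{B}$ in $\widetilde{Z}_1$ together with purity of $K$ does not yield the desired decomposition, and the ``isolatedness of $\bar h(\widetilde{B}_{\Sh})$ in $\bar h(\widetilde{Z})$'' you would need is exactly the content of the lemma rather than something that can be invoked. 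This is precisely the point at which the paper's argument does its work: it chooses the completing elements $\tilde{\cc}_1,\dots,\tilde{\cc}_k$ of the basis of $\widetilde{Z}_1$ so that $n_{\cc_i}\tilde{\cc}_i=\sum_{\mu\in\P} n_\mu\tilde b_\mu$, with no contribution from the $\tilde{\cc}_e$, $e\in\Sh$, after which $H(\cc_i)^{n_{\cc_i}}=\prod_\mu H(b_\mu)^{n_\mu}=1$ and $H(\cc_i)=1$ because $\GG$ is torsion-free. So what you have written is a correct reduction of the lemma to its crux, not a proof of it; the missing choice of the completing elements (or an argument replacing it) must be supplied.
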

\begin{proof}
The set $\{\tilde{\cc}_e \mid e\notin T, e\in \Sh\}$ is a subset of the set of generators of $\widetilde{Z}$ contained in $\widetilde{Z}_1$. Thus, this set can be extended to a basis of $\widetilde{Z}_1$. Since, by (\ref{2.54}),  $[\widetilde{Z}_1 : \widetilde{B}] < \infty$, for every $\tilde \cc\in \widetilde{Z}_1$ there exists $n_\cc\in \N$ such that
$$
n_\cc \tilde \cc=\sum\limits_{e \not \in T,  e\in \Sh} n_e\tilde{\cc}_e +\sum\limits_{\mu \in {\P}} n_\mu \tilde {b}_\mu.
$$
It follows that the set $\{\tilde{\cc}_1,\dots, \tilde{\cc}_k\}$ which completes the set $\{\tilde{\cc}_e \mid e\notin T,  e\in \Sh\}$ to a basis of $\widetilde{Z}_1$, can be chosen so that the following equality holds:
$$
n_{\cc_i} \tilde{ \cc}_i= \sum\limits_{\mu \in {\P}} n_\mu \tilde {b}_\mu.
$$

Hence, by Remark \ref{rem:hab}, for any solution $H$ we have ${H(\cc_i)}^{n_{\cc_i}}=\prod\limits_{\mu \in {\P}} {H(b_\mu)}^{n_\mu}=1$. Since $\GG$ is torsion-free, we have $H(\cc_i)=1$.
\end{proof}

Let $\{e_1, \ldots, e_m\}$ be the set of edges of $T \setminus T_0$. Since $T_0$ is the spanning forest of the graph $\Gamma_0$, it follows that $h(e_1), \ldots, h(e_m) \notin h({\Sh})$; in particular, $h(e_1), \ldots, h(e_m) \in \P$.

Let $F(\Omega)$ be the free group generated by the variables of $\Omega$. Consider in the group $F(\Omega )$ a new set of generators $\bar x$ defined below (we prove that the set $\bar x$ is in fact a set of generators of $F(\Omega )$ in part (\ref{spl3}) of Lemma \ref{2.10''}).

Let $v_i$ be the origin of the edge $e_i$. We introduce new variables
\begin{equation} \label{eq:uie}
\bar u^{(i)}=\{u_{ie}\mid e\not\in T,\ e\in \Sh\}, \ \bar z^{(i)}=\{z_{ie}\mid e\not\in T, e\in \Sh\}, \hbox{ for } 1\leq i\leq m,
\end{equation}
as follows
\begin{equation}\label{2.59}
u_{ie}={h(\p(v_\Gamma,v_i))}^{-1}h(\cc_e)h(\p(v_\Gamma,v_i)),\ z_{ie}=h(e_i)^{-1}u_{ie}h(e_i).
\end{equation}
We denote by \glossary{name={$\bar u$}, description={a subset of the set $\bar x$ of generators of the coordinate group of a periodised generalised equation}, sort=U}$\bar u$ the union $\bigcup\limits_{i=1}^m \bar u^{(i)}$ and by \glossary{name={$\bar z$}, description={a subset of the set $\bar x$ of generators for the coordinate group of a periodised generalised equation}, sort=Z}$\bar z$ the union $\bigcup\limits_{i=1}^m \bar z^{(i)}$. Denote by $\bar t$ the family of variables that do not belong to closed sections from $\P$. Let
\glossary{name={$\bar x$}, description={a set of generators of the coordinate group of a periodised generalised equation}, sort=X}
$$
\bar x = \bar t \cup \{h(e) \mid e\in T_0\} \cup \bar u \cup \bar z\cup \{h(e_1),\dots,h(e_m)\}\cup h(C^{(1)})\cup h(C^{(2)})
$$
\begin{rem}
Note that without loss of generality we may assume that $v_\Gamma$ corresponds to the beginning of the period $P$. Indeed, it follows from the definition of the periodic structure that for any cyclic permutation $P'=P_2P_1$ of $P$, we have $\P(H,P)=\P(H,P')$.
\end{rem}

\begin{lem} \label{lem:spl1}
Let $\Omega $ be a generalised equation periodised with respect to a periodic structure $\langle {\P},R\rangle$. Then for any cycle $\cc_{e_0}$ such that $h(e_0)\notin \P$ and for any solution $H$ of $\Omega$ periodic with respect to a period $P$, such that ${\P}(H,P)=\langle {\P},R\rangle $ one has $H(\cc_{e_0})=P^n$, where $|n|\le 2\rho$. In particular, one can choose a basis $C^{(1)}$ in such a way that for any $\cc\in C^{(1)}$ one has $H(\cc)=P^n$, where $|n|\le 2\rho$.
\end{lem}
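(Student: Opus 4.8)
The plan is to analyze the cycle $\cc_{e_0}$ by tracing how the solution $H$ evaluates along its edges, using Lemma \ref{2.9} to control the answer. Since $h(e_0) \notin \P$, the edge $e_0$ lies in $\Sh(\Gamma)$, so the label $H(e_0)$ is short: by the definition of the periodic structure $\P(H,P)$, the variables not in $\P$ correspond to items $h_i$ with $|H_i| < 2|P|$, so $|H(e_0)| < 2|P|$. The cycle $\cc_{e_0} = \p(v_\Gamma, v) e_0 (\p(v_\Gamma, v'))^{-1}$ is a cycle based at the fixed vertex $v_\Gamma$, which (by the remark preceding the lemma) we may take to correspond to the beginning of the period $P$, i.e. $\delta(v_\Gamma) = P \cdot 1$ (equivalently $P_1 = P$, $P_2 = 1$ in the notation of the definition of $\delta$). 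Then Lemma \ref{2.9} applies directly: $H(\cc_{e_0}) = (P_2 P_1)^n = P^n$ for some $n \in \Z$. So the only real content is the bound $|n| \le 2\rho$.

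To bound $|n|$, first I would observe that the integer $n$ is determined by a telescoping computation along the path: writing $\cc_{e_0} = e_1^{\pm} \cdots e_j^{\pm}$ as a product of edges in $\Gamma$, and using (as in the proof of Lemma \ref{2.9}) that for each edge $e : v \to v'$ one has $H(e) = P_2 P^{n_e} P_1'$ where $\delta(v) = P_1 P_2$, $\delta(v') = P_1' P_2'$, the exponents $n_e$ add up with signs to give $n$ once the boundary decompositions cancel telescopically. The key point is that each individual $H(e)$, with $e$ an edge of $\Gamma$ labelled by some item $h_k$, satisfies $|H(e)| = |H_k| \le |H|$, and in fact the whole quantity $\sum |H_k|$ over items in the periodic structure is at most $|H|$ which is $\le \rho \cdot \max_i |H_i|$; more usefully, $H(\cc_{e_0})$ has length bounded by the sum of the lengths of the items occurring on the path. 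Since $\cc_{e_0}$ uses each edge of the spanning tree $T$ at most twice (once in $\p(v_\Gamma,v)$ and once in $\p(v_\Gamma,v')^{-1}$) plus the single edge $e_0$, and $\Gamma$ has at most $\rho$ edges (one per variable of $\Omega$), one gets $|H(\cc_{e_0})| \le 2 \sum_k |H_k| \le 2|P| \cdot (\text{something})$ — here I must be careful: the tree-edge labels are not all short. The clean estimate instead is to count how many powers of $P$ can fit: each item $h_k$ on a $P$-periodic section contributes a label that is a subword of $P^\infty$ of length $|H_k|$, and the contribution to the $P$-exponent of the telescoped product from the edge labelled $h_k$ is at most $|H_k|/|P| + 1$ in absolute value; summing over the at most $\rho$ items involved and using that only boundary-offsets of size $< |P|$ survive the telescoping gives $|n| \le 2\rho$.

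The step I expect to be the main obstacle is making the telescoping bound tight enough to land exactly at $2\rho$ rather than some larger multiple of $\rho$: one has to exploit that consecutive edges share a common vertex, so the ``boundary pieces'' $P_1, P_2$ cancel in pairs and only contribute a bounded overflow per vertex, and that the number of vertices of $\Gamma$ (being $R$-equivalence classes of boundaries) is also at most $\rho$. I would handle this by fixing a reduced expression for the path $\p(v_\Gamma, v)$ in the tree $T$, computing $H(\p(v_\Gamma,v))$ as a subword of $P^\infty$ of length less than $\rho|P|$ (since each of the at most $\rho$ items contributes a subword of $P^\infty$, and the concatenation of subwords of $P^\infty$ along a path in $\Gamma$ is again a subword of $P^\infty$ by the compatibility condition (f) and Remark \ref{rem:subword}), doing the same for $\p(v_\Gamma,v')$, and for $H(e_0)$ using $|H(e_0)| < 2|P|$; then $H(\cc_{e_0}) = P^n$ forces $|n| \cdot |P| \le |H(\p(v_\Gamma,v))| + |H(e_0)| + |H(\p(v_\Gamma,v'))| < \rho|P| + 2|P| + \rho|P|$, which after the more careful bookkeeping (using that the path labels are actual subwords of $P^\infty$ realizing at most $\rho - 1$ and $\rho - 1$ full periods respectively, plus the short middle edge) yields $|n| \le 2\rho$. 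Finally, for the ``in particular'' clause: since each generator $\cc \in C^{(1)}$ is, by Lemma \ref{cl:propc1} (after choosing the basis appropriately), either some $\cc_e$ with $e \notin T$, $e \in \Sh$ — to which the bound just proved applies since $h(e) \notin \P$ — or satisfies $H(\cc) = 1 = P^0$, the claim holds for every $\cc \in C^{(1)}$ with $|n| \le 2\rho$.
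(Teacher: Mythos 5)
Your reduction to Lemma \ref{2.9} (giving $H(\cc_{e_0})=P^n$) and your handling of the ``in particular'' clause via Lemma \ref{cl:propc1} are fine, but the bound $|n|\le 2\rho$ is where your argument has a genuine gap. Both of your estimates rest on treating every edge label along the paths $\p(v_\Gamma,v)$ and $\p(v_\Gamma,v')$ as short: you sum contributions ``at most $|H_k|/|P|+1$'' over at most $\rho$ items and conclude $|n|\le 2\rho$, and later you assert $|H(\p(v_\Gamma,v))|<\rho|P|$ because each item ``contributes a subword of $P^\infty$''. Neither claim is justified: these paths lie in $T$, not in $T_0$, so they may traverse edges of $T\setminus T_0$, which are labelled by items $h_k\in\P$ with $|H_k|\ge 2|P|$ and no upper bound in terms of $\rho$ and $|P|$ at all. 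A subword of $P^\infty$ can be arbitrarily long, and the exponents $n_e$ of such long edges do \emph{not} cancel in your telescoping (only the common initial segment of the two tree-paths cancels, and only the $\delta$-decomposition pieces mesh at shared vertices); so $\sum(|H_k|/|P|+1)$ is not bounded by $2\rho$, and the final inequality $|n|\,|P|<\rho|P|+2|P|+\rho|P|$ is built on a false premise. You flagged the issue yourself (``the tree-edge labels are not all short'') but the proposed fix does not remove it.

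The missing idea, which is exactly what the paper's proof supplies, is structural rather than numerical: since $h(e_0)\notin\P$, the edge $e_0$ lies in $\Gamma_0$, so its endpoints lie in the same connected component of $\Gamma_0$ and are joined by a path $\s$ in the subforest $T_0$; by uniqueness of reduced paths in the tree $T$ (and $T_0\subseteq T$), the divergent portions of $\p(v_\Gamma,v)$ and $\p(v_\Gamma,v')$ coincide with $\s$, so $\cc_{e_0}=\p_2\,\cc'_{e_0}\,\p_2^{-1}$ for a simple cycle $\cc'_{e_0}$ lying entirely in $\Gamma_0$. Conjugation does not change the exponent (apply Lemma \ref{2.9} at both base vertices), and now every edge of $\cc'_{e_0}$ is labelled by an item $h_k\notin\P$ with $|H_k|\le 2|P|$; since the cycle is simple it has at most $\rho$ edges, whence $|H(\cc_{e_0})|=|H(\cc'_{e_0})|\le 2\rho|P|$ and $|n|\le 2\rho$. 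Without this reduction to a cycle with all short edges, no count of edges can give the bound, because long tree edges would contribute arbitrarily large exponents. To repair your proof you should insert this $T_0$-geodesic argument before doing any length bookkeeping.
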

\begin{proof}
Let $\cc_{e_0}$ be a cycle such that the edges of $\cc_{e_0}$ are labelled by variables $h_k$, $h_k\notin \P$. Observe that $e_0 = \p_1 \cc_{e_0} \p_2$, where $\p_1$ and $\p_2$ are paths in the tree $T$. Since $e_0 \in {\Gamma_0}$, it follows that the origin and the terminus of the edge $e_0$ lie in the same connected component of the graph $\Gamma_0$ and, consequently, are connected by a path $\s$ in the forest $T_0$. Furthermore, $\p_1$ and $\s\p_2^{-1}$ are paths in the tree $T$ connecting the same vertices; therefore, $\p_1 = \s \p_2^{-1}$. Hence, $\cc_{e_0} = \p_2 \cc'_{e_0} \p_2^{-1}$, where $\cc'_{e_0}$ is a certain cycle based at the vertex $v_\Gamma'$ in the graph $\Gamma_0$.

From the equality $H(\cc_{e_0}) = H(\p_2) H(\cc'_{e_0}) {H(\p_2)}^{-1}$ and by Lemma \ref{2.9}, we get that $P^{n_{e_0}}=P^{n_2}P_1(P_2P_1)^{n_{e_0}'}P_1^{-1}P^{-n_2}$, where $\delta(v_\Gamma')=P_1P_2$, $n_{e_0}=\e(H(\cc_{e_0}))$, $n_{e_0}'=\e(H(\cc'_{e_0}))$, $n_2=\e(H(\p_2))$. Hence $n_{e_0}=n_{e_0}'$ and thus $|H(\cc_{e_0})| = |H(\cc'_{e_0})|$.  From the construction of ${\P}(H, P)$, it follows that the inequality $|H_k| \le 2 |P|$ holds for every item $h_k \not \in {\P}$. Since the cycle $\cc'_{e_0}$ is simple, we have that $|H(\cc_{e_0})| = |H(\cc'_{e_0})|\le 2 \rho |P|$.

In particular, one has that $|\e(H(\cc_e))| \leq 2 \rho$ for every $e \notin T$, $e \in \Sh$. By Lemma \ref{cl:propc1} one can choose a basis $C^{(1)}$ such that for every $\cc\in C^{(1)}$ either $\cc=\cc_e$ and $|\e(H(\cc_e))| \leq 2 \rho$, or $H(\cc)=1$.
\end{proof}

The following lemma describes a generating set and the group of automorphisms $\AA(\Omega)$ of the coordinate group $\GG_{R(\Upsilon^*)}$.

\begin{lem} \label{2.10''}
Let $\Omega $ be a generalised equation periodised with respect to a periodic structure $\langle {\P},R\rangle $. Then the following statements hold.
\begin{enumerate}
    \item \label{spl3} The system $\Upsilon ^\ast$ is equivalent to the union of the following two systems of equations:
            $$
                \left\{
                \begin{array}{ll}
                u_{ie}^{h(e_i)}=z_{ie},&\hbox{where } e\in T,\, e\in \Sh; 1\leq i\leq m \\
                \left[u_{ie_1},u_{ie_2}\right]=1,&\hbox{where } e_j\in T, \,e_j\in \Sh, \, j=1,2; 1\leq i\leq m \\
                \left[h(\cc_1),h(\cc_2)\right]=1, & \hbox{where } \cc_1,\cc_2\in C^{(1)}\cup C^{(2)}
                \end{array}
                \right.
            $$
        and a system:
            $$
            \Psi \left( \{h(e)\,\mid\, e\in T, e\in \Sh\}, h(C^{(1)}), \bar t,\bar u,\bar z,\cA\right)=1,
            $$
        such that neither $h(e_i)$, $1\le i\le m$, nor $h(C^{(2)})$ occurs in $\Psi$.

    \item \label{spl4} If $\cc$ is a cycle based at the origin of $e_i$, then the transformation $h(e_i)\rightarrow h(\cc)h(e_i)$ which is identical on all the other elements from $\cA\cup\bar x$, extends to a $\GG$-automorphism of $\GG_{R(\Upsilon^\ast)}$.

    \item \label{spl5} If $\cc\in C^{(2)}$ and $\cc'\in C^{(1)} \cup C^{(2)}$, $\cc'\ne \cc$, then the transformation defined by $h(\cc)\rightarrow h(\cc')h(\cc)$, which is identical on all the other elements from $\cA\cup\bar x$, extends to a $\GG$-automorphism of $\GG_{R(\Upsilon^\ast)}$.
\end{enumerate}
\end{lem}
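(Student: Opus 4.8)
The plan is to prove (\ref{spl3}) first, by a change of generating set of the coordinate group, and then to read off (\ref{spl4}) and (\ref{spl5}) from the presentation of $\GG_{R(\Upsilon^\ast)}$ produced in (\ref{spl3}). After the normalisation of Remark \ref{rem:subword} we may assume $\varepsilon(\mu)=1$ for all $\mu\in\P$, so that the basic equation of a base $\mu\in\P$ reads $h(\mu)=h(\Delta(\mu))$. To prove (\ref{spl3}) I would argue as follows. The free group $F(\Omega)$ is the free product of the free group on $\bar t$ and the free group on the items lying on closed sections from $\P$; by the spanning-tree description of the fundamental group of a graph (see Proposition III.2.1, \cite{LS}) the latter has the free basis (\ref{2.52}), namely $\{h(e)\mid e\in T\}\cup\{h(\cc_e)\mid e\notin T\}$. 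I refine this basis in three steps: split $\{h(e)\mid e\in T\}=\{h(e)\mid e\in T_0\}\cup\{h(e_1),\dots,h(e_m)\}$; replace $\{h(\cc_e)\mid e\notin T\}$ by $h(C^{(1)})\cup h(C^{(2)})$ (using Proposition I.4.4, \cite{LS}, together with Lemma \ref{cl:propc1}, which ensures that every $\cc_e$ with $e\notin T$, $e\in\Sh$ is itself a member of $C^{(1)}$); and adjoin, by Tietze transformations, the abbreviations $u_{ie},z_{ie}$ of (\ref{2.59}). This presents $\GG_{R(\Upsilon^\ast)}$ over the generating set $\bar x$. Now one rewrites every defining relation in the new generators: the basic equations of the bases $\mu\in\P$ become, after conjugating by the path labels and using (\ref{2.53}) and $h(\cc_\mu)=h(b_\mu)$, the relations $h(b_\mu)=1$; since $\Omega$ is periodised (Definition \ref{2.51}), so that all cycle labels pairwise commute in $\GG_{R(\Upsilon^\ast)}$, Remark \ref{rem:hab} lets one rewrite these as relations in the commuting generators $h(C^{(1)})$, using $\tilde b_\mu\in\widetilde B\subseteq\widetilde Z_1=\langle\widetilde C^{(1)}\rangle$. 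Adding the commutators $[u_{ie_1},u_{ie_2}]=1$, $[h(\cc_1),h(\cc_2)]=1$ and the abbreviation equations $u_{ie}^{h(e_i)}=z_{ie}$ one obtains the ``standard'' system; everything remaining — the basic equations of bases outside $\P$, the boundary equations, the coefficient equations and the Tietze relations from the third step — is collected into $\Psi$.

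The step I expect to be the main obstacle is verifying that $h(e_i)$, $1\le i\le m$, and the generators $h(\cc)$ with $\cc\in C^{(2)}$ do not occur in $\Psi$. For the first: every item lying on a closed section of $\P$ occurs only in basic equations of $\P$-bases, and these were entirely absorbed into the standard system, so $h(e_i)\in\P$ cannot occur in $\Psi$ (one must also check that it does not occur in the Tietze relations defining $u_{ie},z_{ie}$, which comes down to choosing the path labels $\p(v_\Gamma,v_i)$ appropriately). For the second: since $\widetilde B\subseteq\widetilde Z_1=\langle\widetilde C^{(1)}\rangle$, the relations coming from the $h(b_\mu)$ are, modulo the commutators of the standard system, words in $h(C^{(1)})$ only, and the cycles of $C^{(2)}$ do not enter $u_{ie}$ or $z_{ie}$ (because $\cc_e\in C^{(1)}$ for $e\in\Sh$, $e\notin T$); hence $h(\cc)$ with $\cc\in C^{(2)}$ occurs only in the commutators $[h(\cc_1),h(\cc_2)]=1$ of the standard system. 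Carrying out this bookkeeping precisely — tracking how each family of defining relations transforms under the successive changes of generators — is the substance of the argument; the rest of the lemma is formal once it is in place.

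Granting (\ref{spl3}), part (\ref{spl4}) follows quickly. In the presentation of $\GG_{R(\Upsilon^\ast)}$ over $\bar x$ the generator $h(e_i)$ occurs only in the relations $u_{ie}^{h(e_i)}=z_{ie}$ ($e\notin T$, $e\in\Sh$): it is distinct from the elements of $\bar t$, $h(C^{(1)})$, $h(C^{(2)})$, it does not occur in $u_{ie}$ or $z_{ie}$, and by (\ref{spl3}) it does not occur in $\Psi$. Let $\cc$ be a cycle based at $v_i$, the origin of $e_i$ (chosen so that $h(\cc)$ is a word in $\bar x\setminus\{h(e_i)\}$). The element $u_{ie}$ is the label of the cycle $\p(v_\Gamma,v_i)^{-1}\cc_e\p(v_\Gamma,v_i)$ based at $v_i$, and $\cc$ is also based at $v_i$, so, $\Omega$ being periodised, $[h(\cc),u_{ie}]=1$ in $\GG_{R(\Upsilon^\ast)}$. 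Hence the substitution that is the identity on all other elements of $\cA\cup\bar x$ and sends $h(e_i)\mapsto h(\cc)h(e_i)$ carries $u_{ie}^{h(e_i)}=z_{ie}$ to $(h(\cc)h(e_i))^{-1}u_{ie}(h(\cc)h(e_i))=h(e_i)^{-1}h(\cc)^{-1}u_{ie}h(\cc)h(e_i)=h(e_i)^{-1}u_{ie}h(e_i)=z_{ie}$ and fixes every other defining relation; so it extends to a $\GG$-endomorphism of $\GG_{R(\Upsilon^\ast)}$. The substitution $h(e_i)\mapsto h(\cc)^{-1}h(e_i)$ extends to a $\GG$-endomorphism by the same computation and is its two-sided inverse, so the map is a $\GG$-automorphism.

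Part (\ref{spl5}) is proved in the same spirit. By (\ref{spl3}), a generator $h(\cc)$ with $\cc\in C^{(2)}$ occurs in the presentation of $\GG_{R(\Upsilon^\ast)}$ over $\bar x$ only in the commutators $[h(\cc),h(\cc'')]=1$ with $\cc''\in C^{(1)}\cup C^{(2)}$, $\cc''\ne\cc$ (it does not occur in $\Psi$, nor in $u_{ie}$, $z_{ie}$, nor in $u_{ie}^{h(e_i)}=z_{ie}$). Given $\cc'\in C^{(1)}\cup C^{(2)}$ with $\cc'\ne\cc$, the substitution that is the identity on all other elements of $\cA\cup\bar x$ and sends $h(\cc)\mapsto h(\cc')h(\cc)$ carries each such relation to $[h(\cc')h(\cc),h(\cc'')]=1$, which holds because $[h(\cc'),h(\cc'')]=1$ (a defining relation when $\cc'\ne\cc''$, and trivial otherwise) and $[h(\cc),h(\cc'')]=1$; it fixes every other defining relation, hence extends to a $\GG$-endomorphism of $\GG_{R(\Upsilon^\ast)}$, with $h(\cc)\mapsto h(\cc')^{-1}h(\cc)$ as its two-sided inverse. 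Therefore it is a $\GG$-automorphism.
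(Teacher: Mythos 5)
Your overall route is the same as the paper's: change generators to $\bar x$, rewrite the defining relations, invoke periodisation and Remark \ref{rem:hab} for the $\P$-bases, and then verify parts (\ref{spl4}) and (\ref{spl5}) by checking that the substitutions preserve the presentation $\Psi\cup\mathcal{O}$ (your explicit computation with $[h(\cc),u_{ie}]=1$ and the inverse substitutions is exactly the content of the paper's ``it is easy to check that $\varphi(\Psi)=\Psi$ and $\varphi(\mathcal{O})\subseteq R(\Psi\cup\mathcal{O})$''). However, in part (\ref{spl3}) there is a genuine gap, and it sits precisely at the step you flag as the main obstacle. Your justification that $h(e_i)$ does not occur in $\Psi$ is that the item $h(e_i)$, being in $\P$, occurs only in basic equations of bases from $\P$. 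That only accounts for direct occurrences of the item $h(e_i)$ in the original equations. The danger is elsewhere: when you rewrite the basic equations of bases \emph{not} in $\P$ in the new generators, any short item $h_k=h(e)$ with $e\in\Sh$, $e\notin T$ must be expressed as $h(\s)h(\p_2)^{-1}h(\cc_e)h(\p_2)$, where $\p_2$ is a path in $T$ that may pass through the edges $e_1,\dots,e_m$; this is how $h(e_i)$ threatens to enter $\Psi$. The heart of the paper's proof of (\ref{spl3}) is the verification that every such occurrence of $h(e_i)$ appears only inside subwords of the form $h(e_i)^{\mp1}h(\cc)h(e_i)^{\pm1}$ with $\cc$ a cycle at the origin of $e_i^{\pm1}$, so that after introducing $\bar u$ and $\bar z$ (this is the raison d'\^etre of those generators) the variable $h(e_i)$ disappears from the rewritten equations. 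Your parenthetical about ``Tietze relations defining $u_{ie},z_{ie}$'' and ``choosing the path labels $\p(v_\Gamma,v_i)$ appropriately'' is aimed at the wrong place and does not supply this analysis; without it the claim that neither $h(e_i)$ nor $h(C^{(2)})$ occurs in $\Psi$ is unproved, and parts (\ref{spl4}) and (\ref{spl5}) depend on it.

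Two smaller points. The assertion that the basic equations of bases in $\P$ are ``entirely absorbed into the standard system'' is a misstatement: after conjugation and the use of $\tilde b_\mu\in\widetilde B\subseteq\widetilde Z_1$ they become, modulo the commutators of $\mathcal{O}$, words in $h(C^{(1)})$, and as such they belong to $\Psi$ (which is allowed to contain $h(C^{(1)})$); this does not affect the argument but should be stated correctly. Also, the opening claim that ``every item lying on a closed section of $\P$ occurs only in basic equations of $\P$-bases'' is false for short items (condition (a) of Definition \ref{above} only forces $\mu\in\P$ when the item is in $\P$); it happens to be true for the items $h(e_i)$, which is all you use, but as written it is misleading and, as explained above, in any case not the right reason for the absence of $h(e_i)$ from $\Psi$.
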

\begin{proof}
We first prove (\ref{spl3}). It is easy to check that (in the above notation):
\begin{gather}\notag
\begin{split}
\{ h_1,\dots, h_\rho\}=& \{h(e)\, \mid \, h(e)\in \sigma, \sigma \notin \P\}\, \cup\\
&\{h(e) \,\mid\, e\in T_0\} \cup  \{h(e) \,\mid\, e\notin T_0, e\in \Sh\}\, \cup \\
&\{h(e) \,\mid\, e\in T\setminus T_0\} \cup \{h(e) \,\mid\, e\notin T, e\notin \Sh\}.
\end{split}
\end{gather}
From Equation (\ref{2.52}) and the discussion above it follows that
\begin{equation} \label{eq:genset}
\langle h_1,\dots, h_\rho\rangle= \left< \bar t \cup \{h(e) \mid e\in T_0\} \cup \bar u\cup \bar z\cup \{h(e_1),\dots,h(e_m)\}\cup h(C^{(1)})\cup h(C^{(2)})\right>=\langle \bar x\rangle,
\end{equation}
i.e. the sets $h$ and $\bar x$ generate the same free group $F(\Omega)$.

We now rewrite the equations from $\Upsilon$ in terms of the new set of generators $\bar x$.

We first consider the relations induced by bases which do not belong to $\P$, i.e. basic equations of the form $h(\mu)=h(\Delta(\mu))$, where $\mu$ is a variable base, $\mu\notin \P$. By construction of the generating set $\bar x$, all the items $h_k$ which appear in these relations belong to the set
$$
\bar t \cup \{h(e) \,\mid\, e\in T_0\} \cup  \{h(e) \,\mid\, e\notin T_0, e\in \Sh\}= \bar t \cup M_1\cup M_2.
$$
The elements of the sets $\bar t$ and $M_1$ are generators in both basis. We now study how elements of $M_2$ rewrite in the new basis.
Let $h(e) = h_k$, $e\notin T_0$, $e\in \Sh$. We have $e=\s\p_2^{-1}\cc_e\p_2$ and
\begin{equation} \label{eq:obt}
h_k = h(\s) h(\p_2)^{-1} h(\cc_e)h(\p_2),
\end{equation}
where $\s$ is a path in $T_0$ and $\p_2$ is a path in $T$ (see proof of Lemma \ref{lem:spl1}). The variables $h(e_i)$, $1 \leq i \leq m$ can occur in the right-hand side of Equation (\ref{eq:obt}) (written in the basis $\bar{x}$) only in $h(\p_2)^{\pm 1}$ and at most once. Moreover, the sign of this occurrence (if it exists) depends only on the orientation of the edge $e_i$ with respect to the root $v_\Gamma$ of the tree $T$. If $\p_2 = \p_2' e_i^{\pm 1}\p_2 ''$, then all the occurrences of the variable $h(e_i)$ in the words $h_k$ written in the basis $\bar{x}$, with $h_k \not\in {\P}$, are contained in the subwords of the form $h(e_i)^{\mp 1} h((\p_2')^{-1}\cc_e\p_2')h(e_i)^{\pm 1}$, i.e. in the subwords of the form $h(e_i)^{\mp 1} h(\cc) h(e_i)^{\pm 1}$, where $\cc$ is a certain cycle in the graph $\Gamma$ based at the origin of the edge $e_i^{\pm 1}$. So the variable $h_k$ rewrites as a word in the generators $\bar u$, $\bar z$, $\{h(e) \,\mid\, e\in T_0 \}$.

Summarising, the basic equations corresponding to variable bases which do not belong to $\P$ rewrite in the new basis as words in $\bar t$, $\{ h(e) \,\mid\, e\in T_0\}$,  $\bar u$ and $\bar z$.

In the new basis, the relations of the form $h(\mu)=h(\Delta(\mu))$, where $\mu\in \P$, are, modulo commutators, words in $h(C^{(1)})$, see Remark \ref{rem:hab}.

Since $\Omega$ is periodised with respect to $\langle \P, R\rangle$, we have
\begin{equation}\label{2.61}
[u_{ie_1},u_{ie_2}]=1 \hbox{ and } [h(\cc_1),h(\cc_2)]=1, \ \cc_1,\cc_2\in C^{(1)}\cup C^{(2)}.
\end{equation}

It follows that the system $\Upsilon ^\ast$ is equivalent to the union of the following two systems of equations in the new variables, a system
$$
\mathcal{O}=\left\{
\begin{array}{ll}
u_{ie}^{h(e_i)}=z_{ie},&\hbox{where } e\in T,\, e\in \Sh; 1\leq i\leq m \\
\left[u_{ie_1},u_{ie_2}\right]=1,&\hbox{where } e_j\in T, \,e_j\in \Sh, \, j=1,2; 1\leq i\leq m  \\
\left[h(\cc_1),h(\cc_2)\right]=1, & \hbox{where } \cc_1,\cc_2\in C^{(1)}\cup C^{(2)}
\end{array}
\right.
$$
and a system (defined by the equations from $\Upsilon$):
$$
\Psi \left( \{h(e)\,\mid\, e\in T, h(e)\notin {\P}\}, h(C^{(1)}), \bar t,\bar u,\bar z,\cA\right)=1,
$$
such that neither $h(e_i)$, $1\le i\le m$, nor $h(C^{(2)})$ occurs in $\Psi$.

The transformations from statements (\ref{spl4}) and (\ref{spl5}) extend to an automorphism $\varphi$ of $\GG_{R(\Upsilon^*)}$. Indeed, by the universal property of the quotient, the following diagram commutes
$$
\xymatrix{
  \GG[\bar x] \ar[d] \ar[r]^{\varphi} & \GG[\bar x] \ar[d]  \\
 \GG_{R(\Psi\cup \mathcal{O})}  \ar@{-->>}[r]^{\tilde\varphi}  &\GG_{R(\Psi\cup \mathcal{O}\cup \varphi(\mathcal{O}))}
 }
$$
It is easy to check that $\varphi(\Psi)=\Psi$ and that $\varphi(\mathcal{O}) \subseteq R(\Psi\cup \mathcal{O})$. Therefore, since by statement (\ref{spl3}) of the lemma the system $\Psi\cup \mathcal{O}$ is equivalent to $\Upsilon^*$, we get that $\tilde\varphi$ is an automorphism of $\GG_{R(\Upsilon^*)}$.
\end{proof}

\begin{defn} \label{defn:singreg}
Let $\Omega=\gpo$ be a generalised equation and let $\langle \P,R\rangle$ be a connected periodic structure on $\Omega$. We say that the generalised equation $\Omega$ is \index{generalised equation!strongly singular with respect to a periodic structure}\emph{strongly singular with respect to the periodic structure $\langle \P,R\rangle$} if one of the following conditions holds.
\begin{itemize}
\item[(a)] The generalised equation $\Omega$ is not periodised with respect to the periodic structure $\langle \P,R\rangle$.
\item[(b)] The generalised equation $\Omega$ is periodised with respect to the periodic structure $\langle \P,R\rangle$ and there exists an automorphism $\varphi$ of the coordinate group $\GG_{R(\Upsilon^*)}$ of the form described in parts (\ref{spl4}) or (\ref{spl5}) of Lemma \ref{2.10''}, such that $\varphi$ does not induce an automorphism of $\GG_{R(\Omega^*)}$.
\end{itemize}

We say that the generalised equation $\Omega$ is \index{generalised equation!singular with respect to a periodic structure}\emph{singular with respect to the periodic structure $\langle \P,R\rangle$} if $\Omega$ is not strongly singular with respect to the periodic structure $\langle \P,R\rangle$ and one of the following conditions holds
\begin{itemize}
\item[(a)] The set $C^{(2)}$ has more than one element.
\item[(b)] The set $C^{(2)}$ has exactly one element, and (in the above notation) there exists a cycle $\cc_{e_0}\in \langle C^{(1)}\rangle$, $h(e_0)\notin \P$ such that $h(\cc_{e_0})\ne 1$ in $\GG_{R(\Omega^\ast)}$.
\end{itemize}

Otherwise, we say that $\Omega$ is \index{generalised equation!regular with respect to a periodic structure}\emph{regular with respect to the periodic structure $\langle \P,R\rangle$}. In particular if $\Omega$ is singular or regular with respect to the periodic structure  $\langle \P,R\rangle$ then $\Omega$ is periodised.

When no confusion arises, instead of saying that $\Omega$ is (strongly) singular (or regular) with respect to the periodic structure $\langle \P,R\rangle$ we say that the periodic structure $\langle \P,R\rangle$ is \index{periodic structure!strongly singular!of type (a)}\index{periodic structure!strongly singular!of type (b)}(\emph{strongly}) \index{periodic structure!singular}\emph{singular} (or \index{periodic structure!regular}\emph{regular}).
\end{defn}

\begin{defn} \label{defn:AA}
Let $\Omega=\gpo$ be a generalised equation and let  $\langle \P, R\rangle $ be a periodic structure on $\Omega$. If $\Omega$ is strongly singular  with respect to $\langle \P, R\rangle$, then we define the group \glossary{name={$\AA(\Omega)$}, description={finitely generated group of automorphisms of $\GG_{R(\Omega^\ast)}$ associated with a periodic structure on $\Omega$}, sort=A} $\AA(\Omega)$ of automorphisms of $\GG_{R(\Omega^\ast)}$ to be trivial.

Otherwise, i.e. if $\langle \P, R\rangle $ is singular or regular, we set  $\AA(\Omega)$ to be the group of automorphisms of $\GG_{R(\Omega^\ast)}$ generated by the automorphisms induced by the automorphisms of $\GG_{R(\Upsilon^*)}$ described in statements (\ref{spl4}) and (\ref{spl5}) of Lemma \ref{2.10''}. Note that by definition of singular and regular periodic structures, every such automorphism of $\GG_{R(\Upsilon^*)}$ induces an automorphism of $\GG_{R(\Omega^\ast)}$  and, therefore the group $\AA(\Omega)$ is finitely generated.
\end{defn}

\subsection{Example} \label{sec:explperstr}

In this section we give an example of a generalised equation $\Omega$, a periodic structure on the generalised equation $\Omega$ and some of the constructions used in Section \ref{sec:perstr}.

Let $\Omega$ be a generalised equation shown on Figure \ref{quadratic} (the one to which the entire transformation is applied). Let $\langle\P,R\rangle$ be a periodic structure on $\Omega$ defined as follows. Set $h_1,h_3,h_4,h_5,h_7\in \P$. From part (a) of Definition \ref{above} it follows that
$$
\lambda_1,\lambda_3, \nu, \Delta(\lambda_1), \Delta(\lambda_3), \Delta(\nu) \in \P.
$$
From  part (c) of Definition \ref{above} we get that then $\sigma=[1,8]\in \P$. The set $\B$ in part (e) of Definition \ref{above} is defined to be $\{1,\dots,8\}$. For the bases $\lambda_1,\lambda_3, \nu$ and their duals, we have
\begin{gather}\notag
\begin{split}
&[1=\alpha(\lambda_1)]\,\sim_R\,[\alpha(\Delta(\lambda_1))=5=\beta(\mu)]\, \sim_R \, [ \beta(\Delta(\mu))=8=\beta(\Delta(\lambda_3))]\, \sim_R\, [\beta(\lambda_3)=4];\\
&[2=\beta(\lambda_1)]\, \sim_R\, [\beta(\Delta(\lambda_1))=6];\\
&[3=\alpha(\lambda_3)]\, \sim_R\, [\alpha(\Delta(\lambda_3))=7].
\end{split}
\end{gather}
Therefore, there are three $R$-equivalence  classes: $\{1,4,5,8\}, \{2,6\},\{3,7\}$.

Suppose that $\Omega$ is not strongly singular with respect to the periodic structure $\langle\P, R\rangle$. Note that, using the decidability of the universal theory of $\GG$, one could check if $\langle\P, R\rangle$ is strongly singular or not.

We construct the graph $\Gamma$ of the periodic structure $\langle\P,R\rangle$ as in Definition \ref{defn:graphperstr}. The graph $\Gamma$ is shown on Figure \ref{fig:perstgraph}. The edges $e_i$ of the graph $\Gamma$ are labelled by the corresponding items $h_i$. In this example the set $\Sh(\Gamma)$ equals $\{e_2,e_6\}$. We take the subgraph $\Gamma_0$ of $\Gamma$ whose edges are elements of $\Sh$ and choose a maximal subtree $T_0$ of $\Gamma_0$ as shown on Figure \ref{fig:perstgraph}. We extend the tree $T_0$ to a maximal subtree $T$ of $\Gamma$, which is also shown on Figure \ref{fig:perstgraph}.

\begin{figure}[!h]
  \centering
   \includegraphics[keepaspectratio,width=5in]{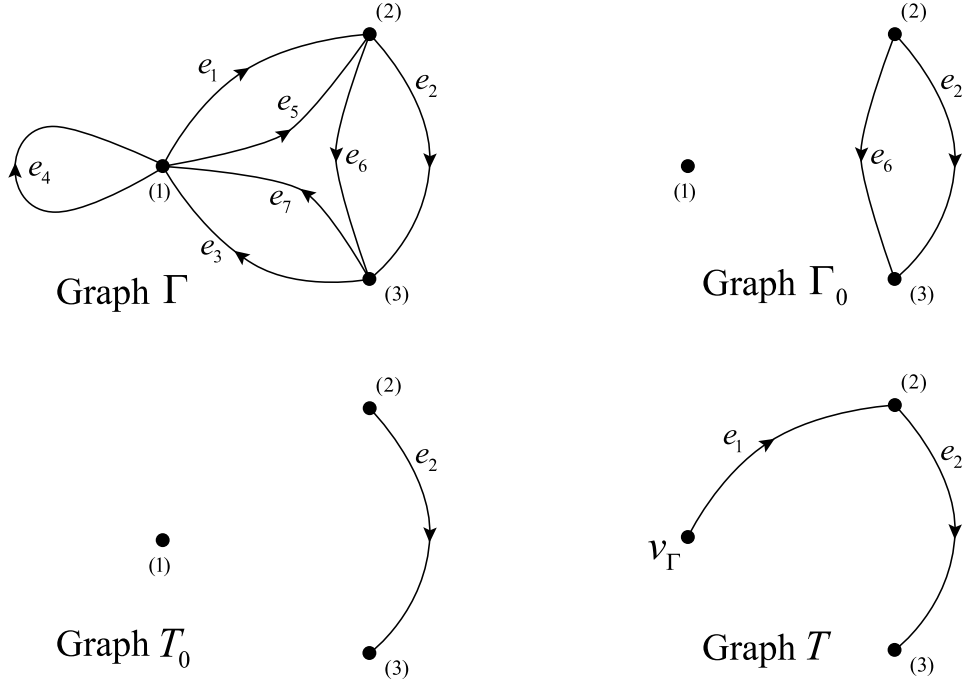}
\caption{The graphs   $\Gamma$, $\Gamma_0$, $T_0$ and $T$.} \label{fig:perstgraph}
\end{figure}

We fix a vertex  $v_\Gamma$ in $\Gamma$. Then the fundamental group $\pi_1(\Gamma,v_\Gamma)$ of $\Gamma$ with respect to $v_\Gamma$ is generated by the cycles $\cc_{e_3},\cc_{e_4},\cc_{e_5},\cc_{e_6},\cc_{e_7}$:
$$
\pi_1(\Gamma,v_\Gamma)=\langle \cc_{e_3},\cc_{e_4},\cc_{e_5},\cc_{e_6},\cc_{e_7}\rangle, \hbox{ where}
$$
$$
\begin{array}{lll}
\cc_{e_3}=e_1e_2e_3; & \cc_{e_5}=e_5e_1^{-1};& \cc_{e_4}=e_4;\\
\cc_{e_6}=e_1e_6e_2^{-1}e_1^{-1};&\cc_{e_7}=e_1e_2e_7.&
\end{array}
$$

Since, by definition, for any base $\mu\in \P$ one has that $\beta(\mu)\, \sim_R\, \beta(\Delta(\mu))$, the word $h(\mu)h(\Delta(\mu))^{-1}$ labels a cycle $\cc_\mu$ in the graph $\Gamma$. For the bases ${\lambda_1},{\lambda_3},\nu \in \P$, we construct the corresponding cycles $\cc_{\lambda_1}, \cc_{\lambda_3},\cc_\nu$ as in Equation (\ref{2.53}):
$$
\cc_{\lambda_1}=e_1e_5^{-1},\quad \cc_{\lambda_3}=e_1e_2e_3e_7^{-1}e_2^{-1}e_1^{-1},\quad \cc_\nu=e_1e_2e_3e_4e_7^{-1}e_6^{-1}e_5^{-1}e_4^{-1}.
$$
Since the cycles $\cc_{\lambda_1}, \cc_{\lambda_3},\cc_\nu$ belong to $\pi_1(\Gamma,v_\Gamma)$, these cycles can be written as words $b_{\lambda_1}, b_{\lambda_3},b_\nu$ in the generators $\cc_{e_i}$, $i=3,4,5,6,7$ as follows:
$$
b_{\lambda_1}=\cc_{e_5}^{-1}, \quad b_{\lambda_3}=\cc_{e_3}\cc_{e_7}^{-1}, \quad b_\nu=\cc_{e_3}\cc_{e_4}\cc_{e_7}^{-1}\cc_{e_6}^{-1}\cc_{e_5}^{-1}\cc_{e_4}^{-1}.
$$
Let $\widetilde{Z}$ be the free abelian group generated by the images of the cycles $\cc_{e_i}$, $i=3,4,5,6,7$ in the abelianisation of $\pi_1(\Gamma,v_\Gamma)$:
$$
\widetilde{Z}=\langle\tilde{\cc}_{e_3}\rangle\oplus\langle\tilde{\cc}_{e_4}\rangle\oplus\langle\tilde{\cc}_{e_5} \rangle\oplus\langle\tilde{\cc}_{e_6}\rangle\oplus\langle\tilde{\cc}_{e_7}\rangle=\langle e_1+e_2+e_3,\, e_4,\,e_5-e_1,\,e_6-e_2,\, e_1+e_2+e_7\rangle.
$$
Consider the subgroup $\widetilde{B}$ of $\widetilde{Z}$ generated by the images of  $b_{\lambda_1}, b_{\lambda_3},b_\nu$ and $\cc_{e_6}$ in the abelianisation of $\pi_1(\Gamma,v_\Gamma)$:
$$
\widetilde{B}=\langle \tilde{b}_{\lambda_1}\rangle\oplus\langle\tilde{b}_{\lambda_3}\rangle\oplus\langle\tilde{b}_{\nu}\rangle\oplus\langle\tilde{\cc}_{e_6}\rangle=\langle e_1-e_5,\, e_3-e_7,\, e_1+e_2+e_3-e_7-e_6-e_5,\, e_6-e_5\rangle.
$$
By the classification theorem of finitely generated abelian groups, there exist free abelian groups $\widetilde{Z}_1$ and $\widetilde{Z}_2$ such that
$\widetilde{Z}=\widetilde{Z}_1 \oplus \widetilde{Z}_2$ and $[\widetilde{Z}_1:\widetilde{B}]<\infty$. To construct  this decomposition (see Equation (\ref{2.54})), we construct a matrix with rows corresponding to the decomposition of the generators of $\widetilde{B}$ in terms of the generators of $\widetilde{Z}$ and then apply the elementary transformations:
$$
\left(
  \begin{array}{ccccc}
    0 & 0 & -1 & 0 & 0 \\
    1 & 0 & 0 & 0 & -1 \\
    1 & 0 & -1& -1& -1\\
    0 & 0 & 0 & 1 & 0 \\
  \end{array}
\right)
\rightsquigarrow
\left(
  \begin{array}{ccccc}
    0 & 0 & -1 & 0 & 0 \\
    1 & 0 & 0 & 0 & 0 \\
    0 & 0 & 0& 0& 0\\
    0 & 0 & 0 & 1 & 0 \\
  \end{array}
\right)
$$
It follows that
$$
\widetilde{B}=\widetilde{Z}_1, \quad \widetilde{Z}_1=\langle e_3-e_7,\, e_5-e_1,\,e_6-e_2\rangle, \quad \widetilde{Z}_2=\langle e_4,\, e_1+e_2+e_7\rangle.
$$
The basis of $\widetilde{Z}_1$ and $\widetilde{Z}_2$ are $\widetilde{C}^{(1)}=\{e_3-e_7,\, e_5-e_1,\,e_6-e_2\}$ and  $\widetilde{C}^{(2)}=\{ e_4,\, e_1+e_2+e_7\}$, respectively.

It is easy to see that the sets
$$
C^{(1)}=\{\cc_{e_3}\cc_{e_7}^{-1}, \cc_{e_5},\cc_{e_6}\},\quad  C^{(2)}=\{ \cc_{e_4}, \cc_{e_7}\}
$$
form a base of $\pi_1(\Gamma,v_\Gamma)$ and their images in $\widetilde{Z}$ are the sets $\widetilde{C}^{(1)}=\{e_3-e_7,\, e_5-e_1,\,e_6-e_2\}$ and $\widetilde{C}^{(2)}=\{ e_4,\, e_1+e_2+e_7\}$, correspondingly. Since $|C^(2)|=2$, $\Omega$ is singular with respect to the periodic structure $\langle \P, R\rangle$.

Note that for every solution $H$ of $\Omega$ we have $H(C^{(1)})=1$. Indeed, since, by assumption, $\Omega$ is periodised with respect to $\langle \P, R\rangle$, the following equalities hold in the coordinate group $\GG_{R(\Omega^*)}$:
$$
\cc_{e_3}\cc_{e_7}^{-1}=b_{\lambda_3}, \quad \cc_{e_5}=b_{\lambda_1}^{-1},\quad \cc_{e_6}=b_{\lambda_3}^{-1}b_{\nu}b_{\lambda_1}^{-1}.
$$
The statement follows, since for any cycle $\cc_\mu$, one has $H(\cc_\mu)=1$ for any solution $H$.

We construct the set of generators $\bar x$ of $\GG_{R(\Omega^*)}$ used in Lemma \ref{2.10''}. The set $\bar x$ is a union of $\bar t=\{h_8\}$, $\left\{h_2\right\}$ (since $e_2\in T_0$), $\{u_{1e_6}=h(e_1e_6e_2^{-1}e_1^{-1})\}$, $\{z_{1e_6}=h(e_6e_2^{-1})\}$ (since $T\setminus T_0=\{e_1\}$), $\{h(e_1)\}$, $h(C^{(1)})$ and $h(C^{(2)})$.

\subsection{Strongly singular and singular cases} \label{sec:singcase}

The next lemma states that  if a generalised equation $\Omega$ is strongly singular with respect to a periodic structure, then every periodic solution of $\Omega$  is in fact a solution  of a proper quotient, which can be effectively constructed. In other words, every periodic solution of the generalised equation can be obtained as a composition of a proper epimorphism and a solution of a proper generalised equation.

\begin{lem} \label{lem:23-1ss}
Let $\Omega=\gpo$ be a generalised equation without boundary connections, strongly singular with respect to the periodic structure $\langle \P, R\rangle$. Then there exists a finite family of elements $\{g_i\}$, $g_i\in \GG_{R(\Omega^\ast)}$ so that for any solution $H$ of $\Omega$ periodic with respect to a period $P$, such that ${\P}(H,P)=\langle{\P},R\rangle$, one has that $g_i\ne 1$ in $\GG_{R(\Omega^\ast)}$ for some $i$ and $H(g_j)=1$ for all $g_j\in\{g_i\}$.
\end{lem}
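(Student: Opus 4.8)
The plan is to deal separately with the two situations in Definition~\ref{defn:singreg} under which $\Omega$ is strongly singular with respect to $\langle\P,R\rangle$, exhibiting in each case an explicit finite subset $\{g_i\}\subseteq\GG_{R(\Omega^\ast)}$ with the two required properties: it contains a non-trivial element (a property of $\Omega$ and $\langle\P,R\rangle$ alone), and every periodic solution inducing $\langle\P,R\rangle$ kills all of its elements. In both cases the second property rests on the fact, recorded in Lemmas~\ref{2.9} and~\ref{lem:spl1}, that if $H$ is periodic with $\P(H,P)=\langle\P,R\rangle$ then the $H$-value of any cycle of $\Gamma$ is a power of a cyclic conjugate of $P$.

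First suppose $\Omega$ is not periodised. Fixing $v_\Gamma$ at the start of the period $P$ (harmless, by the Remark preceding Lemma~\ref{lem:spl1}, since $\P(H,P)=\P(H,P')$ for cyclic permutations $P'$ of $P$), Definition~\ref{2.51} yields cycles $\cc_1,\cc_2$ based at a common vertex $(l)$ with $[h(\cc_1),h(\cc_2)]\neq 1$ in $\GG_{R(\Upsilon^\ast)}$; we may take the pair so that this non-triviality persists in $\GG_{R(\Omega^\ast)}$ (if every such commutator vanished in $\GG_{R(\Omega^\ast)}$ then $\Omega$ would be periodised as a constrained equation and the argument of the next paragraph would apply). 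Put $\{g_i\}=\{[h(\cc_1),h(\cc_2)]\}$. For any periodic solution $H$ with $\P(H,P)=\langle\P,R\rangle$, writing $\delta(l)=P_1P_2$, Lemma~\ref{2.9} gives $H(\cc_1)=(P_2P_1)^{n_1}$ and $H(\cc_2)=(P_2P_1)^{n_2}$, which commute, so $H([h(\cc_1),h(\cc_2)])=1$.

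Now suppose $\Omega$ is periodised but some automorphism $\varphi$ of $\GG_{R(\Upsilon^\ast)}$ of the form described in parts~(\ref{spl4})--(\ref{spl5}) of Lemma~\ref{2.10''} does not induce an automorphism of $\GG_{R(\Omega^\ast)}=\GG_{R(\Upsilon^\ast)}\big/\ncl\langle\{[h_i,h_j]\mid\Re_\Upsilon(h_i,h_j)\}\rangle$. Since $\varphi^{-1}$ is again of that form, and $\varphi$ induces an automorphism of $\GG_{R(\Omega^\ast)}$ iff both $\varphi$ and $\varphi^{-1}$ carry every commutation relation $[h_i,h_j]$ back into $\ncl\langle\cdots\rangle$, after possibly replacing $\varphi$ by $\varphi^{-1}$ we may assume the image $g$ in $\GG_{R(\Omega^\ast)}$ of $\varphi([h_i,h_j])$ is non-trivial for some pair with $\Re_\Upsilon(h_i,h_j)$. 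Take $\{g_i\}$ to be the finite set of images in $\GG_{R(\Omega^\ast)}$ of $\varphi([h_i,h_j])$, over all pairs with $\Re_\Upsilon(h_i,h_j)$. Given a periodic solution $H$ with $\P(H,P)=\langle\P,R\rangle$, let $\bar\pi_H\colon\GG_{R(\Upsilon^\ast)}\to\GG$ be the solution of $\Upsilon^\ast$ induced by $H$; the transformations in Lemma~\ref{2.10''}(\ref{spl4})--(\ref{spl5}) only multiply the $\P$-generators $h(e_i)$, resp.\ $h(\cc)$ with $\cc\in C^{(2)}$, by labels of cycles, whose $H$-values are powers of cyclic conjugates of $P$ compatible with the period; so $\bar\pi_H\circ\varphi$ is realised by a tuple $H''$ of geodesic words which is again a solution of $\Omega$ with the same periodic structure. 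Since $H''$ solves $\Omega$ it satisfies all $\Re_\Upsilon$-commutations, whence $\pi_H(g_j)=\bar\pi_H\big(\varphi([h_i,h_j])\big)=\bar\pi_{H''}\big([h_i,h_j]\big)=1$ for every $j$.

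The main obstacle is the claim in the last paragraph that $\bar\pi_H\circ\varphi$ is realised by a genuine solution $H''$ of the constrained equation $\Omega$ (not merely of $\Upsilon^\ast$), with period $P$: one must check that pre-multiplying a $\P$-variable by a power of the period leaves the value of each original variable $H_k$ a subword of $P^\infty$ and preserves all the disjointness and $\lra$-commutation conditions encoded by $\Re_\Upsilon$ --- essentially the same bookkeeping that underlies the proof that the maps in Lemma~\ref{2.10''}(\ref{spl4})--(\ref{spl5}) are automorphisms of $\GG_{R(\Upsilon^\ast)}$. The rest, including case~(a) and the non-triviality assertions, is routine.
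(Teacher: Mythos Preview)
Your treatment of case~(a) matches the paper's and is fine; the point you flag about $\GG_{R(\Upsilon^\ast)}$ versus $\GG_{R(\Omega^\ast)}$ is a genuine subtlety that the paper glosses over, and taking the whole finite family of cycle commutators (as the paper does) rather than a singleton handles it.

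In case~(b), however, there is a real gap. Your argument hinges on the claim that $H''$ (the tuple realising $\bar\pi_H\circ\varphi$) is again a solution of the \emph{constrained} equation $\Omega$, in particular that $H''_i\lra H''_j$ whenever $\Re_\Upsilon(h_i,h_j)$. You propose to verify this by ``essentially the same bookkeeping that underlies the proof that the maps in Lemma~\ref{2.10''}(\ref{spl4})--(\ref{spl5}) are automorphisms of $\GG_{R(\Upsilon^\ast)}$.'' But that proof never touches $\Re_\Upsilon$; it only shows the system $\Psi\cup\mathcal O$ is preserved. The entire content of strong singularity of type~(b) is precisely that these automorphisms \emph{fail} to respect the extra commutation relations, so you cannot appeal to that lemma to recover them. (Indeed, for the closely related singular case the paper explicitly exhibits, in Section~\ref{sec:explsing}, an $H^+$ obtained by the same kind of automorphism that is \emph{not} a solution of $\Omega$.)

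What is actually needed --- and what the paper supplies --- is an alphabet argument: one shows that $\varphi$ is the identity on every $h_k\notin\P$ (since such $h_k$ lie in $\langle\bar t,\,h(T_0),\,\bar u,\,\bar z,\,h(C^{(1)})\rangle$, on which the generators of types~(\ref{spl4})--(\ref{spl5}) act trivially), and that if $\Re_\Upsilon(h_i,h_j)$ with $h_i\in\P$ then $\az(H_i)=\az(P)$ forces $H_j\lra\az(P)$, hence $h_j\notin\P$ and $\varphi(h_j)=h_j$; finally $\az(H(\varphi(h_i)))\subseteq\az(P)$ gives $H(\varphi(h_i))\lra H_j$, so $H(\varphi([h_i,h_j]))=[H(\varphi(h_i)),H_j]=1$. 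This is the missing idea; once you have it, there is no need to assert that $H''$ is a full solution of $\Omega$.
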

\begin{proof}
Suppose that $\Omega$ is strongly singular of type (a) with respect to $\langle \P, R\rangle$. Since $\Omega$ is not periodised with respect to $\langle \P, R\rangle$ there exist two cycles $\cc_1,\cc_2\in C^{(1)}\cup C^{(2)}$ such that $g=[h(\cc_1),h(\cc_2)]\ne 1$ in $\GG_{R(\Omega^*)}$. Then, by Lemma \ref{2.9}, for every $P$-periodic solution $H$ of $\Omega$ one has $H(g)=1$. In this case we set the family of elements $\{g_i\}$ to be the finite set of commutators of the form $[h(\cc_1),h(\cc_2)]$, where $\cc_1,\cc_2\in C^{(1)}\cup C^{(2)}$.

Suppose that $\Omega$ is strongly singular of type (b) with respect to $\langle \P, R\rangle$. Notice that by the universal property of the quotient, for every automorphism $\varphi$ of the coordinate group $\GG_{R(\Upsilon^*)}$, the following diagram commutes
$$
\xymatrix{
  \GG_{R(\Upsilon^*)} \ar[d] \ar[r]^{\varphi} & \GG_{R(\Upsilon^*)} \ar[d]  \\
 \GG_{R(\Upsilon^*\cup \mathcal{C})}  \ar@{->>}[r]^{\tilde\varphi}  &\GG_{R(\Upsilon^*\cup \mathcal{C}\cup \varphi(\mathcal{C}))}
 }
$$
where $\mathcal{C}=\{[h_i,h_j]\mid \Re_\Upsilon(h_i,h_j)\}$. Note that $\tilde\varphi$ is an automorphism of $\GG_{R(\Upsilon^*\cup \mathcal{C})} =\GG_{R(\Omega^*)}$ if and only if $\varphi(\mathcal{C})\subset R(\Upsilon^*\cup \mathcal{C})$. Since $\Omega$ is strongly singular of type (b) with respect to $\langle \P, R\rangle$, there exists an automorphism $\varphi$ of $\GG_{R(\Upsilon^*)}$ described in statements (\ref{spl4}) or (\ref{spl5}) of Lemma \ref{2.10''} and a commutator $[h_i,h_j]\in \mathcal{C}$ such that $\varphi([h_i,h_j])\notin R(\Upsilon^*\cup \mathcal{C})$, thus $g=\varphi([h_i,h_j])$ is non-trivial in $\GG_{R(\Omega^*)}$. In this case, the finite set of elements of the form $\varphi([h_i,h_j])$, where  $\Re_\Upsilon(h_i,h_j)$ and $\varphi$ is a generator of the group $\AA(\Omega)$, satisfies the required properties.

Since the automorphism $\tilde \varphi$ is identical on $\bar t$, $h(e)$, where $e\in T_0$, $\bar u^{(i)}$, $\bar z^{(i)}$, $i=1,\dots, m$, and on $h(C^{(1)})$, it follows that $\tilde\varphi$ is identical on the set $\{h_k\mid h_k\notin \P\}$. Therefore, from $[h_i,h_j]\in \mathcal{C}$ and $\varphi([h_i,h_j])\notin R(\Upsilon^*\cup\mathcal{C})$, we get that $h_i \in \P$ or $h_j \in \P$. Without loss of generality, we may assume that $h_i\in \P$.

By part (\ref{spl3}) of Lemma \ref{2.10''}, $\bar x$ is a generating set of $\GG_{R(\Omega^*)}$. The item $h_i$ can be written in the generators $\bar x$ as a word $w(h(\Sh), \bar u, \bar z, h(C^{(1)}), h(C^{(2)}), h(e_1), \dots, h(e_m))$. From the above discussion, it follows that $\tilde\varphi(h_i)=w(h(\Sh), \bar u, \bar z, h(C^{(1)}), \tilde\varphi(h(C^{(2)})), \tilde\varphi(h(e_1)), \dots, \tilde\varphi(h(e_m)))$.

Let $H$ be a solution of $\Omega$ periodic with respect to a period $P$ such that ${\P}(H,P)=\langle{\P},R\rangle$. Since
\begin{gather}\label{eq:23ss1}
\begin{split}
\az(H(h(\Sh))), &\az(H(\bar u)), \az(H(\bar z)), \\
&\az(H(C^{(1)})), \az(H(C^{(2)})), \az(H(h(e_1))), \dots, \az(H(h(e_m)))\subseteq\az(P),
\end{split}
\end{gather}
by the definition of the automorphism $\tilde \varphi$, we get that
\begin{gather}\label{eq:23ss2}
\begin{split}
\az(H(h(\Sh))), & \az(H(\bar u)), \az(H(\bar z)), \az(H(C^{(1)})), \az(H(\tilde\varphi(h(C^{(2)})))), \\
&\az(H(\tilde\varphi(h(e_1)))), \dots,\az(H(\tilde\varphi(h(e_m)))) \subseteq\az(P).
\end{split}
\end{gather}
Therefore, $\az(H(\tilde\varphi(h_i)))\subseteq \az(P)$.

Since $h_i\in \P$, we get that  $\az(H_i)=\az(P)$. As $H$ is a solution of $\Omega$ and $\Re_\Upsilon(h_i,h_j)$, so $H_j\lra H_i$, i.e. $H_j\lra \az(P)$. Furthermore, from Equation (\ref{eq:23ss2}), it follows that $H(\tilde\varphi(h_i))\lra H_j$. Since $\bar x$ generates $\GG_{R(\Omega^*)}$ and from Equation (\ref{eq:23ss1}), it follows that $h_j$ can be written as a word in $\bar t$. Then, since, as mentioned above $\tilde \varphi$ is identical on $\bar t$, we have that $H(g)=H(\tilde\varphi([h_i,h_j]))=H([\tilde\varphi(h_i),h_j])=1$.
\end{proof}

The next lemma states that  if a generalised equation $\Omega$ is singular with respect to a periodic structure, then one can construct finitely many proper quotients of the coordinate group $\GG_{R(\Omega^*)}$, such that for every periodic solution there exists an $\AA(\Omega)$-automorphic image such that this image is in fact a solution of one of the quotients constructed. In other words, every solution of the generalised equation can be obtained as a composition of an automorphism from $\AA(\Omega)$ and a solution of a proper generalised equation.

\begin{lem} \label{lem:23-1}
Let $\Omega $ be a generalised equation without boundary connections, singular with respect to the periodic structure $\langle \P, R\rangle $. Then there exists a finite family of cycles $\cc_1,\ldots,\cc_r$ in the graph $\Gamma$ such that the following conditions hold:
\begin{enumerate}
\item \label{it:23-11} $h(\cc_i)\ne 1$, $i=1,\ldots, r$ in $\GG_{R(\Omega^\ast)}$;
\item \label{it:23-12} for any solution $H$ of $\Omega$ periodic with respect to a period $P$, such that ${\P}(H,P)=\langle{\P},R\rangle$, there exists an $\AA(\Omega)$-automorphic image $H^+$ of $H$ such that $H^+(\cc_i)=1$ for some $1\le i\le r$;
\item\label{it:23-13} for any $h_k \notin \P$, $H_k\doteq H^+_k$; and for any $h_k \in \P$ if $H_k \doteq P_1P^{n_k}P_2$, then $H^+_k\doteq P_1P^{n^+_k}P_2$, where $\delta(k)=P_1P_2$, $n_k, n^+_k \in \Z$.
\end{enumerate}
\end{lem}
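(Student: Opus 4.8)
\textbf{Proof plan for Lemma \ref{lem:23-1}.}

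The plan is to exploit the description of the coordinate group provided by Lemma \ref{2.10''} together with the length estimate of Lemma \ref{lem:spl1}. First I would record the consequences of the assumption that $\Omega$ is singular but not strongly singular: $\Omega$ is periodised with respect to $\langle \P,R\rangle$, every automorphism of $\GG_{R(\Upsilon^\ast)}$ of the form described in parts (\ref{spl4}), (\ref{spl5}) of Lemma \ref{2.10''} descends to an automorphism of $\GG_{R(\Omega^\ast)}$ (hence lies in $\AA(\Omega)$), and either $|C^{(2)}|>1$ or $|C^{(2)}|=1$ and there is a cycle $\cc_{e_0}\in\langle C^{(1)}\rangle$ with $h(e_0)\notin\P$ and $h(\cc_{e_0})\ne 1$ in $\GG_{R(\Omega^\ast)}$.

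The construction of the cycles $\cc_1,\dots,\cc_r$ splits into the two cases of Definition \ref{defn:singreg}. In the case $|C^{(2)}|>1$, fix two distinct elements $\cc,\cc'\in C^{(2)}$; by Lemma \ref{2.9} every periodic solution $H$ (with $\P(H,P)=\langle\P,R\rangle$) satisfies $H(\cc)=(P_2P_1)^{a}$, $H(\cc')=(P_2P_1)^{b}$ for integers $a,b$, where $\delta(v_\Gamma)=P_1P_2$; note $P$ is primitive, so $a,b$ are uniquely determined by $H$. I would use the automorphisms from part (\ref{spl5}) of Lemma \ref{2.10''} — $h(\cc)\mapsto h(\cc')^{\pm1}h(\cc)$ — which lie in $\AA(\Omega)$ by singularity, to run a Euclidean-type reduction on the pair $(a,b)$: repeatedly replacing $\cc$ by $\cc'{}^{\mp1}\cc$ decreases $\min(|a|,|b|)$ (or one of them) until one of the two exponents, say the one attached to $\cc$, becomes $0$; the resulting automorphism $\sigma\in\AA(\Omega)$ then yields $H^+=\sigma^{-1}\pi_H$-type image with $H^+(\cc)=1$. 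Since there are finitely many ways the Euclidean algorithm can terminate relative to a bounded number of "target" cycles, the family $\{\cc_i\}$ is finite; we take these to be the cycles in $C^{(2)}$ together with their relevant products, and (\ref{it:23-11}) holds because a single $h(\cc)$, $\cc\in C^{(2)}$, is a basis element in the presentation of Lemma \ref{2.10''}(\ref{spl3}) and is therefore non-trivial in $\GG_{R(\Omega^\ast)}$ (if some $H(\cc)$ were always trivial one could rule out that $\cc\in C^{(2)}$ plays a genuine role; here I would cite the argument in Lemma \ref{cl:propc1}). In the case $|C^{(2)}|=1$, say $C^{(2)}=\{\cc\}$, one uses instead the singular cycle $\cc_{e_0}\in\langle C^{(1)}\rangle$ with $h(e_0)\notin\P$: by Lemma \ref{lem:spl1}, $H(\cc_{e_0})=P^{n}$ with $|n|\le 2\rho$; applying the automorphism $h(\cc)\mapsto h(\cc_{e_0})^{\pm1}h(\cc)$ (which is in $\AA(\Omega)$ by part (\ref{spl5}) and singularity) repeatedly shifts the exponent $a$ of $H(\cc)=(P_2P_1)^a$ by multiples of $n$; since $|n|\le 2\rho$ there are at most $2\rho$ residues, so one of the cycles $\cc\cdot\cc_{e_0}^{-j}$, $0\le j<2\rho$ (more precisely, the cycle whose $H^+$-value is $1$) lies in a finite list, giving (\ref{it:23-12}) with $\cc_i$ of the form $\cc\,\cc_{e_0}^{-j}$ and (\ref{it:23-11}) from $h(\cc_{e_0})\ne 1$ being the hypothesis of case (b).

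Finally, property (\ref{it:23-13}) is essentially bookkeeping about which generators the automorphisms touch. The automorphisms of Lemma \ref{2.10''}(\ref{spl4})–(\ref{spl5}) are identical on $\bar t$, on $\{h(e)\mid e\in T_0\}$, on $\bar u$, on $\bar z$, and on $h(C^{(1)})$; hence, exactly as in the proof of Lemma \ref{lem:23-1ss}, they are identical on every $h_k\notin\P$, which gives $H_k\doteq H^+_k$ for such $k$. For $h_k\in\P$ one writes $h_k$ in the generating set $\bar x$; the only generators that change under $\sigma$ are among $h(C^{(2)})$ and the $h(e_i)$, and each occurrence of these in the word for $h_k$ is, by the structure exhibited in the proof of Lemma \ref{2.10''}, conjugated so that applying $\sigma$ only modifies the power of $P$ in the $P$-periodic word $H_k\doteq P_1P^{n_k}P_2$ (with $\delta(k)=P_1P_2$), leaving $P_1,P_2$ unchanged; thus $H^+_k\doteq P_1P^{n^+_k}P_2$ for some $n^+_k\in\Z$. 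The main obstacle I anticipate is making the Euclidean/residue reduction in step two genuinely \emph{finite and effective}: one must bound the number of terminal configurations independently of $H$, using the uniform bound $|n|\le 2\rho$ from Lemma \ref{lem:spl1} in case (b) and an analogous bounded-basis argument (again via Lemma \ref{cl:propc1} and the fact that $\widetilde B\subseteq\widetilde Z_1$ has finite index) in case (a), and then verifying non-triviality $h(\cc_i)\ne 1$ in $\GG_{R(\Omega^\ast)}$ for each cycle in the resulting finite list — a verification that is decidable because the universal theory of $\GG$ is decidable.
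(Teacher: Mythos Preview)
Your treatment of case (a) ($|C^{(2)}|\ge 2$) is essentially the paper's argument: run the Euclidean algorithm on the exponents $n_1,n_2$ using the automorphisms of Lemma~\ref{2.10''}(\ref{spl5}) until one exponent becomes $0$. You overcomplicate the choice of the finite family, though --- the paper simply fixes two cycles $\cc_1,\cc_2\in C^{(2)}$ and takes $\{\cc_1,\cc_2\}$ as the family; after the Euclidean reduction one of $H^+(\cc_1),H^+(\cc_2)$ equals $P^0=1$. Your argument for part~(\ref{it:23-13}) is also correct and matches the paper: the automorphisms are identical on $\bar t$, $\{h(e)\mid e\in T_0\}$, $\bar u$, $\bar z$, $h(C^{(1)})$, hence on every $h_k\notin\P$; and for $h_k\in\P$ one writes $h_k=h(\p_1)h(\cc_e)h(\p_2)$ with $\p_1,\p_2$ in $T$ and observes that only the $P$-exponent of $H(\cc_e)$ changes.

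There is, however, a genuine gap in your case (b). Write $H(\cc_{e_0})=P^{n_0}$ (with $|n_0|\le 2\rho$ by Lemma~\ref{lem:spl1}) and $H(\cc^{(2)})=P^{a}$. Your automorphism shifts $a$ by multiples of $n_0$, so after reducing you get $H^+(\cc^{(2)})=P^{a'}$ with $0\le a'<|n_0|$; but a cycle of the form $\cc^{(2)}\cdot\cc_{e_0}^{-j}$ evaluates on $H^+$ to $P^{a'-jn_0}$, and this is $1$ only when $a'=0$, which need not happen. The paper's fix is to enlarge the family to all products $(\cc_{e_0})^{i}(\cc^{(2)})^{j}$ with $|i|,|j|\le 2\rho$ and $(i,j)\ne(0,0)$: having reduced to $H^+(\cc^{(2)})=P^{n'}$ with $|n'|\le 2\rho$, the cycle $(\cc_{e_0})^{-n'}(\cc^{(2)})^{n_0}$ always evaluates to $P^{-n'n_0+n'n_0}=1$. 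Your non-triviality argument in case (b) is also incomplete: knowing $h(\cc_{e_0})\ne 1$ does not by itself give $h\big((\cc_{e_0})^{i}(\cc^{(2)})^{j}\big)\ne 1$ for all $(i,j)\ne(0,0)$. The paper proves this by applying the automorphism $\sigma_0\colon h(\cc^{(2)})\mapsto h(\cc_{e_0})h(\cc^{(2)})$ (in $\AA(\Omega)$ by singularity) to a hypothetical relation $h(\cc_{e_0})^{i}h(\cc^{(2)})^{j}=1$, obtaining $h(\cc_{e_0})^{i+j}h(\cc^{(2)})^{j}=1$, hence $h(\cc_{e_0})^{j}=1$, hence $j=0$ (torsion-freeness), hence $i=0$.
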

\begin{proof}
Suppose that $|C^{(2)}|\ge 2$. Let $H$ be a solution of a generalised equation $\Omega $ periodised with respect to the period $P$ such that ${\P}(H,P)=\langle {\P}, R\rangle$. Let $\cc_1,\cc_2\in C^{(2)}$. By Lemma \ref{2.9}, $H(\cc_1)=P^{n_1}$, $H(\cc_2)=P^{n_2}$.

Without loss of generality we may assume that $n_1\le n_2$. Write $n_2=tn_1+r$, where either $r=0$ or $0<|r|<|n_1|$. Applying the automorphism $\varphi:h(\cc_2)\mapsto {h({\cc_1})}^{-t}h(\cc_2)$, $\varphi\in \AA(\Omega)$, we get $H(\varphi(h(\cc_2)))=P^r$, hence the exponent of periodicity $\e(H(\varphi(h(\cc_2))))$ is reduced.

Applying the Euclidean algorithm, we get that there exists an automorphism $\psi$ from $\AA(\Omega)$ such that $H(\psi(h(\cc_2)))=1$ or $H(\psi(h(\cc_1)))=1$. Set $H^+=\psi(H)$. Then the set $\{\cc_1,\cc_2\}$ satisfies conditions (\ref{it:23-11}) and (\ref{it:23-12}) of the lemma.

In the notation of Lemma \ref{2.10''}, the automorphism $\psi$ is identical on all the elements of $\bar x$ except for $h(C^{(2)})$, hence, in particular, it is identical on $h(e)$ such that $e \in T$ or $e\notin\P$, and on $h(C^{(1)})$, i.e. $H^+(e)=H(e)$, for any $e\in T$ or $e\notin \P$ and $H^+(\cc^{(1)})=H(\cc^{(1)})$ for any $\cc^{(1)}\in C^{(1)}$.

If $h_k=h(e)$, $e \notin T$, $h(e) \in \P$, then $h_k=h(\p_1)h(\cc_e)h(\p_2)$, where $\p_1,\p_2$ are paths in $T$. Therefore, $H_k^+=H^+(e)=H^+(\p_1)H^+(\cc_e)H^+(\p_2)=H(\p_1)H^+(\cc_e)H(\p_2)$. Since $h(\cc_e)$ lies in the subgroup generated by $h(C^{(1)})$ and $h(C^{(2)})$, then $H^+(\cc_e)$ and $H(\cc_e)$ lie in the cyclic group generated by $P$. This proves statement (\ref{it:23-13}) of the lemma.

Suppose that $C^{(2)}=\{\cc^{(2)}\}$. Since the periodic structure is singular, there exists a cycle $\cc_{e_0}$ such that $h(e_0)\notin \P$, $\cc_{e_0}\in \langle C^{(1)}\rangle$ and $h(\cc_{e_0})\ne 1$ in $\GG_{R(\Omega^\ast)}$.

We define the set of cycles $\{\cc_1,\ldots, \cc_r\}$  to be
$$
\left\{(\cc_{e_0})^i(\cc^{(2)})^j\,\mid\,  \hbox{$i$ and $j$ are not simultaneously zero}, \, |i|,|j|\leq 2\rho\right\}.
$$

We want to show that the elements of the set just defined are non-trivial. In order to do so we show that if ${h(\cc_{e_0})}^i {h(\cc^{(2)})}^j=1$ in $\GG_{R(\Omega ^\ast)}$, then $i=j=0$. Suppose ${h(\cc_{e_0})}^i {h(\cc^{(2)})}^j=1$. Let $\sigma _0$ be an automorphism from $\AA(\Omega)$ such that $\sigma _0(h(\cc_{e_0}))=h(\cc_{e_0})$ and $\sigma_0(h(\cc^{(2)}))=h(\cc_{e_0})h(\cc^{(2)})$. Hence ${h(\cc_{e_0})}^{i+j}{h(\cc^{(2)})}^j=1$ in $\GG_{R(\Omega ^\ast)}$, and so ${h(\cc_{e_0})}^{j}=1$. Since $\GG$ is torsion-free, this implies that either $h(\cc_{e_0})=1$ or $j=0$. Since $h(\cc_{e_0})\ne 1$, we have $j=0$. Then $h(\cc_{e_0})^i=1$ and, since $\GG$ is torsion-free, so $i=0$.

Let $H$ be a $P$-periodic solution of the generalised equation $\Omega$ such that ${\P}(H, P)=\langle {\P}, R \rangle$. By Lemma \ref{lem:spl1}, we have $H(\cc_{e_0})=P^{n_0}$, $|n_0| \leq 2 \rho$. Let $H(\cc^{(2)})= P^n$, $\cc^{(2)}\in C^{(2)}$.

If $n_0 = 0$, we can take $\sigma = 1$, $H^+ = H$ and  the set of cycles $\{\cc_{e_0}\}$.

Let $n_0 \neq 0$, $n = tn_0 + n'$, and $|n'| \leq 2 \rho$. Let $\sigma= \sigma _0^{-t}$ and define $H^+$ to be the image of $H$ under $\sigma$. Set $\cc=(\cc_{e_0})^{-n'} (\cc^{(2)})^{n_0}$, then $H^+ (\cc_{e_0}) = P^{n_0}$, $H^+(\cc^{(2)}) = P^{n'}$ and $H^+(\cc)=1$.
An analogous argument to the one given in the case $|C^{(2)}| \geq 2$ shows that $H^+_k=H_k$ if $h_k \notin \P$, and $H^+_k=P_1P^{n^+_k}P_2$ if $H_k=P_1P^{n_k}P_2$ and $h_k \in \P$.
\end{proof}

\subsection{Example} \label{sec:explsing}
Let $\Omega$ be the generalised equation shown on Figure \ref{quadratic}. The associated system of equations is as follows:
$$
h_1h_2h_3h_4=h_4h_5h_6h_7;\  h_1=h_5;\  h_3=h_7;\  h_2=h_8;\  h_6=h_8;\  h_8=a.
$$
Consider the periodic structure $\langle \P, R\rangle$ defined in Section \ref{sec:explperstr}. We have shown in the example given in Section \ref{sec:explperstr}, that this periodic structure is singular. Let $H=(H_1,\dots, H_8)$ be defined as follows:
$$
\begin{array}{llll}
H_1=(bac)^2b; &  H_3=(cba)^2c; &H_5=(bac)^2b;&  H_7=(cba)^2c;\\
H_2=a; &H_4=(bac)^6;&H_6=a;&H_8=a.
\end{array}
$$
It is easy to see that for $P=bac$ the tuple $H$ is a $P$-periodic solution of $\Omega$.

In the notation of Lemma \ref{2.10''}, from the example given in Section \ref{sec:explperstr}, we have
$$
\bar x=\left\{h_8, h_2,h_1h_6h_2^{-1}h_1^{-1},h_6h_2^{-1}, h(C^{(1)}),h(C^{(2)})\right\},
$$
where $C^{(1)}=\{\cc_{e_3}\cc_{e_7}^{-1}, \cc_{e_5},\cc_{e_6}\}$, $C^{(2)}=\{ \cc_{e_4}, \cc_{e_7}\}$. Notice that, in particular
$$
H(C^{(2)})=\left\{H(\cc_{e_4}), H(\cc_{e_7})\right\}= \left\{P^6,P^5\right\}.
$$
Consider the automorphism $\varphi\in \AA(\Omega)$ induced by the transformation:
$$
\cc_{e_4}\mapsto\cc_{e_7}^{-1}\cc_{e_4};\quad\cc_{e_7}\mapsto(\cc_{e_7}^{-1}\cc_{e_4})^{-5}\cc_{e_7}
$$
and identical on the other elements of $\bar x$.

Set $H^+=\varphi(H)$. Then $H^+$ satisfies the conditions (\ref{it:23-12}) and (\ref{it:23-13}) of Lemma \ref{lem:23-1}. Indeed,
$$
H^+(\cc_{e_4})=H(\cc_{e_7}^{-1}\cc_{e_4})=P \quad H^+(\cc_{e_7})=H((\cc_{e_7}^{-1}\cc_{e_4})^{-5}\cc_{e_7})=1.
$$
Furthermore, since $\varphi$ is the identity on the other elements of $\bar x$, we get that
\begin{gather}\notag
\begin{split}
&H_1^+=H_1;\ H_2^+=H_2;\  H_8^+=H_8;\\
&H_6^+ {H_2^+}^{-1}=H_6 {H_2}^{-1}\hbox{ and so } H_6^+=H_6;\\
&H_5^+ {H_1^+}^{-1}=H_5 {H_1}^{-1}\hbox{ and so } H_5^+=H_5;\\
&H_4^+=H^+ (\cc_{e_4})=P;\\
&1=H^+ (\cc_{e_7})=H_1^+H_2^+H_7^+\hbox{ and so } H_7^+= H_2^{-1}H_1^{-1};\\
&H_3^+{H_7^+}^{-1}=H_3{H_7}^{-1}=1 \hbox{ and so }   H_3^+=H_7^+= H_2^{-1}H_1^{-1}.
\end{split}
\end{gather}

\subsection{Regular case}

The aim of this section is to prove that if a generalised equation $\Omega$ is regular with respect to a periodic structure, then periodic solutions of $\Omega$ minimal with respect to the group of automorphisms $\AA(\Omega)$ have bounded exponent of periodicity. In other words, the length of such a solution is bounded above by a function of $\Omega$ and the length $|P|$ of its period $P$.

\begin{lem}\label{lem:23-1.5}
Let $\Omega $ be a generalised equation with no boundary connections, periodised with respect to a connected periodic structure $\langle \P, R\rangle$. Let $H$ be a periodic solution of $\Omega$ such that $\P(H,P)=\langle \P, R\rangle$ and minimal with respect to the trivial group of automorphisms. Then, either for all $k$, $1\le k\le \rho$ we have
\begin{equation} \label{2.62}
|H_k| \le 2 \rho |P|,
\end{equation}
or there exists a cycle $\cc\in \pi_1(\Gamma,v_\Gamma)$  so that $H(\cc)=P^n$, where $1\le n\le 2\rho$.
\end{lem}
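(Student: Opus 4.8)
The plan is to argue by contradiction: suppose that for \emph{every} cycle $\cc\in\pi_1(\Gamma,v_\Gamma)$ we have that $H(\cc)$ is not of the form $P^n$ with $1\le n\le 2\rho$, and show that then inequality \eqref{2.62} must hold. By Lemma \ref{lem:spl1} (applied with the trivial automorphism group, which is legitimate since $\Omega$ is periodised and $\P(H,P)=\langle\P,R\rangle$), every cycle $\cc_{e_0}$ with $h(e_0)\notin\P$ already satisfies $H(\cc_{e_0})=P^n$ with $|n|\le 2\rho$; so under our assumption all such $H(\cc_{e_0})$ are in fact trivial, i.e. $n=0$. By Lemma \ref{cl:propc1} we may then choose the basis $C^{(1)}$ of $\pi_1(\Gamma,v_\Gamma)$ so that $H(\cc)=1$ for every $\cc\in C^{(1)}$. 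The burden is thus shifted to controlling the lengths $|H_k|$ for $h_k\in\P$ in terms of $|P|$, knowing that the ``short'' cycles contribute nothing.

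First I would invoke the generating set $\bar x$ and the decomposition in part (\ref{spl3}) of Lemma \ref{2.10''}. Every item $h_k\in\P$ can be written as a word in $\bar x$; by the analysis in the proof of Lemma \ref{2.10''} (the computation following Equation \eqref{eq:obt}), the only generators from $\bar x$ that carry ``long'' $P$-periodic values are $h(e_1),\dots,h(e_m)$ and $h(C^{(1)})$, $h(C^{(2)})$, while $\bar t$, $h(\Sh)$, $\bar u$, $\bar z$ all have values of length at most $2\rho|P|$ (again by Lemma \ref{lem:spl1}, since their values live inside $P^\infty$ and are governed by simple cycles). Since $H(C^{(1)})=1$, the values $H(h_k)$ for $h_k\in\P$ are, up to bounded-length prefixes and suffixes coming from the $\bar u,\bar z,h(\Sh)$ parts, words in $H(h(e_1)),\dots,H(h(e_m))$ and $H(C^{(2)})$. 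Now here is where minimality enters: if some $|H_k|>2\rho|P|$, then the corresponding word in $\bar x$ must genuinely involve one of the $h(e_i)$ or a cycle from $C^{(2)}$ with a large $P$-power. Using the automorphisms of parts (\ref{spl4}) and (\ref{spl5}) of Lemma \ref{2.10''} — which, since $\Omega$ is periodised, act on $\GG_{R(\Upsilon^*)}$, though \emph{not} necessarily on $\GG_{R(\Omega^*)}$, which is exactly why the conclusion is a disjunction and not an outright length bound — one can shorten the solution: replacing $h(e_i)$ by $h(\cc)^{-1}h(e_i)$ for an appropriate cycle $\cc$, or reducing $H(\cc_2)$ modulo $H(\cc_1)$ by the Euclidean algorithm inside $C^{(2)}$, strictly decreases $|H|$. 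To make this strictly-decreasing step produce a \emph{solution of $\Omega$} (not merely of $\Upsilon$), one passes to $\GG(\cA\cup B)$ as in Remark \ref{rem:minsol} and uses that $H$ is minimal with respect to the trivial group there; Lemma \ref{lem:minsol} and the $<_{\BB(\Omega)}$-machinery of Section \ref{sec:minsol} guarantee the shortened tuple is still geodesic as written. This contradicts minimality unless the shortening was impossible — and the shortening is impossible precisely when the relevant cycle $\cc$ already satisfies $H(\cc)=P^n$ with $1\le n\le 2\rho$, which is the second alternative of the lemma.

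Concretely, the steps in order are: (1) assume \eqref{2.62} fails, so $|H_k|>2\rho|P|$ for some $k$, necessarily with $h_k\in\P$; (2) by Lemmas \ref{lem:spl1} and \ref{cl:propc1}, arrange $H(\cc)=1$ for all $\cc\in C^{(1)}$ and $|H(\cc_e)|\le 2\rho|P|$ for all $e\notin T$, $e\in\Sh$; (3) write $H_k$ in the basis $\bar x$ and isolate the long part as a word in $H(h(e_1)),\dots,H(h(e_m)),H(C^{(2)})$; (4) if the exponent of $P$-periodicity of $H$ exceeds $2\rho$, pick the automorphism $\varphi\in\AA(\Omega)$-type transformation (from \ref{spl4} or \ref{spl5} of Lemma \ref{2.10''}) that reduces it via division with remainder, check using Lemma \ref{lem:minsol} and the $B$-extension of Remark \ref{rem:minsol} that the resulting tuple is a bona fide solution of $\Omega$ with strictly smaller length, contradicting minimality; (5) conclude that the obstruction is a cycle $\cc$ with $H(\cc)=P^n$, $1\le n\le 2\rho$. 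The main obstacle I anticipate is step (4): one must verify carefully that the length-reduction performed at the level of generators $\bar x$ actually produces a shorter \emph{solution} — this requires the conditions (\ref{it:minsol3}) and (\ref{it:minsol4}) of Definition \ref{defn:sol<} to propagate through the substitution, which is exactly the role of Lemma \ref{lem:minsol} but needs the solution re-interpreted over $\GG(\cA\cup B)$ to avoid spurious cancellations; getting the bookkeeping of which generators are ``long'' versus ``short'' exactly right (so that the bound $2\rho|P|$, and not some larger multiple, comes out) is the delicate part.
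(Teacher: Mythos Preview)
Your approach has a genuine gap at the point where you invoke minimality. The hypothesis is that $H$ is minimal with respect to the \emph{trivial} group of automorphisms, yet in step (4) you apply transformations of the form $h(e_i)\mapsto h(\cc)h(e_i)$ or $h(\cc_2)\mapsto h(\cc_1)^{-t}h(\cc_2)$ from parts (\ref{spl4}) and (\ref{spl5}) of Lemma \ref{2.10''}. Those are elements of $\AA(\Omega)$, and shortening $H$ by such an automorphism contradicts minimality with respect to $\AA(\Omega)$, not with respect to $\{1\}$. To contradict $\{1\}$-minimality via Definition \ref{defn:sol<} you must exhibit a solution $H'$ and a $\GG$-endomorphism $\pi$ of $\GG(\cA\cup B)$ with $\pi_H=\pi_{H'}\pi$; the $\AA(\Omega)$-automorphisms do not furnish such a $\pi$. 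Remark \ref{rem:minsol} only lets you enlarge the coefficient group --- it does not convert coordinate-group automorphisms into endomorphisms of $\GG(\cA\cup B)$. There is also no mechanism in your outline that produces the specific bound $2\rho$ rather than some computable function; your ``division with remainder'' step controls exponents of cycles, but not the lengths of the tree edges $h(e_1),\dots,h(e_m)$ themselves, which are items $h_k\in\P$ and cannot be bounded in terms of one another.

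The paper proceeds quite differently, and the argument is worth learning. Starting from a long item $h_k$, one first refines $\Omega$ by a chain of $\ET 5$'s, $\mu$-tying every boundary that meets a base $\mu\in\P$; the key observation is that all the resulting graphs $\Gamma^{(i)}$ share the same vertex set (of size $\le\rho$), because a boundary connection $(p,\mu,q)$ forces $\delta(p)=\delta(q)$. Minimality with respect to the trivial group is then used in exactly the form Definition \ref{defn:sol<} demands: if some item $h_l$ of the final refined equation has $|H_l^{(t)}|>2|P|$, replace every component graphically equal to $(H_l^{(t)})^{\pm 1}$ by a fresh letter $u^{\pm 1}$ in $\GG(\cA\cup\{u\})$, and take $\pi:u\mapsto H_l^{(t)}$. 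Because all boundaries are tied, items of $\mu$ and $\Delta(\mu)$ match one-to-one for $\mu\in\P$, so the substituted tuple is still a solution; this yields $H^{(t)'}<_{\{1\}}H^{(t)}$ with strictly smaller length, a contradiction. Hence every refined item has length $\le 2|P|$, so the original $h_k$ with $|H_k|>2\rho|P|$ decomposes into more than $\rho$ refined items; by pigeonhole on the $\le\rho$ vertices of $\Gamma^{(t)}$ two of the intervening boundaries lie over the same vertex, and the segment between them labels a nontrivial cycle of length $\le 2\rho|P|$, which one then pulls back to $\Gamma$.
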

\begin{proof}
Let $H$ be a $P$-periodic solution of the generalised equation $\Omega$ minimal with respect to the trivial group of automorphisms.

Suppose that there exists a variable $h_k \in {\P}$ such that $|H_k| > 2 \rho |P|$. We then prove that there exists a cycle $\cc\in \pi_1(\Gamma,v_\Gamma)$  so that $H(\cc)=P^n$, where $1\le n\le 2\rho$.

Construct a chain
\begin{equation} \label{2.63}
(\Omega, {H})=(\Omega_{v_0}, {H}^{(0)}) \to (\Omega_{v_1}, {H}^{(1)}) \to \cdots \to (\Omega_{v_t}, {H}^{(t)}),
\end{equation}
in which for all $i$, $\Omega_{v_{i+1}}$ is obtained from $\Omega_{v_{i}}$ using $\ET 5$: by $\mu$-tying a free boundary that intersects a certain base $\mu \in {\P}$. Chain (\ref{2.63}) is constructed once all the boundaries intersecting bases $\mu$ from ${\P}$ are $\mu$-tied. This chain is finite, since, by the definition, boundaries that are introduced when applying $\ET 5$ are not free.

By construction, the generalised equations $\Omega_{v_{i}}$'s in (\ref{2.63}) have boundary connections. Our definition of periodic structure is given for generalised equations without boundary connections, see Definition \ref{above}. For this reason we define $\Omega_{v_{i}'}$ to be the generalised equation obtained from $\Omega_{v_i}$ by omitting all boundary connections (here we do not apply $\D 3$). It is obvious that the solution ${H}^{(i)}$ of  $\Omega_{v_i}$ is also a solution of the generalised equation $\Omega_{v_{i}'}$ and is periodic with respect to the period $P$. Denote by $\langle {\P}_i, R_i \rangle$ the periodic structure ${\P}({H}^{(i)}, P)$ on the generalised equation $\Omega_{v_i'}$ restricted to the closed sections from ${\P}$, and by $\Gamma^{(i)}$ the corresponding graph.

If $(p, \mu, q)$, $\mu \in {\P}$, is a boundary connection of the generalised equation $\Omega_{v_i}$, $i=1,\dots,t$, then $\delta(p) = \delta(q)$. Therefore, all the graphs $\Gamma^{(0)}$, $\Gamma^{(1)}, \ldots, \Gamma^{(t)}$ have the same set of vertices, whose cardinality does not exceed $\rho$. By Lemma \ref{lem:2.1}, the solution ${H}^{(t)}$ of the generalised equation $\Omega_{v_t}$ is also minimal with respect to the trivial group of automorphisms.

Suppose that for some variable $h_l$ belonging to a section from ${\P}$ the inequality $|H_l^{(t)}| > 2 |P|$ holds.

Let $\mathcal{H}=\left\{h_i\in \sigma \mid \sigma \in \P\hbox{ and } H_i^{(t)}\doteq {{H}_l^{(t)}}^{\pm 1}\right\}$.  Consider the partially commutative $\GG$-group $\GG(\cA\cup\{u\})$, where $u$ is a new letter and $[u,a_k]=1$, $a_k\in \cA$ for all $a_k\in \BA(H_j)$ (see Section \ref{sec:pcgr} for definition), where $\Re_\Upsilon(h_i,h_j)$ for some $h_i\in \mathcal{H}$.

In the solution ${H}^{(t)}$, replace all the components ${H_i}^{(t)}$ such that $H_i^{(t)}\doteq {{H}_l^{(t)}}$ or $H_i^{(t)}\doteq {{H}_l^{(t)}}^{- 1}$ by the letter $u$ or $u^{-1}$, correspondingly (see Definition \ref{def:minsol}). Denote the resulting $\rho_{\Omega_{v_t}}$-tuple of words by ${H^{(t)}}'$.

We show that ${H^{(t)}}'$ is in fact a solution of $\Omega_{v_t}$ (considered as a generalised equation with coefficients from $\cA \cup \{u\}$, see Remark \ref{rem:minsol}). Obviously, by definition, every component of ${H^{(t)}}'$ is non-empty and geodesic. Since in the generalised equation $\Omega_{v_t}$ all the boundaries from ${\P}$ are $\mu$-tied, $\mu\in \P$, there is a one-to-one correspondence between the items that belong to $\mu$ and the items that belong to $\Delta(\mu)$. Therefore, ${H^{(t)}}'$ satisfies all basic equations $h(\mu)=h(\Delta(\mu))$, $\mu \in \P$ and all the boundary equations of the generalised equation $\Omega_{v_t}$.

If, on the other hand, $\mu \not \in {\P}$, then for every item $h_k \in \mu$ of the generalised equation $\Omega$ lying on a closed section from ${\P}$, we have $h_k \not \in {\P}$ and, consequently, $|H_k| \leq 2 |P|$. In particular, this inequality holds for every item $h_k \in \mu$, $\mu \notin \P$ of the generalised equation $\Omega_{v_t}$.  Therefore, such items have not been replaced in the solution ${H}^{(t)}$, thus ${H^{(t)}}'$ satisfies all basic equations $h(\mu)=h(\Delta(\mu))$, $\mu \notin \P$. From the construction of $\GG(\cA\cup\{u\})$ it follows that ${H^{(t)}}'$ satisfies the constraints from $\Re_{\Upsilon_{v_t}}$.

Let $\pi:\GG(\cA\cup\{u\})\to \GG(\cA\cup\{u\})$ be a map that sends $u$ to ${H}_l^{(t)}$ and fixes $\GG$. Since ${H^{(t)}}$ is a solution of $\Omega_{v_t}$, one has ${H_l^{(t)}}\lra H^{(t)}_j$, where $\Re_{\Upsilon_{v_t}}(h_i,h_j)$, $h_i\in \mathcal{H}$. Therefore, $[H_l^{(t)}, a_k]=1$, where $a_k\in \az(H^{(t)}_j)$ and $\Re_{\Upsilon_{v_t}}(h_i,h_j)$, $h_i\in \mathcal{H}$. These are the relations that, by construction, are satisfied by $u$, hence $\pi$ is  a $\GG$-endomorphism. This contradicts the minimality of the solution ${H^{(t)}}$. Indeed, one has $\pi_{H^{(t)}}=\pi_{{H^{(t)}}'}\pi$. By construction, it is easy to check that conditions (\ref{it:minsol3}) and (\ref{it:minsol4}) of Definition \ref{defn:sol<} hold. Therefore, ${H^{(t)}}'<_{\{1\}}{H^{(t)}}$. Obviously, $|{H_i^{(t)}}'|=|{H_i^{(t)}}|$ for all $i\ne l$ and $1=|u|=|{H_l^{(t)}}'|<|{H_l^{(t)}}|$. We thereby have shown that $|H_l^{(t)}| \leq 2 |P|$, if $h_l$ belongs to a closed section from ${\P}$.

In the construction of chain (\ref{2.63}) we introduced new boundaries, so every item from $\Omega_{v_0}$ can be expressed as a product of items from $\Omega_{v_t'}$. Consequently, since $|H_l^{(t)}| \leq 2 |P|$ for every $l$, if the component $H_k$ of the solution $H$ of the generalised equation $\Omega$ does not satisfy inequality (\ref{2.62}), then $h_k$ is a product of at least $\rho+1$ distinct items of $\Omega_{v_t'}$, $h_k=h^{(t)}_{s}\cdots h^{(t)}_{s+\varrho}$, where $\varrho\ge \rho+1$.

Since the graph $\Gamma^{(t)}$ contains at most $\rho$ vertices, there exist boundaries $l,l'\in \{s,\dots, s+\rho+1\}$, $l<l'$ so that  $\delta({l})=\delta({l'})$. The word $h[l,l']$ is a label of a cycle $\cc_t$ of the graph $\Gamma^{(t)}$ for which $0<|H(\cc_t)|\le 2 \rho |P|$.

Recall that by $\pi(v_i,v_j)$ we denote the homomorphism $\GG_{R({\Omega_{v_i}}^\ast)} \to \GG_{R({\Omega_{v_j}}^\ast)}$ induced by the elementary transformations. It remains to prove the existence of a cycle $\cc_0$ of the graph $\Gamma$ for which $\pi(v_\Gamma,v_t) (h(\cc_0))=h(\cc_t)$. To do this, it suffices to show that for every path $\p_{i+1} : v \rightarrow v'$ in the graph $\Gamma^{(i+1)}$ there exists a path $\p_i : v \rightarrow v'$ in the graph $\Gamma^{(i)}$ such that $\pi(v_i, v_{i+1}) (h(\p_i)) = h(\p_{i+1})$. In turn, it suffices to verify the latter statement in the case when $\p_{i+1}$ is an edge $e$.

The generalised equation $\Omega_{v_{i+1}}$ is obtained from $\Omega_{v_i}$ by $\mu$-tying a boundary $p$. Below we use the notation from the definition of the elementary transformation $\ET 5$.
\begin{enumerate}
    \item Either we introduce a boundary connection $(p,\mu,q)$. In this case every variable $h(e)$ of the generalised equation $\Omega_{v_{i+1}}$ is also a variable of the generalised equation $\Omega_{v_i}$ and the statement is obvious;
    \item Or we introduce a new boundary $q'$ between the boundaries $q$ and $q+1$, and a boundary connection $(p,\mu, q')$. Using the boundary equations we get
\begin{gather}\label{eq:theseform}
\begin{split}
&\pi(v_i,v_{i+1})^{-1}(h_{q})= {h[\alpha (\Delta (\mu )),q]}^{-1}h[\alpha (\Delta(\mu) ),q'] ={h[\alpha (\Delta (\mu )),q]}^{-1}h[\alpha (\mu ),p],\\
&\pi (v_i,v_{i+1})^{-1}(h_{q'})= {h[\alpha(\Delta(\mu) ),q']}^{-1} h[\alpha (\Delta (\mu )),q+1]={h[\alpha(\mu ),p]}^{-1} h[\alpha (\Delta (\mu )),q+1].
\end{split}
\end{gather}
Moreover, $\pi(v_i,v_{i+1})^{-1}(h(e))=h(e)$ for any other variable. Notice that since $(\alpha(\mu)) = (\alpha(\Delta(\mu)))$, the right-hand sides of Equations (\ref{eq:theseform}) are labels of paths in $\Gamma^{(i)}$.
\end{enumerate}
Thus, we have deduced that there exists a cycle $\cc\in \Gamma$ so that $1\le\e(H(\cc))\le 2\rho$.
\end{proof}

\begin{lem} \label{lem:23-2}
Let $\Omega $ be a generalised equation with no boundary connections. Suppose that $\Omega$ is regular with respect to a periodic structure $\langle \P, R\rangle$. Then there exists a computable function $f_0 (\Omega, {\P},R)$ such that, for every $P$-periodic solution $H$ of $\Omega$ such that $\P(H,P)=\langle \P,R\rangle$ and such that $H$ is minimal with respect to $\AA(\Omega)$, the following inequality holds
$$
|H_k| \leq f_{0} (\Omega, {\P},R) \cdot |P| \ \hbox{ for every $k$}.
$$
\end{lem}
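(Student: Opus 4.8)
The plan is to reduce Lemma~\ref{lem:23-2} to the two cases already handled, namely Lemma~\ref{lem:23-1.5} (which covers the passage from a solution minimal with respect to the \emph{trivial} group to one satisfying the desired length bound, at the cost of possibly producing a ``long'' cycle $\cc$ with $H(\cc)=P^n$) and Lemma~\ref{lem:23-1} (which handles the singular case). Since $\Omega$ is \emph{regular} with respect to $\langle\P,R\rangle$, in particular $\Omega$ is periodised, so Lemma~\ref{lem:23-1.5} applies. First I would take a $P$-periodic solution $H$ with $\P(H,P)=\langle\P,R\rangle$, minimal with respect to $\AA(\Omega)$. A fortiori $H$ is minimal with respect to the trivial group of automorphisms (the trivial group is a subgroup of $\AA(\Omega)$, and the relation ``$<$'' is monotone in the group). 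So Lemma~\ref{lem:23-1.5} gives us a dichotomy: either $|H_k|\le 2\rho|P|$ for all $k$, in which case we are done with $f_0=2\rho$, or there exists a cycle $\cc\in\pi_1(\Gamma,v_\Gamma)$ with $H(\cc)=P^n$, $1\le n\le 2\rho$.

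The heart of the argument is to rule out the second alternative using \emph{regularity} and the minimality of $H$ with respect to $\AA(\Omega)$. Express $\cc$ in the basis $C^{(1)}\cup C^{(2)}$ of $\pi_1(\Gamma,v_\Gamma)$, say $\cc = \prod \cc_i^{m_i}\cdot V$ with $\cc_i\in C^{(1)}\cup C^{(2)}$ and $V$ in the derived subgroup; since $\Omega$ is periodised, $H(\cc)=\prod H(\cc_i)^{m_i}$ in $\GG_{R(\Omega^*)}$, and each $H(\cc_i)$ is a power of $P$ by Lemma~\ref{2.9}. Now I would use the structure of $\AA(\Omega)$ from Definition~\ref{defn:AA} and Lemma~\ref{2.10''}: the automorphisms $h(\cc)\mapsto h(\cc')h(\cc)$ for $\cc\in C^{(2)}$, $\cc'\in C^{(1)}\cup C^{(2)}$, together with $h(e_i)\mapsto h(\cc)h(e_i)$, generate $\AA(\Omega)$. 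Because $\Omega$ is regular, $C^{(2)}$ has exactly one element $\cc^{(2)}$ (if $|C^{(2)}|\ge 2$ we would be in the singular case), and for every cycle $\cc_{e_0}$ with $h(e_0)\notin\P$ lying in $\langle C^{(1)}\rangle$ we have $h(\cc_{e_0})=1$ in $\GG_{R(\Omega^*)}$. Then I would run a Euclidean-algorithm reduction exactly as in the proof of Lemma~\ref{lem:23-1}: if $H(\cc^{(2)})=P^n$ with $|n|$ large, and there is any cycle carrying a nonzero but bounded power of $P$ (coming from $C^{(1)}$ or from the $h(e_i)$-cycles), applying a suitable power of a generating automorphism of $\AA(\Omega)$ strictly decreases the exponent of periodicity of $H(\cc^{(2)})$, contradicting minimality; so $|n|$ is bounded by a function of $\Omega$. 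The remaining cycles in $C^{(1)}$ either vanish on $H$ (by Lemma~\ref{cl:propc1} in the form chosen there) or are the $\cc_e$ with $e\notin T$, $e\in\Sh$, for which $|\e(H(\cc_e))|\le 2\rho$ by Lemma~\ref{lem:spl1}. Hence every cycle in $\pi_1(\Gamma,v_\Gamma)$ has $P$-exponent bounded by a computable function of $(\Omega,\P,R)$.

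Having bounded the exponents of periodicity of all cycles, I would propagate this to a bound on the lengths $|H_k|$. For items $h_k\notin\P$ we already have $|H_k|\le 2|P|$ by construction of the periodic structure. For items $h_k\in\P$, use the generating set $\bar x$ of Lemma~\ref{2.10''}\eqref{spl3}: $h_k$ is a word of length bounded by a function of $\Omega$ in the generators $\bar t$, $h(e)$ ($e\in T_0$), $\bar u$, $\bar z$, $h(e_1),\dots,h(e_m)$, $h(C^{(1)})$, $h(C^{(2)})$; each of $H(\bar t)$, $H(h(e))$ with $e\in\Sh$ has length $\le 2|P|$, each $H(\bar u^{(i)})$, $H(\bar z^{(i)})$ is conjugate (via a path in $T$, whose label has $P$-exponent controlled by the cycle bound) to a cycle value which is a bounded power of $P$, each $H(h(e_i))$ again rewrites through paths in $T$ and cycles with bounded $P$-exponent, and $H(C^{(1)})$, $H(C^{(2)})$ are bounded powers of $P$ by the previous paragraph. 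Collecting these, $|H_k|\le f_0(\Omega,\P,R)\cdot|P|$ for an explicit computable $f_0$ depending only on $\rho$, the number of bases, and the combinatorics of $\langle\P,R\rangle$. Finally I would note everything is effective: the bounds on cycle exponents come from the Euclidean reduction, which terminates, and $f_0$ is an explicit polynomial-type expression in the parameters of $\Omega$.

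The main obstacle I anticipate is the bookkeeping in the last step: showing that rewriting $h_k$ ($h_k\in\P$) in the basis $\bar x$ and then substituting $H$ cannot inflate the length beyond a \emph{linear} (in $|P|$) bound --- one must check that the conjugating words (labels of paths in $T$) and the cycle values combine additively rather than multiplicatively in the exponent, which relies on the commutativity relations $[h(\cc_1),h(\cc_2)]=1$ valid in $\GG_{R(\Omega^*)}$ (periodisedness) and on all these values lying in $\BA(P)\cup\langle P\rangle$. A secondary subtlety is making sure the reduction to Lemma~\ref{lem:23-1.5} is legitimate, i.e.\ that a solution minimal with respect to $\AA(\Omega)$ really is minimal with respect to the trivial group --- this is immediate from the definition of ``$<_{\BB(\Omega)}$'' and Definition~\ref{def:minsol}, but should be stated explicitly.
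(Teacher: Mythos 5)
Your treatment of the cycle exponents reaches the right intermediate conclusion but by a mechanism that cannot actually run in the regular case: when $|C^{(2)}|=1$, regularity means precisely that every cycle $\cc_{e_0}\in\langle C^{(1)}\rangle$ with $h(e_0)\notin\P$ satisfies $h(\cc_{e_0})=1$ in $\GG_{R(\Omega^*)}$, so all the candidate cycles your Euclidean reduction would subtract off evaluate trivially under $H$ and the reduction has nothing to act on --- that device belongs to the singular case (Lemma \ref{lem:23-1}). The paper instead gets $|\e(H(\cc^{(2)}))|\le 2\rho$ straight from Lemma \ref{lem:23-1.5}: since all exponents vanish on $\widetilde{Z}_1$ (via $\e(H(b_\mu))=0$ and $[\widetilde{Z}_1:\widetilde{B}]<\infty$), the cycle produced by Lemma \ref{lem:23-1.5} is, in homology, a nonzero multiple of $\cc^{(2)}$, and its exponent is at most $2\rho$; the bound on $\e(H(\cc_e))$ for all $e\notin T$ then follows from the finite index. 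Up to this correction, the first half of your plan matches the paper.

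The genuine gap is in your final propagation step. Bounding the $P$-exponents of all cycles does \emph{not} bound $|H_k|$ for $h_k\in\P$: the labels of the edges $e_1,\dots,e_m$ of $T\setminus T_0$ are themselves items of $\P$ and are generators of the basis $\bar x$, so ``rewriting $h_k$ in $\bar x$'' is vacuous for them, and their values $H_k=P_2^{(k)}P^{n_k}P_1^{(k+1)}$ are not determined by the cycle values --- two such tree edges can carry exponents $+N$ and $-N$ with $N$ arbitrarily large while every cycle through them keeps a bounded exponent. This is exactly where $\AA(\Omega)$-minimality has to enter quantitatively, and the paper does it by a different argument which your proposal omits entirely: each cycle exponent is the linear form $\sum_k t(\cc_e,h_k)(n_k+1)-1$ of Equation (\ref{2.66}), so the exponents $\{n_k\mid h_k\in\P\}$ satisfy the linear Diophantine system (\ref{2.67}), whose coefficients, free terms and number of unknowns are bounded by the cycle-exponent bounds; one then shows the associated vector $\{m_k\}$ is a \emph{minimal} solution of that system, because any smaller solution $\{m_k^+\}$ yields an honest solution $H^+$ of $\Omega$ (here $P$-periodicity is what upgrades equalities in $\GG$ to graphical ones) with $\pi_{H^+}=\pi_H\delta$, where $\delta$ is a product of the automorphisms $\delta_{ie_0}$ built from part (\ref{spl4}) of Lemma \ref{2.10''}, contradicting the minimality of $H$ with respect to $\AA(\Omega)$; finally Makanin's Lemma 1.1 bounds the components of minimal solutions of linear Diophantine systems by a recursive function of their data, which is what produces $f_0$. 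Without this Diophantine/minimal-solution step (or an equivalent use of the $\delta_{ie_0}$'s to control the $T\setminus T_0$ exponents), your passage from cycle bounds to item bounds does not go through; the obstruction is not the bookkeeping of conjugators you anticipate, but the fact that the tree-edge exponents are free parameters constrained only through that linear system.
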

\begin{proof}
Let $H$ be a $P$-periodic minimal solution with respect to the group of automorphisms $\AA(\Omega)$. Notice that $H$ is also minimal with respect to the trivial group of automorphisms.

We first prove that for any regular periodic structure and any $P$-periodic solution $H$ of $\Omega$ minimal with respect to the group of automorphisms $\AA(\Omega)$, the exponent of periodicity of the label of every simple cycle $\cc_e$, $e\notin T$ is bounded by a certain computable function $g_1 (\Omega, {\P}, R)$.

If for every $i$ we have $|H_i|\le 2\rho |P|$, the statement follows. We further assume that there exists $i$ such that  $|H_i|> 2\rho |P|$.

Since the periodic structure is regular, either $|C^{(2)}|=1$ and for all $e \not \in T$ so that $e \in \Sh$ we have $H(\cc_e)=1$, or $|C^{(2)}|=0$.

Assume first that, $C^{(2)}=\{\cc^{(2)}\}$. Since the periodic structure $\langle \P,R\rangle$ is regular, it follows that $H(\cc_e)=1$ for all $e\notin T$, $h(e)\notin \P$. Since ${H}$ is a solution of $\Omega$, it follows that $\e(H(b_\mu))=0$, $\mu \in {\P}$. Therefore, $\e(H(\cc))=0$ for all $\cc$ such that $\tilde{\cc}\in\widetilde{B}$. By (\ref{2.54}), we have that $[\widetilde{Z}_1:\widetilde{B}]<\infty$, hence we get that $\e(H(\cc))=0$ for all $\cc$ such that $\tilde{\cc}\in \widetilde{Z}_1$. Consequently, since the only non-trivial cycle at $v_\Gamma$ is $\cc^{(2)}$, by Lemma \ref{lem:23-1.5}, we get that $|\e(H({\cc}^{(2)}))| \le 2 \rho$. Using factorisation (\ref{2.54}), one can effectively express $\tilde{\cc}_e$, $e \not \in T$ in terms of the elements of the basis:
$$
\tilde{\cc}_e = n_e \tilde{\cc}^{(2)} + \tilde{z}_e^{(1)},\  \tilde{z}_e^{(1)} \in \widetilde{Z}_1.
$$
Hence $|\e(H(\cc_e))| = |n_e \e (H(\cc^{(2)}))| \leq 2 \rho n_e$, and we finally obtain
\begin{equation} \label{2.64}
|\e(H(\cc_e))| \le g_1 (\Omega, {\P}, R),
\end{equation}
where $g_1$ is a certain computable function.

Suppose next that $|C^{(2)}| = 0$, i.e. $\widetilde{Z} = \widetilde{Z}_1$. As we have already seen in the proof of Lemma~\ref{lem:spl1}, $|H(\cc_e)| \leq 2 \rho |P|$, where  $e \not \in T$, $h(e) \not \in {\P}$. Hence, $|\e(H(\cc_e))| \leq 2 \rho$ for all $e$ such that $h(e) \not \in {\P}$. Since $(\widetilde{Z}_{1} : \widetilde{B}) < \infty$, for every $e_0 \not \in T$ one can effectively construct the following equality
$$
n_{e_0} {\tilde{\cc}}_{e_0} = \sum\limits_{h(e) \notin \P} n_e \tilde{\cc}_e + \sum\limits_{\mu \in \P} n_\mu \tilde{b}_\mu.
$$
Hence,
$$
|\e(H({\cc}_{e_0}))| \leq |n_{e_0}\cdot\e ( H({\cc}_{e_0}))|\leq \sum\limits_{h(e)\notin {\P}}
|n_e \e(H({\cc}_e))| \leq 2 \rho \cdot \sum\limits_{h(e) \not \in {\P}} |n_e|.
$$
Thus, $|\e(H({\cc}_{e_0}))|\le g_2(\Omega,\P,R)$.

We now address the statement of the lemma. The way we proceed is as follows. For a $P$-periodic solution $H$ minimal with respect to the group of automorphisms $\AA(\Omega)$, we show that the vector that consists of exponents of periodicity of each of the components $H_k$ of $H$ is bounded by a minimal solution of a linear system of equations whose  coefficients depend only on the generalised equation. Since by Lemma 1.1 from \cite{Makanin}, the components of a minimal solution of a linear system of equations are bounded above by a recursive function of the coefficients of the system, we then get a recursive bound on the exponents of periodicity of the components of the solution $H$.

Let $\delta(k) = P_1^{(k)} P_2^{(k)}$. Denote by $t(\cc, h_k)$ the algebraic sum of occurrences of the edge with the label $h_k$ in the cycle $\cc$, (i.e. edges with different orientation contribute with different signs). For every item $h_k$ that belongs to a closed section from ${\P}$ one can write $$
H_k = P_2^{(k)} P^{n_k} P_1^{(k+1)}.
$$
Note that in the case that $h_k\in \P$, the above equality is graphical. However, in the case that $h_k\notin \P$ and $H_k$ is a subword of $P^{\pm 1}$ there is cancellation. Direct calculations show that
\begin{equation} \label{2.66}
H(\cc) = P^{\left(\sum\limits_k t(\cc, h_k)(n_k+1)\right) -1}.
\end{equation}
By Lemma \ref{lem:23-1.5}, $e_0 \not \in T$ can be chosen in such a way that $\e(H({\cc}_{e_0})) \neq 0$. Let $n_k = | \e (\tilde{\cc}_{e_0})| m_k + r_k$, where $0 < r_k \leq | \e (\tilde{\cc}_{e_0})|$. Equation (\ref{2.66}) implies that the vector $\{ m_k \mid h_k \in {\P} \}$ is a solution of the following system of Diophantine equations in variables $\{ z_k \mid h_k \in {\P} \}$:
\begin{equation} \label{2.67}
\sum\limits_{h_k \in {\P}} t(\cc_e, h_k)(|\e(H({\cc}_{e_0}))| z_k + r_k +1) + \sum\limits_{h_k \not \in {\P}} t(\cc_e, h_k)(n_k +1)  -1 = \e(H({\cc}_e)), \ e \not \in T.
\end{equation}
Note that the number of variables of the system (\ref{2.67}) is bounded. Furthermore, as we have proven above, free terms $\e(H({\cc}_e))$ of this system are also bounded above, and so are the coefficients $|n_k|\leq 2$ for $h_k\not\in {\P}$.

A solution $\{m_k\}$ of a system of linear Diophantine equations is called {\em minimal}, see \cite{Makanin}, if $m_k \geq 0$ and there is no other solution $\{m_k^+\}$ such that $0 \leq m_k^+ \leq m_k$ for all $k$, and at least one of the inequalities $m_k^+ \leq m_k$ is strict. Let us verify that the solution $\{m_k \mid h_k \in {\P} \}$ of system (\ref{2.67}) is minimal.

Indeed, let $\{ m_k^+ \}$ be another solution of system (\ref{2.67}) such that $0 \leq m_k^+ \leq m_k$ for all $k$, and at least for one $k$ the inequality is strict. Let $n_k^+ = |\e(H({\cc}_{e_0}))| m_k^+ + r_k$. Define a vector $H^+$ as follows: $H_k^+ = H_k$ if $h_k \not \in {\P}$, and $H_k^+ = P_2^{(k)} P^{n_k^+}P_1^{(k+1)}$ if $h_k \in {\P}$.

We now show that $H^+$ is a solution of the generalised equation which can be obtained from $H$ by an automorphism from $\AA(\Omega)$.

The vector $H^+$ satisfies all the coefficient equations and all the basic equations of $\Omega$. Indeed, since $\{m_k^+\}$ is a solution of system (\ref{2.67}), $H^+(\cc_e) = P^{\e(H({\cc}_e))} = H(\cc_e)$. Therefore, for every cycle $\cc$ we have $H^+(\cc) = H(\cc)$ and, in particular, $H^+(b_\mu) = H(b_\mu) = 1$. Thus the vector $H^+$ is a solution of the system $\Omega^\ast$.

By construction every component of the solution $H^+$ is non-empty and the words $H^+(\mu)$, $H^+(\Delta(\mu))$ are geodesic. On the other hand, for every $\mu$ we have
$$
H^+(\mu){H^+(\Delta(\mu))}^{-1} = 1.
$$
Since if $\mu \in \P$, then the words $H^+ (\mu)$ and $H^+ (\Delta(\mu))$ are $P$-periodic, it follows that
$$
H^+ (\mu) \doteq H^+ (\Delta(\mu)).
$$
From the definition of $H^+$ it is obvious that $H^+$ satisfies the commutation constraints induced by $\Re_\Upsilon$. Thus, $H^+$ is a solution of the generalised equation $\Omega$.

Denote by $\delta_{ie_0}$ an element from the group of automorphisms $\AA(\Omega)$ defined in the following way. For every $e_i\in T\setminus T_0$, $i=1,\dots,m$ set
$$
\delta_{ie_0}: h(e_j) \mapsto
\left\{
  \begin{array}{ll}
    h(\p(v_\Gamma, v_i)^{-1} \cc_{e_0} \p(v_\Gamma, v_i))h(e_i), & \hbox{for } j=i; \\
    h(e_j), & \hbox{for } j\ne i.
  \end{array}
\right.
$$
Therefore, if $\pi_{{H}'} = \delta_{ie_0}\pi_{H}$ and $h(e_i) = h_k \in {\P}$, then $H_k' =P_2^{(k)} P^{n_k + \e(H(\cc_{{e}_0}))} P_1^{(k+1)}$, and all the other components of $H'$ are the same as in $H$, $H'_l=H_l$, $l\ne k$. Denote by $\delta = \prod\limits_{i=1}^{m} \delta_{ie_0}^{\Delta_i}$, where $h(e_i)=h_{k_i}$, $\Delta_i = (m^+_{k_i} - m_{k_i}) \cdot {\rm \sign}(\e(H(\cc_{{e}_0})))$. Let us
verify the equality
\begin{equation} \label{2.69}
\pi_{{H}^+} = \pi_{{H}} \delta.
\end{equation}

Let $\pi_{{H}^{(1)}} =\delta\pi_{{H}}$. Then, by construction, $H_k^{(1)} = P_2^{(k)} P^{m_k^+} P_1^{(k+1)} = H_k^+$ for all $k$ such that $h_k$ is the label of an edge from $T \setminus T_0$. If the edge labelled by $h_k$ belongs to $T_0$, or $h_k$ does not belong to a closed section from ${\P}$, then $h_k \not \in {\P}$ and $H_k^{(1)}= H_k = H_k^+$. Finally, for every $e \not \in T$ we have $H^{(1)}(\cc_e) = H(\cc_e) = H^+(\cc_e)$. As $\cc_e = \p_1 e \p_2$, where $\p_1$, $\p_2$ are paths in the tree $T$, and for every item $h_k$ which labels an edge from $T$, the equality $H_k^{(1)} = H_k^+$ has already been established, so it follows that $H^{(1)}(e) = H^+(e)$. This proves (\ref{2.69}).

Since $H^+$ is a solution of $\Omega$, since by construction $H^+$ satisfies conditions (\ref{it:minsol3}) and (\ref{it:minsol4}) from Definition \ref{defn:sol<} and by (\ref{2.69}), it follows that ${H}^+ <_{\AA(\Omega)} {H}$. We arrive to a contradiction with the minimality of the solution ${H}$. Consequently, the solution $\{m_k \mid h_k \in {\P} \}$ of system (\ref{2.67}) is minimal.

Lemma 1.1 from \cite{Makanin} states that the components of the  minimal solution $\{m_k \mid h_k \in {\P} \}$ are bounded by a recursive function of the coefficients, the number of variables and the number of equations of the system. Since, as shown above, all of these parameters of system (\ref{2.67}) are bounded above by a computable function, the lemma follows.
\end{proof}

\section{The finite tree $T_0(\Omega )$ and minimal solutions} \label{se:5.3}

In Section \ref{se:5} we constructed an infinite tree $T(\Omega)$. Though for every solution $H$ the path in $T(\Omega)$
$$
(\Omega,H)=(\Omega_{v_0},H^{(0)})\to(\Omega_{v_1},H^{(1)})\to\dots \to (\Omega_{v_t},H^{(t)})
$$
defined by the solution $H$ is finite, in general, there is no global bound for the length of these paths.

Informally, the aim of this section is to prove that the set of solutions of the generalised equation $\Omega$ can be parametrised by a finite subtree $T_0$ (to be constructed below) of the tree $T$,  automorphisms from a recursive group of automorphisms of the coordinate group $\GG_{R(\Omega^*)}$, and solutions of the generalised equations associated to leaves of $T_0$. In other words, we prove that there exists a global bound  $M$, such that for any solution $H$ of $\Omega$ one can effectively construct a path \index{path!$\p(H)$}$\p(H)=(\Omega_{v_0},H^{[0]})\to(\Omega_{v_1},H^{[1]})\to\dots$ in $T(\Omega)$ of length bounded above by $M$ and such that $H^{[i]}<_{\Aut(\Omega)} H$ for all $i$, where ${\Aut(\Omega)}$  is the group of automorphisms of $\GG_{R(\Omega^*)}$ defined in Section \ref{5.5.3}, see  Definition \ref{defn:Aut} (note the abuse of notation: $H^{[i]}$ and $H$ are solutions of different generalised equations; in fact this relation is between two solutions of $\Omega$: a solution induced by $H^{[i]}$ and $H$; see below for a formal definition).

We summarise the results of this section in the proposition below.

\begin{prop}\label{prop:sum5}
For a {\rm(}constrained\/{\rm)} generalised equation $\Omega=\Omega_{v_0}$, one can effectively construct a \emph{finite} oriented rooted at $v_0$ tree $T_0$, $T_0=T_0(\Omega_{v_0})$, such that:
\begin{enumerate}
\item The tree $T_0$ is a subtree of the tree $T(\Omega)$.
\item To the root $v_0$ of $T_0$ we assign a recursive group of automorphisms $\Aut(\Omega)$ {\rm (}see {\rm Definition \ref{defn:Aut})}.
\item For any solution $H$ of a generalised equation $\Omega $ there exists a leaf $w$ of the tree $T_0(\Omega )$, $\tp(w)=1,2$, and a solution $H^{[w]}$ of the generalised equation $\Omega _w$ such that
\begin{itemize}
    \item  ${H}^{[w]}<_{\Aut(\Omega)} H$;
    \item  if $\tp(w)=2$ and the generalised equation $\Omega_{w}$ contains non-constant non-active sections, then there exists a period $P$ such that $H^{[w]}$ is periodic with respect to the period $P$ and the generalised equation $\Omega _w$ is either singular of strongly singular with respect to the periodic structure ${\P}(H^{[w]},P)$.
\end{itemize}
\end{enumerate}
\end{prop}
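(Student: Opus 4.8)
\textbf{Proof proposal for Proposition \ref{prop:sum5}.}

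The plan is to construct $T_0(\Omega)$ as a finite ``truncation'' of the infinite tree $T(\Omega)$ from Section \ref{se:5}, where each branch is cut at the first vertex beyond which minimality of the solution (with respect to an appropriate recursive group of automorphisms) forces the branch to have bounded length. Concretely, I would first recall from Lemma \ref{3.2} that every infinite branch of $T(\Omega)$ eventually consists entirely of principal edges of one of the three types: the linear case (A), the quadratic case (B), or the general case (C). The construction of $T_0$ and the group $\Aut(\Omega)$ must therefore be carried out case by case, following the strategy outlined in the introduction to Section \ref{se:5.3}: for the linear and quadratic branches one argues combinatorially that a generalised equation must repeat along a sufficiently long branch, which yields an automorphism of the coordinate group; one collects these automorphisms into a recursive group, and then shows (using the relation $<_{\BB(\Omega)}$ of Section \ref{sec:minsol} and Lemma \ref{lem:2.1}) that minimal solutions with respect to this group factor through sub-branches of bounded height. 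For the general branch (Case 15), one additionally invokes the machinery of periodic structures from Section \ref{sec:periodstr}: the group $\Aut(\Omega)$ must incorporate the automorphism groups $\AA(\Omega_w)$ of Definition \ref{defn:AA}, so that by Lemmas \ref{lem:23-1ss}, \ref{lem:23-1}, and \ref{lem:23-2} a minimal periodic solution either has bounded exponent of periodicity (regular case) or already solves a proper quotient (singular/strongly singular case), which is exactly the dichotomy recorded in the second bullet of item (3).

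The key steps, in order, are: (i) define the group of automorphisms $\Aut(\Omega)$ of $\GG_{R(\Omega^*)}$ attached to the root $v_0$, assembled from the ``linear'' and ``quadratic'' periodicity-free automorphisms arising from repeated generalised equations along branches of types (A) and (B), together with the periodic-structure automorphism groups $\AA(\Omega_w)$ for the vertices $w$ where Case 15 applies, and verify it is recursive (enumerable with decidable membership), using that the universal theory of $\GG$ is decidable (as in Lemmas \ref{le:hom-check}, \ref{cl:propc1}); (ii) bound, separately for each of the three types of infinite branch, the height at which a branch can be cut while still guaranteeing that for any solution $H$ there is a solution $H^{[v]}$ of the equation at the cut vertex $v$ with $H^{[v]} <_{\Aut(\Omega)} H$ — here one uses Remark \ref{rem:leng<} (strict length decrease in Cases $7$--$10$, $12$, $15$) together with the repetition/pigeonhole arguments and, in the periodic case, Lemma \ref{lem:23-2}, to see that beyond the cut the solution is no longer $\Aut(\Omega)$-minimal; (iii) take $T_0$ to be the resulting finite subtree, re-examine every leaf: by construction a leaf either has $\tp = 1$ or $\tp = 2$ (Cases $1$, $2$ of Section \ref{se:5.2}), and when $\tp(w) = 2$ with non-constant non-active sections present, apply Lemma \ref{lem:case2} to produce the period $P$ and then classify $\Omega_w$ as singular or strongly singular with respect to $\P(H^{[w]}, P)$ via Definition \ref{defn:singreg} (the regular subcase being excluded precisely because Lemma \ref{lem:23-2} would have allowed us to cut the branch earlier); (iv) assemble these facts, propagating the relation $<_{\Aut(\Omega)} H$ down the path using transitivity of $<_{\BB(\Omega)}$ and Lemma \ref{lem:2.1}, to conclude item (3).

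The main obstacle I expect is step (ii) in the general case: establishing a \emph{uniform} bound on the height of the general (Case 15) branches. Unlike the linear and quadratic cases, where complexity $\comp(\Omega)$ and the parameters $n_A, \xi, \rho$ are monotone (Lemma \ref{3.1}) and a repeated equation gives an honest automorphism, in the general case one has to interleave the entire transformation $\D 5$ with the periodic-structure analysis, and the delicate point is showing that a minimal solution whose exponent of periodicity is already bounded by $f_0(\Omega, \P, R)$ of Lemma \ref{lem:23-2} forces the branch below to terminate within a computable number of steps — this requires carefully matching the periodic structure $\P(H^{[v]}, P)$ along the branch with the repetition of generalised equations and ruling out the regular case at the cut vertex, which is where the interaction between Lemma \ref{lem:23-1.5}, Lemma \ref{lem:23-2} and the combinatorics of Case 15.1 (the carrier intersecting its dual, with its auxiliary edges) becomes technically heavy. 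The linear-case bound, relying on the classification of repeated equations of ``Levitt/thin type'', and the quadratic-case bound (``surface/interval-exchange type'') are more routine by comparison, essentially pigeonhole plus Remark \ref{rem:leng<}.
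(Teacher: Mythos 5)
Your sketch correctly identifies the three types of infinite branch, the pigeonhole/repetition mechanism for the linear and quadratic cases, and the role of minimality and of Lemma \ref{lem:23-2} at type-2 leaves, but as a proof plan it has two genuine gaps. First, the tree $T_0$ cannot be defined, as you propose, by ``cutting each branch at the first vertex beyond which minimality of the solution forces the branch to have bounded length'': the construction must be effective and solution-independent. The paper instead defines $T_0$ combinatorially, by induction on the parameter $\tau_v$, as the subtree of $T(\Omega)$ containing no \emph{prohibited} subpaths, where prohibited paths of types 7--10 and 12 are repetition conditions on generalised equations, and a prohibited path of type 15 (Definition \ref{defn:proh15}) is not a height bound at all but a decomposition into $\eta_i$-reducing subpaths, each base occurring at least $40n^2f_{1}+20n+1$ times, together with a condition on transfer bases; moreover the threshold $\ss(\Omega_v)$ entering the notion of a $\mu$-reducing path is defined recursively from the already-constructed subtrees $T_0(\Omega_{w})$ at the auxiliary-edge targets. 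This interleaved recursion on $\tau_v$ is the organisational backbone you are missing, and without it neither the finiteness of $T_0$ nor the construction of the paths $\p(H)$ (where sections are declared non-active exactly when the periodicity exponent exceeds $\ss(\Omega_{w_i})$, condition (\ref{3.9})) can be carried out.

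Second, your diagnosis of the Case 15 difficulty — ``matching the periodic structure along the branch'' — misses the actual mechanism, and your choice of automorphism groups does not support it. For the general branch the paper uses: the excess $\psi_{A\Sigma}$ and its invariance under the entire transformation (Lemma \ref{lem:excess}); the bound of the active-part length by a function of the excess, inequality (\ref{3.19}), which rests on Lemma \ref{2.8} and hence on minimality with respect to the automorphisms \emph{invariant with respect to the non-quadratic part} attached to type 15 (and 12) vertices; and the estimate (\ref{3.26}) that every $\mu$-reducing path shortens the solution by at least $\tfrac{1}{10}|H^{(1)}(\mu)|$, so that a prohibited path of type 15 forces (\ref{3.28}), contradicting (\ref{3.19}). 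Your $\Aut(\Omega)$ attaches the periodic-structure groups $\AA(\Omega_w)$ to Case 15 vertices and omits the non-quadratic-part-invariant automorphisms there, whereas in the paper the groups $\AA$ for \emph{regular} periodic structures are attached to type 2 vertices and enter only through $\ss$ and the leaf analysis; with your assignment Lemma \ref{2.8} is unavailable and the excess bound collapses. Finally, at a type-2 leaf, Lemma \ref{lem:case2} only produces \emph{some} period; the singular/strongly-singular conclusion requires the quantitative lower bound $r\ge\rho_w\max f_{0}$ on the exponent of periodicity recorded in (\ref{3.29}), which comes from the construction of $\p(H)$ via (\ref{3.9}), and only then does minimality together with Lemma \ref{lem:23-2} rule out the regular case.
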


This section is organised in three subsections.  The aim of Section \ref{5.5.2} is to define the recursive group of automorphisms $\VV(\Omega_v)$ of the coordinate group of the generalised equation $\Omega_v$ associated to $v$, $v\in T(\Omega)$.

In Section \ref{5.5.3}, we define a finite subtree $T_0(\Omega)$ of the tree $T(\Omega)$. In order to define $T_0(\Omega)$ we introduce the notions of prohibited paths of type 7-10, 12 and 15.  Using Lemma \ref{3.2}, we prove that any infinite branch of the tree $T(\Omega)$ contains a prohibited path. The tree $T_0(\Omega)$ is defined to be a subtree of $T(\Omega)$ that does not contain prohibited paths and, by construction, is finite. We then define a recursive group of  automorphisms $\Aut(\Omega)$ that we assign to the root vertex of the tree $T_0(\Omega)$. The group $\Aut(\Omega)$ is generated by conjugates of the groups $\VV(\Omega_v)$, $v\in T_0(\Omega)$, $\tp(v)\ne 1$.

Finally, in Section \ref{5.5.4} for any solution $H$ of $\Omega$ we construct the path $\p(H)$ from the root $v_0$ to $w$, prove that $w$ is a leaf of the tree $T_0(\Omega)$ and show that the leaf $w$ satisfies the properties required in Proposition \ref{prop:sum5}.

\subsection{Automorphisms}\label{5.5.2}

One of the features of our results in this paper is that all the constructions we give are effective. Since, we do not have control over the automorphism groups of $\GG$-residually $\GG$ groups, we are led to narrow the group of automorphisms in such a way that on one hand they are effectively described and, on the other hand, Proposition \ref{prop:sum5} stated above still holds.

Informally, we will see that the automorphisms that we consider (besides the ones already defined in Section \ref{sec:periodstr} for periodic structures) are, in a sense, induced by the canonical epimorphisms in the infinite branches of the tree $T$. Therefore, these automorphisms are compositions of epimorphisms induced by elementary and derived transformations. This will allow us to prove that the automorphisms are completely induced: come from word mappings that, in turn, induce automorphisms of residually free groups considered by Razborov in \cite{Razborov3}. Since the group of automorphisms considered by Razborov are tame and finitely generated, using the decidability of the universal theory of $\GG$, we will get that our groups of automorphisms are recursive.

In his work on systems of equations over a free group \cite{Razborov1}, \cite{Razborov3}, to prove that the groups of automorphisms are finitely generated, Razborov uses a result of McCool, see {\rm \cite{McCool}}. When this paper was already written M.~Day published two preprints, \cite{Day1} and \cite{Day2} on automorphisms of partially commutative groups. The authors believe that using ideas of A.~Razborov and techniques developed by M.~Day, one can prove that the automorphisms groups of the coordinate groups we consider are, in fact, finitely generated.

\begin{defn}\label{defn:complind}
Let $\Omega=\gpo$ be a generalised equation. An automorphism $\theta$ of the coordinate group $\GG_{R(\Omega^*)}$ is called \index{automorphism!completely induced}\emph{completely induced} if there exist an $F$-homomorphism $\tilde \theta$ from $F[h]$ to $F[h]$, where $F$ is the free group on $\cA$ and an $F$-automorphism $\theta'$ of the coordinate group $F_{R(\Upsilon^*)}$, such that the following diagram commutes:
$$
\CD
 \GG_{R(\Omega^*)} @<<<  F[h]           @>>> F_{R(\Upsilon^*)} \\
 @V\theta VV       @V \tilde\theta VV   @VV \theta' V  \\
 \GG_{R(\Omega^*)} @<<<   F[h]           @>>> F_{R(\Upsilon^*)}
\endCD
$$

In this case, the automorphisms $\theta$ and $\theta'$ are called \index{automorphism!dual}\emph{dual}.
\end{defn}

To every vertex $v$ of the tree $T(\Omega)$, $\Omega=\gpo$ we assign a recursive group of automorphisms \glossary{name={$\VV(\Omega_v)$}, description={a recursive group of automorphisms of $\GG_{R(\Omega_v^\ast)}$ that we associate to a vertex $v$ of the tree $T(\Omega)$}, sort=V}$\VV(\Omega_v)$ of $\GG_{R(\Omega_v^\ast)}$.

If $\tp (v)=2$, set  $\VV(\Omega_v)$ to be the group generated by all the  groups of automorphisms $\AA(\Omega_v)$ corresponding to \emph{regular} periodic structures on $\Omega_v$, see Definition \ref{defn:AA}.

\medskip

\begin{rem}
The definition of the automorphism groups $\VV(\Omega_v)$ in the case when $7\le \tp(v)\le 10$ (or, $\tp(v)=12$) given below may seem unnatural. In fact, for the purposes of this paper, the automorphism group $\VV(\Omega_v)$ could be defined as the group of automorphisms of $\GG_{R(\Omega_v^*)}$ that are induced by $F$-\emph{homomorphisms} $\varphi$ of the free group $F[h]$ such that $\varphi(h_i)=h_i$ for all $h_i$ that  belong to the kernel of $\Omega_v$ (or, such that $\varphi(h_i)=h_i$ for all $h_i$ that belong to the non-quadratic part of $\Omega_v$, see below for definition). If defined in this way, the groups of automorphisms $\VV(\Omega_v)$ are recursive and all the proofs in the paper work in the same way.

However, we chose to define the groups $\VV(\Omega_v)$ as below because the groups of automorphisms defined by Razborov are well understood (in the case that the corresponding coordinate group is fully residually free they correspond to the canonical group of automorphisms associated to its JSJ decomposition, see \cite{KMIrc} and \cite{EJSJ}) and we hope that establishing relations between $\VV(\Omega_v)$ and the groups defined by Razborov will help understanding the structure of $\VV(\Omega_v)$ and, consequently, of the splittings of fully residually $\GG$ groups.

For the time being, the only consequences of our definition are that the group of automorphisms $\VV(\Omega_v)$ is ``induced'' by a subgroup of the automorphism groups defined by Razborov and that automorphisms from $\VV(\Omega_v)$ are tame. Recall that an automorphism of a group is \index{automorphism!tame}\emph{tame} if it is induced by an automorphism of the free group.
\end{rem}

Let $7\leq \tp (v)\leq 10$. An automorphism $\varphi$ of the coordinate group $\GG_{R({\Omega_v}^*)}$ is called \index{automorphism!invariant with respect to the kernel}\emph{invariant with respect to the kernel} if it is completely induced (see Definition \ref{defn:complind}) and its dual is invariant with respect to the kernel in the sense of Razborov, \cite{Razborov3}. We now recall the definition of an automorphism invariant with respect to the kernel in the sense of Razborov.

Let $\Upsilon_v$ be a non-constrained generalised equation over the free monoid $\FF$, and $F_{R(\Upsilon_v^*)}$ be its coordinate group over $F$, where $F$ is the free group with basis $\cA$. Let $\widehat{\Upsilon}_v$ be obtained from $\D 3(\Upsilon_v)$ by removing all coefficient equations and all bases from the kernel of $\Upsilon_v$. Let $\pi$ be the  natural homomorphism from $F_{R(\widehat{\Upsilon}_v^*)}$ to $F_{R(\Upsilon_v^*)}$.

An automorphism $\varphi$ of the coordinate group $F_{R({\Upsilon_v}^*)}$ is called \index{automorphism!invariant with respect to the kernel in the sense of Razborov}\emph{invariant with respect to the kernel}, see \cite{Razborov3}, if it is induced by an automorphism $\varphi'$ of the coordinate group $F_{R(\widehat{\Upsilon}_v^*)}$ identical on the kernel of $\Upsilon_v$, i.e. there exists an $F$-automorphism $\varphi'$ of the coordinate group $F_{R(\widehat{\Upsilon}_v^*)}$ so that $\varphi'(h_i)=h_i$ for every variable $h_i\in \Ker(\Upsilon_v)$ and the following diagram commutes
$$
\CD
F_{R(\widehat{\Upsilon}_v^*)}   @>{\pi}>> F_{R({\Upsilon_v}^*)} \\
  @V{\varphi'}VV        @V \varphi VV  \\
  F_{R(\widehat{\Upsilon}_v^*)}  @>{\pi}>> F_{R({\Upsilon_v}^*)}
\endCD
$$

\begin{lem}[Lemma 2.5, \cite{Razborov3}]\label{lem:razker}
There exists a finitely presented subgroup $K$ of the group of automorphisms of a free group such that every automorphism from $K$ induces an automorphism of $\factor{F[h]}{\ncl\langle\Upsilon_v\rangle}$, which, in turn, induces an automorphism of the coordinate group $F_{R({\Upsilon_v}^*)}$. The finitely generated group of all automorphisms $K'$ of $F_{R({\Upsilon_v}^*)}$ induced by automorphisms from $K$ contains all the automorphisms invariant respect to the kernel of $\Upsilon_v$.
\end{lem}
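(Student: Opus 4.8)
The statement to prove (Lemma~2.5 of \cite{Razborov3} as quoted) asserts the existence of a finitely presented group $K$ of automorphisms of a free group $F$, each member of which descends through the two successive quotients $F[h]\twoheadrightarrow F[h]/\ncl\langle\Upsilon_v\rangle\twoheadrightarrow F_{R(\Upsilon_v^*)}$, and such that the image group $K'$ in $\Aut(F_{R(\Upsilon_v^*)})$ captures all kernel-invariant automorphisms. Since this is Razborov's lemma, the cleanest route is to cite it; but assuming one wants to reconstruct the argument, here is the plan.

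\textbf{Step 1: Reduce to a stabiliser problem via the kernel equation.} First I would pass to the generalised equation $\widehat{\Upsilon}_v$ obtained from $\D3(\Upsilon_v)$ by deleting coefficient equations and all bases belonging to $\Ker(\Upsilon_v)$, as in the definition of kernel-invariant automorphism recalled just above the statement. The point is that $F_{R(\widehat{\Upsilon}_v^*)}$ splits, as a consequence of the elimination process $\D4$ (see the computations around Equations~(\ref{eq:ker1}), (\ref{eq:ker2}) and Lemma~\ref{7-10}), into a free product of a ``kernel part'' (generated by the $h_i$ lying in $\Ker(\Upsilon_v)$) and free factors coming from the eliminated bases and free variables; the relevant automorphisms must fix the kernel part pointwise. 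Thus a kernel-invariant automorphism of $F_{R(\Upsilon_v^*)}$ is, by definition, induced by an automorphism of $F_{R(\widehat{\Upsilon}_v^*)}$ that fixes a finite set of words (the generators of the kernel part, together with the coefficients from $\cA$) elementwise.

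\textbf{Step 2: Lift to $\Aut(F)$ and invoke McCool.} Next I would lift: since $F_{R(\widehat{\Upsilon}_v^*)}$ is itself a quotient of the free group $F[h]$ (with $F=F(\cA)$ a free factor), every automorphism fixing the designated finite tuple $W=(w_1,\dots,w_s)$ of words (kernel generators and coefficients) is induced by an element of $\Aut(F[h])$ stabilising $W$ up to conjugacy, or better, stabilising $W$ pointwise after the standard normalisation. By McCool's theorem \cite{McCool}, the stabiliser in $\Aut(F[h])$ of a finite set of elements is finitely presented; take $K$ to be (a suitable finite-index refinement of) this stabiliser. One then checks: (a) each $\kappa\in K$ preserves $\ncl\langle\widehat\Upsilon_v\rangle$, hence $\ncl\langle\Upsilon_v\rangle$, because it fixes the defining words of the kernel and the elimination relations are algebraic consequences (this is where the explicit form of $\D4$ from Lemma~\ref{7-10} is used); (b) each $\kappa\in K$ therefore induces an automorphism of $F[h]/\ncl\langle\Upsilon_v\rangle$, and since it fixes the coefficients and the system $\Upsilon_v^*$ modulo its radical, it induces an automorphism of the coordinate group $F_{R(\Upsilon_v^*)}$. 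Let $K'$ be the image of $K$ in $\Aut(F_{R(\Upsilon_v^*)})$; it is finitely generated as a quotient of a finitely presented group.

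\textbf{Step 3: Surjectivity onto kernel-invariant automorphisms.} Finally I would verify that every kernel-invariant automorphism $\varphi$ of $F_{R(\Upsilon_v^*)}$ lies in $K'$: by definition $\varphi$ comes from $\varphi'\in\Aut(F_{R(\widehat\Upsilon_v^*)})$ fixing $\Ker(\Upsilon_v)$; lift $\varphi'$ to $\tilde\varphi\in\Aut(F[h])$ — possible since $F_{R(\widehat\Upsilon_v^*)}$ is defined by relations each of which, after $\D3$, involves only pairs of dual bases not in the kernel, so a free-group lift exists by the standard Tietze/Nielsen argument — and observe $\tilde\varphi$ fixes $W$ up to inner automorphisms, hence after composing with an inner automorphism lies in the McCool stabiliser $K$. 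Its image in $\Aut(F_{R(\Upsilon_v^*)})$ is $\varphi$, so $\varphi\in K'$.

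\textbf{Main obstacle.} The delicate point is Step~2(a)--Step~3: showing that ``fixing the kernel words in $\Aut(F[h])$'' is exactly matched, after passing through the two quotients, with ``being kernel-invariant,'' and that lifts back to $\Aut(F[h])$ exist. This requires a careful bookkeeping of which relations of $F_{R(\Upsilon_v^*)}$ survive in $\widehat\Upsilon_v$ and of the Tietze transformations performed during $\D4$ (cf.\ the derivation of Equations~(\ref{eq:ker1}) and (\ref{eq:ker2})); one must ensure no relation outside the control of the stabiliser gets disturbed. The finite presentability of $K$ itself is entirely McCool's theorem and needs no reproof. Given the technical weight, in the paper I would simply cite \cite{Razborov3} for the free-group statement and, in the adjacent Lemma~\ref{lem:razker} and its consequences, use it as a black box to extract recursiveness of $\VV(\Omega_v)$ together with the decidability of the universal theory of $\GG$.
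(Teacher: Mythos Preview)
Your proposal is more than the paper requires: the paper gives no proof of this lemma at all --- it is stated with the attribution ``[Lemma 2.5, \cite{Razborov3}]'' and used as a black box, exactly as you yourself anticipate in your final paragraph. Your reconstruction sketch via McCool's theorem is a reasonable outline of how Razborov's argument goes, but it is supplementary material not present in the paper; the paper immediately proceeds to use $K$ and $K'$ to deduce recursiveness of $\VV(\Omega_v)$ from the decidability of the universal theory of $\GG$.
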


We use the notation of Lemma \ref{lem:razker}. Every automorphism of $\GG_{R(\Omega_v^\ast)}$ invariant with respect to the kernel is completely induced and, by definition, its dual belongs to $K'$. For every automorphism $\varphi$ from $K$, there exists an algorithm to decide whether or not $\varphi$ induces an automorphism of  $\GG_{R(\Omega_v^\ast)}$. Indeed, in order to do so it suffices to check if the following finite family of universal formulas (quasi-identities) holds in $\GG$:
$$
\left\{
\begin{array}{lcl}
\forall H_1,\dots,H_\rho\, \left[\varphi(\Upsilon^*(H))=1\wedge \{\varphi([H_i,H_j])=1\mid \Re_\Upsilon(h_i,h_j)\} \right]&\to&u(H)=1;\\
\forall H_1,\dots,H_\rho\, \left[\varphi(\Upsilon^*(H))=1\wedge \{\varphi([H_i,H_j])=1\mid \Re_\Upsilon(h_i,h_j)\} \right]&\to&[H_i,H_j]=1;\\
\forall H_1,\dots,H_\rho\, \left[\Upsilon^*(H)=1 \wedge \{[H_i,H_j]=1\mid \Re_\Upsilon(h_i,h_j)\}\right]&\to&\varphi(u(H))=1;\\
\forall H_1,\dots,H_\rho\, \left[\Upsilon^*(H)=1 \wedge \{[H_i,H_j]=1\mid \Re_\Upsilon(h_i,h_j)\}\right] &\to&\varphi([H_i,H_j])=1.
\end{array}
\right\}
$$
for every equation $u$ of the system $\Upsilon^*$ and every pair $(H_i, H_j)$ such that $\Re_\Upsilon(h_i,h_j)$. This can be done effectively, since the universal theory of the group $\GG$ is decidable, see \cite{DL}. We therefore get the following lemma.

\begin{lem} \label{lem:autinvkerrec}
The group of all automorphisms of $\GG_{R(\Omega_v^\ast)}$ invariant with respect to the kernel of $\Omega_v$ is tame and recursive.
\end{lem}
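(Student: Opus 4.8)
\textbf{Proof plan for Lemma \ref{lem:autinvkerrec}.}
The statement asserts two things about the group of automorphisms of $\GG_{R(\Omega_v^\ast)}$ that are invariant with respect to the kernel of $\Omega_v$: that it is tame, and that it is recursive. The plan is to derive both facts from the preceding discussion, essentially by assembling Lemma \ref{lem:razker}, the definition of ``completely induced'', and the decidability of the universal theory of $\GG$ (cited from \cite{DL}).

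First I would settle tameness. By definition, every automorphism of $\GG_{R(\Omega_v^\ast)}$ invariant with respect to the kernel is completely induced, so it arises from an $F$-homomorphism $\tilde\theta$ of the free group $F[h]$ making the defining square of Definition \ref{defn:complind} commute; in particular such an automorphism is induced by an automorphism of the free group, which is exactly the definition of a tame automorphism. Moreover the dual automorphism $\theta'$ of $F_{R(\Upsilon_v^\ast)}$ is, by the definition given just before Lemma \ref{lem:razker}, invariant with respect to the kernel of $\Upsilon_v$ in the sense of Razborov, hence lies in the finitely generated group $K'$ of Lemma \ref{lem:razker}, and is itself induced (through $\pi$) by an automorphism $\varphi'$ of $F_{R(\widehat{\Upsilon}_v^\ast)}$ coming from the finitely presented group $K$ of automorphisms of a free group. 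Thus every kernel-invariant automorphism of $\GG_{R(\Omega_v^\ast)}$ is induced, via the chain $K \to K' \to \Aut(\GG_{R(\Omega_v^\ast)})$, by a free-group automorphism from $K$, which gives tameness and simultaneously exhibits a recursively enumerable (indeed finitely generated) overgroup of candidate inducing automorphisms.

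Next I would establish recursiveness. The group $K$ is finitely presented, hence its elements can be effectively enumerated. For each $\varphi \in K$ one checks whether it induces an automorphism of $\GG_{R(\Omega_v^\ast)}$ by testing the finite family of quasi-identities displayed immediately before the statement of the lemma: the two formulas saying that $\varphi$ carries solutions to solutions (preserving both the equations $\Upsilon^\ast(H)=1$ and the constraints $[H_i,H_j]=1$ for $\Re_\Upsilon(h_i,h_j)$), together with the two symmetric formulas for $\varphi^{-1}$. Each of these is a universal sentence in $\cL_\GG$, and since the universal theory of $\GG$ is decidable (see \cite{DL}), each test terminates. Running this test over the enumeration of $K$ produces a recursive enumeration of all kernel-invariant automorphisms of $\GG_{R(\Omega_v^\ast)}$ (each presented as the automorphism induced by a specified word map), which is precisely the assertion that this automorphism group is recursive.

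The steps here are essentially bookkeeping, so there is no serious obstacle; the one point demanding care is making sure that the quasi-identity test is genuinely a faithful criterion — that $\varphi \in K$ induces an automorphism of $\GG_{R(\Omega_v^\ast)}$ \emph{if and only if} all four universal sentences hold in $\GG$. The forward implication is the routine direction; for the converse one must observe that the first two formulas guarantee a well-defined $\GG$-endomorphism of $\GG_{R(\Omega_v^\ast)}$ (using Theorem \ref{thm:coordgr}, i.e.\ that $\GG_{R(\Omega_v^\ast)}$ is $\GG$-separated by $\GG$ and $\GG$ is equationally Noetherian, so a word map respecting all relations descends to the coordinate group), and the last two guarantee that the induced endomorphism of $\varphi$ and of $\varphi^{-1}$ are mutually inverse; hence the induced map is an automorphism. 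I expect this verification to be the only place where anything beyond quotation of earlier results is needed, and even there the argument is short.
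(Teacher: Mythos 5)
Your proposal is correct and follows essentially the same route as the paper: kernel-invariant automorphisms are completely induced with duals in the group $K'$ of Lemma \ref{lem:razker}, hence come from the finitely presented group $K$ of free-group automorphisms (giving tameness), and membership is decided by testing the displayed quasi-identities against the decidable universal theory of $\GG$ (giving recursiveness). Your added remark justifying that the quasi-identity test is an exact criterion is a harmless elaboration of what the paper leaves implicit.
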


In this case, we define $\VV(\Omega_v)$ to be the group of automorphisms of $\GG_{R(\Omega_v^\ast)}$ invariant with respect to the kernel.

\medskip

Let $\tp (v)=15$. Apply derived transformation $\D 3$ and consider the generalised equation $\D 3(\Omega_v)=\widetilde{\Omega} _v$. Notice that, since every boundary in the active part of $\widetilde{\Omega} _v$ that touches a base and intersects another base $\mu$ is $\mu$-tied (i.e. assumptions of Case 14 do not hold), the function $\gamma$ is constant on closed sections of $\widetilde{\Omega}_v$, i.e. $\gamma(h_i)=\gamma(h_j)$ whenever $h_i$ and $h_j$ belong to the same closed section of $\widetilde{\Omega}_v$. Applying $\D 2$, we can assume that every item in the section $[1,j+1]$ is covered  exactly twice (i.e. $\gamma(h_i)=2$ for every $i=1,\dots,j$) and for all $k\ge j+1$ we have $\gamma(h_k)>2$. In this case we call the section $[1,j+1]$ \index{part of a generalised equation!quadratic}\emph{the quadratic part of $\Omega_v$}. We sometimes refer to the set of non-quadratic sections of the generalised equation $\Omega_v$ as to the \index{part of a generalised equation!non-quadratic}\emph{non-quadratic part} of $\Omega_v$.

Let $\tp (v)=12$. Then the \emph{quadratic part of $\Omega_v$} is the whole active part of $\Omega_v$.

A variable base $\mu$ of the generalised equation $\Omega$ is called a \index{base!quadratic}{\em quadratic base} if $\mu $ and its dual $\Delta(\mu)$ both belong to the quadratic part of $\Omega$. A base $\mu$ of the generalised equation $\Omega$ is called a \index{base!quadratic-coefficient}\emph{quadratic-coefficient base} if $\mu$ belongs to the quadratic part of $\Omega$ and its dual $\Delta(\mu)$ does not belong to the quadratic part of $\Omega$.

Let $\tp (v)=12$ or $\tp(v)=15$ and let $[1,j+1]$ be the quadratic part of $\Omega_v$. An automorphism $\varphi$ of the coordinate group $\GG_{R({\Omega_v}^*)}$ is called \index{automorphism!invariant with respect to the non-quadratic part}\emph{invariant with respect to the non-quadratic part} if it is completely induced (see Definition \ref{defn:complind}) and its dual is invariant with respect to the non-quadratic part in the sense of Razborov, \cite{Razborov3}. We now recall the definition of an automorphism invariant with respect to the non-quadratic part in the sense of Razborov.

Let $\Upsilon_v$ be a non-constrained generalised equation over the free monoid $\FF$, and $F_{R(\Upsilon_v^*)}$ be its coordinate group over $F$, where $F$ is the free group with basis $\cA$. Denote by $\widehat{\Upsilon}_v$ the generalised equation obtained from $\Upsilon_v$ by removing all non-quadratic bases, all quadratic-coefficient bases and all coefficient equations.  Let $\phi$ be the  natural homomorphism from $\factor{F[h]}{\ncl\langle\widehat{\Upsilon}_v\rangle}$ to $F_{R(\Upsilon_v^*)}$.

An automorphism $\varphi$ of the coordinate group $F_{R(\Upsilon_v^*)}$ is called \index{automorphism!invariant with respect to the non-quadratic part in the sense of Razborov}\emph{invariant with respect to the non-quadratic part}, see \cite{Razborov3}, if there exists an automorphism $\hat{\varphi}$ of the group $\factor{F[h]}{\ncl(\widehat{\Upsilon}_v)}$ so that for every quadratic-coefficient base $\nu$ of $\Upsilon_v$ one has $\hat{\varphi}(h(\nu))=h(\nu)$, for every $h_i$, $i=j+1,\dots, \rho_{\Upsilon_v}$ (recall that $[1,j+1]$ is the quadratic part of $\Upsilon_v$) one has $\hat{\varphi}(h_i)=h_i$, and the following diagram commutes:
$$
\CD
  \factor{F[h]}{\ncl\langle\widehat{\Upsilon}_v\rangle} @>\phi>> F_{R(\Upsilon_v^*)} \\
  @V \hat{\varphi} VV                               @VV\varphi V  \\
  \factor{F[h]}{\ncl\langle\widehat{\Upsilon}_v\rangle} @>>\phi> F_{R(\Upsilon_v^*)}
\endCD
$$

\begin{lem}[Lemma 2.7, \cite{Razborov3}]\label{lem:raznqp}
There exists a finitely presented subgroup $K$ of the group of automorphisms of a free group such that every automorphism from $K$ induces an automorphism of $\factor{F[h]}{\ncl\langle\Upsilon_v\rangle}$, which, in turn, induces an automorphism of the coordinate group $F_{R({\Upsilon_v}^*)}$. The finitely generated group $K'$ of all automorphisms of $F_{R({\Upsilon_v}^*)}$ induced by automorphisms from $K$ contains all the automorphisms invariant with respect to the non-quadratic part of $\Upsilon_v$.
\end{lem}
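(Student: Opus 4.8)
\textbf{Proof plan for Lemma 2.7 (cited as Lemma 2.7 of \cite{Razborov3}).}

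The statement to be established is the analogue, for the non-quadratic part, of Lemma \ref{lem:razker}: one wants a finitely presented group $K$ of automorphisms of the free group $F[h]$ such that (i) each element of $K$ descends to an automorphism of $\factor{F[h]}{\ncl\langle\Upsilon_v\rangle}$ and hence of $F_{R({\Upsilon_v}^*)}$, and (ii) the induced finitely generated group $K'$ of automorphisms of $F_{R({\Upsilon_v}^*)}$ contains all automorphisms invariant with respect to the non-quadratic part of $\Upsilon_v$, in the sense of the diagram displayed just above. Since this is quoted verbatim from Razborov's paper, the proof strategy is to reproduce his argument, whose core is McCool's theorem: the stabiliser in $\Aut(F)$ of a finite set of conjugacy classes of elements of a free group $F$ is finitely presented, see \cite{McCool}.

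The plan is as follows. First I would describe explicitly the group of automorphisms of $\factor{F[h]}{\ncl\langle\widehat{\Upsilon}_v\rangle}$ that are allowed in the definition of ``invariant with respect to the non-quadratic part'': those fixing $h(\nu)$ for every quadratic-coefficient base $\nu$ and fixing $h_i$ for every $i=j+1,\dots,\rho_{\Upsilon_v}$ outside the quadratic part $[1,j+1]$. The key point is that $\factor{F[h]}{\ncl\langle\widehat{\Upsilon}_v\rangle}$ is a free group (the quadratic part, with quadratic and quadratic-coefficient bases removed, carries no basic equations among the remaining free generators), so the relevant automorphisms are genuine automorphisms of a free group, and ``fixing a finite collection of elements/conjugacy classes'' is exactly the McCool setting. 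Thus the group of such automorphisms is finitely presented; call this group $\hat K$. Second, I would verify that every $\hat\varphi \in \hat K$ lifts to an $F$-automorphism of $F[h]$ — one chooses, for each generator of $\factor{F[h]}{\ncl\langle\widehat{\Upsilon}_v\rangle}$ not already an $h_i$, a word in the $h_i$ representing it and defines $\tilde\varphi$ on $F[h]$ accordingly; the resulting $\tilde\varphi$ is an automorphism of $F[h]$ because the relators $\ncl\langle\widehat{\Upsilon}_v\rangle$ are respected. Let $K$ be the group of all such lifts; it is finitely presented (as an extension controlled by $\hat K$ and the inner automorphisms / relator-preserving ambiguities, which in Razborov's argument form a finitely generated kernel). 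Third, I would check that each $\tilde\varphi\in K$ carries $\ncl\langle\Upsilon_v\rangle$ into itself: the relators of $\Upsilon_v$ not coming from $\widehat{\Upsilon}_v$ are the coefficient equations, the non-quadratic basic equations, and the quadratic-coefficient basic equations; by the fixing conditions on $\hat\varphi$ (it fixes the $h_i$ in the non-quadratic part and the $h(\nu)$) these are preserved, so $\tilde\varphi$ descends to $\factor{F[h]}{\ncl\langle\Upsilon_v\rangle}$ and then to $F_{R({\Upsilon_v}^*)}$, proving (i). Finally, (ii) is tautological once one unwinds the definitions: given $\varphi$ invariant with respect to the non-quadratic part, its witnessing $\hat\varphi$ lies in $\hat K$ by construction, hence a lift lies in $K$ and induces $\varphi$ on $F_{R({\Upsilon_v}^*)}$, so $\varphi\in K'$; and $K'$ is finitely generated because $K$ is.

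The main obstacle is the bookkeeping in the first and third steps: one must be careful that $\factor{F[h]}{\ncl\langle\widehat{\Upsilon}_v\rangle}$ really is free with a canonical basis (some $h_i$ plus a set of ``path'' generators associated to the retained bases), that the collection of elements required to be fixed — the $h(\nu)$ for quadratic-coefficient bases $\nu$ together with the non-quadratic $h_i$ — is finite and is stable under passing to conjugacy classes in the way McCool's theorem demands, and that lifts from $\hat K$ to $K$ can be organised so that $K$ is again finitely presented rather than merely a homomorphic image of something finitely presented. All of these are handled exactly as in \cite{Razborov3}, with the only change being that at the very end the passage from ``automorphism of $\factor{F[h]}{\ncl\langle\Upsilon_v\rangle}$'' to ``automorphism of $F_{R({\Upsilon_v}^*)}$'' is automatic since $R(\Upsilon_v^*)\supseteq\ncl\langle\Upsilon_v\rangle$ and the relevant maps are compatible; I would simply cite Razborov for the detailed verification and indicate the (routine) modifications forced by the presence of quadratic-coefficient bases in our setting.
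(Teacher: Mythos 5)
The paper does not actually prove this statement: it is imported verbatim as Lemma 2.7 of Razborov's thesis \cite{Razborov3}, so there is no internal argument to compare your sketch with, and your closing move of ``cite Razborov for the detailed verification'' is in effect what the paper itself does. Judged on its own terms, though, your reconstruction has a concrete gap at its first step. By the paper's definition, $\widehat{\Upsilon}_v$ is obtained from $\Upsilon_v$ by removing the non-quadratic bases, the quadratic-coefficient bases and the coefficient equations; the quadratic bases themselves are \emph{retained}. Hence $\factor{F[h]}{\ncl\langle\widehat{\Upsilon}_v\rangle}$ still carries all the basic equations $h(\mu)^{\varepsilon(\mu)}=h(\Delta(\mu))^{\varepsilon(\Delta(\mu))}$ with $\mu,\Delta(\mu)$ quadratic, and a presentation in which every generator occurs (at most) twice among the relators is in general not free — a commutator-type relator already gives $\Z^2$. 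So your central claim that this quotient ``really is free with a canonical basis'', and the ensuing direct appeal to McCool's theorem applied to its automorphism group, does not stand: McCool's theorem is about stabilisers of finitely many conjugacy classes in the automorphism group of a \emph{free} group, and the lemma itself places $K$ inside the automorphism group of a free group, namely $F[h]$ — the McCool input has to be applied to the stabiliser in $\Aut(F[h])$ of a finite set of conjugacy classes (relator words of the generalised equation together with the words $h(\nu)$ for quadratic-coefficient bases $\nu$ and the non-quadratic items), not to $\Aut$ of the quotient by $\widehat{\Upsilon}_v$.

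A second, related weak point is the lifting step. You assert that each witnessing automorphism $\hat\varphi$ of $\factor{F[h]}{\ncl\langle\widehat{\Upsilon}_v\rangle}$ lifts to an $F$-automorphism of $F[h]$ by choosing word representatives, ``because the relators are respected'', and that the group of such lifts is again finitely presented. Neither claim is routine: an endomorphism of $F[h]$ inducing an automorphism of a proper quotient need not be an automorphism of $F[h]$, an automorphism of the quotient need not lift at all, and finite presentability of the group of lifts does not follow formally from finite presentability of the group downstairs. Showing that every automorphism invariant with respect to the non-quadratic part is induced by an element of the finitely presented McCool stabiliser is exactly the non-trivial content of Razborov's lemma, so it cannot be dismissed as bookkeeping; if you intend to rely on Razborov for it, the honest formulation is to cite the lemma outright (as the paper does) rather than to present the above as a proof outline.
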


We use the notation of Lemma \ref{lem:raznqp}. Every automorphism of $\GG_{R(\Omega_v^\ast)}$ invariant with respect to the non-quadratic part is completely induced and, by definition, its dual belongs to $K'$.  An argument analogous to the one in the case of automorphisms invariant with respect to the kernel shows that for every automorphism $\varphi$ from $K$, there exists an algorithm to decide whether or not $\varphi$ induces an automorphism of $\GG_{R(\Omega_v^\ast)}$. We thereby get the following lemma.

\begin{lem}
The group of all automorphisms of $\GG_{R(\Omega_v^\ast)}$ invariant with respect to the non-quadratic part of $\Omega_v$ is tame and recursive.
\end{lem}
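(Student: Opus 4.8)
The proof of this lemma follows closely the pattern established for the previous recursiveness results in this section, namely Lemma~\ref{lem:autinvkerrec} and the analogous statement for the strongly/singular periodic cases; the plan is to reduce the claim to the decidability of the universal theory of $\GG$, which is available by \cite{DL}. First I would invoke Lemma~\ref{lem:raznqp}: it produces a finitely presented group $K$ of automorphisms of the free group $F[h]$ such that each $\varphi\in K$ induces an automorphism of $\factor{F[h]}{\ncl\langle\Upsilon_v\rangle}$ and hence of $F_{R(\Upsilon_v^*)}$, and such that the finitely generated image group $K'$ contains all automorphisms of $F_{R(\Upsilon_v^*)}$ invariant with respect to the non-quadratic part. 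Since every automorphism of $\GG_{R(\Omega_v^*)}$ invariant with respect to the non-quadratic part is by definition completely induced with dual in $K'$, the group we must study is the set of automorphisms of $\GG_{R(\Omega_v^*)}$ that arise from elements of $K$ (equivalently $K'$), and hence it is generated by a recursively enumerable — in fact finitely generated — set. This gives tameness immediately, since each such automorphism lifts through $F[h]\to\GG[h]$.

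The substantive point is recursiveness, i.e.\ that one can decide, given a generator $\varphi\in K$, whether the induced map on $F_{R(\Upsilon_v^*)}$ actually descends to a well-defined automorphism of $\GG_{R(\Omega_v^*)}=\factor{\GG[h]}{R(\Upsilon_v^*\cup\{[h_i,h_j]\mid \Re_\Upsilon(h_i,h_j)\})}$. This is exactly where I would repeat the argument sketched right before Lemma~\ref{lem:autinvkerrec}: $\varphi$ induces such an automorphism precisely when $\varphi$ maps the defining relations of $\GG_{R(\Omega_v^*)}$ into the radical $R(\Omega_v^*)$ and $\varphi^{-1}$ does the same, i.e.\ when for every equation $u$ of $\Upsilon_v^*$ and every pair $(h_i,h_j)$ with $\Re_\Upsilon(h_i,h_j)$, the following quasi-identities hold in $\GG$:
$$
\forall H_1,\dots,H_\rho\,\left[\varphi(\Upsilon_v^*(H))=1\wedge\{\varphi([H_i,H_j])=1\mid\Re_\Upsilon(h_i,h_j)\}\right]\to u(H)=1,
$$
$$
\forall H_1,\dots,H_\rho\,\left[\varphi(\Upsilon_v^*(H))=1\wedge\{\varphi([H_i,H_j])=1\mid\Re_\Upsilon(h_i,h_j)\}\right]\to[H_i,H_j]=1,
$$
together with the two symmetric formulas obtained by exchanging the roles of the hypothesis and conclusion (the ``$\varphi^{-1}$'' conditions). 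Each of these is a universal sentence in the language $\cL_\cA$, so by the decidability of the universal theory of $\GG$ (\cite{DL}) one can check all of them effectively; there are only finitely many equations $u$ in $\Upsilon_v^*$ and finitely many related pairs, so the whole test is a finite decision procedure. Thus the subset of $K$ inducing automorphisms of $\GG_{R(\Omega_v^*)}$ is decidable, and its (finitely generated) image is a recursive group of automorphisms; call it $\VV(\Omega_v)$.

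The only genuine obstacle is a bookkeeping one: one must make sure the identification of $\GG_{R(\Omega_v^*)}$ used above is the correct one, namely that $\Omega_v$ is first put through $\D 3$ and $\D 2$ so that the quadratic part $[1,j+1]$ is well-defined (every item covered exactly twice there, more than twice elsewhere), and that $\widehat{\Upsilon}_v$ is formed by deleting all non-quadratic bases, all quadratic-coefficient bases, and all coefficient equations, exactly as in the definition preceding Lemma~\ref{lem:raznqp}; one then has to confirm that ``completely induced with dual invariant with respect to the non-quadratic part'' really does translate into the hypothesis of Lemma~\ref{lem:raznqp} applied to $\Upsilon_v$. None of this is deep — it is the same template as the kernel case — but it requires care to keep the quadratic/non-quadratic dichotomy straight when $\tp(v)=15$ versus $\tp(v)=12$, where in the latter case the quadratic part is the whole active part. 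With those identifications in place, tameness follows because the dual automorphism lives in $K\le\Aut(F[h])$, and recursiveness follows from the decidability argument above, completing the proof.
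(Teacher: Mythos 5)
Your proposal matches the paper's argument: the paper likewise cites Lemma \ref{lem:raznqp}, observes that every automorphism invariant with respect to the non-quadratic part is completely induced with dual in $K'$, and then declares recursiveness by ``an argument analogous to the one in the case of automorphisms invariant with respect to the kernel,'' i.e.\ exactly the quasi-identity test against the decidable universal theory of $\GG$ that you spell out. Your additional bookkeeping about the quadratic part and tameness is consistent with the paper's definitions, so the proposal is correct and takes essentially the same route.
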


In this case, set $\VV(\Omega_v)$ to be the group of automorphisms of $\GG_{R(\Omega_v^\ast)}$ invariant with respect to the non-quadratic part.

In all other cases set $\VV(\Omega_v)=1$.

\subsection{The finite subtree $T_0(\Omega )$}\label{5.5.3}

As mentioned above, the aim of this section is to construct the finite subtree \glossary{name={$T_0(\Omega)$}, description={the finite subtree of $T(\Omega)$ that does not contain prohibited paths}, sort=T}$T_0(\Omega)$ of $T(\Omega)$ as the subtree that does not contain prohibited paths.

The definition of a prohibited path is designed in such a way that the paths $\p(H)$ in the tree $T$ associated to the solution $H$ do not contain them. Therefore, the nature of the definition of a prohibited path will become clearer in Section \ref{5.5.4}.

\bigskip

By Lemma \ref{3.2}, infinite branches of the tree $T(\Omega)$ correspond to the following cases: $7\leq \tp(v_k)\leq 10$ for all $k$, or $\tp(v_k)=12$ for all $k$, or $\tp(v_k)=15$ for all $k$. We now define prohibited paths of types 7-10, 12 and 15.

\begin{defn}
We call a path $v_1\to v_2 \to\ldots\to v_k$ in $T(\Omega )$ \index{path!prohibited of type 7-10}{\em prohibited of type 7-10} if $7\leq \tp(v_i)\leq 10$  for all $i=1,\dots,k$ and some generalised equation with $\rho$ variables occurs among $\{\Omega_{v_i}\mid 1\le i\le l\}$ at least $2^{4\rho^2\cdot (2^\rho+1)}+1$ times.

Similarly, a path $v_1\to v_2 \to\ldots\to v_k$ in $T(\Omega )$ is called \index{path!prohibited of type 12}{\em prohibited of type 12} if $\tp(v_i)=12$ for all $i=1,\dots,k$  and some generalised equation with $\rho$ variables occurs among $\{\Omega_{v_i}\mid 1\le i\le l\}$ at least $2^{4\rho^2\cdot (2^\rho+1)}+1$ times.
\end{defn}

\bigskip

We now prove that an infinite branch of $T(\Omega)$  of type 7-10 or 12 contains a prohibited path of type 7-10 or 12, correspondingly.

\begin{lem} \label{3.3}
Let $v_0\to v_1 \to \ldots\to v_n\to\ldots $ be an infinite path in the tree $T(\Omega)$, where $7\leq \tp(v_i) \leq 10$ for all $i$, and let $\Omega_{v_0}, \Omega_{v_1}, \dots, \Omega_{v_n}, \dots$ be the sequence of corresponding generalised equations. Then among $\{\Omega_{v_i}\}$ some generalised equation occurs infinitely many times. Furthermore, if $\Omega_{v_k}=\Omega _{v_l}$, then $\pi(v_k,v_l)$ is a $\GG$-automorphism of $\GG_{R(\Omega_{v_k}^*)}$ invariant with respect to the kernel of $\Omega_{v_k}$.
\end{lem}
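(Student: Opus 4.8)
The plan is to prove the two assertions in turn. For the first assertion, I would argue by a finiteness/pigeonhole principle. Along an infinite path of type 7--10, Lemma~\ref{3.1} (applied to the relevant parts (1), (3), (4)) guarantees that the parameters $n_A(\Omega_{v_i})$, $\xi(\Omega_{v_i})$ and $\comp(\Omega_{v_i})$ are non-increasing (Case 10 may increase $n_A$ by at most $2$, but then a Case~7, 9 or 13 step strictly decreases it, and in a branch where only types 7--10 occur one checks that $n_A$ cannot grow without bound --- I would make this precise by combining parts (1) and (2) of Lemma~\ref{3.1} and noting that type~$10$ vertices delete a closed section so the re-enumeration keeps things controlled). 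Hence after some point $N_1$ all these parameters stabilise. Once $n_A$, $\xi$, $\comp$ are fixed, $\rho$ is also bounded (the number of items in the active part is controlled by $n_A$ and $\xi$), so there are only finitely many possible generalised equations $\Omega_{v_i}$ for $i\ge N_1$ (a generalised equation with bounded $\rho$, bounded number of bases, and no boundary connections --- recall we are in the type 7--10 regime, where the process does not accumulate boundary connections --- has only finitely many isomorphism types). Therefore some $\Omega_v$ repeats infinitely often along the path.

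For the second assertion I would proceed as follows. Suppose $\Omega_{v_k}=\Omega_{v_l}$ with $k<l$, so $\pi(v_k,v_l)$ is an endomorphism of $\GG_{R(\Omega_{v_k}^*)}$. First, it is an epimorphism by construction of $T(\Omega)$ (all $\pi(v_i,v_{i+1})$ are epimorphisms, and composites of epimorphisms are epimorphisms), and moreover, since we are in an infinite branch, Lemma~\ref{3.2}/the description in Proposition~\ref{prop:TO}(2) tells us that all the edges $v_i\to v_{i+1}$ along this path are labelled by \emph{isomorphisms} (edges that belong to infinite branches of $T$ are isomorphisms; none of the $v_i$ here is a type-1 leaf). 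Hence $\pi(v_k,v_l)$ is a composition of $\GG$-isomorphisms and is therefore a $\GG$-automorphism of $\GG_{R(\Omega_{v_k}^*)}$. It remains to see that this automorphism is invariant with respect to the kernel of $\Omega_{v_k}$. For this I would invoke Lemma~\ref{lem:indETepi} and Lemma~\ref{lem:indDepi}: each elementary or derived transformation $\ET1$--$\ET5$, $\D1$--$\D6$ used along the path lifts to an $F$-homomorphism $\tilde\theta$ on the free group $F[h]$ making the relevant square commute, hence the composite $\pi(v_k,v_l)$ is \emph{completely induced} in the sense of Definition~\ref{defn:complind}, with a dual automorphism $\theta'$ of $F_{R(\Upsilon_{v_k}^*)}$. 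Then I would trace through Cases 7--10 of Section~\ref{se:5.2} to see that the elementary transformations applied in these cases (the only transformations used are $\ET5$, $\ET1$, $\ET2$, $\ET4$ and $\D1$, $\D3$, applied to \emph{linear} bases or to the two-base closed sections whose derived bases do \emph{not} enter the kernel, by the hypothesis of Case~9) act as the identity on every item $h_i$ belonging to the kernel of $\Omega_{v_k}$; this is exactly the property required of a kernel-invariant automorphism in Razborov's sense (the dual $\theta'$ fixes $\Ker(\Upsilon_{v_k})$ pointwise and factors through $F_{R(\widehat\Upsilon_{v_k}^*)}$), matching the commuting diagram in the definition preceding Lemma~\ref{lem:razker}.

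The main obstacle I expect is the bookkeeping in this last step: verifying carefully that, case by case through Cases~7--10, the induced map on $F[h]$ leaves the kernel variables untouched. Cases 7 and 8 (linear base removal, where the variable $h_i$ has $\gamma_i=1$) are immediate --- the formula for $\tilde\theta_1$ in $\ET4$ only rewrites $h_i$ itself and leaves $h_j$ for $j<\alpha(\mu)$ or $j\ge\beta(\mu)$ fixed --- and $h_i$ is not in the kernel since it lies only in one base, which must be the eliminable one. Case~10 (closing an open section and removing it) is similar. The delicate case is Case~9, where two bases $\mu_1,\mu_2$ fill a closed section but $\mu_1\ne\Delta(\mu_2)$; here one must use precisely the clause in the Case~9 hypothesis that says all bases obtained from $\mu_1,\mu_2$ by $\ET1$ in $\D3(\Omega)$ lie outside the kernel of $\widetilde\Omega_v$, so that transferring $\mu_2$ onto $\Delta(\mu_1)$ and deleting $\mu_1$ (via $\ET5$, $\ET2$, $\ET4$) modifies only non-kernel variables. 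Once this verification is in place, complete induction on the number of transformations in the path $v_k\to\cdots\to v_l$ (as in the proof of Lemma~\ref{2.1}) upgrades the per-step statement to the conclusion for $\pi(v_k,v_l)$, and the lemma follows.
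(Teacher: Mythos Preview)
Your plan for the second assertion is essentially the paper's argument, with one omission: you assert that the dual map $\theta'$ on $F_{R(\Upsilon_{v_k}^*)}$ is an automorphism, but Lemmas~\ref{lem:indETepi} and~\ref{lem:indDepi} only give you an epimorphism on the $F$-side. The paper closes this by observing that $F_{R(\Upsilon_{v_k}^*)}$ is finitely generated residually free, hence residually finite, hence Hopfian; you should add that step.

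The real gap is in the first assertion. Your claim that $n_A$ is eventually bounded is not supported by the tools you cite. Lemma~\ref{3.1}(1)--(2) says $n_A$ is non-increasing except in Case~10, where it may go up by $2$; nothing prevents the path from hitting Case~10 infinitely often without compensating strict decreases (your aside that ``a Case~7, 9 or 13 step strictly decreases it'' afterward is unjustified --- the successor of a type-10 vertex could again be type~10, and type~13 does not even occur on a type 7--10 branch). So the pigeonhole argument does not go through on the parameters $n_A,\xi,\comp$ alone.

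The paper fills this with a genuinely new ingredient: it first proves that the set of bases lying in $\Ker(\widetilde{\Upsilon}_{v_k})$ is the \emph{same} for all $k$ (this is a nontrivial case analysis, the delicate part being Case~9, where one has to run an elimination process and show that the pieces of $\mu_2$ and $\Delta(\mu_2)$ produced by cutting never enter the kernel). Writing $n'$ for the cardinality of this invariant set, the paper then establishes the hard inequality $n_A(\Omega_{v_k})\le 3\comp+6n'+1$ by a counting argument: since types~5--9 are excluded at a would-be violating vertex, every active section there has either $\ge 3$ bases or contains a kernel base, and combining this with $\comp = n_A - 2w - u$ yields the bound. Only with this uniform bound on $n_A$ (and hence on $\rho_A$) does finiteness of the set $\{\Omega_{v_k}\}$ follow. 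Note also that this kernel-invariance argument is exactly what feeds into the second assertion --- so you will need it anyway.
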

\begin{proof}
By Lemma \ref{3.1}, we have that $\comp(\Omega_{v_k})\leq \comp (\Omega_{v_0})$ and $\xi(\Omega_{v_k})\leq \xi(\Omega_{v_0})$ for all $k$. We, therefore, may assume that $\comp=\comp(\Omega_{v_k})=\comp(\Omega_{v_0})$ and $\xi(\Omega_{v_k})=\xi(\Omega_{v_0})$ for all $k$. It follows that all the transformations $\ET 5$ introduce a new boundary.

For all $k$, the generalised equations $\Ker (\widetilde{\Upsilon}_{v_k})$ have the same set of bases, recall that $\widetilde{\Upsilon}_{v_k}=\D 3(\widetilde{\Upsilon}_{v_k})$. Indeed, consider the generalised equations $\widetilde{\Upsilon} _{v_k}$ and $\widetilde{\Upsilon}_{v_{k+1}}$. Since $\tp(v_k)\ne 3,4$, the active part of $\widetilde{\Upsilon}_{v_k}$ does not contain constant bases.

If $\tp(v_k)=7,8,10$, then $\widetilde{\Upsilon}_{v_{k+1}}$ is obtained from $\widetilde{\Upsilon} _{v_k}$ by cutting some  base $\mu$ which is eliminable in $\widetilde{\Upsilon} _{v_k}$  and then deleting one of the new bases, which is also eliminable, since it falls under the assumption a) of the definition of an eliminable base. Since every transformation $\ET 5$ introduces a new boundary, the remaining part of the base $\mu$ falls under the assumption b) of the definition of an eliminable base. Therefore, in this case, the set of bases that belong to the kernel does not change.

Let $\tp(v_k)=9$. It suffices to show that, in the notation of Case 9, all the bases of $\widetilde{\Upsilon}_{v_{k+1}}$ obtained by cutting the base $\mu_2$ do not belong to the kernel. Without loss of generality we may assume that $\sigma(\mu_2)$ is a closed section of $\widetilde{\Upsilon}_{v_k}$. Indeed, if $\sigma(\mu_2)$ is not closed, instead, we can consider one of its closed subsections $\sigma'$ in the generalised equation $\widetilde{\Upsilon}_{v_k}$.

Notice that, since $\sigma(\mu_2)$ is closed, every boundary that intersects $\mu_1$ and $\mu_2$ in $\Upsilon_{v_k}$, touches exactly two bases in $\widetilde{\Upsilon}_{v_k}$. Thus, for every boundary connection $(p,\mu_2,q)$ in $\Upsilon_{v_{k+1}}$ either the boundary $p$ or the boundary $q$ touches exactly two bases in $\widetilde{\Upsilon}_{v_{k+1}}$.

Construct an elimination process (see description of the derived transformation $\D 4$) for the generalised equation $\widetilde{\Upsilon}_{v_k}$ and take the first generalised equation $\Upsilon_i$ in this elimination process, such that the base $\nu$ eliminated in this equation was obtained from either $\mu_1$ or $\mu_2$ or $\Delta(\mu _1)$ or $\Delta(\mu _2)$ by applying $\D 3$ to ${\Upsilon}_{v_k}$. The base $\nu$ could not be obtained from $\mu _1$ or $\mu _2$, since every item in the section $\sigma(\mu_2)$ is covered twice and every boundary in this section touches two bases.

If $\nu$ falls under the assumption of case b) of the definition of an eliminable base, then
$$
\hbox{either }\alpha (\nu)\in\{\alpha(\Delta (\mu _1)), \alpha (\Delta (\mu _2))\}\hbox{ or }\beta (\nu)\in\{\beta (\Delta (\mu _1)), \beta(\Delta (\mu _2))\}.
$$
We now construct an elimination process for the generalised equation $\widetilde{\Upsilon}_{v_{k+1}}$. The first $i$ steps of the elimination process for $\widetilde{\Upsilon}_{v_{k+1}}$ coincide with the first $i$ steps of the elimination process constructed for the generalised equation $\widetilde{\Upsilon}_{v_k}$. Then the eliminable base $\nu$ of $\Upsilon_{i}$ corresponds to a base $\nu '$ obtained from either $\mu_2$ or $\Delta(\mu _2)$ by applying $\D 3$ to ${\Upsilon}_{v_{k+1}}$. The base $\nu'$ is eliminable.

Notice that one of the boundaries $\alpha(\nu)$, $\beta(\nu)$, $\alpha(\Delta(\nu))$ or $\alpha(\Delta(\nu))$ touches just two bases. Therefore, after eliminating $\nu'$, this boundary touches just one base $\eta$ that was obtained from $\mu_2$ or $\Delta(\mu_2)$. The base $\eta$ falls under the assumptions b) of the definition of an eliminable base. Repeating this argument, one can subsequently eliminate all the other bases obtained from $\mu _2$ or $\Delta(\mu_2)$. It follows that all the bases of the generalised equation $\widetilde{\Upsilon} _{v_{k+1}}$, obtained from $\mu _2$ or $\Delta(\mu_2)$ do not belong to the kernel.

We thereby have shown that the set of bases is the same for all the generalised equations $\Ker(\widetilde{\Upsilon} _{v_k})$ and thus the set of bases is the same for all the generalised equations $\Ker(\widetilde{\Omega} _{v_k})$. We denote the cardinality of this set by $n'$.

We now prove that the number of bases in the active sections of $\Omega_{v_{k}}$, for all $k$, is bounded above by a function of $\Omega_{v_0}$:
\begin{equation}\label{3.2'}
n_A(\Omega_{v_{k}})\leq 3\comp+6n'+1.
\end{equation}
Indeed, assume the contrary and let $k$ be minimal for which inequality (\ref{3.2'}) fails. Then
\begin{equation}\label{3.3'}
n_A(\Omega_{v_{k-1}})\leq 3\comp +6n'+1, n_A(\Omega_{v_{k}})>3\comp+6n'+1.
\end{equation}
By Lemma \ref{3.1}, $\tp(v_{k-1})=10$. It follows that $\tp(v_{k-1})\ne 5,6,7,8,9$. Therefore, every active section of $\Omega _{v_{k-1}}$ either contains at least three bases or contains some base of the generalised equation $\Ker (\widetilde{\Omega} _{v_{k-1}})$. Let $u_{k-1}$ and $w_{k-1}$ be the number of active sections of $\Omega_{v_{k-1}}$ that contain one base and more than one base, respectively. Hence $u_{k-1}+w_{k-1}\leq\frac{1}{3}n_A(\Omega_{v_{k-1}})+n'$. It is easy to see that
$$
\comp = \sum_{\sigma \in A\Sigma(\Omega)} \max\{0, n(\sigma)-2\}= n_A(\Omega_{v_{k-1}})-2w_{k-1}-u_{k-1}.
$$
Then, $\comp\ge n_A(\Omega_{v_{k-1}})-2(w_{k-1}+u_{k-1})\geq\frac{1}{3}n_A(\Omega_{v_{k-1}})-2n'$, which contradicts  (\ref{3.3'}).

Furthermore, the number $\rho_A(\Omega_{v_k})$ of items in the active part of  $\Omega_{v_k}$ is bounded above:
$$
\rho_A(\Omega_{v_k})\leq \xi(\Omega_{v_k})+n_A(\Omega_{v_{k-1}})+1\leq 3\comp +6n'+\xi(\Omega_{v_0}) +2.
$$
Since the number of bases and number of items is bounded above, the set $\{\Omega _{v_k}\mid k\in\mathbb{N}\}$ is finite and thus some generalised equation occurs in this set infinitely many times.

Let $\Omega _{v_k}=\Omega _{v_l}$. Since, by assumption, the edges $v_j\to v_{j+1}$, $j=k,\dots, l-1$ are labelled by isomorphisms, the homomorphism $\pi (v_k,v_l)$ is an automorphism of the coordinate group $\GG_{R(\Omega_{v_k}^*)}$.

By Lemma \ref{lem:indETepi} and Lemma \ref{lem:indDepi}, there exists an epimorphism $\pi (v_k,v_l)'$  from $F_{R(\Upsilon_{v_k}^*)}$ to $F_{R(\Upsilon_{v_l}^*)}$. Since $F_{R(\Upsilon_{v_k}^*)}$ is a finitely generated residually free group and therefore is residually finite, by a theorem of Mal'cev, $F_{R(\Upsilon_{v_k}^*)}$ is Hopfian. Thus the epimorphism $\pi (v_k,v_l)'$ is an automorphism of $F_{R(\Upsilon_{v_k}^*)}$. This shows that the automorphism $\pi(v_k,v_l)$ is completely induced and $\pi (v_k,v_l)'$ is its dual.  We are left to show that $\pi (v_k,v_l)'$ is invariant with respect to the kernel (in the sense of Razborov, \cite{Razborov3}).

As shown above,
$$
\Ker (\widetilde{\Upsilon}_{v_{k}})=\Ker (\widetilde{\Upsilon}_{v_{k+1}})=\dots=\Ker (\widetilde{\Upsilon} _{v_{l}}).
$$
From the above, it follows that $\Ker(\widetilde{\Upsilon}_{v_{i+1}})$ is obtained from  $\Ker(\widetilde{\Upsilon}_{v_{i}})$ by introducing new boundaries and removing some of the items that do not belong to the kernel of $\widetilde{\Upsilon}_{v_{i+1}}$. Therefore, the number of items that belong to the kernel $\Ker(\widetilde{\Upsilon}_{v_{i+1}})$ can only increase. As $\Omega_{v_k}=\Omega_{v_l}$, so this number is the same for all $i$, $i=k,\dots, l-1$. It follows that $\pi (v_k,v_l)'(h_i)=h_i$ for all $h_i$ that belong to the kernel of $\Upsilon_{v_k}$.

Since the transformations that take the generalised equation $\widetilde{\Upsilon}_{v_{k}}$ to $\widetilde{\Upsilon}_{v_{k+1}}$ do not involve bases that belong to the kernel of $\widetilde{\Upsilon}_{v_{k}}$, the same sequence of transformations can be applied to the generalised equation $\widehat{\widetilde{\Upsilon}}_{v_{k}}$, where $\widehat{\widetilde{\Upsilon}}_{v_{k}}$ is obtained from $\widetilde{\Upsilon}_{v_{k}}$ by removing all coefficient equations and all bases that belong to the kernel of $\widetilde{\Upsilon}_{v_k}$.

Since, by assumption, every time we $\mu$-tie a boundary a new boundary is introduced, we get that the epimorphism from $F_{R(\widehat{\widetilde{\Upsilon}}_{v_{k}}^*)}$ to $F_{R(\widehat{\widetilde{\Upsilon}}_{v_{k+1}}^*)}$ is, in fact, an isomorphism. We therefore get the following commutative diagram (see the definition of an automorphism invariant with respect to the kernel, Section \ref{5.5.2}):
$$
\CD
F_{R(\widehat{\Upsilon}_{v_k}^*)}   @>{\pi}>> F_{R({\Upsilon_{v_k}}^*)} \\
  @VVV        @VV\pi (v_k,v_l)'V  \\
  F_{R({\widehat{\Upsilon}_{v_{l}}}^*)}  @>{\pi}>> F_{R({\Upsilon_{v_{l}}}^*)}
\endCD
$$
It follows that the automorphism $\pi (v_k,v_l)'$ is invariant with respect to the kernel in the sense of Razborov and thus the automorphism $\pi (v_k,v_l)$ is invariant with respect to the kernel of ${\Omega}_{v_{k}}$.
\end{proof}

\begin{cor} \label{cor:proh710}
Let $\p=v_1 \to \ldots\to v_n\to\ldots $ be an infinite path in the tree $T(\Omega)$, and $7\le \tp(v_i)\le 10$ for all $i$. Then
$\p$ contains a prohibited subpath of type 7-10.
\end{cor}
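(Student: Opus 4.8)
The plan is to apply Lemma \ref{3.3} together with a pigeonhole argument on the (finitely many) distinct generalised equations that appear along the path. First I would invoke Lemma \ref{3.3}: since $\p$ is infinite and $7\le \tp(v_i)\le 10$ for all $i$, the set $\{\Omega_{v_i}\mid i\ge 1\}$ is finite, and therefore some generalised equation occurs infinitely often among the $\Omega_{v_i}$. In particular, fixing such a recurring generalised equation $\Omega_0$ with, say, $\rho$ variables, there are arbitrarily long finite subpaths $v_s\to \dots\to v_t$ of $\p$ on which $\Omega_0$ appears as many times as we like.

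Next I would quantify ``as many times as we like''. Take the initial segment $v_1\to\dots\to v_k$ of $\p$ long enough that some generalised equation with (say) $\rho$ variables occurs among $\{\Omega_{v_i}\mid 1\le i\le k\}$ at least $2^{4\rho^2\cdot(2^\rho+1)}+1$ times; such a $k$ exists by the previous paragraph, since that particular equation recurs infinitely often and each recurrence contributes one occurrence. By the definition of a prohibited path of type 7-10, the path $v_1\to v_2\to\dots\to v_k$ is then itself prohibited of type 7-10, and it is a subpath of $\p$. This is essentially all that is needed, so the corollary follows.

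The one point requiring a little care — and the only genuine obstacle — is the bookkeeping of the parameter $\rho$: the threshold $2^{4\rho^2\cdot(2^\rho+1)}+1$ in the definition of a prohibited path is stated for a generalised equation ``with $\rho$ variables'', so I must make sure that the recurring equation is recognized with the correct value of $\rho$. This is immediate, however: by Lemma \ref{3.1} together with the argument in the proof of Lemma \ref{3.3}, along an infinite type 7-10 branch the number of items $\rho_A(\Omega_{v_k})$ (and hence the total number $\rho_{\Omega_{v_k}}$ of variables) is bounded above by a function of $\Omega_{v_0}$, so only finitely many values of $\rho$ occur; pick the recurring equation to be one of those occurring infinitely often, read off its number of variables $\rho$, and apply the threshold for that $\rho$. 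Thus no uniformity over $\rho$ is needed, and the pigeonhole step goes through verbatim.

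In summary, the proof is: (1) by Lemma \ref{3.3} some $\Omega_0$ recurs infinitely often along $\p$; (2) choose an initial segment of $\p$ on which $\Omega_0$ occurs more than $2^{4\rho^2\cdot(2^\rho+1)}$ times, where $\rho$ is the number of variables of $\Omega_0$; (3) that segment is by definition a prohibited subpath of type 7-10 contained in $\p$. I expect step (1) to carry all the real content (it is Lemma \ref{3.3}), and steps (2)–(3) to be a routine unwinding of the definition.
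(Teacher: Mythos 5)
Your proposal is correct and coincides with the paper's (implicit) argument: Corollary \ref{cor:proh710} is stated there without proof precisely because, as you observe, Lemma \ref{3.3} gives a generalised equation recurring infinitely often along $\p$, and any initial segment containing at least $2^{4\rho^2\cdot(2^\rho+1)}+1$ of its occurrences is by definition a prohibited subpath of type 7-10. Your extra remark about the parameter $\rho$ is harmless but unnecessary, since the recurring equation is fixed and so its number of variables $\rho$ is determined.
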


\begin{lem}  \label{lem:c12}
Let $v_0\to v_1 \to \ldots\to v_n\to\ldots $ be an infinite path in the tree $T(\Omega)$, where $\tp(v_i) = 12$ for all $i$, and $\Omega_{v_0}, \Omega_{v_1}, \dots, \Omega_{v_n}, \dots$ be the sequence of corresponding generalised equations. Then among $\{\Omega_{v_i}\}$ some generalised equation occurs infinitely many times. Furthermore, if $\Omega_{v_k}=\Omega _{v_l}$, then $\pi(v_k,v_l)$ is a $\GG$-automorphism of the coordinate group $\GG_{R(\Omega_{v_k}^*)}$ invariant with respect to the non-quadratic part.
\end{lem}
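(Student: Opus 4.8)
The statement is the exact analogue, for quadratic branches, of Lemma \ref{3.3}, so the plan is to mimic the structure of the proof of Lemma \ref{3.3} step by step, replacing ``kernel'' by ``quadratic part'' and the notion of eliminable base by the partition of bases into quadratic, quadratic-coefficient and non-quadratic bases. First I would invoke Lemma \ref{3.1}(3)--(4) to reduce to the situation in which $\comp(\Omega_{v_k})$ and $\xi(\Omega_{v_k})$ are constant along the path; constancy of $\xi$ again forces every application of $\ET 5$ in the path to introduce a new boundary, which is the technical fact that drives the whole argument. Since $\tp(v_k)=12$ for all $k$, the active part of every $\Omega_{v_k}$ is entirely quadratic ($\gamma(h_i)=2$ for all active items), and the entire transformation $\D 5$ is applied at each step with carrier $\mu$ and dual $\Delta(\mu)$ both in the active part; by Lemma \ref{lem:excess} the excess is preserved, and more importantly the set of quadratic bases of $\D 3(\Omega_{v_k})$ does not change along the path (cutting a quadratic base by $\ET 1$ and then deleting one of the two pieces by $\ET 4$ leaves the collection of quadratic bases intact, exactly as in the type 7--10 analysis). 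This gives a fixed bound $n'$ on the number of quadratic bases.

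Next I would bound the total size of $\Omega_{v_k}$. Because $\tp(v_k)=12$ excludes Cases 5--11 and 13--15, every active section either contains at least three bases or contains a base of the (fixed) set of quadratic bases, which yields, by the same counting argument as for inequality (\ref{3.2'}), a bound $n_A(\Omega_{v_k})\le 3\comp+6n'+1$; together with the constancy of $\xi$ this bounds $\rho_A(\Omega_{v_k})$, hence bounds the number of items and bases of $\Omega_{v_k}$. Therefore the set $\{\Omega_{v_k}\mid k\in\N\}$ is finite and some generalised equation recurs infinitely often. For the second assertion, if $\Omega_{v_k}=\Omega_{v_l}$ then all intermediate edges are labelled by isomorphisms (we are on an infinite branch, so Case 1 never occurs), so $\pi(v_k,v_l)$ is a $\GG$-automorphism of $\GG_{R(\Omega_{v_k}^*)}$. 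By Lemmas \ref{lem:indETepi} and \ref{lem:indDepi} it is completely induced by an epimorphism $\pi(v_k,v_l)'$ of $F_{R(\Upsilon_{v_k}^*)}$, which is an automorphism because $F_{R(\Upsilon_{v_k}^*)}$ is finitely generated residually free, hence residually finite, hence Hopfian (Mal'cev).

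It remains to check that $\pi(v_k,v_l)'$ is invariant with respect to the non-quadratic part in the sense of Razborov, i.e. that it fixes every quadratic-coefficient base $h(\nu)$ and every non-quadratic item $h_i$, and descends from an automorphism of $\factor{F[h]}{\ncl\langle\widehat{\Upsilon}_{v_k}\rangle}$, where $\widehat{\Upsilon}_{v_k}$ is obtained by deleting all non-quadratic bases, all quadratic-coefficient bases and all coefficient equations. Here I would argue, as in the last part of the proof of Lemma \ref{3.3}, that the transformations taking $\D 3(\Upsilon_{v_k})$ to $\D 3(\Upsilon_{v_{k+1}})$ do not touch the non-quadratic bases or the quadratic-coefficient bases (the entire transformation only moves the carrier $\mu$ and transfer bases, all quadratic); hence the same sequence of transformations applies to $\widehat{\Upsilon}_{v_k}$, and because every $\mu$-tying introduces a fresh boundary, the induced map $F_{R(\widehat{\Upsilon}_{v_k}^*)}\to F_{R(\widehat{\Upsilon}_{v_{k+1}}^*)}$ is actually an isomorphism. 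Composing along $v_k\to\dots\to v_l$ and using $\Omega_{v_k}=\Omega_{v_l}$ gives the required commutative square, and the invariance on the items follows because the number of non-quadratic items can only grow along the path and must therefore be constant between $v_k$ and $v_l$, forcing $\pi(v_k,v_l)'(h_i)=h_i$ there. The main obstacle I anticipate is the bookkeeping in this last step: verifying precisely that the entire transformation $\D 5$ and the subsequent $\ET 5$'s in Case 12 never disturb the quadratic-coefficient bases or create/destroy non-quadratic items in a way that would break the invariance, which requires a careful reading of the definitions of quadratic and quadratic-coefficient bases against the description of Case 12 — but this is exactly parallel to the type 7--10 argument already carried out in Lemma \ref{3.3}, so no genuinely new idea should be needed.
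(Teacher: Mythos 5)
Your overall strategy matches the paper's for the main claim: you obtain $\pi(v_k,v_l)'$ from Lemmas \ref{lem:indETepi} and \ref{lem:indDepi}, promote it to an automorphism of $F_{R(\Upsilon_{v_k}^*)}$ by Hopficity of finitely generated residually free groups, and then run the same sequence of transformations on the hatted equation (with non-quadratic bases, quadratic-coefficient bases and coefficient equations removed) to get the commutative square defining invariance with respect to the non-quadratic part. That half of your argument is essentially the paper's proof.

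The finiteness part, however, is where you depart from the paper, and two of the claims you import from the type 7--10 analysis are false in the quadratic setting. First, the set of quadratic bases is \emph{not} invariant along the branch: each application of $\D 5$ cuts the carrier at the boundary $k+1$ and deletes the initial piece together with its dual piece, so the carrier is shortened or disappears; moreover when the carrier is quadratic-coefficient the transferred bases land on its dual in the non-active part and cease to be quadratic. Only the non-increasing \emph{count} of active bases survives. Second, the dichotomy ``every active section contains at least three bases or a quadratic base'' can fail for a type 12 equation: a closed active section covered by exactly two quadratic-coefficient bases, each containing every item of the section, is perfectly possible. Both claims are in fact superfluous: for $\tp(v_i)=12$, $n_A$ is non-increasing by Lemma \ref{3.1} (and, as the paper observes, strictly decreases whenever the carrier is quadratic-coefficient, so eventually all carriers are quadratic), $\comp$ and $\xi$ are non-increasing, and since the active part is quadratic and no free boundaries are created, the number of items does not increase; this is the paper's shorter route to finiteness, and your bound $\rho_A\le \xi+n_A+1$ then closes the argument without the $3\comp+6n'+1$ detour. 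Finally, in the invariance step, the assertion that the transformations ``do not touch the quadratic-coefficient bases'' is not literally true: transfer bases need not be quadratic, and a new boundary introduced by $\ET 5$ inside $\Delta(\mu)$ can fall inside a quadratic-coefficient base, refining it. What one actually verifies (as the paper does) is that the number of items inside each quadratic-coefficient base is monotone non-decreasing along the path, so $\Omega_{v_k}=\Omega_{v_l}$ forces $\pi(v_k,v_l)'$ to fix $h(\nu)$ for every quadratic-coefficient base $\nu$; your count of ``non-quadratic items'' handles the non-active items but not this point, which is the substantive half of Razborov's invariance condition.
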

\begin{proof}
Notice that since $\Omega _{v_i}$ is a quadratic generalised equation, quadratic-coefficient bases of $\Omega _{v_i}$ are bases whose duals belong to the non-active part.

Let $\mu_{i}$ be the carrier base of the generalised equation $\Omega _{v_i}$. Consider the sequence $\mu_0,\dots,\mu_i,\dots$ of carrier bases.
By Lemma \ref{3.1}, if $\tp({v_{i}})=12$, then $n_A(\Omega_{v_{i+1}})\le n_A(\Omega_{v_{i}})$. Furthermore, if the carrier base $\mu_i$ is quadratic-coefficient, then this inequality is strict. Hence, it suffices to consider the case when all carrier bases are quadratic.

The number of consecutive quadratic bases in the sequence $\mu_1,\dots,\mu_i,\ldots$ is bounded above. Indeed, by Lemma \ref{3.1}, when the entire transformation is applied, the complexity of the generalised equation does not increase. Furthermore, since the generalised equation is quadratic and does not contain free boundaries, the number of items does not increase. The number of constrained generalised equations with a bounded number of items and bounded complexity is finite and thus some generalised equation occurs in the sequence $\{\Omega_{v_i}\}$ infinitely many times.

Obviously, if $\Omega _{v_k}=\Omega _{v_l}$, then $\pi(v_k,v_l)$ is a $\GG$-automorphism of $\GG_{R(\Omega_{v_k}^*)}$.

By Lemma \ref{lem:indETepi} and Lemma \ref{lem:indDepi}, there exists an epimorphism $\pi (v_k,v_l)'$  from $F_{R(\Upsilon_{v_k}^*)}$ to $F_{R(\Upsilon_{v_l}^*)}$. Since $F_{R(\Upsilon_{v_k}^*)}$ is a finitely generated residually free group and therefore is residually finite, by a theorem of Mal'cev, $F_{R(\Upsilon_{v_k}^*)}$ is Hopfian. Thus the epimorphism $\pi (v_k,v_l)'$ is an automorphism of $F_{R(\Upsilon_{v_k}^*)}$. This shows that the automorphism $\pi(v_k,v_l)$ is completely induced and $\pi (v_k,v_l)'$ is its dual.  We are left to show that $\pi (v_k,v_l)'$ is invariant with respect to the non-quadratic part (in the sense of Razborov, \cite{Razborov3}).

From the definition of the entire transformation $\D 5$, it follows that  the number of items that belong to a given quadratic-coefficient base can only increase. As $\Omega_{v_k}=\Omega_{v_l}$, so this number is the same for all $i$, $i=k,\dots, l$. It follows that $\pi (v_k,v_l)'(h_i)=h_i$ for all $h_i$ that belong to a quadratic-coefficient base.

Since the transformations that take the generalised equation $\widetilde{\Upsilon}_{v_{k}}$ to $\widetilde{\Upsilon}_{v_{k+1}}$ involve only quadratic bases of $\widetilde{\Upsilon}_{v_{k}}$, the same sequence of transformations can be applied to the generalised equation $\widehat{\widetilde{\Upsilon}}_{v_{k}}$, where $\widehat{\widetilde{\Upsilon}}_{v_{k}}$ is the generalised equation obtained from $\widetilde{\Upsilon}_{v_k}$ by removing all non-quadratic bases, all quadratic-coefficient bases and all coefficient equations.

Since, by assumption, every time we $\mu$-tie a boundary a new boundary is introduced, we get that the epimorphism from $F_{R(\widehat{\widetilde{\Upsilon}}_{v_{k}}^*)}$ to $F_{R(\widehat{\widetilde{\Upsilon}}_{v_{k+1}}^*)}$ is, in fact, an isomorphism. We therefore get the following commutative diagram (see the definition of an automorphism invariant with respect to the non-quadratic part, Section \ref{5.5.2}):
$$
\CD
\factor{F[h]}{\ncl\langle\widehat{\Upsilon}_{v_k}\rangle}   @>{\phi}>> F_{R({\Upsilon_{v_k}}^*)} \\
  @VVV        @VV\pi (v_k,v_l)'V  \\
  \factor{F[h]}{\ncl\langle\widehat{\Upsilon}_{v_l}\rangle}  @>{\phi}>> F_{R({\Upsilon_{v_{l}}}^*)}
\endCD
$$
It follows that the automorphism $\pi (v_k,v_l)'$ is invariant with respect to the non-quadratic part of $\Upsilon_{v_k}$ in the sense of Razborov and thus the automorphism $\pi (v_k,v_l)$ is invariant with respect to the non-quadratic part of ${\Omega}_{v_{k}}$.
\end{proof}

\begin{cor}\label{cor:proh12}
Let $\p=v_1 \to \ldots\to v_n\to\ldots $ be an infinite path in the tree $T(\Omega)$, and $\tp(v_i)=12$ for all $i$. Then $\p$ contains a prohibited subpath of type 12.
\end{cor}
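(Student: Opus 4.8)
\textbf{Proof proposal for Corollary \ref{cor:proh12}.}

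The plan is to deduce the corollary directly from Lemma \ref{lem:c12} together with a pigeonhole argument, following the template that the excerpt already set up for type 7-10 (Lemma \ref{3.3} and Corollary \ref{cor:proh710}). First I would invoke Lemma \ref{lem:c12}: along the infinite path $\p = v_1 \to v_2 \to \ldots \to v_n \to \ldots$ with $\tp(v_i)=12$ for all $i$, some generalised equation occurs infinitely many times among $\{\Omega_{v_i}\}$. In particular, writing $\rho$ for the number of variables of this repeated generalised equation, the vertices $v_i$ carrying a copy of it occur infinitely often, so there are more than $2^{4\rho^2\cdot(2^\rho+1)}$ of them. Let $v_1 \to \ldots \to v_k$ be the initial segment of $\p$ containing the first $2^{4\rho^2\cdot(2^\rho+1)}+1$ of these occurrences. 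By definition of a prohibited path of type 12, this finite subpath is prohibited, which is exactly what the corollary asserts.

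The only point requiring a word of care is the match between the ``number of variables $\rho$'' in the definition of a prohibited path and the one coming out of Lemma \ref{lem:c12}. A generalised equation that repeats along the path has a fixed number of variables, and the definition of prohibited path of type 12 quantifies over ``some generalised equation with $\rho$ variables'' occurring at least $2^{4\rho^2\cdot(2^\rho+1)}+1$ times; so one simply takes $\rho$ to be the number of variables of the equation supplied by Lemma \ref{lem:c12}, and the count is met by choosing the first $2^{4\rho^2\cdot(2^\rho+1)}+1$ repetitions. I would also remark, as in the proof of Lemma \ref{3.3}, that since $\tp(v_i)=12$ along the path the edges are labelled by isomorphisms, so the canonical homomorphism between two coincident copies is an automorphism — but this is not needed for the corollary itself; it is recorded in Lemma \ref{lem:c12} and will be used later in Section \ref{5.5.4}.

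There is essentially no obstacle here: the substance of the argument is entirely carried by Lemma \ref{lem:c12}, whose proof (boundedness of the number of items and of the complexity of quadratic generalised equations along a type-12 branch, hence finiteness of the set of equations that can appear, hence infinite repetition by pigeonhole) is the genuine content. The corollary is a one-line finiteness/pigeonhole consequence, and I would present it as such, in parallel with Corollary \ref{cor:proh710}.
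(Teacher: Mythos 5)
Your proposal is correct and matches the paper's (implicit) argument: the paper states Corollary \ref{cor:proh12} without a separate proof, precisely because it is the immediate pigeonhole consequence of Lemma \ref{lem:c12} together with the definition of a prohibited path of type 12, exactly as you argue. Your remark about matching the number of variables $\rho$ to the repeated generalised equation, and your observation that the automorphism statement of Lemma \ref{lem:c12} is not needed here, are both consistent with how the paper uses these facts.
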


\bigskip

Below we shall define  prohibited paths of type 15 in $T(\Omega )$. In this case, the definition of a prohibited path is much more involved.

We need some auxiliary definitions. Recall that the complexity of a generalised equation $\Omega $ is defined as follows:
$$
\comp = \comp (\Omega) = \sum_{\sigma \in A\Sigma_\Omega} \max\{0, n(\sigma)-2\},
$$
where $n(\sigma)$ is the number of bases in $\sigma$. Let \glossary{name={$\tau_v$, $\tau (\Omega_v)$}, description={function of the generalised equation}, sort=T}$\tau_v=\tau (\Omega_v)=\comp(\Omega_v)+\rho -\rho_{v}'$, where $\rho=\rho_\Omega$ is the number of variables in the initial generalised equation $\Omega $ and \glossary{name={$\rho_v'$}, description={the number of free variables that belong to the non-active sections of  $\Omega _v$}, sort=R}$\rho_{v}'$ is the number of free variables belonging to the non-active sections of the generalised equation $\Omega _v$. We have $\rho _{v}'\le \rho$ (see the proof of Lemma \ref{3.2}), hence $\tau_v\geq 0$. If, in addition,  $v_1\to v_2$ is an auxiliary edge, then $\tau_{v_2}< \tau_{v_1}$.

We use induction on $\tau _v$ to construct a finite subtree $T_0(\Omega _v)$ of $T(\Omega _v)$, and the function $\ss(\Omega_v)$.

The tree $T_0(\Omega _v)$ is a rooted tree at $v$ and consists of some of the vertices and edges of $T(\Omega)$ that lie above $v$.

Suppose that $\tau _v=0$. It follows that $\comp(\Omega_v)=0$. Then in $T(\Omega)$ there are no auxiliary edges. Furthermore, since $\comp(\Omega_v)=0$, it follows that every closed active section contains at most two bases and so no vertex of type 15 lies above $v$. We define the subtree $T_0(\Omega _v)$ as follows. The set of vertices of $T_0(\Omega _v)$ consists of all vertices $v_1$ of $T(\Omega)$ that lie above $v$, and so that the path from $v$ to $v_1$ does not contain prohibited subpaths of types 7-10 and 12. By Corollary \ref{cor:proh710} and Corollary \ref{cor:proh12},  $T_0(\Omega _v)$ is finite.

Let
\glossary{name={$\ss(\Omega_v)$}, description={a function that bounds the length of a minimal solution}, sort=S}
\begin{equation}\label{so}
\ss(\Omega_v)=\max\limits_w \max\limits_{\langle {\P},R\rangle}\rho_{\Omega_w}\cdot\{ f_{0} (\Omega_w,{\P},R)\},
\end{equation}
where $\max\limits_w$ is taken over all the vertices of $T_0(\Omega_v)$ for which $\tp (w)=2$ and $\Omega _w$ contains
non-active sections; $\max\limits_{\langle {\P},R\rangle}$ is taken over all regular periodic structures such that the generalised equation  $\widetilde{\Omega}_w$ is regular with respect to $\langle \P, R\rangle$; and $f_{0}$ is the function from Lemma \ref{lem:23-2}.

Suppose now that $\tau_v> 0$. By induction, we assume that for all $v_1$ such that $\tau _{v_1}< \tau _v$ the finite tree $T_0(\Omega _{v_1})$ and $\ss(\Omega _{v_1})$ are already defined. Furthermore, we assume that the full subtree of $T(\Omega)$ whose set of vertices consists of all vertices that lie above $v$ does not contain prohibited paths of type 7-10 and of type 12. Consider a path $\p$ in $T(\Omega)$:
\begin{equation}\label{3.6}
v_1\rightarrow v_2\rightarrow \ldots\rightarrow v_m,
\end{equation}
where $\tp(v_i)=15$, $1\leq i\leq m$ and all the edges are principal. We have $\tau _{v_i}=\tau _v$.

Denote by $\mu _i$ the carrier base of the generalised equation $\Omega_{v_i}$. Path (\ref{3.6}) is called \index{path!$\mu$-reducing}\emph{$\mu$-reducing} if $\mu_1=\mu$ and either there are no auxiliary edges from the vertex $v_2$ and $\mu$ occurs in the sequence $\mu _1,\ldots ,\mu_{m-1}$ at least twice, or there are auxiliary edges $v_2\to w_1$, $v_2\to w_2, \ldots ,v_2\to w_\nn$ and $\mu$ occurs in the sequence $\mu _1,\ldots ,\mu _{m-1}$ at least $\max \limits_{1\leq i\leq \nn} \ss(\Omega _{w_i})$ times. We will show later, see Equation (\ref{3.26}), that, informally, in any $\mu$-reducing path the length of the solution $H$ is reduced by at least $\frac{1}{10}$ of the length of $H(\mu)$, hence the terminology.

\begin{defn}\label{defn:proh15}
Path (\ref{3.6}) is called \index{path!prohibited of type 15}{\em prohibited of type 15}, if it can be represented in the form
\begin{equation}\label{3.7}
\p_1\s_1\ldots \p_l\s_l\p',
\end{equation}
where for some sequence of bases $\eta _1,\ldots ,\eta _l$ the following three conditions are satisfied:
\begin{enumerate}
    \item\label{it:prp2} the path $\p_i$ is $\eta _i$-reducing;
    \item\label{it:prp1} every base $\mu_i$ that occurs at least once in the sequence $\mu_1,\dots, \mu_{m-1}$, occurs at least $40n^2f_{1}(\Omega_{v_2})+20n+1$ times in the sequence $\eta _1,\ldots ,\eta _l$, where $n=|\BS(\Omega_{v_i})|$ is the number of all bases in the generalised equation $\Omega_{v_i}$, and $f_{1}$ is the function from Lemma \ref{2.8}; in other words, in a prohibited path of type 15, for every carrier base $\mu_i$ there exists at least $40n^2f_{1}(\Omega_{v_2})+20n+1$ many $\mu_i$-reducing paths.
    \item\label{it:prp3} every transfer base of some generalised equation of the path $\p$ is a transfer base of some generalised equation of the path $\p'$.
\end{enumerate}
\end{defn}
Note that for any path of the form (\ref{3.6}) in $T(\Omega)$, there is an algorithm to decide whether this path  is prohibited of type 15 or not.

\bigskip

We now prove that any infinite branch of the tree $T(\Omega)$ of type 15 contains a prohibited subpath of type 15.
\begin{lem}
Let $\p=v_1 \to \ldots\to v_n\to\ldots $ be an infinite path in the tree $T(\Omega)$, and $\tp(v_i) =15$ for all $i$. Then
$\p$ contains a prohibited subpath of type 15.
\end{lem}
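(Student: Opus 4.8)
The plan is to show that in an infinite path of type $15$, the carrier bases recur often enough that one can build up, by concatenation, a path of the prohibited form (\ref{3.7}). First I would fix the infinite path $\p = v_1 \to v_2 \to \cdots$ with $\tp(v_i) = 15$ for all $i$. By Lemma \ref{3.1}, along such a path the complexity does not increase and, since $\tp(v_i) = 15$ for all $i$ rules out Cases $3$ and $11$, the number of open boundaries $\xi(\Omega_{v_i})$ and the quantity $\tau_{v_i}$ eventually stabilise; so after discarding a finite prefix we may assume $\comp(\Omega_{v_i})$, $\xi(\Omega_{v_i})$, $\rho'_{v_i}$, and hence $\tau_{v_i} = \tau$, are all constant. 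In particular all the generalised equations $\Omega_{v_i}$ have a bounded number of items and a bounded number of bases (the bound depending only on $\Omega$), so the set $\{\Omega_{v_i}\}$ and the set of carrier bases $\{\mu_i\}$ are finite, and by the pigeonhole principle at least one carrier base — indeed at least one generalised equation — recurs infinitely often.

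Next I would establish the building blocks: $\mu$-reducing subpaths. The key observation is that between two consecutive occurrences of the \emph{same} carrier base $\mu$ in the sequence $\mu_1, \mu_2, \dots$, one gets a $\mu$-reducing path in the sense defined just before Definition \ref{defn:proh15} — this is exactly what is needed to satisfy condition (\ref{it:prp2}); when there are auxiliary edges out of the second vertex one must wait for $\mu$ to recur enough times to exceed $\max_i \ss(\Omega_{w_i})$, which is possible since $\mu$ recurs infinitely often and there are only finitely many such auxiliary vertices $w_i$ (each contributing a fixed finite $\ss(\Omega_{w_i})$, this being where the inductive construction on $\tau_v$ enters). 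Iterating, one extracts from $\p$ an infinite sequence of reducing subpaths $\p_1 \s_1 \p_2 \s_2 \cdots$ with reducing bases $\eta_1, \eta_2, \dots$ drawn from the finite set of carrier bases. Since each carrier base that occurs at all occurs infinitely often as a reducing base, one can choose $l$ large enough so that condition (\ref{it:prp1}) is met: every base $\mu_i$ that occurs among the carrier bases of the relevant stretch occurs at least $40n^2 f_1(\Omega_{v_2}) + 20n + 1$ times among $\eta_1, \dots, \eta_l$ (here $n$ and $f_1(\Omega_{v_2})$ are fixed once the prefix is discarded and the equation at $v_2$ is determined).

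Finally, for condition (\ref{it:prp3}) I would argue that by taking the residual tail $\p'$ long enough — and again using that the set of bases is finite while the path is infinite — every transfer base that appeared in the prefix $\p_1 \s_1 \cdots \p_l \s_l$ reappears as a transfer base somewhere along $\p'$; more precisely, a transfer base of some $\Omega_{v_j}$ in the prefix is a base of the (fixed, up to finitely many possibilities) generalised equation, and since entire transformations are applied infinitely often each eligible base is eventually again a transfer base. Assembling: the decomposition $\p_1 \s_1 \cdots \p_l \s_l \p'$ with the chosen $\eta_1, \dots, \eta_l$ satisfies all three conditions of Definition \ref{defn:proh15}, so $\p$ contains a prohibited subpath of type $15$. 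The main obstacle I anticipate is the bookkeeping in step two: verifying carefully that the stretches between recurrences of a carrier base genuinely qualify as $\mu$-reducing paths when auxiliary edges are present (this forces the wait until $\mu$ recurs $\max_i \ss(\Omega_{w_i})$ times and relies on the induction hypothesis that $T_0(\Omega_{w_i})$ and $\ss(\Omega_{w_i})$ are already well-defined for $\tau_{w_i} < \tau_v$), and simultaneously arranging that the counting in condition (\ref{it:prp1}) can be satisfied without breaking the reducing structure — i.e. choosing the cut points $l$ and the tail $\p'$ compatibly.
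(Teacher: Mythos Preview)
Your approach is essentially the paper's: pass to a tail where the relevant bases recur infinitely often, build enough $\mu$-reducing subpaths to meet condition~(\ref{it:prp1}), then take $\p'$ long enough for condition~(\ref{it:prp3}). Two points deserve tightening. First, the claim that the $\Omega_{v_i}$ have a bounded number of items (and hence that only finitely many generalised equations occur) is not established for type~$15$ paths---$\ET 5$ in Case~$15$ may introduce new boundaries---and in any case you do not need it; what you need, and what is true by Lemma~\ref{3.1}, is that $n_A(\Omega_{v_i})$ is non-increasing, so the set of \emph{bases} (hence of possible carrier and transfer bases) is finite. Second, the assertion that ``each carrier base that occurs at all occurs infinitely often'' does not follow from your prefix-stabilisation of $\comp$, $\xi$, $\tau$; a base could still appear as carrier only finitely many times. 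The paper's fix is to let $\omega$ be the set of bases occurring as carrier infinitely often and $\tilde\omega$ the set occurring as transfer base infinitely often, then discard a further prefix so that every carrier base lies in $\omega$ and every transfer base lies in $\tilde\omega$; after that your argument goes through verbatim, and condition~(\ref{it:prp3}) is automatic once $\p'$ is long enough to witness each $\tilde\omega$-base as a transfer base.
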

\begin{proof}
Let $\omega$ be the set of all bases occurring in the sequence of carrier bases $\mu _1,\mu_2,\ldots$ infinitely many times, and $\tilde\omega$ be the set of all bases that are transfer bases of infinitely many equations $\Omega _{v_i}$. Considering, if necessary, a subpath $\tilde \p$ of $\p$ of the form $v_j \to v_{j+1}\to \ldots$, one can assume that all the bases in the sequence $\mu _1,\mu_2 \ldots $ belong to $\omega$ and every base which is a transfer base of at least one generalised equation belongs to $\tilde\omega$. Then for any $\mu\in\omega$ the path $\tilde \p$ contains infinitely many non-intersecting $\mu$-reducing finite subpaths. Hence  there exists a subpath of the form (\ref{3.7}) of $\tilde \p$  which satisfies conditions (\ref{it:prp2}) and (\ref{it:prp1}) of the definition of a prohibited path of type 15, see Definition \ref{defn:proh15}. Taking a long enough subpath $\p'$ of $\p$, we obtain a prohibited subpath of $\p$.
\end{proof}

We now construct the tree \glossary{name={$T_0(\Omega)$}, description={the finite subtree of $T(\Omega)$ that does not contain prohibited paths}, sort=T}$T_0(\Omega)$. Let $T'(\Omega_v)$ be the subtree of $T(\Omega _v)$ consisting of the vertices $v_1$ such that the path from $v$ to $v_1$ in $T(\Omega)$ does not contain prohibited subpaths and does not contain vertices $v_2\ne v_1$ such that $\tau_{v_2}< \tau _v$. Thus, the leaves of $T'(\Omega_v)$ are either vertices $v_1$ such that $\tau_{v_1}< \tau_v$ or leaves of $T(\Omega _v)$.

The subtree $T'(\Omega_v)$ can be effectively constructed. The tree $T_0(\Omega _v)$ is obtained from $T'(\Omega_v)$ by attaching (gluing) $T_0(\Omega _{v_1})$ (which is already constructed by the induction hypothesis) to those leaves $v_1$ of $T'(\Omega _v)$ for which $\tau _{v_1}< \tau _v$. The function $\ss(\Omega _v)$ is defined by (\ref{so}). Set $T_0(\Omega)=T_0(\Omega _{v_0})$, which is finite by construction.

\bigskip

\begin{defn}\label{defn:Aut}
Denote by \glossary{name={$\Aut (\Omega)$}, description={recursive group of automorphisms associated to the root of $T_0(\Omega)$}, sort=A}$\Aut (\Omega)$, $\Omega=\Omega_{v_0}$, the group of  automorphisms of $\GG_{R(\Omega^\ast)}$, generated by all the groups $\pi(v_0,v)\VV(\Omega _v)\pi (v_0,v)^{-1}$, $v\in T_0(\Omega)$, $\tp(v)\ne 1$ (thus $\pi (v_0,v)$ is an isomorphism). Note that by construction the group $\Aut (\Omega )$ is recursive.
\end{defn}

We adopt the following convention. Given two solutions $H^{(i)}$ of $\Omega_{v_i}$ and ${H^{(i')}}$ of $\Omega_{v_i'}$, by \glossary{name={`$H^{(i)}<_{\Aut(\Omega)}{H^{(i')}}$'}, description={if $H^{(i)}$ is a solution of $\Omega_{v_i}$ and ${H^{(i')}}$ is a solution of $\Omega_{v_{i'}}$, $v_i, v_{i'}\in T_0(\Omega)$, then $H^{(i)}<_{\Aut(\Omega)}{H^{(i')}}$ if and only if $H^{(i)}<_{\pi (v_0,v_{i'})^{-1}\Aut(\Omega)\pi(v_0,v_{i})} {H^{(i')}}$}, sort=Z}
$H^{(i)}<_{\Aut(\Omega)}{H^{(i')}}$ we mean $H^{(i)}<_{\pi (v_0,v_{i'})^{-1}\Aut(\Omega)\pi(v_0,v_{i})} {H^{(i')}}$.

\begin{lem} \label{cor:2.1}
Let
$$
(\Omega,H)=(\Omega _{v_0},H^{(0)})\to (\Omega _{v_1}, H^{(1)})\to\ldots\to (\Omega _{v_l}, H^{(l)})
$$
be the path defined by the solution $H$. If $H$ is a minimal solution with respect to the group of automorphisms $\Aut(\Omega)$, then $H^{(i)}$ is a minimal solution of $\Omega _{v_i}$ with respect to the group $\VV(\Omega_{v_i})$ for all $i$.
\end{lem}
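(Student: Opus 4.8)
The plan is to reduce the statement to the single-transformation case, which is exactly Lemma \ref{lem:2.1}, and then iterate along the path. First I would clarify the setup: the path $(\Omega_{v_0},H^{(0)})\to\dots\to(\Omega_{v_l},H^{(l)})$ is the path \emph{defined} by the solution $H=H^{(0)}$, so each pair $(\Omega_{v_{i+1}},H^{(i+1)})$ is obtained from $(\Omega_{v_i},H^{(i)})$ by the sequence of elementary and derived transformations prescribed at the vertex $v_i$, and the homomorphism $\pi(v_i,v_{i+1})$ is the corresponding composition; moreover $\pi_{H^{(i)}}=\pi(v_i,v_{i+1})\pi_{H^{(i+1)}}$. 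Since $T_0(\Omega)$ contains no vertex of type $1$ in its interior, all edges on this path (except possibly the last, into a leaf of type $1$) are labelled by \emph{isomorphisms}; I should note that the statement is only asserting minimality for the $\Omega_{v_i}$ that actually occur, and for vertices of type $1$ the group $\VV(\Omega_{v_i})$ is trivial so minimality is automatic. Hence without loss of generality I may assume every $\pi(v_i,v_{i+1})$ is an isomorphism.

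Second, I would set up the induction on $i$. Because the group $\Aut(\Omega)$ is, by Definition \ref{defn:Aut}, generated by the conjugates $\pi(v_0,v)\VV(\Omega_v)\pi(v_0,v)^{-1}$ over all $v\in T_0(\Omega)$ with $\tp(v)\ne1$, and because of the convention that $H^{(i)}<_{\Aut(\Omega)}H^{(i')}$ abbreviates $H^{(i)}<_{\pi(v_0,v_{i'})^{-1}\Aut(\Omega)\pi(v_0,v_i)}H^{(i')}$, the hypothesis ``$H=H^{(0)}$ is minimal with respect to $\Aut(\Omega)$'' transported along the isomorphism $\pi(v_0,v_i)$ says precisely that $H^{(i)}$ is minimal with respect to the group $\pi(v_0,v_i)^{-1}\Aut(\Omega)\pi(v_0,v_i)$ of automorphisms of $\GG_{R(\Omega_{v_i}^*)}$. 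Now $\VV(\Omega_{v_i})$ is one of the generating subgroups appearing in $\Aut(\Omega)$ (conjugated into the $v_i$-frame it is $\VV(\Omega_{v_i})$ itself), so $\VV(\Omega_{v_i})\subseteq\pi(v_0,v_i)^{-1}\Aut(\Omega)\pi(v_0,v_i)$; minimality with respect to a larger group implies minimality with respect to a smaller one, because a destabilising solution $H'<_{\VV(\Omega_{v_i})}H^{(i)}$ with shorter length would also witness $H'<_{\pi(v_0,v_i)^{-1}\Aut(\Omega)\pi(v_0,v_i)}H^{(i)}$ (the relation $<$ only requires \emph{some} automorphism from the group, so enlarging the group preserves the relation). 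Thus it suffices to establish that each $H^{(i)}$ is minimal with respect to $\pi(v_0,v_i)^{-1}\Aut(\Omega)\pi(v_0,v_i)$.

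Third, the actual work is to propagate minimality along a single edge. The edge $v_i\to v_{i+1}$ corresponds to a finite composition of elementary transformations $\ET1$–$\ET5$ and derived transformations $\D1$–$\D6$. For each elementary transformation $\ET j$ with $\Omega_{v_{i+1}}\in\ET j(\Omega_{v_i})$ and $\GG_{R(\Omega_{v_{i+1}}^*)}\cong\GG_{R(\Omega_{v_i}^*)}$ via $\theta$, Lemma \ref{lem:2.1} gives that if $H^{(i)}$ is minimal with respect to a group $\BB$, then $H^{(i+1)}$ is minimal with respect to $\theta^{-1}\BB\theta$. A derived transformation is a finite sequence of elementary ones (or, for $\D4$, a composition of Tietze-type isomorphisms covered by Lemmas \ref{lem:indETepi} and \ref{lem:indDepi}), so the same conclusion holds with $\theta=\pi(v_i,v_{i+1})$: minimality with respect to $\BB_i:=\pi(v_0,v_i)^{-1}\Aut(\Omega)\pi(v_0,v_i)$ passes to minimality with respect to $\pi(v_i,v_{i+1})^{-1}\BB_i\,\pi(v_i,v_{i+1})=\pi(v_0,v_{i+1})^{-1}\Aut(\Omega)\pi(v_0,v_{i+1})=\BB_{i+1}$. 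Iterating from $i=0$ (where $\BB_0=\Aut(\Omega)$ and minimality is the hypothesis) up to $i=l$ completes the induction, and then the containment $\VV(\Omega_{v_i})\subseteq\BB_i$ from the previous paragraph yields the lemma.

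The main obstacle is the bookkeeping in the middle step: Lemma \ref{lem:2.1} is stated for a single elementary transformation under the hypothesis that the coordinate groups are isomorphic, so I must check that for each derived transformation used on the path the hypotheses genuinely hold — in particular that along the interior edges of $T_0(\Omega)$ the homomorphisms are isomorphisms (which follows since no interior vertex has type $1$), and that the decomposition of a derived transformation into elementary ones preserves the solution-tracking and the commuting diagram (this is exactly what the definitions of $\D1$–$\D6$ and Lemmas \ref{lem:indETepi}, \ref{lem:indDepi} guarantee). One must also be slightly careful that $\D4$ ($\Ker$) and $\D3$ (completing a cut) may not be literal compositions of $\ET1$–$\ET5$, but they induce isomorphisms of coordinate groups given by explicit word maps, and Lemma \ref{lem:2.1}'s proof (examination of the definitions, combined with Lemma \ref{lem:minsol}) applies verbatim to any such completely-induced isomorphism; I would remark on this rather than reprove it.
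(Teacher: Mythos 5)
Your proposal is correct and follows the same route as the paper: the paper's own proof is the one-line remark that the lemma follows from Lemma \ref{lem:2.1}, and your argument simply makes explicit the intended details, namely iterating Lemma \ref{lem:2.1} along the (isomorphism-labelled) edges of the path, invoking the conjugation convention for $<_{\Aut(\Omega)}$, and using that minimality with respect to $\pi(v_0,v_i)^{-1}\Aut(\Omega)\pi(v_0,v_i)$ implies minimality with respect to its subgroup $\VV(\Omega_{v_i})$. No gaps; your cautionary remarks about derived transformations and type~$1$ leaves are consistent with how the paper uses the lemma.
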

\begin{proof}
Follows from Lemma \ref{lem:2.1}
\end{proof}

\subsection{Paths $\p(H)$ are in $T_0(\Omega )$} \label{5.5.4}

The goal of this section is to give a proof of the proposition below.
\begin{prop} \label{3.4}
For any solution $H$ of a generalised equation $\Omega $ there exists a leaf $w$ of the tree $T_0(\Omega )$, $\tp(w)=1,2$, and a solution $H^{[w]}$ of the generalised equation $\Omega _w$ such that
\begin{enumerate}
    \item \label{it:prop1} ${H}^{[w]}<_{\Aut(\Omega)} H$;
    \item \label{it:prop2} if $\tp(w)=2$ and the generalised equation $\Omega_{w}$ contains
non-constant non-active sections, then there exists a period $P$ such that $H^{[w]}$ is periodic with respect to the period $P$, and the generalised equation $\Omega _w$ is singular or strongly singular with respect to the periodic structure ${\P}(H^{[w]},P)$.
\end{enumerate}
\end{prop}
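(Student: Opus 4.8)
\textbf{Proof strategy for Proposition \ref{3.4}.} The plan is to start from the path $\p(H)$ in $T(\Omega)$ defined by a \emph{minimal} solution (it suffices to prove the statement for minimal solutions, since every solution $H$ dominates a minimal one $H^+$ with $H^+<_{\Aut(\Omega)}H$, and composing the relations gives the general case). So first I would replace $H$ by a solution minimal with respect to $\Aut(\Omega)$; by Corollary \ref{cor:2.1} this minimality is inherited along the whole path, i.e.\ every $H^{(i)}$ is minimal with respect to $\VV(\Omega_{v_i})$. The core claim is then: \emph{the path $\p(H)$ defined by a minimal solution, after possibly inserting automorphism steps from $\Aut(\Omega)$, never contains a prohibited subpath}, hence it stays inside $T_0(\Omega)$ and terminates at a leaf $w$ with $\tp(w)\in\{1,2\}$ (by Lemma \ref{3.2}, Corollary \ref{cor:proh710}, Corollary \ref{cor:proh12} and the corresponding statement for type 15, every infinite branch — equivalently every branch long enough to leave $T_0$ — contains a prohibited path). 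The three types of prohibited paths must be ruled out separately, and each rests on a technical lemma proved earlier in Sections \ref{sec:minsol} and \ref{sec:periodstr} together with the repetition lemmas \ref{3.3}, \ref{lem:c12} and the type-15 analogue.

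For the \textbf{linear case (types 7--10)}: a prohibited path of type 7-10 forces some generalised equation $\Omega_{v_k}=\Omega_{v_l}$ to repeat many times (the combinatorial pigeonhole built into the count $2^{4\rho^2(2^\rho+1)}+1$, which is the number of possible cancellation matrices from Definition \ref{defn:nfmatrix}). By Lemma \ref{3.3}, $\pi(v_k,v_l)$ is then a $\GG$-automorphism of $\GG_{R(\Omega_{v_k}^*)}$ invariant with respect to the kernel, hence lies in $\VV(\Omega_{v_k})$ and its conjugate lies in $\Aut(\Omega)$. If the subpath from $v_k$ to $v_l$ contributed a genuine repetition of the \emph{pair} $(\Omega_{v_k}, \text{cancellation matrix of }H^{(k)})$, then $H^{(l)}<_{\VV(\Omega_{v_k})}H^{(k)}$ but by Lemma \ref{lem:solleng} / Remark \ref{rem:leng<} the length strictly drops along such a path (cases 7,8,9,10 all strictly decrease $|H^{(i)}|$ when the eliminated base is quadratic, and otherwise $n_A$ drops), contradicting minimality of $H^{(k)}$ with respect to $\VV(\Omega_{v_k})$. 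So a minimal solution can only traverse boundedly many distinct $(\Omega_{v_i},\text{matrix})$ pairs, i.e.\ the path contains no prohibited subpath of type 7-10.

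For the \textbf{quadratic case (type 12)}: the argument is parallel, using Lemma \ref{lem:c12} in place of Lemma \ref{3.3} (the repeated automorphism is now invariant with respect to the non-quadratic part, hence in $\VV(\Omega_{v_k})\subseteq$ conjugates generating $\Aut(\Omega)$) and Remark \ref{rem:leng<} again to get the strict length drop in every type-12 step, contradicting minimality. For the \textbf{general case (type 15)}: this is the main obstacle, and it is where the structure of Definition \ref{defn:proh15} (with the explicit bound $40n^2f_1(\Omega_{v_2})+20n+1$ and the function $\ss$) does the work. Here one uses the entire transformation $\D 5$, Lemma \ref{lem:excess} (the excess $\psi_{A\Sigma}$ is preserved, so ``complexity'' is under control) and, crucially, the results on periodic structures: in a $\mu$-reducing path the length of $H(\mu)$ is eaten away by roughly $\tfrac{1}{10}|H(\mu)|$ per pass, so if a carrier base $\mu_i$ occurred as a carrier $40n^2f_1(\Omega_{v_2})+20n+1$ times, then either $H(\mu_i)$ is short — which forces the generalised equation into a periodic regime — or we get a contradiction with minimality. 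When a periodic structure $\langle\P,R\rangle$ appears, Lemmas \ref{lem:23-1ss}, \ref{lem:23-1} and \ref{lem:23-2} apply: in the strongly singular or singular cases we factor through a proper quotient or apply an $\AA(\Omega_w)$-automorphism (which sits in $\VV(\Omega_w)$ by Definition \ref{defn:AA} and hence in $\Aut(\Omega)$); in the regular case Lemma \ref{lem:23-2} bounds $|H_k|\le f_0(\Omega_w,\P,R)|P|$, which is exactly why $\ss(\Omega_v)$ is defined via $f_0$ in \eqref{so} and why a minimal solution cannot produce a $\mu$-reducing path of the prohibited length. Running the induction on $\tau_v$ (the parameter decreases across auxiliary edges), one concludes that $\p(H)$ — after inserting the finitely many $\Aut(\Omega)$-automorphism adjustments supplied by the three cases — reaches a leaf $w$ of $T_0(\Omega)$ with $\tp(w)=1$ or $2$, and the accumulated relation gives $H^{[w]}<_{\Aut(\Omega)}H$, establishing \eqref{it:prop1}.

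Finally, for \eqref{it:prop2}: if $\tp(w)=2$ and $\Omega_w$ has non-constant non-active sections, then the only way the branch terminated at $w$ (rather than continuing, per Lemma \ref{3.2}) is that $H^{[w]}$ restricted to those sections is periodic — by Lemma \ref{lem:case2}, the shape of the generalised equations reached in this branch (closed sections built from a base overlapping its dual) guarantees a period $P$ with $H^{[w]}$ $P$-periodic, giving a periodic structure $\P(H^{[w]},P)=\langle\P,R\rangle$ by Lemma \ref{le:PP}. It remains to see that $\Omega_w$ is singular or strongly singular with respect to $\langle\P,R\rangle$: if it were regular, Lemma \ref{lem:23-2} would bound $|H^{[w]}_k|\le f_0(\Omega_w,\P,R)|P|$, but $f_0$ feeds into $\ss$ and hence into the definition of prohibited paths of type 15; so a regular periodic structure at a minimal solution would have forced the branch to be pruned earlier inside $T_0$, a contradiction with $w$ being a leaf reached by a minimal solution. (Concretely: after passing to the $\AA(\Omega_w)$-minimal representative using Lemma \ref{lem:23-1}, a regular structure yields a bounded-length solution, which by the construction of $\ss(\Omega_v)$ in \eqref{so} and the definition of $\mu$-reducing paths cannot survive as a leaf.) Hence $\langle\P,R\rangle$ is singular or strongly singular, which is exactly \eqref{it:prop2}. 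The hard part, as noted, is the bookkeeping of the type-15 case — correctly matching the combinatorial constants $40n^2f_1(\Omega_{v_2})+20n+1$ and $\ss$ against the length-decrease estimate $\tfrac{1}{10}|H(\mu)|$ in $\mu$-reducing paths, and verifying that the inserted automorphisms genuinely lie in $\Aut(\Omega)$ via the $\VV(\Omega_v)$-conjugation — and this is where the bulk of the detailed argument (and the auxiliary Lemmas \ref{lem:excess}, \ref{2.8}, \ref{lem:23-1.5}) is spent.
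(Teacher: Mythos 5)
Your overall route is the paper's: start the path $\p(H)$ from an $\Aut(\Omega)$-minimal solution (so that, by Lemma \ref{cor:2.1}, each $H^{[i]}$ is minimal with respect to $\VV(\Omega_{v_i})$), rule out prohibited subpaths of types 7--10 and 12 by the cancellation-matrix pigeonhole behind the count $2^{4\rho^2(2^\rho+1)}+1$ together with Lemmas \ref{3.3} and \ref{lem:c12} and the strict length drop of Remark \ref{rem:leng<}, and handle type 15 by playing the upper bound on $d_{A\Sigma}$ supplied by Lemma \ref{2.8} (via the excess, which Lemma \ref{lem:excess} keeps constant) against the lower bound accumulated along $\mu$-reducing paths (the $\tfrac{1}{10}|H(\mu)|$ estimate matched against $40n^2f_{1}+20n+1$ and $\ss$). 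That is exactly the paper's argument, and your explanation of why a regular periodic structure is impossible at a type-2 leaf (Lemma \ref{lem:23-2} against the $f_0$-dependence built into $\ss$) is also the paper's.

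The one step whose justification would not go through as written is the periodicity of $H^{[w]}$ in part (\ref{it:prop2}). You derive it from Lemma \ref{lem:case2}, asserting that the generalised equation at the leaf has every closed section spanned by a base overlapping its dual; that structural hypothesis is not established for $\Omega_w$, and Lemma \ref{lem:case2} is only motivational and plays no role in the paper's proof. In the paper, periodicity comes from the construction of $\p(H)$ itself: a non-constant section becomes non-active only through the $\edge'$ move, which is taken precisely when condition (\ref{3.9}) holds, i.e. when the value of the section equals $P^rP_1$ with $r\ge\max\ss(\Omega_{w_i})$, and once a section is non-active its value is frozen, $H^{[i]}[j,k]\doteq\cdots\doteq H^{[w]}[j,k]$; hence at the leaf every non-constant non-active section is $P_l$-periodic with exponent at least $\rho_w\max f_{0}$, and taking $P_l$ of maximal length gives the required periodic structure. (Relatedly, in the regular case one does not pass to an $\AA(\Omega_w)$-minimal representative via Lemma \ref{lem:23-1}: $H^{[w]}$ is already minimal, inherited from $H^{[0]}$, and Lemma \ref{lem:23-2} is applied to it directly to produce the contradiction with the exponent bound.) With that substitution your argument for part (\ref{it:prop2}) coincides with the paper's.
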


The proof of this proposition is rather long and technical. We now outline the organisation of the proof.

In part (A), for any solution $H$ of $\Omega$ we describe a path $\p(H):(\Omega_{v_0},H^{[0]})\to \dots \to (\Omega_{v_l}, H^{[l]})$, where $H^{[i]}<_{\Aut(\Omega)} H$.

In part (B) we prove that all vertices of the paths $\p(H)$ belong to the tree $T_0$. In order to do so we show that they do not contain prohibited subpaths. In steps (I) and (II) we prove that the paths $\p(H)$ do not contain prohibited paths of type 7-10 and 12, correspondingly. The proof is by contradiction: if $\p(H)$ is not in $T_0$, the fact that a generalised equation repeats enough times, allows us to construct an automorphism that makes the solution shorter, contradicting its minimality.

To prove that the paths $\p(H)$  do not contain prohibited paths of type 15 (step (III)) we show, by contradiction, that on one hand, the length of minimal solutions is bounded above by a function of the excess (see Definition \ref{defn:excess} for definition of excess), see Equation (\ref{3.18}) and, on the other hand the length of a minimal solution $H$ such that $\p(H)$ contains a prohibited path of type 15 fails inequality (\ref{3.18}).

Finally, in part (C) we prove that the pair $(\Omega_w,H^{[w]})$, where $w$ is a leaf of type 2, satisfies the properties required in Proposition \ref{3.4}.

\subsubsection*{\textbf{{\rm(A):} Constructing the paths $\p(H)$}}

To define the path \index{path!$\p(H)$}$\p(H)$ we shall make use of two functions \glossary{name={$\edge$}, description={a function that assigns a pair $(\Omega_{v'},H^{(v')})$ to the pair $(\Omega_v,H^{(v)})$, $\tp(v)\ne 1,2$}, sort=E}$\edge$ and \glossary{name={$\edge'$}, description={a function that assigns a pair $(\Omega_{v'},H^{(v')})$ to the pair $(\Omega_v,H^{(v)})$, $\tp(v)=15$}, sort=E}$\edge'$ that assign to the pair $(\Omega_v,H^{(v)})$ a pair $(\Omega_{v'},H^{(v')})$, where either $v'=v$ or there is an edge $v\to v'$ in $T(\Omega)$. The function $\edge$ can be applied to any pair $(\Omega_v,H^{(v)})$, where $\tp(v)\ne 1,2$ and the function $\edge'$ can only be applied to a pair $(\Omega_v,H^{(v)})$, where $\tp(v)=15$ and there are auxiliary edges outgoing from the vertex $v$.

\medskip

We now define the functions $\edge$ and $\edge'$.

Let $\tp(v)=3$ or $\tp (v)\geq 6$ and let $v\to w_1,\ldots,v\to w_m$ be the list of all \emph{principal} outgoing edges from $v$, then the generalised equations $\Omega _{w_1},\dots ,\Omega_{w_m}$ are obtained from $\Omega _v$ by a sequence of elementary transformations. For every solution $H$ the path defined by $H$ is unique, i.e. for the pair $(\Omega,H)$ there exists a unique pair $(\Omega_{w_i},H^{(i)})$ such that the following diagram commutes:
$$
\xymatrix@C3em{
 \GG_{R(\Omega^\ast)}  \ar[rd]_{\pi_H} \ar[rr]^{\theta_i}  &  &\GG_{R(\Omega_{w_i}^\ast )} \ar[ld]^{\pi_{H^{(i)}}}
                                                                             \\
                               &  \GG &
}
$$
Define a function $\edge$ that assigns the pair $(\Omega _{w_i},H ^{(i)})$ to the pair $(\Omega _v,H)$, $\edge(\Omega_v,H)=(\Omega _{w_i},H ^{(i)})$.

Let $\tp(v)=4$ or $\tp(v)=5$. In these cases there is a single edge $v\to w_1$ outgoing  from $v$ and this edge is auxiliary. We set $\edge(\Omega_v,H)=(\Omega_{w_1},H^{(1)})$.

If $\tp (v)=15$ and there are auxiliary outgoing edges from the vertex $v$, then the carrier base $\mu$ of the generalised equation $\Omega _v$ intersects with $\Delta (\mu)$. Below we use the notation from the description of Case 15.1. For any solution $H$ of the generalised equation $\Omega_v$ one can construct a solution $H'$ of the generalised equation $\Omega_{v'}$ as follows: $H'_{\rho_v+1}=H[1,\beta(\Delta(\mu))]$. We define the function $\edge'$ as follows  $\edge'(\Omega_v,H)=(\Omega_{v'},H')$.

\medskip

To construct the path $\p(H)$
\begin{equation}\label{3.8}
(\Omega,H)\to (\Omega _{v_0},H^{[0]})\to (\Omega _{v_1}, H^{[1]})\to\ldots
\end{equation}
we use induction on its length $i$.

Let $i=0$, we define $H^{[0]}$ to be a solution of the generalised equation $\Omega$ minimal with respect to the group of automorphisms $\Aut(\Omega)$, such that $H^{[0]} <_{\Aut(\Omega)} H$. Let $i \ge 1$ and suppose that the term $(\Omega_{v_i},H^{[i]})$ of the sequence (\ref{3.8}) is already constructed. We construct $(\Omega_{v_{i+1}},H^{[i+1]})$

If $3\le \tp(v_i)\le 6$, $\tp(v_i)=11,13,14$, we set $(\Omega _{v_{i+1}},H^{[i+1]})=\edge(\Omega _{v_i},{H^{[i]}})$.

If $7\leq \tp(v_i)\leq 10$ or $\tp(v_i)=12$ and there exists a minimal solution $H^+$ of $\Omega _{v_i}$ such that $H^+<_{\Aut(\Omega)} H^{[i]}$ and $|H^+|<|H^{[i]}|$, then we set $(\Omega_{v_{i+1}},H^{[i+1]})=(\Omega_{v_i}, H^+)$. Note that, since  $H^{[0]}$ is a minimal solution of $\Omega$ with respect to the group of automorphisms $\Aut(\Omega)$, by construction and by Lemma \ref{cor:2.1}, we have that the solution $H^{[i]}$ is minimal with respect to the group of automorphism $\VV(\Omega_{v_i})$ for all $i$. Although $H^{[i]}$ is a minimal solution, in this step we take a  minimal solution of minimal total length, see Remark \ref{rem:ms}.

Let $\tp(v_i)=15$, $v_i\neq v_{i-1}$ and $v_i\to w_1,\ldots ,v_i\to w_{\nn}$ be the auxiliary edges outgoing from $v_i$ (the carrier base $\mu$ intersects with its dual $\Delta(\mu)$). If there exists a period $P$ such that
\begin{equation}\label{3.9}
{H^{[i]}}[1,\beta (\Delta (\mu))]\doteq P^rP_1,\ P\doteq P_1P_2, \ r\ge \max \limits_{1\leq i\leq {\nn}} \ss(\Omega _{w_i}),
\end{equation}
then we set $(\Omega_{v_{i+1}},H^{[i+1]}) = \edge'(\Omega _{v_i},H^{[i]})$ and declare the section $[1,\beta (\Delta (\mu))]$ non-active.

In all the other cases (when $\tp(v_i)=15$) we set $(\Omega _{v_{i+1}},H^{[i+1]})=\edge(\Omega _{v_i},{H^{[i]}})$.

The path (\ref{3.8}) ends if $\tp (v_i)\leq 2$.

\medskip

A leaf $w$ of the tree $T(\Omega)$ is called \index{leaf!final of the tree $T$}\emph{final} if there exists a solution $H$ of $\Omega_{v_0}$ and a path $\p(H)$  such that $\p(H)$ ends in $w$.

\subsubsection*{\textbf{{\rm (B):} Paths $\p(H)$ belong to $T_0$}}

We use induction on $\tau$ to show that every vertex $v_i$ of the path $\p(H)$ (see Equation (\ref{3.8})) belongs to $T_0(\Omega)$, i.e.  $v_i\in T_0(\Omega)$.  Suppose that $v_i\not\in T_0(\Omega )$ and let $i_0$ be the least among such numbers. It follows from the construction of $T_0(\Omega )$ that there exists $i_1< i_0$ such that the path from $v_{i_1}$ to $v_{i_0}$ contains a prohibited subpath $\s$. From the minimality of $i_0$ it follows that the prohibited path $\s$ goes from $v_{i_2}$, $i_1\le i_2\le i_0$ to $v_{i_0}$.

\subsubsection*{{\rm (I):} Paths $\p(H)$ do not contain prohibited subpaths of type 7-10}

Suppose first that the prohibited path $\s$ is of type 7-10, i.e. $7\leq \tp(v_i)\leq 10$. By definition, there exists a generalised equation $\Omega_{v_{k_1}}$ that repeats $r=2^{4\rho_{\Omega_{v_{k_1}}}^2\cdot(2^{\rho_{\Omega_{v_{k_1}}}}+1)}+1$ times, i.e.
$$
\gpof{v_{k_1}}=\dots=\gpof{v_{k_r}}.
$$
Since the path $\s$ is prohibited, we may assume that $v_{k_i}\ne v_{k_{i+1}}$ for all $i$ and $v_{k_i}\ne v_{k_i+1}$ for all $i$, i.e.
$(\Omega_{v_{k_i+1}},H^{[{k_i+1}]})=\edge(\Omega_{v_{k_i}},H^{[{k_i}]})$.

We now prove that there exist $k_j$ and $k_{j'}$, $k_{j}<k_{j'}$ such that $H^{[k_{j'}]}<_{\Aut(\Omega)}H^{[k_j]}$.

Since, the number of different $2\rho_{\Omega_{v_{k_1}}}\times 2\rho_{\Omega_{v_{k_1}}}$ cancellation matrices (see Definition \ref{defn:nfmatrix}) is bounded above by $r$, if the generalised equation $\Omega_{v_{k_1}}$ repeats $r$ times, there exist $k_j$ and $k_{j'}$ such that $H^{[k_j]}$ and $H^{[k_{j'}]}$ have the same cancellation matrix, i.e. satisfy conditions (\ref{it:minsol3}) and (\ref{it:minsol4}) from Definition \ref{defn:sol<}. Moreover, by Lemma \ref{3.3}, $\pi(v_{k_j},v_{k_{j'}})$ is an automorphism of $\GG_{R(\Omega_{v_{k_j}}^\ast)}$ invariant with respect to the kernel of ${\Omega}_{v_{k_j}}$.

By Remark \ref{rem:leng<}, we have $|H^{[k_j]}| >|{H^{[k_{j'}]}}|$. This derives a contradiction, since, by construction of the sequence (\ref{3.8}) one has $v_{k_j+1}=v_{k_j}$.

\subsubsection*{{\rm (II):} Paths $\p(H)$ do not contain prohibited subpaths of type 12.}
Suppose next that the path $\s$ is prohibited  of type 12, i.e. $\tp(v_i)=12$. An analogous argument to the one for prohibited paths of type 7-10, but using Lemma \ref{lem:c12} instead of Lemma \ref{3.3}, leads to a contradiction. Hence, we conclude that $v_i\in T_0(\Omega)$, where $\tp(v_i)=12$.

\subsubsection*{{\rm (III):} Paths $\p(H)$ do contain prohibited subpaths of type 15.}
Finally, suppose that the path $\s$ is prohibited of type 15, i.e. $\tp(v_i)=15$. Abusing the notation, we consider a subpath of (\ref{3.8})
$$
(\Omega _{v_1},H^{[1]})\to (\Omega_{v_2},H^{[2]})\to\ldots \to(\Omega_{v_m},H^{[m]})\to\ldots,
$$
where $v_1,v_2,\ldots$ are vertices of the tree $T_0(\Omega)$, $\tp(v_i)=15$ and the edges $v_i\to v_{i+1}$ are principal for all $i$. Notice, that by construction the above path is the path defined by the solution $H^{(1)}=H^{[1]}$:
\begin{equation}\label{3.11}
(\Omega _{v_1},H^{(1)})\to (\Omega_{v_2},H^{(2)})\to\ldots \to(\Omega_{v_m},H^{(m)})\to\ldots,
\end{equation}

To simplify the notation, below we write $\rho_i$ for $\rho_{\Omega_{v_i}}$.

Let $\omega =\{\mu _1,\ldots ,\mu_{m}, \ldots\}$ be the set of carrier bases $\mu _i$ of the generalised equations $\Omega_{v_i}$'s and let $\tilde\omega$ denote the set of bases which are transfer bases for at least one generalised equation in (\ref{3.11}). By $\omega_2$ we denote the set of all bases $\nu$ of $\Omega_{v_i}$, $i=1,\dots,m,\dots$ so that $\nu, \Delta(\nu)\notin \omega\cup\tilde \omega$.  Let
$$
\alpha (\omega)=\min \left\{\min\limits_{\mu\in\omega _2}\{\alpha(\mu)\},\rho_A\right\},
$$
where $\rho_A$ is the boundary between the active part and the non-active part.

For every element $(\Omega _{v_i},H^{(i)})$ of the sequence (\ref{3.11}), using $\D 3$, if necessary, we make the section $[1,\alpha (\omega)]$ of the generalised equation $\Omega _{v_i}$ closed and set $[\alpha(\omega),\rho_i]$ to be the non-active part of the generalised equation $\Omega_{v_i}$ for all $i$.

Recall that by $\omega _1$ we denote the set of all variable bases $\nu $ for which either $\nu$ or $\Delta (\nu)$ belongs to the active part $[1,\alpha(\omega)]$ of the  generalised equation $\Omega_{v_1}$, see Definition \ref{defn:excess}.

\subsubsection*{{\rm(III.1):} Lengths of minimal solutions are bounded by a function of the excess}

Let $H$ be a solution of the generalised equation $\Omega$ and let $[1,j+1]$ be the quadratic part of $\Omega$. Set
\glossary{name={$d_1(H)$}, description={length of the ``quadratic part of the solution'' $H$}, sort=D}\glossary{name={$d_2(H)$}, description={length of the ``quadratic-coefficient part of the solution'' $H$}, sort=D}
$$
d _1(H)=\sum \limits_{i=1}^{j}|H_i|, \ d_2(H)=\sum \limits_{\nu}|H(\nu)|,
$$
where $\nu$ is a quadratic-coefficient base.

\begin{lem} \label{2.8}
Let $v$ be a vertex of $T(\Omega)$,  $\tp(v)=15$. There exists a recursive function $f_{1}(\Omega _v)$ such that for any solution $H$ minimal with respect to $\VV(\Omega_v)$ one has
$$
d_1(H)\leq f_{1}(\Omega_v)\max \left\{d_2(H),1\right\}.
$$
\end{lem}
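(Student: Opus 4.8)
The plan is to reduce the statement to a purely combinatorial estimate on the generalised equation of type $15$ and its quadratic part, following the scheme of Lemma~2.8 in \cite{Razborov3}. First I would recall the structure of a generalised equation $\Omega_v$ with $\tp(v)=15$: after applying $\D 3$ and $\D 2$ (as in Section~\ref{5.5.2}) we may assume that in the quadratic part $[1,j+1]$ every item is covered exactly twice, every boundary touching a base and intersecting another base is $\mu$-tied, and $\gamma$ is constant on closed sections. Thus the quadratic part carries a pairing of its items by quadratic bases together with a family of quadratic-coefficient bases $\nu$, whose duals lie outside $[1,j+1]$. The first step is to set up a ``weighted graph'' (or a system of linear inequalities) whose vertices are the closed sections of the quadratic part and whose edges record how each quadratic base glues two sections and how each quadratic-coefficient base attaches a section to the non-quadratic part; here $d_1(H)$ is the total weight of the sections and $d_2(H)$ is the total weight carried by quadratic-coefficient bases.

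The core of the argument is to show that if $d_1(H)$ were too large compared with $d_2(H)$, then $H$ could be shortened by an automorphism from $\VV(\Omega_v)$, i.e.\ by an automorphism invariant with respect to the non-quadratic part, contradicting minimality. Concretely, I would argue as follows. Consider the subgraph spanned by the quadratic bases only (ignoring quadratic-coefficient bases); since every item of the quadratic part is covered exactly twice by bases of the quadratic part together with quadratic-coefficient bases, a connected component of the quadratic part that meets no quadratic-coefficient base would give a ``purely quadratic'' subsystem, and using the entire transformation / Euclidean-type reductions on that subsystem one produces a strictly shorter solution in $\VV(\Omega_v)$, as in the quadratic case (Case~12). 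Hence every component meets at least one quadratic-coefficient base, and then a counting argument on the bipartite incidence between sections and bases gives a bound on the number of sections (and on their individual lengths, via the $\mu$-tied condition which forces items to be re-expressed through boundary equations) in terms of the number $n=|\BS(\Omega_v)|$ of bases and the total $\VV(\Omega_v)$-invariant shortening bound. More precisely, for each component one bounds $\sum_{i\in\text{component}}|H_i|$ by a recursive function of $\Omega_v$ times $\max\{|H(\nu)|,1\}$ over the quadratic-coefficient bases $\nu$ it meets, again invoking minimality of $H$ with respect to $\VV(\Omega_v)$ to rule out long ``free'' excursions inside the component; summing over all components yields $d_1(H)\le f_1(\Omega_v)\max\{d_2(H),1\}$ with $f_1$ recursive in the parameters $n_A(\Omega_v),\xi(\Omega_v),\rho_{\Omega_v},\comp(\Omega_v)$.

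The main obstacle, I expect, is the per-component length estimate: bounding $d_1(H)$ requires controlling not just the number of closed sections but the actual lengths $|H_i|$ of items inside a quadratic component, and a priori these can be arbitrarily long. This is exactly where minimality with respect to $\VV(\Omega_v)$ must be exploited in full: one has to show that an overlong item inside a quadratic component can be ``cut down'' by an automorphism invariant with respect to the non-quadratic part that fixes all items of the non-quadratic part and all items of quadratic-coefficient bases. The technical heart is to verify that the word transformation produced this way indeed lies in $\VV(\Omega_v)$ — i.e.\ that it is completely induced and its dual is invariant with respect to the non-quadratic part in the sense of Razborov \cite{Razborov3} — and that it strictly decreases $|H|$ while preserving the cancellation matrix (conditions (\ref{it:minsol3}) and (\ref{it:minsol4}) of Definition~\ref{defn:sol<}) so that the produced solution is genuinely $<_{\VV(\Omega_v)}H$. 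Once this is in place, the recursiveness of $f_1$ follows from the recursiveness of $\VV(\Omega_v)$ (Lemma on automorphisms invariant with respect to the non-quadratic part being recursive) together with the finiteness of the combinatorial data of $\Omega_v$, and the bound on a minimal solution of the associated linear Diophantine system provided by Lemma~1.1 of \cite{Makanin}, exactly as in the regular periodic case (Lemma~\ref{lem:23-2}).
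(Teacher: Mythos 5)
There is a genuine gap, and you have in fact pointed at it yourself: the entire argument hinges on being able to shorten an overlong item inside the quadratic part by an automorphism from $\VV(\Omega_v)$, and your proposal never constructs such an automorphism --- it only asserts that ``the technical heart is to verify'' that a suitable word transformation lies in $\VV(\Omega_v)$ and decreases the length while preserving the cancellation matrix. Nothing in your static set-up (the weighted incidence graph of sections and bases, the per-component counting) produces this transformation, and without it the per-component length estimate, which is exactly where all the analytic content sits, is unsupported. The paper obtains the shortening automorphism by a quite different mechanism: it runs the entire transformation $\D 5$ on the quadratic part, producing a sequence $(\Omega_{v_0},H^{(0)})\to(\Omega_{v_1},H^{(1)})\to\cdots$, and observes that if some generalised equation recurs $2^{4\rho^2(2^\rho+1)}+1$ times along a run of quadratic carrier bases, then by pigeonhole two of the corresponding solutions share the same cancellation matrix (so conditions (\ref{it:minsol3}) and (\ref{it:minsol4}) of Definition \ref{defn:sol<} hold for free), while by the argument of Lemma \ref{lem:c12} (Hopficity of the residually free coordinate group) the composite epimorphism between the repeated equations is an automorphism invariant with respect to the non-quadratic part; since each step of $\D 5$ strictly shortens the solution (Remark \ref{rem:leng<}), this contradicts minimality with respect to $\VV(\Omega_v)$. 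Thus the process has length bounded by a computable $N$, and the inequality $d_1(H)\le f_1(\Omega_v)\max\{d_2(H),1\}$ is then proved by a downward induction along this bounded sequence, splitting into the cases of a quadratic and a quadratic-coefficient carrier base.

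Two further points where your route, as written, would not go through. First, the claim that a ``purely quadratic'' component forces shortening ``by Euclidean-type reductions as in Case 12'' is not what happens in Case 12: there the automorphism again comes from repetition of generalised equations along $\D 5$, not from a direct Euclidean reduction, and the lemma's conclusion does not need every component to meet a quadratic-coefficient base (the $\max\{d_2(H),1\}$ with the constant $1$ absorbs the degenerate situation). Second, invoking Lemma 1.1 of \cite{Makanin} on minimal solutions of linear Diophantine systems is misplaced here; that tool is used in the regular periodic case (Lemma \ref{lem:23-2}), whereas in the present lemma the computability of $f_1$ comes solely from the computable bound $N$ on the length of the entire-transformation process and the explicit multiplicative factors $(n_i+1)$, $(n_i+2)$ appearing in the induction.
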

\begin{proof}
Since $\tp(v)=15$, every boundary that touches a base is $\eta$-tied in every base $\eta$ which it intersects.  Instead of $\Omega _v$ below we consider the generalised equation $\widetilde{\Omega}_v=\D 3(\Omega_v)$ which does not have any boundary connections. Then $\GG_{R(\Omega_v^\ast)}$ is isomorphic to $\GG_{R({\widetilde{\Omega}}_v^\ast)}$. We abuse the notation and denote $\widetilde{\Omega}_v$ by $\Omega_v$.

Consider the sequence
$$
(\Omega_v ,H)=(\Omega _{v_0}, H^{(0)})\to (\Omega _{v_1}, H^{(1)})\to\ldots  \to (\Omega _{v_i}, H^{(i)})\to \ldots,
$$
where $(\Omega _{v_{j+1}}, H^{(j+1)})$ is obtained from $(\Omega _{v_j}, H^{(j )})$ by applying the entire transformation $\D 5$ in the quadratic part of $\Omega$. Denote by $\mu_{i}$ the carrier base of the generalised equation $\Omega _{v_i}$ and consider the sequence $\mu_1,\dots,\mu_i,\ldots$

We use an argument analogous to the one given in the proof of Lemma \ref{lem:c12} to show that the number of consecutive quadratic bases in the sequence $\mu_1,\dots,\mu_i,\ldots$ is bounded above. The entire transformation applied in the quadratic part of a generalised equation, does not increase the complexity and the number of items. The number of constrained generalised equations with a bounded number of items and bounded complexity is finite.

We now prove that if a generalised equation $\Omega_{v_{k_1}}$ repeats $r=2^{4\rho_{k_1}^2 \cdot (2^\rho_{k_1}+1)}+1$ times, then there exist $k_j$ and $k_{j'}$ such that  $H^{(k_{j'})}<_{\VV(\Omega_{v_{k_j}})}H^{(k_j)}$.

Since, the number of different $2\rho_{k_1}\times 2\rho_{k_1}$ cancellation matrices (see Definition \ref{defn:nfmatrix}) is bounded above by $r$, if the generalised equation $\Omega_{v_{k_1}}$ repeats $r$ times, there exist $k_j$ and $k_{j'}$ such that $H^{(k_j)}$ and $H^{(k_{j'})}$ have the same cancellation matrix, i.e. satisfy conditions (\ref{it:minsol3}) and (\ref{it:minsol4}) from Definition \ref{defn:sol<}.

Moreover, by Lemma \ref{lem:c12}, the automorphism $\pi(v_{k_j},v_{k_{j'}})$ of $\GG_{R(\Omega_{v_{k_j}}^\ast)}$ is invariant with respect to the non-quadratic part of $\Omega_{v_{k_j}}$.

By Remark \ref{rem:leng<}, we have $|{H^{(k_{j'})}}|< |H^{(k_j)}|$. We, therefore, have that ${H^{(k_{j'})}} <_{\VV(\Omega_{v_{k_j}})} H^{(k_j)}$, contradicting the minimality of $H^{(k_j)}$. Thus, we have proven that the number of consecutive quadratic carrier bases is bounded.

Furthermore, since whenever the carrier base is a quadratic-coefficient base the number of bases in the quadratic part decreases, there exists an integer $N$ bounded above by a computable function of the generalised equation, such that the quadratic part of $\Omega_{v_N}$ is empty.

We prove the statement of the lemma by induction on the length $i$ of the sequence. If $i=N-1$, since the application of the entire transformation to $\Omega_{v_{N-1}}$ results in a generalised equation with the empty quadratic part, the carrier $\mu_{N-1}$ is a quadratic-coefficient base and all the other bases $\nu_{N-1,1},\dots,\nu_{N-1,n_{N-1}}$ are transfer bases and are transferred from the carrier to its dual. Since the length $|H^{(N-1)}(\nu_{N-1,i})|$ of every transferred base is less than the length of the carrier, we get that
$$
d_1(H^{(N-1)})\le\sum\limits_{n=1}^{n_{N-1}}|H^{(N-1)}(\nu_{N-1,n})|\le n_{N-1}|H^{(N-1)}(\mu_{N-1})| = n_{N-1}d_2(H^{(N-1)}).
$$

By induction, we may assume that $d_1(H^{(i+1)})\leq g_{i+1}(\Omega_{v_{i+1}})\max \left\{d_2( H^{(i+1)}),1\right\}$, where $g_{i+1}$ is a certain computable function. We prove that the statement holds for $H^{(i)}$.

Suppose that the carrier base $\mu_i$ is quadratic and let $\nu_{i,1},\dots,\nu_{i,n_{i}}$ be the transfer bases of $\Omega_{v_i}$. Then
$$
|H^{(i)}(\mu_i)|-|H^{(i+1)}(\mu_i)|\le \sum\limits_{n=1}^{n_{i}}|H^{(i+1)}(\nu_{i,n})|,
$$
where $\nu_{i,n}$, $n=1,\dots, n_i$ are bases of $\Omega_{v_{i+1}}$. Thus, by induction hypothesis, we get $$
\sum\limits_{n=1}^{n_{i}}|H^{(i+1)}(\nu_{i,n})|\le n_i g_{i+1}(\Omega_{v_{i+1}})\max \left\{d_2( H^{(i+1)}),1\right\}.
$$
Notice that the sets of quadratic-coefficient bases of $\Omega_{v_i}$ and $\Omega_{v_{i+1}}$ coincide, thus $d_2(H^{(i+1)})=d_2(H^{(i)})$. Therefore,
\begin{gather}\notag
\begin{split}
d_1(H^{(i)})=\left(|H^{(i)}(\mu_i)|\right.&\left.-|H^{(i+1)}(\mu_i)|\right)+ d_1(H^{(i+1)})\le \\
\le \left(|H^{(i)}(\mu_i)|\right.&\left.-|H^{(i+1)}(\mu_i)|\right)+ g_{i+1}(\Omega_{v_{i+1}}) \cdot\max \left\{d_2( H^{(i+1)}),1\right\}\le\\
&\le (n_i+1) \cdot g_{i+1}(\Omega_{v_{i+1}}) \cdot\max \left\{d_2( H^{(i+1)}),1\right\}=g_{i}(\Omega_{v_{i}})\cdot\max \left\{d_2( H^{(i)}),1\right\}.
\end{split}
\end{gather}

Suppose now that the carrier base $\mu_i$ is a quadratic-coefficient base and let $\nu_{i,1},\dots,\nu_{i,n_{i}}$ be the transfer bases of the generalised equation $\Omega_{v_i}$. Since the duals of the transferred quadratic bases become quadratic-coefficient and since $|H^{(i)}(\nu_{i,n})|\le |H^{(i)}(\mu_i)|$, $n=1,\dots, n_i$, we get that
\begin{gather} \notag
\begin{split}
d_2(H^{(i+1)})\le& L+|H^{(i+1)}(\mu_i)+\sum \limits_{n=1}^{n_{i}}|H^{(i)}(\nu_{n_i,n})|\le\\
\le& L+ (n_i+1) |H^{(i)}(\mu_{i})|\le (n_{i}+1)\left(L+|H^{(i)}(\mu_i)|\right)=(n_i+1)\cdot d_2(H^{(i)}),
\end{split}
\end{gather}
where $L=\sum\limits_\lambda|H^{(i)}(\lambda)|$ and the sum is taken over all bases that are quadratic-coefficient in both $\Omega_{v_i}$ and $\Omega_{v_{i+1}}$. Therefore,
\begin{gather} \notag
\begin{split}
  d_1(H^{(i)})=\left(|H^{(i)}(\mu_i)|\right.&- \left.|H^{(i+1)}(\mu_i)|\right)+ d_1(H^{(i+1)})\le \\
            \le |H^{(i)}(\mu_i)|&+g_{i+1}(\Omega_{v_{i+1}}) \cdot\max \left\{d_2( H^{(i+1)}),1\right\}\le\\
                    \le |H^{(i)}&(\mu_i)|+(n_i+1)\cdot g_{i+1}(\Omega_{v_{i+1}})\cdot\max \left\{d_2( H^{(i)}),1\right\}\le \\
&\le(n_{i}+2)\cdot g_{i+1}(\Omega_{v_{i+1}}) \cdot \max  \left\{d_2( H^{(i)}),1\right\}=g_{i}(\Omega_{v_{i}})\cdot \max \left\{d_2(H^{(i)}),1\right\}.
\end{split}
\end{gather}
The statement of the lemma follows.
\end{proof}

Recall, that by $\widetilde{\Omega}$ we denote the generalised equation obtained from $\Omega$ applying $\D 3$. Consider the section  of $\widetilde{\Omega} _{v_1}$ of the form  $[1,\alpha(\omega)]$. The section $[1,\alpha(\omega)]$ lies in the quadratic part of $\widetilde{\Omega} _{v_1}$. Let $B'$ be the set of quadratic bases that belong to $[1,\alpha(\omega)]$ and let $\VV'(\Omega_{v_1})$ be the group of automorphisms of $\GG_{R(\Omega^\ast)}$ that are invariant with respect to the non-quadratic part of $\widetilde{\Omega} _{v_1}$ and act identically on all the bases which do not belong to $B'$. By definition, $\VV'(\Omega_{v_1})\le \VV(\Omega_{v_1})$. Thus the solution $H^{(1)}$ minimal with respect to $\VV(\Omega_{v_1})$ is also minimal with respect to $\VV'(\Omega_{v_1})$. By Lemma \ref{2.8} we have
\begin{equation} \label{3.14}
d_1(H^{(1)})\leq f_{1}(\Omega _{v_1}) \max\left\{d_2(H^{(1)}),1\right\}.
\end{equation}

Recall that  (see Definition \ref{defn:excess})
\begin{equation} \label{eq:M}
d_{A\Sigma}(H)=\sum \limits_{i=1}^{\alpha(\omega)-1}|H_i|, \quad \psi_{A\Sigma}(H)=\sum \limits_{\mu\in \omega_1}|H({\mu})|-2d_{A\Sigma}(H).
\end{equation}

Our next goal is, using inequality (\ref{3.14}), to give an upper bound of the length of the interval $d_{A\Sigma}(H^{(1)})$ in terms of the excess $\psi _{A\Sigma}$ and the function $f_{1}(\Omega _{v_1})$, see inequality (\ref{3.19}).

Denote by \glossary{name={$\gamma_i(\omega)$}, description={the number of bases from $\omega_1$ that contain $h_i$}, sort=G}$\gamma_i(\omega)$ the number of bases $\mu\in\omega_1$ containing $h_i$. Then
\begin{equation}\label{3.15}
\sum\limits_{\mu\in\omega _1}|H^{(1)}({\mu})|=\sum\limits_{i=1}^{\rho}|H_i^{(1)}| \gamma_i(\omega),
\end{equation}
where $\rho =\rho_{\Omega _{v_1}}$. Let $I=\left\{i \mid 1\le i \le\alpha(\omega)-1 \hbox{ and }\gamma_i=2\right\}$, and $J=\left\{i\mid 1\leq i\le \alpha (\omega)-1 \hbox{ and } \gamma_i> 2\right\}$. By (\ref{3.12}) we have:
\begin{equation}\label{3.16}
d_{A\Sigma}(H^{(1)})=\sum \limits_{i\in I}|H_i^{(1)}|+ \sum\limits_{i\in J}|H_i^{(1)}|= d_1(H^{(1)})+
\sum\limits_{i\in J}|H_i^{(1)}|.
\end{equation}

Let  $\lambda ,\Delta(\lambda)$ be a pair of variable quadratic-coefficient bases of the generalised equation $\widetilde{\Omega}_{v_1}$, where $\lambda$ belongs to the non-quadratic part of $\widetilde\Omega _{v_1}$.
When we apply $\D 3$ to $\Omega_{v_1}$ thereby obtaining $\widetilde\Omega _{v_1}$, the pair $\lambda ,\Delta(\lambda)$ is obtained from bases $\mu\in\omega_1$.
There are two types of quadratic-coefficient bases.
\begin{itemize}
    \item[Type 1:] variable bases $\lambda$ such that $\beta(\lambda)\le\alpha(\omega)$. In this case, since $\lambda$ belongs to the non-quadratic part of $\widetilde{\Omega}_{v_1}$, it is a product of items $\{h_i\mid i\in J\}$ and thus $|H({\lambda})|\leq\sum\limits_{i\in J}|H_i^{(1)}|$. Thus the sum of the lengths of quadratic-coefficient bases of Type 1 and their duals is bounded above by $2n\sum\limits_{i\in J}|H_i^{(1)}|$, where $n$ is the number of bases in $\Omega$.
    \item[Type 2:] variable bases $\lambda$ such that $\alpha(\lambda)\ge\alpha(\omega)$. The sum of the lengths of quadratic-coefficient bases of the second type is bounded above by $2\cdot \sum \limits_{i=\alpha(\omega)}^{\rho}|H_i^{(1)}|\gamma _i(\omega)$.
\end{itemize}
We have
\begin{equation}\label{3.17}
d_2(H^{(1)})\le 2n\sum\limits_{i\in J}|H_i^{(1)}|+2\cdot\sum\limits_{i=\alpha(\omega)}^{\rho}|H_i^{(1)}| \gamma_i(\omega).
\end{equation}
Then from (\ref{eq:M}) and (\ref{3.15}) it follows that
\begin{equation}\label{3.18}
\psi_{A\Sigma}(H^{(1)}_i)\ge \sum \limits_{i\in J}|H_i^{(1)}|+\sum\limits_{i=\alpha(\omega)}^{\rho}|H_i^{(1)}|\gamma_i(\omega).
\end{equation}
From  Equation (\ref{3.16}), using inequalities (\ref{3.14}), (\ref{3.17}), (\ref{3.18}) we get
\begin{equation}\label{3.19}
d_{A\Sigma}(H^{(1)})\leq \max\left\{\psi_{A\Sigma}(H^{(1)})(2n f_{1}(\Omega_{v_1})+1), f_{1}(\Omega _{v_1})\right\}.
\end{equation}

\subsubsection*{{\rm (III.2):} Minimal solutions $H$ such that $\p(H)$ contains a prohibited subpath of type 15 fail inequality {\rm(\ref{3.18})}.}

Let the path $v_1\to v_2\to \ldots \to v_m$ corresponding to the sequence (\ref{3.11}) be $\mu$-reducing, that is  $\mu_1=\mu$ and, either there are no outgoing auxiliary edges from $v_2$ and $\mu$ occurs in the sequence $\mu _1,\ldots ,\mu _{m-1}$ at least twice, or $v_2$ does have outgoing auxiliary edges $v_2\to w_1,\dots, v_2\to w_{\nn}$ and the base $\mu$ occurs in the sequence $\mu _1,\ldots ,\mu _{m-1}$ at least $\max\limits_{1\le i\le \nn}\ss(\Omega_{w_i})$ times.

Set $\delta_i=d_{A\Sigma}(H^{(i)})-d_{A\Sigma}(H ^{(i+1)})$. We give a lower bound for $\sum \limits_{i=1}^{m-1}\delta _i$, i.e. we estimate by how much the length of a solution is reduced in a $\mu$-reducing path.

We first prove that if $\mu _{i_1}=\mu _{i_2}=\mu$, $i_1< i_2$ and $\mu _i\ne\mu$ for $i_1< i< i_2$, then
\begin{equation} \label{3.23}
\sum \limits_{i=i_1}^{i_2-1}\delta _i\ge |H^{(i_1+1)}[1,\alpha (\Delta (\mu_{i_1+1}))]|.
\end{equation}
Indeed, if $i_2=i_1+1$ then
$$
\delta _{i_1}=|H^{(i_1)}[1,\alpha(\Delta (\mu))]|=|H^{(i_1+1)}[1,\alpha (\Delta (\mu))]|.
$$
If $i_2 > i_1+1$, then $\mu _{i_1+1}\ne \mu$ and $\mu$ is a transfer base in the generalised equation $\Omega_{v_{i_1+1}}$ and thus
\begin{equation} \label{est1}
\delta_{i_1+1}+|H^{(i_1+2)}[1,\alpha(\mu)]|=|H^{(i_1+1)}[1,\alpha(\Delta(\mu _{i_1+1}))]|.
\end{equation}
Since $\mu$ is the carrier base of $\Omega_{v_{i_2}}$ we have
\begin{equation} \label{est2}
\sum\limits_{i=i_1+2}^{i_2-1}\delta _i\ge |H^{(i_1+2)}[1,\alpha (\mu)]|.
\end{equation}
From (\ref{est2}) and (\ref{est1}) we get (\ref{3.23}).

\bigskip

We want to show that every $\mu$-reducing path reduces the length of a solution $H^{(1)}$ by at least $\frac{1}{10}|H^{(1)}(\mu)|$, see inequality (\ref{3.26}). To prove this we consider the two cases from the definition of a $\mu$-reducing path.

Suppose first that $v_2$ does not have any outgoing auxiliary edges, i.e. the bases $\mu _2$ and $\Delta(\mu _2)$ do not intersect in the generalised equation $\Omega _{v_2}$, then (\ref{3.23}) implies that
$$
\sum\limits_{i=1}^{m-1}\delta_i\ge |H^{(2)}[1,\alpha (\Delta(\mu _2))]|\ge |H^{(2)}({\mu _2})|\ge |H^{(2)}({\mu})|=|H^{(1)}({\mu})|-\delta_1,
$$
which, in turn, implies that
\begin{equation} \label{3.24}
\sum\limits_{i=1}^{m-1}\delta _i\ge\frac{1}{2}|H^{(1)}({\mu})|.
\end{equation}

Suppose now that there are auxiliary edges $v_2 \to w_1,\ldots ,v_2 \to w_{\nn}$. Let $H^{(2)}[1,\alpha (\Delta (\mu _2))]\doteq Q$,
and $P$ be a period such that $Q\doteq P^d$ for some $d\ge 1$, then $H^{(2)}({\mu_2})$ and $H^{(2)}({\mu})$ are initial subwords of the word $H^{(2)}[1,\beta (\Delta (\mu_2))]$, which, in turn, is an initial subword of $P^\infty$.

By construction of the sequence (\ref{3.8}), relation (\ref{3.9}) fails for the vertex $v_2$, i. e. (in the notation of (\ref{3.9})):
\begin{equation}\label{3.25}
H^{(2)}({\mu})\doteq {P}^r \cdot P_1, \ P\doteq P_1\cdot P_2,\ r< \max\limits_{1\leq j\leq \nn} \ss(\Omega _{w_j}).
\end{equation}
Let $\mu _{i_1}=\mu _{i_2}=\mu$, $i_1< i_2$ and $\mu _i\ne \mu$ for $i_1< i< i_2$. If
\begin{equation} \label{3.26a}
|H^{(i_1+1)}({\mu _{i_1+1}})|\ge 2 |P|
\end{equation}
since $H^{(i_1+1)}(\Delta({\mu _{i_1+1}}))$ is a  $Q'$-periodic subword ($Q'$ is a cyclic permutation of $P$) of the $Q'$-periodic word $H^{(i_1+1)}[1,\beta(\Delta(\mu_{i_1+1}))]$ of length greater than $2|Q'|=2|P|$, it follows by Lemma 1.2.9 in \cite{1}, that $|H^{(i_1+1)}[1,\alpha (\Delta(\mu_{i_1+1}))]|\ge k|Q'|$. As $k\ne 0$ ($\mu_{i+1}$ and $\Delta(\mu_{i+1})$ do not form a pair of matched bases), so $|H^{(i_1+1)}[1,\alpha (\Delta(\mu_{i_1+1}))]|\ge |P|$. Together with (\ref{3.23}) this gives that $\sum \limits_{i=i_1}^{i_2-1}\delta _i\ge |P|$. The base $\mu$ occurs in the sequence $\mu _1,\ldots ,\mu _{m-1}$ at least $r$ times, so either (\ref{3.26a}) fails for some $i_1\le m-1$ or
$\sum \limits_{i=1}^{m-1}\delta_i\ge (r-3)|P|$.

If (\ref{3.26a}) fails, then from the inequality $|H^{(i+1)}({\mu_i})|\le |H^{(i+1)}({\mu _{i+1}})|$ and the definition of $\delta_i$ follows that
$$
\sum\limits_{i=1}^{i_1}\delta_i\ge |H^{(1)}({\mu})|-|H^{(i_1+1)}({\mu _{i_1+1}})|\ge (r-2)|P|.
$$
hence in both cases $\sum \limits_{i=1}^{m-1}\delta_i\ge (r-3)|P|$.

Notice that for $i_1=1$, inequality (\ref{3.23}) implies that $\sum\limits_{i=1}^{m-1}\delta _i\ge |Q|\ge |P|$; so $\sum\limits_{i=1}^{m-1}\delta_i\ge \max\{1,r-3\}|P|$. Together with (\ref{3.25}) this implies that
$$
\sum\limits_{i=1}^{m-1}\delta _i\ge \frac{1}{5}|H^{(2)}({\mu})|=\frac{1}{5}(|H^{(1)}({\mu})|-\delta _1).
$$
Finally,
\begin{equation}\label{3.26}
\sum \limits_{i=1}^{m-1}\delta _i\ge\frac{1}{10} |H^{(1)}({\mu})|.
\end{equation}
Comparing (\ref{3.24}) and (\ref{3.26}), we see that for the $\mu$-reducing path inequality (\ref{3.26}) always holds. We thereby have shown that in any $\mu$-reducing path the length of the solution is reduced by at least $\frac{1}{10}$ of the length of the carrier  base $\mu$.

\bigskip

Notice that by property (\ref{it:prp3}) from Definition \ref{defn:proh15}, we can assume that the carrier bases $\mu_i$ and their duals $\Delta(\mu_i)$ belong to the active part $A\Sigma =[1,\alpha(\omega)]$. Then, by Lemma \ref{lem:excess} and by construction of the path (\ref{3.11}), we have that $\psi_{A\Sigma}(H^{(1)})=\dots=\psi_{A\Sigma}(H^{(m)})=\dots$ We denote this number by $\psi_{A\Sigma}$.

We now prove that there exists a base $\mu\in\omega$ such that
\begin{equation}\label{3.27}
|H^{(\kappa)}({\mu })|\ge\frac{1}{2n}\psi _{A\Sigma},
\end{equation}
where $n$ is the number of bases in $\Omega$.
Since, by assumption, the path $v_1\to v_2\to \ldots \to v_m$ corresponding to the path (\ref{3.11}) is prohibited, it can be presented in the form (\ref{3.7}). From the definition of $\psi _{A\Sigma}$, see (\ref{eq:M}), we get that $\sum\limits_{\mu \in\omega_1}|H^{(m)}({\mu})|\ge \psi _{A\Sigma}$, hence the inequality $|H^{(m)}({\mu})|\geq\frac{1}{2n}\psi _{A\Sigma}$ holds for at least one base $\mu\in\omega_1$. Since $H^{(m)}({\mu})\doteq \left(H^{(m)}({\Delta (\mu)})\right)^{\pm 1}$, we may assume that $\mu\in\omega\cup\tilde{\omega}$. Let $\kappa$ be the length of the path $\p_1\s_1\ldots \p_l\s_l$  in (\ref{3.7}).

If $\mu\in\omega$, then inequality (\ref{3.27}) trivially holds.

If $\mu\in\tilde{\omega}$, then  by the third condition in the definition of a prohibited path of type 15 (see Definition \ref{defn:proh15}) there exists $\kappa\leq i\leq m$ such that $\mu$ is a transfer base of $\Omega _{v_i}$. Hence, $|H^{(\kappa)}({\mu_i})|\ge |H^{(i)}({\mu _i})|\ge |H^{(i)}({\mu})|\ge |H^{(m)}({\mu})|\ge \frac{1}{2n}\psi _{A\Sigma}$.

\bigskip

Finally, from conditions (\ref{it:prp2}) and (\ref{it:prp1}) in the definition of a prohibited path of type 15, from the inequality $|H^{(i)}({\mu})|\ge |H^{(\kappa)}({\mu})|$,  $1\leq i\leq \kappa$, and from (\ref{3.26}) and (\ref{3.27}), it follows that
\begin{equation}\label{3.28}
\sum\limits_{i=1}^{\kappa-1}\delta _i\ge \max\left\{\frac{1}{20n}\psi _{A\Sigma},1\right\}\cdot(40n^2f_{1}+20n+1).
\end{equation}

By Equation (\ref{3.22}), the sum in the left part of the inequality (\ref{3.28}) equals $d_{A\Sigma}(H^{(1)})-d_{A\Sigma}(H^{(\kappa)})$, hence
\begin{equation}\label{eq:3.29}
d_{A\Sigma}(H^{(1)})\ge \max \left\{\frac{1}{20n}\psi _{A\Sigma},1\right\}\cdot(40n^2f_{1} +20n +1),
\end{equation}
which contradicts (\ref{3.19}).

Therefore, the assumption that there are prohibited subpaths (\ref{3.11}) of type 15 in the path (\ref{3.8}) led to a contradiction. Hence, the path (\ref{3.8}) does not contain prohibited subpaths. This implies that $v_i\in T_0(\Omega )$ for all $(\Omega _{v_i},H^{(i)})$ in (\ref{3.8}).

In particular, we have shown that final leaves $w$ of the tree $T(\Omega)$ are, in fact, leaves of the tree $T_0(\Omega)$. Naturally, we call  such leaves the \index{leaf!final of the tree $T_0$}\emph{final leaves of $T_0(\Omega)$}.

\subsubsection*{\textbf{{\rm (C):} The pair $(\Omega_w,H^{[w]})$, where $\tp(w)=2$, satisfies the properties required in Proposition \ref{3.4}}}

For all $i$, either $v_i= v_{i+1}$  and $|H^{[i+1]}|<|H^{[i]}|$, or $v_i\to v_{i+1}$ is an edge of a finite tree $T_0(\Omega)$. Hence the sequence (\ref{3.8}) is finite. Let $(\Omega _{w}, H^{[w]})$ be its final term. We show that $(\Omega _{w},H^{[w]})$ satisfies the properties required in the proposition.

Property (\ref{it:prop1}) follows directly from  the construction of $H^{[w]}$.

We now prove that property (\ref{it:prop2}) holds. Let $\tp (w)=2$ and suppose that $\Omega _w$ has non-constant non-active sections. It follows from the construction of (\ref{3.8}) that if $[j,k]$ is an active section of $\Omega_{v_{i-1}}$ and is a non-active section of $\Omega_{v_i}$ then $H^{[i]}[j,k]\doteq H^{[i+1]}[j,k]\doteq\ldots \doteq H^{[w]}[j,k]$. Therefore, (\ref{3.9}) and the definition of $\ss(\Omega _v)$ imply that the word $h_1\ldots h_{\rho_w}$ can be subdivided into subwords $h[1,i_1],\ldots ,h[i_{l'-1},i_{\rho_w}]$, such that for any $l$ either
$H^{[w]}[i_l,i_{l+1}]$ has length $1$, or the word $h[i_l,i_{l+1}]$ does not appear in basic and coefficient equations, or
\begin{equation}\label{3.29}
H^{[w]}[i_l,i_{l+1}]\doteq P_l^r \cdot P_l';\quad P_l\doteq P_l'P_l''; \quad r\ge \rho_w \max\limits_{\langle \P,R\rangle  } \left\{ f_{0}(\Omega_w, \P, R)\right\},
\end{equation}
where $P_l$ is a period, and the maximum is taken over all regular periodic structures of $\widetilde{\Omega}_w$. Therefore, if we choose $P_{l}$ of maximal length, then $\widetilde{\Omega }_w$ is  singular or strongly singular with respect to the periodic structure $\P(H^{[w]},P_l)$. Indeed, suppose that it is regular with respect to this periodic structure. Then, as in $H^{[w]}[i_l,i_{l+1}]$ one has $i_{l+1}-i_l\le \rho_w$, so (\ref{3.29}) implies that there exists $h_k$ such that $|H^{[w]}_k|\ge f_{0}(\Omega_w, \P, R)$. By Lemma \ref{lem:23-2}, this contradicts the minimality of the solution $H^{[w]}$.

This finishes the proof of Proposition \ref{3.4}.

\section{From the coordinate group $\GG_{R(\Omega^*)}$ to proper quotients: \newline the decomposition tree $T_{\dec}$ and the extension tree $T_{\ext}$.} \label{sec:7}

\subsection{The decomposition tree $T_{\dec}(\Omega )$}\label{5.5.5}

We proved in the previous section that for every solution $H$ of a generalised equation $\Omega$, the path $\p(H)$ associated to the solution $H$ ends in a final leaf $v$ of the tree $T_0(\Omega )$, $\tp(v)=1,2$. Furthermore, if $\tp(v)=2$ and the generalised equation $\Omega_{v}$ contains non-constant non-active sections, then the generalised equation $\Omega _v$ is singular or strongly singular with respect to the periodic structure ${\P}(H^{(v)},P)$, see Proposition \ref{3.4}.

The essence of the decomposition tree $T_{\dec}(\Omega)$ is that to every solution $H$ of $\Omega$ one can associate the path $\p(H)$ in \glossary{name={$T_{\dec}(\Omega)$}, description={the decomposition tree of $\Omega$}, sort=T} $T_{\dec}(\Omega)$ such that either all sections of the generalised equation $\Omega_u$ corresponding to the leaf $u$ of $T_{\dec}(\Omega)$  are non-active constant sections or the coordinate group of $\Omega_u$ is a proper quotient of $\GG_{R(\Omega^*)}$.

We summarise the results of this section in the proposition below.
\begin{prop}\label{prop:dectree}
For a {\rm(}constrained{\rm\/)} generalised equation $\Omega=\Omega_{v_0}$, one can effectively construct a finite oriented rooted at $v_0$ tree $T_{\dec}$, $T_{\dec}=T_{\dec}(\Omega_{v_0})$, such that:
\begin{enumerate}
\item The tree $T_0(\Omega)$ is a subtree of the tree $T_{\dec}$.
\item To every vertex $v$ of $T_{\dec}$ we assign a recursive group of automorphisms $A(\Omega_v)$.
\item For any solution $H$ of a generalised equation $\Omega $ there exists a leaf $u$ of the tree $T_{\dec}$, $\tp(u)=1,2$, and a solution $H^{[u]}$ of the generalised equation $\Omega _u$ such that
\begin{itemize}
    \item  $\pi_H= \sigma_0 \pi(v_0,v_1)\sigma_{1} \ldots  \pi(v_{n-1},u)\sigma_n \pi_{H^{[u]}}$, where $\sigma_i\in A(\Omega_{v_i})$;
    \item  if $\tp(u)=2$, then all non-active sections of $\Omega_u$ are constant sections.
\end{itemize}
\end{enumerate}
\end{prop}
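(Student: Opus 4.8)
The plan is to obtain $T_{\dec}$ by grafting finitely many new vertices onto the finite tree $T_0(\Omega)$ produced by Proposition \ref{prop:sum5}, one batch for each final leaf of $T_0$ at which the solution set is not yet trivially described, and then to distribute the automorphisms of $\Aut(\Omega)$, together with the periodic-structure groups $\AA(\cdot)$, among the vertices. Recall that by Proposition \ref{3.4}, for every solution $H$ of $\Omega$ the path $\p(H)$ in $T_0(\Omega)$ terminates at a final leaf $w$ with $\tp(w)\in\{1,2\}$, at a solution $H^{[w]}$ of $\Omega_w$ with $H^{[w]}<_{\Aut(\Omega)}H$, and moreover, if $\tp(w)=2$ and $\Omega_w$ has a non-constant non-active section, then $\widetilde\Omega_w$ is singular or strongly singular with respect to the periodic structure $\P(H^{[w]},P)$ for some period $P$. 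Leaves of the first two kinds — $\tp(w)=1$, where by the construction of $T$ the canonical epimorphism $\pi(v_0,w)$ is proper, and $\tp(w)=2$ with all non-active sections constant — already witness the conclusion and are kept as leaves of $T_{\dec}$; only the final leaves $w$ with $\tp(w)=2$ and a non-constant non-active section need further processing.

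First I would fix such a leaf $w$ and run over the finitely many, effectively constructible periodic structures $\langle\P,R\rangle$ on $\widetilde\Omega_w$. For each $\langle\P,R\rangle$ with respect to which $\widetilde\Omega_w$ is strongly singular, Lemma \ref{lem:23-1ss} supplies a finite family $\{g_i\}\subset\GG_{R(\Omega_w^*)}$, not all trivial, annihilated by every solution of $\Omega_w$ periodic with respect to $P$ and inducing $\langle\P,R\rangle$; for each $\langle\P,R\rangle$ with respect to which $\widetilde\Omega_w$ is singular, Lemma \ref{lem:23-1} supplies finitely many cycles $\cc_1,\dots,\cc_r$ with $h(\cc_j)\ne 1$ in $\GG_{R(\Omega_w^*)}$ such that every such periodic solution has an $\AA(\Omega_w)$-automorphic image killing some $h(\cc_j)$. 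In either case the relevant solutions, respectively their $\AA(\Omega_w)$-automorphic images, factor through one of the proper quotients $\GG_{R(\Omega_w^*)}/\ncl\langle\{g_i\}\rangle$ or $\GG_{R(\Omega_w^*)}/\ncl\langle h(\cc_j)\rangle$. Using the reduction of Section \ref{sec:red} (Lemmas \ref{le:14} and \ref{lem:R1}) I would represent each of these proper quotients, regarded as the coordinate group of a system of equations over $\GG$, by a finite collection of generalised equations over $\FF$, attach the resulting vertices $w'$ as children of $w$, and label the new edges by the corresponding proper epimorphisms. The union of $T_0(\Omega)$ with all these grafts is $T_{\dec}$; it is finite because $T_0$ is finite and, at each of its finitely many final leaves of the relevant type, only finitely many periodic structures, hence finitely many proper quotients, hence finitely many new generalised equations, are attached.

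For the automorphism groups I would set $A(\Omega_{v_0})=\Aut(\Omega)$, take $A(\Omega_v)$ at an interior vertex $v$ of $T_0$ to be the subgroup of automorphisms that $\Aut(\Omega)$ contributes ``at $v$'' (the subgroup through which the $<_{\Aut(\Omega)}$-reductions of types $7$--$10$ and $12$ performed at $v$ factor), put $A(\Omega_w)=\langle\,\AA(\Omega_w):\langle\P,R\rangle\text{ singular on }\widetilde\Omega_w\,\rangle$ at the processed leaves, and $A(\Omega_{w'})=1$ at the new leaves; each of these is recursive, by Definition \ref{defn:Aut}, Lemma \ref{lem:autinvkerrec}, and the finite generation of the groups $\AA(\Omega_w)$. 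The factorization in the third item is then read off the explicit construction of $\p(H)$ in Proposition \ref{3.4}: for $H$ ending at a leaf of the first two kinds, $\pi_H$ is the composition, along $\p(H)$, of the edge epimorphisms interspersed with the automorphism at the root witnessing $H^{[0]}<_{\Aut(\Omega)}H$ and those at the type $7$--$10$ and $12$ vertices witnessing the successive $<$-reductions, followed by $\pi_{H^{[w]}}$, the residual $\GG$-endomorphism coming from the definition of $<$ being accounted for by the choice of the terminal solution; for $H$ ending at a processed leaf $w$ one additionally appends at $w$ the $\AA(\Omega_w)$-automorphism of Lemma \ref{lem:23-1} (trivial in the strongly singular case) and the proper edge $w\to w'$, taking $H^{[w']}$ to be the induced solution. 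The last clause of the third item is then immediate, since the only type $2$ leaves surviving in $T_{\dec}$ are those with all non-active sections constant.

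The main obstacle is twofold. First, one must certify that the finitely many proper quotients grafted onto each processed leaf really do capture every periodic solution — this is exactly where the effective finiteness built into Lemmas \ref{lem:23-1ss} and \ref{lem:23-1}, and into the set of periodic structures on $\widetilde\Omega_w$, is indispensable. Second, and more delicate, one must keep the automorphism bookkeeping coherent: the single automorphism witnessing $H^{[w]}<_{\Aut(\Omega)}H$, and in particular the $\AA(\Omega_w)$-automorphism needed to push a solution into a proper quotient, must be split and recorded at the correct vertices of $\p(H)$, each piece lying inside the recursive group $A(\Omega_v)$ assigned there, while the residual $\GG$-endomorphism inherent in the relation $<$ is carried along and dealt with at the terminal solution.
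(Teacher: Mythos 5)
Your overall architecture (graft finitely many proper-quotient vertices onto the final type~2 leaves of $T_0(\Omega)$ with non-constant non-active sections, using the finitely many singular/strongly singular periodic structures and Lemmas \ref{lem:23-1ss} and \ref{lem:23-1}, and distribute $\Aut(\Omega)$ and the groups $\AA(\Omega_w)$ over the vertices) matches the paper. But the step where you pass to the abstract quotients $\GG_{R(\Omega_w^*)}/\ncl\langle\{g_i\}\rangle$, resp.\ $\GG_{R(\Omega_w^*)}/\ncl\langle h(\cc_j)\rangle$, and then ``represent each of these proper quotients \dots\ by a finite collection of generalised equations over $\FF$'' via Lemmas \ref{le:14} and \ref{lem:R1} has a genuine gap, and it is exactly the point the paper's proof is built around. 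The homomorphisms produced by Lemma \ref{le:14} (and Lemma \ref{lem:R1}) from a coordinate group to the coordinate groups of the generalised equations associated to partition tables are in general neither injective nor surjective; composing the proper quotient map with such a homomorphism therefore labels your new edges $w\to w'$ by maps that need not be epimorphisms at all, and the leaf groups $\GG_{R(\Omega_{w'}^*)}$ need not be proper quotients of $\GG_{R(\Omega_w^*)}$. This destroys the only reason $T_{\dec}$ is useful: the construction of $T_{\ext}$ in Section \ref{se:5.5'} terminates because every infinite branch would have to cross infinitely many \emph{proper epimorphisms}, contradicting equational Noetherianity. The paper resolves this as follows: for strongly singular structures of type (b) no new equations are added at all -- the elements $g_q=[\varphi_q(h_{i,q}),h_{j,q}]$ are killed by enlarging the constraint relation $\Re_{\Upsilon_v}$, so the map is visibly a proper epimorphism; for strongly singular type (a) and for singular structures one restricts to $\GG$-partition tables $\TB$ satisfying condition (\ref{3.31}) ($z_1,\dots,z_p\in\langle V_{ij}\rangle$), which forces surjectivity of $\pi_{\Upsilon_\TB}$, and one proves, using part (\ref{it:23-13}) of Lemma \ref{lem:23-1} (components geodesic, and in each product either no cancellation or complete cancellation), that the cancellation scheme of the relevant (automorphically adjusted) periodic solution does satisfy (\ref{3.31}) and that the resulting generalised equations may be taken directly over $\FF$, bypassing the $\Tr$-to-$\FF$ reduction of Lemma \ref{lem:R1}. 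None of this is automatic, and your proposal does not supply a substitute argument.

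A related point you gloss over: in the singular case the adjusted solution $H^+$ is a solution of the system $\Omega_v^*$ over $\GG$ but in general \emph{not} of the constrained generalised equation $\Omega_v$ (the worked example in Section \ref{sec:explsing} exhibits a product $H_1^+H_2^+H_3^+H_4^+$ that is no longer geodesic as written), so ``taking $H^{[w']}$ to be the induced solution'' needs justification. The paper handles this by not re-processing the whole quotient: it glues $\check{\Upsilon}_v$ (the items and bases of $\widetilde{\Upsilon}_v$ outside $\P$) to a generalised equation $\Upsilon_\TB$ for the small system $\bS=\{h(\cc_\mu)=1\ (\mu\in\P),\ h(\cc)=1\}$, identifies the common items via equations $h_k=P_{h_k}(h',\cA)$, extends $\Re$ appropriately, and only then verifies, again via Lemma \ref{lem:23-1}(\ref{it:23-13}), that $H^+$ genuinely induces a solution of the new constrained generalised equation and that $\pi_{v,\cc,\TB}$ is a proper epimorphism. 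Your bookkeeping of the automorphism groups (splitting $\Aut(\Omega)$ over interior vertices instead of concentrating it at the root as the paper does) is a harmless deviation, but the missing surjectivity/geodesicity analysis above is essential and must be added for the proof to go through.
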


To obtain $T_{\dec}(\Omega )$ we add some edges labelled by proper epimorphisms (to be described below) to the final leaves of type $2$ of $T_0(\Omega)$ so that the corresponding generalised equations contain non-constant non-active sections.

The idea behind the construction of this tree is the following. Lemmas \ref{lem:23-1} and \ref{lem:23-1ss} state that given a generalised equation $\Omega$ singular or strongly singular with respect to a periodic structure $\langle\P, R\rangle$, there exist finitely many proper quotients of the coordinate group $\GG_{R(\Omega^*)}$ such that for every $P$-periodic solution $H$ of the generalised equation $\Omega$ such that $\P(H,P)=\langle\P, R\rangle$, an $\AA(\Omega)$-automorphic image $H^+$ of $H$ is a solution of a proper equation. In other words, the $\GG$-homomorphism $\pi_{H^+}: \GG_{R(\Omega^*)} \to \GG$ factors through one of the finitely many proper quotients of $\GG_{R(\Omega^*)}$.

Recall that given a coordinate group of a system of equations, the homomorphisms $\pi$ from this coordinate group to the coordinate groups of constrained  generalised equations determined by partition tables (see Equation (\ref{eq:hompt}) and discussion in Section \ref{sec:relcoordgrgrtr}), are, in general, just homomorphisms (neither injective nor surjective). Our goal here is to prove that in fact, the solution $H^+$ is a solution of one of the finitely many proper generalised equations such that the homomorphism from the coordinate group of the system of equations to the coordinate group of the generalised equation is an epimorphism.

\medskip

Let $v$ be a final leaf of type $2$ of $T_0(\Omega)$ such that $\Omega_v$ contains non-constant non-active sections. For every periodic structure $\langle \P, R\rangle$ on $\Omega_v$ such that $\Omega_v$ is either singular or strongly singular with respect to $\langle \P, R\rangle$, we now describe the vertices that we introduce in the tree $T_{\dec}$ (these vertices will be leaves of $T_{\dec}$), the generalised equations associated to these new vertices and the epimorphisms corresponding to the edges that join the leaf $v$ with these vertices. We then prove that for every $P$-periodic solution $H$ of the generalised equation $\Omega_v$ such that $\Omega_v$ is singular or strongly singular with respect to $\P(H,P)$, there exists an $\AA(\Omega)$-automorphic image $H^+$ of $H$ so that $H^+$ is a solution of one of the generalised equations associated to the new vertices.

\bigskip

We first consider the case when $\Omega_v$ is strongly singular of type (b) with respect to the periodic structure $\langle \P, R\rangle$ (see Definition \ref{defn:singreg}).

In Lemma \ref{lem:23-1ss}, we proved that every $P$-periodic solution $H$ of a generalised equation $\Omega$ strongly singular (of type $(b)$) with respect to a periodic structure $\langle\P, R\rangle$ such that $\P(H,P)=\langle\P, R\rangle$, is a solution of a system of equations $\Omega$ obtained from $\Omega$ by adding new equations. We now explicitly construct a single generalised equation $\Omega_v(\P, R,\{g_i\})$ so that every $P$-periodic solution $H$ of a generalised equation $\Omega$ strongly singular (of type $(b)$) with respect to a periodic structure $\langle\P, R\rangle$ such that $\P(H,P)=\langle\P, R\rangle$, is a solution of $\Omega_v(\P, R,\{g_i\})$.

Let $g_q\in \{g_i\}$ be an element of the family $\{g_i\}$ constructed in Lemma \ref{lem:23-1ss}. As shown in the proof of Lemma \ref{lem:23-1ss}, the element $g_q$ is a commutator of the form $[\varphi_q(h_{i,q}),h_{j,q}]$, where $\varphi_q$ is a generator of the automorphism group $\AA(\Upsilon_v)$, see parts (\ref{spl4}), (\ref{spl5}) of Lemma \ref{2.10''} and Definition \ref{defn:AA}. Let $\varphi_q(h_{i,q})=h_{i_1,q}\cdots h_{i_{k_q},q}$. Define the generalised equation $\Omega_v(\P, R,\{g_i\})=\langle \Upsilon_v(\P, R,\{g_i\}), \Re_{\Upsilon_v(\P, R,\{g_i\})}\rangle$ as follows. Set $\Upsilon_v(\P, R,\{g_i\})=\Upsilon_v$ and define $\Re_{\Upsilon_v(\P, R,\{g_i\})}$ to be the minimal subset of $h\times h$ that contains $\Re_{\Upsilon_v}$, $\{(h_{i_l,q}, h_{j,q})\mid l=1,\dots, k_q\}$ for all $q$, is symmetric and satisfies the condition ($\star$) from Definition \ref{defn:Re}.

The natural homomorphism $\pi_{v,\{g_i\}}:\GG_{R(\Omega^*_v)}\to \GG_{R({\Omega_v(\P, R,\{g_i\})}^\ast)}$ is surjective. Moreover, by construction, $\pi_{v,\{g_i\}}(h(g_q))=1$ for all $g_q\in \{g_i\}$, and therefore $\pi_{v,\{g_i\}}$ is a proper epimorphism.

We introduce a new vertex $w$ and associate the generalised equation $\Omega_w=\Omega_v(\P, R,\{g_i\})$ to it. The vertex  $w$ is a leaf of the tree $T_{\dec}$ and is joined to the vertex $v$ by an edge $v\to w$ labelled by the epimorphism $\pi(v,w)=\pi_{v,\{g_i\}}$.

We now show that every $P$-periodic solution $H$ of $\Omega_v$ such that $\Omega_v$ is strongly singular of type (b) with respect to $\P(H,P)$, is a solution of the generalised equation $\Omega_w$. Obviously, $H$ is a solution of $\Upsilon_w$. We are left to prove that if $\Re_{\Upsilon_w}(h_i,h_j)$, then $H_i\lra H_j$.

In the proof of  Lemma \ref{lem:23-1ss}, on one hand we have shown that  $H_{i,q}\lra H_{j,q}$ and that $\az(H_{i,q})=\az(P)$, thus $\az(P)\lra H_{j,q}$. On the other hand, we have shown that for every generator $\varphi_q$ of the automorphism group $\AA(\Omega_v)$, the image $\varphi_q(h_{i,q})$ is a word $h_{i_1,q}\cdots h_{i_k,q}$ in variables $\{h_{i_l,q}\mid h_{i_l,q}\in \sigma, \sigma \in \P, l=1,\dots, k\}$. Since for every solution $H$ on has $\az(H_{i_l,q})\subseteq \az(P)$ and $\az(P)\lra H_{j,q}$, we get that $H_{i_l,q}\lra H_{j,q}$ and the statement follows.

\bigskip

Let us consider the case when $\Omega_v$ is strongly singular of type (a) with respect to the periodic structure $\langle \P, R\rangle$ (see Definition \ref{defn:singreg}).

In Lemma \ref{lem:23-1ss}, we proved that every $P$-periodic solution $H$ of a generalised equation $\Omega$ strongly singular (of type $(a)$) with respect to a periodic structure $\langle\P, R\rangle$ such that $\P(H,P)=\langle\P, R\rangle$, is a solution of the proper system of equations $\{\Omega_v \cup\{g_i\}\}^*$. Notice that, a priori, $H$ is a solution of the system of equations $\{\Omega_v \cup\{g_i\}\}^*$ over $\GG$. Our goal is to construct a generalised equation ${\Omega_v(\P, R,\{g_i\},\TB)}$ in such a way that $H$ is a solution of the generalised equation ${\Omega_v(\P, R,\{g_i\},\TB)}$ and the homomorphism from $\GG_{R({\Omega_v^\ast)}}$ to $\GG_{R({\Omega_v(\P,R,\{g_i\},\TB)}^\ast)}$ is a proper epimorphism.

Below we use the notation of Section \ref{sec:redgrmon}. For the  system of equations $\{g_i=1\}$, where the elements $g_i$ are defined in Lemma \ref{lem:23-1ss}, consider the subset $\PT'$ of the set $\PT$ of all $\GG$-partition tables of the system $\{g_i\}$, of the form $(V,\GG\ast F(z_1,\dots,z_p))$ that satisfy the following condition
\begin{equation}\label{3.31}
z_1, \dots, z_p \in \langle V_{i,j} \rangle.
\end{equation}
For any generalised equation $\Upsilon_\TB$, $\TB\in \PT'$, the homomorphism $\pi_{\Upsilon_\TB}:\GG_{R(\{g_i\})} \to \GG_{R(\Upsilon^*_\TB)}$ induced by the map $h_i\mapsto P_{h_i}(h',\cA)$ (see Section \ref{sec:relcoordgrgrtr} for definition) is surjective. Consider the set of generalised equations $\{ \Upsilon_\TB \mid \TB \in \PT' \}$ over the free monoid $\FF$. Note that in general, this set of generalised equations should be considered over $\Tr$.  However, we will show that for $P$-periodic solutions $H$ it suffices to consider the set of generalised equations $\{ \Upsilon_\TB \mid \TB \in \PT' \}$ over $\FF$.

Construct the generalised equation $\Omega_v(\P,R,\{g_i\},\TB)=\langle \Upsilon_v(\P,R,\{g_i\},\TB),\Re_{\Upsilon_{v}(\P,R,\{g_i\},\TB)}\rangle$ as follows. The set of items $\tilde h= h\cup h'$ of $\Omega_{v}(\P,R,\{g_i\},\TB)$ is the disjoint union of the items of $\Upsilon_v$ and $\Upsilon_\TB$; the set of coefficient equations of $\Omega_v(\P,R,\{g_i\},\TB)$ is the disjoint union of the coefficient equations of $\Upsilon_v$ and $\Upsilon_\TB$. The set of basic equations of $\Omega_v(\P,R,\{g_i\},\TB)$ consists of: the basic equations of $\Upsilon_v$, the basic equations of $\Upsilon_\TB$, and basic equations of the form $h_k= P_{h_k}(h',\cA)$, where in the left hand side of this equation $h_k$ is treated as a variable of $\Upsilon_v$ and $P_{h_k}(h',\cA)$ is a label of a section of $\Upsilon_\TB$.

The natural homomorphism $\pi'_{v,\{g_i\},\TB}:\GG_{R(\Upsilon^*_v)}\to \GG_{R({\Upsilon_v(\P, R,\{g_i\},\TB)}^\ast)}$, induced by the map
$$
\varpi: h_i\mapsto P_{h_i}(h',\cA),
$$
is surjective, since $\pi_{\Upsilon_\TB}$ is.

The set of relations $\Re_{\Upsilon_{v}(\P,R,\{g_i\},\TB)}\subseteq \tilde h\times \tilde h$ is defined as follows. Let $\varpi(h_i)=h_{i_1}'\cdots h_{i_k}'$ and $\varpi(h_j)=h_{j_1}'\cdots h_{j_l}'$. If $\Re_{\Upsilon_v}(h_i,h_j)$, then we set $\Re_{\Upsilon_{v}(\P,R,\{g_i\},\TB)}(h_{i_m}',h_{j_n}')$ for all $m=1,\dots, k$ and $n=1,\dots,l$. The set $\Re_{\Upsilon_{v}(\P,R,\{g_i\},\TB)}$ is defined as the minimal subset of $\tilde h\times \tilde h$ that contains the above defined set, the set $\Re_{\Upsilon_v}$, is symmetric and satisfies the condition ($\star$) from Definition \ref{defn:Re}. Note that by condition ($\star$), the set $\Re_{\Upsilon_{v}(\P,R,\{g_i\},\TB)}$ is independent of the choice of the map $\varpi$.

The natural homomorphism $\pi_{v,\{g_i\},\TB}:\GG_{R(\Omega^*_v)}\to \GG_{R({\Omega_v(\P, R,\{g_i\},\TB)}^\ast)}$ is surjective, since $\pi'_{v,\{g_i\},\TB}$ is. Moreover, by construction, $\pi_{v,\{g_i\},\TB}(h(g_q))=1$ for all $g_q\in \{g_i\}$, and therefore $\pi_{v,\{g_i\},\TB}$ is a proper epimorphism.

For every partition table $\TB\in \PT'$ as above, we introduce a new vertex $w_\TB$ and associate the generalised equation $\Omega_{w_\TB}=\Omega_v(\P, R,\{g_i\},\TB)$ to this vertex. The vertex  ${w_\TB}$ is  a leaf of the tree $T_{\dec}$ and is joined to $v$ by an edge $v\to w_\TB$ labelled by the epimorphism $\pi_{v, \{g_i\},\TB}$.

We now show that every $P$-periodic solution $H$ of $\Omega_v$ such that $\Omega_v$ is strongly singular of type (a) with respect to $\P(H,P)$ factors through one of the solutions of generalised equations $\Omega_v(\P,R,\{g_i\},\TB)$ for some choice of $\TB$. By Lemma \ref{2.9}, the solution $H$ is a solution of the  system of equations $\{g_q=[h(\cc_{1,q}), h(\cc_{2,q})]=1\mid g_q\in \{g_i\}\}$. It follows that if we write the system $\{[h(\cc_{1,q}), h(\cc_{2,q})]=1\mid g_q\in \{g_i\}\}$ in the form (\ref{*}), and if $r_1r_2\cdots r_l=1$ is an equation of this system and $H_{j_1},\dots, H_{j_l}$ are the respective components of $H$, then the word $H_{j_1}\cdots H_{j_l}$ is trivial in the free group $F(\cA)$, and in the product of two consecutive subwords $H_{j_r}$ and $H_{j_{r+1}}$ either there is no cancellation, or one of these words cancels completely, that is $H_{j_r}\doteq W\cdot {\left(H_{j_{r+1}}\right)}^{-1}$ or, vice-versa $H_{j_{r+1}}\doteq {\left(H_{j_{r}}\right)}^{-1}\cdot W$. This shows that the $\GG$-partition table of the system $\{[h(\cc_{1,q}), h(\cc_{2,q})]=1\mid g_q\in \{g_i\}\}$ for which the solution $H$ factors through satisfies condition (\ref{3.31}).

\bigskip

Finally, we consider the case when $\Omega_v$ is singular with respect to the periodic structure $\langle \P, R\rangle$ (see Definition \ref{defn:singreg}).

In Lemma \ref{lem:23-1}, we proved that for every $P$-periodic solution $H$ of a generalised equation $\Omega$ singular with respect to a periodic structure $\langle\P, R\rangle$ such that $\P(H,P)=\langle\P, R\rangle$ there exists an $\AA(\Omega)$-automorphic image $H^+$ of $H$ that is a solution of a proper equation. Though $H^+$ is a solution of $\Omega^*$, it may be not a solution of $\Omega$, see the example given in Section \ref{sec:explsing}. We construct the generalised equation ${\Omega_v(\P, R,\cc,\TB)}$ in such a way that $H^+$ induces a solution of ${\Omega_v(\P, R,\cc,\TB)}$ and the homomorphism from $\GG_{R({\Omega_v^\ast)}}$ to $\GG_{R({\Omega_v(\P, R,\cc,\TB)}^\ast)}$ is a proper epimorphism.

More precisely, consider a triple $(\langle \P,R\rangle,\cc,\TB)$, where
\begin{itemize}
    \item $\Omega_v$ is singular with respect to the periodic structure $\langle{\P}, R\rangle$,
    \item  $\cc$ is a cycle in $\Gamma$, $\cc\in\{\cc_1,\dots,\cc_r\}$, where $\{\cc_1,\dots,\cc_r\}$ is the set of cycles from Lemma \ref{lem:23-1},
    \item and $\TB$ is a partition table from the set $\PT'$ defined below.
\end{itemize}
For every such triple, we construct a generalised equation ${\Omega_v(\P, R,\cc,\TB)}$ and a proper epimorphism $\pi_{v,\cc,\TB}:\GG_{R(\Omega^*_v)}\to \GG_{R({\Omega_v(\P, R,\cc,\TB)}^\ast)}$, put an edge $v\to u$, $\Omega_u=\Omega_v(\P, R,\cc,\TB)$ and show that every solution $H^+$ of $\Omega_v$ is a solution of one of the generalised equations $\Omega_v(\P, R,\cc,\TB)$ for some $\cc$ and $\TB$. The vertex $u$ is a leaf of $T_{\dec}(\Omega )$.

The way we proceed is the following. By part (\ref{it:23-13}) of Lemma \ref{lem:23-1}, we know that $H^+$ is a solution of the generalised equation $\{h(\mu)=h(\Delta(\mu))\mid \mu \notin \P\}$. This fact pilots the construction of the generalised equation $\check{\Upsilon}_v$ below. On the other hand, we consider the system of equations $\bS$ over $\GG$ corresponding to the bases from $\P$ and the cycle $\cc$ (see below). Since the solution $H^+$ satisfies statement (\ref{it:23-13}) of Lemma \ref{lem:23-1}, we can prove that the $\GG$-partition table $\TB$ associated to $H^+$ satisfies property (\ref{3.31}). For the partition tables $\TB$ that satisfy condition (\ref{3.31}) and the corresponding generalised equations $\Upsilon_\TB$, the homomorphism $\pi_{\Upsilon_\TB}:\GG_{R(\bS)} \to \GG_{R(\Upsilon^*_\TB)}$ is surjective. We construct the generalised equation ${\Omega_v(\P, R,\cc,\TB)}$ from $\Upsilon_\TB$ and $\check{\Upsilon}_v$ by identifying the items they have in common. We refer the reader to Section \ref{sec:expldec} for an example of the constructions given below.

We now formalise the construction of $\GG_{R({\Omega_v(\P, R,\cc,\TB)}^\ast)}$ and $\pi_{v,\cc,\TB}$.

Let $\cc$ be as above and suppose that $h(\cc)=h_{i_1}\cdots h_{i_k}$. Consider the system of equations $\bS \subset \GG[h]$ over $\GG$:
$$
\bS = \left\{ h(\cc_\mu)=1, h(\cc)=h_{i_1}\cdots h_{i_k}=1 \mid \mu \in \P \right\}.
$$

Below we use the notation of Section \ref{sec:redgrmon}. For the system of equations $\bS$ consider the subset $\PT'$ of the set $\PT(\bS)$ of all $\GG$-partition tables of $\bS$ of the form $(V,\GG\ast F(z_1,\dots,z_p))$ that satisfy condition (\ref{3.31}). Consider the set of generalised equations $\{ \Upsilon_\TB \mid \TB \in \PT' \}$ over the free monoid $\FF$. Note that, in general, this set of generalised equations should be considered over $\Tr$. However, we will show that for the minimal solution $H^+$ it suffices to consider the set of generalised equations $\{ \Upsilon_\TB \mid \TB \in \PT' \}$ over $\FF$. For any generalised equation $\Upsilon_\TB$, $\TB\in \PT'$, since the partition table $\TB$ satisfies condition (\ref{3.31}), it follows that the homomorphism $\pi_{\Upsilon_\TB}:\GG_{R(\bS)} \to \GG_{R(\Upsilon^*_\TB)}$ induced by the map $h_i\mapsto P_{h_i}(h',\cA)$ (see Section \ref{sec:relcoordgrgrtr} for definition) is surjective.

Let the generalised equation $\check{\Upsilon}_v$ be obtained from $\widetilde{\Upsilon}_v$ by removing all bases and items that belong to $\P$.

Construct the generalised equation $\Omega_v(\P,R,\cc,\TB)=\langle \Upsilon_v(\P,R,\cc,\TB),\Re_{\Upsilon_{v}(\P,R,\cc,\TB)}\rangle$ as follows. The set of items $\tilde h$ of $\Omega_{v}(\P,R,\cc,\TB)$ is the disjoint union of the items of $\check{\Upsilon}_v$ and $\Upsilon_\TB$; the set of coefficient equations of $\Omega_v(\P,R,\cc,\TB)$ is the disjoint union of the coefficient equations of $\check{\Upsilon}_v$ and $\Upsilon_\TB$. The set of basic equations of $\Omega_v(\P,R,\cc,\TB)$ consists of: the basic equations of $\check{\Upsilon}_v$, the basic equation of $\Upsilon_\TB$, and basic equations of the form $h_k= P_{h_k}(h',\cA)$, $h_k \notin \P$, where in the left hand side of this equation $h_k$ is treated as a variable of $\check{\Upsilon}_v$ and $P_{h_k}(h',\cA)$, $h_k \notin \P$ is a label of a section of $\Upsilon_\TB$.

The natural homomorphism $\pi'_{v,\cc,\TB}:\GG_{R(\Upsilon^*_v)}\to \GG_{R({\Upsilon_v(\P, R,\cc,\TB)}^\ast)}$, induced by a map $\varpi$:
$$
h_i\mapsto \left\{
\begin{array}{ll}
P_{h_i}(h',\cA),&\hbox{ when $h_i\in \P$,} \\
h_i, & \hbox{ otherwise};
\end{array}\right.
$$
is surjective, since $\pi_{\Upsilon_\TB}$ is.

The relation $\Re_{\Upsilon_{v}(\P,R,\cc,\TB)}\subseteq \tilde h\times \tilde h$ is defined as follows. Let $\varpi(h_i)=h_{i_1}'\cdots h_{i_k}'$ and $\varpi(h_j)=h_{j_1}'\cdots h_{j_l}'$. If $\Re_{\Upsilon_v}(h_i,h_j)$, then we set $\Re_{\Upsilon_{v}(\P,R,\cc,\TB)}(h_{i_m}',h_{j_n}')$ for all $m=1,\dots, k$ and $n=1,\dots,l$. The set $\Re_{\Upsilon_{v}(\P,R,\cc,\TB)}$ is defined as the minimal subset of $\tilde h\times \tilde h$ that contains the above defined set, the set $\Re_{\check{\Upsilon}_v}$ (the restriction of the relation $\Re_{\Upsilon_v}$ onto the set of items of $\check{\Upsilon}_v$), is symmetric and satisfies the condition ($\star$) from Definition \ref{defn:Re}. Note that, by condition ($\star$), the set $\Re_{\Upsilon_{v}(\P,R,\cc,\TB)}$ is independent of the choice of the map $\varpi$.

The natural homomorphism $\pi_{v,\cc,\TB}:\GG_{R(\Omega^*_v)}\to \GG_{R({\Omega_v(\P, R,\cc,\TB)}^\ast)}$ is surjective, since $\pi'_{v,\cc,\TB}$ is. Moreover, by construction, $\pi_{v,\cc,\TB}(h(\cc))=1$, and therefore $\pi_{v,\cc,\TB}$ is a proper epimorphism.

We now show that for every $P$-periodic solution $H$ of $\Omega_v$ such that $\Omega_v$ is singular with respect to the periodic structure $\P(H,P)$, there exists an $\AA(\Omega_v)$-automorphic image $H^+$ of $H$ such that $H^+$ factors through  one of the solutions of the generalised equations $\Omega_v(\P,R,\cc,\TB)$, for some choice of $\cc$ and $\TB$. Indeed, let $H^+$  be the solution constructed in Lemma \ref{lem:23-1} and $\cc$ be one of the cycles from Lemma \ref{lem:23-1} for which $H^+(\cc)=1$. The solution $H^+$ is a solution of the system $\bS$. All equations of $\bS$ have the form $h(\cc')=1$, where $\cc'$ is a cycle in the graph of the periodic structure $\P(H,P)$. From condition (\ref{it:23-13}) of Lemma \ref{lem:23-1}, it follows that, on one hand, the word $H_k^+$, $1\le k\le \rho$, is geodesic, and, on the other hand, that if $r_1r_2\cdots r_l=1$ is an equation  of the system $\bS$ and $H_{j_1}^+,\dots, H_{j_l}^+$ are the respective components of $H^+$, then the word $H_{j_1}^+\cdots H_{j_l}^+$ is trivial in the free group $F(\cA)$. Furthermore, in the product of two consecutive subwords $H_{j_r}^+$ and $H_{j_{r+1}}^+$ either there is no cancellation, or one of these words cancels completely, that is either $H_{j_r}^+\doteq W\cdot {\left(H_{j_{r+1}}^+\right)}^{-1}$ or $H_{j_{r+1}}\doteq {\left(H_{j_{r}}\right)}^{-1}\cdot W$. Let $\TB$ be the partition table corresponding to the cancellation scheme of the system $\bS$. The argument above shows that $\TB$ satisfies condition (\ref{3.31}).

Let $v_0=v$,  $\Omega_{v_1}=\Omega_v(\P, R,\cc,\TB)$.

The solution $H^+$ induces a solution $H^+_\bS$ of the system $\bS$. By Lemma \ref{lem:R1}, there exists a solution $H^+_{\Upsilon_\TB}$ of the generalised equation $\Upsilon_\TB$ such that the following diagram
$$
\xymatrix@C3em{
\GG_{R(\bS)} \ar[rd]_{\pi_{H^+_{\bS}}}  \ar[rr]  &     &  \GG_{R(\Upsilon_\TB^\ast)}  \ar[ld]^{\pi_{H^+_{\Upsilon_\TB}}} \\
                                                &  \GG &
}
$$
is commutative.

As $H^+$ satisfies condition (\ref{it:23-13}) of Lemma \ref{lem:23-1}, and the $H_i$'s are geodesic, so the $H^+_i$'s are geodesic. In particular, $H^+_{\Upsilon_\TB}$ is a solution of the generalised equation $\Upsilon_\TB$ over the free monoid $\FF$.

On the other hand, using again part (\ref{it:23-13}) of Lemma \ref{lem:23-1}, if we remove the components $\{H^+_k\mid h_k\in \P\}$ of the solution $H^+$, we get a solution $\check{H}^+$ of the generalised equation $\check{\Upsilon}_v$.

Combining the solutions $\check{H}^+$ and $H^+_{\Upsilon_\TB}$ we get a solution $H^{(v_1)}$ of the generalised equation $\Upsilon_v(\P, R,\cc,\TB)$.  By part (\ref{it:23-13}) of Lemma \ref{lem:23-1}, the solution $H^+$ satisfies the commutation constraints from $\Re_{\Upsilon_v}$. Therefore, from the construction it follows that $H^{(v_1)}$ is a solution of $\Omega_v(\P, R,\cc,\TB)$ and
$$
\pi_{H^{+}}=\pi(v_0,v_1) \pi_{H^{(v_1)}}.
$$
We thereby have shown that for every $P$-periodic solution $H$ of $\Omega_v$ such that $\Omega_v$ is singular with respect to the periodic structure $\P(H,P)$, there exists an $\AA(\Omega_v)$-automorphic image $H^+$ of $H$ such that $H^+$ factors through one of the solutions of the generalised equations $\Omega_v(\P,R,\cc,\TB)$.

\bigskip

To the root vertex $v_0$ of the decomposition tree  $T_{\dec(\Omega)}$ we associate the group of automorphisms $\Aut(\Omega)$ of the coordinate group $\GG_{R(\Omega_{v_0})}$, see Definition \ref{defn:Aut}. To the vertices $v$ such that $v$ is a leaf of $T_0(\Omega)$ but not of $T_{\dec}(\Omega)$ (those vertices to which we added edges), we associate the group of automorphisms generated by the groups $\AA(\Omega_v)$, see Definition \ref{defn:AA}, corresponding to all periodic structures on $\Omega_v$ with respect to which $\Omega_v$ is singular or strongly singular. To all the other vertices of $T_{\dec}(\Omega)$ we associate the trivial group of automorphisms. We denote the automorphism group associated to a vertex $v$ of the tree $T_{\dec}(\Omega)$ by \glossary{name={$A(\Omega_v)$}, description={the automorphism group associated to a vertex $v$ of the trees $T_{\dec}(\Omega)$, $T_{\ext}(\Omega)$ and $T_{\sol}(\Omega)$}, sort=A}$A(\Omega_v)$.

Naturally, we call leaves $u$ such that the paths $\p(H)$ associated to solutions $H$ of $\Omega$ end in $u$, \index{leaf!final of the tree $T_{\dec}$}\emph{final leaves of $T_{\dec}(\Omega)$}.

\subsection{Example} \label{sec:expldec}
Let $\Omega$ be the generalised equation shown on Figure \ref{quadratic}:
$$
h_1h_2h_3h_4=h_4h_5h_6h_7;\  h_1=h_5;\  h_3=h_7;\  h_2=h_8;\  h_6=h_8;\  h_8=a.
$$
Consider the periodic structure $\langle \P, R\rangle$ on $\Omega$ defined in Section \ref{sec:explperstr}. We have shown in the example given in Section \ref{sec:explperstr} that this periodic structure is singular.

In the example given in Section \ref{sec:explsing}, we considered the solution $H$:
$$
\begin{array}{llll}
H_1=(bac)^2b; &  H_3=(cba)^2c; &H_5=(bac)^2b;&  H_7=(cba)^2c;\\
H_2=a; &H_4=(bac)^6;&H_6=a;&H_8=a.
\end{array}
$$

In Section \ref{sec:explsing}, for the solution $H$ we constructed its automorphic image $H^+$ that  satisfies conditions (\ref{it:23-12}) and (\ref{it:23-13}) of Lemma \ref{lem:23-1}:
$$
\begin{array}{llll}
H_1^+=(bac)^2b; &  H_3^+=a^{-1}b^{-1}(bac)^{-2}; &H_5^+=(bac)^2b;&  H_7^+=a^{-1}b^{-1}(bac)^{-2};\\
H_2^+=a; &H_4^+=bac;&H_6^+=a;&H_8^+=a.
\end{array}
$$

Obviously, $H^+$ is a solution of $\Omega^*$.  Notice, however, that $H^+$ is not a solution of $\Omega$. Indeed, for the equation
$h_1h_2h_3h_4=h_4h_5h_6h_7$, the word $H_1^+H_2^+H_3^+H_4^+$ is not geodesic as written.

We construct the system of equations $\bS$ over $\GG$. The system $\bS$ consists of all the equations of $\Omega$ that correspond to the bases that belong to the periodic structure $\langle \P, R\rangle$, see Example \ref{sec:explperstr}, and the equation $h(\cc_{e_7})=h_1h_2h_7=1$:
$$
\bS=\{h_1h_2h_3h_4=h_4h_5h_6h_7, \ h_1=h_5,\  h_3=h_7, \ h_1h_2h_7=1\}.
$$

In the example given in  Section \ref{sec:explsing}, we showed that $H^+(\cc_{e_7})=1$ and therefore, $H^+$ is a solution of $\bS$.  The cancellation scheme for the solution $H^+$ is shown on Figure \ref{fig:perstrdiagr}.

\begin{figure}[!h]
  \centering
   \includegraphics[keepaspectratio,width=5in]{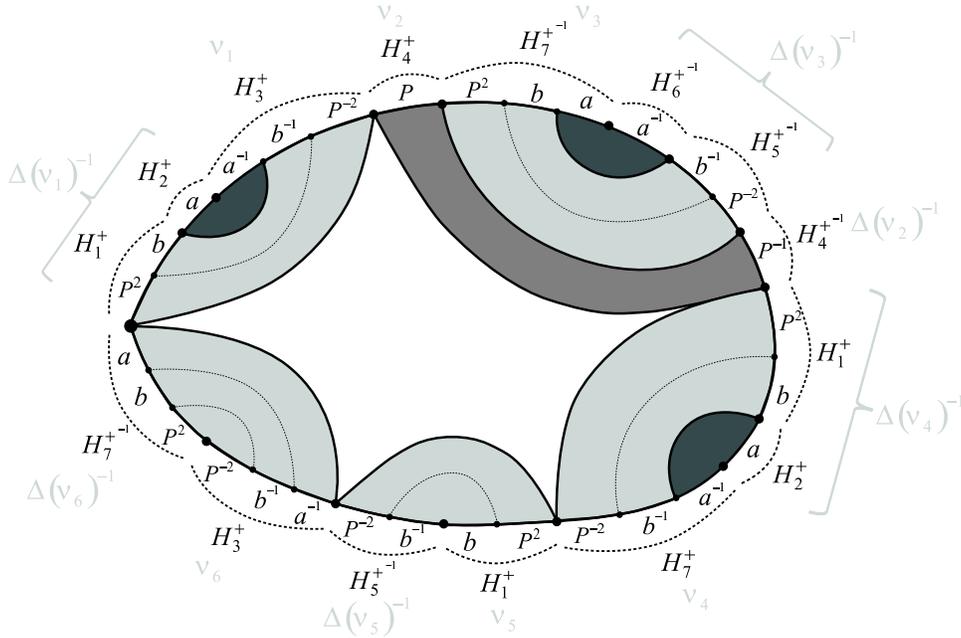}
\caption{The cancellation scheme of $H^+$} \label{fig:perstrdiagr}
\end{figure}

Notice that the partition table corresponding to the cancellation scheme shown on Figure \ref{fig:perstrdiagr} satisfies condition (\ref{3.31}).

We then construct the generalised equation $\Upsilon_\TB$ associated to the system $\bS$ and the generalised equation $\check{\Upsilon}_v$ obtained from $\widetilde{\Upsilon}_v$ by removing all bases and items that belong to $\P$. Using these two equations we construct $\Omega(\P,R,\cc,\TB)$ as shown on Figure \ref{fig:example2}.

We would like to draw the reader's attention to the fact that, though (for simplicity) the generalised equation $\Upsilon_\TB$ shown on Figure \ref{fig:example2} has not been constructed using the procedure described in Section \ref{sec:geneqT}, but the constructed generalised equation is $\approx$-equivalent to the one described there.

\begin{figure}[!h]
  \centering
   \includegraphics[keepaspectratio,width=5in]{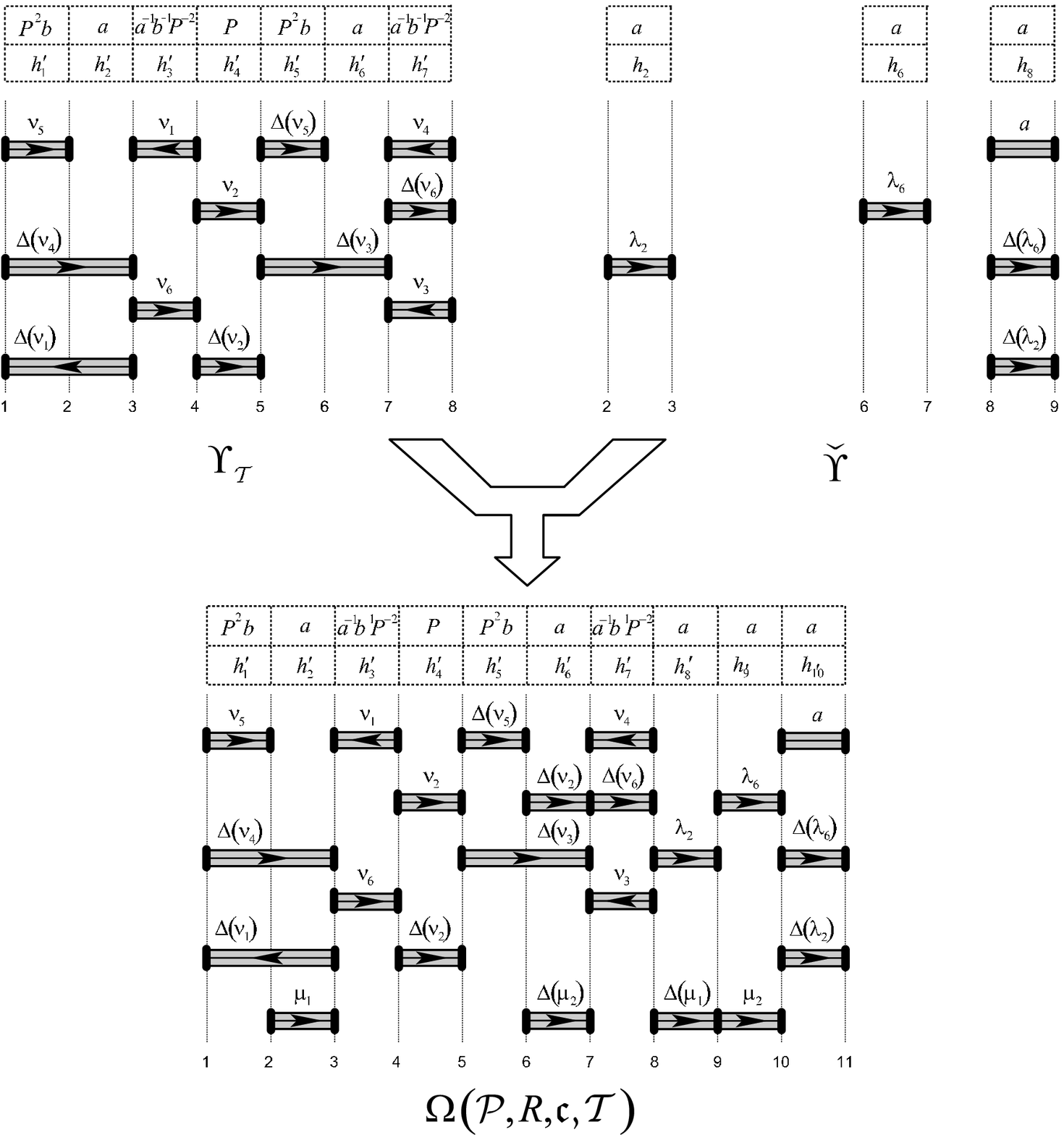}
\caption{The generalised equations $\Upsilon_\TB$, $\check{\Upsilon}$ and $\Omega(\P,R,\cc,\TB)$} \label{fig:example2}
\end{figure}

\subsection{The extension tree $T_{\ext}(\Omega)$}\label{se:5.5'}

Recall that the coordinate group $\GG_{R(\Omega_v^\ast)}$ associated to  a final leaf $v$  of $T_{\dec}(\Omega)$ is either a proper epimorphic image of $\GG_{R(\Omega^\ast)}$ or all the sections of the corresponding generalised equation $\Omega_v$ are non-active constant sections. Since partially commutative groups are equationally Noetherian and thus any sequence of proper epimorphisms of coordinate groups is finite, an inductive argument for those leaves of $T_{\dec}(\Omega)$ that are proper epimorphic image of $\GG_{R(\Omega^\ast)}$ shows that we can construct a tree \glossary{name={$T_{\ext}(\Omega)$}, description={the extension tree of $\Omega$}, sort=T}$T_{\ext}$ with the property that for every leaf $v$ of $T_{\ext}$ all the sections of the generalised equation $\Omega_v$ are non-active constant sections.

We summarise the results of this section in the proposition below.
\begin{prop}\label{prop:exttree}
For a {\rm(}constrained{\rm\/)} generalised equation $\Omega=\Omega_{v_0}$, one can effectively construct a finite oriented rooted at $v_0$ tree $T_{\ext}$, $T_{\ext}=T_{\ext}(\Omega_{v_0})$, such that:
\begin{enumerate}
\item The tree $T_{\dec}(\Omega)$ is a subtree of the tree $T_{\ext}$.
\item To every vertex $v$ of $T_{\ext}$ we assign a recursive group of automorphisms $A(\Omega_v)$.
\item For any solution $H$ of a generalised equation $\Omega $ there exists a leaf $u$ of the tree $T_{\ext}$, $\tp(u)=1,2$, and a solution $H^{[u]}$ of the generalised equation $\Omega _u$ such that
\begin{itemize}
    \item  $\pi_H= \sigma_0 \pi(v_0,v_1)\sigma_{1} \ldots  \pi(v_{n-1},u)\sigma_n \pi_{H^{[u]}}$, where $\sigma_i\in A(\Omega_{v_i})$;
    \item  the sections of $\Omega_u$ are non-active constant sections.
\end{itemize}
\end{enumerate}
\end{prop}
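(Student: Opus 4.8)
The plan is to construct $T_{\ext}(\Omega)$ by iterating the construction of $T_{\dec}$ from the previous section, stopping along a branch as soon as the generalised equation attached to a leaf has only non-active constant sections. Recall from Proposition~\ref{prop:dectree} that every final leaf $v$ of $T_{\dec}(\Omega)$ is of one of two kinds: either $\tp(v)=2$ and all non-active sections of $\Omega_v$ are constant (so, since a vertex of type $2$ has no active sections, \emph{all} sections of $\Omega_v$ are non-active constant sections), or the canonical epimorphism $\GG_{R(\Omega^\ast)}\to\GG_{R(\Omega_v^\ast)}$ is proper. I would call a leaf of the first kind \emph{terminal}. For each non-terminal final leaf $v$ of $T_{\dec}(\Omega)$ the group $\GG_{R(\Omega_v^\ast)}$ is a proper $\GG$-epimorphic image of $\GG_{R(\Omega^\ast)}$, and I would attach to $v$ the tree $T_{\ext}(\Omega_v)$, defined recursively by the same procedure. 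The tree $T_{\ext}(\Omega)$ is the result of carrying out all these attachments.

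The crucial point is that this recursion terminates, so that $T_{\ext}(\Omega)$ is finite. Each $T_{\dec}(\Omega')$ is finite and effectively constructible by Proposition~\ref{prop:dectree}, and at every leaf we attach finitely many further finite trees, so the tree built by the recursion is locally finite. Suppose it were infinite. By K\"onig's lemma it would contain an infinite branch; since every $T_{\dec}$-block along it is finite, this branch would pass through infinitely many attachment points $v_1,v_2,\dots$, and the composition of the corresponding canonical epimorphisms would give an infinite chain of proper $\GG$-epimorphisms
$$
\GG_{R(\Omega^\ast)}=\GG_{R(\Omega_{v_0}^\ast)}\twoheadrightarrow \GG_{R(\Omega_{v_1}^\ast)}\twoheadrightarrow \GG_{R(\Omega_{v_2}^\ast)}\twoheadrightarrow\cdots .
$$
Since every $\GG_{R(\Omega_{v_i}^\ast)}$ is a $\GG$-quotient of $\GG_{R(\Omega^\ast)}=\factor{\GG[X]}{R(\Omega^\ast)}$, where $X$ is the finite variable set of $\Omega$, this chain corresponds to a strictly ascending chain of $\GG$-radicals of $\GG[X]$ above $R(\Omega^\ast)$. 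As partially commutative groups are equationally Noetherian, the Zariski topology on $\GG^{|X|}$ is Noetherian, hence the radicals of $\GG[X]$ satisfy the ascending chain condition (Lemma~\ref{lem:rad}); this contradiction shows that the recursion terminates and $T_{\ext}(\Omega)$ is finite. By construction it contains $T_{\dec}(\Omega)$ as a subtree, and every final leaf of $T_{\ext}(\Omega)$ is terminal, so the generalised equation attached to it has only non-active constant sections.

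To each vertex $v$ of $T_{\ext}(\Omega)$ I would assign the automorphism group $A(\Omega_v)$ dictated by the unique $T_{\dec}$-block containing $v$: if $v$ is an internal vertex of a block $T_{\dec}(\Omega')$ it inherits the group assigned to it there, and if $v$ is the root of a block $T_{\dec}(\Omega')$ (an attachment point) it is assigned the group $\Aut(\Omega')$ from that block. As $T_{\ext}(\Omega)$ consists of finitely many blocks, each carrying recursive automorphism groups by Proposition~\ref{prop:dectree}, every $A(\Omega_v)$ is recursive.

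Finally, for the factorisation of solutions I would argue by induction on the number of attachment levels of $T_{\ext}(\Omega)$, which is finite by the second paragraph. Given a solution $H$ of $\Omega$, Proposition~\ref{prop:dectree} produces a final leaf $v'$ of $T_{\dec}(\Omega)$, a solution $H^{[v']}$ of $\Omega_{v'}$, and a factorisation $\pi_H=\sigma_0\,\pi(v_0,v_1)\,\sigma_1\cdots\pi(v_{n-1},v')\,\sigma_n\,\pi_{H^{[v']}}$ with $\sigma_i\in A(\Omega_{v_i})$. If $v'$ is terminal we are done. Otherwise $v'$ is an attachment point, and applying the inductive hypothesis to the solution $H^{[v']}$ of $\Omega_{v'}$ yields a final leaf $u$ of $T_{\ext}(\Omega_{v'})$ (hence of $T_{\ext}(\Omega)$), a solution $H^{[u]}$ of $\Omega_u$ whose sections are all non-active constant sections, and a factorisation of $\pi_{H^{[v']}}$ of the required form through the edges and automorphism groups of $T_{\ext}(\Omega_{v'})$; concatenating the two factorisations gives the claim. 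The main obstacle is the termination step: one must check that all coordinate groups occurring along a branch are $\GG$-quotients of one fixed finitely generated $\GG$-group, so that equational Noetherianity of $\GG$ applies, and that local finiteness together with K\"onig's lemma upgrades ``no infinite chain of proper epimorphisms'' to an actual finite bound on the depth of $T_{\ext}(\Omega)$.
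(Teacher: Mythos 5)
Your proposal is correct and follows essentially the same route as the paper: iterate the $T_{\dec}$-construction at non-terminal leaves (the paper's leaf-extensions $L_v$), deduce finiteness from K\"onig's lemma together with the fact that an infinite branch would force an infinite chain of proper epimorphisms, contradicting equational Noetherianity of $\GG$, and obtain the factorisation of solutions by applying Proposition \ref{prop:dectree} block by block. Your extra remark reducing the chain condition to an ascending chain of radicals in $\GG[X]$ is just a spelled-out version of the paper's one-line appeal to equational Noetherianity.
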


We define a new transformation $L_v$, which we call a \emph{leaf-extension of the tree $T_{\dec}(\Omega)$ at the leaf $v$} in the following way. If there are active sections in the generalised equation $\Omega_v$, we take the union of two trees $T_{\dec}(\Omega)$ and $T_{\dec}(\Omega_v)$ and identify the leaf $v$ of $T_{\dec}(\Omega)$ with the root $v$ of the tree $T_{\dec}(\Omega_v)$, i.e. we extend the tree $T_{\dec}(\Omega)$ by gluing the tree $T_{\dec}(\Omega_v)$ to the vertex $v$.  If all the sections of the generalised equation $\Omega_v$ are non-active, then the vertex $v$ is a leaf and $T_{\dec}(\Omega_v)$ consists of a single vertex, namely $v$. We call such a vertex $v$ \index{terminal vertex}\emph{terminal}.

We use induction to construct the extension tree $T_{\ext}(\Omega)$. Let $v$ be a non-terminal leaf of $T^{(0)} = T_{\dec}(\Omega)$. Apply the transformation $L_v$ to obtain a new tree $T^{(1)} = L_v(T_{\dec}(\Omega))$. If in $T^{(1)}$ there exists a non-terminal leaf $v_1$, we apply the transformation $L_{v_1}$, and so on. By induction we construct a strictly increasing sequence of finite trees
\begin{equation} \label{eq:5.3.1}
T^{(0)} \subset  T^{(1)} \subset \ldots \subset T^{(i)}  \subset \ldots
\end{equation}
Sequence (\ref{eq:5.3.1}) is finite. Indeed, assume the contrary, i.e. the sequence is infinite and hence the union $T^{(\infty)}$ of this sequence is an infinite tree locally finite tree. By  K\"{o}nig's lemma, $T^{(\infty)}$ has an infinite branch. Observe that along any infinite branch in $T^{(\infty)}$ one has to encounter infinitely many proper epimorphisms. This derives a contradiction with the fact that $\GG$ is equationally Noetherian.

Denote by $T_{\ext}(\Omega)$ the last term of the sequence (\ref{eq:5.3.1}).

The groups of automorphisms \glossary{name={$A(\Omega_v)$}, description={the automorphism group associated to a vertex $v$ of the trees $T_{\dec}(\Omega)$, $T_{\ext}(\Omega)$ and $T_{\sol}(\Omega)$}, sort=A}$A(\Omega_v)$ associated to vertices of $T_{\ext}(\Omega)$ are induced, in a natural way, by the groups of automorphisms associated to vertices of the decomposition trees.

Naturally, we call leaves $u$ such that the paths $\p(H)$ associated to solutions $H$ of $\Omega$ end in $u$, \index{leaf!final of the tree $T_{\ext}$}\emph{final leaves of $T_{\ext}(\Omega)$}.

\section{The solution tree $T_{\sol}(\Omega)$ and the main theorem.}\label{se:5.5}

In the previous section we have shown that the generalised equations $\Omega_v=\gpof{v}$ associated to the final leaves $v$ of $T_{\ext}(\Omega)$ contain only non-active constant sections. In other words, the coordinate group $\GG_{R(\Omega_v^*)}$ is isomorphic to
$$
\factor{\GG[h_1,\dots,h_{\rho_{\Omega_v}}]}{R(\{\hbox{coefficient  equations}\}\cup \{[h_i,h_j]\mid \Re_{\Upsilon_v}(h_i,h_j)\})}.
$$

In general, the coordinate group $\GG_{R(\Omega_v^*)}$ may be not fully residually $\GG$, see the example given in Section \ref{sec:explsol}.

The solution  tree $T_{\sol}(\Omega)$ is constructed from the tree $T_{\ext}(\Omega)$ by adding some edges, labelled by homomorphisms of the coordinate groups, to final leaves of the tree $T_{\ext}$. The new vertices $w_{1},\dots, w_n$ connected to a final leaf $v$ of $T_{\ext}$ by an edge have the coordinate groups $\GG_{w_i}$ associated to them and satisfy the following properties.
\begin{enumerate}
  \item \label{it:prs1}For every solution $H^{(v)}$ of $\Omega_v$, there exist a vertex $w_i$ and a solution $H^{(w_i)}$ such that the following diagram commutes:
$$
\xymatrix{
  \GG_{R(\Omega_v^*)} \ar[rr]^{\pi(v,w_i)} \ar[dr]_{\pi_{H^{(v)}}} &  &    \GG_{w_i} \ar[dl]^{\pi_{H^{(w_i)}}}    \\
                & \GG                 }
$$
    \item \label{it:prs2}For every solution solution $H^{(w_i)}$ there exists a homomorphism $\varphi:\GG_{R(\Omega_v^*)} \to \GG $ such that the following diagram commutes:
$$
\xymatrix{
  \GG_{R(\Omega_v^*)} \ar[rr]^{\pi(v,w_i)} \ar[dr]_{\varphi} &  &    \GG_{w_i} \ar[dl]^{\pi_{H^{(w_i)}}}    \\
                & \GG                 }
$$
    \item \label{it:prs3} The coordinate group $\GG_{w_i}$ is a fully-residually $\GG$ free partially commutative group.
\end{enumerate}

The idea behind this construction is the following.

Let $v$ be a final leaf of the tree $T_{\ext}$ and $\Omega_v$ be the constrained generalised equation associated to $v$. By definition, for any $h_i, h_j\in h^{(v)}=h$, such that $\Re_{\Upsilon_v}(h_i,h_j)$, one has that $H_i^{(v)}\lra H_j^{(v)}$ for any solution $H^{(v)}$ of $\Omega_v$, i.e. $H_i^{(v)}\in \BA(H_j^{(v)})$ and $H_j^{(v)}\in \BA(H_i^{(v)})$, see Section \ref{sec:pcgr}. Therefore, any solution of the generalised equation maps $h_i$ and $h_j$ into disjoint canonical parabolic subgroups of $\GG$ that $\lra$-commute.

We encode all canonical parabolic subgroups of the group $\GG$ and the $\lra$-commutativity relation  between them in the graph $\digamma$. There are only finitely many tuples of canonical parabolic subgroups, where the solution $H^{(v)}$ of $\Omega_v$ may map the tuple of variables $h$. The choice of the tuple of canonical parabolic subgroups is encoded by the graph homomorphism $\varphi_{v,i}$ defined below. There is a vertex $w_i$ in the tree $T_{\sol}$ for every such homomorphism $\varphi_{v,i}$.

Using the homomorphism $\varphi_{v,i}$, we construct the coordinate group $\GG_{w_i}$ that we associate to the vertex $w_i$ and define solutions corresponding to this vertex.

We then prove that properties (\ref{it:prs1}), (\ref{it:prs2}) and (\ref{it:prs3}) above hold.

We refer the reader to Section \ref{sec:explsol} for an example of the constructions described in this section.

\bigskip

Define a non-oriented graph \glossary{name={$\digamma$}, description={a non-oriented graph that encodes all canonical parabolic subgroups of the group $\GG$ and the $\lra$-commutativity relation between them}, sort=G}$\digamma$ as follows. The set of vertices $V(\digamma)$ is the set of all full subgraphs of the commutation graph $\mathcal{G}$ of $\GG$ (or, which is equivalent, canonical parabolic subgroups of the group $\GG$). There is an edge  $e\in E(\digamma)$ between two vertices $s_1$ and $s_2$ if and only $V(s_1)\lra V(s_2)$, $V(s_1),V(s_2)\subseteq \cA$.

Let $\Omega_v$ be the generalised equation associated to a final leaf $v$ of the tree $T_{\ext}$. Define a non-oriented graph \glossary{name={$\Pi_v$}, description={a non-oriented graph that encodes coefficient equations of $\Omega_v$ and the relation $\Re_{\Upsilon_v}$}, sort=P}$\Pi_v$ as follows. The set of vertices $V(\Pi_v)$ is $\cA\cup \{h_i\in h^{(v)}\mid h_i \hbox{ does not occur in coefficient equations of $\Omega_v$}\}$.  There is an edge  $e\in E(\Pi_v)$ between two vertices $v_1$ and $v_2$ if and only
$$
\begin{array}{ll}
v_1=a_i, \ v_2=a_j& \hbox{ and } [a_i, a_j]=1 \hbox{ in }\GG,\\
v_1=h_i, \ v_2=h_j& \hbox{ and } \Re_{\Upsilon_v}(h_i, h_j),\\
v_1=h_i, \ v_2=a_j& \hbox{ and there exists a coefficient equation ${h_k}^{\pm 1}=a_j$, and $\Re_{\Upsilon_v}(h_i, h_k)$.}
\end{array}
$$

Consider the set of all graph homomorphisms $\varphi_{v,i}$ from $\Pi_v$ to $\digamma$ that satisfy the following conditions:
\begin{enumerate}
\item[(I)] $\varphi_{v,i}(a_j)=\{a_j\}$, for all $a_j \in \cA$;
\item[(II)] for every $e\in E(\Pi_v)$ we have $\varphi_{v,i}(e)\in E(\digamma)$, i.e. $\varphi_{v,i}(e)$ does not collapse edges.
\end{enumerate}
Note that the set of all such homomorphisms is finite and can be effectively constructed.

For every homomorphism $\varphi_{v,i}$ we construct a new leaf $w_i$ in the tree $T_{\sol}(\Omega)$ and an edge joining $v$ and $w_i$.

The coordinate group $\GG_{w_i}$ associated to the vertex $w_i$ is obtained as follows. Let $G$ be the graph product of groups with the underlying commutation graph $\varphi_{v,i}(\Pi_v)$.

The group associated to the vertex $s_j$ of $\varphi_{v,i}(\Pi_v)$ is defined as follows. For every vertex $s_j$ of $\varphi_{v,i}(\Pi_v)$ we consider the partially commutative group $\GG(s_j)$, where $\GG(s_j)<\GG$ is the canonical parabolic subgroup of $\GG$ corresponding to the full subgraph of $\mathcal{G}$ associated to $s_j$. Consider the decomposition of $\GG(s_j)$ of the form (\ref{eq:decomp}):
$$
\GG(s_j)= \GG(s_j,I_1) \times \cdots \times \GG(s_j,I_{m_j}).
$$
Let $h^{(j)}$ be the set of vertices $h_k\in V(\Pi_v)$ such that $\varphi_{v,i}(h_k)=s_j$. We treat every $h_k\in h^{(j)}$ as a tuple of variables $(h_{k,1},\dots, h_{k,m_j})$, $h_{k,l}\in h^{(j,I_l)}$.
Consider the group
\begin{equation} \label{eq:vertgr}
\GG(s_j,I_1) [h^{(j,I_1)}]\times \cdots \times \GG(s_j,I_m) [h^{(j,I_{m_j})}],
\end{equation}
where if $\GG(s_j,I_l)$ is free abelian, then  $\GG(s_j,I_l) [h^{(j,I_l)}]=\GG(s_j,I_l)\times \langle h^{(j,I_l)}\rangle$, where $\langle h^{(j,I_l)}\rangle$ is the free abelian group with basis $h^{(j,I_l)}$, and if $\GG(s_j,I_l)$  is non-abelian, then
$\GG(s_j,I_l) [h^{(j,I_l)}]=\GG(s_j,I_l) \ast F(h^{(j,I_l)})$. The group we associate to the vertex $s_j$ is the group given in (\ref{eq:vertgr}). It follows, since the graph product of partially commutative groups is again a partially commutative group, that $G$ is a free partially commutative group.

We now turn the group $G$ into a $\GG$-group as follows
\glossary{name={$\GG_{w_i}$}, description={fully residually $\GG$ partially commutative group associated to the leaf of the solution tree $T_{\sol}(\Omega)$}, sort=G}
$$
\GG_{w_i}=\left< \GG, G\,\left| \,C, \, [C_\GG(\GG(s_j,I_k)),h^{(j,I_k)}]=1\right. \hbox{ for all }j, k\right>,
$$
where the relations in $C$ identify the subgroups $\GG(s_j)$ with the corresponding subgroups of $\GG$. This is the group $\GG_{w_i}$ that we associate to the leaf $w_i$ of the tree $T_{\ext}(\Omega)$. Note that, since $G$ is a partially commutative group and the centraliser of a canonical parabolic subgroup is again a canonical parabolic subgroup, the group $\GG_{w_i}$ is a free partially commutative group.

A $\GG$-homomorphism $\psi$ from $\GG_{w_i}$ to $\GG$ such that for every $h_k \in h^{(j)}$ one has $\psi(h_k)\in \GG(s_j)$ is termed a \index{solution!associated to the vertex}\emph{solution associated to the vertex $w_i$}. In other words, by taking restrictions, every solution  associated to the vertex $w_i$, induces a tuple of homomorphisms $(\psi_1,\dots, \psi_{|V(\varphi_{v,i}(\Pi_v))|})$, where $\psi_j$ is a $\GG(s_j)$-homomorphism from $\GG(s_j)[h^{(j)}]$ to $\GG(s_j)$. Conversely, any $|V(\varphi_{v,i}(\Pi_v))|$-tuple of $\GG(s_j)$-homomorphisms from $\GG(s_j)[h^{(j)}]$ to $\GG(s_j)$ uniquely defines a solution $\psi$ associated to the vertex $w_i$. The homomorphisms $\psi_j$ are called \emph{components} of $\psi$.

Every homomorphism $\psi_j$, in turn, induces an $m_j$-tuple of $\GG(s_j,I_l)$-homomorphisms from $\GG(s_j,I_l) [h^{(j,I_l)}]$ to $\GG(s_j,I_l)$, $l=1,\dots, m_j$. Conversely, any $m_j$-tuple of $\GG(s_j,I_l)$-homomorphisms from $\GG(s_j,I_l) [h^{(j,I_l)}]$ to $\GG(s_j,I_l)$ uniquely defines a component $\psi_j$ of a solution $\psi$.

The map $h_k\mapsto h_{k,1} \cdots h_{k,m_j}$ induces a $\GG$-homomorphism $\pi(v,w_i)$ from $\GG_{R(\Omega_{v}^*)}$ to $\GG_{w_i}$. Indeed, by Proposition \ref{prop:soltrdiscr} below, $\GG_{w_i}$ is $\GG$-discriminated by $\GG$. Then, by Theorem \ref{thm:ircoordgr}, $\GG_{w_i}$ is the coordinate group of an irreducible algebraic set. Since every equation from the system $\Omega_{v}^*$ is mapped to the trivial element of $\GG_{w_i}$ and since $\GG_{w_i}$ is a coordinate group, $\pi(v,w_i)$ is a homomorphism.

It follows, therefore,  that property (\ref{it:prs2}) described in the beginning of this section holds, i.e. that for
every solution solution $H^{(w_i)}$ there exists a homomorphism $\varphi:\GG_{R(\Omega_v^*)} \to \GG $ such that the following diagram commutes:
$$
\xymatrix{
  \GG_{R(\Omega_v^*)} \ar[rr]^{\pi(v,w_i)} \ar[dr]_{\varphi} &  &    \GG_{w_i} \ar[dl]^{\pi_{H^{(w_i)}}}    \\
                & \GG                 }
$$
It suffices to take $\varphi:\GG_{R(\Omega_v^*)} \to \GG $ to be the composition of $\pi(v,w_i)$ and $\pi_{H^{(w_i)}}$.

We now prove that property (\ref{it:prs1}) described in the beginning of this section holds, i.e. that for every solution $H$ of $\Omega_{v}$ there exists $i$ and a solution $\psi$ associated to the vertex $w_i$ such that the following diagram is commutative:
$$
\xymatrix@C3em{
 \GG_{R(\Omega_v^*)}  \ar[rd]_{\pi_H} \ar[rr]^{\pi(v,w_i)}  &  &\GG_{w_i} \ar[ld]^{\psi}
                                                                             \\
                               &  \GG &
}
$$

Given a solution $H$ of $\Omega_v$, we construct a non-oriented graph \glossary{name={$\gimel$}, description={a non-oriented graph that describes the canonical parabolic subgroups associated to the solution of a generalised equation}, sort=G}$\gimel$. There are two types of vertices in $\gimel$. For every $a_j\in \cA$, we add a vertex labelled by $a_j$. For all distinct sets $\az(H_j)$, we introduce a vertex of $\gimel$ labelled by a full subgraph of $\mathcal{G}$ generated by $\az(H_j)$. There is an edge between two vertices corresponding to the sets $\az(H_j)$ and $\az(H_k)$ if and only if $\az(H_j) \lra \az(H_k)$. There is an edge between two vertices corresponding to $a_j$ and $a_k$ if and only if $a_j\lra a_k$. There is an edge between two vertices $v_1$, corresponding to the set $\az(H_j)$, and $v_2$, corresponding to $a_k$,  if and only if  $\az(H_j) \lra a_k$. By construction, the graph $\gimel$ is a full subgraph of $\digamma$.

The map $\phi_{v,i}:h_j\mapsto v(h_j)$, where $v(h_j)$ is labelled by the full subgraph of $\mathcal{G}$ generated by $\az(H_j)$ and
$\phi_{v,i}:a_j\mapsto a_j$, where $a_j\in \cA$ extends, to an epimorphism $\varphi$ from $\Pi_v$ to $\gimel$. Since $H$ is a solution of $\Omega_{v}$, if $\Re_{\Upsilon_v}(h_j,h_k)$, then $H_j \lra H_k$. Thus, the homomorphism $\varphi$ satisfies the property (II) above and hence, $\varphi=\varphi_{v,i}$ for some $i$. Setting $\psi=\pi_H$, it follows that the above diagram is commutative.

The proposition below proves that property (\ref{it:prs3}) described in the beginning of this section holds.
\begin{prop} \label{prop:soltrdiscr}
In the above notation, the groups $\GG_{w_i}$ are $\GG$-discriminated by $\GG$ by the family of solutions associated to the vertex $w_i$.
\end{prop}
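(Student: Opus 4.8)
The plan is, given an arbitrary finite set $S$ of nontrivial elements of $\GG_{w_i}$, to build a solution $\psi$ associated to the vertex $w_i$ with $1\notin\psi(S)$, mirroring the proof of Proposition \ref{prop:soltree}. First I would record the structure I need. By the decomposition (\ref{eq:decomp}) applied inside each $\GG(s_j)$, every direct factor $\GG(s_j,I_l)$ occurring in (\ref{eq:vertgr}) is a directly indecomposable canonical parabolic subgroup of $\GG$, and it is either infinite cyclic or non-abelian. In the non-abelian case the result of \cite{CK1} quoted before Proposition \ref{prop:soltree} applies to $\GG(s_j,I_l)$, so the factor $\GG(s_j,I_l)[h^{(j,I_l)}]=\GG(s_j,I_l)\ast F(h^{(j,I_l)})$ is $\GG(s_j,I_l)$-fully residually $\GG(s_j,I_l)$; in the cyclic case $\GG(s_j,I_l)[h^{(j,I_l)}]$ is a finitely generated free abelian group discriminated by $\GG(s_j,I_l)=\Z$ via the maps $h_{k,l}\mapsto a_{j,l}^{N^{p_{k,l}}}$ for $N$ large enough relative to $S$. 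Finally, writing $g\in\GG_{w_i}$ in the normal form induced by an ordering in which every $h$-variable precedes every letter of $\cA$, one obtains an alternating decomposition $g=g_0w_1g_1\cdots w_rg_r$ with each $g_p\in\GG$, each $w_p$ a nonempty reduced word in the $h$-variables, and $g_p$ failing to commute with some variable of $w_{p+1}$ (and of $w_p$) for $0\le p\le r$.

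Now the construction of $\psi$, carried out factor by factor. For each pair $(j,l)$ with $\GG(s_j,I_l)$ non-abelian, fix a cyclically reduced block $b_{j,l}\in\GG(s_j,I_l)$ and a $\GG(s_j,I_l)$-homomorphism $\chi_{j,l}\colon\GG(s_j,I_l)[h^{(j,I_l)}]\to\GG(s_j,I_l)$ discriminating the finitely many subwords of elements of $S$ that are words in $h^{(j,I_l)}$. Using Corollary \ref{cor:thm417} with the block $b_{j,l}$, I would then replace $\chi_{j,l}$ by its composition with conjugation and multiplication by a high enough power of $b_{j,l}$ so that in addition (a) $b_{j,l}$ does not commute with any coefficient syllable $g_p$ ($0<p<r$) arising in normal forms of elements of $S$ and flanking a subword of $w_p$ or $w_{p+1}$ lying in $h^{(j,I_l)}$, and (b) no proper left or right divisor of any $\chi_{j,l}(h_{k,l})$ absorbs such a $g_p$ under cancellation (both possible since $\GG(s_j,I_l)$ is directly indecomposable). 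Putting $\psi|_{\GG}=\mathrm{id}$, $\psi(h_{k,l})=\chi_{j,l}(h_{k,l})$ on the non-abelian factors, and the cyclic-case maps above on the remaining variables, one checks that $\psi$ is a well-defined $\GG$-homomorphism $\GG_{w_i}\to\GG$: the defining relations of $\GG_{w_i}$ hold because $\chi_{j,l}(h^{(j,I_l)})\subseteq\GG(s_j,I_l)\le C_\GG(C_\GG(\GG(s_j,I_l)))$ and because $\GG(s_j,I_l)$ commutes, on disjoint alphabets, with $\GG(s_{j'},I_{l'})$ in $\GG$ whenever the corresponding variables are forced to commute in $\GG_{w_i}$. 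Since $\psi(h_{k,l})\in\GG(s_j,I_l)\le\GG(s_j)$, $\psi$ is by construction a solution associated to $w_i$.

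It then remains to check $1\notin\psi(S)$. Take $g=g_0w_1g_1\cdots w_rg_r\in S$. Each $\psi(w_p)$ is nontrivial by the choice of the $\chi_{j,l}$ (and of the $N$'s), and by Lemma \ref{lem:4.17} together with Lemma \ref{lem:discr} and (b), $\psi(w_p)$ has the form $b^{\pm 1}\circ u_p\circ b^{\pm 1}$, where $b$ is the block governing the first and last $h$-variable of $w_p$ and $u_p$ is a geodesic word; then, exactly as in the proof of Proposition \ref{prop:soltree} via Corollary \ref{cor:thm417} and (a), the product $g_0\psi(w_1)g_1\cdots\psi(w_r)g_r$ exhibits no cancellation between consecutive coefficient syllables, whence $\psi(g)\ne1$. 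Formally this is an induction on the number $r$ of $h$-syllables of $g$, with $r=0$ trivial.

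The main obstacle I anticipate is the simultaneous bookkeeping in the last two paragraphs: one must control at once (i) the intra-factor discrimination inside each $\GG(s_j,I_l)$, (ii) the absence of cancellation between each $\psi(w_p)$ and the flanking coefficient syllables $g_p$, and (iii) the internal structure of a single $\psi(w_p)$ when $w_p$ mixes variables from several factors $h^{(j,I_l)}$ that commute in $\GG_{w_i}$ — so that $\psi(w_p)$ is a product of commuting blocks of a priori distinct periods and its geodesic must still be described before Lemma \ref{lem:4.17} and Corollary \ref{cor:thm417} can be applied. An alternative route would be to realise $\GG_{w_i}$ as a chain of one-variable extensions of the type in Proposition \ref{prop:soltree} and invoke transitivity of $\GG$-discrimination; this succeeds for adjoining the variables of a single factor $h^{(j,I_l)}$, but reproducing the full commutation graph of $\GG_{w_i}$ dictated by the graph product over $\varphi_{v,i}(\Pi_v)$ is precisely what makes the uniform factor-by-factor construction above preferable.
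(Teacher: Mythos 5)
There is a genuine gap, and it sits exactly where you flag it as ``the main obstacle I anticipate'': the passage from the one-variable cancellation machinery to your multi-variable, multi-factor setting is never actually carried out. Corollary \ref{cor:thm417} (and the proof of Proposition \ref{prop:soltree} built on it) is intrinsically a one-variable statement: a single letter is sent to a high power of a single block $b$ chosen not to commute with \emph{any} of the coefficient syllables. In your situation a syllable $w_p$ may mix variables from several factors $\GG(s_j,I_l)$ that commute with one another in $\GG_{w_i}$, so $\psi(w_p)$ is a product of high powers of blocks with distinct periods lying in pairwise $\lra$-commuting parabolics; the claim that $\psi(w_p)$ has the shape $b^{\pm 1}\circ u_p\circ b^{\pm 1}$ ``where $b$ is the block governing the first and last $h$-variable of $w_p$'' is unjustified (indeed ``first and last variable of $w_p$'' is not even canonical, since $w_p$ can be rearranged using the commutations of the graph product). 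Worse, your normal form only guarantees that $g_p$ fails to commute with \emph{some} factor occurring in $w_{p+1}$; it may commute with the remaining factors, so pieces of $\psi(w_{p+1})$ can be shuffled past $g_p$ and interact with $\psi(w_p)$, and the ``no cancellation between consecutive syllables'' conclusion does not follow from Lemmas \ref{lem:discr} and \ref{lem:4.17} as stated. Controlling this simultaneous cancellation is the heart of the proposition, not bookkeeping, and conditions (a)--(b) in your construction do not by themselves resolve it.

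Ironically, the alternative you dismiss in your last sentence is the paper's actual proof, and the obstacle you cite against it is illusory. The paper shows by induction (adjoining one variable of one factor at a time) that $\GG_{w_i}$ is a chain of extensions of centralisers of directly indecomposable canonical parabolic subgroups, the centraliser being taken in the \emph{intermediate} partially commutative group: by Theorem \ref{thm:centr}, $C_{\GG_{\Lambda'}}\bigl(\GG(s_j,I_l)\bigr)$ automatically contains, besides $C_\GG\bigl(\GG(s_j,I_l)\bigr)$, all variables previously adjoined at vertices adjacent to $s_j$ in $\varphi_{v,i}(\Pi_v)$ and at the other direct factors of $s_j$, so the full commutation graph of the graph product is reproduced for free at each step. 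Each step is then exactly the one-variable situation of Proposition \ref{prop:soltree}, and transitivity of $\GG$-discrimination (for finite sets of nontrivial elements) together with the observation that the composed retractions send each $h_k$ into $\GG(s_j)$, hence are solutions associated to $w_i$, completes the argument. If you wish to keep your direct construction, you must either prove a genuinely multi-variable analogue of Corollary \ref{cor:thm417} for words over several $\lra$-commuting blocks with coefficients commuting with some of them, or reorganise the argument so that only one block is active at a time --- which is what the chain of centraliser extensions accomplishes.
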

\begin{proof}
We use induction on the number $N$ of vertices of the underlying graph $\varphi_{v,i}(\Pi_v)$ of the graph product $G$ to prove that the group $\GG_{w_i}$ is obtained from $\GG$ by a chain of extension of centralisers of directly indecomposable canonical parabolic subgroups. It then follows, by Proposition \ref{prop:soltree}, that the group $\GG_{w_i}$ is $\GG$-discriminated by $\GG$.

If $N=1$, then
$$
\GG_{w_1} =\left<\GG,  \GG(s_1,I_1) [h^{(1,I_1)}]\times \cdots \times \GG(s_1,I_{m_1}) [h^{(1,I_{m_1})}]\left|\, C, \, [C_\GG(\GG(s_1,I_k)),h^{(1,I_k)}]=1\right. \hbox{ for all } k\right>,
$$
where the relations in $C$ identify the subgroups $\GG(s_1,I_k)$ with the corresponding subgroups of $\GG$.

We use induction on $m_1$ to prove that the group $\GG_{w_1}$ is obtained from $\GG$ by a chain of extension of centralisers of directly indecomposable canonical parabolic subgroups. If $m_1=1$, then the statement is trivial.

Let
$$
\GG_{w_1}'=\left<\GG, \GG(s_1,I_1) [h^{(1,I_1)}]\times \cdots \times \GG(s_1,I_{m_1-1}) [h^{(1,I_{m_1-1})}],\left| \, C, \, [C_\GG(\GG(s_1,I_k)),h^{(1,I_k)}]=1\right. \hbox{ for all } k\right>.
$$
Without loss of generality we may assume that $|h^{(1,I_{m_1})}|=1$, $h^{(1,I_{m_1})}=\{h\}$. It suffices to show that $\GG_{w_1}$ is isomorphic to
\begin{equation}\label{eq:tocompare}
\langle \GG_{w_1}', h\mid \rel(\GG_{w_1}'), [h, C_{\GG_{w_1}'}(\GG(s_1,I_{m_1}))]=1\rangle.
\end{equation}
Since, by induction, $\GG_{w_1}'$ is a partially commutative group, by Theorem \ref{thm:centr},
$$
C_{\GG_{w_1}'}(\GG(s_1,I_{m_1}))=\langle C_{\GG}(\GG(s_1,I_{m_1})), h^{(1,I_k)}, k=1,\dots, m_1-1\rangle.
$$
Comparing the presentations of $\GG_{w_1}$ and the one given in (\ref{eq:tocompare}), the statement follows in the case when $N=1$.

Take a graph $\Lambda$ with $N$ vertices and consider a full subgraph $\Lambda '$ of $\Lambda$ such that $V(\Lambda)=V(\Lambda')\cup \{s_N\}$. By induction, the $\GG$-group $\GG_{\Lambda'}$ constructed by the graph $\Lambda'$ is a partially commutative group obtained from $\GG$ by a sequence of extensions of centralisers of directly indecomposable canonical parabolic subgroups.

Suppose first that $\GG(s_N)$ is directly indecomposable. Without loss of generality we may assume that $|h^{(N,I_1)}|=1$, $h^{(N,I_1)}=\{h_N\}$. We now prove that $\GG_\Lambda$ is isomorphic to
$$
\GG'=\langle \GG_{\Lambda'}, h_N\mid \rel(\GG_{\Lambda'}), [C_{\GG_{\Lambda'}}(\GG(s_N,I_{1})), h_N]=1\rangle.
$$
Since, by induction, $\GG_{\Lambda'}$ is a partially commutative group, by Theorem \ref{thm:centr},
$$
C_{\GG_{\Lambda'}}(\GG(s_N,I_{1}))=\left<
\begin{array}{l}
 C_{\GG}(\GG(s_N,I_{1})), \ h^{(j,I_k)}, \ \ k=1,\dots, m_j \\
 \hbox{ and there is an edge between $s_j$ and $s_N$ in $\Lambda$}
 \end{array}
 \right>
$$
Comparing the presentations of $\GG'$ and $\GG_{\Lambda}$, the statement follows.

If the group $\GG(s_N)$ is directly decomposable, the proof is analogous to the base of induction and the previous case.
\end{proof}

The solution tree \glossary{name={$T_{\sol}$}, description={the solution tree of a generalised equation}, sort=T}$T_{\sol}(\Omega)$ is obtained from the extension tree $T_{\ext}(\Omega)$ by extending every final leaf of $T_{\ext}(\Omega)$ as above. To each non-leaf vertex $v$ of the tree $T_{\sol}(\Omega)$, we associate the same group of automorphisms \glossary{name={$A(\Omega_v)$}, description={the automorphism group associated to a vertex $v$ of the trees $T_{\dec}(\Omega)$, $T_{\ext}(\Omega)$ and $T_{\sol}(\Omega)$}, sort=A}$A(\Omega _v)$ as in the extension tree $T_{\ext}(\Omega)$. For every leaf $w_i$ of  $T_{\sol}(\Omega)$ we associate the trivial group of automorphisms.

We are now ready to formulate the main result of this paper.
\begin{thm}\label{th:5.3.1}
Let $\Omega = \Omega(h)$ be a constrained generalised equation in variables $h$. Let $T_{\sol}(\Omega)$ be the solution tree for $\Omega$. Then the following statements hold.
\begin{enumerate}
    \item \label{it:th1}  For any solution $H$ of the generalised equation $\Omega$ there exist: a path $v_0\to v_1\to \ldots\to v_n = v$  in $T_{\sol}(\Omega)$ from the root vertex $v_0$ to a leaf $v$, a sequence of automorphisms $\sigma = (\sigma_0, \ldots, \sigma_n)$, where $\sigma_i \in  A(\Omega _{v_i})$ and a solution $H^{(v)}$ associated to the vertex $v$, such that
\begin{equation} \label{eq:5.3.3}
\pi_H=\Phi_{\sigma,H^{(v)}} = \sigma_0 \pi(v_0,v_1)\sigma_{1} \ldots  \pi(v_{n-1},v_n)\sigma_n \pi_{H^{(v)}}.
\end{equation}
    \item \label{it:th2} For any path $v_0\to v_1\to \ldots\to v_n = v$  in $T_{\sol}(\Omega)$ from the root vertex $v_0$ to a leaf $v$, any sequence of automorphisms $\sigma =(\sigma_0, \ldots, \sigma_n )$, $\sigma_i \in  A(\Omega _{v_i})$, and any solution $H^{(v)}$ associated to the vertex $v$, the homomorphism $\Phi_{\sigma,H^{(v)}}$ is a solution of $\Omega^\ast$. Moreover, every solution of $\Omega^\ast$ can be obtained in this way.
\end{enumerate}
\end{thm}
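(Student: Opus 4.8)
The plan is to assemble the results established through Sections \ref{se:5}--\ref{se:5.5}: the tree $T_{\sol}(\Omega)$ was built as a chain of enlargements $T_0(\Omega)\subseteq T_{\dec}(\Omega)\subseteq T_{\ext}(\Omega)\subseteq T_{\sol}(\Omega)$, and the theorem is essentially the concatenation of Propositions \ref{prop:sum5}, \ref{prop:dectree}, \ref{prop:exttree} with the leaf-extension step and Proposition \ref{prop:soltrdiscr}. For part (\ref{it:th1}), fix a solution $H$ of $\Omega$, equivalently the $\GG$-homomorphism $\pi_H\colon\GG_{R(\Omega^\ast)}\to\GG$. Proposition \ref{prop:exttree} produces a path $v_0\to v_1\to\dots\to v_k=u$ in $T_{\ext}(\Omega)$ ending at a final leaf $u$, automorphisms $\sigma_i\in A(\Omega_{v_i})$, and a solution $H^{[u]}$ of $\Omega_u$ such that $\pi_H=\sigma_0\,\pi(v_0,v_1)\,\sigma_1\cdots\pi(v_{k-1},v_k)\,\sigma_k\,\pi_{H^{[u]}}$ and, moreover, all sections of $\Omega_u$ are non-active constant sections. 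Since $u$ is a final leaf of $T_{\ext}(\Omega)$, in $T_{\sol}(\Omega)$ it carries the children $w_1,\dots,w_m$ indexed by the graph homomorphisms $\varphi_{u,i}\colon\Pi_u\to\digamma$ satisfying conditions (I) and (II). By property~(\ref{it:prs1}) of the leaf extension (proved immediately after the construction of $\GG_{w_i}$), there is an index $i$ and a solution $H^{(w_i)}$ associated to $w_i$ with $\pi_{H^{[u]}}=\pi(u,w_i)\,\pi_{H^{(w_i)}}$. Putting $v_{k+1}:=w_i$, $H^{(v_{k+1})}:=H^{(w_i)}$, $\sigma_{k+1}:=\id$ (the group $A(\Omega_{w_i})$ is trivial) and $n:=k+1$, and substituting, we obtain a path $v_0\to\dots\to v_n=v$ with $v:=w_i$, a sequence $\sigma=(\sigma_0,\dots,\sigma_n)$ of automorphisms with $\sigma_i\in A(\Omega_{v_i})$, and a solution $H^{(v)}$ associated to $v$ for which $\pi_H=\Phi_{\sigma,H^{(v)}}$; this is exactly (\ref{eq:5.3.3}).

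For the first assertion of part (\ref{it:th2}), I would observe that the data appearing in $\Phi_{\sigma,H^{(v)}}$ are $\GG$-maps with matching sources and targets along the path. Each edge label $\pi(v_j,v_{j+1})$ of $T_{\sol}(\Omega)$ is a $\GG$-homomorphism of coordinate groups: for the edges inherited from $T_{\ext}(\Omega)$ this is built into the construction (compositions of the epimorphisms attached to the elementary transformations $\ET 1-\ET 5$ and the derived transformations $\D 1-\D 6$, and of the proper epimorphisms $\pi_{v,\{g_i\}}$, $\pi_{v,\{g_i\},\TB}$, $\pi_{v,\cc,\TB}$ of Section \ref{5.5.5}), while for a leaf-extension edge $u\to w_i$ the map $\pi(u,w_i)$ is a $\GG$-homomorphism by the argument following (\ref{eq:vertgr}), which invokes Proposition \ref{prop:soltrdiscr} together with Theorem \ref{thm:ircoordgr} so that $\GG_{w_i}$ is recognised as a coordinate group and hence any system vanishing on it forces a homomorphism. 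Each $\sigma_i$ lies in $A(\Omega_{v_i})$, a recursive group of $\GG$-automorphisms of $\GG_{R(\Omega_{v_i}^\ast)}$ produced in Sections \ref{5.5.2}, \ref{sec:periodstr} and \ref{5.5.5}, and $H^{(v)}$ is a solution associated to $v=w_i$, that is, a $\GG$-homomorphism $\GG_{w_i}\to\GG$. Composing, $\Phi_{\sigma,H^{(v)}}\colon\GG_{R(\Omega^\ast)}\to\GG$ is a $\GG$-homomorphism, and a $\GG$-homomorphism $\GG_{R(\Omega^\ast)}\to\GG$ is precisely a solution of the system $\Omega^\ast$.

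The second assertion of part (\ref{it:th2}) --- that \emph{every} solution of $\Omega^\ast$ arises as some $\Phi_{\sigma,H^{(v)}}$ --- is then a restatement of part (\ref{it:th1}): a solution of $\Omega^\ast$ is the same datum as a $\GG$-homomorphism $\GG_{R(\Omega^\ast)}\to\GG$, which is exactly what Proposition \ref{prop:exttree} and the leaf extension have just been applied to. The point that needs care is the bookkeeping of vertices, edges and automorphism groups across the three trees $T_0$, $T_{\dec}$, $T_{\ext}$ and the leaf extension: one must verify that the automorphism group assigned to each vertex of $T_{\sol}(\Omega)$ coincides with the one used in the relevant proposition --- which holds because the groups $A(\Omega_v)$ are inherited verbatim from $T_{\dec}(\Omega)$ to $T_{\ext}(\Omega)$ to $T_{\sol}(\Omega)$, with the trivial group attached to the new leaves $w_i$ --- and that the composable chains of $\GG$-homomorphisms line up. I expect this organisational verification to be the bulk of the proof, since all the hard work --- finiteness of $T_0(\Omega)$, the dichotomy for periodic structures, the passage to proper quotients along $T_{\dec}(\Omega)$, termination of $T_{\ext}(\Omega)$ by equational Noetherianity of $\GG$, and the $\GG$-discrimination of the groups $\GG_{w_i}$ --- has already been carried out in Sections \ref{se:5.3}--\ref{se:5.5}.
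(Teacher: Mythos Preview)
Your assembly is correct and is precisely what the paper does: its own proof is the single sentence ``The statement follows from the construction of the tree $T_{\sol}(\Omega)$,'' and you have simply unpacked that construction by chaining Propositions~\ref{prop:sum5}, \ref{prop:dectree}, \ref{prop:exttree} with the leaf-extension properties~(\ref{it:prs1})--(\ref{it:prs3}) and Proposition~\ref{prop:soltrdiscr}.

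One small imprecision worth flagging: you call the ``Moreover'' clause of part~(\ref{it:th2}) a restatement of part~(\ref{it:th1}), but part~(\ref{it:th1}) and Proposition~\ref{prop:exttree} are stated for solutions $H$ of the constrained generalised equation $\Omega$ (tuples of geodesic words satisfying the equations graphically in $\FF$ together with the $\lra$-constraints), whereas the ``Moreover'' speaks of solutions of $\Omega^\ast$, i.e.\ arbitrary $\GG$-homomorphisms $\GG_{R(\Omega^\ast)}\to\GG$; the paper itself notes (just after the definition of $\Omega^\ast$ in Section~\ref{sec:relcoordgrgrtr}) that the latter class is in general strictly larger. So identifying the two is not literally correct. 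That said, the paper's own one-line proof does not address this distinction either, so your argument is at least as complete as the original.
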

\begin{proof}
The statement follows from the construction of the tree $T_{\sol}(\Omega,\Lambda)$.
\end{proof}

We call leaves $v$ of the tree $T_{\sol}(\Omega)$ such that there exists a path from $v_0$ to $v$ as in statement (\ref{it:th1}) of Theorem \ref{th:5.3.1} \index{leaf!final of the tree $T_{\sol}$}\emph{final leaves of the tree $T_{\sol}(\Omega)$}.

\begin{thm}\label{ge}
Let $\GG$ be the free partially commutative group with the underlying commutation graph $\mathcal{G}$ and let $\widehat{G}$ be a finitely generated {\rm(}$\GG$-{\rm)}group. Then the set of all {\rm(}$\GG$-{\rm)}homomorphisms  $\Hom(\widehat{G},\GG)$ {\rm(}$\Hom_\GG(\widehat{G},\GG)$, correspondingly{\rm)} from $\widehat{G}$ to $\GG$ can be effectively described by a finite rooted tree. This tree is oriented from the root, all its vertices except for the root and the leaves are labelled by coordinate groups of generalised equations. The final leaves of the tree are labelled by fully residually $\GG$ partially commutative groups $\GG_{w_i}$.

Edges from the root vertex correspond to a finite number of {\rm(}$\GG$-{\rm)}homomorphisms from $\widehat{G}$ into coordinate groups of generalised equations.  To each vertex group we assign the group of automorphisms $A(\Omega_v)$. Each edge {\rm(}except for the edges from the root and the edges to the final leaves{\rm)} in this tree is labelled by an epimorphism, and all the epimorphisms are proper. Every {\rm(}$\GG$-{\rm)}homomorphism from $\widehat{G}$ to $\GG$ can be written as a composition of the {\rm(}$\GG$-{\rm)}homomorphisms corresponding to the edges, automorphisms of the groups assigned to the vertices, and a {\rm(}$\GG$-{\rm)}homomorphism $\psi=(\psi_j)_{j\in J}$, $|J|\le 2^\rr$  into $\GG$, where $\psi_j:\HH_j[Y]\to \HH_j$ and $\HH_j$ is the free partially commutative subgroup of $\GG$ defined by some full subgraph of $\mathcal{G}$.
\end{thm}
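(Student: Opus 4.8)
The plan is to assemble the statement from the reduction of Section~\ref{sec:red} and the solution tree of Theorem~\ref{th:5.3.1}; by this point essentially all the work has been done. First I would pass from the group $\widehat G$ to a system of equations. Since $\GG$ is equationally Noetherian, there is a finite system $S=S(X,\cA)=1$ of equations over $\GG$ (taking for $S$ a finite set of relators when $\widehat G$ is finitely presented, which makes the construction effective) such that the canonical map $\widehat G\to\GG_{R(S)}$ induces a bijection $\Hom(\GG_{R(S)},\GG)\to\Hom(\widehat G,\GG)$, and likewise with $\GG$-homomorphisms in the equivariant setting. Thus it suffices to describe $V_\GG(S)$. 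Applying Corollary~\ref{co:R1} (which packages Lemmas~\ref{le:14} and~\ref{lem:R1}), I obtain a finite, effectively constructible family $\GE(S)$ of constrained generalised equations over $\FF$ together with $\GG$-homomorphisms $\pi_\Omega\colon\GG_{R(S)}\to\GG_{R(\Omega^\ast)}$, $\Omega\in\GE(S)$, such that every solution $W$ of $S$ factors as $\pi_W=\pi_\Omega\,\pi_H$ for some $\Omega\in\GE(S)$ and some solution $H$ of $\Omega$, and conversely every such pair $(\Omega,H)$ yields a solution of $S$. The compositions $\widehat G\to\GG_{R(S)}\xrightarrow{\pi_\Omega}\GG_{R(\Omega^\ast)}$ are the finitely many edges issuing from the root vertex $\widehat G$; these are merely homomorphisms, which is why they are singled out in the statement.

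Next, for each $\Omega\in\GE(S)$ I would attach the solution tree $T_{\sol}(\Omega)$ built in Sections~\ref{se:5}--\ref{se:5.5} and invoke Theorem~\ref{th:5.3.1}. By construction the non-root, non-leaf vertices of $T_{\sol}(\Omega)$ are labelled by coordinate groups $\GG_{R(\Omega_v^\ast)}$ of generalised equations; each vertex carries the automorphism group $A(\Omega_v)$, which is recursive by Definitions~\ref{defn:Aut} and~\ref{defn:AA}, Lemma~\ref{lem:autinvkerrec} and its analogue for the non-quadratic part, together with the decidability of the universal theory of $\GG$, \cite{DL}; the final leaves are labelled by the groups $\GG_{w_i}$, which by Proposition~\ref{prop:soltrdiscr} are $\GG$-discriminated by $\GG$, hence $\GG$-fully residually $\GG$, and which are free partially commutative groups, as shown in their construction. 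Every internal edge other than those issuing from the root and those terminating in a final leaf is labelled by a proper epimorphism: the edges added in passing from $T_0(\Omega)$ to $T_{\dec}(\Omega)$ and iterated through $T_{\ext}(\Omega)$ are proper by Lemmas~\ref{lem:23-1} and~\ref{lem:23-1ss} (and effectively recognizable by Lemma~\ref{le:hom-check}), while the remaining edges of $T_0(\Omega)$, being isomorphisms by Proposition~\ref{prop:TO}, are absorbed into (composed with) the adjacent proper epimorphisms and the root edges. The edges into final leaves are the homomorphisms $\pi(v,w_i)$. Finiteness of $T_{\ext}(\Omega)$ — hence of the whole diagram — is exactly where equational Noetherianity of $\GG$ is used, via K\"onig's lemma, since a branch containing infinitely many proper epimorphisms of coordinate groups is impossible.

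It remains to read off the factorisation and the bound $|J|\le 2^\rr$. Given $H\in\Hom(\widehat G,\GG)$, the induced solution of $S$ factors through $\pi_\Omega\,\pi_{H'}$ with $H'$ a solution of some $\Omega\in\GE(S)$; by Theorem~\ref{th:5.3.1}(\ref{it:th1}) there is a path $v_0\to\cdots\to v_n$ in $T_{\sol}(\Omega)$, automorphisms $\sigma_i\in A(\Omega_{v_i})$, and a solution $H^{(v_n)}$ associated to the final leaf $v_n=w_i$ with $\pi_{H'}=\sigma_0\,\pi(v_0,v_1)\,\sigma_1\cdots\pi(v_{n-1},v_n)\,\sigma_n\,\pi_{H^{(v_n)}}$; precomposing with $\widehat G\to\GG_{R(S)}\xrightarrow{\pi_\Omega}\GG_{R(\Omega^\ast)}$ gives the required decomposition, with $\psi=\pi_{H^{(v_n)}}$. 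A solution $\psi$ associated to $w_i$ is, by the construction of $T_{\sol}(\Omega)$, the datum of its components $\psi_j\colon\HH_j[h^{(j)}]\to\HH_j$ indexed by the vertices $s_j$ of the graph $\varphi_{v,i}(\Pi_v)$, where $\HH_j=\GG(s_j)$ is the canonical parabolic subgroup of $\GG$ cut out by the full subgraph $s_j$ of $\mathcal G$; since $\varphi_{v,i}(\Pi_v)$ is a subgraph of $\digamma$, whose vertices are the full subgraphs of $\mathcal G$, the index set $J=V(\varphi_{v,i}(\Pi_v))$ satisfies $|J|\le 2^\rr$. The main obstacle is not any single step but the bookkeeping: one must verify that the compositions match across the three reductions (Section~\ref{sec:red}, the process of Section~\ref{se:5}, and the extension to $T_{\sol}$), that the automorphism groups sit at the correct vertices, and that truncating the isomorphism segments of $T_0(\Omega)$ loses no solutions — all of which follow from Propositions~\ref{prop:sum5}, \ref{prop:dectree}, \ref{prop:exttree} and Theorem~\ref{th:5.3.1}.
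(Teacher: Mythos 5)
Your proposal is correct and follows essentially the same route as the paper: reduce $\Hom(\widehat G,\GG)$ (resp. $\Hom_\GG(\widehat G,\GG)$) to the solution set of a finite system of equations via equational Noetherianity, then run the reduction of Section~\ref{sec:red} and invoke Theorem~\ref{th:5.3.1}, reading off the tree structure, the automorphism groups $A(\Omega_v)$, the leaf groups $\GG_{w_i}$ and the bound $|J|\le 2^\rr$ from the construction of $T_{\sol}$. The only point the paper treats separately that you fold in implicitly is the coefficient-free (non-$\GG$-group) case, where it replaces $\GG_{R(S)}$ by the group $\GG'_{R'(S)}$ defined over $F(X)$; this is a routine adjustment and does not affect the argument.
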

\begin{proof}
Suppose first that $\widehat{G}$ is the finitely generated $\GG$-group $\GG$-generated by $X$, i.e. $\widehat{G}$ is generated by $\GG\cup X$. Let $S$ be the set of defining relations of $\widehat{G}$. We treat $S$ as a system of equations (possibly infinite) over $\GG$ (with coefficients from $\GG$). Since $\GG$ is equationally Noetherian, there exists a finite subsystem $S_0\subseteq S$ such that $R(S)=R(S_0)$. Since every $\GG$-homomorphism from $\widehat{G}=\factor{\langle \GG, X\rangle}{\ncl\langle S\rangle}$ to $\GG$ factors through $\GG_{R(S_0)}$, it suffices to describe the set of all homomorphisms from $\GG_{R(S_0)}$ to $\GG$. We now run the process for the system of equations $S_0$. Since there is a one-to-one correspondence between solutions of the system $S_0$ and homomorphisms from $\GG_{R(S_0)}$ to $\GG$, by Theorem \ref{th:5.3.1}, we obtain a description of $\Hom_\GG(\GG_{R(S_0)},\GG)$.

Suppose now that $\widehat{G}$ is not a $\GG$-group, $\widehat{G}=\langle X\mid S\rangle$ (note that the set $S$ may be infinite). We treat $S$ as a coefficient-free system of equations over $\GG$. Though, formally, coordinate groups are $\GG$-groups, in this case, we consider instead the group $\GG'_{R'(S)}=\factor{F(X)}{R'(S)}$, where $R'(S)=\bigcap\ker(\varphi)$ and the intersection is taken over all homomorphisms $\varphi$ from $F(X)$ to $\GG$ such that $S\subseteq \ker(\varphi)$. It is clear that $\GG'_{R'(S)}$ is a residually $\GG$ group. Since $\GG$ is equationally Noetherian, there exists a finite subsystem $S_0\subseteq S$ such that $R'(S)=R'(S_0)$. Every homomorphism from $\widehat{G}$ to $\GG$ factors through $\GG'_{R'(S_0)}$, it suffices to describe the homomorphisms from $\GG'_{R'(S_0)}$ to $\GG$. We now run the process for the system of equations $S_0$ and construct the solution tree, where to each vertex $v$ instead of $\GG_{R(\Omega_v^*)}$ we associate $\GG'_{R'(\Omega_v^*)}$ and the corresponding epimorphisms, homomorphisms and automorphisms are defined in a natural way. We thereby obtain a description of the set $\Hom(\GG'_{R'(S_0)},\GG)$.
\end{proof}

\subsection{Example} \label{sec:explsol}
Let $\GG$ be the partially commutative group whose commutation graph is a path of length $3$, $\GG=\langle a,b,c,d\mid [a,b]=1,[b,c]=1,[c,d]=1\rangle$.  Consider the following coordinate group, that, a priori, could be associated to the leaf of the extension tree:
$$
\GG_{R(\Omega_v^*)}=\factor{\GG[h_1,\dots,h_6]}{R\left(\begin{array}{l} \{h_5=a,\, h_6=c\} \cup \\ \{[h_1,h_2]=[h_2,h_3]=[h_3,h_4]=[h_1,h_4]=[h_1,h_5]=[h_4,h_6]=1\}
\end{array}
\right)}.
$$
We show that the group $\GG_{R(\Omega_v^*)}$ is not fully residually $\GG$. We first show that the elements $[h_1,h_3]$ and $[h_2,h_4]$ are non-trivial in $\GG_{R(\Omega_v^*)}$. By the definition of the radical, it suffices to show that there exist homomorphisms $\psi_1$, $\psi_2$ from $\GG_{R(\Omega_v^*)}$ to $\GG$ such that $\psi_1([h_1,h_3])\ne 1$ and $\psi_2([h_2,h_4])\ne 1$. It is easy to check that the map $h_1\to a$, $h_2\to b$, $h_3\to c$ and $h_4\to b$ induces a homomorphism $\psi_1$ such that $\psi_1([h_1,h_3])\ne 1$. Similarly, the map $h_1\to b$, $h_2\to a$, $h_3\to b$ and $h_4\to c$ induces a homomorphism $\psi_2$ such that $\psi_2([h_2,h_4])\ne 1$.

We claim that the family $\{[h_1,h_3], [h_2,h_4]\}$ of elements from $\GG_{R(\Omega_v^*)}$ can not be discriminated into $\GG$, i.e. for every homomorphism $\psi$ from $\GG_{R(\Omega_v^*)}$ to $\GG$ either $\psi([h_1,h_3])=1$ or $\psi([h_2,h_4])=1$. In the proof we make a substantial use of the Centraliser Theorem, see Theorem \ref{thm:centr}.

Indeed, since $[h_1,h_5]=1$ and $h_5=a$, we have that $\psi(h_1)\in C_\GG(a)$. There are three cases to consider: either $\psi(h_1)\in \langle a\rangle$ or $\psi(h_1)\in\langle a,b\rangle$ (and  $\psi(h_1) \notin \langle a\rangle\cup\langle b\rangle$) or $\psi(h_1)\in\langle b \rangle$.

Suppose that $\psi(h_1)\in\langle a\rangle$ (or, $\psi(h_1)\in\langle a,b\rangle$, $\psi(h_1)=w(a,b)$). Since $\psi(h_4)\in C_\GG(c)\cap C_\GG(a)$ (correspondingly, $\psi(h_4)\in C_\GG(c)\cap C_\GG(w)$), it follows that $\psi(h_4)\in \langle b\rangle$. Similarly, we get that  $\psi(h_2)\in C_\GG(a)=\langle a,b\rangle$, and therefore $\psi([h_2,h_4])=1$.

Suppose that $\psi(h_1)\in\langle b\rangle$.  Then either $\psi(h_4)\in\langle b\rangle$ or  $\psi(h_4)\in\langle c\rangle$ or  $\psi(h_4)\in\langle b,c\rangle$. If $\psi(h_4)\in\langle b\rangle$ or $\psi(h_4)\in\langle b,c\rangle$, then $h_3\in C_\GG(b)$ and thus $\psi([h_1,h_3])=1$. Finally, assume that $\psi(h_4)\in\langle c\rangle$. It follows that  either $\psi(h_3)\in\langle c\rangle$ or  $\psi(h_3)\in\langle d\rangle$ or  $\psi(h_4)\in\langle c,d\rangle$. If $\psi(h_3)\in\langle c\rangle$, then $\psi([h_1,h_3])=1$. If  $\psi(h_3)\in\langle c\rangle$ or  $\psi(h_3)\in\langle c,d\rangle$, then since $\psi(h_2)\in C_\GG(\psi(h_1))\cap C_\GG(\psi(h_3))$, it follows that $\psi(h_2)\in \langle c\rangle$, therefore, $\psi([h_2,h_4])=1$.

Therefore, the group $\GG_{R(\Omega_v^*)}$ is not $\GG$-discriminated by $\GG$.

Note that the set of solutions of the generalised equation $\Omega_v$ is empty, while there are solutions of the system $\Omega_v^*$. Indeed, if $H$ is a solution of $\Omega_v$, then $H_1\lra a$, therefore $H_1\in \langle b\rangle$. On the other hand, since $H_4\lra c$, we get that $H_4\in \langle b, d\rangle$. This derives a contradiction since $H_1\not\lra H_4$.

\medskip

Let $\GG$ be the partially commutative group whose commutation graph is a path of length $3$, $\GG=\langle a,b,c,d\mid [a,b]=1,[b,c]=1,[c,d]=1\rangle$.  Consider the following coordinate group:
$$
\GG_{R(\Omega_v^*)}=\factor{\GG[h_1,\dots,h_6]}{R\left(\begin{array}{l} \{h_5=a,\, h_6=b\} \cup \\ \{[h_1,h_2]=[h_2,h_3]=[h_3,h_4]=[h_1,h_4]=[h_1,h_5]=[h_4,h_6]=1\}
\end{array}
\right)}.
$$

We now construct one of the groups $\GG_{w_i}$. The corresponding graphs $\Pi_v$ and $\digamma$ constructed by $\Omega_v$ and $\GG$ are shown on Figure \ref{fig:pivdig}.
\begin{figure}[!h]
  \centering
   \includegraphics[keepaspectratio,width=6in]{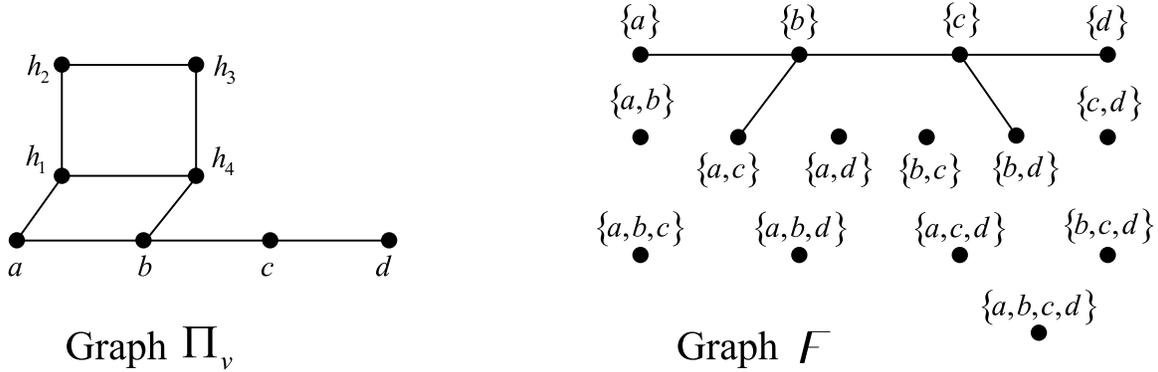}
\caption{The graphs $\Pi_v$ and $\digamma$} \label{fig:pivdig}
\end{figure}

The set of all homomorphisms $\varphi_{v,i}$ from $\Pi_v$ to $\digamma$ that satisfy properties (I) and (II) are listed in the table below:
\begin{center}
\begin{tabular}{l|ccccccccccc}
      & $\varphi_{v,1}$ & $\varphi_{v,2}$ & $\varphi_{v,3}$ & $\varphi_{v,4}$ & $\varphi_{v,5}$ & $\varphi_{v,6}$& $\varphi_{v,7}$& $\varphi_{v,8}$& $\varphi_{v,9}$& $\varphi_{v,10}$& $\varphi_{v,11}$\\  \hline
  $h_1$ & $b$ & $b$       & $b$ & $b$       & $b$       & $b$       & $b$ & $b$         & $b$ & $b$         & $b$\\
  $h_2$ & $a$ & $a$       & $a$ & $\{a,c\}$ & $\{a,c\}$ & $\{a,c\}$ & $c$ & $c$         & $c$ & $c$         & $c$ \\
  $h_3$ & $b$ & $b$       & $b$ & $b$       & $b$       & $b$       & $b$ & $b$         & $b$ & $\{b,d\}$   & $d$\\
  $h_4$ & $a$ & $\{a,c\}$ & $c$ & $a$       & $\{a,c\}$ & $c$       & $a$ & $\{a,c\}$   & $c$ & $c$         & $c$
\end{tabular}
\end{center}

\medskip

Consider the homomorphism $\varphi_{v,6}$ then the corresponding graph $\varphi_{v,6}(\Pi_v)$ is a path of length $2$, whose vertices $s_1$, $s_2$ and $s_3$ are labelled by the subgroups of $\GG$ generated by $\{a,c\}$, $\{b\}$ and  $\{c\}$ correspondingly, see Figure \ref{fig:v2G}. The partially commutative groups $\GG(s_1)=\GG(a,c)$, $\GG(s_2)=\GG(b)$ and $\GG(s_3)=\GG(c)$ are directly indecomposable (the first one is free, and the latter two are cyclic), thus their decompositions of the form (\ref{eq:decomp}) are trivial. The set of vertices $h_k\in V(\Pi_v)$ such that $\varphi_{v,6}(h_k)=s_1$ is $h^{(1)}=\{h_2\}$. Analogously, $h^{(2)}=\{h_1,h_3\}$ and $h^{(2)}=\{h_4\}$. The corresponding group $G$ is a graph product whose underlying graph is a path of length two and the corresponding vertex groups are $\GG(a,c)[h_2]$, $\langle b\rangle\times \langle h_1\rangle\times \langle h_3\rangle$ and $\langle c\rangle \times \langle h_4\rangle$, see Figure \ref{fig:v2G}.

\begin{figure}[!h]
  \centering
   \includegraphics[keepaspectratio,width=6in]{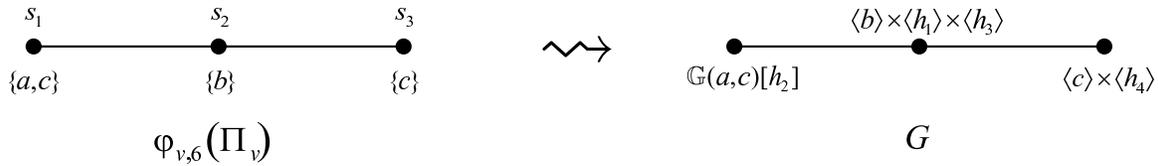}
\caption{Constructing the group $G$ by the graph $\varphi_{v,6}(\Pi_v)$.} \label{fig:v2G}
\end{figure}

We then construct the coordinate group
\begin{gather} \notag
\begin{split}
\GG_{w_6}&=\langle \GG, G\mid\, C,\, [C_\GG(\GG(a,c)),h^{(1)}]=1, [C_\GG(\GG(b)),h^{(2)}]=1, [C_\GG(\GG(c)),h^{(3)}]=1\rangle \\
&=\left< \GG, G\ \left| \begin{array}{l}
                        \mathstrut [b,h_2]=1,  [a,h_1]=[a,h_3]=[b,h_1]=[b,h_3]=[c,h_1]=[c,h_3]=1, \\
                        \mathstrut [b,h_4]=[c,h_4]=[d,h_4]=1, C
                        \end{array}
 \right.\right>\\
&=
\left<
\begin{array}{l}
a,b,c,d, \\
h_1,h_2,h_3,h_4
\end{array}
\left|
\begin{array}{l}
\mathstrut [b,h_2]=1,  [a,h_1]=[a,h_3]=[b,h_1]=[b,h_3]=[c,h_1]=[c,h_3]=1, \\
\mathstrut [b,h_4]=[c,h_4]=[d,h_4]=1, \\
\mathstrut [h_1,h_2]=1, [h_1,h_3]=1, [h_1,h_4]=1, [h_2,h_3]=1, [h_3,h_4]=1.
\end{array}
\right.
\right>
\end{split}
\end{gather}
The coordinate group $\GG_{w_6}$ is the fully residually $\GG$ partially commutative group, whose underlying commutation graph is shown on Figure \ref{fig:lastexpl}.

\begin{figure}[!h]
  \centering
   \includegraphics[keepaspectratio,width=2.4in]{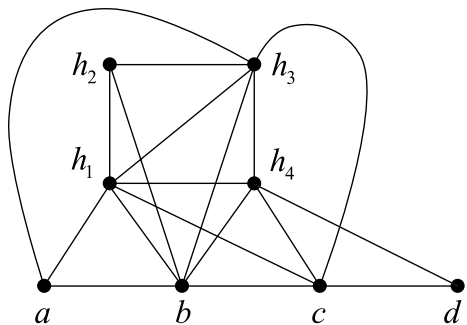}
\caption{The commutation graph of $\GG_{w_6}$.} \label{fig:lastexpl}
\end{figure}

\printindex
\printglossary

\end{document}